\newtheorem{conj}[subsubsection]{Conjecture}
\newtheorem{prop}[subsubsection]{Proposition}
\newtheorem{cor}[subsubsection]{Corollary}
\newtheorem{lem}[subsubsection]{Lemma}
\newtheorem{defn}[subsubsection]{Definition}
\newtheorem{thm}[subsubsection]{Theorem}
\numberwithin{equation}{section}
\theoremstyle{remark}
\newtheorem{rem}[subsubsection]{Remark}
\newcommand{\lemref}[1]{Lemma~\ref{#1}}
\newcommand{\thmref}[1]{Theorem~\ref{#1}}
\newcommand{\secref}[1]{Sect.~\ref{#1}}
\newcommand{\corref}[1]{Corollary~\ref{#1}}
\newcommand{\propref}[1]{Proposition~\ref{#1}}
\newcommand{\nc}{\newcommand}
\nc{\ssec}{\subsection}
\nc{\sssec}{\subsubsection}
\nc{\renc}{\renewcommand}
\nc{\on}{\operatorname}
\nc\ol{\overline}
\nc\wt{\widetilde}
\nc{\Loc}{\on{Loc}}
\nc{\Bun}{\on{Bun}}
\nc{\BQ}{{\mathbb{Q}}}
\nc{\BA}{{\mathbb{A}}}
\nc{\BC}{{\mathbb{C}}}
\nc{\BE}{{\mathbb{E}}}
\nc{\BH}{{\mathbb{H}}}
\nc{\BG}{{\mathbb{G}}}
\nc{\BK}{{\mathbb{K}}}
\nc{\BN}{{\mathbb{N}}}
\nc{\BD}{{\mathbb{D}}}
\nc{\BV}{{\mathbb{V}}}
\nc{\BL}{{\mathbb{L}}}
\nc{\CA}{{\mathcal{A}}}
\nc{\CC}{{\mathcal{C}}}
\nc{\CG}{{\mathcal{G}}}
\nc{\CI}{{\mathcal{I}}}
\nc{\CJ}{{\mathcal{J}}}
\nc{\CO}{{\mathcal{O}}}
\nc{\CP}{{\mathcal{P}}}
\nc{\CR}{{\mathcal{R}}}
\nc{\CV}{{\mathcal{V}}}
\nc{\CW}{{\mathcal{W}}}
\nc{\CK}{{\mathcal{K}}}
\nc{\CM}{{\mathcal{M}}}
\nc{\CN}{{\mathcal{N}}}
\nc{\CL}{{\mathcal{L}}}
\nc{\CF}{{\mathcal{F}}}
\nc{\CX}{{\mathcal{X}}}
\nc{\CY}{{\mathcal{Y}}}
\nc{\CZ}{{\mathcal{Z}}}
\nc{\D}{{\mathcal{D}}}
\nc{\fd}{{\mathfrak{d}}}
\nc{\fg}{{\mathfrak{g}}}
\nc{\fD}{{\mathfrak{D}}}
\nc{\fh}{{\mathfrak{h}}}
\nc{\fl}{{\mathfrak{l}}}
\nc{\fn}{{\mathfrak{n}}}
\nc{\sM}{{\mathsf M}}
\nc{\ppart}{(\!(t)\!)}
\nc{\qqart}{[\![t]\!]}
\nc{\hg}{{\widehat\fg}}
\nc{\sA}{{\mathsf A}}
\nc{\sB}{{\mathsf B}}
\nc{\sF}{{\mathsf F}}
\nc{\sG}{{\mathsf G}}
\nc{\sk}{{\mathsf k}}
\nc{\sj}{{\mathsf j}}
\nc{\bC}{{\mathbf{C}}}
\nc{\bZ}{{\mathbf{Z}}}
\nc{\bD}{{\mathbf{D}}}
\nc{\bO}{{\mathbf{O}}}
\nc{\bU}{{\mathbf{U}}}
\nc{\bc}{{\mathbf{c}}}
\nc{\be}{{\mathbf{e}}}
\nc{\br}{{\mathbf{r}}}
\nc{\bM}{{\mathbf{M}}}
\nc{\bA}{{\mathbf{A}}}
\nc{\bK}{{\mathbf{K}}}
\nc{\oZ}{\overset{\circ}Z{}}
\nc{\oY}{\overset{\circ}Y{}}
\nc{\oX}{\overset{\circ}X{}}
\nc{\of}{\overset{\circ}f{}}
\nc{\oCX}{\overset{\circ}\CX{}}
\nc{\fW}{{\mathfrak{W}}}
\nc{\reg}{{\text{\rm reg}}}
\nc{\nilp}{{\text{\rm nilp}}}
\nc{\cG}{{\check{G}}}
\nc{\cB}{{\check{B}}}
\nc{\cg}{{\check{\fg}}}
\nc{\cb}{{\check{\fb}}}
\nc{\cn}{{\check{\fn}}}
\nc{\mer}{{\on{mer}}}
\nc{\Const}{\mathsf{Const}}
\nc{\Whit}{\on{Whit}}
\nc{\KL}{\on{KL}}
\nc{\FS}{\on{FS}}
\nc{\LocSys}{\on{LocSys}}
\nc{\QCoh}{\on{QCoh}}
\nc{\Coh}{\on{Coh}}
\nc{\IndCoh}{{\on{IndCoh}}}
\nc{\Cat}{\on{Cat}}
\nc{\Op}{\on{Op}}
\nc{\Gr}{\on{Gr}}
\nc{\Fl}{\on{Fl}}
\nc{\Rep}{\on{Rep}}
\renc{\mod}{{\on{-mod}}}
\nc{\Conn}{\on{Conn}}
\nc{\unit}{{\mathbf{1}}}
\nc{\Hom}{\on{Hom}}
\nc{\End}{\on{End}}
\nc{\Vect}{\on{Vect}}
\nc{\Av}{\on{Av}}
\nc{\Ind}{\on{Ind}}
\nc{\Spec}{\on{Spec}}
\nc{\KG}{K\backslash G}
\nc{\comult}{{co\text{-}mult}}
\nc{\counit}{{co\text{-}unit}}
\nc{\uHom}{{\underline{\Hom}}}
\nc{\dgSch}{\on{DGSch}}
\nc{\dgindSch}{\on{DGindSch}}
\nc{\indSch}{\on{indSch}}
\nc{\Sch}{\on{Sch}}
\nc{\affdgSch}{\on{DGSch}^{\on{aff}}}
\nc{\affSch}{\on{Sch^{\on{aff}}}}
\nc{\Groupoids}{\on{Grpd}}
\nc{\inftygroup}{\infty\on{-Grpd}}
\nc{\inftyPicgroup}{\infty\on{-PicGrpd}}
\nc{\inftyCat}{\infty\on{-Cat}}
\nc{\StinftyCat}{\on{DGCat}}
\nc{\MoninftyCat}{\infty\on{-Cat}^{Mon}}
\nc{\SymMoninftyCat}{\infty\on{-Cat}^{SymMon}}
\nc{\SymMonStinftyCat}{\on{DGCat}^{SymMon}}
\nc{\MonStinftyCat}{\on{DGCat}^{Mon}}
\nc{\inftystack}{\on{Stk}}
\nc{\inftystackalg}{Stk^{1\text{-}alg}}
\nc{\inftyprestack}{\on{PreStk}}
\nc{\inftydgnearstack}{\on{NearStk}}
\nc{\inftydgstack}{\on{Stk}}
\nc{\inftydgstackalg}{DGStk^{1\text{-}alg}}
\nc{\inftydgprestack}{\on{PreStk}}
\nc{\mmod}{{\on{-}{\mathbf{mod}}}}
\nc{\wh}{\widehat}
\nc{\nDG}{^{\leq n}\!\on{DG}}
\nc{\Maps}{\on{Maps}}
\nc{\CMaps}{{\mathcal Maps}}
\begin{document}

\title[Indschemes]{DG indschemes}

\author{Dennis Gaitsgory and Nick Rozenblyum}

\date{\today}

\dedicatory{To Igor Frenkel on the occasion of his 60th birthday}

\begin{abstract}
We develop the notion of indscheme in the context of derived algebraic geometry, 
and study the categories of quasi-coherent sheaves and ind-coherent sheaves on 
indschemes. The main results concern the relation between classical and
derived indschemes and the notion of formal smoothness.
\end{abstract}

\maketitle

\tableofcontents

\section*{Introduction}

\ssec{What is this paper about?}

The goal of this paper is to develop the foundations of the theory of indschemes, especially in the context
of derived algebraic geometry.

\sssec{}

The first question to ask here is ``why bother"? For, it is more
or less clear what DG indschemes are: functors on the category of affine DG schemes, i.e.,
$\infty$-prestacks in the terminology of \cite{Stacks}, that can be written as filtered colimits
of DG schemes with transition maps being closed embeddings. 

\medskip

The definition of the category
of quasi-coherent sheaves on a DG indscheme $\CX$ is also automatic: the category $\QCoh(\CX)$
is defined on any $\infty$-prestack (see \cite[Sect. 1.1]{QCoh} or \cite[Sect. 2.7]{Lu1}), and 
in particular on a DG indscheme.

\medskip

Here is, however, the question, which started life as a remark in another paper, but answering
which in detail was one of the main reasons for writing the present one:  

\sssec{}

Consider the affine Grassmannian $\Gr_G$ corresponding to an algebraic group $G$. This is an
indscheme that figures prominently in the geometric Langlands program. We would like to consider
the category $\QCoh(\Gr_G)$ of quasi-coherent sheaves on $\Gr_G$.\footnote{The other main result 
of this paper, also of direct relevance to geometric Langlands,
is described in \secref{sss:descr of main} below. It expresses the category $\QCoh(\Gr_G)$ in terms
of the corresponding category of ind-coherent sheaves on $\Gr_G$.} However, a moment's
reflection leads one to conclude that the expression $\QCoh(\Gr_G)$ is ambiguous. Namely,
the affine Grassmannian itself can be understood in two, a priori different, ways. 

\medskip

Recall that, as a functor on the category of commutative algebras, $\Gr_G$ assigns to
a commutative algebra $A$ the groupoid of $G$-torsors over $\Spec(A\qqart)$ with a trivialization over
$\Spec(A\ppart)$.

\medskip

Now, we can first take $A$'s to be \emph{classical}, i.e., non-derived, commutative algebras, and 
thus consider $\Gr_G$ as a classical indscheme. Let us denote this version of
$\Gr_G$ by $^{cl}\!\Gr_G$. As for any classical indscheme, we can consider the category
$\QCoh({}^{cl}\!\Gr_G)$.

\medskip

The second possibility is to take $A$'s to be DG algebras, and thus consider $\Gr_G$ right 
away as an object of derived algebraic geometry. Thus, we obtain a different version of $\QCoh(\Gr_G)$. 

\medskip

There is a natural functor 
\begin{equation} \label{e:two versions of Gr}
\QCoh(\Gr_G)\to \QCoh({}^{cl}\!\Gr_G), 
\end{equation}
and our initial question was whether or not it is an equivalence. 

\medskip

If it were not an equivalence, 
it would signify substantial trouble for the geometric Langlands community: on the
one hand, $^{cl}\!\Gr_G$ is a familiar object that people have dealt with for some time
now. However, it is clear that the $\Gr_G$ is ``the right object to consider" if we ever
want to mix derived algebraic geometry into our considerations, which we inevitably
do. \footnote{One might raise an objection to the relevance of the above question 
by remarking that for geometric Langlands we mainly consider D-modules on
$\Gr_G$, and those only depend on the underlying classical indscheme. However,
this is not accurate, since along with D-modules, we consider their global sections
as quasi-coherent sheaves, and the latter do depend on the scheme-theoretic
structure.}

\medskip

To calm the anxious reader, let us say that the functor \eqref{e:two versions of Gr}
is an equivalence, as is guaranteed by \thmref{t:aff gr} of the present paper. 

\medskip

In fact, we show that $\Gr_G$ is ``the same as" $^{cl}\!\Gr_G$, in the sense that the former
is obtained from the latter by the natural procedure of turning classical schemes/indschemes/$\infty$-stacks
into derived ones, \footnote{This procedure is the left Kan extension along the embedding
$\affSch\hookrightarrow \affdgSch$, followed by sheafification in the fppf topology.} 
which preserves the operation of taking $\QCoh$ (see \cite[Lemma 1.2.5]{QCoh} for the latter statement). 

\sssec{}

Another result along these lines, \propref{p:class vs derived formal}, concerns formal completions.

\medskip

Let $X$ be a classical scheme and $Y\subset X$ a Zariski-closed subset. Consider
the formal completion $X^\wedge_Y$. By definition, as a functor on commutative
algebras, $X^\wedge_Y$ assigns to a ring $A$ the groupoid of maps $\Spec(A)\to X$,
such that their image is, set-theoretically, contained in $Y$. 

\medskip

However, again there are two ways to understand $X^\wedge_Y$: as a classical indscheme, 
which we then turn into a DG indscheme by the procedure mentioned above. Or,
we can consider it as a functor of DG algebras, obtaining a DG indscheme right away. 

\medskip

In \propref{p:class vs derived formal} we show that, under the assumption that $X$
is Noetherian, the above two versions of $X^\wedge_Y$ are isomorphic.

\medskip

So, by and large, this paper is devoted to developing the theory in order to prove the above and
similar results.

\ssec{What is done in this paper}

We shall presently proceed to review the main results of this paper
(not necessarily in the order in which they appear in the paper). 

\medskip

We should say that none of these results is really surprising. Rather, they are all
in the spirit of ``things work as they should." \footnote{For the duration of the paper we
make the technical assumption that our DG indschemes are what one could call ``ind-quasi compact
and ``ind-quasi separated."}

\sssec{DG indschemes via deformation theory}

The first theorem of this paper, \thmref{t:char by deform}, addresses the following issue.
Let $\CX$ be an $\infty$-prestack, such that the underlying classical $\infty$-prestack
is a classical indscheme. What are the conditions that would guarantee that $\CX$
is itself a DG indscheme?

\medskip

There is a natural guess: since DG algebras can be thought of as infinitesimal
deformations of classical algebras, if we know the behavior of the functor $\CX$
on the latter, its behavior on the former should be governed by deformation theory.

\medskip

By deformation theory we mean the following: if an algebra $A'$ is the extension of
an algebra $A$ by a square-zero ideal $\CI$, then the groupoid of extensions of a given map
$x:\Spec(A)\to \CX$ to a map $x':\Spec(A')\to \CX$ is determined by the 
\emph{cotangent space} to $\CX$ at $x$, denoted $T^*_x\CX$, which is 
understood just as a functor on the category of $\CI$'s, i.e., on $A\mod$. 

\medskip

If we expect $\CX$ to be a DG indscheme, then the functor 
\begin{equation} \label{e:cotangent as a functor}
T^*_x\CX:A\mod\to \inftygroup
\end{equation}
must have certain properties: for a given algebra $A$, as well as for
algebra homomorphisms $A\to B$.  If an abstract $\infty$-prestack $\CX$ 
has these properties, we shall say that $\CX$ \emph{admits connective deformation theory}.

\medskip

Our \thmref{t:char by deform} asserts that if $\CX$ is such that its underlying classical
$\infty$-prestack is a classical indscheme, and if $\CX$ admits connective deformation theory,
then it is a DG indscheme.

\sssec{Formal smoothness} 

Let us recall the notion of formal smoothness for a classical scheme, or more generally for 
a classical $\infty$-prestack, i.e.,
a functor
\begin{equation} \label{e:for cl formal smoothness}
\CX:(\affSch)^{\on{op}}\to \inftygroup.
\end{equation}
We say that $\CX$ is \emph{formally smooth} if whenever $S\to S'$ is a nilpotent embedding
(i.e., a closed embedding with a nilpotent ideal),
then the restriction map
$$\pi_{0}(\CX(S'))\to \pi_{0}(\CX(S))$$
is surjective.

\medskip

The notion of formal smoothness in the DG setting is less evident. We formulate it as follows.
Let $\CX$ be an $\infty$-prestack, i.e., just a functor
$$(\affdgSch)^{\on{op}}\to \inftygroup.$$
We say that it is formally smooth if:

\begin{itemize}

\item When we restrict $\CX$ to classical affine schemes,
the resulting functor as in \eqref{e:for cl formal smoothness}, is formally smooth
in the classical sense.

\item For an affine DG scheme $S=\Spec(A)$, the $i$-th homotopy
group of the $\infty$-groupoid $\CX(S)$ depends only on the truncation $\tau^{\geq -i}(A)$
(i.e., a map $A_1\to A_2$ that induces an isomorphisms of the $i$-th truncations should
induce an isomorphism of $\pi_i$'s of $\CX(\Spec(A_1))$ and $\CX(\Spec(A_2))$.
\footnote{It is quite possible that a more reasonable definition in
both the classical and derived contexts is when the corresponding properties 
take place not ``on the nose", but after Zariski/Nisnevich/\'etale localization. It is likely
that the notion of formal smoothness defined as above is only sensible for $\infty$-prestacks
that are ``locally of finite type", or more generally of Tate type.}

\end{itemize}

\medskip

It is well-known that if a classical scheme \emph{of finite type} is classically formally smooth,
then it is actually smooth. This implies that it is formally smooth also when viewed as a derived
scheme. \footnote{We do not know whether the latter is true in general without the
finite type hypothesis.}

\medskip

The question we consider is whether the same is true for indschemes. Namely,
if $\CX$ is a classical indscheme, which is classically formally smooth, and \emph{locally of finite type},
is it true that it will be formally smooth also as a DG indscheme? (By ``as a DG indscheme"  we mean the procedure
of turning classical $\infty$-stacks into derived ones by the procedure mentioned above.)

\medskip

The answer turns out to be ``yes", under some additional technical hypotheses, see 
\thmref{t:classical vs derived ft}.  

\medskip

Moreover, the above theorem formally implies that (under the same additional hypotheses),
every formally smooth DG indscheme is classical, i.e., is obtained by the above procedure
from a classical formally smooth indscheme.

\medskip

The theorem about the affine Grassmannian mentioned above is an easy corollary of this
result. 

\sssec{Loop spaces}

We don't know whether \thmref{t:classical vs derived ft} remains valid  if one omits
the locally finite type hypothesis. It is quite possible that this hypothesis is essential. 
However, we do propose the following conjecture:

\medskip

Let $Z$ be a classical affine scheme of finite type, which is smooth. Consider the 
corresponding DG indscheme $Z\ppart$ (see \secref{sss:loops setting} for the definition).
It is easy to see that it is formally smooth. 

\medskip

We conjecture that, although $Z\ppart$ is not locally of finite type, it is classical. The evidence
for this is provided by \cite[Theorem 6.4]{Dr}. This theorem says that $Z\ppart$ violates the 
locally finite type condition by factors isomorphic to the infinite-dimensional affine
space, and the latter does not affect the property of being classical. 

\medskip

We prove this conjecture in the special case when $Z$ is an algebraic group $G$.

\ssec{Quasi-coherent and ind-coherent sheaves on indschemes}

With future applications in mind, the focus of this paper is the categories
$\IndCoh(\CX)$ and $\QCoh(\CX)$ of ind-coherent and quasi-coherent
on a DG indscheme $\CX$.  \footnote{We the refer the reader to \cite{IndCoh} where the category 
$\IndCoh(\CX)$ on a prestack $\CX$ is studied. For it to be defined, $\CX$ needs to
be locally almost of finite type (see \cite[Sect. 1.3.9]{Stacks} for what the latter means).}

\medskip

We shall now proceed to state the main result of this paper.

\sssec{Comparison of $\QCoh$ and $\IndCoh$ on the loop group}  \label{sss:descr of main}

Let us return to the situation of the affine Grassmannian $\Gr_G$, or rather, the loop group $G\ppart$.
As we now know, both of these DG indschemes are classical. 

\medskip

In the study of local geometric Langlands, one considers the notion of category acted on by
the loop group $G\ppart$. This notion may be defined in two, a priori, different ways:

\medskip

\noindent(a) As a \emph{co-action} of the \emph{co-monoidal} category $\QCoh(G\ppart)$, where the
co-monoidal structure is given by pullback with respect to the multiplication map on $G\ppart$.

\medskip

\noindent(b) As an \emph{action} of the \emph{monoidal} category $\IndCoh(G\ppart)$, where the
monoidal structure is given by push-forward with respect to the same multiplication map.
\footnote{We should remark that when talking about $\IndCoh(G\ppart)$, we are leaving the realm of documented
mathematics, as $G\ppart$ is not locally of finite type. However, it is not difficult to give a definition of
$\IndCoh$ ``by hand" in the particular case of $G\ppart$, using the affine Grassmannian.}

\medskip

Obviously, one would like these two notions to coincide. This leads one to believe that the
corresponding categories $\QCoh(G\ppart)$ and $\IndCoh(G\ppart)$ are duals of one another
(duality is understood here in the sense of \cite[Sect. 2.1]{DG}). 

\medskip

Moreover, unless we prove something about $\QCoh(G\ppart)$, it would be a rather unwieldy object, 
as $\QCoh(\CX)$ is for a general DG indscheme $\CX$. For instance, we would not know that it is compactly generated, etc. 

\sssec{}

To formulate a precise statement, we shall return to the case of the affine Grassmannian. We claim
that the functor
$$\QCoh(\Gr_G)\to \IndCoh(\Gr_G)$$
given by tensoring with the dualizing sheaf $\omega_{\Gr_G}\in \IndCoh(\Gr_G)$ is an equivalence.

\medskip

In fact, we prove \thmref{t:QCoh main} that asserts that a similarly defined functor is an equivalence
for any formally smooth DG indscheme locally of finite type (with an additional technical hypothesis). 

\medskip

This theorem was originally stated and proved by J.~Lurie in 2008. 

\medskip

We give a different proof,
but it should be noted that Lurie's original proof was much more elegant. The reason we do not reproduce
it here is that it uses some not yet documented facts about Ext computations on indschemes.

\sssec{$\QCoh$ and $\IndCoh$ on formal completions}

Another set of results we establish concerning $\QCoh$ and $\IndCoh$ is the following.

\medskip

In order to prove \thmref{t:QCoh main} mentioned above, we have to analyze 
in detail the behavior of the categories $\QCoh$ and $\IndCoh$ on a DG indscheme
obtained as a formal completion $X^\wedge_Y$ of a 
DG scheme $X$ along a Zariski-closed subset $Y$. 

\medskip

We show that the category $\QCoh(X^\wedge_Y)$ (resp., $\IndCoh(X^\wedge_Y)$) is equivalent
to the localization of $\QCoh(X)$ (resp., $\IndCoh(X)$) with respect to $\QCoh(U)$ 
(resp., $\IndCoh(U)$), where $U=X\on{-}Y$.

\medskip

This implies some favorable properties of $\QCoh(X^\wedge_Y)$, e.g., that it is compactly 
generated (something, which is not necessarily true for an arbitrary indscheme). We also
endow $\QCoh(X^\wedge_Y)$ with two different t-structures, one compatible with pullbacks
from $X$, and another compatible with push-forwards to $X$.

\medskip

In addition, we show that the functors $\Psi,\Xi,\Psi^\vee,\Xi^\vee$ that act 
between $\QCoh$ and $\IndCoh$ (see \cite[Sects. 1.1, 1.5, 9.3 and 9.6]{IndCoh}) are
compatible for $X^\wedge_Y$ and $X$ under the push-forward and pullback
functors.

\ssec{Conventions and notation}

Our conventions follow closely those of \cite{IndCoh}. Let us recall the most essential ones. 

\sssec{The ground field}

Throughout the paper we will be working over a fixed ground field $k$.  
We assume that $\on{char}(k)=0$.

\sssec{$\infty$-categories}

By an $\infty$-category we always mean an $(\infty,1)$-category. By a slight abuse
of language we will sometimes talk about ``categories" when we actually mean 
$\infty$-categories. Our usage of $\infty$-categories is not tied to any particular model,
but it is their realization as quasi-categories that we actually have in mind, the basic reference
to which is \cite{Lu0}.

\medskip

By $\inftygroup$ we denote the $\infty$-category of $\infty$-groupoids, which is the
same as the category ${\mathcal S}$ of spaces in the notation of \cite{Lu0}.

\medskip

There is a natural functor
$$ \inftyCat \rightarrow \inftygroup $$
which is the right adjoint of the inclusion functor.  It sends an $\infty$-category $\bC$ to its maximal subgroupoid, 
which we will denote by $\bC^{\on{grpd}}$.  I.e., $\bC^{\on{grpd}}$ is obtained from $\bC$ by discarding 
the non-invertible $1$-morphisms.

\medskip

For an $\infty$-category $\bC$, and $x,y\in \bC$, we shall denote by
$\on{Maps}_\bC(x,y)\in \inftygroup$ the corresponding mapping space. By
$\Hom_\bC(x,y)$ we denote the set $\pi_0(\on{Maps}_\bC(x,y))$, i.e.,
what is denoted $\Hom_{h\bC}(x,y)$ in \cite{Lu0}. 

\medskip

When working in a fixed $\infty$-category $\bC$, for two objects $x,y\in \bC$,
we shall call a point of $\on{Maps}_\bC(x,y)$ an \emph{isomorphism} what is in
\cite{Lu0} is called an \emph{equivalence}. I.e., a map that admits a homotopy 
inverse. We reserve the word ``equivalence" to mean a (homotopy) equivalence
between $\infty$-categories.

\sssec{Subcategories}

Let $\phi:\bC'\to \bC$ be a functor between $\infty$-categoris. 

\medskip

We shall say that $\phi$ is \emph{0-fully faithful}, or just \emph{fully faithful}
if for any $\bc'_1,\bc'_2\in \bC'$, the map
\begin{equation} \label{e:map on Homs}
\on{Maps}_{\bC'}(\bc'_1,\bc'_2)\to \on{Maps}_{\bC}(\phi(\bc'_1),\phi(\bc'_2))
\end{equation}
is an isomorphism (=homotopy equivalence)
of $\infty$-groupoids. In this case we shall say that $\phi$ makes $\bC'$ into a \emph{0-full}
(or just \emph{full}) subcategory of $\bC$. 

\medskip

We also consider two weaker notions:

\medskip

We shall say that $\phi$ is \emph{1-fully faithful}, or just \emph{faithful}, if for any $\bc'_1,\bc'_2\in \bC'$, 
the map \eqref{e:map on Homs} is a fully faithful map of $\infty$-groupoids. Equivalently, the map 
\eqref{e:map on Homs} induces an injection on $\pi_0$ and a bijection on the homotopy groups $\pi_i$, $i\geq 1$ 
on each connected component of the space $\on{Maps}_{\bC'}(\bc'_1,\bc'_2)$.

\medskip

I.e., $2$- and higher morphisms between $1$-morphisms in $\bC'$ are the same in $\bC'$
and $\bC$, up to homotopy. 

\medskip

We shall say that $\phi$ is \emph{faithful and groupoid-full} if it is faithful, and 
for any $\bc'_1,\bc'_2\in \bC'$, the map
\eqref{e:map on Homs} is surjective on those connected components of 
$\on{Maps}_{\bC}(\phi(\bc'_1),\phi(\bc'_2))$ that correspond to isomorphisms.  In other words, $\phi$ is faithful and groupoid-full if it is faithful and the restriction
$$ \phi^{\on{grpd}}: \bC'^{\on{grpd}} \rightarrow \bC^{\on{grpd}} $$
is fully faithful.  In this case, we shall say that $\phi$ makes $\bC'$ into a \emph{1-full}
subcategory of $\bC$. 

\sssec{DG categories}

Our conventions regarding DG categories follow \cite[Sects. 0.6.4 and 0.6.5]{IndCoh}.

\medskip

In particular, we denote by $\Vect$ the DG category of chain complexes of $k$-vector
spaces.

\medskip

Unless specified otherwise, we will only consider continuous
functors between DG categories (i.e., exact functors that commute
with direct sums, or equivalently, with all colimits). In other words,
we will be working in the category $\StinftyCat_{\on{cont}}$ in the
notation of \cite{DG}. \footnote{One can replace $\StinftyCat_{\on{cont}}$
by (the equivalent) $(\infty,1)$-category of stable presentable 
$\infty$-categories tensored over $\Vect$, with colimit-preserving functors.}

\medskip

For a DG category $\bC$ and $\bc_1,\bc_2\in \bC$ we let
$$\CMaps_\bC(\bc_1,\bc_2)$$
denote the corresponding object of $\Vect$. We can regard $\CMaps_\bC(\bc_1,\bc_2)$
as a not necessarily connective spectrum and thus identify
$$\Maps_\bC(\bc_1,\bc_2)=\Omega^\infty(\CMaps_\bC(\bc_1,\bc_2)).$$

\medskip

For a DG category $\bC$ equipped with a t-structure, we denote by $\bC^{\leq n}$
(resp., $\bC^{\geq m}$, $\bC^{\leq n,\geq m}$) the corresponding full subcategories.
The inclusion $\bC^{\leq n}\hookrightarrow \bC$ admits
a right adjoint denoted by $\tau^{\leq n}$, and similarly, for the other
categories. We let $\bC^\heartsuit$ denote the heart of the t-structure, and by $H^i:\bC\to \bC^\heartsuit$ 
the functor of $i$th cohomology with respect to our t-structure. Note that if 
$\bc\in \bC^{\leq n}$ (resp., $\bC^{\geq m}$) then $H^i(\bc)=0$
for $i>n$ (resp., $i<m$), but the converse is not true, unless the t-structure is \emph{separated}. 

\sssec{(Pre)stacks and DG schemes}

Our conventions regarding (pre)stacks and DG schemes follow \cite{Stacks}:

\medskip

Let $\affdgSch$ denote the $\infty$-category opposite to that of \emph{connective}
commutative DG algebras over $k$.

\medskip

The category $\inftydgprestack$ of prestacks is by definition that of all accessible\footnote{Recall that an accessible functor is one which commutes with $\kappa$-filtered colimits for some regular cardinal $\kappa$.  This condition ensures that we can avoid set theoretic difficulties when dealing with categories which are not small. See \cite{Lu0} for a discussion of accessible $\infty$-categories and functors.}
 functors
$$(\affdgSch)^{\on{op}}\to \inftygroup.$$
The category $\inftydgstack$ is a full subcategory in $\inftydgprestack$
that consists of those functors that satisfy fppf descent (see \cite[Sect. 2.2]{Stacks}).
This inclusion admits a left adjoint, denoted $L$, referred to as the \emph{sheafification
functor}.

\medskip

We remark that for the purposes of the current paper, the fppf topology can be replaced
by the \'etale, Nisnevich or Zariski  topology: all we need is that a non-affine (DG) scheme 
be isomorphic to the colimit, taken in the category of stacks, of its affine open subschemes. 

\ssec{The notion of $n$-coconnectivity for (pre)stacks}
For the reader's convenience, in this subsection, we briefly review
the material of \cite{Stacks} related to the notion of $n$-connectivity.

\sssec{}

Let $n$ be a non-negative integer. 

\medskip

We denote by $^{\leq n}\!\affdgSch$ the full subcategory of $\affdgSch$ that consists
of affine DG schemes $S=\Spec(A)$, such that $H^{-i}(A)=0$ for $i>n$. 
We shall refer to objects of this category as ``$n$-coconnective affine DG schemes."
When $n=0$ we shall also use the terminology ``classical affine schemes", and
denote this category by $\affSch$. 

\medskip

The inclusion $^{\leq n}\!\affdgSch\hookrightarrow \affdgSch$ admits a right
adjoint given by cohomological truncation below degree $-n$; we denote this
functor by $S\mapsto \tau^{\leq n}(S)$.

\sssec{The case of prestacks}

In this paper, we make extensive use of the operation of restricting a prestack 
$\CY$ to the subcategory $^{\leq n}\!\affdgSch$. We denote this functor by
$$\CY\mapsto {}^{\leq n}\CY:\inftydgprestack\to {}^{\leq n}\!\inftydgprestack,$$
where $^{\leq n}\!\inftydgprestack$ is by definition the category of
all functors
$({}^{\leq n}\!\affdgSch)^{\on{op}}\to \inftygroup$.

\medskip

The above restriction functor admits a (fully faithful) left adjoint, given by left
Kan extension along $^{\leq n}\!\affdgSch\hookrightarrow \affdgSch$; we denote it
by 
$$\on{LKE}_{({}^{\leq n}\!\affdgSch)^{\on{op}}\hookrightarrow (\affdgSch)^{\on{op}}}:{}^{\leq n}\!\inftydgprestack\to
\inftydgprestack.$$
The composition
$$\CY\mapsto \on{LKE}_{({}^{\leq n}\!\affdgSch)^{\on{op}}\hookrightarrow (\affdgSch)^{\on{op}}}({}^{\leq n}\CY)$$
is a colocalization functor on $\inftydgprestack$; we denote it by $\CY\mapsto \tau^{\leq n}(\CY)$.
When $\CY$ is an affine scheme $S$, this coincides with what was denoted above
by $\tau^{\leq n}(S)$.

\medskip

We shall say that a prestack $\CY$ is $n$-\emph{coconnective} if it belongs to the essential
image of $\on{LKE}_{({}^{\leq n}\!\affdgSch)^{\on{op}}\hookrightarrow (\affdgSch)^{\on{op}}}$, or equivalently if
the canonical map $\tau^{\leq n}(\CY)\to \CY$ is an isomorphism. 

\medskip

Thus, the functors of restriction
and left Kan extension identify $^{\leq n}\!\inftydgprestack$ with the full subcategory of 
$\inftydgprestack$ spanned by $n$-\emph{coconnective} prestacks. 

\medskip

We shall say that $\CY$ is \emph{eventually coconnective} if it is $n$-coconnective for some $n$. 

\medskip 

We shall refer to objects of $^{\leq 0}\!\inftydgprestack$ as ``classical prestacks";
we shall denote this category also by $^{cl}\!\inftydgprestack$.
By the above, the category of classical prestacks is canonically equivalent to that
of $0$-coconnective prestacks.

\sssec{The notion of $n$-coconnectivity for stacks}

By considering fppf topology on the category $^{\leq n}\!\affdgSch$, we obtain the
corresponding full subcategory
$$^{\leq n}\!\inftydgstack\subset {}^{\leq n}\!\inftydgprestack.$$

The restriction functor $\inftydgprestack\to {}^{\leq n}\!\inftydgprestack$ sends
\begin{equation} \label{e:restriction of stacks}
\inftydgstack\to {}^{\leq n}\!\inftydgstack,
\end{equation}
but the left adjoint $\on{LKE}_{({}^{\leq n}\!\affdgSch)^{\on{op}}\hookrightarrow (\affdgSch)^{\on{op}}}$ does
not send $^{\leq n}\!\inftydgstack$ to $\inftydgstack$. The left adjoint to the functor
\eqref{e:restriction of stacks} is given by the composition
$$^{\leq n}\!\inftydgstack\hookrightarrow 
{}^{\leq n}\!\inftydgprestack\overset{\on{LKE}_{({}^{\leq n}\!\affdgSch)^{\on{op}}\hookrightarrow (\affdgSch)^{\on{op}}}}\longrightarrow
\inftydgprestack\overset{L}\longrightarrow \inftydgstack,$$
and is denoted $^L\!\on{LKE}_{({}^{\leq n}\!\affdgSch)^{\on{op}}\hookrightarrow (\affdgSch)^{\on{op}}}$. 
The functor $^L\!\on{LKE}_{({}^{\leq n}\!\affdgSch)^{\on{op}}\hookrightarrow (\affdgSch)^{\on{op}}}$ is fully faithful. 
The composition of the functor \eqref{e:restriction of stacks} with 
$^L\!\on{LKE}_{({}^{\leq n}\!\affdgSch)^{\on{op}}\hookrightarrow (\affdgSch)^{\on{op}}}$ is a colocalization functor on
$\inftydgstack$ and is denoted $\CY\mapsto {}^L\tau^{\leq n}(\CY)$.

\medskip

We shall say that a stack $\CY\in \inftydgstack$ is $n$-coconnective \emph{as a stack} if it belongs
to the essential image of the functor $^L\!\on{LKE}_{({}^{\leq n}\!\affdgSch)^{\on{op}}\hookrightarrow (\affdgSch)^{\on{op}}}$,
or equivalently, if the canonical map $^L\tau^{\leq n}(\CY)\to \CY$ is an isomorphism.

\medskip

We emphasize, however, that if $\CY$ is $n$-coconnective as a stack, it is \emph{not} necessarily 
$n$-coconnective as a prestack. The corresponding morphism $\tau^{\leq n}(\CY)\to \CY$ 
becomes an isomorphism only after applying the sheafification functor $L$.

\medskip

Thus, the functor \eqref{e:restriction of stacks} and its left adjoint identify the category 
$^{\leq n}\!\inftydgstack$ with the full subcategory of $\inftydgstack$ spanned by
$n$-coconnective stacks. 

\medskip

We shall say that $\CY$ is \emph{eventually coconnective as a stack} if it is $n$-coconnective as a stack for some $n$. 

\medskip 

We shall refer to objects of $^{\leq 0}\!\inftydgstack$ as ``classical stacks";
we shall also denote this category by $^{cl}\!\inftydgstack$.
By the above, the category of classical stacks is canonically equivalent to that
of $0$-coconnective stacks. 

\sssec{DG schemes}  \label{sss:DG schemes}

The category $\inftydgstack$ (resp., $^{\leq n}\!\inftydgstack$) contains the full subcategory
$\dgSch$ (resp., $^{\leq n}\!\dgSch$), see \cite{Stacks}, Sect. 3.2. 

\medskip

The functors of restriction and $^L\!\on{LKE}_{({}^{\leq n}\!\affdgSch)^{\on{op}}\hookrightarrow (\affdgSch)^{\on{op}}}$
send the categories $\dgSch$ and $^{\leq n}\!\dgSch$ to one another, thereby identifying 
$^{\leq n}\!\dgSch$ with the subcategory of $\dgSch$ that consists of $n$-coconnective
DG schemes, i.e., those DG schemes that are $n$-coconnective as stacks. 

\medskip

For $n=0$ we shall refer to objects of $^{\leq 0}\!\dgSch$ as ``classical schemes", and denote this
category also by $\Sch$.

\medskip

\noindent{\it Notational convention:} In order to avoid unbearably long formulas,
we will sometimes use the following slightly abusive notation: if $Z$ is an object of
$^{\leq n}\!\dgSch$, we will use the same symbol $Z$ for the object of $\dgSch$
that should properly be denoted 
$$^L\!\on{LKE}_{({}^{\leq n}\!\affdgSch)^{\on{op}}\hookrightarrow (\affdgSch)^{\on{op}}}(Z).$$
Similarly, for $n'\geq n$, we shall write $Z$ for the object of $^{\leq n'}\!\dgSch$
that should properly be denoted 
$$^{\leq n'}\left({}^L\!\on{LKE}_{({}^{\leq n}\!\affdgSch)^{\on{op}}\hookrightarrow (\affdgSch)^{\on{op}}}(Z)\right).$$

\sssec{Convergence}  \label{sss:convergence}

An object $\CY$ of $\inftydgprestack$ (resp., $\inftydgstack$) is said to be convergent
if for any $S\in \affdgSch$, the natural map
$$\CY(S)\to \underset{n}{lim}\, \CY(\tau^{\leq n}(S))$$
is an isomorphism.

\medskip

Equivalently, $\CY\in \inftydgprestack$ (resp., $\inftydgstack$) is convergent if the map
$$\CY\to \on{RKE}_{({}^{<\infty}\!\affdgSch)^{\on{op}}\hookrightarrow (\affdgSch)^{\on{op}}}(\CY|_{^{<\infty}\!\affdgSch})$$
is an isomorphism.  Here, $^{<\infty}\!\affdgSch$ denotes the full subcategory of $\affdgSch$
spanned by eventually coconnective affine DG schemes.

\medskip

The full subcategory of $\inftydgprestack$ (resp., $\inftydgstack$) that consists of convergent objects
is denoted $^{\on{conv}}\!\inftydgprestack$ (resp., $^{\on{conv}}\!\inftydgstack$).  The embedding
$$^{\on{conv}}\!\inftydgprestack\hookrightarrow \inftydgprestack$$
admits a left adjoint, called the convergent completion, and denoted $\CY\mapsto {}^{\on{conv}}\CY$.
\footnote{In \cite{Stacks}, this functor was denoted $\CY\mapsto \wh\CY$.}
The restriction of this functor to $\inftydgstack$ sends
$$\inftydgstack\to {}^{\on{conv}}\!\inftydgstack,$$
and is the left adjoint to the embedding $^{\on{conv}}\!\inftydgstack\hookrightarrow \inftydgstack$.

\medskip

Tautologically, we can describe the functor of convergent completion as the composition
$$\CY\mapsto \on{RKE}_{({}^{<\infty}\!\affdgSch)^{\on{op}}\hookrightarrow (\affdgSch)^{\on{op}}}(\CY|_{^{<\infty}\!\affdgSch}).$$
I.e,  the functor of right Kan extension $\on{RKE}_{({}^{<\infty}\!\affdgSch)^{\on{op}}\hookrightarrow (\affdgSch)^{\on{op}}}$
along $$({}^{<\infty}\!\affdgSch)^{\on{op}}\hookrightarrow (\affdgSch)^{\on{op}}$$
identifies the category $^{<\infty}\!\inftydgprestack$ with $^{\on{conv}}\!\inftydgprestack$, and 
$^{<\infty}\!\inftydgstack$ with $^{\on{conv}}\!\inftydgstack$. 

\sssec{Weak $n$-coconnectivity}  \label{sss:weakly coconnective}

For a fixed $n$, the composite functor
$$^{\on{conv}}\!\inftydgprestack\hookrightarrow \inftydgprestack\overset{\on{restriction}}\longrightarrow
{}^{\leq n}\!\inftydgprestack$$
also admits a left adjoint given by
\begin{equation} \label{e:extension and completion}
\CY_n\mapsto {}^{\on{conv}}\!\on{LKE}_{({}^{\leq n}\!\affdgSch)^{\on{op}}\hookrightarrow (\affdgSch)^{\on{op}}}(\CY_n):=
{}^{\on{conv}}\!(\on{LKE}_{({}^{\leq n}\!\affdgSch)^{\on{op}}\hookrightarrow (\affdgSch)^{\on{op}}}(\CY_n)).
\end{equation}
Equivalently, when we identify $^{<\infty}\!\inftydgprestack\simeq {}^{\on{conv}}\!\inftydgprestack$,
the above functor can be described as $\on{LKE}_{({}^{\leq n}\!\affdgSch)^{\on{op}}\hookrightarrow ({}^{<\infty}\!\affdgSch)^{\on{op}}}$.

\medskip

The composite functor
$$\CY\mapsto {}^{\on{conv}}\!\on{LKE}_{({}^{\leq n}\!\affdgSch)^{\on{op}}\hookrightarrow (\affdgSch)^{\on{op}}}(\CY|_{^{\leq n}\!\affdgSch})$$
is a colocalization on $^{\on{conv}}\!\inftydgprestack$, and we will denote it by $^{\on{conv}}\!\tau^{\leq n}$.

\medskip

Similarly, the composite functor
$$^{\on{conv}}\!\inftydgstack\hookrightarrow \inftydgstack \overset{\on{restriction}}\longrightarrow
{}^{\leq n}\!\inftydgstack$$
also admits a left adjoint given by
\begin{equation} \label{e:extension sheaf and completion}
\CY_n\mapsto {}^{\on{conv},L}\!\on{LKE}_{({}^{\leq n}\!\affdgSch)^{\on{op}}\hookrightarrow (\affdgSch)^{\on{op}}}(\CY_n):=
{}^{\on{conv}}\!({}^L\!\on{LKE}_{({}^{\leq n}\!\affdgSch)^{\on{op}}\hookrightarrow (\affdgSch)^{\on{op}}}(\CY_n)).
\end{equation}
Alternatively, when we identify $^{<\infty}\!\inftydgstack\simeq {}^{\on{conv}}\!\inftydgstack$,
the above functor can be described as 
$$^{^{<\infty}\!L}\!\on{LKE}_{({}^{\leq n}\!\affdgSch)^{\on{op}}\hookrightarrow ({}^{<\infty}\!\affdgSch)^{\on{op}}}.$$

\medskip

The composite functor
$$\CY\mapsto {}^{\on{conv},L}\!\on{LKE}_{({}^{\leq n}\!\affdgSch)^{\on{op}}\hookrightarrow (\affdgSch)^{\on{op}}}(\CY|_{^{\leq n}\!\affdgSch})$$
is a colocalization on $^{\on{conv}}\!\inftydgstack$, and we will denote it by $^{\on{conv},L}\!\tau^{\leq n}$.

\medskip

We shall say that an object $\CY$ of $^{\on{conv}}\!\inftydgprestack$ (resp., $^{\on{conv}}\!\inftydgstack$) is
\emph{weakly} $n$-coconnective if it belongs to the essential image of the functor 
\eqref{e:extension and completion}  (resp., \eqref{e:extension sheaf and completion}).
Equivalently, an object as above is \emph{weakly} $n$-coconnective if and only if its
restriction to $^{\leq m}\!\affdgSch$ is $n$-coconnective for any $m\geq n$. 

\medskip

It is clear that if an object is $n$-coconnective, then it is weakly $n$-coconnective. However,
the converse is false.

\ssec{Acknowledgments}

We are much indebted to Jacob Lurie for many helpful discussions (and, really, for teaching
us derived algebraic geometry). We are also grateful to him for sharing with us what 
appears in this paper as \thmref{t:QCoh main}.

\medskip

We are also very grateful to V.~Drinfeld for his help in proving \thmref{t:classical vs derived ft}: he pointed
us to Sect. 7.12 of \cite{BD}, and especially to Proposition 7.12.23, which is crucial for the proof.  Additionally, we thank P.~Pstragowski for pointing out a mistake in a previous version of \secref{ss:monomorph}.

\medskip

The research of D.G. is supported by NSF grant DMS-1063470.
  
\section{DG indschemes}

When dealing with usual indschemes, the definition is straightforward: like any "space" in algebraic geometry,
an indscheme is a presheaf on the category of affine schemes, and the condition we require is that it should
be representable by a filtered family of schemes, where the transition maps are closed embeddings.

\medskip

The same definition is reasonable in the DG setting as long as we restrict ourselves to $n$-coconnective
DG schemes for some $n$. However, when dealing with arbitrary DG indschemes, one
has to additionally require that the presheaf be \emph{convergent}, see
\secref{sss:convergence}.

\medskip

Thus, for reasons of technical convenience we define DG indschemes by requiring the existence of a 
presentation as a filtered colimit  \emph{at the truncated level}. 
We will later show that a DG indscheme defined in this way itself admit a presentation as
a colimit of DG schemes. 

\medskip

In this section we define DG indschemes, first in the $n$-coconnective setting for some $n$,
and then in general, and study the relationship between these two notions. 

\medskip

As was mentioned in the introduction, the class of (DG) indschemes that we consider in this
paper is somewhat smaller than one could in principle consider in general: we will only consider
those (DG) indschemes that are ind-quasi compact and ind-quasi separated. 

\ssec{Definition in the $n$-coconnective case}

\sssec{}

Let us recall the notion of closed embedding in derived algebraic geometry. 

\begin{defn}
A map $X_1\to X_2$ in $\dgSch$ or $^{\leq n}\!\dgSch$ is a closed embedding if the corresponding
map of classical schemes $^{cl}\!X_1\to {}^{cl}\!X_2$ is.
\end{defn}

Recall that the notation $^{cl}\!X$ means $X|_{^{cl}\!\affdgSch}$, i.e., we regard $X$ is a functor
on classical affine schemes, and if $X$ was a DG scheme, then $^{cl}\!X$ is a classical scheme
(see \cite[Sect. 3.2.1]{Stacks}).

\medskip

Let $(\dgSch)_{\on{closed}}$ (resp., $({}^{\leq n}\!\dgSch)_{\on{closed}}$)
denote the 1-full subcategory of $\dgSch$ (resp., $^{\leq n}\!\dgSch$), where we restrict
$1$-morphisms to be closed embeddings. Let 
$$\dgSch_{\on{qsep-qc}}\subset \dgSch,\,\, (\dgSch_{\on{qsep-qc}})_{\on{closed}} \subset (\dgSch)_{\on{closed}},$$
$$^{\leq n}\!\dgSch_{\on{qsep-qc}}\subset {}^{\leq n}\!\dgSch, \,\,
({}^{\leq n}\!\dgSch_{\on{qsep-qc}})_{\on{closed}} \subset ({}^{\leq n}\!\dgSch)_{\on{closed}}$$
be the full subcategories corresponding to quasi-separated and quasi-compact DG schemes
(by definition, this is a condition on the underlying classical scheme).



\sssec{}

We give the following definition:

\begin{defn}
A $\nDG$ indscheme is an object $\CX$ of $^{\leq n}\!\inftydgprestack$ that can be represented
as a colimit of a functor 
$$\sA\to {}^{\leq n}\!\inftydgprestack$$
which \emph{can be factored} as 
$$\sA\to ({}^{\leq n}\!\dgSch_{\on{qsep-qc}})_{\on{closed}}\hookrightarrow {}^{\leq n}\!\inftydgprestack,$$
and where the category $A$ is \emph{filtered}. 
\end{defn}

\medskip

I.e., $\CX\in \on{PreStk}$ is a $\nDG$ indscheme if it can be written as a filtered colimit in $^{\leq n}\!\inftydgprestack$:
\begin{equation} \label{e:indscheme as a colimit}
\underset{\alpha}{colim}\, X_\alpha,
\end{equation}
where $X_\alpha\in {}^{\leq n}\!\dgSch_{\on{qsep-qc}}$ and for $\alpha_1\to \alpha_2$, the corresponding map 
$i_{\alpha_1,\alpha_2}:X_{\alpha_1}\to X_{\alpha_2}$
is a closed embedding. 

\medskip

Let $^{\leq n}\!\dgindSch$ denote the full subcategory of $^{\leq n}\!\inftydgprestack$ spanned
by $\nDG$ indschemes. We shall refer to objects of $^{\leq 0}\!\dgindSch$ as
\emph{classical indschemes}; we shall also use the notation $\indSch:={}^{\leq 0}\!\dgindSch$.

\begin{rem}
Note that the quasi-compactness and quasi-separatedness assumption 
in the definition of $\nDG$ indschemes means that not every $\nDG$ scheme $X$ is a 
$\nDG$ indscheme.  However, a scheme which is an indscheme is not necessarily quasi-separated and quasi-compact:
for example, a disjoint union of quasi-separated and quasi-compact 
$\nDG$ schemes is a $\nDG$ indscheme.
\end{rem}

\ssec{Changing $n$}

\sssec{}

Clearly, for $n'<n$, the functor 
$$^{\leq n}\!\inftydgprestack\to {}^{\leq n'}\!\inftydgprestack,$$
corresponding to restriction along $$^{\leq n'}\!\affdgSch\hookrightarrow {}^{\leq n}\!\affdgSch,$$
sends the subcategory $^{\leq n}\!\dgindSch$ to $^{\leq n'}\!\dgindSch$.

\medskip

Indeed, if $\CX$ is presented as in 
\eqref{e:indscheme as a colimit}, then $^{\leq n'}\CX:=\CX|_{^{\leq n'}\!\affdgSch}$ can be presented as
$$\underset{\alpha}{colim}\,\, ({}^{\leq n'}\!X_\alpha).$$

Thus, restriction defines a functor
$$^{\leq n'}\!\dgindSch \leftarrow {}^{\leq n}\!\dgindSch.$$

\sssec{}  \label{sss:LKE n'->n}

Vice versa, consider the functor
\begin{multline} \label{e:LA to restr stacks}
^{^{\leq n}\!L}\!\on{LKE}_{({}^{\leq n'}\!\affdgSch)^{\on{op}}\hookrightarrow {}({}^{\leq n}\!\affdgSch)^{\on{op}}}:=\\
{}^{\leq n}\!L\circ \on{LKE}_{({}^{\leq n'}\!\affdgSch)^{\on{op}}\hookrightarrow {}({}^{\leq n}\!\affdgSch)^{\on{op}}}:
{}^{\leq n'}\!\inftydgstack\to {}^{\leq n}\!\inftydgstack,
\end{multline}
left adjoint to the restriction functor. In the above formula $^{\leq n}\!L:{}^{\leq n}\!\inftydgprestack\to {}^{\leq n}\inftydgstack$
is the sheafification functor, left adjoint to the embedding $^{\leq n}\!\inftydgstack\hookrightarrow
{}^{\leq n}\!\inftydgprestack$.

\medskip

We claim that it sends $^{\leq n'}\!\dgindSch$ to $^{\leq n}\!\dgindSch$. Indeed,
if $\CX'\in {}^{\leq n'}\!\dgindSch$ is written as
$$\CX'\simeq \underset{\alpha}{colim}\, X_\alpha,\quad X_\alpha\in {}^{\leq n'}\!\dgSch$$
(the colimit taken in $^{\leq n'}\!\on{PreStk}$), then 
$$^{^{\leq n}\!L}\!\on{LKE}_{({}^{\leq n'}\!\affdgSch)^{\on{op}}\hookrightarrow {}({}^{\leq n}\!\affdgSch)^{\on{op}}}(\CX')\simeq 
\underset{\alpha}{colim}\,  X_\alpha,$$
where the colimit is taken in $^{\leq n}\!\on{PreStk}$, and $X_\alpha$ is perceived as an object of
$^{\leq n}\!\dgSch$, see the notational convention in \secref{sss:DG schemes}.

\sssec{}

We obtain a pair of adjoint functors
\begin{equation} \label{e:LA to restr indschemes}
^{\leq n'}\!\dgindSch \rightleftarrows {}^{\leq n}\!\dgindSch,
\end{equation}
with the left adjoint being fully faithful.

\medskip

An object $\CX\in {}^{\leq n}\!\dgindSch$ belongs to the essential image of the left adjoint in
\eqref{e:LA to restr indschemes} if and only if it is $n'$-coconnective as an object of 
$^{\leq n}\!\inftydgstack$, i.e., if it belongs to the essential image of the left adjoint
\eqref{e:LA to restr stacks}. 

\medskip

Moreover, if $\CX \in {}^{\leq n}\!\dgindSch$ has this property,
it admits a presentation as in \eqref{e:indscheme as a colimit},
where the $X_\alpha$ are $n'$-coconnective.

\ssec{Basic properties of $\nDG$ indschemes}

\sssec{}

We observe:

\begin{prop}  \label{p:indschemes fppf n}
Every $\nDG$ indscheme belongs to $^{\leq n}\!\inftydgstack$
i.e., satisfies fppf descent. 
\end{prop}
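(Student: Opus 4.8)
The plan is to verify the fppf descent condition directly from the presentation \eqref{e:indscheme as a colimit}. Write $\CX\simeq \underset{\alpha}{colim}\, X_\alpha$ as a filtered colimit in $^{\leq n}\!\inftydgprestack$. Since colimits of prestacks are computed objectwise, for every $S\in {}^{\leq n}\!\affdgSch$ we have $\CX(S)\simeq \underset{\alpha}{colim}\, X_\alpha(S)$, a \emph{filtered} colimit of $\infty$-groupoids. To test descent it suffices to fix an fppf cover of an affine $S$; since fppf covers of affines admit affine refinements, we may assume the \v{C}ech nerve $S^\bullet$ consists of objects $S^m\in {}^{\leq n}\!\affdgSch$. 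As each $X_\alpha$ is a quasi-compact quasi-separated DG scheme, it lies in $^{\leq n}\!\inftydgstack$, so $X_\alpha(S)\simeq \underset{[m]\in\Delta}{lim}\, X_\alpha(S^m)$. Thus the whole problem reduces to showing that the natural map
$$\underset{\alpha}{colim}\,\, \underset{[m]\in\Delta}{lim}\, X_\alpha(S^m)\to \underset{[m]\in\Delta}{lim}\,\, \underset{\alpha}{colim}\, X_\alpha(S^m)$$
is an isomorphism, i.e.\ that the filtered colimit commutes with the totalization.

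The key input is that the transition maps $i_{\alpha_1,\alpha_2}$ are closed embeddings, hence monomorphisms in $\dgSch$, and therefore monomorphisms in $^{\leq n}\!\inftydgprestack$. Consequently, for every $T\in {}^{\leq n}\!\affdgSch$ the induced maps $X_{\alpha_1}(T)\to X_{\alpha_2}(T)$ are monomorphisms (that is, $(-1)$-truncated maps) of $\infty$-groupoids: each identifies $X_{\alpha_1}(T)$ with a union of connected components of $X_{\alpha_2}(T)$. Hence the cosimplicial diagram $[m]\mapsto \underset{\alpha}{colim}\, X_\alpha(S^m)$ is a filtered colimit along \emph{levelwise monomorphisms}. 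I would isolate the needed fact as a lemma: a filtered colimit of fppf stacks along monomorphisms, formed in prestacks, again satisfies descent; equivalently, such colimits commute with totalizations of \v{C}ech nerves.

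The proof of this lemma, which is the main obstacle, is exactly where the monomorphism property is used to defeat the usual failure of filtered colimits to commute with the infinite limit $\underset{\Delta}{lim}$. A point of the totalization consists of a $0$-simplex $y\in \underset{\alpha}{colim}\, X_\alpha(S^0)$ together with a compatible system of coherence data in the higher $\underset{\alpha}{colim}\, X_\alpha(S^m)$. By filteredness, $y$ is represented at some finite stage $\alpha_0$; since every coface map preserves the $\alpha$-filtration, the endpoints of the first coherence path lie in $X_{\alpha_0}(S^1)$, and because $X_{\alpha_0}(S^1)\hookrightarrow \underset{\alpha}{colim}\, X_\alpha(S^1)$ is an inclusion of connected components, the path itself already lies in $X_{\alpha_0}(S^1)$. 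Inducting up the cosimplicial degree, all coherence data are defined at the single stage $\alpha_0$, so the point descends from $\underset{[m]\in\Delta}{lim}\, X_{\alpha_0}(S^m)$; this gives essential surjectivity, and the identical argument applied to mapping spaces yields full faithfulness. I expect that verifying the inductive bookkeeping of the higher coherences — rather than any single conceptual step — will be the most delicate part, but the monomorphism hypothesis makes each stage automatic, which is precisely what the closed-embedding assumption in the definition of a $\nDG$ indscheme provides.
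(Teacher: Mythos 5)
Your reduction to commuting a filtered colimit past the totalization of the \v{C}ech nerve is the right framing, but the key input you invoke is false in the derived setting. A closed embedding $X_1\to X_2$ of DG schemes is, by definition, only a condition on the underlying classical map $^{cl}\!X_1\to {}^{cl}\!X_2$; it does \emph{not} imply that $X_1\to X_2$ is a monomorphism of prestacks, nor that $X_1(T)\to X_2(T)$ is $(-1)$-truncated. For a concrete counterexample, take $X_1=\Spec(\on{Sym}_k(k[1]))$ and $X_2=\Spec(k)$: the map $X_1\to X_2$ is a closed embedding in the sense of the paper (its classical truncation is an isomorphism of $\Spec(k)$ with itself), yet for $T=\Spec(B)$ one has $\Maps(T,X_1)\simeq \Omega^\infty(\tau^{\leq 0}(B[-1]))$, which has $\pi_0\cong H^{-1}(B)$ and maps to the point $\Maps(T,X_2)$ — not an inclusion of connected components as soon as $H^{-1}(B)\neq 0$. (Even ``genuine'' derived closed immersions such as $\Spec(k)\to \Spec(k[x])$ fail to be monomorphisms, since the derived self-intersection $\Spec(k\overset{L}\otimes_{k[x]}k)$ is not $\Spec(k)$.) Your claim is correct only for $n=0$, where everything is classical and set-valued; for $n\geq 1$ the inductive step ``the coherence cell already lies at stage $\alpha_0$'' has no justification, and this is exactly where the commutation of a filtered colimit with an infinite limit would otherwise fail.

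The mechanism that actually makes the argument work is truncatedness rather than monomorphy. Since the $X_\alpha$ are quasi-separated, quasi-compact DG schemes evaluated on $n$-coconnective affine DG schemes, the functors $X_\alpha$ and $\CX$ take values in $(k+n)$-groupoids for a uniform bound; for a cosimplicial object in $m$-groupoids the totalization agrees with the finite partial totalization $\on{Tot}^{\leq m+1}$, and filtered colimits \emph{do} commute with finite limits. This replaces your monomorphism lemma and is the content of Lemma~\ref{l:no sheafification}, from which the proposition follows immediately. Your preliminary reductions (affine refinements of the cover, objectwise computation of the colimit) are fine and coincide with the paper's; it is only the engine for the interchange that needs to be replaced.
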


The proof is immediate from the following general assertion:

\begin{lem} \label{l:no sheafification}
Let $\alpha\mapsto X_\alpha$ be a filtered diagram in $^{\leq n}\!\inftydgprestack$. Set
$$\CX:=\underset{\alpha}{colim}\, X_\alpha.$$
Then if all $X_\alpha$ belong to $^{\leq n}\!\inftydgstack$ and are $k$-truncated
for some $k$ \emph{(}see \cite[Sect. 1.1.7]{Stacks}\emph{)}, then $\CX$ has the same properties.
\end{lem}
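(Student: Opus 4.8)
The plan is to verify the two required properties of $\CX=\underset{\alpha}{\on{colim}}\, X_\alpha$ separately, recalling throughout that colimits in ${}^{\leq n}\!\inftydgprestack$ are computed objectwise, so that $\CX(S)=\underset{\alpha}{\on{colim}}\, X_\alpha(S)$ in $\inftygroup$ for every $S\in {}^{\leq n}\!\affdgSch$. The $k$-truncatedness of $\CX$ is immediate: since homotopy groups commute with filtered colimits of spaces, $\pi_i(\CX(S))\simeq \underset{\alpha}{\on{colim}}\, \pi_i(X_\alpha(S))$, and the right-hand side vanishes for $i>k$ because each $X_\alpha$ is $k$-truncated. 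The substance of the lemma is therefore the assertion that the objectwise colimit $\CX$ again satisfies fppf descent; note that we are claiming this \emph{before} any sheafification, which is exactly why the statement is useful.

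For the descent statement, I would fix an fppf cover $S'\to S$ in ${}^{\leq n}\!\affdgSch$ and let $\check{C}_\bullet$ denote its \v{C}ech nerve, with $\check{C}_m=S'\times_S\cdots\times_S S'$ ($m+1$ factors). For any $\CF\in {}^{\leq n}\!\inftydgprestack$, descent along this cover is the assertion that the canonical map $\CF(S)\to \on{Tot}\bigl(\CF(\check{C}_\bullet)\bigr)$ is an isomorphism, where $\on{Tot}=\underset{\Delta}{\on{lim}}$. Since each $X_\alpha$ is a stack, $X_\alpha(S)\simeq \on{Tot}(X_\alpha(\check{C}_\bullet))$, and applying $\underset{\alpha}{\on{colim}}$ reduces us to showing that the canonical comparison map
$$\underset{\alpha}{\on{colim}}\,\on{Tot}\bigl(X_\alpha(\check{C}_\bullet)\bigr)\to \on{Tot}\Bigl(\underset{\alpha}{\on{colim}}\, X_\alpha(\check{C}_\bullet)\Bigr)$$
is an isomorphism; that is, that totalization of this particular family of cosimplicial spaces commutes with the filtered colimit. (The remaining parts of the fppf sheaf condition — that $\CF$ carries finite disjoint unions to products and $\emptyset$ to a point — are preserved under filtered colimits for the same reason as below, since finite products are finite limits.)

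The key point is that every cosimplicial space in sight is \emph{uniformly $k$-truncated}: for each $\alpha$ the terms $X_\alpha(\check{C}_m)$ are $k$-truncated, and hence so are the terms $\underset{\alpha}{\on{colim}}\, X_\alpha(\check{C}_m)$ of the colimit, by the argument of the first paragraph. I would then invoke the convergence of the totalization tower: for a cosimplicial space $Y^\bullet$ all of whose terms are $k$-truncated, the natural map $\on{Tot}(Y^\bullet)\to \on{Tot}_m(Y^\bullet)$ to the $m$-th partial totalization $\underset{\Delta_{\leq m}}{\on{lim}}\, Y^\bullet$ is an isomorphism once $m$ exceeds a bound depending only on $k$ (the fibers of $\on{Tot}_m\to \on{Tot}_{m-1}$ are built from $\Omega^m$ of $k$-truncated spaces, hence contractible for $m$ large). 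Applying this uniformly to each $X_\alpha(\check{C}_\bullet)$ and to their colimit replaces both occurrences of $\on{Tot}$ in the comparison map by $\on{Tot}_m$ for a single such $m$.

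Finally, $\on{Tot}_m$ is a \emph{finite} limit: the partial totalization is assembled by finitely many pullbacks from the terms $Y^0,\dots,Y^m$ and their matching objects, each matching object being a limit over a finite category. Since filtered colimits commute with finite limits in $\inftygroup$ (see \cite{Lu0}), the map $\underset{\alpha}{\on{colim}}\,\on{Tot}_m(X_\alpha(\check{C}_\bullet))\to \on{Tot}_m(\underset{\alpha}{\on{colim}}\, X_\alpha(\check{C}_\bullet))$ is an isomorphism, and chasing the naturality of all the identifications shows that the resulting isomorphism $\CX(S)\to \on{Tot}(\CX(\check{C}_\bullet))$ is the canonical descent map. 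I expect the only delicate step to be the totalization-tower convergence together with its compatibility with the colimit; everything else is the formal principle that filtered colimits commute with finite limits.
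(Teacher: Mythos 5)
Your proposal is correct and follows essentially the same route as the paper's proof: both arguments use the uniform truncatedness of the values to replace the totalization of the \v{C}ech nerve by a finite partial totalization, and then conclude by commuting the filtered colimit past this finite limit. The only cosmetic difference is that the paper's convention for ``$k$-truncated'' on $^{\leq n}\!\affdgSch$ makes the values $(k+n)$-groupoids rather than $k$-groupoids, which shifts the stage at which the Tot tower stabilizes but changes nothing in the argument.
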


\begin{proof}
By assumption,
$$X_\alpha \text{ and }\CX:{}^{\leq n}\!\affdgSch\to \inftygroup$$ take
values in the subcategory $(k+n)$-groupoids. 

\medskip

Recall that for a co-simplicial object $\bc^\bullet$ in 
the category of $m$-groupoids, the totalization $\on{Tot}(\bc^\bullet)$ maps isomorphically to
$\on{Tot}^{m+1}(\bc^\bullet)$, where $\on{Tot}^{m+1}(-)$ denotes the limit taken over the
category of finite ordered sets of cardinality $\leq (m+1)$. 

\medskip

Hence, for an fppf cover $S'\to S$ and its 
\v{C}ech nerve $S'{}^\bullet/S$, for its $(k+n+1)$-truncation $S'{}^{\bullet\leq k+n+1}/S$, the 
restriction maps
$$\on{Tot}(X_\alpha(S'{}^\bullet/S))\to \on{Tot}^{\leq (k+n+1)}(X_\alpha(S'{}^\bullet/S)) \text{ and }
\on{Tot}(\CX(S'{}^\bullet/S))\to \on{Tot}^{\leq (k+n+1)}(\CX(S'{}^{\bullet}/S))$$
are isomorphisms.

\medskip

In particular, it suffices to show that the map $\CX(S)\to \on{Tot}^{\leq (k+n+1)}(\CX(S'{}^{\bullet}/S))$
is an isomorphism. 

\medskip

Consider the commutative diagram
$$
\CD
\underset{\alpha}{colim}\, X_\alpha(S)   @>>>  \on{Tot}^{\leq (k+n+1)} \left(\underset{\alpha}{colim}\, X_\alpha(S'{}^{\bullet}/S)\right) \\
@A{\on{id}}AA    @AAA   \\
\underset{\alpha}{colim}\, X_\alpha(S)  @>>> \underset{\alpha}{colim}\, \left(\on{Tot}^{\leq (k+n+1)}(X_\alpha(S'{}^{\bullet}/S))\right).
\endCD
$$
The bottom horizontal arrow is an isomorphism, since all $X_\alpha$ satisfy descent. The right vertical arrow is an isomorphism, since
filtered colimits commute with \emph{finite} limits. Hence, the top horizontal arrow is also an isomorphism, as desired. 
 
\end{proof}

\sssec{}

We obtain that if $\CX\in {}^{\leq n}\!\inftydgstack$ is written as in \eqref{e:indscheme as a colimit}, 
\emph{but where the colimit is taken in the category $^{\leq n}\!\inftydgstack$}, then $\CX$ is a $\nDG$ indscheme.

\medskip

Indeed, \propref{p:indschemes fppf n} implies that the the natural map from the colimit of \eqref{e:indscheme as a colimit} 
taken in $^{\leq n}\!\inftydgprestack$ to that in $^{\leq n}\!\inftydgstack$ is an isomorphism.







\sssec{}  \label{sss:maps out of arb}

Let $Y$ be an object of $^{\leq n}\!\dgSch$, and let $\CX\in {}^{\leq n}\!\dgindSch$ be presented
as in \eqref{e:indscheme as a colimit}. We have a natural map
\begin{equation}  \label{e:maps from an arb scheme}
\underset{\alpha}{colim}\, \on{Maps}(Y,X_\alpha)\to \on{Maps}(Y,\CX).
\end{equation}

If $Y$ is affine, the above map is an isomorphism by definition, since colimits in 
$$^{\leq n}\!\inftydgprestack=\on{Func}({}^{\leq n}\!\affdgSch,\inftygroup)$$
are computed object-wise.

\medskip

For a general $Y$, the map \eqref{e:maps from an arb scheme}
need not be an isomorphism. However, we have:

\begin{lem} \label{l:maps out of qc}
If $Y$ is quasi-separated and quasi-compact, then the map \eqref{e:maps from an arb scheme}
is an isomorphism.
\end{lem}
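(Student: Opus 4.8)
The plan is to reduce the assertion to the affine case already treated in \secref{sss:maps out of arb} by a Mayer--Vietoris (Zariski descent) argument, using crucially that a pullback is a finite limit and therefore commutes with the filtered colimit over $\alpha$. Write $\CZ$ for either $\CX$ or one of the DG schemes $X_\alpha$. In every case $\CZ$ satisfies Zariski descent: for the $X_\alpha$ because DG schemes do, and for $\CX$ by \propref{p:indschemes fppf n}. Hence for any decomposition $Y = U \cup V$ of a DG scheme into two Zariski-open subschemes, descent gives a Cartesian square
$$\Maps(Y,\CZ) \simeq \Maps(U,\CZ) \underset{\Maps(U\cap V,\CZ)}{\times} \Maps(V,\CZ),$$
where $U\cap V = U\underset{Y}{\times} V$. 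Since the indexing category is filtered and filtered colimits commute with finite limits, applying $\underset{\alpha}{colim}$ to these squares for $\CZ = X_\alpha$ yields again a Cartesian square; comparing it with the corresponding square for $\CZ = \CX$, we conclude that if \eqref{e:maps from an arb scheme} is an isomorphism for each of $U$, $V$ and $U\cap V$, then it is an isomorphism for $Y$.

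It remains to organize an induction whose pieces are covered by the affine case. First I would treat the case of a \emph{separated} quasi-compact $Y$, by induction on the minimal number $m$ of affine opens in a cover $Y = U_1\cup\cdots\cup U_m$; the case $m=1$ is the affine case. For $m>1$ set $U = U_1\cup\cdots\cup U_{m-1}$ and $V = U_m$. Then $U$ is open in $Y$, hence separated, and is covered by $m-1$ affines, so the inductive hypothesis applies to it; $V$ is affine. The intersection $U\cap V = \bigcup_{i<m}(U_i\cap V)$, and since $Y$ is separated each $U_i\cap V$ is affine (its underlying classical scheme is the intersection of two affine opens in a separated classical scheme, hence affine, and a DG scheme with affine underlying classical scheme is affine); thus $U\cap V$ is separated and covered by at most $m-1$ affines, and the inductive hypothesis applies. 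The Mayer--Vietoris step of the previous paragraph then gives the claim for $Y$.

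Finally I would bootstrap from separated to arbitrary quasi-separated quasi-compact $Y$, again by induction on the number $m$ of affines in a cover, with the same splitting $Y = U\cup V$, $U = U_1\cup\cdots\cup U_{m-1}$, $V = U_m$. Here $U$ is quasi-compact and quasi-separated (being open in $Y$) and covered by $m-1$ affines, so the inductive hypothesis applies; $V$ is affine, in particular separated, so the separated case applies; and $U\cap V$ is an open subscheme of the affine, hence separated, scheme $V$ and is quasi-compact by quasi-separatedness of $Y$, so it is separated quasi-compact and again the separated case applies. Mayer--Vietoris finishes the induction.

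The only real obstacle is the termination of the induction, i.e.\ controlling the intersection terms $U\cap V$, which a priori need not be affine. This is exactly what forces the two-stage bootstrap: the decisive point is that in a \emph{separated} scheme the intersection of two affine opens is again affine, which keeps the number of affines from growing and lets the separated case close up; the passage to the general quasi-separated case then costs only one further Mayer--Vietoris square, since every intersection $U\cap V$ that arises is an open in an affine and hence separated. All the descent inputs needed are supplied by \propref{p:indschemes fppf n}, and the compatibility with the filtered colimit is the elementary fact that filtered colimits commute with finite limits.
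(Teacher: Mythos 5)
Your proof is correct and is essentially the paper's argument: the paper's one-line proof invokes descent for $\CX$ (\propref{p:indschemes fppf n}), the fact that a quasi-separated quasi-compact DG scheme is a colimit in $^{\leq n}\!\inftydgstack$ of a \emph{finite} diagram of affines, and the commutation of filtered colimits with finite limits in $\inftygroup$. Your two-stage Mayer--Vietoris induction (separated case first, then bootstrapping via quasi-separatedness) is precisely an explicit implementation of that finite-diagram presentation, so the two proofs coincide in substance.
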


\begin{proof}
This follows from the fact that $\CX$ belongs to $^{\leq n}\!\inftydgstack$ , and that 
a quasi-separated and quasi-compact DG scheme can be written 
as a colimit in $^{\leq n}\!\inftydgstack$ of a \emph{finite} diagram whose terms are in
$^{\leq n}\!\affdgSch$, and the fact that filtered colimits in $\inftygroup$ commute with finite limits.
\end{proof}

\begin{rem}
The reason we ever mention sheafification and work with $\on{Stk}$ rather than simply with
$\on{PreStk}$ is \lemref{l:maps out of qc} above. However, the proof of 
\lemref{l:maps out of qc} shows that we could equally well work with \'etale, Nisnevich or
Zariski topologies, instead of fppf.
\end{rem}
 
\ssec{General DG indschemes}  \label{ss:gen DG indschemes}

\sssec{}

We give the following definition:

\begin{defn}
An object $\CX\in \inftydgprestack$ is a DG indscheme if the following
two conditions hold:

\begin{enumerate}

\item  As an object of $\inftydgprestack$, $\CX$ is convergent (see \secref{sss:convergence}).

\item For every $n$, $^{\leq n}\CX:=\CX|_{^{\leq n}\!\affdgSch}$ is a $\nDG$ indscheme.

\end{enumerate}

\end{defn}

\medskip

We shall denote the full subcategory of $\inftydgprestack$ spanned by DG indschemes by
$\dgindSch$.

\sssec{}

We will prove the following (see also \propref{p:indscheme as colimit of its closed} below for a more precise assertion):

\begin{prop}  \label{p:presentation of indschemes}
Any DG indscheme $\CX$ can be presented as a filtered colimit in $\inftydgprestack$
\begin{equation} \label{e:gen indscheme as a colimit}
\underset{\alpha}{colim}\, X_\alpha,
\end{equation}
where $X_\alpha\in \dgSch_{\on{qsep-qc}}$ and for $\alpha_1\to \alpha_2$, the corresponding map 
$i_{\alpha_1,\alpha_2}:X_{\alpha_1}\to X_{\alpha_2}$
is a closed embedding. 
\end{prop}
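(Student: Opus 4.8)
The plan is to produce the presentation \eqref{e:gen indscheme as a colimit} by indexing over closed quasi-compact quasi-separated DG subschemes of $\CX$ and by building such subschemes out of their truncations using convergence. Since $\CX$ is convergent (see \secref{sss:convergence}), it is determined by the compatible system $\{{}^{\leq n}\CX\}_n$, and each ${}^{\leq n}\CX$ is a $\nDG$ indscheme by hypothesis. Thus the argument splits into two tasks: (i) presenting each ${}^{\leq n}\CX$ canonically as a filtered colimit of its closed subschemes, compatibly with the restriction functors ${}^{\leq n}\CX \to {}^{\leq n-1}\CX$; and (ii) lifting such truncated closed subschemes to genuine convergent DG schemes, which is where the honest objects $X_\alpha\in \dgSch_{\on{qsep-qc}}$ will come from.

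For the first task, fix $n$ and let $\CC_n$ be the category whose objects are closed embeddings $Z\hookrightarrow {}^{\leq n}\CX$ with $Z\in {}^{\leq n}\!\dgSch_{\on{qsep-qc}}$, and whose morphisms are closed embeddings over ${}^{\leq n}\CX$. I would first check that $\CC_n$ is filtered: given $Z_1,Z_2$, \lemref{l:maps out of qc} lets both closed embeddings factor through a single term $X_\alpha$ of any chosen presentation \eqref{e:indscheme as a colimit} of ${}^{\leq n}\CX$, and inside the \emph{scheme} $X_\alpha$ one forms the scheme-theoretic union $Z_1\cup Z_2$, a closed qc-qs subscheme dominating both (higher coherences are treated the same way). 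Comparing $\CC_n$ with the given presentation and again invoking \lemref{l:maps out of qc} shows that the tautological functor $\CC_n\to {}^{\leq n}\!\dgindSch$ has colimit ${}^{\leq n}\CX$. Since truncation sends closed embeddings to closed embeddings, it induces functors ${}^{\leq n}(-):\CC_{n+1}\to \CC_n$.

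The crux, and what I expect to be the main obstacle, is the lifting: given $W\hookrightarrow {}^{\leq n}\CX$ in $\CC_n$, produce $\tilde W\in \CC_{n+1}$ with ${}^{\leq n}\tilde W\simeq W$ over ${}^{\leq n}\CX$. By \lemref{l:maps out of qc} the embedding $W\hookrightarrow {}^{\leq n}\CX={}^{\leq n}({}^{\leq n+1}\CX)$ factors as a closed embedding $W\hookrightarrow {}^{\leq n}X$ for some $(n+1)$-coconnective closed subscheme $X\hookrightarrow {}^{\leq n+1}\CX$ appearing in a presentation of ${}^{\leq n+1}\CX$, so it suffices to lift $W$ to a closed subscheme of $X$ itself. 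Affine-locally, with $X=\Spec A$ ($A$ being $(n+1)$-coconnective) and $W=\Spec B$ ($B$ being $n$-coconnective), the Postnikov truncation map $A\to \tau^{\leq n}A$ is a square-zero extension by $H^{-(n+1)}(A)[n+1]$; base-changing this square-zero extension along the closed embedding $\tau^{\leq n}A\to B$ (and truncating the resulting module to its top degree, if necessary) yields an $(n+1)$-coconnective $\tilde B$ equipped with a map $A\to \tilde B$ and an identification $\tau^{\leq n}\tilde B\simeq B$. Surjectivity of $H^0(A)\to H^0(\tilde B)=H^0(B)$ shows that $\Spec \tilde B\hookrightarrow X$ is the desired closed lift. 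This construction is functorial in the square-zero data, hence globalizes over a finite affine cover of the qc-qs scheme $X$. Iterating the lift produces, for any $W\in \CC_n$, a tower $(Z^{(m)})_{m\geq n}$ with ${}^{\leq m}Z^{(m+1)}\simeq Z^{(m)}$ (and $Z^{(m)}:={}^{\leq m}W$ for $m\leq n$), which by convergence assembles into a single $Z\in \dgSch_{\on{qsep-qc}}$ carrying a closed embedding $Z\hookrightarrow \CX$ with ${}^{\leq n}Z\simeq W$.

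Finally, let $\sA$ be the category of closed embeddings $Z\hookrightarrow \CX$ with $Z\in \dgSch_{\on{qsep-qc}}$, where a map $Z\to \CX$ (determined by the compatible system $\{{}^{\leq m}Z\to {}^{\leq m}\CX\}$ since both $Z$ and $\CX$ are convergent) is declared a closed embedding when each ${}^{\leq m}Z\hookrightarrow {}^{\leq m}\CX$ is. The lifting step propagates filteredness from the $\CC_n$ up to $\sA$, and for the colimit I would verify, using convergence, that $\underset{\sA}{colim}\, Z$ agrees with $\CX$ after restriction to each ${}^{\leq n}\!\affdgSch$: there it becomes $\underset{\sA}{colim}\, {}^{\leq n}Z$, which equals ${}^{\leq n}\CX$ because the level-$n$ truncations of objects of $\sA$ are cofinal in $\CC_n$ by the lifting construction, together with the colimit computation of the second paragraph. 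The functoriality needed to deduce filteredness of $\sA$ from that of the $\CC_n$ is exactly the delicate point; everything else is a formal consequence of convergence and \lemref{l:maps out of qc}. (The refined statement recording this explicitly is \propref{p:indscheme as colimit of its closed}.)
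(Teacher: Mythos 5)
Your reduction to truncations runs into two genuine obstacles. First, the lifting step is not available as stated: square-zero extensions can be pushed \emph{forward} along a map of schemes (direct image of the ideal together with the adjoint of the classifying map, as in \eqref{e:pushout for square zero ext}), but they cannot be pulled back along a closed embedding $W\hookrightarrow {}^{\leq n}X$, because the classifying map $\gamma\colon {}^{\geq -n-1}(T^*({}^{\leq n}X))\to \CI[1]$ admits no natural restriction to a map out of $T^*W$ (the codifferential goes the other way). Concretely, for $A\to\tau^{\leq n}A\to B$ the existence of $\tilde B$ with $\tau^{\leq n}\tilde B\simeq B$ and $H^0(A)\twoheadrightarrow H^0(\tilde B)$ is obstructed by a class involving $L_{B/\tau^{\leq n}A}$: deforming a closed subscheme along a square-zero thickening of the ambient scheme is an obstructed problem, so a \emph{given} $W$ need not lift. (The trivial lift $\tilde W=W$ always exists, but iterating it produces only eventually coconnective $Z$'s, and these are not cofinal: a map $S\to\CX$ from a non-eventually-coconnective affine $S$ need not factor through any eventually coconnective closed subscheme.)

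Second, and independently, your final verification only tests the colimit against objects of ${}^{\leq n}\!\affdgSch$. Colimits in $\inftydgprestack$ are computed objectwise, so knowing that the restriction of $\underset{\sA}{colim}\,Z$ to ${}^{\leq n}\!\affdgSch$ agrees with ${}^{\leq n}\CX$ for every $n$ only identifies $\CX$ with the \emph{convergent completion} of that colimit, not with the colimit itself: a filtered colimit of convergent prestacks need not be convergent, and the interchange of $\underset{n}{lim}$ with $\underset{\sA}{colim}$ that you would need is exactly what fails. This is the gap between \lemref{l:convergent completion} (the ``weak'' presentation) and the assertion of the proposition. The paper closes both gaps at once by arguing in the opposite direction: given an \emph{arbitrary} $Y\in\dgSch_{\on{qsep-qc}}$ with a map to $\CX$, one factors ${}^{\leq n}Y\to Z_n\to{}^{\leq n}\CX$ for a single $n$ exceeding the cohomological amplitude of the classical pushforward, and glues $Z:=Y\underset{{}^{\leq n}Y}\sqcup\, Z_n$ along the closed nil-immersion ${}^{\leq n}Y\hookrightarrow Y$; \propref{p:compat with pushouts} and \corref{c:compat with pushouts} guarantee that $\CX$ sends this push-out to a fiber product, so $Z$ receives a closed embedding into $\CX$ through which $Y$ factors. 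Combined with the finite-colimit construction of \propref{p:factorization coproducts for indscheme} (which is what ``scheme-theoretic union'' must mean in the DG setting, and which itself needs the truncation-stabilization argument, not just binary unions inside a common $X_\alpha$), this yields filteredness and cofinality simultaneously.
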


\medskip

The above proposition allows us to give the following, in a sense, more straightforward,
definition of DG indschemes:

\begin{cor}
An object $\CX\in \inftydgprestack$ is a DG indscheme if and only if:

\begin{itemize}

\item It is convergent;

\item As an object of $\inftydgprestack$ it admits a presentation as in \eqref{e:gen indscheme as a colimit}.

\end{itemize}

\end{cor}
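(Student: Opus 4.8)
The plan is to read this off directly from \propref{p:presentation of indschemes} together with the definition of a DG indscheme, treating the two implications separately.

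For the ``only if'' direction there is essentially nothing to do. If $\CX$ is a DG indscheme, then convergence is condition (1) of the definition, and the existence of a presentation as in \eqref{e:gen indscheme as a colimit} is precisely the content of \propref{p:presentation of indschemes}.

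The content is in the ``if'' direction. Assume $\CX$ is convergent and that $\CX\simeq \underset{\alpha}{colim}\, X_\alpha$ in $\inftydgprestack$, with $X_\alpha\in \dgSch_{\on{qsep-qc}}$ and with the transition maps $i_{\alpha_1,\alpha_2}$ closed embeddings, the indexing category being filtered. I must verify condition (2) of the definition, namely that $^{\leq n}\CX$ is a $\nDG$ indscheme for every $n$. The key observation is that colimits in the functor category $\inftydgprestack$ are computed objectwise, so the restriction functor $\inftydgprestack\to {}^{\leq n}\!\inftydgprestack$ preserves all colimits; hence $^{\leq n}\CX\simeq \underset{\alpha}{colim}\, ({}^{\leq n}X_\alpha)$, the colimit now taken in $^{\leq n}\!\inftydgprestack$. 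It then remains to check that this presentation meets the requirements in the definition of a $\nDG$ indscheme. Each $^{\leq n}X_\alpha=X_\alpha|_{^{\leq n}\!\affdgSch}$ lies in $^{\leq n}\!\dgSch_{\on{qsep-qc}}$, since quasi-separatedness and quasi-compactness are conditions on the underlying classical scheme and $^{cl}({}^{\leq n}X_\alpha)={}^{cl}X_\alpha$. Each transition map $^{\leq n}i_{\alpha_1,\alpha_2}$ is again a closed embedding for the same reason: by \defnref{} a closed embedding is detected on underlying classical schemes, and these are unaffected by truncation. Since the indexing category is filtered, $^{\leq n}\CX$ is a $\nDG$ indscheme, and combined with the assumed convergence this shows $\CX$ is a DG indscheme.

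I do not expect any genuine obstacle: the statement is a formal repackaging of \propref{p:presentation of indschemes}. The only points requiring a word of justification are that restriction along $^{\leq n}\!\affdgSch\hookrightarrow \affdgSch$ carries the colimit \eqref{e:gen indscheme as a colimit} to the corresponding colimit of truncations, and that it preserves both the qsep-qc property and the closed-embedding property of the structure maps; both follow at once from the objectwise computation of colimits of prestacks and from the definition of closed embeddings via underlying classical schemes.
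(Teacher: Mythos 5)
Your proof is correct and is exactly the argument the paper intends: the ``only if'' direction is the definition of convergence plus \propref{p:presentation of indschemes}, and the ``if'' direction is the observation (already made in the paper's ``Changing $n$'' discussion, \secref{sss:LKE n'->n}) that restriction to $^{\leq n}\!\affdgSch$ preserves the colimit objectwise and carries the presentation \eqref{e:gen indscheme as a colimit} to one as in \eqref{e:indscheme as a colimit}, since quasi-separatedness, quasi-compactness and the closed-embedding condition are all detected on underlying classical schemes. Nothing further is needed.
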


\sssec{}

Note that unlike the case of $\nDG$ indschemes, an object of $\inftydgprestack$ written 
as in \eqref{e:gen indscheme as a colimit} need \emph{not} be a DG indscheme. Indeed, it can
fail to be convergent.

\medskip

However, such a colimit gives rise to a DG indscheme via the following lemma:

\medskip

\begin{lem}  \label{l:convergent completion}
For $\CX\in \inftydgprestack$ given as in \eqref{e:gen indscheme as a colimit}, the
object
$$^{\on{conv}}\CX\in \inftydgprestack$$
belongs to $\dgindSch$.
\end{lem}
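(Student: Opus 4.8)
The plan is to verify directly the two defining conditions of a DG indscheme for ${}^{\on{conv}}\CX$: that it is convergent, and that its restriction ${}^{\leq n}({}^{\on{conv}}\CX)$ to ${}^{\leq n}\!\affdgSch$ is a $\nDG$ indscheme for every $n$. The first condition is immediate from the construction in \secref{sss:convergence}: the object ${}^{\on{conv}}\CX$ lies in the essential image of $\on{RKE}_{({}^{<\infty}\!\affdgSch)^{\on{op}}\hookrightarrow (\affdgSch)^{\on{op}}}$, which is precisely the full subcategory ${}^{\on{conv}}\!\inftydgprestack$ of convergent prestacks. So only the second condition requires an argument.

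The crux is that passing to the convergent completion does not alter the restriction to any bounded truncation. Indeed, since ${}^{\leq n}\!\affdgSch\subset {}^{<\infty}\!\affdgSch$, and since the right Kan extension along the fully faithful embedding $({}^{<\infty}\!\affdgSch)^{\on{op}}\hookrightarrow (\affdgSch)^{\on{op}}$ restricts back to the identity on ${}^{<\infty}\!\affdgSch$ (this is exactly the assertion, recalled in \secref{sss:convergence}, that this functor \emph{identifies} ${}^{<\infty}\!\inftydgprestack$ with ${}^{\on{conv}}\!\inftydgprestack$), we obtain for every $S\in {}^{\leq n}\!\affdgSch$ a natural isomorphism $({}^{\on{conv}}\CX)(S)\simeq \CX(S)$. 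Hence ${}^{\leq n}({}^{\on{conv}}\CX)\simeq {}^{\leq n}\CX$, and it suffices to show that ${}^{\leq n}\CX$ is a $\nDG$ indscheme, where $\CX\simeq \underset{\alpha}{colim}\, X_\alpha$ is the presentation in $\inftydgprestack$ supplied by \eqref{e:gen indscheme as a colimit}.

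For this I would use that restriction along ${}^{\leq n}\!\affdgSch\hookrightarrow \affdgSch$ is computed objectwise, hence commutes with the filtered colimit, yielding
$${}^{\leq n}\CX\simeq \underset{\alpha}{colim}\, {}^{\leq n}X_\alpha=\underset{\alpha}{colim}\, \tau^{\leq n}(X_\alpha)$$
in ${}^{\leq n}\!\inftydgprestack$. Each $\tau^{\leq n}(X_\alpha)$ lies in ${}^{\leq n}\!\dgSch_{\on{qsep-qc}}$, and since $\tau^{\leq n}$ leaves the underlying classical scheme unchanged (as ${}^{cl}(\tau^{\leq n}X_\alpha)={}^{cl}X_\alpha$ for $n\geq 0$), the transition maps $\tau^{\leq n}(i_{\alpha_1,\alpha_2})$ remain closed embeddings and the quasi-separated, quasi-compact property persists, both being conditions on classical schemes. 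Thus the display above is a presentation of ${}^{\leq n}\CX$ of exactly the form demanded in the definition of a $\nDG$ indscheme, which completes the verification.

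The step that demands the most care — which I regard as the essential point rather than a real obstacle — is the identification ${}^{\leq n}({}^{\on{conv}}\CX)\simeq {}^{\leq n}\CX$: one must be certain that the right Kan extension defining the convergent completion evaluates to the identity on eventually coconnective (in particular $n$-coconnective) test schemes. Once this is granted, the remainder is formal bookkeeping with filtered colimits in functor $\infty$-categories together with the elementary compatibility of truncation with the classical truncation functor.
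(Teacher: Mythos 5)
Your proof is correct and follows essentially the same route as the paper's: convergence of $^{\on{conv}}\CX$ holds by construction, and the key identification $^{\leq n}({}^{\on{conv}}\CX)\simeq {}^{\leq n}\CX$ (which the paper states without elaboration) reduces the claim to the fact that restriction to $^{\leq n}\!\affdgSch$ commutes with the filtered colimit, yielding the required presentation of $^{\leq n}\CX$. You have merely spelled out the details that the paper leaves implicit.
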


\begin{proof}

Indeed, $^{\on{conv}}\CX$ is convergent by definition, and for any $n$, we have 
$^{\leq n}({}^{\on{conv}}\CX)\simeq {}^{\leq n}\CX$. 

\end{proof}

\sssec{}

If $\CX$ is a DG indscheme, then
$$^{\leq n}\CX:=\CX|_{^{\leq n}\!\affdgSch}$$
is a $\nDG$ indscheme. In particular, $^{cl}\CX$ is a classical indscheme.  Thus,
we obtain a functor
\begin{equation} \label{e:restr gen to n}
^{\leq n}\!\dgindSch\leftarrow \dgindSch.
\end{equation}

\medskip

Vice versa, if $\CX_n$ is a $\nDG$ indscheme, set
$$\CX:={}^{\on{conv},L}\!\on{LKE}_{({}^{\leq n}\!\affdgSch)^{\on{op}}\hookrightarrow (\affdgSch)^{\on{op}}}(\CX_n).$$
Explicitly, if $\CX_n$ is given by the colimit as in \eqref{e:indscheme as a colimit}, then 
$\CX$ is the convergent completion of the same colimit taken in $\on{PreStk}$, but where
$X_\alpha$ are understood as objects of $\dgSch$, see notational convention in \secref{sss:DG schemes}.
By \lemref{l:convergent completion}, we obtain that $\CX$ is a DG indscheme.

\medskip

This defines a functor
\begin{equation} \label{e:LKE n to gen}
^{\leq n}\!\dgindSch\to \dgindSch,
\end{equation}
which is left adjoint to the one in \eqref{e:restr gen to n}. It is easy to see that the unit map
defines an isomorphism from the identity functor to 
$$^{\leq n}\!\dgindSch\to  \dgindSch\to {}^{\leq n}\!\dgindSch.$$
I.e., the functor in \eqref{e:LKE n to gen} is fully faithful. 

\sssec{}

In what follows, we shall say that a DG indscheme is \emph{weakly} $n$-coconnective
if it is such as an object of $\inftydgstack$, see \secref{sss:weakly coconnective}, i.e., 
if it belongs to the essential image of the functor \eqref{e:LKE n to gen}. 

\medskip

Thus, the above functor establishes an equivalence between $^{\leq n}\!\dgindSch$ and the full subcategory
of $\dgindSch$ spanned by \emph{weakly} $n$-coconnective DG schemes. In particular, it identifies
classical indschemes with weakly $0$-coconnective DG indschemes. 

\medskip

We shall say that $\CX$ is weakly eventually coconnective if it is weakly $n$-coconnective for some $n$. 

\sssec{}

We shall say that a DG indscheme is $n$-coconnective if it is 
$n$-coconnective as an object of $\inftydgstack$, i.e., if it lies in the essential image of the functor 
\begin{equation} \label{e:LA to restr stacks gen}
^L\!\on{LKE}_{({}^{\leq n}\!\affdgSch)^{\on{op}}\hookrightarrow (\affdgSch)^{\on{op}}}:{}^{\leq n}\!\inftydgstack \to \inftydgstack.
\end{equation}
We shall say that $\CX$ is eventually coconnective if it is $n$-coconnective for some $n$. 

\sssec{The $\aleph_0$ condition}  \label{sss:aleph 0}

We shall say that $\CX\in {}^{\leq n}\!\dgindSch$ is $\aleph_0$ if there exists a presentation as
in \eqref{e:indscheme as a colimit} with the category of indices equivalent to the poset $\BN$. 

\medskip

We shall say that $\CX\in \dgindSch$ is $\aleph_0$ if for it admits a presentation as in
\propref{p:presentation of indschemes}, with the category of indices equivalent to the poset $\BN$. 

\medskip

We shall say that $\CX\in \dgindSch$ is \emph{weakly} $\aleph_0$ if for every $n$,
the object
$$^{\leq n}\CX\in {}^{\leq n}\!\dgindSch$$
is $\aleph_0$.

\ssec{Basic properties of DG indschemes}

\sssec{}

We claim:

\begin{prop}  \label{p:indschemes fppf}
Every $\CX\in \dgindSch$ belongs to $\inftydgstack$, i.e., satisfies fppf descent. 
\end{prop}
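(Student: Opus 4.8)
The plan is to leverage convergence to reduce fppf descent for $\CX$ over all of $\affdgSch$ to descent at each truncated level, where \propref{p:indschemes fppf n} applies. Fix an fppf cover $S'\to S$ in $\affdgSch$ with \v{C}ech nerve $S'{}^\bullet/S$; we must show that $\CX(S)\to \on{Tot}(\CX(S'{}^\bullet/S))$ is an isomorphism. First I would use that $\CX$ is convergent (\secref{sss:convergence}): each term $S'{}^k$ of the \v{C}ech nerve is again an affine DG scheme, so $\CX(S'{}^k)\simeq \underset{m}{lim}\, \CX(\tau^{\leq m}(S'{}^k))$, and likewise $\CX(S)\simeq \underset{m}{lim}\,\CX(\tau^{\leq m}(S))$. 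Since $\on{Tot}$ is a limit over $\Delta$ and limits commute, the displayed map is identified with $\underset{m}{lim}$ of the maps
$$\CX(\tau^{\leq m}(S))\to \on{Tot}(\CX(\tau^{\leq m}(S'{}^\bullet/S))).$$
It therefore suffices to prove that each of these is an isomorphism, and then pass to the limit over $m$.

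Second, I would identify the simplicial object $\tau^{\leq m}(S'{}^\bullet/S)$ in $^{\leq m}\!\affdgSch$ with the \v{C}ech nerve of the map $\tau^{\leq m}(S')\to \tau^{\leq m}(S)$ computed \emph{inside} $^{\leq m}\!\affdgSch$. This rests on two facts: (i) since the inclusion $^{\leq m}\!\affdgSch\hookrightarrow \affdgSch$ admits $\tau^{\leq m}$ as a right adjoint, limits in $^{\leq m}\!\affdgSch$ are computed by truncating the corresponding limits in $\affdgSch$; and (ii) for connective DG algebras the truncation $\tau^{\leq m}$ of an iterated relative tensor product depends only on the $\tau^{\leq m}$ of the factors, so that $\tau^{\leq m}((\tau^{\leq m}S')\times_{\tau^{\leq m}S}(\tau^{\leq m}S'))\simeq \tau^{\leq m}(S'\times_S S')$, and similarly for the higher fiber products. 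Combined with the fact (from \cite{Stacks}, which underlies the well-definedness of the restriction functor $\inftydgstack\to {}^{\leq m}\!\inftydgstack$) that $\tau^{\leq m}(S')\to \tau^{\leq m}(S)$ is again an fppf cover, this exhibits $\tau^{\leq m}(S'{}^\bullet/S)$ as a genuine \v{C}ech nerve of an fppf cover in $^{\leq m}\!\affdgSch$.

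Finally, I would invoke \propref{p:indschemes fppf n}: by the definition of a DG indscheme, $^{\leq m}\CX$ is a $\nDG$ indscheme (with $n=m$), hence satisfies fppf descent over $^{\leq m}\!\affdgSch$. Since $\CX(\tau^{\leq m}(-))$ agrees with $^{\leq m}\CX$ on $^{\leq m}\!\affdgSch$, the map $\CX(\tau^{\leq m}(S))\to \on{Tot}(\CX(\tau^{\leq m}(S'{}^\bullet/S)))$ is precisely the descent isomorphism for $^{\leq m}\CX$ against the cover identified in the previous step. Taking the limit over $m$ then yields descent for $\CX$, as desired.

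The main obstacle is the second step: the compatibility of truncation with the \v{C}ech nerve and with the fppf topology. The delicate point is fact (ii) — that $\tau^{\leq m}$ of fiber products of affine DG schemes depends only on the truncated inputs — together with the preservation of fppf covers under $\tau^{\leq m}$; these are the substantive inputs, drawn from \cite{Stacks}, whereas the reduction via convergence and the appeal to \propref{p:indschemes fppf n} are formal manipulations of limits.
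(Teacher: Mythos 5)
Your argument is correct and follows essentially the same route as the paper's: use convergence to identify $\CX(S)$ and each $\CX(S'{}^k)$ with the limits of their values on truncations, commute the limits, identify $\tau^{\leq m}(S'{}^\bullet/S)$ with the \v{C}ech nerve of the (still fppf) truncated cover $\tau^{\leq m}(S')\to \tau^{\leq m}(S)$, and invoke \propref{p:indschemes fppf n}. The only cosmetic difference is that the paper justifies the compatibility of truncation with the \v{C}ech nerve directly via flatness of $S'\to S$, whereas you also appeal to the general fact that $\tau^{\leq m}$ of a relative tensor product of connective algebras depends only on the truncated inputs; both are valid.
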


\begin{proof}

Let $S'\to S$ be an fppf map in $\affdgSch$, and let $S'{}^\bullet/S$ be its \v{C}ech nerve. We need
to show that the map
$$\Maps(S,\CX)\to \on{Tot}(\Maps(S'{}^\bullet/S,\CX))$$
is an isomorphism. 

\medskip

For an integer $n$, we consider the truncation $^{\leq n}\!S\in {}^{\leq n}\!\affdgSch$
of $S$. Note that since $S'\to S$ is flat, the map $^{\leq n}\!S'\to {}^{\leq n}\!S$ is flat, and 
the simplicial object $^{\leq n}(S'{}^\bullet/S)$ of $^{\leq n}\!\affdgSch$ is the \v{C}ech nerve
of $^{\leq n}\!S'\to {}^{\leq n}\!S$. 

\medskip

We have a commutative diagram
$$
\CD
\Maps(S,\CX)    @>>>   \on{Tot}(\Maps(S'{}^\bullet/S,\CX))  \\
@VVV    @VVV    \\
\underset{n\in \BN^{\on{op}}}{lim}\, \Maps({}^{\leq n}\!S,{}^{\leq n}\CX)   @>>>   
\underset{n\in \BN^{\on{op}}}{lim}\, \on{Tot}(\Maps({}^{\leq n}(S'{}^\bullet/S),{}^{\leq n}\CX))
\endCD.
$$
In this diagram the vertical arrows are isomorphisms, since $\CX$ is convergent.
The bottom horizontal arrow is an isomorphism by \propref{p:indschemes fppf n}.
Hence, the top horizontal arrow is an isomorphism as well, as desired.

\end{proof}

\sssec{}

As in \secref{sss:maps out of arb} we consider maps into a DG indscheme $\CX$ from
an arbitrary DG scheme $Y$, and we have the following analog of \lemref{l:maps out of qc}
(with the same proof, but relying on \propref{p:presentation of indschemes}):

\begin{lem} \label{l:gen maps out of qc}
For $\CX\in \inftydgstack$ written as in \eqref{e:gen indscheme as a colimit}, and $Y\in \dgSch$, 
the natural map $$\underset{\alpha}{colim}\, \on{Maps}(Y,X_\alpha)\to \on{Maps}(Y,\CX)$$
is an isomorphism, provided that $Y$ is quasi-separated and quasi-compact.
\end{lem}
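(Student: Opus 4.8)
The plan is to run the proof of \lemref{l:maps out of qc} essentially verbatim, feeding in the presentation furnished by \propref{p:presentation of indschemes} in place of the tautological $\nDG$ presentation used there. Two inputs make the argument go through. First, $\CX$ satisfies fppf descent (it is a stack by hypothesis, cf. \propref{p:indschemes fppf}), so it is legitimate to compute $\Maps(Y,\CX)$ by writing $Y$ as a colimit \emph{in $\inftydgstack$}; moreover, since the presentation \eqref{e:gen indscheme as a colimit} is taken in $\inftydgprestack$, where colimits are computed object-wise, for every affine $S\in \affdgSch$ the comparison map $\underset{\alpha}{colim}\,\Maps(S,X_\alpha)\to \Maps(S,\CX)$ is an isomorphism on the nose (both sides equal $\underset{\alpha}{colim}\, X_\alpha(S)=\CX(S)$). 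Second, I will use the geometric fact that a quasi-separated, quasi-compact DG scheme $Y$ is the colimit \emph{in $\inftydgstack$} of a \emph{finite} diagram $i\mapsto S_i$ with all $S_i\in \affdgSch$.

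First I would establish this last fact in the form I need. By quasi-compactness $Y$ admits a finite affine open cover $U_1,\dots,U_k$, and by quasi-separatedness all multiple intersections are again quasi-compact, hence finite unions of affines. Zariski (and a fortiori fppf) descent then exhibits $Y$ as a colimit in $\inftydgstack$ of the associated finite nerve; concretely one proceeds by induction on $k$, writing $Y=Y'\sqcup_{Y'\cap U_k}U_k$ with $Y'=U_1\cup\dots\cup U_{k-1}$, where $Y'\cap U_k$ is a quasi-compact open of the affine $U_k$, so that both $Y'$ and $Y'\cap U_k$ are finite colimits of affines by the inductive hypothesis. This step is purely topological and does not see the DG or truncation structure, so it is identical to the one underlying \lemref{l:maps out of qc}; this is also the only place where the hypotheses on $Y$ are used.

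With $Y\simeq \underset{i}{colim}\, S_i$ a finite colimit of affines in $\inftydgstack$ in hand, I would apply $\Maps(-,\CX)$ and $\Maps(-,X_\alpha)$, which send this colimit to the corresponding \emph{finite} limits since $\CX$ and the $X_\alpha$ are stacks, and compute
\begin{equation*}
\underset{\alpha}{colim}\,\Maps(Y,X_\alpha)\simeq \underset{\alpha}{colim}\,\underset{i}{lim}\,\Maps(S_i,X_\alpha)\simeq \underset{i}{lim}\,\underset{\alpha}{colim}\,\Maps(S_i,X_\alpha)\simeq \underset{i}{lim}\,\Maps(S_i,\CX)\simeq \Maps(Y,\CX),
\end{equation*}
where the middle isomorphism is the commutation of the filtered colimit over $\alpha$ with the finite limit over $i$ in $\inftygroup$, and the third isomorphism is the object-wise affine comparison noted above.

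The main obstacle is precisely the finiteness in the second input: a general \v{C}ech nerve is an infinite simplicial diagram, and filtered colimits do \emph{not} commute with infinite limits in $\inftygroup$, so the whole argument hinges on replacing that nerve by a genuinely finite diagram of affines. Securing this finite presentation is exactly where quasi-compactness and quasi-separatedness of $Y$ enter, and once it is in place the remaining steps are the formal manipulation displayed above.
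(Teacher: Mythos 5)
Your proof is correct and follows exactly the route the paper intends: the paper's own "proof" of this lemma is literally the remark that the argument of \lemref{l:maps out of qc} applies verbatim, i.e., write the quasi-separated quasi-compact $Y$ as a \emph{finite} colimit of affines in $\inftydgstack$, use that $\CX$ and the $X_\alpha$ satisfy descent together with the object-wise (affine) comparison, and commute the filtered colimit over $\alpha$ with the finite limit. Your explicit construction of the finite affine diagram and your identification of the finiteness as the crux are exactly the content the paper leaves implicit.
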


\ssec{The canonical presentation of a DG indscheme}

We shall now formulate a sharper version of \propref{p:presentation of indschemes}, which will be proved
in Sect. \ref{s:pushouts}. 

\sssec{}  \label{sss:general closed emb}

We give the following definition:

\begin{defn}
A map $\CY_1\to \CY_2$ in $\on{PreStk}$ is said to be a \emph{closed embedding} if the corresponding 
map $^{cl}\CY_1\to {}^{cl}\CY_2$ is a closed embedding (i.e., its base change by an affine scheme yields a 
closed embedding). 
\end{defn}

\medskip

Note that in the DG setting, being a closed embedding does not imply that a map is
schematic\footnote{We recall that a map of prestacks is called \emph{schematic} if its
base change by an affine DG scheme yields a DG scheme.}.
Indeed, a closed embedding of a DG scheme into a DG indscheme is typically not schematic.

\medskip

It is easy to see that for maps $\CY_1\to \CY_2\to \CY_3$ with $\CY_2\to \CY_3$ being 
a closed embedding, the map $\CY_1\to \CY_2$ is a closed embedding if and only if $\CY_1\to \CY_3$ is.

\sssec{}

For a DG indscheme $\CX$, let 
$$(\dgSch_{\on{qsep-qc}})_{\on{closed}\,\on{in}\,\CX}\subset (\dgSch_{\on{qsep-qc}})_{/\CX}$$ be
the full subcategory, consisting of those objects for which the map $Z\to \CX$ is a closed
embedding in the above sense.

\medskip

In Sects. \ref{ss:cl into ind} and \ref{ss:proof of indscheme as colimit of its closed} we will prove:

\begin{prop}  \label{p:indscheme as colimit of its closed}
Let $\CX$ be a DG scheme.

\smallskip

\noindent{\em(a)} The category  $(\dgSch_{\on{qsep-qc}})_{\on{closed}\,\on{in}\,\CX}$
is filtered.

\smallskip

\noindent{\em(b)} The natural map
\begin{equation} \label{e:indscheme as a colim of closed}
\underset{Z\in (\dgSch_{\on{qsep-qc}})_{\on{closed}\,\on{in}\,\CX}}{colim}\, Z\to \CX,
\end{equation}
where the colimit is taken in $\inftydgprestack$, is an isomorphism.
\end{prop}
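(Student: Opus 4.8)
The plan is to leverage the presentation furnished by \propref{p:presentation of indschemes} and to show that the tautological functor from its (filtered) index category into
$$\CC:=(\dgSch_{\on{qsep-qc}})_{\on{closed}\,\on{in}\,\CX}$$
is \emph{cofinal}; assertion (b) is then immediate, and assertion (a) follows formally. (Throughout, $\CX$ denotes the DG indscheme in question.) So first I would fix a presentation $\CX\simeq \underset{\alpha\in\sA}{\on{colim}}\, X_\alpha$ as in \eqref{e:gen indscheme as a colimit}, with $\sA$ filtered, each $X_\alpha\in \dgSch_{\on{qsep-qc}}$, and all transition maps closed embeddings. The first point to check is that each structure map $\iota_\alpha:X_\alpha\to\CX$ is itself a closed embedding, so that the cocone $\alpha\mapsto X_\alpha$ defines a functor $\Phi:\sA\to\CC$. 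This is a statement about classical truncations: $^{cl}\CX\simeq \underset{\alpha}{\on{colim}}\, {}^{cl}X_\alpha$ is a classical indscheme, and for a closed-embedding presentation of a classical indscheme each term includes as a closed subscheme. Concretely, a map from a classical affine $S$ factors through some $^{cl}X_\beta$ with $\beta\geq\alpha$ by the classical form of \lemref{l:maps out of qc}, and then the base change of $\iota_\alpha$ along $S\to {}^{cl}\CX$ is identified with the base change of the closed embedding $^{cl}X_\alpha\to {}^{cl}X_\beta$; this is the content of \secref{ss:cl into ind}.

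The key lemma is the following factorization statement. Let $Z\in\CC$, with its closed embedding $\iota_Z:Z\to\CX$. Since $Z$ is quasi-separated and quasi-compact, \lemref{l:gen maps out of qc} gives $\Maps(Z,\CX)\simeq \underset{\alpha}{\on{colim}}\,\Maps(Z,X_\alpha)$, so $\iota_Z$ factors as $Z\to X_\alpha\to\CX$ for some $\alpha$, the second arrow being $\iota_\alpha$. Moreover, for \emph{any} such factorization the map $Z\to X_\alpha$ is automatically a closed embedding: this is exactly the composability property recorded in \secref{sss:general closed emb}, applied to $Z\to X_\alpha\to\CX$ with $\iota_\alpha$ and $\iota_Z$ closed embeddings. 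Consequently $\Maps_\CC(Z,X_\alpha)\simeq \on{fib}_{\iota_Z}\!\left(\Maps(Z,X_\alpha)\to\Maps(Z,\CX)\right)$, the fiber over $\iota_Z$, with no components discarded by the closed-embedding condition. I would then prove that $\Phi$ is cofinal: by \cite{Lu0} it suffices that for each $Z\in\CC$ the comma $\infty$-category $\sA\times_\CC\CC_{Z/}$ be weakly contractible, and its classifying space is $\underset{\alpha}{\on{colim}}\,\Maps_\CC(Z,X_\alpha)$. Using the key lemma and the fact that filtered colimits commute with finite limits, this colimit is
$$\underset{\alpha}{\on{colim}}\;\on{fib}_{\iota_Z}\!\left(\Maps(Z,X_\alpha)\to\Maps(Z,\CX)\right)\simeq \on{fib}_{\iota_Z}\!\left(\underset{\alpha}{\on{colim}}\,\Maps(Z,X_\alpha)\to\Maps(Z,\CX)\right),$$
and by \lemref{l:gen maps out of qc} the map inside is an equivalence, so its fiber is contractible. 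Hence $\Phi$ is cofinal, and therefore
$$\underset{Z\in\CC}{\on{colim}}\, Z\;\simeq\;\underset{\alpha\in\sA}{\on{colim}}\, X_\alpha\;\simeq\;\CX,$$
the colimits being taken in $\inftydgprestack$; this is assertion (b).

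Assertion (a) then follows formally from cofinality of $\Phi$ together with filteredness of $\sA$. Indeed, given any finite diagram $p:K\to\CC$, the space of cocones under $p$ with apex $\Phi(\alpha)$ is computed by the finite limit $\underset{k\in K}{\lim}\,\Maps_\CC(p(k),X_\alpha)$; taking the filtered colimit over $\alpha$ and commuting it past $\underset{k}{\lim}$, and using cofinality to rewrite each factor as $\underset{\alpha}{\on{colim}}\,\Maps_\CC(p(k),X_\alpha)\simeq\underset{j\in\CC}{\on{colim}}\,\Maps_\CC(p(k),j)$ — the classifying space of $\CC_{p(k)/}$, which is contractible since it has the initial object $(p(k),\on{id})$ — we find that the colimit of cocone-spaces is contractible, in particular nonempty. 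Thus every finite diagram in $\CC$ admits a cocone, which by \cite{Lu0} means $\CC$ is filtered.

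The main obstacle is the interchange of the filtered colimit with finite limits at the level of mapping spaces in the last two steps, i.e.\ the passage from an object-wise and morphism-wise factorization to a genuinely homotopy-coherent statement; this is precisely what \lemref{l:gen maps out of qc} (and, behind it, the reduction to the classical case in \secref{ss:cl into ind}) is designed to supply, together with the reduction of filteredness to the existence of cocones on finite diagrams. One should also take care that the identification $\Maps_\CC(Z,X_\alpha)\simeq\on{fib}_{\iota_Z}(\dots)$ really loses no components — guaranteed by the composability of closed embeddings in \secref{sss:general closed emb} — since otherwise the fiber computation would only bound, rather than compute, the relevant comma category.
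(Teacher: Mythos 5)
Your argument is circular: it takes as its starting point a presentation $\CX\simeq \underset{\alpha}{colim}\, X_\alpha$ as in \eqref{e:gen indscheme as a colimit}, but the existence of such a presentation is exactly the content of \propref{p:presentation of indschemes}, and that proposition is the weaker form of the statement you are trying to prove --- the paper introduces \propref{p:indscheme as colimit of its closed} precisely as the ``more precise assertion'' from which \propref{p:presentation of indschemes} follows. The definition of a general DG indscheme (\secref{ss:gen DG indschemes}) only requires convergence plus the condition that each truncation $^{\leq n}\CX$ be a $\nDG$ indscheme; no presentation by quasi-separated quasi-compact DG schemes under closed embeddings is given a priori. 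The same circularity infects your use of \lemref{l:gen maps out of qc}, which the paper explicitly notes is proved ``relying on \propref{p:presentation of indschemes}.'' The paper even flags this trap: the remark following \propref{p:factorization coproducts for indscheme} says that at this stage one cannot use a presentation as in \eqref{e:gen indscheme as a colimit}, and that ``if we could assume such a presentation, the proof would be immediate'' --- which is essentially your proof.

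The actual argument has to work from the truncations, whose presentations \emph{are} available by definition. For (b) (in the sharper form of \corref{c:cofinality of closed}) one shows that for any $f:Y\to\CX$ the category of factorizations $Y\to Z\to\CX$ with $Z\to\CX$ a closed embedding is contractible, by (i) establishing that it admits finite colimits --- this is \propref{p:factorization coproducts for indscheme}, proved by assembling closed factorizations across the truncations $^{\leq n}\CX$ using the amplitude bounds of \lemref{c:truncation of push-out} --- and (ii) proving nonemptiness by factoring $^{\leq n}f$ through a closed subscheme $Z_n$ of the $\nDG$ indscheme $^{\leq n}\CX$ for suitable $n$, and then forming the pushout $Y\underset{^{\leq n}Y}\sqcup\, Z_n$, which maps to $\CX$ by a closed embedding thanks to \corref{c:compat with pushouts}. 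Part (a) is then \corref{c:filtered for indscheme}. Your cofinality computation is not wasted: it correctly proves the statement recorded in \secref{sss:canonicity of presentation} --- that any presentation, once one exists, is cofinal in $(\dgSch_{\on{qsep-qc}})_{\on{closed}\,\on{in}\,\CX}$ --- but it cannot serve as the proof that a presentation exists.
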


\sssec{}

Combined with \lemref{l:gen maps out of qc}, we obtain the following:

\begin{cor}  \label{c:cofinality of closed} 
Let $\CX$ be a DG indscheme. The functor
\begin{equation} \label{e:closed vs all}
(\dgSch_{\on{qsep-qc}})_{\on{closed}\,\on{in}\,\CX}\to (\dgSch_{\on{qsep-qc}})_{/\CX}
\end{equation}
is cofinal. 
\end{cor}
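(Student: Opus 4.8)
The plan is to verify cofinality through Lurie's slice-category criterion (the $\infty$-categorical Quillen Theorem~A, \cite[Thm.~4.1.3.1]{Lu0}) and to feed \lemref{l:gen maps out of qc} into it. Write $I:=(\dgSch_{\on{qsep-qc}})_{\on{closed}\,\on{in}\,\CX}$ and $J:=(\dgSch_{\on{qsep-qc}})_{/\CX}$, and let $\phi\colon I\hookrightarrow J$ be the functor \eqref{e:closed vs all}. By the cited criterion it suffices to show that for every object $(Y,f)\in J$ — i.e.\ a quasi-separated quasi-compact DG scheme $Y$ equipped with a map $f\colon Y\to \CX$ — the fiber product
$$\CK_{(Y,f)}:=I\underset{J}{\times}J_{(Y,f)/}$$
is weakly contractible.

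First I would identify $\CK_{(Y,f)}$ with a Grothendieck construction. The projection $\CK_{(Y,f)}\to I$ is the left fibration obtained by base-changing $J_{(Y,f)/}\to J$ along $\phi$; its fiber over an object $Z\in I$ (with closed embedding $p_Z\colon Z\to \CX$) is the mapping space $\Maps_J\bigl((Y,f),(Z,p_Z)\bigr)$, which by the standard formula for mapping spaces in an overcategory is the homotopy fiber
$$F_Z:=\on{fib}_f\bigl(\Maps(Y,Z)\to \Maps(Y,\CX)\bigr),$$
the map being post-composition with $p_Z$. Thus $\CK_{(Y,f)}$ is the unstraightening of the functor $F\colon I\to \inftygroup$, $Z\mapsto F_Z$, and since $F$ takes values in $\inftygroup$ its weak homotopy type is computed by the colimit, $|\CK_{(Y,f)}|\simeq \underset{Z\in I}{colim}\, F_Z$ (this is the usual comparison between a left fibration and the colimit of its straightening, \cite[Cor.~3.3.4.6]{Lu0}).

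The remaining step is to evaluate this colimit. By \propref{p:indscheme as colimit of its closed}(a) the category $I$ is filtered, hence weakly contractible, so filtered colimits over $I$ commute with finite limits and carry constant diagrams to their value. The homotopy fiber $F_Z=\Maps(Y,Z)\underset{\Maps(Y,\CX)}{\times}\{f\}$ is a finite limit in which two of the three terms are constant in $Z$, whence
$$\underset{Z\in I}{colim}\, F_Z\simeq \Bigl(\underset{Z\in I}{colim}\,\Maps(Y,Z)\Bigr)\underset{\Maps(Y,\CX)}{\times}\{f\}.$$
Now \lemref{l:gen maps out of qc}, applied to the presentation of \propref{p:indscheme as colimit of its closed}(b) and using that $Y$ is quasi-separated and quasi-compact, shows that $\underset{Z\in I}{colim}\,\Maps(Y,Z)\to \Maps(Y,\CX)$ is an isomorphism. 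Hence the displayed fiber product is the base change of $\{f\}$ along an isomorphism, so it is contractible: $\underset{Z\in I}{colim}\, F_Z\simeq \{f\}\simeq \ast$. This shows $\CK_{(Y,f)}$ is weakly contractible for every $(Y,f)$, and therefore $\phi$ is cofinal.

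Almost all of the content sits in \lemref{l:gen maps out of qc}; the deduction itself is formal. The points that require care are the identification of the slice $\CK_{(Y,f)}$ with the unstraightening of $F$ and the passage from that unstraightening to $\underset{Z}{colim}\,F_Z$, together with the two uses of filteredness of $I$ (commuting the colimit past the homotopy fiber, and evaluating the colimits of the constant diagrams). The genuinely substantive input — that maps out of a quasi-separated quasi-compact $Y$ commute with the filtered colimit presenting $\CX$ — is exactly where quasi-compactness of $Y$ enters, and is the one hypothesis whose removal would break the argument.
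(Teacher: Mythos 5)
Your proof is correct and takes essentially the same route as the paper's: the paper likewise identifies the slice category over $(Y,f)$ with the fiber of $\underset{Z}{colim}\,\Maps(Y,Z)\to\Maps(Y,\CX)$ over $f$ and concludes via \lemref{l:gen maps out of qc} applied to the canonical presentation of \propref{p:indscheme as colimit of its closed}; you merely spell out the straightening and the commutation of the filtered colimit with the fiber, which the paper leaves implicit. (The only caveat is that this derivation, like the paper's, presupposes \propref{p:indscheme as colimit of its closed}, whose proof in \secref{ss:proof of indscheme as colimit of its closed} is in fact carried out by establishing the corollary directly via coproducts and non-emptiness of the factorization category.)
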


\begin{proof}
We need to show that for $X\in \dgSch_{\on{qsep-qc}}$ and a map $X\to \CX$, the category of its factorizations
$$X\to Z\to \CX,$$
where $Z\to \CX$ is a closed embedding, is contractible. However, the above category of factorizations is
the fiber of the map of spaces
$$\underset{Z\in (\dgSch_{\on{qsep-qc}})_{\on{closed}\,\on{in}\,\CX}}{colim}\, \Maps(X,Z)\to \Maps(X,\CX)$$
over our given point in $\Maps(X,\CX)$.
\end{proof}

Finally, we can give the following characterization of DG indschemes among $\inftydgprestack$:

\begin{cor}  \label{c:criter for indscheme}
An object $\CX\in {}^{\on{conv}}\!\inftydgprestack$ is a DG indscheme if and only if:
\begin{itemize}

\item The category of closed embeddings $Z\to \CX$, where $Z\in \dgSch_{\on{qsep-qc}}$, is filtered.

\item The functor \eqref{e:closed vs all} is cofinal.

\end{itemize}
\end{cor}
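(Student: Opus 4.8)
The plan is to dispatch the two directions separately. The forward direction is essentially a restatement of results already in hand, while the reverse direction amounts to running the proof of \corref{c:cofinality of closed} backwards.

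For the ``only if'' direction, suppose $\CX$ is a DG indscheme. Then $\CX$ is convergent, hence lies in $^{\on{conv}}\!\inftydgprestack$; filteredness of the category of closed embeddings $Z\to \CX$ with $Z\in \dgSch_{\on{qsep-qc}}$ is \propref{p:indscheme as colimit of its closed}(a); and cofinality of \eqref{e:closed vs all} is \corref{c:cofinality of closed}. So this direction requires nothing new.

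For the ``if'' direction, I would start from the tautological diagram
$$\underset{Z\in (\dgSch_{\on{qsep-qc}})_{\on{closed}\,\on{in}\,\CX}}{colim}\, Z,$$
which is a filtered colimit by the first hypothesis, and test the canonical map to $\CX$ on an arbitrary $S\in \affdgSch$. Since colimits in $\inftydgprestack$ are computed pointwise on affines, this becomes $\underset{Z}{colim}\,\Maps(S,Z)\to \Maps(S,\CX)$. Exactly as in the proof of \corref{c:cofinality of closed}, the fiber of this map over a point $x\in \Maps(S,\CX)$ is the classifying space of the category of factorizations $S\to Z\to \CX$ through closed embeddings --- here one uses that the colimit is filtered, so that passing to the fiber (a finite limit) commutes with it. The second hypothesis, applied to the quasi-separated quasi-compact scheme $S$, says precisely that this category is contractible. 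A map of spaces all of whose fibers are contractible is an isomorphism, so the displayed map is an isomorphism for every affine $S$, and therefore $\underset{Z}{colim}\, Z\to \CX$ is an isomorphism of prestacks.

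It remains to recognize this as a presentation of the form \eqref{e:gen indscheme as a colimit}: the terms lie in $\dgSch_{\on{qsep-qc}}$, and for a morphism $Z_1\to Z_2$ in the indexing category the map $Z_1\to Z_2$ is a closed embedding by the cancellation property for closed embeddings noted in \secref{sss:general closed emb}, since both $Z_1\to \CX$ and $Z_2\to \CX$ are. As $\CX$ is convergent by hypothesis, the corollary following \propref{p:presentation of indschemes} then gives $\CX\in \dgindSch$. The one genuinely non-formal point --- the step I expect to demand the most care --- is the fiberwise identification: one must verify that the fiber of the filtered colimit map over a fixed point is computed by the colimit of fibers and is identified with the classifying space of the factorization category, so that cofinality can be plugged in. This is precisely the identification in the proof of \corref{c:cofinality of closed}, now read in the opposite direction; everything else is bookkeeping with the earlier structural results.
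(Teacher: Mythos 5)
Your proof is correct and follows the route the paper intends: the ``only if'' direction is exactly \propref{p:indscheme as colimit of its closed}(a) together with \corref{c:cofinality of closed}, and your ``if'' direction is the formal reversal of the argument proving \corref{c:cofinality of closed} (identify the homotopy fibers of $\underset{Z}{colim}\,\Maps(S,Z)\to\Maps(S,\CX)$ with the factorization categories via commutation of the filtered colimit with finite limits, invoke cofinality to get contractibility, use cancellation for closed embeddings to see the transition maps are closed embeddings, and finish with convergence plus the corollary to \propref{p:presentation of indschemes}). The paper states this corollary without proof precisely because it is this formal consequence, so no further comment is needed.
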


\sssec{}  \label{sss:canonicity of presentation}

Let us also note that \lemref{l:gen maps out of qc} implies that for any presentation of a DG indscheme
as in \propref{p:presentation of indschemes}, the tautological map
$$\sA\to (\dgSch_{\on{qsep-qc}})_{\on{closed}\,\on{in}\,\CX}$$
is cofinal. 

\ssec{The locally almost of finite type condition}

\sssec{}

We shall say that $\CX\in {}^{\leq n}\!\dgindSch$ is locally of finite type if it
is such as an object of $^{\leq n}\!\inftydgprestack$ (see \cite[Sect. 1.3.2]{Stacks}), i.e.,
it belongs to $^{\leq n}\!\inftydgprestack_{\on{lft}}$ in the terminology of {\it loc. cit.} 

\medskip

By
definition, this means that $\CX$, viewed as a functor 
$$({}^{\leq n}\affdgSch)^{\on{op}}\to \inftygroup,$$
equals the left Kan extension under
$$({}^{\leq n}\affdgSch_{\on{ft}})^{\on{op}}\hookrightarrow ({}^{\leq n}\affdgSch)^{\on{op}}$$
of its own restriction to $({}^{\leq n}\affdgSch_{\on{ft}})^{\on{op}}$, where 
$^{\leq n}\affdgSch_{\on{ft}}\subset {}^{\leq n}\affdgSch$ denotes the full subcategory
of $n$-coconnective affine DG schemes \emph{of finite type}. \footnote{We remind that $\Spec(A)\in {}^{\leq n}\affdgSch$
is said to be \emph{of finite type} if $H^0(A)$ is a finitely generated algebra over $k$, and each $H^i(A)$ is finitely generated
as an $H^0(A)$-module.} 

\medskip

We shall denote the full subcategory of $^{\leq n}\!\dgindSch$ spanned
by $\nDG$ indschemes locally of finite type by $^{\leq n}\!\dgindSch_{\on{lft}}$.

\medskip

We shall say that $\CX\in \dgindSch$ is locally almost of finite type if it is such as an object of
$\inftydgprestack$, see \cite[Sect. 1.3.9]{Stacks}, i.e., if in the notation of {\it loc.cit.} it
belongs to the subcategory $\inftydgprestack_{\on{laft}}\subset \inftydgprestack$.
By definition, this means that 
$$^{\leq n}\CX\in {}^{\leq n}\!\dgindSch$$
must be locally of finite type for every $n$. We shall denote the full subcategory of $\dgindSch$ spanned
by DG indschemes locally almost of finite type by $\dgindSch_{\on{laft}}$.

\sssec{}

It is natural to wonder whether one can represent objects of $\dgindSch_{\on{laft}}$ as colimits
of objects of $\dgSch_{\on{aft}}$ under closed embeddings. (We denote by 
$\dgSch_{\on{aft}}$ the category of DG schemes almost of finite type, 
i.e., $\dgSch_{\on{aft}}:=\dgSch_{\on{laft}}\cap \dgSch_{\on{qc}}$, see \cite[Sect. 3.3.1]{Stacks}.)

\medskip

In fact, there are two senses in which one can ask this question: one may want to have
a presentation in a ``weak sense", i.e., as in \lemref{l:convergent completion}, or in the ``strong"
sense, i.e., as in \propref{p:presentation of indschemes}.  

\medskip

The answer to the ``weak" version is affirmative: we will prove
the following:

\begin{prop}  \label{p:presentation of indschemes lft weak} 
For a DG indscheme $\CX$ locally almost of finite type there exists
a filtered family 
$$A\to (\dgSch_{\on{aft}})_{\on{closed}}:\alpha\mapsto X_\alpha,$$ such that
$\CX$ is isomorphic to the convergent completion of 
\begin{equation} \label{e:indscheme as a colimit lft weak}
\underset{\alpha\in \sA}{colim}\, X_\alpha,
\end{equation}
where the colimit is taken in $\inftydgprestack$. 
\end{prop}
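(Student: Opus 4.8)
The goal is to prove \propref{p:presentation of indschemes lft weak}: for a DG indscheme $\CX$ locally almost of finite type, there is a filtered family of DG schemes $X_\alpha$ almost of finite type, with closed-embedding transition maps, such that $\CX$ is the convergent completion of $\underset{\alpha}{colim}\, X_\alpha$ (colimit in $\inftydgprestack$).

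The plan is to bootstrap the existence result at each truncated level and then glue across $n$ using the convergence of $\CX$. First I would fix $n$ and work with $^{\leq n}\CX$, which by the definition of laft is an object of $^{\leq n}\!\dgindSch_{\on{lft}}$. The key reduction is to produce, for each $n$, a presentation of $^{\leq n}\CX$ as a filtered colimit $\underset{\alpha}{colim}\, X^{(n)}_\alpha$ in $^{\leq n}\!\inftydgprestack$, where the $X^{(n)}_\alpha\in {}^{\leq n}\!\dgSch$ are \emph{of finite type} (not merely qsep-qc) and the transition maps are closed embeddings. To get this, I would invoke the canonical presentation \propref{p:indscheme as colimit of its closed} (at the $n$-coconnective level) to write $^{\leq n}\CX$ as the colimit over all closed qsep-qc DG subschemes $Z$, and then use the locally-of-finite-type hypothesis to replace the index category by its full subcategory of \emph{finite type} closed subschemes. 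The mechanism here is that a locally-finite-type object is the left Kan extension from finite-type affines; since each qsep-qc $Z\to {}^{\leq n}\CX$ is covered by finitely many affines, and the lft condition forces these to factor through finite-type pieces, one shows the subcategory $(^{\leq n}\dgSch_{\on{ft}})_{\on{closed\,in}\,{}^{\leq n}\CX}$ is filtered and cofinal in the category of all closed qsep-qc subschemes. This uses \corref{c:cofinality of closed} together with the characterization of lft via left Kan extension.

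Next I would assemble these level-by-level presentations into a single filtered diagram. The cleanest route is to use the compatibility of the restriction functors $^{\leq n'}(-)$ for $n'<n$ with indschemes (established in the subsection ``Changing $n$''): restricting a finite-type closed subscheme of $^{\leq n}\CX$ gives a finite-type closed subscheme of $^{\leq n'}\CX$. This lets me form the index category $\sA$ as (an appropriate cofinal model of) the category of finite-type qsep-qc DG schemes $X$ equipped with a closed embedding $X\to \CX$, i.e. $(\dgSch_{\on{aft}})_{\on{closed\,in}\,\CX}$, using \defnref{sss:general closed emb} for the notion of closed embedding of a general prestack. I would show this category is filtered by the same argument as in \propref{p:indscheme as colimit of its closed}(a), now carried out at the finite-type level: pushouts/unions of two finite-type closed subschemes inside $\CX$ are again finite-type closed subschemes (finiteness is preserved under the scheme-theoretic union, which is a finite colimit of finite-type schemes). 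Setting $\CX':=\underset{\alpha\in\sA}{colim}\, X_\alpha$ in $\inftydgprestack$, the claim is that $^{\on{conv}}\CX'\simeq \CX$.

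The main obstacle will be the last isomorphism $^{\on{conv}}\CX'\simeq \CX$, i.e. verifying that the convergent completion of the finite-type colimit recovers $\CX$ on the nose. Since both sides are convergent, by the description of convergence in \secref{sss:convergence} it suffices to check that $^{\leq n}\CX'\to {}^{\leq n}\CX$ is an isomorphism for every $n$. Here I must be careful: the colimit defining $\CX'$ is taken over the \emph{global} finite-type index category $\sA$, and I need that its restriction to $^{\leq n}\!\affdgSch$ reproduces the level-$n$ presentation of $^{\leq n}\CX$ found in the first step. This is where cofinality does the real work: by \secref{sss:canonicity of presentation} and \corref{c:cofinality of closed} (applied at level $n$), the functor sending $\alpha\in\sA$ to $^{\leq n}X_\alpha$ is cofinal in $(^{\leq n}\dgSch_{\on{ft}})_{\on{closed\,in}\,{}^{\leq n}\CX}$, so the restricted colimit computes $^{\leq n}\CX$ by the first step. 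The delicate point is ensuring the \emph{same} filtered family $\sA$ works simultaneously for all $n$ and that truncation commutes with the colimit at each level; this is handled by \lemref{l:no sheafification} and the fact that filtered colimits commute with the finite limits involved in truncation and descent. Once these compatibilities are checked, convergence upgrades the level-wise isomorphisms to $^{\on{conv}}\CX'\simeq \CX$, completing the proof.
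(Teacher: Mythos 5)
Your overall architecture is the right one and matches the paper's in outline: the paper also reduces everything to showing that $(\dgSch_{\on{aft}})_{\on{closed}\,\on{in}\,\CX}$ is cofinal inside the filtered category $(\dgSch_{\on{qsep-qc}})_{\on{closed}\,\on{in}\,\CX}$ (via \lemref{l:cofinal in filtered}, which also yields filteredness of the subcategory for free, so you do not actually need a separate argument that unions of finite-type closed subschemes are finite type). But there is a genuine gap at the crux: your justification of cofinality --- ``each qsep-qc $Z\to {}^{\leq n}\CX$ is covered by finitely many affines, and the lft condition forces these to factor through finite-type pieces'' --- does not work. The lft/LKE condition tells you that a map $S\to{}^{\leq n}\CX$ from an affine factors through some \emph{finite-type affine} $S'$, but $S'$ comes with no closed embedding into $\CX$, and there is no mechanism offered for gluing these local factorizations over a cover of $Z$ into a single finite-type DG scheme that is \emph{closed in} $\CX$ and receives $Z$. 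Passing from ``factors through a finite-type affine'' to ``factors through a finite-type closed subscheme'' is precisely the hard content of the proposition, and taking closures of images just returns you to the original problem.

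What the paper actually does to close this gap (proof of \propref{p:canonical presentation of laft indsch} in \secref{sss:proof of presentation lft strong}) is quite different from a covering argument. At the classical level, \lemref{l:classical lft} shows that any classical closed subscheme of a classical lft indscheme is \emph{automatically} of finite type, using the characterization of finite type via filtered unions of subalgebras. In the derived direction one then argues by induction on $n$: the truncation ${}^{\leq n-1}Y\to{}^{\leq n}Y$ is canonically a square-zero extension by an ideal $\CI\in\QCoh({}^{\leq n-1}Y)^\heartsuit[n]$ (\corref{c:can sq zero}); one pushes this out along ${}^{\leq n-1}Y\to Z_{n-1}$ to a square-zero extension of the finite-type model $Z_{n-1}$ by $\CJ=(h_{n-1})_*(\CI)$, writes $\CJ$ as a filtered colimit of \emph{coherent} subsheaves $\CJ_\alpha$, and uses the lft property of ${}^{\leq n}\CX$ (commutation of $\on{Maps}(-,{}^{\leq n}\CX)$ with the resulting filtered limit of schemes $Z'_n\simeq\underset{\alpha}{lim}\,Z_{n,\alpha}$) to factor the map through some finite-type square-zero extension $Z_{n,\alpha}$. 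Without this square-zero/coherent-approximation step (or a substitute for it), your proof is incomplete; once it is supplied, your level-by-level assembly via convergence is fine, though note the paper obtains the stronger conclusion that the colimit itself (not just its convergent completion) is $\CX$.
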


\sssec{}

Before we answer the ``strong question", let us note that it is \emph{not} 
true that for any $\CY\in \inftydgprestack_{\on{laft}}$, the functor
$$(\affdgSch_{\on{aft}})_{/\CY}\to (\affdgSch)_{/\CY}$$
is cofinal. However, if $\CX\in \dgindSch_{\on{laft}}$ admitted a
presentation as a colimit of objects of $\dgSch_{\on{aft}}$, it would automatically
have this property. We have:

\begin{prop}  \label{p:presentation of indschemes lft strong}
For a DG indscheme $\CX$ locally almost of finite type there exists
a filtered family 
$$A\to (\dgSch_{\on{aft}})_{\on{closed}}:\alpha\mapsto X_\alpha,$$ such that
$\CX$ is isomorphic to 
\begin{equation} \label{e:indscheme as a colimit lft strong}
\underset{\alpha\in \sA}{colim}\, X_\alpha,
\end{equation}
where the colimit is taken in $\inftydgprestack$. 
\end{prop}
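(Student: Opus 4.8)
The plan is to upgrade the weak presentation of \propref{p:presentation of indschemes lft weak} to a strict one, by showing that in the locally-almost-of-finite-type setting the relevant filtered colimit is automatically convergent. Fix a presentation as in \propref{p:presentation of indschemes lft weak}: a filtered family $\alpha\mapsto X_\alpha$ in $(\dgSch_{\on{aft}})_{\on{closed}}$ together with an isomorphism $\CX\simeq {}^{\on{conv}}\CX'$, where
$$\CX':=\underset{\alpha}{colim}\, X_\alpha$$
is the colimit taken in $\inftydgprestack$. Since $\CX'$ is already of the shape required in \eqref{e:indscheme as a colimit lft strong}, the proposition is \emph{equivalent} to the assertion that the canonical map $\CX'\to {}^{\on{conv}}\CX'\simeq\CX$ is an isomorphism, i.e. that $\CX'$ is convergent (see \secref{sss:convergence}). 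Note that by construction $\CX'$ and $\CX$ already agree on $^{<\infty}\!\affdgSch$.

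Next I would reduce the verification to aft affine test schemes. Each $X_\alpha$, being a DG scheme almost of finite type, admits deformation theory, so the theorem quoted above from \cite{GR} applies to it and yields $X_\alpha\simeq \on{LKE}_{(\affdgSch_{\on{aft}})^{\on{op}}\hookrightarrow (\affdgSch)^{\on{op}}}(X_\alpha|_{\affdgSch_{\on{aft}}})$. The functor $\on{LKE}_{(\affdgSch_{\on{aft}})^{\on{op}}\hookrightarrow(\affdgSch)^{\on{op}}}\bigl((-)|_{\affdgSch_{\on{aft}}}\bigr)$ preserves filtered colimits (left Kan extension is a left adjoint, and restriction to a full subcategory is computed objectwise), whence $\CX'\simeq \on{LKE}_{(\affdgSch_{\on{aft}})^{\on{op}}\hookrightarrow(\affdgSch)^{\on{op}}}(\CX'|_{\affdgSch_{\on{aft}}})$. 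Applying the same theorem to $\CX$ itself — which is laft and, being a DG indscheme, admits deformation theory — gives $\CX\simeq \on{LKE}_{(\affdgSch_{\on{aft}})^{\on{op}}\hookrightarrow(\affdgSch)^{\on{op}}}(\CX|_{\affdgSch_{\on{aft}}})$. Thus it suffices to prove that $\CX'$ and $\CX$ agree on aft affines, i.e. that for every $T\in\affdgSch_{\on{aft}}$ the natural map $\CX'(T)\to \CX(T)$ is an isomorphism.

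Unwinding both sides, and using that each DG scheme $X_\alpha$ is convergent, this comes down to the interchange
$$\underset{\alpha}{colim}\,\underset{n}{lim}\,\Maps(\tau^{\leq n}T,X_\alpha)\ \longrightarrow\ \underset{n}{lim}\,\underset{\alpha}{colim}\,\Maps(\tau^{\leq n}T,X_\alpha)$$
of a filtered colimit with the countable tower of truncations $\tau^{\leq n}T$. This is the main obstacle: filtered colimits do not commute with sequential limits in general, and it is precisely here that the local-finite-type hypothesis is essential (for a general DG indscheme the map $\CX'\to\CX$ is indeed \emph{not} an isomorphism). The key point is that for $T$ aft each $\tau^{\leq n}T$ is \emph{of finite type}, so that — analyzing the successive fibers of the tower in terms of $\pi_\ast$ of the cotangent complexes of the almost-of-finite-type schemes $X_\alpha$ against the finitely generated modules $H^{-n}(\CO_T)$ — the tower $\{\Maps(\tau^{\leq n}T,X_\alpha)\}_n$ is Mittag-Leffler in each homotopy degree, uniformly in $\alpha$. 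This uniform control makes the relevant $\underset{n}{lim}{}^1$-terms vanish and allows the filtered colimit to pass through the limit, yielding the interchange.

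Finally I would combine the three steps: the identity $\CX'|_{\affdgSch_{\on{aft}}}\simeq \CX|_{\affdgSch_{\on{aft}}}$ just established, together with the two left-Kan-extension identities of the second paragraph, forces $\CX'\simeq \CX$. Hence the family $\alpha\mapsto X_\alpha$ already presents $\CX$ as in \eqref{e:indscheme as a colimit lft strong}, with the colimit taken in $\inftydgprestack$, as desired.
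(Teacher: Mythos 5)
There is a genuine gap, and it sits in your very first reduction. You fix the weak presentation supplied by \propref{p:presentation of indschemes lft weak} and claim the proposition is \emph{equivalent} to showing that, for that particular family, the map $\CX'=\underset{\alpha}{colim}\, X_\alpha\to{}^{\on{conv}}\CX'\simeq\CX$ is an isomorphism. But the proposition is an existence statement, and an arbitrary weak presentation need not be a strong one. Concretely, let $\CX=X=\Spec(A)$ be an affine DG scheme almost of finite type which is \emph{not} eventually coconnective (take $A$ connective with $H^0(A)=k$ and $H^{-i}(A)$ nonzero and finite-dimensional for infinitely many $i$), and take the family $n\mapsto \tau^{\leq n}(X)$, $n\in \BN$. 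The transition maps are closed embeddings (they are isomorphisms on classical truncations), each term is aft, and the convergent completion of $\underset{n}{colim}\, \tau^{\leq n}(X)$ is $X$, so this is a legitimate weak presentation. Yet the map $\underset{n}{colim}\, \Maps(X,\tau^{\leq n}(X))\to \Maps(X,X)$ is not surjective on $\pi_0$: if $\on{id}_X$ factored through some $\tau^{\leq n}(X)$, then $X$ would be a retract of an $n$-coconnective affine DG scheme and hence itself $n$-coconnective. So $\underset{n}{colim}\, \tau^{\leq n}(X)\neq X$ in $\inftydgprestack$, and the statement you reduce to is false for this weak presentation. This also pinpoints exactly where your limit/colimit interchange breaks: in this example each tower $m\mapsto \Maps(\tau^{\leq m}T,X_\alpha)$ is eventually constant, hence Mittag--Leffler, but the stabilization point depends on $\alpha$; the uniformity in $\alpha$ that you invoke is precisely what fails, and you give no argument for it.

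To repair the strategy one must \emph{replace} the given family by a better one, and this is what the paper does: it proves (\propref{p:canonical presentation of laft indsch}) that the full subcategory $(\dgSch_{\on{aft}})_{\on{closed}\,\on{in}\,\CX}$ is filtered and cofinal in $(\dgSch_{\on{qsep-qc}})_{\on{closed}\,\on{in}\,\CX}$, by showing that every closed embedding $Y\to \CX$ with $Y$ quasi-separated and quasi-compact factors through an aft closed subscheme. That factorization is constructed by induction on coconnectivity: the classical step is automatic (\lemref{l:classical lft}), and the inductive step realizes $^{\leq n}Y\to {}^{\leq n}\CX$ via a square-zero extension of the previously built $Z_{n-1}$ by an ideal written as a filtered colimit of coherent subsheaves, the laft hypothesis on $\CX$ being used to factor the resulting map through one of the finite-type terms. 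None of this deformation-theoretic construction appears in your proposal; moreover the cofinality theorem from \cite{GR} that you apply to $\CX$ itself is quoted in the paper only as motivation, not as an ingredient of its (self-contained) proof.
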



\sssec{}

In fact, we shall prove a more precise version of the above assertions. Namely, in
\secref{ss:proof of indscheme as colimit of its closed} we will prove:

\begin{prop} \label{p:canonical presentation of laft indsch}
Let $\CX$ be an object of $\dgindSch_{\on{laft}}$.

\smallskip

\noindent{\em(a)} The category $(\dgSch_{\on{aft}})_{\on{closed}\,\on{in}\,\CX}$ is filtered.

\smallskip

\noindent{\em(b)} The natural map 
$$\underset{Z\in (\dgSch_{\on{aft}})_{\on{closed}\,\on{in}\,\CX}}{colim}\, Z\to \CX,$$
where the colimit is taken in $\inftydgprestack$, is an isomorphism.
\end{prop}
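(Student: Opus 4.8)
The plan is to deduce both (a) and (b) from the already-established version for quasi-separated quasi-compact closed subschemes, \propref{p:indscheme as colimit of its closed}, by proving that the aft closed subschemes are cofinal among all of them. Write $\CC := (\dgSch_{\on{aft}})_{\on{closed}\,\on{in}\,\CX}$ and $\CP := (\dgSch_{\on{qsep-qc}})_{\on{closed}\,\on{in}\,\CX}$, so that $\CC$ is a \emph{full} subcategory of $\CP$. By \propref{p:indscheme as colimit of its closed}, $\CP$ is filtered and the tautological map $\underset{W\in \CP}{colim}\, W\to \CX$ is an isomorphism. The main categorical input I would use is the elementary fact that if $\CP$ is filtered, $\CC\subset \CP$ is full, and every object of $\CP$ admits a morphism in $\CP$ to an object of $\CC$, then $\CC$ is itself filtered and $\CC\hookrightarrow \CP$ is cofinal. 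For filteredness: extend a finite diagram in $\CC$ to a cocone in $\CP$ and post-compose its cone point with a morphism into $\CC$; fullness guarantees the resulting cocone lands in $\CC$. For cofinality: for $W\in\CP$ the comma category $\CC_{W/}$ is a full subcategory of the filtered category $\CP_{W/}$ into which every object admits a morphism, hence is filtered and in particular weakly contractible. Granting this, (a) is immediate and (b) follows since cofinality gives $\underset{Z\in \CC}{colim}\, Z\cong \underset{W\in \CP}{colim}\, W\cong \CX$.

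Everything therefore reduces to the following factorization statement, which is the heart of the matter: every closed embedding $W\hookrightarrow \CX$ with $W\in \dgSch_{\on{qsep-qc}}$ admits a factorization $W\to Z\to \CX$ with $Z\in \dgSch_{\on{aft}}$ and $Z\to \CX$ a closed embedding. It is worth noting that one only needs to produce the \emph{map} $W\to Z$ over $\CX$: once it exists, $W\to Z$ is automatically a closed embedding by the cancellation property of \secref{sss:general closed emb}, so it indeed furnishes a morphism $W\to Z$ in $\CP$ with target in $\CC$, which is exactly what the categorical lemma requires.

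To prove the factorization I would construct $Z$ through its coconnective truncations and reassemble using convergence. At each fixed level $n$, the restriction $^{\leq n}\CX$ is an $\nDG$ indscheme locally of finite type, and by \propref{p:presentation of indschemes lft weak} (whose weak/strong distinction collapses at a fixed truncation level, since there is no convergence issue there) it is an honest filtered colimit $^{\leq n}\CX\cong \underset{\alpha}{colim}\, {}^{\leq n}X_\alpha$ of finite-type closed subschemes. Hence by \lemref{l:maps out of qc} the map $\tau^{\leq n}W\to {}^{\leq n}\CX$ factors through a finite-type closed subscheme, with no interchange of limits involved. The goal is to make these choices compatible: to build a tower $^{\leq n}Z$ of finite-type $\nDG$ closed subschemes of $^{\leq n}\CX$ with ${}^{\leq n-1}({}^{\leq n}Z)\cong {}^{\leq n-1}Z$, through which the $\tau^{\leq n}W$ factor compatibly. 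One then takes $Z$ to be the convergent aft DG scheme whose $n$-coconnective truncation is $^{\leq n}Z$; since $^{cl}Z={}^{cl}({}^{\leq 0}Z)$ is a finite-type closed subscheme of $^{cl}\CX$, the map $Z\to \CX$ is a closed embedding, and by convergence of $W$ and $Z$ the maps $\tau^{\leq n}W\to {}^{\leq n}Z$ assemble into the required $W\to Z$ over $\CX$.

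The hard part will be precisely the cross-level compatibility of the tower $\{{}^{\leq n}Z\}$: this is an instance of the non-commutation of the sequential limit over truncations with the filtered colimit defining the indscheme, and it is exactly the gap between the ``weak'' presentation (which only yields $^{\on{conv}}(\underset{\alpha}{colim}\, X_\alpha)$) and the ``strong'' statement asserted here. To bridge it I would use that $\CX$, being a DG indscheme, admits deformation theory (see \secref{s:deformation theory}), together with the cofinality theorem of \cite{GR} quoted above: deformation theory controls the extension of a finite-type closed subscheme from level $n-1$ to level $n$ by a finitely generated cotangent contribution, which is what allows the tower to be built inductively and $Z$ to come out genuinely almost of finite type rather than a mere approximation. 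I expect this deformation-theoretic inductive step to be the only substantial point, with the reduction of the first two paragraphs and the level-by-level factorization being formal.
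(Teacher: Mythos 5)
Your reduction is exactly the paper's: the statement is deduced from \propref{p:indscheme as colimit of its closed} via the observation (\lemref{l:cofinal in filtered}) that a full subcategory of a filtered category into which every object admits a map is itself filtered and cofinal, so everything comes down to factoring a closed embedding $Y\to\CX$ with $Y\in\dgSch_{\on{qsep-qc}}$ through an almost-finite-type closed subscheme; and the paper, like you, builds that factorization as a compatible tower of finite-type closed subschemes $Z_n\hookrightarrow{}^{\leq n}\CX$ by induction on $n$, the base case being that a classical closed subscheme of a classical lft indscheme is automatically of finite type (\lemref{l:classical lft}). (As a side remark, your appeal to \propref{p:presentation of indschemes lft weak} for the level-$n$ factorization is shaky, since that proposition is not established independently of the present one; but this step is anyway subsumed by the induction.)

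The gap is in the inductive step, which you correctly flag as the only substantial point but do not carry out, and the tools you propose for it are not the right ones: the cofinality theorem quoted from \cite{GR} concerns affine DG schemes mapping to $\CY$ and does not produce \emph{closed} subschemes, and the finiteness needed is not ``a finitely generated cotangent contribution.'' What the paper actually does (Steps 3--5 of \secref{sss:proof of presentation lft strong}) is: form the pushout $Z'_n:=Z_{n-1}\underset{^{\leq n-1}Y}\sqcup\,{}^{\leq n}Y$ along the closed nil-immersion $^{\leq n-1}Y\to{}^{\leq n}Y$; this receives $^{\leq n}Y$, maps to $^{\leq n}\CX$ by \propref{p:compat with pushouts}, and restricts to $Z_{n-1}$ on $(n-1)$-coconnective truncations. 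The problem is that $Z'_n$ need not be of finite type: it is a square-zero extension of $Z_{n-1}$ by $\CJ:=(h_{n-1})_*(\CI)$, where $\CI\in\QCoh({}^{\leq n-1}Y)^\heartsuit[n]$ is the ideal of the truncation and $h_{n-1}$ is merely quasi-separated quasi-compact, so $\CJ$ need not be coherent. One writes $\CJ\simeq\underset{\alpha}{colim}\,\CJ_\alpha$ with $\CJ_\alpha\in\Coh(Z_{n-1})^\heartsuit[n]$, obtaining $Z'_n\simeq\underset{\alpha}{lim}\,Z_{n,\alpha}$ with each $Z_{n,\alpha}$ of finite type, and then invokes the locally-almost-of-finite-type hypothesis on $\CX$ --- which says precisely that $\on{Maps}(-,{}^{\leq n}\CX)$ takes this cofiltered limit of schemes to a filtered colimit of spaces --- to factor $Z'_n\to{}^{\leq n}\CX$ through some $Z_{n,\alpha}$. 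Without this pushout-plus-coherent-approximation mechanism the compatible tower cannot be built, so the proposal as written does not close.
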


As a formal consequence, we obtain:

\begin{cor}  \label{c:laft cofinality}
For $\CX\in \dgindSch_{\on{laft}}$ the following functors are cofinal:
\begin{equation} \label{e:closed aft vs all}
(\dgSch_{\on{aft}})_{\on{closed}\,\on{in}\,\CX} \to (\dgSch_{\on{qsep-qc}})_{/\CX}
\end{equation}
\begin{equation} \label{e:closed aft vs closed}
(\dgSch_{\on{aft}})_{\on{closed}\,\on{in}\,\CX} \to (\dgSch_{\on{qsep-qc}})_{\on{closed}\,\on{in}\,\CX} 
\end{equation}
\begin{equation} \label{e:closed aft vs all aft}
(\dgSch_{\on{aft}})_{\on{closed}\,\on{in}\,\CX} \to (\dgSch_{\on{aft}})_{/\CX}
\end{equation} 
and
\begin{equation} \label{e:closed ft vs all ft}
({}^{<\infty}\!\dgSch_{\on{aft}})_{\on{closed}\,\on{in}\,\CX} \to ({}^{<\infty}\!\dgSch_{\on{aft}})_{/\CX}.
\end{equation} 
\end{cor}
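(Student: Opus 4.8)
The plan is to verify each of the four functors by the standard cofinality criterion (\cite{Lu0}): a functor $F\colon \mathcal{I}\to \mathcal{J}$ is cofinal if and only if for every $j\in \mathcal{J}$ the comma category $\mathcal{I}\times_{\mathcal{J}}\mathcal{J}_{j/}$ is weakly contractible. This is exactly the mechanism already used in the proof of \corref{c:cofinality of closed}: for a map $Y\to \CX$, the comma category of factorizations $Y\to Z\to \CX$, with $Z$ ranging over a filtered category of closed-in-$\CX$ schemes, is identified with the fiber over the point $[Y\to \CX]$ of the map of spaces $\underset{Z}{colim}\,\Maps(Y,Z)\to \Maps(Y,\CX)$; contractibility of all fibers is equivalent to this map being an isomorphism.

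Write $\CA := (\dgSch_{\on{aft}})_{\on{closed}\,\on{in}\,\CX}$, which by \propref{p:canonical presentation of laft indsch} is filtered with $\underset{Z\in \CA}{colim}\, Z\simeq \CX$, the transition maps being closed embeddings. For the functors \eqref{e:closed aft vs all}, \eqref{e:closed aft vs closed}, \eqref{e:closed aft vs all aft} the comma category at an object $Y\to \CX$ (with $Y$ quasi-separated and quasi-compact, and in \eqref{e:closed aft vs closed}, \eqref{e:closed aft vs all aft} subject to further restrictions that only cut down the class of such $Y$) is precisely this factorization category. Since $Y$ is quasi-separated and quasi-compact, \lemref{l:gen maps out of qc} applied to the presentation $\underset{Z\in \CA}{colim}\, Z\simeq \CX$ shows $\underset{Z\in \CA}{colim}\,\Maps(Y,Z)\to \Maps(Y,\CX)$ is an isomorphism; hence all its fibers are contractible and the three functors are cofinal simultaneously. (Alternatively, \eqref{e:closed aft vs all} factors as \eqref{e:closed aft vs closed} followed by the functor of \corref{c:cofinality of closed}, a composite of cofinal functors.)

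The main difficulty is the eventually coconnective statement \eqref{e:closed ft vs all ft}, because the canonical presentation of \propref{p:canonical presentation of laft indsch} is by aft schemes that need not be eventually coconnective, so \lemref{l:gen maps out of qc} does not directly produce the required isomorphism with a colimit indexed by $\CA^{<\infty} := ({}^{<\infty}\!\dgSch_{\on{aft}})_{\on{closed}\,\on{in}\,\CX}$. I would resolve this by truncation. Fix $Y\in {}^{<\infty}\!\dgSch_{\on{aft}}$, say $m$-coconnective, and a map $Y\to \CX$, and let $\CA_{\leq m}\subset \CA^{<\infty}\subset \CA$ be the full subcategories of $m$-coconnective objects. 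The key inputs are: (i) since $Y$ is $m$-coconnective, the adjunction between ${}^{\leq m}\!\dgSch\hookrightarrow \dgSch$ and $\tau^{\leq m}$ gives $\Maps(Y,W)\simeq \Maps(Y,\tau^{\leq m}W)$ for any $W$; and (ii) the functor $\tau^{\leq m}$, being right adjoint to the fully faithful inclusions $\CA_{\leq m}\hookrightarrow \CA^{<\infty}\hookrightarrow \CA$, is cofinal, since its comma category at $W\in \CA_{\leq m}$ has the initial object $(W,\on{id})$. Combining (i) and (ii), both $\underset{Z\in \CA}{colim}\,\Maps(Y,Z)$ and $\underset{Z'\in \CA^{<\infty}}{colim}\,\Maps(Y,Z')$ are identified with $\underset{W\in \CA_{\leq m}}{colim}\,\Maps(Y,W)$, the first being $\Maps(Y,\CX)$ by the previous paragraph. (The same truncation argument shows $\CA^{<\infty}$ is filtered, so that its comma categories are again fibers of this colimit map.) Thus $\underset{Z'\in \CA^{<\infty}}{colim}\,\Maps(Y,Z')\to \Maps(Y,\CX)$ is an isomorphism, every fiber is contractible, and \eqref{e:closed ft vs all ft} is cofinal.
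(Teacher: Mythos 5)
Your argument is correct and is exactly the intended one: the paper states \corref{c:laft cofinality} as a ``formal consequence'' of \propref{p:canonical presentation of laft indsch}, the mechanism being the same identification of comma categories with fibers of $\underset{Z}{colim}\,\Maps(Y,Z)\to \Maps(Y,\CX)$ via \lemref{l:gen maps out of qc} that underlies \corref{c:cofinality of closed}. Your truncation argument for \eqref{e:closed ft vs all ft} (cofinality of $\tau^{\leq m}$ as a right adjoint, plus $\Maps(Y,W)\simeq \Maps(Y,\tau^{\leq m}W)$ for $Y$ $m$-coconnective) correctly supplies the one step the paper leaves entirely implicit.
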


\begin{cor}  \label{c:IndSch LKE from aft}
An object $\CX\in \dgindSch_{\on{laft}}\subset \on{PreStk}$
lies in the essential image of the fully faithful functor
\begin{multline*}
\on{LKE}_{(\affdgSch_{\on{aft}})^{\on{op}}\hookrightarrow (\affdgSch)^{\on{op}}}:
\on{Funct}((\affdgSch_{\on{aft}})^{\on{op}},\inftygroup)\to \\
\to \on{Funct}((\affdgSch)^{\on{op}},\inftygroup)=\on{PreStk}.
\end{multline*}
Equivalently, the functor
$$(\affdgSch_{\on{aft}})_{/\CX}\to (\affdgSch)_{/\CX}$$
is cofinal. 
\end{cor}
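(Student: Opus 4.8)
The plan is to prove the second (cofinality) formulation; its equivalence with the first is a general feature of left Kan extensions. By the co-Yoneda/density formula every prestack satisfies $\CX \simeq \underset{(S\to \CX)\in(\affdgSch)_{/\CX}}{\on{colim}}\, S$ in $\inftydgprestack$, while, since $\on{LKE}_{(\affdgSch_{\on{aft}})^{\on{op}}\hookrightarrow(\affdgSch)^{\on{op}}}$ is a left adjoint carrying representables to representables, one has $\on{LKE}_{(\affdgSch_{\on{aft}})^{\on{op}}\hookrightarrow(\affdgSch)^{\on{op}}}(\CX|_{\affdgSch_{\on{aft}}})\simeq \underset{(T\to\CX)\in(\affdgSch_{\on{aft}})_{/\CX}}{\on{colim}}\, T$. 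The canonical map between these two colimits is induced by the functor $(\affdgSch_{\on{aft}})_{/\CX}\to(\affdgSch)_{/\CX}$, and it is an isomorphism precisely when this functor is cofinal; thus $\CX$ lies in the essential image if and only if the asserted cofinality holds.

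First I would record the structural mechanism driving the argument: because $\on{LKE}_{(\affdgSch_{\on{aft}})^{\on{op}}\hookrightarrow(\affdgSch)^{\on{op}}}$ is a \emph{fully faithful} left adjoint, its essential image is closed under all colimits taken in $\inftydgprestack$ (if $\CY_j\simeq\on{LKE}_{(\affdgSch_{\on{aft}})^{\on{op}}\hookrightarrow(\affdgSch)^{\on{op}}}(\CY_j|_{\affdgSch_{\on{aft}}})$, then the same holds for $\underset{j}{\on{colim}}\,\CY_j$, using that the functor preserves colimits and that full faithfulness identifies the restriction of each term). Next I would invoke the canonical presentation \propref{p:canonical presentation of laft indsch}(b), which writes $\CX\simeq\underset{Z\in(\dgSch_{\on{aft}})_{\on{closed}\,\on{in}\,\CX}}{\on{colim}}\, Z$ as a colimit \emph{in} $\inftydgprestack$ of DG schemes almost of finite type. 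By the closure property just noted, it then suffices to show that every $Z\in\dgSch_{\on{aft}}$ lies in the essential image of $\on{LKE}_{(\affdgSch_{\on{aft}})^{\on{op}}\hookrightarrow(\affdgSch)^{\on{op}}}$.

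This base case is where I expect the only genuine content to sit, and it is the main obstacle. A quasi-compact, quasi-separated DG scheme $Z$ that is almost of finite type is covered by finitely many affine opens, each again almost of finite type, and is the colimit of the \v{C}ech nerve of such a cover. The delicate issue is that this presentation of $Z$ is a colimit in $\inftydgstack$, i.e.\ it involves sheafification, whereas the closure property above concerns colimits in $\inftydgprestack$; so one cannot simply quote ``finite colimit of affine a.f.t.\ schemes.'' The correct statement — that a DG scheme almost of finite type is, as a prestack, the left Kan extension of its restriction to affine DG schemes almost of finite type — is precisely the scheme-level instance of the theory of $\inftydgprestack_{\on{laft}}$ developed in \cite{Stacks}, and I would invoke it here (this is where convergence and the reconciliation of the Zariski gluing with the Kan extension are handled). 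Granting it for schemes, the reduction of the previous paragraph finishes the proof; note that $\CX$ itself is convergent as a DG indscheme, which is what makes the passage through all truncations consistent. As an alternative, one may bypass the presentation entirely: $\CX$ being convergent and, by hypothesis, locally almost of finite type in the sense that each $^{\leq n}\CX$ is locally of finite type, the conclusion is a direct application of the general characterization of $\inftydgprestack_{\on{laft}}$ from \cite{Stacks}; the colimit argument above is the ``formal consequence'' route via \propref{p:canonical presentation of laft indsch} and \corref{c:laft cofinality}.
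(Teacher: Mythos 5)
Your main argument is correct and is essentially the paper's own route: write $\CX$ as a colimit in $\inftydgprestack$ of objects of $\dgSch_{\on{aft}}$ via \propref{p:canonical presentation of laft indsch}(b), use that the essential image of the fully faithful, colimit-preserving left Kan extension is closed under colimits, and reduce to the case of a single quasi-compact quasi-separated DG scheme almost of finite type, which is the scheme-level input from \cite{Stacks}. You also correctly isolate the one genuinely delicate point, namely that the Zariski presentation of a non-affine scheme is a colimit in $\inftydgstack$ rather than $\inftydgprestack$, so the base case cannot be disposed of by citing closure under colimits of prestacks.

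The one thing to strike is your closing ``alternative route.'' Membership in $\inftydgprestack_{\on{laft}}$ --- convergence together with each $^{\leq n}\CX$ being a left Kan extension from $^{\leq n}\affdgSch_{\on{ft}}$ --- does \emph{not} by itself imply that $(\affdgSch_{\on{aft}})_{/\CX}\to(\affdgSch)_{/\CX}$ is cofinal; the paper explicitly warns that this fails for general objects of $\inftydgprestack_{\on{laft}}$, and the theorem from \cite{GR} that would give the cofinality directly requires the additional hypothesis that the prestack admits deformation theory. So the passage through the colimit presentation (or through deformation theory) cannot be bypassed; with that remark deleted, your proof stands.
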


\begin{cor}
An object $\CX\in {}^{\on{conv}}\!\inftydgprestack$ belongs to $\dgindSch_{\on{laft}}$ if and only if:
\begin{itemize}

\item The category of closed embeddings $Z\to \CX$, where $Z\in \dgSch_{\on{aft}}$, is filtered.

\item The functor \eqref{e:closed aft vs all} is cofinal.

\end{itemize}
\end{cor}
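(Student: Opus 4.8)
The plan is to prove both directions of the equivalence. The ``only if'' direction is essentially a summary of what has already been established: if $\CX\in\dgindSch_{\on{laft}}$, then in particular $\CX$ is a DG indscheme, so it is convergent (hence lies in $^{\on{conv}}\!\inftydgprestack$), and \propref{p:canonical presentation of laft indsch}(a) gives that the category of closed embeddings $Z\to\CX$ with $Z\in\dgSch_{\on{aft}}$ is filtered, while \corref{c:laft cofinality} (specifically the cofinality of \eqref{e:closed aft vs all}) gives the second bullet. So this direction requires essentially no new work beyond citing the preceding results.

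For the ``if'' direction, suppose $\CX\in{}^{\on{conv}}\!\inftydgprestack$ satisfies the two bullets. First I would use the second bullet together with \lemref{l:gen maps out of qc}: since the functor \eqref{e:closed aft vs all} is cofinal, and since DG schemes in $\dgSch_{\on{aft}}$ are in particular quasi-separated and quasi-compact, the colimit
$$\underset{Z\in(\dgSch_{\on{aft}})_{\on{closed}\,\on{in}\,\CX}}{colim}\, Z$$
taken in $\inftydgprestack$ maps to $\CX$, and I want to show this map is an isomorphism. The key point is that for any $Y\in\dgSch_{\on{qsep-qc}}$, \lemref{l:gen maps out of qc} identifies $\Maps(Y,\CX')$ with $\underset{Z}{colim}\,\Maps(Y,Z)$ for a prestack $\CX'$ written as such a colimit; combined with the assumed cofinality of \eqref{e:closed aft vs all}, one gets that $\CX$ has the correct mapping spaces out of every quasi-compact quasi-separated DG scheme, and in particular (taking $Y$ affine) out of every affine DG scheme. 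Since prestacks are functors on $\affdgSch$, matching mapping spaces out of all affines is exactly an isomorphism of prestacks.

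The substance, then, is to verify the two defining conditions of a DG indscheme for $\CX$. Convergence is assumed outright. For the remaining condition, I must check that $^{\leq n}\CX$ is a $\nDG$ indscheme for each $n$. The strategy is to produce a presentation of $\CX$ as in \eqref{e:gen indscheme as a colimit} from the two bullets, and then restrict to $^{\leq n}\!\affdgSch$. Concretely, the filtered index category $\sA:=(\dgSch_{\on{aft}})_{\on{closed}\,\on{in}\,\CX}$ (filtered by the first bullet) gives a filtered diagram of DG schemes under closed embeddings whose colimit in $\inftydgprestack$ is $\CX$ by the argument above. Restricting each $X_\alpha$ to $^{\leq n}\!\affdgSch$ yields $^{\leq n}\!X_\alpha\in{}^{\leq n}\!\dgSch_{\on{qsep-qc}}$ with closed-embedding transition maps, and $^{\leq n}\CX\simeq\underset{\alpha}{colim}\,{}^{\leq n}\!X_\alpha$ since restriction is a left adjoint and hence commutes with colimits; this is precisely a presentation exhibiting $^{\leq n}\CX$ as a $\nDG$ indscheme.

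The main obstacle I anticipate is the first step of the ``if'' direction: rigorously deducing that the map from the colimit over $(\dgSch_{\on{aft}})_{\on{closed}\,\on{in}\,\CX}$ to $\CX$ is an isomorphism in $\inftydgprestack$. The cofinality of \eqref{e:closed aft vs all} controls mapping spaces \emph{out of} quasi-separated quasi-compact DG schemes via \lemref{l:gen maps out of qc}, but an isomorphism of prestacks is detected on \emph{affine} test objects, and one must be careful that the colimit is computed in $\inftydgprestack$ (objectwise on affines) rather than in $\inftydgstack$. The cleanest route is: evaluate both sides on an arbitrary $S\in\affdgSch$, note $\Maps(S,\CX)\simeq\underset{\alpha}{colim}\,\Maps(S,X_\alpha)$ will follow once we know the colimit prestack and $\CX$ agree, so instead I would argue directly that the assumed cofinality makes $\CX$ the left Kan extension of its restriction along closed embeddings, invoking the formula for colimits of representables in a functor category, and then appeal to \emph{\corref{c:criter for indscheme}}-style reasoning. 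I expect the correct formalization to parallel the proof of \corref{c:criter for indscheme} almost verbatim, with $\dgSch_{\on{qsep-qc}}$ replaced by $\dgSch_{\on{aft}}$ throughout and the relevant cofinality input being the second bullet rather than \corref{c:cofinality of closed}.
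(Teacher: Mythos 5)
Your ``only if'' direction is fine: convergence is part of the definition of a DG indscheme, the first bullet is \propref{p:canonical presentation of laft indsch}(a), and the second is \corref{c:laft cofinality}. In the ``if'' direction there are two genuine gaps. The first is the step you yourself flag but do not close: you never actually prove that the two bullets force the map $\underset{Z\in(\dgSch_{\on{aft}})_{\on{closed}\,\on{in}\,\CX}}{colim}\,Z\to\CX$ to be an isomorphism in $\inftydgprestack$. Your first attempt via \lemref{l:gen maps out of qc} is circular, as you note, and your fallback --- ``parallel the proof of \corref{c:criter for indscheme}'' together with ``the formula for colimits of representables'' --- does not work as stated: the cofinality you are given lands in $(\dgSch_{\on{qsep-qc}})_{/\CX}$, while the co-Yoneda presentation of $\CX$ is indexed by $(\affdgSch)_{/\CX}$, and the inclusion $(\affdgSch)_{/\CX}\to(\dgSch_{\on{qsep-qc}})_{/\CX}$ is \emph{not} cofinal for a general prestack (a map out of $\mathbb{P}^1$ need not factor through an affine scheme), so you cannot chain cofinalities. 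The argument that does work is the paper's proof of \corref{c:cofinality of closed} read backwards: cofinality of \eqref{e:closed aft vs all} means that for every object of $(\dgSch_{\on{qsep-qc}})_{/\CX}$ --- in particular for every $(S,x)$ with $S$ affine --- the comma category of factorizations $S\to Z\to\CX$ with $Z\to\CX$ a closed embedding, $Z\in\dgSch_{\on{aft}}$, is contractible; that comma category is precisely the fiber over $x$ of $\underset{Z}{colim}\,\Maps(S,Z)\to\Maps(S,\CX)$; a map of spaces all of whose fibers are contractible is an equivalence; and since colimits in $\inftydgprestack$ are computed objectwise on affine test schemes, this is exactly the assertion that $\underset{Z}{colim}\,Z\to\CX$ is an isomorphism. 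Convergence plus this presentation then gives $\CX\in\dgindSch$ by the corollary following \propref{p:presentation of indschemes}.

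The second gap is that you never verify the ``locally almost of finite type'' half of the conclusion: your truncation step records only that $^{\leq n}X_\alpha\in{}^{\leq n}\!\dgSch_{\on{qsep-qc}}$, discarding the finite-type information, so at best you prove $\CX\in\dgindSch$. You need to add that each $X_\alpha$ lies in $\dgSch_{\on{aft}}$, hence $^{\leq n}X_\alpha\in{}^{\leq n}\!\dgSch_{\on{ft}}$ is locally of finite type as a prestack, and that $^{\leq n}\!\inftydgprestack_{\on{lft}}$ is closed under colimits (being the essential image of a left Kan extension, i.e., of a left adjoint); therefore $^{\leq n}\CX\simeq\underset{\alpha}{colim}\,{}^{\leq n}X_\alpha$ is locally of finite type for every $n$, which is the definition of $\CX$ being locally almost of finite type.
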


\sssec{}  \label{sss:canonicity of presentation laft}

Note that \lemref{l:gen maps out of qc} implies that for any presentation of $\CX$ as in 
\propref{p:presentation of indschemes lft strong},  
the tautological map
$$\sA\to (\dgSch_{\on{aft}})_{\on{closed}\,\on{in}\,\CX}$$
is cofinal. 

\section{Sheaves on DG indschemes}

\ssec{Quasi-coherent sheaves on a DG indscheme}

\sssec{}

For any $\CY\in \on{PreStk}$, we have the symmetric monoidal category $\QCoh(\CY)$ defined as in \cite[Sect. 1.1.3]{QCoh}.
Explicitly,
$$\QCoh(\CY):=\underset{S\in ((\affdgSch)_{/\CY})^{\on{op}}}{lim}\, \QCoh(S).$$

\sssec{}

In particular, for $\CX\in \dgindSch$ we obtain the symmetric monoidal category $\QCoh(\CX)$.

\medskip

If $\CX\in \dgindSch$ is written as \eqref{e:gen indscheme as a colimit}, we have:
$$\QCoh(\CX)\simeq \underset{\alpha}{lim}\, \QCoh(X_\alpha),$$
where for $\alpha_2\geq \alpha_1$, the map
$\QCoh(X_{\alpha_2})\to \QCoh(X_{\alpha_1})$ is $i^*_{\alpha_1,\alpha_2}$.
This follows from the fact that the functor
$$\QCoh_{\inftydgprestack}:\inftydgprestack^{\on{op}}\to \StinftyCat_{\on{cont}}$$
takes colimits in $\inftydgprestack$ to limits in $\StinftyCat_{\on{cont}}$. 

\medskip

Since the category $\QCoh(\CX)$ is given as a limit, it is not at all guaranteed that it will
be compactly generated. 

\sssec{}

We have the following nice feature of the category $\QCoh$ on DG indschemes that are
locally almost of finite type. Namely, we ``only need to know" $\QCoh$ on affine 
DG schemes that are almost of finite type to recover it. More precisely, from \corref{c:IndSch LKE from aft},
we obtain:

\begin{cor} For $\CX\in \dgindSch_{\on{laft}}$, the functor
$$\QCoh(\CX)=
\underset{S\in ((\affdgSch)_{/\CY})^{\on{op}}}{lim}\, \QCoh(S)\to
\underset{S\in ((\affdgSch_{\on{aft}})_{/\CY})^{\on{op}}}{lim}\, \QCoh(S),$$
given by restriction, is an equivalence.
\end{cor}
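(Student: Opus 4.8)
By definition $\QCoh(\CX)=\underset{S\in ((\affdgSch)_{/\CX})^{\on{op}}}{lim}\, \QCoh(S)$, and the target of the functor in the statement is the analogous limit taken over the smaller indexing category $((\affdgSch_{\on{aft}})_{/\CX})^{\on{op}}$; the functor itself is the tautological restriction of limit-diagrams. The plan is to exhibit this restriction as the comparison map for limits associated to the opposite of the inclusion of indexing categories
$$\phi:(\affdgSch_{\on{aft}})_{/\CX}\to (\affdgSch)_{/\CX},$$
and then to conclude formally from the cofinality of $\phi$ provided by \corref{c:IndSch LKE from aft}.

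First I would record the limit-analogue of the defining property of cofinal functors: if $\phi:I\to J$ is cofinal, then $\phi^{\on{op}}:I^{\on{op}}\to J^{\on{op}}$ is coinitial, so that for any $\infty$-category $\D$ admitting the relevant limits and any diagram $G:J^{\on{op}}\to \D$, the canonical restriction map
$$\underset{J^{\on{op}}}{lim}\, G\to \underset{I^{\on{op}}}{lim}\,(G\circ \phi^{\on{op}})$$
is an equivalence. This is obtained from the colimit statement in \cite{Lu0} by passing to opposite categories.

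Next I would apply this with $I=(\affdgSch_{\on{aft}})_{/\CX}$, $J=(\affdgSch)_{/\CX}$, with $\D=\StinftyCat_{\on{cont}}$ (which admits all limits), and with $G$ the functor $S\mapsto \QCoh(S)$. The hypothesis that $\phi$ is cofinal is precisely the second formulation of \corref{c:IndSch LKE from aft}, so the displayed restriction map is an equivalence; and by the preceding identification this map is exactly the functor in the statement.

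The one genuinely delicate point is the bookkeeping of variance. The comparison functor $\phi$ between slice categories is cofinal in the sense of colimits, whereas $\QCoh(\CX)$ is assembled as a \emph{limit}; one must therefore pass to opposite categories and invoke coinitiality rather than cofinality, being careful that the op's appearing in $(-)_{/\CX}$ and in the defining limit for $\QCoh$ compose correctly. Once this is tracked, there is no additional content: the corollary is a purely formal consequence of \corref{c:IndSch LKE from aft} together with the invariance of limits under coinitial restriction.
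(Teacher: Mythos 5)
Your argument is correct and coincides with the paper's: the corollary is stated there as an immediate formal consequence of \corref{c:IndSch LKE from aft}, exactly via the observation that a cofinal functor between the slice categories induces a coinitial functor on opposites, under which the limit defining $\QCoh(\CX)$ is invariant. Your extra care with the variance/op bookkeeping is sound and adds nothing beyond what the paper implicitly uses.
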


\ssec{A digression: perfect objects in $\QCoh$}

\sssec{}  

Recall the notion of a perfect object in $\QCoh(\CY)$ for $\CY\in \on{PreStk}$, see, e.g., \cite[Sect. 4.1.6]{QCoh}.

\medskip

The subcategory $\QCoh(\CY)^{\on{perf}}$ coincides with
that of dualizable objects of $\QCoh(\CY)$ (see, e.g., \cite[Lemma 4.2.2]{QCoh}).

\sssec{}

Recall also that if $\CY=X$ is a quasi-separated and quasi-compact DG scheme, then the category $\QCoh(X)$
is compactly generated and $\QCoh(X)^c=\QCoh(X)^{\on{perf}}$. Moreover, we have the canonical self-duality 
equivalence
$$\bD_X^{\on{naive}}:\QCoh(X)^\vee\simeq \QCoh(X)$$
which can be described in either of the following two equivalent ways:

\begin{itemize}

\item The corresponding \footnote{We recall that for a compactly generated category $\bC$ we have a canonical
equivalence $(\bC^\vee)^c\simeq (\bC^c)^{\on{op}}$.}
equivalence $\BD^{\on{naive}}_{\QCoh(X)}:(\QCoh(X)^c)^{\on{op}}\simeq \QCoh(X)^c$ is the duality functor
with respect to the symmetric monoidal structure on $\QCoh(X)$:
$$\CF\mapsto \CF^\vee:(\QCoh(X)^{\on{perf}})^{\on{op}}\to \QCoh(X)^{\on{perf}}.$$

\smallskip

\item The pairing $\QCoh(X)\otimes \QCoh(X)\to \Vect$ is the composition
$$\QCoh(X)\otimes \QCoh(X)\overset{\otimes}\to \QCoh(X)\overset{\Gamma(X,-)}\longrightarrow \Vect.$$

\end{itemize}

\sssec{}

Note that for an object $\CY\in \on{PreStk}$ (and, in particular, for $\CX\in \dgindSch$),
the functor $\Gamma(\CY,-):\QCoh(\CY)\to \Vect$ is not, in general, continuous. Therefore, the functor
$$\QCoh(\CY)\otimes \QCoh(\CY)\overset{\otimes}\to \QCoh(\CY)\overset{\Gamma(\CY,-)}\longrightarrow \Vect$$
is not continuous either, and as such cannot serve as a candidate the duality paring. 

\sssec{}  \label{sss:rigidity on QCoh}

Let $\CY$ be an arbitrary object of $\on{PreStk}$. We shall say that $\CY$ is quasi-perfect if 

\medskip

\noindent(i) The category $\QCoh(\CY)$ is compactly generated.

\smallskip

\noindent(ii) The compact objects of $\QCoh(\CY)$ are \emph{perfect},
and the duality functor 
\begin{equation} \label{e:duality on perf}
\CF\mapsto \CF^\vee:(\QCoh(\CY)^{\on{perf}})^{\on{op}}\simeq \QCoh(\CY)^{\on{perf}}
\end{equation}
sends $(\QCoh(\CY)^c)^{\on{op}}$ to $\QCoh(\CY)^c$.

\medskip

Note that for $\CY$ quasi-perfect, there exists a canonical equivalence 
$$\bD_{\CY}^{\on{naive}}:\QCoh(\CY)^\vee\simeq \QCoh(\CY),$$ 
given by the equivalence
$$\BD^{\on{naive}}_{\QCoh(\CY)}:(\QCoh(\CY)^c)^{\on{op}}\simeq  \QCoh(\CY)^c$$
induced by the duality functor \eqref{e:duality on perf}.

\medskip

The corresponding pairing $\QCoh(\CY)\otimes \QCoh(\CY)\to \Vect$ can be described as follows:
it is obtained by ind-extending the pairing on compact objects given by 
$$\CF_1,\CF_2\in \QCoh(\CY)^c\mapsto \Gamma(\CX,\CF_1\underset{\CO_\CY}\otimes \CF_2)\in \Vect.$$
Indeed, this follows from the fact that for 
$\CF\in \QCoh(\CY)^{\on{perf}}$ and $\CF'\in \QCoh(\CX)$, we have a functorial isomorphism
$$\CMaps(\CF^\vee,\CF')\simeq \Gamma(\CX,\CF\underset{\CO_\CX}\otimes \CF').$$

\medskip

Furthermore, note that $\QCoh(\CX)^c$ is a monoidal ideal in $\QCoh(X)^{\on{perf}}$.

\sssec{}

We shall see that certain DG indschemes are quasi-perfect in the above sense
(see \secref{ss:self duality formal compl} and \secref{sss:self duality on weird}).

\ssec{Ind-coherent sheaves on a DG indscheme}  \label{ss:IndCoh}

\sssec{}  \label{sss:IndCoh}

Let $\CY$ be an object of $\inftydgprestack_{\on{laft}}$. Following \cite[Sect. 10.1]{IndCoh}, 
we define the category $\IndCoh(\CY)$, which is a module category over $\QCoh(\CY)$
(see \cite[Sect. 10.3]{IndCoh} for the latter piece of structure). 

\medskip

Explicitly, 
$$\IndCoh(\CY)=\underset{S\in (({}^{<\infty}\affdgSch_{\on{aft}})_{/\CY})^{\on{op}}}{lim}\, \IndCoh(S),$$
where for $(f:S_1\to S_2)\in ({}^{\infty}\!\affdgSch_{\on{aft}})_{/\CY}$, the functor
$\IndCoh(S_2)\to \IndCoh(S_1)$ is $f^!$.

\medskip

It follows from \cite[Corollaries 10.2.2 and 10.5.5]{IndCoh} that in the following commutative diagram
all arrows are equivalences:
$$
\CD
\underset{S\in ((\dgSch_{\on{aft}})_{/\CY})^{\on{op}}}{lim}\, \IndCoh(S)   @>>>  
\underset{S\in ((\affdgSch_{\on{aft}})_{/\CY})^{\on{op}}}{lim}\, \IndCoh(S)  \\
@VVV    @VVV  \\
\underset{S\in (({}^{<\infty}\!\dgSch_{\on{aft}})_{/\CY})^{\on{op}}}{lim}\, \IndCoh(S)  @>>>
\underset{S\in (({}^{<\infty}\!\affdgSch_{\on{aft}})_{/\CY})^{\on{op}}}{lim}\, \IndCoh(S)=:\IndCoh(\CY).
\endCD
$$

The following is immediate from the definitions:

\begin{lem} \label{l:IndCoh and colim}
The functor 
$$\IndCoh_{\inftydgprestack_{\on{laft}}}:(\inftydgprestack_{\on{laft}})^{\on{op}}\to 
\StinftyCat_{\on{cont}}$$
takes colimits in 
$\inftydgprestack_{\on{laft}}$ to limits in $\StinftyCat_{\on{cont}}$. 
\end{lem}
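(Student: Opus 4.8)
The plan is to recognize $\IndCoh_{\inftydgprestack_{\on{laft}}}$ as a right Kan extension along a Yoneda embedding, for which the conversion of colimits into limits is purely formal. First I would recall from the laft formalism of \cite[Sect. 1.3.9]{Stacks} that restriction along the inclusion ${}^{<\infty}\affdgSch_{\on{aft}}\hookrightarrow \affdgSch$ identifies $\inftydgprestack_{\on{laft}}$ with the presheaf $\infty$-category $\on{Funct}(({}^{<\infty}\affdgSch_{\on{aft}})^{\on{op}},\inftygroup)$. In particular $\inftydgprestack_{\on{laft}}$ is the free cocompletion of the (essentially small) category ${}^{<\infty}\affdgSch_{\on{aft}}$, with $j:{}^{<\infty}\affdgSch_{\on{aft}}\hookrightarrow \inftydgprestack_{\on{laft}}$ the Yoneda embedding, and colimits in $\inftydgprestack_{\on{laft}}$ are computed as in this presheaf category. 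Under this identification the defining formula
$$\IndCoh(\CY)=\underset{S\in (({}^{<\infty}\affdgSch_{\on{aft}})_{/\CY})^{\on{op}}}{lim}\, \IndCoh(S)$$
exhibits $\IndCoh_{\inftydgprestack_{\on{laft}}}$ as the right Kan extension, along $j^{\on{op}}$, of the functor $\IndCoh:({}^{<\infty}\affdgSch_{\on{aft}})^{\on{op}}\to \StinftyCat_{\on{cont}}$ with transition maps $f^!$; indeed, since $({}^{<\infty}\affdgSch_{\on{aft}})_{/S}$ has the terminal object $\on{id}_S$, this extension restricts to the original $\IndCoh$ on representables, as it must.

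Next I would isolate the following formal statement and apply it. Let $\mathcal{C}$ be an essentially small $\infty$-category with free cocompletion $\mathcal{P}=\on{Funct}(\mathcal{C}^{\on{op}},\inftygroup)$ and Yoneda embedding $j$, and let $\mathcal{D}$ be an $\infty$-category admitting all small limits. Then for any $F:\mathcal{C}^{\on{op}}\to \mathcal{D}$ the right Kan extension $\on{RKE}_{j^{\on{op}}}(F):\mathcal{P}^{\on{op}}\to \mathcal{D}$ sends colimits in $\mathcal{P}$ to limits in $\mathcal{D}$. To see this I would pass to opposite categories: the functor $\on{RKE}_{j^{\on{op}}}(F)^{\on{op}}:\mathcal{P}\to \mathcal{D}^{\on{op}}$ is the left Kan extension $\on{LKE}_j(F^{\on{op}})$ of $F^{\on{op}}:\mathcal{C}\to \mathcal{D}^{\on{op}}$. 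Since $\mathcal{D}$ has all small limits, $\mathcal{D}^{\on{op}}$ is cocomplete, and by the universal property of the free cocompletion (see \cite{Lu0}) the left Kan extension along $j$ of any functor into a cocomplete target preserves all colimits. Hence $\on{RKE}_{j^{\on{op}}}(F)^{\on{op}}$ preserves colimits, which is to say $\on{RKE}_{j^{\on{op}}}(F)$ takes colimits to limits.

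Applying this with $\mathcal{C}={}^{<\infty}\affdgSch_{\on{aft}}$, with $\mathcal{D}=\StinftyCat_{\on{cont}}$ (which admits all small limits), and with $F=\IndCoh$ then yields the lemma. The one step that is not pure formalism, and which I expect to be the real content, is the identification of $\inftydgprestack_{\on{laft}}$ with $\on{Funct}(({}^{<\infty}\affdgSch_{\on{aft}})^{\on{op}},\inftygroup)$ as a free cocompletion, so that the abstract colimits in the categorical statement agree with colimits taken in $\inftydgprestack_{\on{laft}}$; this rests on the results of \cite{Stacks} rather than on the manipulations above. Granting that identification, everything else is automatic, which is exactly why the assertion may be recorded as immediate from the definitions.
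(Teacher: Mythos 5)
Your proof is correct and is exactly the unpacking of what the paper records as ``immediate from the definitions'': the limit formula defining $\IndCoh(\CY)$ exhibits $\IndCoh_{\inftydgprestack_{\on{laft}}}$ as the right Kan extension of $\IndCoh^!$ on $^{<\infty}\!\affdgSch_{\on{aft}}$ along the Yoneda embedding into $\inftydgprestack_{\on{laft}}\simeq \on{Funct}(({}^{<\infty}\!\affdgSch_{\on{aft}})^{\on{op}},\inftygroup)$, and such extensions send colimits to limits by the universal property of the free cocompletion. The only non-formal input, as you correctly flag, is the identification of $\inftydgprestack_{\on{laft}}$ with this presheaf category from \cite{Stacks}, so no gap remains.
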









\sssec{}

Let us denote by $\IndCoh^!_{\dgindSch_{\on{laft}}}$ the functor
$$(\dgindSch_{\on{laft}})^{\on{op}}\to \StinftyCat_{\on{cont}},$$
obtained from $\IndCoh^!_{\inftydgprestack_{\on{laft}}}$
by restriction along the fully faithful embedding 
$$\dgindSch_{\on{laft}}\hookrightarrow \inftydgprestack_{\on{laft}}.$$

\medskip

Thus, for every $\CX\in \dgindSch_{\on{laft}}$, we have a well-defined DG category $\IndCoh(\CX)$,
which is a module for $\QCoh(\CX)$.

\medskip

We have:

\begin{lem} \label{l:IndCoh expl as lim}
Let $\CX\in \dgindSch$ be written as in \eqref{e:indscheme as a colimit lft weak}. Then the 
natural map 
$$\IndCoh(\CX)\to \underset{\alpha}{lim}\, \IndCoh(X_\alpha),$$
is an equivalence.
\end{lem}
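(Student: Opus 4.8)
The plan is to compare $\IndCoh$ of $\CX$ with $\IndCoh$ of the genuine colimit taken \emph{before} convergent completion, and then invoke \lemref{l:IndCoh and colim}. Write $\CX':=\underset{\alpha}{colim}\, X_\alpha$ for the colimit formed in $\inftydgprestack$, so that by the presentation of \propref{p:presentation of indschemes lft weak} we have $\CX\simeq {}^{\on{conv}}\CX'$.

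First I would record that $\CX'$ is itself locally almost of finite type and is also the colimit of the $X_\alpha$ formed inside $\inftydgprestack_{\on{laft}}$. Indeed, each $X_\alpha$ lies in $\dgSch_{\on{aft}}\subset \inftydgprestack_{\on{laft}}$, and the inclusion $\inftydgprestack_{\on{laft}}\hookrightarrow \inftydgprestack$ admits a right adjoint, namely $\CY\mapsto \on{LKE}_{(\affdgSch_{\on{aft}})^{\on{op}}\hookrightarrow (\affdgSch)^{\on{op}}}(\CY|_{\affdgSch_{\on{aft}}})$; being a left adjoint, this inclusion preserves colimits. Hence $\CX'$, as a colimit of laft objects, lies in $\inftydgprestack_{\on{laft}}$ and coincides with the colimit formed there. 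Applying \lemref{l:IndCoh and colim} yields a canonical equivalence $\IndCoh(\CX')\simeq \underset{\alpha}{lim}\, \IndCoh(X_\alpha)$.

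Next I would show $\IndCoh(\CX)\simeq \IndCoh(\CX')$. By the definition recalled in \secref{sss:IndCoh}, for any laft prestack $\CY$ the category $\IndCoh(\CY)$ is the limit of $\IndCoh(S)$ over $S\in (({}^{<\infty}\!\affdgSch_{\on{aft}})_{/\CY})^{\on{op}}$, and this indexing category depends only on the restriction of $\CY$ to ${}^{<\infty}\!\affdgSch_{\on{aft}}$. Since the functor of convergent completion is the right Kan extension along $({}^{<\infty}\!\affdgSch)^{\on{op}}\hookrightarrow (\affdgSch)^{\on{op}}$ (see \secref{sss:convergence}), restricting ${}^{\on{conv}}\CX'$ back to ${}^{<\infty}\!\affdgSch$ recovers $\CX'|_{{}^{<\infty}\!\affdgSch}$; equivalently, for every eventually coconnective affine $S$ the unit map $\Maps(S,\CX')\to \Maps(S,\CX)$ is an isomorphism. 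Therefore the diagrams defining $\IndCoh(\CX)$ and $\IndCoh(\CX')$ are literally identified, giving $\IndCoh(\CX)\simeq \IndCoh(\CX')$.

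Combining the two steps, $\IndCoh(\CX)\simeq \IndCoh(\CX')\simeq \underset{\alpha}{lim}\, \IndCoh(X_\alpha)$, and one checks that this composite is exactly the map induced by the $!$-pullback functors along the closed embeddings $X_\alpha\to \CX$, i.e. the natural map in the statement. The main obstacle, to my mind, is the bookkeeping of the first step: one must verify that forming the colimit in $\inftydgprestack$ does not leave $\inftydgprestack_{\on{laft}}$ and that it agrees with the colimit formed there, so that \lemref{l:IndCoh and colim} genuinely applies. Once this compatibility of colimits is secured, the remainder is a routine unwinding of the definitions of $\IndCoh$ and of convergent completion.
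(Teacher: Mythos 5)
Your key observation -- that $\IndCoh(-)$ is by definition a limit over $(({}^{<\infty}\!\affdgSch_{\on{aft}})_{/(-)})^{\on{op}}$ and therefore only sees the restriction of a prestack to eventually coconnective affines, which convergent completion does not alter -- is exactly the content hiding behind the paper's one-line proof (``Follows from \lemref{l:IndCoh and colim}''). However, your first step contains a genuine error. You assert that the inclusion $\inftydgprestack_{\on{laft}}\hookrightarrow \inftydgprestack$ has a right adjoint given by $\CY\mapsto \on{LKE}_{(\affdgSch_{\on{aft}})^{\on{op}}\hookrightarrow (\affdgSch)^{\on{op}}}(\CY|_{\affdgSch_{\on{aft}}})$ and hence preserves colimits, so that $\CX':=\underset{\alpha}{colim}\,X_\alpha$ (formed in $\inftydgprestack$) lies in $\inftydgprestack_{\on{laft}}$ and agrees with the colimit formed there. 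This fails on two counts. First, $\inftydgprestack_{\on{laft}}$ is \emph{not} the essential image of that left Kan extension: the paper explicitly warns that for a general $\CY\in\inftydgprestack_{\on{laft}}$ the functor $(\affdgSch_{\on{aft}})_{/\CY}\to(\affdgSch)_{/\CY}$ need not be cofinal (that it is cofinal for laft DG ind\-schemes is the nontrivial \corref{c:IndSch LKE from aft}), so your formula is not a right adjoint to the inclusion. Second, and decisively, membership in $\inftydgprestack_{\on{laft}}$ requires convergence, and $\CX'$ is in general \emph{not} convergent -- this is precisely why \propref{p:presentation of indschemes lft weak} presents $\CX$ as the convergent completion of $\CX'$ rather than as $\CX'$ itself, and why the paper notes that a colimit as in \eqref{e:gen indscheme as a colimit} can fail to be a DG indscheme. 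So \lemref{l:IndCoh and colim} cannot be applied to $\CX'$ in the way you propose.

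The repair is to run your two steps in the opposite order. Restriction to ${}^{<\infty}\!\affdgSch_{\on{aft}}$ sends the colimit in $\inftydgprestack$ to the objectwise colimit, and a laft prestack is recovered from this restriction by right Kan extension; hence the colimit of the $X_\alpha$ formed \emph{inside} $\inftydgprestack_{\on{laft}}$ is $\CX={}^{\on{conv}}\CX'$ itself, not $\CX'$. Applying \lemref{l:IndCoh and colim} to that colimit yields the statement directly, and the identification of the resulting equivalence with the map induced by the $!$-pullbacks is then immediate from the definitions, as you indicate. With this reordering your argument coincides with the paper's.
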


\begin{proof}
Follows from \lemref{l:IndCoh and colim}.
\end{proof}

\begin{rem}
We present results using the presentation of a DG indscheme as in
\eqref{e:indscheme as a colimit lft weak} rather than in \eqref{e:indscheme as a colimit lft strong},
because many DG schemes that occur in practice come in this form. The possibility of
presenting them as in \eqref{e:indscheme as a colimit lft strong} is the result of
\propref{p:presentation of indschemes lft strong} and is seldom explicit. 
\end{rem}









\ssec{Interpretation of $\IndCoh$ as a colimit and compact generation}

\sssec{}

One of the main advantages of the category $\IndCoh(\CX)$ over $\QCoh(\CX)$ for a
DG indscheme $\CX$ is that the former admits an alternative description as a colimit.

\sssec{}  \label{sss:indcoh as colim}

Indeed, recall that for a closed embedding of DG schemes $i:X_1\to X_2$, the functor $$i^!:\IndCoh(X_2)\to \IndCoh(X_1)$$
admits a left adjoint, $i^{\IndCoh}_*$, see \cite[Sect. 3.3]{IndCoh}. 

\medskip

By \lemref{l:IndCoh expl as lim} and \cite[Lemma. 1.3.3]{DG}, we have that for $\CX$ as in \eqref{e:indscheme as a colimit lft weak},
\begin{equation} \label{e:IndCoh of indscheme as colimit}
\IndCoh(\CX)\simeq \underset{\alpha}{colim}\, \IndCoh(X_\alpha),
\end{equation}
where for $\alpha_2\geq \alpha_1$, the map
$\IndCoh(X_{\alpha_2})\to \IndCoh(X_{\alpha_1})$ is $(i_{\alpha_1,\alpha_2})^{\IndCoh}_*$.

\sssec{} \label{sss:coh on ind}

For $\CX\in \dgindSch_{\on{laft}}$, we let $\Coh(\CX)$ denote the full subcategory of $\IndCoh(\CX)$ spanned by objects
$$i^\IndCoh_*(\CF),\quad i:X\to \CX \text{ is a closed embedding and } \CF\in \Coh(X).$$

By \cite[Sect. 2.2.1]{DG}, we obtain:

\begin{cor} \label{c:IndCoh compactly generated}
For $\CX\in \dgindSch$, the category $\IndCoh(\CX)$ is compactly generated by $\Coh(\CX)$.
\end{cor}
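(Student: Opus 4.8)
The plan is to read off the statement from the colimit presentation \eqref{e:IndCoh of indscheme as colimit} together with the standard behaviour of compact generation under filtered colimits in $\StinftyCat_{\on{cont}}$. Note first that, since $\IndCoh$ and hence $\Coh(\CX)$ are only defined in the locally almost of finite type setting, we may assume $\CX\in \dgindSch_{\on{laft}}$ and fix a presentation as in \eqref{e:indscheme as a colimit lft weak} with $X_\alpha\in \dgSch_{\on{aft}}$. By \lemref{l:IndCoh expl as lim} and the discussion in \secref{sss:indcoh as colim} we then have
$$\IndCoh(\CX)\simeq \underset{\alpha}{colim}\, \IndCoh(X_\alpha),$$
the colimit being taken in $\StinftyCat_{\on{cont}}$ with transition functors $(i_{\alpha_1,\alpha_2})^{\IndCoh}_*$, and with insertion functors $(i_\alpha)^{\IndCoh}_*:\IndCoh(X_\alpha)\to \IndCoh(\CX)$ (the left adjoints of the projections $i_\alpha^!$ of the limit presentation, via \cite[Lemma 1.3.3]{DG}).

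Next I would assemble the two inputs needed to invoke the general assertion of \cite[Sect. 2.2.1]{DG}. On the one hand, each $X_\alpha\in \dgSch_{\on{aft}}$, so $\IndCoh(X_\alpha)$ is compactly generated with $\IndCoh(X_\alpha)^c=\Coh(X_\alpha)$, by the corresponding statement for DG schemes almost of finite type from \cite{IndCoh}. On the other hand, the transition functors $(i_{\alpha_1,\alpha_2})^{\IndCoh}_*$ admit continuous right adjoints, namely the functors $i^!_{\alpha_1,\alpha_2}$, and therefore preserve compactness. The cited result then yields that $\IndCoh(\CX)$ is compactly generated, that each insertion functor $(i_\alpha)^{\IndCoh}_*$ preserves compactness, and that the totality of objects $(i_\alpha)^{\IndCoh}_*(\CF)$ with $\CF\in \Coh(X_\alpha)$ compactly generates $\IndCoh(\CX)$.

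It remains to match these generators with the subcategory $\Coh(\CX)$ of \secref{sss:coh on ind}. By construction every object $(i_\alpha)^{\IndCoh}_*(\CF)$ with $\CF\in \Coh(X_\alpha)$ lies in $\Coh(\CX)$, so $\Coh(\CX)$ compactly generates. Conversely, given an arbitrary closed embedding $i:X\to \CX$ with $X\in \dgSch_{\on{aft}}$ and $\CF\in \Coh(X)$, the cofinality recorded in \secref{sss:canonicity of presentation laft} (equivalently \corref{c:laft cofinality}) lets me factor $i$ as $X\overset{j}\to X_\alpha\overset{i_\alpha}\to \CX$ with $j$ a closed embedding into some term of the presentation. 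Since pushforward along a closed embedding preserves coherence, $j^\IndCoh_*(\CF)\in \Coh(X_\alpha)$, and $i^\IndCoh_*(\CF)\simeq (i_\alpha)^{\IndCoh}_*(j^\IndCoh_*(\CF))$ is the insertion of a compact object, hence compact. Thus $\Coh(\CX)\subset \IndCoh(\CX)^c$ and it generates, which is the assertion.

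The substantive work of the argument is already packaged in \cite[Sect. 2.2.1]{DG}, so I expect the main obstacle to be purely organizational: the abstract colimit statement produces generators only of the specific form $(i_\alpha)^{\IndCoh}_*(\CF)$ attached to the chosen presentation, and one must use the cofinality results to see that these exhaust $\Coh(\CX)$ up to isomorphism, so that the conclusion is independent of the presentation. Keeping track that one may take $X_\alpha\in \dgSch_{\on{aft}}$ throughout — which is exactly what \eqref{e:indscheme as a colimit lft weak} provides in the laft case — is what makes this bookkeeping go through.
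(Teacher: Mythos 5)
Your proof is correct and follows essentially the same route as the paper: the paper's entire argument is to combine the colimit presentation \eqref{e:IndCoh of indscheme as colimit} (with transition functors $(i_{\alpha_1,\alpha_2})^{\IndCoh}_*$ admitting the continuous right adjoints $i^!$) with the general compact-generation statement of \cite[Sect. 2.2.1]{DG}. Your extra bookkeeping — using \corref{c:laft cofinality} to check that the generators produced by a chosen presentation exhaust $\Coh(\CX)$ — is exactly the step the paper leaves implicit, and it is carried out correctly.
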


\sssec{} 

We are going to prove:

\begin{prop}  \label{p:descr of all compacts} \hfill

\smallskip

\noindent{\em(a)} $\Coh(\CX)$ is a (non-cocomplete) DG subcategory of $\IndCoh(\CX)$.

\smallskip

\noindent{\em(b)} The natural functor $\Ind(\Coh(\CX))\to \IndCoh(\CX))$ is an equivalence.

\smallskip

\noindent{\em(c)} Every compact object of $\IndCoh(\CX)$ can be realized as a direct summand
of an object of $\Coh(\CX)$.

\end{prop}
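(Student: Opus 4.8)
The plan is to reduce all three parts to a single structural statement: that $\Coh(\CX)$ is the filtered colimit, formed in $\inftyCat$ along fully faithful exact functors, of the categories $\Coh(X_\alpha)$ attached to a presentation of $\CX$. First I would fix a presentation $\CX\simeq \underset{\alpha}{colim}\, X_\alpha$ as in \propref{p:canonical presentation of laft indsch}, with $X_\alpha\in \dgSch_{\on{aft}}$, the indexing category filtered, and the transition maps $i_{\alpha_1,\alpha_2}$ closed embeddings. By \eqref{e:IndCoh of indscheme as colimit} we then have $\IndCoh(\CX)\simeq \underset{\alpha}{colim}\, \IndCoh(X_\alpha)$ in $\StinftyCat_{\on{cont}}$, with transition functors $(i_{\alpha_1,\alpha_2})^{\IndCoh}_*$.

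The key geometric input, taken from \cite[Sect. 3.3]{IndCoh}, is that for a closed embedding $j$ of DG schemes the functor $j^{\IndCoh}_*$ is fully faithful (the unit $\on{id}\to j^!\, j^{\IndCoh}_*$ is an isomorphism) and carries $\Coh$ into $\Coh$ (a closed embedding is proper, so $j^{\IndCoh}_*$ preserves coherence, and $\Coh(X_\alpha)=\IndCoh(X_\alpha)^c$). Since the transition functors are thus fully faithful, the insertion functors $(i_\alpha)^{\IndCoh}_*:\IndCoh(X_\alpha)\to\IndCoh(\CX)$ are fully faithful as well, by the behaviour of mapping spaces under filtered colimits recorded in \cite[Sect. 1.3]{DG}. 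By the definition of $\Coh(\CX)$ in \secref{sss:coh on ind}, together with the cofinality built into \propref{p:canonical presentation of laft indsch}, the images $(i_\alpha)^{\IndCoh}_*(\Coh(X_\alpha))$ exhaust $\Coh(\CX)$; combined with the full faithfulness just noted, this identifies $\Coh(\CX)$ with $\underset{\alpha}{colim}\, \Coh(X_\alpha)$, the colimit taken in $\inftyCat$ along the fully faithful exact restrictions of $(i_{\alpha_1,\alpha_2})^{\IndCoh}_*$.

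With this identification part (a) is immediate: a filtered colimit of stable $\infty$-categories along exact functors is again stable, and the tautological embedding into $\IndCoh(\CX)$ is exact, so $\Coh(\CX)$ is a stable (in particular DG) subcategory; it fails to be cocomplete since infinite colimits of coherent objects need not be coherent. Concretely, one sees closure under cones directly: given $A,B\in\Coh(\CX)$, filteredness realizes both as $(i_\beta)^{\IndCoh}_*\CG_1$ and $(i_\beta)^{\IndCoh}_*\CG_2$ for a single $\beta$ with $\CG_1,\CG_2\in\Coh(X_\beta)$, and full faithfulness of $(i_\beta)^{\IndCoh}_*$ lets one lift any morphism, and hence its cone, to $\Coh(X_\beta)$, which is stable.

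For parts (c) and (b) I would invoke the compact-object package for filtered colimits of \cite[Sect. 2.2]{DG}, already used in \corref{c:IndCoh compactly generated}. Since each $\Coh(X_\alpha)=\IndCoh(X_\alpha)^c$ and the transition functors preserve compactness, every object of $\Coh(\CX)$ is compact in $\IndCoh(\CX)$, and $\IndCoh(\CX)^c$ is the idempotent (Karoubi) completion of $\underset{\alpha}{colim}\, \IndCoh(X_\alpha)^c=\Coh(\CX)$; this is precisely the assertion of (c), that every compact object is a retract of an object of $\Coh(\CX)$. Part (b) then follows formally, since $\Ind$ of a small stable category agrees with $\Ind$ of its idempotent completion: $\Ind(\Coh(\CX))\simeq\Ind(\IndCoh(\CX)^c)\simeq\IndCoh(\CX)$, the last equivalence because $\IndCoh(\CX)$ is compactly generated. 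The main obstacle is the identification $\Coh(\CX)\simeq\underset{\alpha}{colim}\,\Coh(X_\alpha)$, and inside it the full faithfulness of $i^{\IndCoh}_*$ along closed embeddings; once this is in hand, (a)--(c) are formal consequences of the theory of compactly generated categories.
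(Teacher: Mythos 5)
Your overall architecture---identify $\Coh(\CX)$ with $\underset{\alpha}{colim}\,\Coh(X_\alpha)$ and then run the compact-generation formalism of \cite[Sect. 2.2]{DG}---is close in spirit to the paper's argument, but the key input you use to establish the identification is false. For a closed embedding $i:X_1\to X_2$ of DG schemes the functor $i^{\IndCoh}_*$ is \emph{not} fully faithful: full faithfulness is equivalent to the unit $\on{id}\to i^!\circ i^{\IndCoh}_*$ being an isomorphism, and already for the inclusion $i$ of the origin into $\BA^1$ one computes $i^!\circ i^{\IndCoh}_*(k)\simeq \on{RHom}_{k[t]}(k,k)\simeq k\oplus k[-1]$. (It is the counit $i^*\circ i_*\to \on{id}$ of the $(i^*,i_*)$-adjunction on $\QCoh$ that is invertible for a closed embedding; for the $(i^{\IndCoh}_*,i^!)$-adjunction the relevant $2$-morphism is the unit, and it is not.) Consequently the insertion functors $(i_\alpha)^{\IndCoh}_*:\IndCoh(X_\alpha)\to\IndCoh(\CX)$ are not fully faithful either, and your ``concrete'' verification of closure under cones---realize $A,B$ over a single index $\beta$ and lift the morphism using full faithfulness of $(i_\beta)^{\IndCoh}_*$---does not go through: at a \emph{fixed} $\beta$ the map $\Maps_{\IndCoh(X_\beta)}\bigl((i_{\alpha,\beta})^{\IndCoh}_*\CF',(i_{\alpha',\beta})^{\IndCoh}_*\CF''\bigr)\to \Maps_{\IndCoh(\CX)}\bigl((i')^{\IndCoh}_*\CF',(i'')^{\IndCoh}_*\CF''\bigr)$ need not be surjective.

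What is true, and what the paper's proof rests on, is the colimit formula $(i')^!\circ (i'')^{\IndCoh}_*\simeq \underset{\beta}{colim}\,(i_{\alpha,\beta})^!\circ (i_{\alpha',\beta})^{\IndCoh}_*$ of \lemref{l:push forward and restr}. Combined with the compactness of $\CF'\in\Coh(X')$, this yields $\Maps_{\IndCoh(\CX)}\bigl((i')^{\IndCoh}_*\CF',(i'')^{\IndCoh}_*\CF''\bigr)\simeq \underset{\beta}{colim}\,\Maps_{\IndCoh(X_\beta)}\bigl((i_{\alpha,\beta})^{\IndCoh}_*\CF',(i_{\alpha',\beta})^{\IndCoh}_*\CF''\bigr)$: a morphism is realized at \emph{some} finite stage, though not at every stage. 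This is precisely what is needed both for closure of $\Coh(\CX)$ under cones (part (a)) and to substantiate your identification $\Coh(\CX)\simeq\underset{\alpha}{colim}\,\Coh(X_\alpha)$. Once that step is repaired, your deduction of (c) as the Karoubi-completion statement from \cite[Sect. 2.2]{DG} and of (b) from the invariance of $\Ind$ under idempotent completion is fine, and agrees in substance with the paper's deduction of (b) from (a) together with \corref{c:IndCoh compactly generated}, and of (c) from (b).
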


\sssec{}  \label{sss:push forward and restr}

For the proof of the above proposition, we will need the following observation:

\medskip

Let $$X'\overset{i'}\to \CX \overset{i''}\leftarrow X''$$
be closed embeddings. 

\medskip

We would like to calculate the composition
$$(i')^!\circ (i'')^\IndCoh_*:\IndCoh(X'')\to \IndCoh(X').$$

Let $A$ denote the category $(\dgSch_{\on{aft}})_{\on{closed}\,\on{in}\,\CX}$, so that $X'$ and
$X''$ correspond to indices $\alpha$ and $\alpha'$, respectively. 
Let $B$ be any category cofinal in 
$$A_{\alpha\sqcup \alpha'/}:=A_{\alpha/}\underset{A}\times A_{\alpha'/}.$$ 
For $\beta\in B$, let
$$X'=X_{\alpha}\overset{i_{\alpha,\beta}}\longrightarrow X_\beta
\overset{i_{\alpha',\beta}}\longleftarrow X_{\alpha'}=X''$$
denote the corresponding maps.

\medskip

The next assertion follows from \cite[Sect. 1.3.5]{DG}:

\begin{lem} \label{l:push forward and restr}
Under the above circumstances, we have a canonican isomorphism
$$(i')^!\circ (i'')^\IndCoh_*\simeq 
\underset{b\in B}{colim}\, (i_{\alpha,\beta})^!\circ (i_{\alpha',\beta})^\IndCoh_*.$$
\end{lem}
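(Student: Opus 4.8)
The plan is to recognize this as a direct instance of the abstract categorical mechanism of \cite[Sect. 1.3.5]{DG}, applied to the filtered diagram $\alpha\mapsto \IndCoh(X_\alpha)$ indexed by $A=(\dgSch_{\on{aft}})_{\on{closed}\,\on{in}\,\CX}$, which is filtered by \propref{p:canonical presentation of laft indsch}(a) and presents $\CX$ via \propref{p:canonical presentation of laft indsch}(b). The key observation is that this single diagram carries two compatible systems of transition functors: for each arrow $\alpha_1\to \alpha_2$ the closed embedding $i_{\alpha_1,\alpha_2}:X_{\alpha_1}\to X_{\alpha_2}$ yields the pushforward $(i_{\alpha_1,\alpha_2})^\IndCoh_*$, whose continuous right adjoint is $(i_{\alpha_1,\alpha_2})^!$, as recalled in \secref{sss:indcoh as colim} (citing \cite[Sect. 3.3]{IndCoh}).

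First I would assemble the two resulting presentations of the same category. Forming the colimit along the pushforwards gives $\IndCoh(\CX)\simeq \underset{\alpha}{colim}\,\IndCoh(X_\alpha)$ by \eqref{e:IndCoh of indscheme as colimit}, while forming the limit along the $!$-pullbacks gives $\IndCoh(\CX)\simeq \underset{\alpha}{lim}\,\IndCoh(X_\alpha)$ by \lemref{l:IndCoh expl as lim}. Under the equivalence \cite[Lemma 1.3.3]{DG} these two identifications agree, so that $\IndCoh(\CX)$ is simultaneously a colimit (with insertion functors $\on{ins}_\bullet$) and a limit (with evaluation functors $\on{ev}_\bullet$) of the single diagram $\alpha\mapsto\IndCoh(X_\alpha)$.

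Next I would identify the two functors appearing in the statement with these structural functors. Under the colimit presentation, the insertion functor attached to the index $\alpha'$ of $X''$ is exactly the pushforward $(i'')^\IndCoh_*:\IndCoh(X'')\to \IndCoh(\CX)$; under the limit presentation, the evaluation functor attached to the index $\alpha$ of $X'$ is exactly the pullback $(i')^!:\IndCoh(\CX)\to \IndCoh(X')$. Hence the composition in question is precisely $\on{ev}_\alpha\circ \on{ins}_{\alpha'}$, and the content of \cite[Sect. 1.3.5]{DG} is a formula for such an evaluation-after-insertion: it is the colimit, over the category of objects $\beta$ equipped with maps from both $\alpha$ and $\alpha'$, that is over $A_{\alpha/}\underset{A}\times A_{\alpha'/}=A_{\alpha\sqcup \alpha'/}$, of the composites $(i_{\alpha,\beta})^!\circ (i_{\alpha',\beta})^\IndCoh_*$, where $(i_{\alpha,\beta})^!$ is the right adjoint of $(i_{\alpha,\beta})^\IndCoh_*$ landing in $\IndCoh(X_\alpha)$. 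Finally, since $B$ is cofinal in $A_{\alpha\sqcup \alpha'/}$ and the expression is a colimit, replacing the index category by $B$ does not change the value, which yields the asserted isomorphism.

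The only step requiring genuine care — beyond the bookkeeping of invoking the cited results — is matching our situation to \cite[Sect. 1.3.5]{DG}, namely verifying that the colimit-insertion and limit-evaluation functors of the two presentations really are $(i'')^\IndCoh_*$ and $(i')^!$, and keeping the variance straight so that the right adjoint $(i_{\alpha,\beta})^!$ is attached to the $\alpha$-side (i.e.\ to $X'$, the target) while the pushforward $(i_{\alpha',\beta})^\IndCoh_*$ is attached to the $\alpha'$-side (i.e.\ to $X''$, the source). This is where I would be most attentive, since reversing these two roles would produce the wrong formula.
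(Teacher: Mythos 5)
Your argument is exactly what the paper does: the paper's entire proof of this lemma is the citation of \cite[Sect. 1.3.5]{DG}, and your proposal simply unpacks that citation correctly — identifying $\IndCoh(\CX)$ as both the colimit along $(i_{\alpha_1,\alpha_2})^{\IndCoh}_*$ and the limit along $(i_{\alpha_1,\alpha_2})^!$ via \cite[Lemma 1.3.3]{DG}, matching $(i'')^{\IndCoh}_*$ and $(i')^!$ with the insertion and evaluation functors, and invoking cofinality of $B$ in $A_{\alpha\sqcup\alpha'/}$. The identification of the roles of $\alpha$ and $\alpha'$ is the right way around, so there is nothing to add.
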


\sssec{Proof of \propref{p:descr of all compacts}}

To prove point (a), we only need to show that the category $\Coh(\CX)$ is preserved
by taking cones. I.e., we have to show that in the situation of \secref{sss:push forward and restr},
for
$$\CF'\in \Coh(X'),\,\, \CF''\in \Coh(X'')$$
and a map 
$$(i')^\IndCoh_*(\CF')\to (i'')^\IndCoh_*(\CF'')\in \IndCoh(\CX),$$
this map can be realized coming from a map
$$(i_{a',b})^\IndCoh_*(\CF')\to (i_{a'',b})^\IndCoh_*(\CF'')\in \IndCoh(X_b)$$
for some $b\in A_{a'\sqcup a''/}$. However, this readily follows from \lemref{l:push forward and restr}.

\medskip

Point (b) follows from point (a) combined with \corref{c:IndCoh compactly generated}. Point (c)
follows from point (b).

\qed

\ssec{The t-structure on IndCoh}  \label{ss:t-structure on IndCoh} 

\sssec{}

For $\CX\in \dgindSch_{\on{laft}}$, we define a t-structure on $\IndCoh(\CX)$ as follows.
An object $$\CF\in \IndCoh(\CX)$$ belongs to $\IndCoh^{\geq 0}$ if and only if for
every closed embedding $i:X\to \CX$ with $X\in \dgSch_{\on{aft}}$, the object
$i^!(\CF)\in \IndCoh(X)$ belongs to $\IndCoh(X)^{\geq 0}$.

\medskip

By construction, this t-structure is compatible with filtered colimits, i.e., $\IndCoh(\CX)^{\geq 0}$
is preserved by filtered colimits.

\sssec{}

We can describe this t-structure and the category 
$\IndCoh(\CX)^{\leq 0}$ more explicitly. Fix a presentation of $\CX$ as in 
\eqref{e:indscheme as a colimit lft weak}. For each $\alpha$, let $i_\alpha$ denote the corresponding map
$X_\alpha\to \CX$. By \eqref{e:IndCoh of indscheme as colimit}, we have a pair of adjoint functors
$$(i_\alpha)^\IndCoh_*:\IndCoh(X_\alpha)\rightleftarrows \IndCoh(\CX):i_\alpha^!.$$

\begin{lem} \label{l:criter for coconn}
Under the above circumstances we have:

\smallskip

\noindent{\em(a)} An object $\CF\in \IndCoh(\CX)$ belongs to $\IndCoh^{\geq 0}$ if and only if for
every $\alpha$, the object $i_\alpha^!(\CF)\in \IndCoh(X_\alpha)$ belongs to $\IndCoh(X_\alpha)^{\geq 0}$.

\smallskip

\noindent{\em(b)} The category $\IndCoh(\CX)^{\leq 0}$ is generated under colimits by the essential images of 
the functors $(i_\alpha)_*^{\IndCoh}\left(\Coh(X_\alpha)^{\leq 0}\right)$.

\end{lem}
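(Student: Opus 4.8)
The plan is to prove both parts by reducing statements about $\IndCoh(\CX)$ to statements about the $\IndCoh(X_\alpha)$, using the adjunction $((i_\alpha)^\IndCoh_*, i_\alpha^!)$ together with \lemref{l:push forward and restr}. For part (a), one direction is automatic: if $\CF\in \IndCoh(\CX)^{\geq 0}$ in the sense of the defining t-structure, then in particular $i_\alpha^!(\CF)\in \IndCoh(X_\alpha)^{\geq 0}$ for every $\alpha$ in our chosen presentation, since each $i_\alpha:X_\alpha\to \CX$ is a closed embedding with $X_\alpha\in \dgSch_{\on{aft}}$. For the converse, suppose $i_\alpha^!(\CF)\in \IndCoh(X_\alpha)^{\geq 0}$ for all $\alpha$. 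I would take an arbitrary closed embedding $i:X\to \CX$ with $X\in \dgSch_{\on{aft}}$ and show $i^!(\CF)\in \IndCoh(X)^{\geq 0}$. By \corref{c:laft cofinality}, the family $\{X_\alpha\}$ is cofinal in $(\dgSch_{\on{aft}})_{\on{closed}\,\on{in}\,\CX}$, so the object $X$ fits into the filtered system: one can find $\beta$ and a closed embedding $X\to X_\beta$ over $\CX$. Then $i^!$ factors as $(X\to X_\beta)^!\circ i_\beta^!$; since $i_\beta^!(\CF)\in \IndCoh(X_\beta)^{\geq 0}$ by hypothesis and the $!$-pullback along a closed embedding of DG schemes is left t-exact (it is right adjoint to the right t-exact $*$-pushforward), we conclude $i^!(\CF)\in \IndCoh(X)^{\geq 0}$.

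For part (b), I would first observe that each $(i_\alpha)^\IndCoh_*$ is right t-exact (being left adjoint to the left t-exact $i_\alpha^!$), so $(i_\alpha)^\IndCoh_*\left(\Coh(X_\alpha)^{\leq 0}\right)\subset \IndCoh(\CX)^{\leq 0}$; hence the subcategory generated under colimits by these essential images is contained in $\IndCoh(\CX)^{\leq 0}$. For the reverse inclusion, let $\CF\in \IndCoh(\CX)^{\leq 0}$. By \corref{c:IndCoh compactly generated} and the colimit presentation \eqref{e:IndCoh of indscheme as colimit}, $\CF$ is a colimit of objects of the form $(i_\alpha)^\IndCoh_*(\CG_\alpha)$ with $\CG_\alpha\in \Coh(X_\alpha)$. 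The issue is that a priori these $\CG_\alpha$ need not be coconnective. The plan is to replace each such generator by its coconnective truncation: using right t-exactness of $(i_\alpha)^\IndCoh_*$ together with the fact that the t-structure on $\IndCoh(\CX)$ is compatible with filtered colimits, one shows that $\IndCoh(\CX)^{\leq 0}$ is the colimit-closure of the truncations $\tau^{\leq 0}$ of these generators, which lie in $(i_\alpha)^\IndCoh_*\left(\Coh(X_\alpha)^{\leq 0}\right)$.

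The main obstacle I anticipate is the interplay in part (b) between the abstract generation statement and the t-structure. Concretely, one must verify that an arbitrary $\CF\in \IndCoh(\CX)^{\leq 0}$ lies in the colimit-closure of the proposed generators, and the subtlety is that $\Coh(X_\alpha)^{\leq 0}$-objects pushed forward generate $\IndCoh(\CX)^{\leq 0}$ rather than all of $\IndCoh(\CX)$. I would handle this by expressing $\CF$ as $\underset{\alpha}{colim}\, (i_\alpha)^\IndCoh_*\, i_\alpha^!(\CF)$ — valid because the $(i_\alpha)^\IndCoh_*$ are jointly generating and the presentation is a filtered colimit — then noting that by part (a) each $i_\alpha^!(\CF)$ is itself \emph{not} necessarily coconnective, but that the relevant generators are obtained by writing each $i_\alpha^!(\CF)\in \IndCoh(X_\alpha)$ as a filtered colimit of objects of $\Coh(X_\alpha)$ and truncating. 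The compatibility of the $\IndCoh(\CX)$-t-structure with filtered colimits is exactly what lets these truncations pass through the colimit, giving the claimed generation.
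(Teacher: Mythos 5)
Your treatment of part (b) is essentially the paper's (which just says ``(b) follows formally from (a)'') fleshed out, and the mechanism you describe is the right one: $(i_\alpha)^\IndCoh_*$ is right t-exact by adjunction, $\on{Id}_{\IndCoh(\CX)}\simeq \underset{\alpha}{colim}\,(i_\alpha)^\IndCoh_*\circ i_\alpha^!$, and truncations pass through filtered colimits because the t-structure is compatible with them; the remaining input, that $\IndCoh(X_\alpha)^{\leq 0}$ is generated under colimits by $\Coh(X_\alpha)^{\leq 0}$, is the standard fact for quasi-compact DG schemes that the paper records at the start of its proof. The forward direction of (a) is indeed automatic.

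The converse direction of (a) has a genuine gap. You assert that an arbitrary closed embedding $i:X\to \CX$ with $X\in \dgSch_{\on{aft}}$ factors through some $X_\beta$ of the given system, citing \corref{c:laft cofinality}; but that corollary compares the \emph{canonical} family of all aft closed subschemes with various ambient categories, and the cofinality of a \emph{chosen} presentation (\secref{sss:canonicity of presentation laft}) is stated for strong presentations as in \eqref{e:indscheme as a colimit lft strong}. The lemma at hand fixes a presentation as in \eqref{e:indscheme as a colimit lft weak}, where $\CX$ is only the \emph{convergent completion} of $\underset{\alpha}{colim}\, X_\alpha$; for such a presentation one only has $\Maps(X,\CX)\simeq \underset{n}{lim}\,\underset{\alpha}{colim}\,\Maps({}^{\leq n}X,X_\alpha)$, and a limit of filtered colimits need not produce a single index $\beta$ working for all $n$ when $X$ is not eventually coconnective. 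The paper closes exactly this hole by first reducing to classical test schemes: since $\IndCoh(X)^{\leq 0}$ is generated under colimits by $\Coh({}^{cl}\!X)^{\leq 0}$, an object of $\IndCoh(X)$ is coconnective if and only if its $!$-restriction to $^{cl}\!X$ is, so in the definition of $\IndCoh(\CX)^{\geq 0}$ one may restrict to closed embeddings from \emph{classical} schemes; a classical $X$ maps into $^{cl}\CX=\underset{\alpha}{colim}\,{}^{cl}\!X_\alpha$ and therefore factors through some $X_\alpha$ by \lemref{l:maps out of qc}. With that reduction inserted, the rest of your argument (factoring $i^!$ through $i_\beta^!$ and using left t-exactness of $!$-pullback along closed embeddings) goes through.
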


\begin{proof}

It is easy to see that for a quasi-compact DG scheme $X$, the category $\IndCoh(X)^{\leq 0}$
is generated under colimits by $\Coh({}^{cl}\!X)^{\leq 0}$. 
In particular, by adjunction,
an object $\CF\in \IndCoh(X)$ is coconnective if and only if its restriction to 
$^{cl}\!X$ is coconnective.  

\medskip

Hence, in the definition of $\IndCoh(\CX)^{\geq 0}$, instead of all closed embeddings 
$X\to \CX$, it suffices to consider only those with $X$ a classical scheme. 

\medskip

This implies point (a)
of the lemma by \lemref{l:maps out of qc}. Point (b) follows formally from point (a).

\end{proof}

\sssec{}

Suppose $i: X\rightarrow \CX$ is a closed embedding of a DG scheme into a DG indscheme.  We then have:

\begin{lem}\label{l:exactness of closed embedding}
The functor $i^{\IndCoh}_*$ is t-exact.
\end{lem}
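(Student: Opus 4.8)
The plan is to establish the two halves of t-exactness separately, after reducing the general closed embedding to the structure maps of a presentation.

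First I would reduce to the case where $i$ is one of the structure maps $i_\alpha\colon X_\alpha\to\CX$ of a presentation of $\CX$ as in \eqref{e:indscheme as a colimit lft weak}; using \propref{p:canonical presentation of laft indsch} we may take it indexed by the filtered category $A:=(\dgSch_{\on{aft}})_{\on{closed}\,\on{in}\,\CX}$. Indeed, for an arbitrary closed embedding $i\colon X\to\CX$, \lemref{l:gen maps out of qc} lets us factor $i$ as $X\overset{j}\to X_\alpha\overset{i_\alpha}\to \CX$, and $j$ is then a closed embedding of DG schemes by \secref{sss:general closed emb}. Since $X$ is closed in the almost-of-finite-type scheme $X_\alpha$ it is itself almost of finite type, and $j^{\IndCoh}_*$ is t-exact by the corresponding statement for closed embeddings of DG schemes (\cite[Sect. 3.3]{IndCoh}). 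As $i^{\IndCoh}_*\simeq (i_\alpha)^{\IndCoh}_*\circ j^{\IndCoh}_*$, it suffices to treat the maps $i_\alpha$.

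For right t-exactness I would use that $(i_\alpha)^{\IndCoh}_*$, being a left adjoint, is continuous, while $\IndCoh(X_\alpha)^{\leq 0}$ is generated under colimits by $\Coh(X_\alpha)^{\leq 0}$ (as in the proof of \lemref{l:criter for coconn}). By \lemref{l:criter for coconn}(b) the essential image $(i_\alpha)^{\IndCoh}_*\!\left(\Coh(X_\alpha)^{\leq 0}\right)$ lies in $\IndCoh(\CX)^{\leq 0}$, and the latter subcategory is closed under colimits; hence $(i_\alpha)^{\IndCoh}_*$ carries $\IndCoh(X_\alpha)^{\leq 0}$ into $\IndCoh(\CX)^{\leq 0}$.

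For left t-exactness, fix $\CF\in\IndCoh(X_\alpha)^{\geq 0}$. By \lemref{l:criter for coconn}(a) it is enough to show that $i_\beta^!\,(i_\alpha)^{\IndCoh}_*(\CF)\in\IndCoh(X_\beta)^{\geq 0}$ for every $\beta$. Here I would apply \lemref{l:push forward and restr} (with $i'=i_\beta$ and $i''=i_\alpha$) to rewrite this as the filtered colimit $\underset{b\in B}{colim}\,(i_{\beta,b})^!\,(i_{\alpha,b})^{\IndCoh}_*(\CF)$ over a filtered $B$ cofinal in $A_{\beta/}\underset{A}\times A_{\alpha/}$. Each $(i_{\alpha,b})^{\IndCoh}_*$ is t-exact, being pushforward along a closed embedding of DG schemes, and each $(i_{\beta,b})^!$ is left t-exact, being right adjoint to the t-exact functor $(i_{\beta,b})^{\IndCoh}_*$; hence every term lies in $\IndCoh(X_\beta)^{\geq 0}$. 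Since the t-structure on $\IndCoh(X_\beta)$ is compatible with filtered colimits, their filtered colimit again lies in $\IndCoh(X_\beta)^{\geq 0}$, as desired.

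The step I expect to be the main obstacle is the left t-exactness: it is the only part that genuinely uses the indscheme structure, via the colimit formula of \lemref{l:push forward and restr}, rather than formal adjunction. It relies crucially on knowing that pushforward along a closed embedding of DG schemes is t-exact --- this is what makes the $!$-pullbacks left t-exact and lets one control each term of the colimit before passing to the limit.
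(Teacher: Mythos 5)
Your proof is correct and follows essentially the same route as the paper's: right t-exactness from adjunction (the paper gets it in one line from $i^{\IndCoh}_*$ being left adjoint to the left t-exact functor $i^!$, whereas you route it through the generators of \lemref{l:criter for coconn}(b) --- both work), and left t-exactness via the colimit formula of \lemref{l:push forward and restr}, exactly as in the paper. Your initial reduction to the structure maps $i_\alpha$ of a presentation is harmless but unnecessary, since \lemref{l:push forward and restr} already applies to an arbitrary pair of closed embeddings into $\CX$.
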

\begin{proof}
Since $i^{\IndCoh}_*$ is the left adjoint of $i^!$, it is right t-exact.  Thus we need to show that for $\CF \in \IndCoh(X)^{\geq 0}$, 
we have $i_{\alpha}^! \circ i^{\IndCoh}_*(\CF) \in \IndCoh(X_{\alpha})^{\geq 0}$ for every closed embedding 
$i_{\alpha}: X_{\alpha}\rightarrow \CX$. 
However, this follows from \lemref{l:push forward and restr}.
\end{proof}

\sssec{}

Recall the full (but not cocomplete) subcategory $\Coh(\CX)\subset \IndCoh(\CX)$, see 
\secref{sss:coh on ind} above. From \lemref{l:exactness of closed embedding} we obtain:

\begin{cor}
The full subcategories 
$$\Coh(\CX)\subset \IndCoh(\CX)^c\subset \IndCoh(\CX)$$ are preserved by the truncation
functors.
\end{cor}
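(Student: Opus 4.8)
The plan is to reduce everything to the $t$-exactness of $i^{\IndCoh}_*$ established in \lemref{l:exactness of closed embedding}, together with the general principle that a $t$-exact functor commutes with the truncation functors. First I would record this principle: if $F:\bC\to \bD$ is a functor between DG categories with $t$-structures which is $t$-exact (i.e.\ $F(\bC^{\leq n})\subset \bD^{\leq n}$ and $F(\bC^{\geq m})\subset \bD^{\geq m}$), then for every $M\in \bC$ the canonical map $F(\tau^{\leq n}M)\to \tau^{\leq n}F(M)$ is an isomorphism, and similarly for $\tau^{\geq m}$. Indeed, applying $F$ to the truncation triangle $\tau^{\leq n}M\to M\to \tau^{\geq n+1}M$ produces a triangle whose outer terms lie in $\bD^{\leq n}$ and $\bD^{\geq n+1}$ respectively; by the uniqueness of truncation triangles this must be the truncation triangle of $F(M)$.

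Next I would treat $\Coh(\CX)$. Take an object of $\Coh(\CX)$ of the form $i^{\IndCoh}_*(\CF)$ with $i:X\to \CX$ a closed embedding, $X\in \dgSch_{\on{aft}}$ and $\CF\in \Coh(X)$. Since $i^{\IndCoh}_*$ is $t$-exact, the general principle gives $\tau^{\leq n}(i^{\IndCoh}_*(\CF))\simeq i^{\IndCoh}_*(\tau^{\leq n}(\CF))$, and likewise for $\tau^{\geq m}$. It then remains to note that $\Coh(X)$ is preserved by the truncation functors on $\IndCoh(X)$ --- this is the standard fact that a truncation of a bounded complex with coherent cohomology on a Noetherian DG scheme is again of the same kind. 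Hence $\tau^{\leq n}(\CF),\tau^{\geq m}(\CF)\in \Coh(X)$, and applying $i^{\IndCoh}_*$ lands us back in $\Coh(\CX)$ by its very definition (\secref{sss:coh on ind}).

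Finally I would pass to $\IndCoh(\CX)^c$. By \propref{p:descr of all compacts}(c) every compact object $\CG$ is a direct summand of some $\CH\in \Coh(\CX)$, say $\CH\simeq \CG\oplus \CG'$. Applying $\tau^{\leq n}$ gives $\tau^{\leq n}(\CH)\simeq \tau^{\leq n}(\CG)\oplus \tau^{\leq n}(\CG')$, and the left-hand side lies in $\Coh(\CX)$ by the previous step. Thus $\tau^{\leq n}(\CG)$ is a direct summand of a compact object, hence compact, and the same applies to $\tau^{\geq m}$.

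I do not expect a genuine obstacle here: the whole argument rests on the already-proven $t$-exactness in \lemref{l:exactness of closed embedding}, and the only points requiring mild care are the commutation of $t$-exact functors with truncation (purely formal, via the characterization of truncation triangles) and the passage from $\Coh(\CX)$ to its idempotent completion $\IndCoh(\CX)^c$, which is exactly what \propref{p:descr of all compacts} supplies. The most delicate ingredient to cite correctly is the stability of $\Coh(X)$ under truncation on the DG scheme $X$, but this is standard once one uses that $X\in \dgSch_{\on{aft}}$ is Noetherian.
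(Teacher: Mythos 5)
Your proposal is correct and follows the same route the paper intends: the corollary is stated as an immediate consequence of the $t$-exactness of $i^{\IndCoh}_*$ from \lemref{l:exactness of closed embedding}, combined with the stability of $\Coh(X)$ under truncation on a Noetherian DG scheme and the description of compact objects as retracts of objects of $\Coh(\CX)$ from \propref{p:descr of all compacts}(c). You have simply made explicit the formal steps (a $t$-exact functor commutes with truncations; truncation commutes with direct summands) that the paper leaves to the reader.
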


Thus, taking into account \propref{p:descr of all compacts},
we obtain that the t-structure on $\IndCoh(\CX)$ can also be described as the ind-extension of
the t-structure on $\Coh(\CX)$:

\begin{cor}  \label{c:generation of t-structure}
The category $\IndCoh(\CX)^{\geq 0}$ 
is generated under filtered colimits by $\Coh(\CX)^{\geq 0}$.
\end{cor}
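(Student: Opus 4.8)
The plan is to leverage the description $\IndCoh(\CX)\simeq \Ind(\Coh(\CX))$ furnished by \propref{p:descr of all compacts}, together with the fact that the coconnective truncation functor is a \emph{left} adjoint and therefore commutes with filtered colimits.

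First I would take an arbitrary $\CF\in \IndCoh(\CX)^{\geq 0}$ and use \propref{p:descr of all compacts}(b) to present it as a filtered colimit $\CF\simeq \underset{\alpha}{colim}\, \CG_\alpha$ with each $\CG_\alpha\in \Coh(\CX)$. The heart of the argument is then to replace each $\CG_\alpha$ by its coconnective part without altering the colimit. For this I would invoke that the truncation functor $\tau^{\geq 0}$, being left adjoint to the inclusion $\IndCoh(\CX)^{\geq 0}\hookrightarrow \IndCoh(\CX)$, preserves all colimits, and in particular filtered ones. Applying it to the presentation above and using that $\tau^{\geq 0}(\CF)\simeq \CF$ (since $\CF$ is already coconnective), I obtain $\CF\simeq \underset{\alpha}{colim}\, \tau^{\geq 0}(\CG_\alpha)$.

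Finally, I would check that each term lies in $\Coh(\CX)^{\geq 0}$: by the preceding corollary the subcategory $\Coh(\CX)$ is preserved by the truncation functors, so $\tau^{\geq 0}(\CG_\alpha)\in \Coh(\CX)$, and it is coconnective by construction, hence lies in $\Coh(\CX)\cap \IndCoh(\CX)^{\geq 0}=\Coh(\CX)^{\geq 0}$. This exhibits $\CF$ as a filtered colimit of objects of $\Coh(\CX)^{\geq 0}$, which is exactly the asserted generation statement.

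I do not expect a substantial obstacle here; the entire content of the proof is the commutation of $\tau^{\geq 0}$ with the filtered colimit. The only point that genuinely requires care is the direction of the adjunction — that in the cohomological convention used in the paper $\tau^{\geq 0}$ is the \emph{left} adjoint of the inclusion of coconnective objects (dually to $\tau^{\leq n}$ being a right adjoint) — since it is precisely this that renders the colimit-commutation automatic and lets me avoid appealing separately to the compatibility of the t-structure with filtered colimits.
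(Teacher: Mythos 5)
Your argument is correct and is essentially the paper's own proof made explicit: the paper likewise combines $\IndCoh(\CX)\simeq \Ind(\Coh(\CX))$ (\propref{p:descr of all compacts}) with the fact that $\Coh(\CX)$ is preserved by the truncation functors to conclude that the t-structure on $\IndCoh(\CX)$ is the ind-extension of the one on $\Coh(\CX)$. The commutation of $\tau^{\geq 0}$ with filtered colimits that you spell out (valid since $\tau^{\geq 0}$ is the left adjoint of the inclusion of coconnective objects) is precisely the content packed into the phrase ``ind-extension of the t-structure.''
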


\ssec{Serre duality on DG indschemes}

We shall now show that the category $\IndCoh(\CX)$ is canonically self-dual, i.e. there exists a canonical equivalence
\begin{equation} \label{e:self-duality IndCoh}
\bD_\CX^{\on{Serre}}:\IndCoh(\CX)^\vee\simeq \IndCoh(\CX).
\end{equation}

\sssec{}

Let us write $\CX$ as in \eqref{e:indscheme as a colimit lft weak}. 
Combining \eqref{e:IndCoh of indscheme as colimit} with \cite[Lemma 2.2.2]{DG} and \cite[Sect. 9.2.3]{IndCoh}, we obtain:

\begin{cor} \label{c:Serre for Ind}
Serre duality defines a canonical equivalence:
$$\IndCoh(\CX)^\vee\simeq \IndCoh(\CX).$$
\end{cor}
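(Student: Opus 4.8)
The plan is to dualize the presentation of $\IndCoh(\CX)$ as a filtered colimit and then identify the resulting limit with $\IndCoh(\CX)$ itself, using Serre duality on each constituent DG scheme. First I would fix a presentation of $\CX$ as in \eqref{e:indscheme as a colimit lft weak} and record the two descriptions of $\IndCoh(\CX)$ at our disposal: on the one hand, the colimit presentation \eqref{e:IndCoh of indscheme as colimit}
$$\IndCoh(\CX) \simeq \underset{\alpha}{colim}\, \IndCoh(X_\alpha)$$
in $\StinftyCat_{\on{cont}}$, whose transition functors for $\alpha_1\to\alpha_2$ are $(i_{\alpha_1,\alpha_2})^{\IndCoh}_*$; and on the other hand, the limit presentation of \lemref{l:IndCoh expl as lim}
$$\IndCoh(\CX) \simeq \underset{\alpha}{lim}\, \IndCoh(X_\alpha),$$
whose transition functors are $(i_{\alpha_1,\alpha_2})^!$.

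Next I would apply \cite[Lemma 2.2.2]{DG}, which says that dualization in $\StinftyCat_{\on{cont}}$ carries a filtered colimit to the limit of the duals, with each transition functor replaced by its dual. This yields
$$\IndCoh(\CX)^\vee \simeq \underset{\alpha}{lim}\, \IndCoh(X_\alpha)^\vee,$$
where the functor attached to $\alpha_1\to\alpha_2$ is now $\left((i_{\alpha_1,\alpha_2})^{\IndCoh}_*\right)^\vee$. The decisive input is then Serre duality on each $X_\alpha\in\dgSch_{\on{aft}}$, i.e.\ the canonical self-duality $\bD_{X_\alpha}^{\on{Serre}}:\IndCoh(X_\alpha)^\vee\simeq \IndCoh(X_\alpha)$ of \cite[Sect. 9.2.3]{IndCoh}, together with its compatibility property: for a proper morphism, and in particular for a closed embedding $i$, the functor $i^!$ is Serre-dual to $i^{\IndCoh}_*$. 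Granting this, the equivalences $\bD_{X_\alpha}^{\on{Serre}}$ intertwine the transition functors $\left((i_{\alpha_1,\alpha_2})^{\IndCoh}_*\right)^\vee$ with $(i_{\alpha_1,\alpha_2})^!$, so they assemble into an equivalence of the two limit diagrams and give
$$\IndCoh(\CX)^\vee \simeq \underset{\alpha}{lim}\, \IndCoh(X_\alpha) \simeq \IndCoh(\CX),$$
the last step being \lemref{l:IndCoh expl as lim}.

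The main obstacle is not any individual step but the coherence of this final identification: I must know that the family $\{\bD_{X_\alpha}^{\on{Serre}}\}$ is compatible with the functors $i^!$ and $i^{\IndCoh}_*$ not merely arrow by arrow, but as a morphism of the whole $\sA^{\on{op}}$-indexed diagrams, so that the pointwise Serre equivalences genuinely induce an equivalence of the limits. This homotopy-coherent functoriality of Serre duality is precisely what is packaged in \cite[Sect. 9.2.3]{IndCoh}; once it is invoked, the matching of transition functors and the passage to the limit are formal.
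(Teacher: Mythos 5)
Your argument is correct and is exactly the paper's proof: the paper likewise combines the colimit presentation \eqref{e:IndCoh of indscheme as colimit} with \cite[Lemma 2.2.2]{DG} and the coherent Serre self-dualities of \cite[Sect. 9.2.3]{IndCoh}, which intertwine the duals of the pushforward transition functors with the $!$-pullbacks. Your closing remark about needing the compatibility as a morphism of $\sA^{\on{op}}$-diagrams, not just arrow by arrow, correctly identifies the content being imported from that reference.
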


\medskip

Note that by \secref{sss:canonicity of presentation laft}, any other way of writing $\CX$ as in 
\eqref{e:indscheme as a colimit lft strong} will give rise to a canonically isomorphic duality functor. 

\sssec{}

Let us describe the equivalence of \corref{c:Serre for Ind} more explicitly. Namely, we would
like to describe the corresponding pairing:
\begin{equation} \label{e:pairing on IndCoh}
\IndCoh(\CX)\otimes \IndCoh(\CX)\to \Vect.
\end{equation}

\sssec{}  \label{sss:Gamma IndCoh}

For a DG scheme $X$ almost of finite type, let
$$\Gamma^{\IndCoh}(X,-):\IndCoh(X)\to \Vect$$
denote the functor $(p_X)^{\IndCoh}_*$ of \cite{IndCoh}, Proposition 3.1.1, where $p_X:X\to \on{pt}$.

\medskip

For a DG indscheme $\CX$, written as in \eqref{e:indscheme as a colimit lft weak}, we define
the functor
$$\Gamma^{\IndCoh}(\CX,-):\IndCoh(\CX)\to \Vect$$
to be given by the compatible family of functors $\Gamma^{\IndCoh}(X_\alpha,-):\IndCoh(X_\alpha)\to \Vect$.

\medskip

Again, by \secref{sss:canonicity of presentation laft}, the above definition of $\Gamma^{\IndCoh}(\CX,-)$
is canonically independent of the choice of the presentation \eqref{e:indscheme as a colimit lft weak}.

\sssec{}

The definition of the functor $\bD^{\on{Serre}}_\CX$ in \eqref{e:self-duality IndCoh}
and \cite[Sect. 9.2.2]{IndCoh} imply: 

\begin{cor} \label{c:pairing on IndCoh}
The functor \eqref{e:pairing on IndCoh} is canonically isomorphic to the composite
$$\IndCoh(\CX)\otimes \IndCoh(\CX)\overset{\boxtimes}\longrightarrow \IndCoh(\CX\times \CX)\overset{\Delta_\CX^!}
\longrightarrow \IndCoh(\CX)\overset{\Gamma^{\IndCoh}(\CX,-)}\longrightarrow \Vect.$$
\end{cor}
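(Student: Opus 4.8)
The plan is to reduce the statement to the already-established presentation of $\CX$ as a filtered colimit $\underset{\alpha}{colim}\, X_\alpha$ as in \eqref{e:indscheme as a colimit lft weak}, and to match the two pairings level-by-level on the DG schemes $X_\alpha$, where the corresponding statement for schemes is part of \cite[Sect. 9.2.2]{IndCoh}. Concretely, I would first unwind the definition of the self-duality functor $\bD^{\on{Serre}}_\CX$ of \eqref{e:self-duality IndCoh}. By \corref{c:Serre for Ind}, this equivalence is obtained by passing to the colimit in \eqref{e:IndCoh of indscheme as colimit} of the Serre-duality equivalences on the $X_\alpha$, using the compatibility of $(i_{\alpha_1,\alpha_2})^{\IndCoh}_*$ with duality recorded in \cite[Lemma 2.2.2]{DG}. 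Thus the associated pairing \eqref{e:pairing on IndCoh} is, by construction, the colimit (equivalently, the compatible family) of the Serre pairings on the individual $X_\alpha$.

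The key computational step is to identify the composite
$$\IndCoh(\CX)\otimes \IndCoh(\CX)\overset{\boxtimes}\longrightarrow \IndCoh(\CX\times \CX)\overset{\Delta_\CX^!}\longrightarrow \IndCoh(\CX)\overset{\Gamma^{\IndCoh}(\CX,-)}\longrightarrow \Vect$$
with the same compatible family. Here I would use that $\CX\times \CX$ is itself presented as $\underset{\alpha}{colim}\, (X_\alpha\times X_\alpha)$ along the appropriate cofinal diagonal subcategory (the product of the index category with itself is filtered and the diagonal is cofinal), so that by \lemref{l:IndCoh expl as lim} applied to the product, the functor $\boxtimes$ is compatible with the structure maps $(i_\alpha)^{\IndCoh}_*$. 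The delicate point is the behavior of $\Delta_\CX^!$: one must check that $\Delta_\CX^!\circ (i_\alpha\times i_\alpha)^{\IndCoh}_* \simeq (i_\alpha)^{\IndCoh}_*\circ \Delta_{X_\alpha}^!$, i.e.\ that the diagonals are compatible under push-forward. This is a base-change statement for the Cartesian square expressing $X_\alpha$ as the fiber product $\CX\underset{\CX\times \CX}\times (X_\alpha\times X_\alpha)$, and it follows from the base-change isomorphism between $!$-pullback and $\IndCoh$-push-forward along closed embeddings proved in \cite[Sect. 3]{IndCoh}.

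Finally, I would combine the above with the definition of $\Gamma^{\IndCoh}(\CX,-)$ in \secref{sss:Gamma IndCoh} as the compatible family $\{\Gamma^{\IndCoh}(X_\alpha,-)\}$, so that the whole composite becomes, on each $\alpha$,
$$\IndCoh(X_\alpha)\otimes \IndCoh(X_\alpha)\overset{\boxtimes}\longrightarrow \IndCoh(X_\alpha\times X_\alpha)\overset{\Delta_{X_\alpha}^!}\longrightarrow \IndCoh(X_\alpha)\overset{\Gamma^{\IndCoh}(X_\alpha,-)}\longrightarrow \Vect,$$
which is precisely the Serre pairing on the DG scheme $X_\alpha$ by \cite[Sect. 9.2.2]{IndCoh}. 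Matching this with the compatible family describing \eqref{e:pairing on IndCoh} yields the desired identification. I expect the main obstacle to be the compatibility of $\Delta^!_\CX$ with the colimit presentation, i.e.\ the base-change step for the diagonal; the remaining verifications (cofinality of the diagonal index category and the colimit-compatibility of $\boxtimes$ and $\Gamma^{\IndCoh}$) are formal given the results cited above, and the independence of the choice of presentation is guaranteed by \secref{sss:canonicity of presentation laft}.
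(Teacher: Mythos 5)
Your proposal is correct and is essentially the argument the paper intends: the paper's own proof is a one-line appeal to the construction of $\bD^{\on{Serre}}_\CX$ via \corref{c:Serre for Ind} and to the scheme-level statement in \cite[Sect. 9.2.2]{IndCoh}, and your write-up is a faithful unwinding of exactly that reduction to the $X_\alpha$. The only adjustment: the base-change step $\Delta_\CX^!\circ (i_\alpha\times i_\alpha)^{\IndCoh}_*\simeq (i_\alpha)^{\IndCoh}_*\circ \Delta_{X_\alpha}^!$ takes place in a Cartesian square whose right-hand column consists of DG indschemes, so the relevant reference is \propref{p:base change one} (proper base change for ind-proper maps of DG indschemes) rather than the scheme-level result of \cite[Sect. 3]{IndCoh}.
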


\ssec{Functoriality of $\IndCoh$ under pushforwards}

\sssec{}

Recall the functor $\IndCoh_{\dgSch_{\on{aft}}}:\dgSch_{\on{aft}}\to \StinftyCat_{\on{cont}}$ of \cite[Sect. 3.2]{IndCoh}, which assigns
to $X\in \dgSch_{\on{aft}}$ the category $\IndCoh(X)$ and to a map $f:X_1\to X_2$ the functor 
$$f^\IndCoh_*:\IndCoh(X_1)\to \IndCoh(X_2).$$

\medskip

Let
$$(\dgSch_{\on{aft}})_{\on{closed}}\subset (\dgSch_{\on{aft}})_{\on{proper}}\subset \dgSch_{\on{aft}}$$
be the 1-full subcategories, where we restrict $1$-morphisms to be closed embeddings (resp., proper). Let
$$\IndCoh_{(\dgSch_{\on{aft}})_{\on{closed}}} \text{ and } \IndCoh_{(\dgSch_{\on{aft}})_{\on{proper}}}$$
be the restriction of $\IndCoh_{\dgSch_{\on{aft}}}$ to these subcategories.

\sssec{}

We shall say that a map of classical indschemes $f:\CX_1\to \CX_2$ is an \emph{ind-closed embedding} (resp., \emph{ind-proper}) if 
the following condition is satisfied:

\medskip

Whenever $X_i\hookrightarrow \CX_i$ are closed embeddings with $X_i\in \Sch_{\on{ft}}$
such that there exists a commutative diagram
$$
\CD
X_1 @>>>  \CX_1 \\
@V{f'}VV    @VV{f}V  \\
X_2  @>>>  \CX_2,
\endCD
$$
the map $f'$ (which is automatically unique!), is a closed embedding (resp., proper). 

\medskip

Equivalently, one can reformulate this as follows: if $$\CX_1:=\underset{\alpha\in A}{colim}\, X_{1,a} \text{ and }
\CX_2:=\underset{\beta\in A}{colim}\, X_{2,\beta},$$
then for every index $\alpha$, 
and every/some index $\beta$ for which $X_{1,\alpha}\to \CX_1 \to \CX_2$ factors as 
$$X_{1,\alpha}\to X_{2,\beta}\to \CX_2,$$ 
the map $X_{1,\alpha}\to X_{2,\beta}$ is a closed embedding
(resp., proper). 

\medskip

It is easy to see that if $\CX_1=X_1\in \Sch_{\on{qsep-qc}}$, then $f:X_1\to \CX_2$ is an ind-closed embedding
if and only if it is a closed embedding.

\begin{rem}
Note that, in general, ``closed embedding" is stronger than ``ind-closed embedding." For instance,
$$\on{Spf}(k[\![t]\!])\to \Spec(k[t])$$
is an an ``ind-closed emnedding", but not a closed embedding. 
\end{rem}

\sssec{}

We shall say that a map of DG indschemes $f:\CX_1\to \CX_2$ is an \emph{ind-closed embedding} (resp., \emph{ind-proper}) if 
the induced map of classical indschemes $^{cl}\CX_1\to {}^{cl}\CX_2$ is an \emph{ind-closed embedding} (resp., \emph{ind-proper}).

\medskip

Let 
$$(\dgindSch_{\on{laft}})_{\on{ind-closed}}\subset (\dgindSch_{\on{laft}})_{\on{ind-proper}}$$
denote the corresponding 1-full subcategories of $\dgindSch_{\on{laft}}$.

\medskip

Let 
\begin{equation} \label{e:various IndCoh}
\IndCoh_{(\dgindSch_{\on{laft}})_{\on{ind-closed}}},\,\, \IndCoh_{(\dgindSch_{\on{laft}})_{\on{ind-proper}}}
\text{ and } \IndCoh_{\dgindSch_{\on{laft}}}
\end{equation}
denote the left Kan extensions of the functors
$$\IndCoh_{(\dgSch_{\on{aft}})_{\on{closed}}},\,\, \IndCoh_{(\dgSch_{\on{aft}})_{\on{proper}}} \text{ and } \IndCoh_{\dgSch_{\on{aft}}}$$
along the fully faithful embeddings
$$(\dgSch_{\on{aft}})_{\on{closed}}\hookrightarrow 
(\dgindSch_{\on{laft}})_{\on{ind-closed}},\,\, (\dgSch_{\on{aft}})_{\on{proper}}\hookrightarrow 
(\dgindSch_{\on{laft}})_{\on{ind-proper}}$$
and 
$$\dgSch_{\on{aft}}\hookrightarrow \dgindSch_{\on{laft}},$$
respectively.

\medskip

From \eqref{e:IndCoh of indscheme as colimit} and \secref{sss:canonicity of presentation laft} we obtain:

\begin{cor} \label{c:value of IndCoh}
For $\CX\in \dgindSch_{\on{laft}}$, the value of the functor $\IndCoh_{(\dgindSch_{\on{laft}})_{\on{ind-closed}}}$ on $\CX$
is canonically equivalent to $\IndCoh(\CX)$.
\end{cor}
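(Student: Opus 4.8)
The plan is to evaluate the left Kan extension defining $\IndCoh_{(\dgindSch_{\on{laft}})_{\on{ind-closed}}}$ through its pointwise colimit formula, and then to recognize the resulting colimit as the presentation \eqref{e:IndCoh of indscheme as colimit} of $\IndCoh(\CX)$. By definition of the left Kan extension along $(\dgSch_{\on{aft}})_{\on{closed}}\hookrightarrow (\dgindSch_{\on{laft}})_{\on{ind-closed}}$, its value on $\CX$ is the colimit, taken in $\StinftyCat_{\on{cont}}$, of $\IndCoh(Z)$ over the comma category of pairs $(Z,\,Z\to \CX)$ with $Z\in \dgSch_{\on{aft}}$ and $Z\to \CX$ an ind-closed embedding, the transition functors being the pushforwards $i^{\IndCoh}_*$.

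The first step is to identify this comma category with $(\dgSch_{\on{aft}})_{\on{closed}\,\on{in}\,\CX}$. Since every $Z\in \dgSch_{\on{aft}}$ is quasi-separated and quasi-compact, an ind-closed embedding $Z\to \CX$ is the same as a closed embedding, so the objects match. For the morphisms one uses the two-out-of-three property recorded in \secref{sss:general closed emb}: in a factorization $Z_1\to Z_2\to \CX$ with $Z_2\to \CX$ a closed embedding, the map $Z_1\to Z_2$ is a closed embedding exactly when $Z_1\to \CX$ is. Hence any map over $\CX$ between two schemes closed in $\CX$ is automatically a closed embedding, and the two notions of morphism agree. Thus the pointwise formula reads
$$\IndCoh_{(\dgindSch_{\on{laft}})_{\on{ind-closed}}}(\CX)\simeq \underset{Z\in (\dgSch_{\on{aft}})_{\on{closed}\,\on{in}\,\CX}}{colim}\, \IndCoh(Z),$$
with transition functors $i^{\IndCoh}_*$.

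The second step is to match this colimit with $\IndCoh(\CX)$. By \propref{p:canonical presentation of laft indsch}, the category $(\dgSch_{\on{aft}})_{\on{closed}\,\on{in}\,\CX}$ is filtered and the tautological map $\underset{Z}{colim}\, Z\to \CX$ is an isomorphism in $\inftydgprestack$; that is, it is itself a presentation of $\CX$ of the kind entering \eqref{e:IndCoh of indscheme as colimit}. Applying that formula, which rests on \lemref{l:IndCoh and colim} and \cite[Lemma 1.3.3]{DG}, to this very presentation gives $\IndCoh(\CX)\simeq \underset{Z\in (\dgSch_{\on{aft}})_{\on{closed}\,\on{in}\,\CX}}{colim}\, \IndCoh(Z)$ along the same pushforwards $i^{\IndCoh}_*$, which is precisely the colimit produced by the left Kan extension. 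Alternatively, starting from any strong presentation $\underset{\alpha\in \sA}{colim}\, X_\alpha$ as in \propref{p:presentation of indschemes lft strong}, one invokes the cofinality of $\sA\to (\dgSch_{\on{aft}})_{\on{closed}\,\on{in}\,\CX}$ established in \secref{sss:canonicity of presentation laft} to reach the same identification.

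The main obstacle is not the objectwise comparison but its coherence: one must verify that the identification is an equivalence of \emph{diagrams}, so that the structure maps of $\IndCoh_{(\dgSch_{\on{aft}})_{\on{closed}}}$ serving as transition functors in the left Kan extension coincide with those appearing in \eqref{e:IndCoh of indscheme as colimit}, compatibly over the entire filtered index category and not merely on objects. This is exactly what is encoded in the assertion that the comma $\infty$-category of the left Kan extension agrees, as a category over $\CX$, with $(\dgSch_{\on{aft}})_{\on{closed}\,\on{in}\,\CX}$; once this is granted, together with the cofinality of \secref{sss:canonicity of presentation laft} and the colimit description \eqref{e:IndCoh of indscheme as colimit}, the remaining content is this bookkeeping of functoriality.
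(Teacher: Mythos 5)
Your proposal is correct and follows the same route as the paper: the paper's one-line derivation ``from \eqref{e:IndCoh of indscheme as colimit} and \secref{sss:canonicity of presentation laft}'' is precisely the combination of the pointwise colimit formula for the left Kan extension over the comma category (identified with $(\dgSch_{\on{aft}})_{\on{closed}\,\on{in}\,\CX}$, as the paper itself does in the proof of \propref{p:value of IndCoh}) with the colimit presentation of $\IndCoh(\CX)$ and the cofinality of any strong presentation. Your elaboration of the comma-category identification and of the diagram-level coherence is a faithful unpacking of what the paper leaves implicit.
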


\sssec{}

By construction, we have the natural transformations
\begin{multline} \label{e:pushforwards}
\IndCoh_{(\dgindSch_{\on{laft}})_{\on{ind-proper}}}\to \IndCoh_{\dgindSch_{\on{laft}}}|_{(\dgindSch_{\on{laft}})_{\on{ind-proper}}} 
\text{ and } \\
\IndCoh_{(\dgindSch_{\on{laft}})_{\on{ind-closed}}}\to 
\IndCoh_{(\dgindSch_{\on{laft}})_{\on{proper}}}|_{(\dgindSch_{\on{laft}})_{\on{ind-closed}}}.
\end{multline}

\begin{prop} \label{p:value of IndCoh}
The natural transformations \eqref{e:pushforwards} are equivalences.
\end{prop}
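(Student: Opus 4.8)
The plan is to reduce the statement to a cofinality assertion about comma categories, which then follows from \corref{c:laft cofinality} together with an argument patterned on the proof of \corref{c:cofinality of closed}. Since a natural transformation of functors valued in $\StinftyCat_{\on{cont}}$ is an equivalence precisely when it is an equivalence on every object, it suffices to evaluate the maps \eqref{e:pushforwards} on a fixed $\CX\in \dgindSch_{\on{laft}}$.

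First I would unwind the three functors in \eqref{e:various IndCoh} as left Kan extensions, so that the value on $\CX$ is the colimit of $\IndCoh(Z)$ — with transition functors the maps $f^{\IndCoh}_*$ — taken over the appropriate comma category of DG schemes $Z\in \dgSch_{\on{aft}}$ mapping to $\CX$. Concretely, for $\IndCoh_{(\dgindSch_{\on{laft}})_{\on{ind-closed}}}$ the indexing category is $(\dgSch_{\on{aft}})_{\on{closed}\,\on{in}\,\CX}$ (using that an ind-closed map out of a DG scheme is a closed embedding), for $\IndCoh_{(\dgindSch_{\on{laft}})_{\on{ind-proper}}}$ it is the category $(\dgSch_{\on{aft}})_{\on{proper}\,\on{in}\,\CX}$ of DG schemes equipped with an ind-proper map to $\CX$, and for $\IndCoh_{\dgindSch_{\on{laft}}}$ it is $(\dgSch_{\on{aft}})_{/\CX}$. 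The two natural transformations in \eqref{e:pushforwards} are then identified, objectwise, with the comparison maps of colimits induced by the inclusions
$$(\dgSch_{\on{aft}})_{\on{closed}\,\on{in}\,\CX}\hookrightarrow (\dgSch_{\on{aft}})_{\on{proper}\,\on{in}\,\CX}\hookrightarrow (\dgSch_{\on{aft}})_{/\CX};$$
here the transition functors match because a closed embedding is proper and $f^{\IndCoh}_*$ along it agrees with the closed pushforward. Thus the proposition is equivalent to the assertion that both inclusions are cofinal.

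The composite inclusion is cofinal by \corref{c:laft cofinality}, namely by the cofinality of \eqref{e:closed aft vs all aft}. To handle the first inclusion, I would show directly that $(\dgSch_{\on{aft}})_{\on{closed}\,\on{in}\,\CX}\hookrightarrow (\dgSch_{\on{aft}})_{\on{proper}\,\on{in}\,\CX}$ is cofinal. Fix an object of the target, i.e.\ an ind-proper map $f:Z\to \CX$ with $Z\in \dgSch_{\on{aft}}$. The relevant comma category is that of factorizations $Z\to W\hookrightarrow \CX$ with $W\hookrightarrow \CX$ a closed embedding in $(\dgSch_{\on{aft}})_{\on{closed}\,\on{in}\,\CX}$; since $f$ is ind-proper, the map $Z\to W$ in any such factorization is automatically proper, so this is simply the category of \emph{all} factorizations of $f$ through closed embeddings. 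Exactly as in the proof of \corref{c:cofinality of closed}, this category is the fiber over $f$ of the map of spaces
$$\underset{W\in (\dgSch_{\on{aft}})_{\on{closed}\,\on{in}\,\CX}}{colim}\, \Maps(Z,W)\to \Maps(Z,\CX).$$
By \lemref{l:gen maps out of qc} (valid since $Z$ is quasi-separated and quasi-compact) together with the cofinality of \eqref{e:closed aft vs all aft}, this map is an equivalence, so all of its fibers are contractible; hence the first inclusion is cofinal.

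Finally I would invoke the right-cancellation property of cofinal functors (if $F$ and $G\circ F$ are cofinal then so is $G$): since the composite inclusion and the first inclusion are both cofinal, the second inclusion $(\dgSch_{\on{aft}})_{\on{proper}\,\on{in}\,\CX}\hookrightarrow (\dgSch_{\on{aft}})_{/\CX}$ is cofinal as well. Consequently both comparison maps of colimits are equivalences for every $\CX$, which is what we need. The main obstacle is the middle step: identifying the two transformations with the comma-category inclusions and verifying cofinality of the closed-into-proper inclusion. The key simplifying input is that ind-properness of $f$ forces every factorization through a closed subscheme to be proper, which lets me reuse verbatim the fiber-contractibility argument of \corref{c:cofinality of closed} rather than analyzing proper maps directly.
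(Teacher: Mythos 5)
Your proof is correct and follows the paper's argument: both unwind the three functors of \eqref{e:various IndCoh} as colimits of $\IndCoh(-)$ over the corresponding comma categories and reduce the statement to the cofinality of the two inclusions
$(\dgSch_{\on{aft}})_{\on{closed}\,\on{in}\,\CX}\hookrightarrow (\dgSch_{\on{aft}})_{\on{proper}\,\on{over}\,\CX}\hookrightarrow (\dgSch_{\on{aft}})_{/\CX}$, with the composite handled by \corref{c:laft cofinality}. The only divergence is at the intermediate step: the paper observes that since both arrows are fully faithful it suffices that the composite be cofinal, whereas you verify cofinality of the closed-into-proper inclusion directly by the fiber-contractibility argument of \corref{c:cofinality of closed} (correctly noting that ind-properness of $Z\to\CX$ forces any factorization through a closed subscheme to be proper) and then conclude by right-cancellation --- a valid, if somewhat longer, route to the same conclusion.
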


\begin{proof}
For a given $\CX\in \dgindSch$, the value of the functors \eqref{e:various IndCoh} on it are given by
$$\underset{X\in (\dgSch_{\on{aft}})_{\on{closed}\,\on{in}\,\CX}}{colim}\, \IndCoh(X),\,\,
\underset{X\in (\dgSch_{\on{aft}})_{\on{proper}\,\on{over}\,\CX}}{colim}\, \IndCoh(X)$$ and 
$$\underset{X\in (\dgSch_{\on{aft}})_{/\CX}}{colim}\, \IndCoh(X),$$
respectively. 

\medskip

Hence, to prove the proposition, it suffices to show that the functors
$$(\dgSch_{\on{aft}})_{\on{closed}\,\on{in}\,\CX}\to (\dgSch_{\on{aft}})_{\on{proper}\,\on{over}\,\CX}\to
(\dgSch_{\on{aft}})_{/\CX}$$
are cofinal. Since both arrows are fully faithful embeddings, it suffices to show that the functor
$$(\dgSch_{\on{aft}})_{\on{closed}\,\on{in}\,\CX}\to
(\dgSch_{\on{aft}})_{/\CX}$$
is cofinal, but the latter is given by \corref{c:laft cofinality}.

\end{proof}

\sssec{}

Thus, from \propref{p:value of IndCoh} we obtain that for a morphism $f:\CX_1\to \CX_2$
we have a well-defined functor
$$f^\IndCoh_*:\IndCoh(\CX_1)\to \IndCoh(\CX_2).$$

\medskip

Concretely, the functor $f^\IndCoh_*$ can be described as follows. By \eqref{e:IndCoh of indscheme as colimit}, 
objects of $\IndCoh(X)$ are 
colimits of objects of the form $(i_1)^{\IndCoh}_*(\CF_1)$ for
$\CF_1\in \IndCoh(X_1)$, where $X_1$ is a DG scheme almost of finite type equipped with a closed embedding 
$X_1\overset{i_1}\to \CX_1$. By continuity, the functor $f^\IndCoh_*$ is completely determined by its values on such
objects. 

\medskip

By \corref{c:laft cofinality},
we can factor the map 
$$X_1\overset{i_1}\to \CX_1\overset{f}\to \CX_2$$
as
$$X_1\overset{g}\to X_2\overset{i_2}\to \CX_2,$$
where $X_2\in \dgSch_{\on{aft}}$ and $i_2$ being a closed embedding. We set
$$f^\IndCoh_*((i_1)^{\IndCoh}_*(\CF_1)):=(i_2)^\IndCoh_*(g^\IndCoh_*(\CF_1)).$$

\medskip

The content of \propref{p:value of IndCoh} is that this construction extends to a well-defined
functor $f^\IndCoh_*:\IndCoh(\CX_1)\to \IndCoh(\CX_2)$. 

\medskip

Note that the functor $\Gamma^\IndCoh(\CX,-)$ of \secref{sss:Gamma IndCoh} is a particular
instance of this construction for $\CX_1=\CX$ and $\CX_2=\on{pt}$.

\sssec{}

It follows from the definition of the self-duality functors
$$\bD_{\CX_i}^{\on{Serre}}:\IndCoh(\CX_i)^\vee\to \IndCoh(\CX_i),\quad i=1,2$$
that the dual of the functor $f_*^\IndCoh$ identifies canonically with $f^!$. 

\sssec{}

For a morphism of DG indschemes, the pushforward functor on $\IndCoh$ interacts with the t-structure in the usual way:

\begin{lem}
Let $f: \CX_1\rightarrow \CX_2$ be a map of indschemes. Then the functor $f^\IndCoh_*$ is
left t-exact. Furthermore, if $f$ is a closed embedding, then it is t-exact.
\end{lem}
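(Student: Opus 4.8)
The plan is to reduce both assertions to the scheme-theoretic statements, where left t-exactness of $g^\IndCoh_*$ for an arbitrary morphism $g$ and t-exactness for a closed embedding are part of the formalism of \cite{IndCoh}, and to propagate these through the explicit description of $f^\IndCoh_*$ recorded just above together with \lemref{l:exactness of closed embedding}. First I would fix a presentation $\CX_1\simeq \underset{\alpha}{colim}\, X_\alpha$ as in \eqref{e:indscheme as a colimit lft weak}, with structure maps $i_\alpha:X_\alpha\to \CX_1$. By \eqref{e:IndCoh of indscheme as colimit} and \cite[Lemma 1.3.3]{DG}, the functors $(i_\alpha)^\IndCoh_*$ and their right adjoints $i_\alpha^!$ exhibit the identity of $\IndCoh(\CX_1)$ as the filtered colimit $\underset{\alpha}{colim}\,(i_\alpha)^\IndCoh_*\circ i_\alpha^!$, so that every $\CF\in\IndCoh(\CX_1)$ is a filtered colimit of the objects $(i_\alpha)^\IndCoh_*(i_\alpha^!\CF)$. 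Since $f^\IndCoh_*$ is continuous, I then get
\begin{equation*}
f^\IndCoh_*(\CF)\simeq \underset{\alpha}{colim}\,(i_2^{(\alpha)})^\IndCoh_*\bigl((g_\alpha)^\IndCoh_*(i_\alpha^!\CF)\bigr),
\end{equation*}
where, following the construction of $f^\IndCoh_*$, I factor $X_\alpha\to \CX_1\overset{f}\to\CX_2$ as $X_\alpha\overset{g_\alpha}\to X_2^{(\alpha)}\overset{i_2^{(\alpha)}}\to\CX_2$ with $X_2^{(\alpha)}\in\dgSch_{\on{aft}}$ and $i_2^{(\alpha)}$ a closed embedding.

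For left t-exactness I would take $\CF\in\IndCoh(\CX_1)^{\geq 0}$. Then $i_\alpha^!\CF\in\IndCoh(X_\alpha)^{\geq 0}$ by the definition of the t-structure; the scheme-level pushforward $(g_\alpha)^\IndCoh_*$ is left t-exact, so $(g_\alpha)^\IndCoh_*(i_\alpha^!\CF)\in\IndCoh(X_2^{(\alpha)})^{\geq 0}$; and $(i_2^{(\alpha)})^\IndCoh_*$ is t-exact by \lemref{l:exactness of closed embedding}. Thus each term of the colimit lies in $\IndCoh(\CX_2)^{\geq 0}$, and since $\IndCoh(\CX_2)^{\geq 0}$ is preserved by filtered colimits (the t-structure is compatible with filtered colimits, see \secref{ss:t-structure on IndCoh}), I conclude $f^\IndCoh_*(\CF)\in\IndCoh(\CX_2)^{\geq 0}$.

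For t-exactness when $f$ is a closed embedding it remains to establish right t-exactness. Here I would use \lemref{l:criter for coconn}(b): the category $\IndCoh(\CX_1)^{\leq 0}$ is generated under colimits by the essential images of $(i_\alpha)^\IndCoh_*\bigl(\Coh(X_\alpha)^{\leq 0}\bigr)$. As $f^\IndCoh_*$ is continuous and $\IndCoh(\CX_2)^{\leq 0}$ is closed under colimits, it suffices to check that $(i_2^{(\alpha)})^\IndCoh_*\circ(g_\alpha)^\IndCoh_*$ carries $\IndCoh(X_\alpha)^{\leq 0}$ into $\IndCoh(\CX_2)^{\leq 0}$. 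When $f$ is a closed embedding, each $g_\alpha$ is itself a closed embedding of DG schemes, by the transitivity of closed embeddings noted in \secref{sss:general closed emb} applied to $X_\alpha\to X_2^{(\alpha)}\to\CX_2$; hence $(g_\alpha)^\IndCoh_*$ is t-exact at the scheme level, and again $(i_2^{(\alpha)})^\IndCoh_*$ is t-exact by \lemref{l:exactness of closed embedding}, so their composite preserves connectivity.

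The main obstacle — really the only nontrivial input beyond the scheme-level t-exactness facts of \cite{IndCoh} — is ensuring that the ``factor-through-a-scheme'' description of $f^\IndCoh_*$ is compatible with the t-structure. Concretely, I must verify that the auxiliary closed embeddings $i_2^{(\alpha)}$ into $\CX_2$ contribute no shift (this is exactly \lemref{l:exactness of closed embedding}), and that in the closed-embedding case the factorizing maps $g_\alpha$ are genuine closed embeddings of schemes rather than arbitrary morphisms; once these two points are secured, everything reduces to the behavior of pushforward and filtered colimits, which is automatic.
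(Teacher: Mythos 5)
Your proof is correct and follows essentially the same route as the paper: reduce to generators of the (co)connective part via the colimit presentation of $\IndCoh(\CX_1)$ (resp.\ \lemref{l:criter for coconn}(b)), push through the factorization $X_\alpha\to X_2^{(\alpha)}\to\CX_2$, and invoke scheme-level (left) t-exactness together with \lemref{l:exactness of closed embedding}. The only cosmetic differences are that the paper cites \corref{c:generation of t-structure} rather than writing the identity as $\underset{\alpha}{colim}\,(i_\alpha)^\IndCoh_*\circ i_\alpha^!$, and in the closed-embedding case it applies \lemref{l:exactness of closed embedding} directly to the composite closed embedding $X_1\to\CX_2$ instead of splitting it as $g_\alpha$ followed by $i_2^{(\alpha)}$.
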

\begin{proof}
Let $\CF\in \IndCoh(\CX_1)^{\geq 0}$. We wish to show that $f^\IndCoh_*(\CF) \in
\IndCoh(\CX_2)^{\geq 0}$. By \corref{c:generation of t-structure}, 
we can assume that $\CF = (i_1)^{\IndCoh}_*(\CF_1)$ for $\CF_1 \in \IndCoh(X_1)^{\geq 0}$ where 
$i_1: X_1 \rightarrow \CX_1$ is a closed embedding.  

\medskip

Let now
$$X_1 \overset{g}{\rightarrow} X_2 \overset{i_2}{\rightarrow} \CX_2 $$
be a factorization of $f\circ i_1$, where $i_2$ is a closed embedding.  We have:
$$f^\IndCoh_*(\CF) \simeq f^\IndCoh_* ((i_1)^\IndCoh_*(\CF_1)) = (i_2)^\IndCoh_*(g^\IndCoh_*(\CF_1)). $$
By \lemref{l:exactness of closed embedding}, $(i_2)^\IndCoh_*(g^\IndCoh_*(\CF_1)) \in \IndCoh(\CX_2)^{\geq 0}$.

\medskip

Suppose now that $f$ is a closed embedding.  In this case, we wish to show that $f^\IndCoh_*$ is also right t-exact.  
Let $\CF\in \IndCoh(\CX_1)^{\leq 0}$.  By \lemref{l:criter for coconn}(b), we can assume that $\CF = (i_1)^\IndCoh_*(\CF_1)$ for $\CF_1 \in \IndCoh(X_1)^{\leq 0}$ where $i_1: X_1 \rightarrow \CX_1$ is a closed embedding.  The result now follows from the fact that the composed map
$$ X_1 \rightarrow \CX_1 \rightarrow \CX_2 $$
is a closed embedding and \lemref{l:exactness of closed embedding}.
\end{proof}

\ssec{Adjunction for proper maps}

\sssec{}

Consider the functor 
$$\IndCoh^!_{\dgindSch_{\on{laft}}}:\dgindSch_{\on{laft}}^{\on{op}}\to \StinftyCat_{\on{cont}},$$
and let 
$$\IndCoh^!_{(\dgindSch_{\on{laft}})_{\on{ind-proper}}} \text{ and } \IndCoh^!_{(\dgindSch_{\on{laft}})_{\on{ind-closed}}}$$
be the restrictions of $\IndCoh^!_{\dgindSch_{\on{laft}}}$ to the corresponding 1-full subcategories.

\medskip

In addition, consider the corresponding functors
$$\IndCoh^!_{\dgSch_{\on{aft}}},\,\, \IndCoh^!_{(\dgSch_{\on{aft}})_{\on{proper}}} \text{ and } \IndCoh^!_{(\dgSch_{\on{aft}})_{\on{closed}}}$$
for $\dgSch_{\on{aft}}$ instead of $\dgindSch_{\on{laft}}$.

\medskip

As in \propref{p:value of IndCoh}, we have:

\begin{lem} \label{l:value of IndCoh !}
The natural maps
$$ \IndCoh^!_{(\dgindSch_{\on{laft}})_{\on{ind-proper}}} \to \on{RKE}_{(\dgSch_{\on{aft}})^{\on{op}}_{\on{proper}}\hookrightarrow (\dgindSch_{\on{laft}})^{\on{op}}_{\on{ind-proper}}}
(\IndCoh^!_{(\dgSch_{\on{aft}})_{\on{proper}}}) $$
and
$$ \IndCoh^!_{(\dgindSch_{\on{laft}})_{\on{ind-closed}}} \to \on{RKE}_{(\dgSch_{\on{aft}})^{\on{op}}_{\on{closed}}\hookrightarrow (\dgindSch_{\on{laft}})^{\on{op}}_{\on{ind-closed}}}
(\IndCoh^!_{(\dgSch_{\on{aft}})_{\on{closed}}}) $$
are isomorphisms.
\end{lem}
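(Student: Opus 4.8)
The plan is to prove both isomorphisms by evaluating the stated natural transformations --- the units of the (restriction, right Kan extension) adjunctions --- on a fixed object $\CX \in \dgindSch_{\on{laft}}$ and identifying each side with $\IndCoh(\CX)$. First I would unwind the pointwise formula for the right Kan extension. Because $\IndCoh^!$ is contravariant, the value at $\CX$ of $\on{RKE}(\IndCoh^!_{(\dgSch_{\on{aft}})_{\on{closed}}})$ is a limit of the categories $\IndCoh(X)$, taken with respect to the $!$-pullback functors and indexed by the (opposite of the) comma category of the embedding $(\dgSch_{\on{aft}})_{\on{closed}}\hookrightarrow (\dgindSch_{\on{laft}})_{\on{ind-closed}}$ over $\CX$. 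Since a morphism $X\to \CX$ with $X\in \dgSch_{\on{aft}}$ is ind-closed precisely when it is a closed embedding (as observed above), this comma category is canonically $(\dgSch_{\on{aft}})_{\on{closed}\,\on{in}\,\CX}$. In the ind-proper case the same unwinding identifies the relevant comma category with $(\dgSch_{\on{aft}})_{\on{proper}\,\on{over}\,\CX}$.

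Next I would identify the resulting limit with $\IndCoh(\CX)$. Choosing a presentation of $\CX$ as in \eqref{e:indscheme as a colimit lft weak}, \lemref{l:IndCoh expl as lim} exhibits $\IndCoh(\CX)$ as the limit $\underset{\alpha}{lim}\, \IndCoh(X_\alpha)$ with transition maps the $!$-pullbacks. By \secref{sss:canonicity of presentation laft}, the tautological functor $\sA \to (\dgSch_{\on{aft}})_{\on{closed}\,\on{in}\,\CX}$ is cofinal; passing to opposite categories it becomes coinitial, so the limit over $(\dgSch_{\on{aft}})_{\on{closed}\,\on{in}\,\CX}$ agrees with the limit over $\sA$, and hence equals $\IndCoh(\CX)$. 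Tracing through the construction, the unit map is precisely the canonical compatible family of $!$-pullbacks $i_X^!:\IndCoh(\CX)\to \IndCoh(X)$ that exhibits $\IndCoh(\CX)$ as this limit, so it is an equivalence. For the ind-proper statement I would additionally invoke the cofinality of $(\dgSch_{\on{aft}})_{\on{closed}\,\on{in}\,\CX}\to (\dgSch_{\on{aft}})_{\on{proper}\,\on{over}\,\CX}$ established in the proof of \propref{p:value of IndCoh} (both categories being cofinal in $(\dgSch_{\on{aft}})_{/\CX}$ by \corref{c:laft cofinality}); after passing to opposites this reduces the proper limit to the closed one, hence again to $\IndCoh(\CX)$.

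The whole argument is thus the $!$-pullback, limit-theoretic mirror of the $*$-pushforward, colimit-theoretic \propref{p:value of IndCoh}, driven by the same cofinality results of \corref{c:laft cofinality}. The one place that requires genuine care --- and which I expect to be the main obstacle --- is the variance bookkeeping: the earlier results furnish cofinality statements phrased for colimits, whereas here I need the corresponding limits to agree, so one must consistently pass to opposite categories and check that each cofinal functor becomes coinitial. Relatedly, I must confirm that the natural transformation appearing in the statement is indeed the unit of the adjunction, i.e.\ the map assembled from the $!$-pullbacks, so that the abstract comparison of limits genuinely computes \emph{that} map rather than merely producing an abstract equivalence.
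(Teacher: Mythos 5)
Your proof is correct and follows essentially the same route the paper intends: the paper gives no separate argument for this lemma, saying only ``As in \propref{p:value of IndCoh}'', and your unwinding of the pointwise right Kan extension, identification of the comma categories with $(\dgSch_{\on{aft}})_{\on{closed}\,\on{in}\,\CX}$ (resp.\ $(\dgSch_{\on{aft}})_{\on{proper}\,\on{over}\,\CX}$), and appeal to \lemref{l:IndCoh expl as lim} together with the cofinality statements of \corref{c:laft cofinality} and \secref{sss:canonicity of presentation laft} is exactly the limit-theoretic mirror of that argument. Your attention to the variance bookkeeping (cofinal becoming coinitial after passing to opposites) is the right point to be careful about and is handled correctly.
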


We shall now deduce the following:

\begin{cor}  \label{c:! * adj}
The functor 
$$\IndCoh_{(\dgindSch_{\on{laft}})_{\on{ind-proper}}}:(\dgindSch_{\on{laft}})_{\on{ind-proper}}\to \StinftyCat_{\on{cont}}$$ 
is obtained
from the functor 
$$\IndCoh^!_{(\dgindSch_{\on{laft}})_{\on{ind-proper}}}:(\dgindSch_{\on{laft}})^{\on{op}}_{\on{ind-proper}}\to \StinftyCat_{\on{cont}}$$ by passing
to left adjoints.
\end{cor}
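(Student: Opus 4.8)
The plan is to deduce the adjunction $f^{\IndCoh}_* \dashv f^!$ for ind-proper maps from the corresponding adjunction for proper maps of DG schemes, using the two Kan-extension descriptions already at our disposal. First I would recall the base case: for a proper morphism $g$ in $\dgSch_{\on{aft}}$, the functor $g^{\IndCoh}_*$ is the left adjoint of $g^!$ (the $(*,!)$-adjunction for proper maps established in \cite{IndCoh}). In our language this says precisely that the covariant functor $\IndCoh_{(\dgSch_{\on{aft}})_{\on{proper}}}$ is obtained from the contravariant functor $\IndCoh^!_{(\dgSch_{\on{aft}})_{\on{proper}}}$ by passing to left adjoints; in particular each transition functor in the former admits a continuous right adjoint, so passing to adjoints makes sense.

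Next I would reduce the corollary to a purely formal compatibility between passing to adjoints and Kan extension. By construction (see \eqref{e:various IndCoh}), $\IndCoh_{(\dgindSch_{\on{laft}})_{\on{ind-proper}}}$ is the left Kan extension of $\IndCoh_{(\dgSch_{\on{aft}})_{\on{proper}}}$ along $(\dgSch_{\on{aft}})_{\on{proper}} \hookrightarrow (\dgindSch_{\on{laft}})_{\on{ind-proper}}$, whereas by \lemref{l:value of IndCoh !} the functor $\IndCoh^!_{(\dgindSch_{\on{laft}})_{\on{ind-proper}}}$ is the right Kan extension of $\IndCoh^!_{(\dgSch_{\on{aft}})_{\on{proper}}}$ along the opposite embedding. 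Thus the statement follows once we know the general principle that passing to left adjoints carries the right Kan extension of a contravariant diagram to the left Kan extension of its left-adjoint covariant counterpart; i.e., writing $F := \IndCoh^!_{(\dgSch_{\on{aft}})_{\on{proper}}}$ and $F^L = \IndCoh_{(\dgSch_{\on{aft}})_{\on{proper}}}$, one has $(\on{RKE}\,F)^L \simeq \on{LKE}(F^L)$.

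The mechanism behind this principle, and the concrete way I would verify it, is the colimit-versus-limit duality of \cite[Lemma 1.3.3]{DG}. For $\CX \in \dgindSch_{\on{laft}}$ the value $\IndCoh(\CX)$ is the filtered colimit $\underset{X}{colim}\, \IndCoh(X)$ over $(\dgSch_{\on{aft}})_{\on{proper}\,\on{over}\,\CX}$ along the $*$-pushforwards, while $\IndCoh^!(\CX)$ is the corresponding filtered limit along the $!$-pullbacks; the relevant index categories are filtered by \corref{c:laft cofinality}. Since for proper maps $*$-pushforward and $!$-pullback are mutually adjoint continuous functors, \cite[Lemma 1.3.3]{DG} identifies this colimit with the limit, and under the identification the insertion functors $(i_X)^{\IndCoh}_*$ become left adjoint to the evaluation functors $(i_X)^!$. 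Tracking this through a proper factorization $X_{1,\alpha} \overset{g_{\alpha,\beta}}\to X_{2,\beta} \hookrightarrow \CX_2$ of $f\circ i_{1,\alpha}$, the objectwise adjunction $f^{\IndCoh}_* \dashv f^!$ then follows by composing the three adjunctions attached to $i_{1,\alpha}$, $g_{\alpha,\beta}$ and $i_{2,\beta}$, using $(g_{\alpha,\beta})^! (i_{2,\beta})^! \simeq (i_{1,\alpha})^! f^!$.

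I expect the main obstacle to be coherence rather than the objectwise check. Exhibiting the unit and counit for each fixed $f$ is straightforward as above, but upgrading this to an equivalence of functors $(\dgindSch_{\on{laft}})_{\on{ind-proper}} \to \StinftyCat_{\on{cont}}$ — producing the passage to adjoints compatibly across all morphisms and all composites — is the delicate point. I would handle it by invoking the general $\infty$-categorical formalism of passing to adjoints inside a diagram: a functor to $\StinftyCat_{\on{cont}}$ all of whose transition functors admit continuous right adjoints is equivalent data to the functor of right adjoints on the opposite category. Applying this to the two Kan-extended diagrams, together with the already-coherent base-level adjunction, yields the claimed identification and completes the proof.
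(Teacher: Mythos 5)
Your proposal is correct and follows essentially the same route as the paper: the base case is the $(f^{\IndCoh}_*,f^!)$-adjunction for proper maps of DG schemes from \cite[Theorem 5.2.2(a)]{IndCoh}, and the inductive step is exactly the general principle that passing to adjoints intertwines the left Kan extension of the covariant diagram with the right Kan extension of the contravariant one, which the paper also states as a version of \cite[Lemma 1.3.3]{DG}. Your extra discussion of the objectwise unit/counit and of coherence is a sound elaboration of the same argument rather than a different proof.
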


This corollary means that for
a proper map $f:\CX_1\to \CX_2$ in $\dgindSch_{\on{laft}}$, the functor
$$f_*^{\IndCoh}:\IndCoh(\CX_1)\to \IndCoh(\CX_2)$$
is the left adjoint of $f^!:\IndCoh(\CX_2)\to \IndCoh(\CX_1)$ in a way compatible with compositions, and that
this data is homotopy-coherent.

\begin{proof}

This follows from the corresponding fact for the functors $\IndCoh_{(\dgSch_{\on{aft}})_{\on{proper}}}$
and $\IndCoh^!_{(\dgSch_{\on{aft}})_{\on{proper}}}$ (see \cite[Theorem 5.2.2(a)]{IndCoh}), and the following
general assertion:

\medskip

Let $F:\bC_1\to \bC_2$ be a functor between $\infty$-categories. Let $\Phi_1:\bC_1\to \StinftyCat_{\on{cont}}$
be a functor such that for every $\bc'_1\to \bc''_1$, the corresponding functor
$$\Phi_1(\bc'_1)\to \Phi_1(\bc''_1)$$
admits a continuous right adjoint. Let $\Psi_1:\bC_1^{\on{op}}\to \StinftyCat$ be the resulting functor given by
taking the right adjoints. 

\medskip

Let $\Phi_2$ and $\Psi_2$ be the left (resp., right) Kan extension of $\Phi_1$ (resp., $\Psi_1)$ along $F$ 
(resp., $F^{\on{op}}$). 

\medskip

The following is a version of \cite[Lemma 1.3.3]{DG}:

\begin{lem}
Under the above circumstances, the functor $\Psi_2$ is obtained from $\Phi_2$ by taking right adjoints.
\end{lem}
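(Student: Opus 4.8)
The plan is to reduce the statement to the diagram version of the cited result and then to package the homotopy-coherence through the duality $\on{Pr}^L\simeq(\on{Pr}^R)^{\on{op}}$ between presentable categories with left- and right-adjoint functors (in its stable $k$-linear form for $\StinftyCat_{\on{cont}}$). First I would unwind what must be proved: that each transition functor $\Phi_2(\bc'_2\to\bc''_2)$ admits a continuous right adjoint, and that the resulting assignment $\bC_2^{\on{op}}\to\StinftyCat_{\on{cont}}$ agrees with $\Psi_2$. The natural opening step is to record the pointwise Kan-extension formulas. For $\bc_2\in\bC_2$, writing $\CI_{\bc_2}:=\bC_1\times_{\bC_2}(\bC_2)_{/\bc_2}$ for the comma category (whose objects are pairs $(\bc_1, F(\bc_1)\to\bc_2)$), one has $\Phi_2(\bc_2)\simeq\on{colim}_{\CI_{\bc_2}}\Phi_1$; and since the indexing comma category for the right Kan extension at $\bc_2$ is canonically $\CI_{\bc_2}^{\on{op}}$, one also has $\Psi_2(\bc_2)\simeq\lim_{\CI_{\bc_2}^{\on{op}}}\Psi_1$. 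The restriction of $\Phi_1$ to $\CI_{\bc_2}$ again has transition functors admitting continuous right adjoints, and its adjoint diagram is precisely the restriction of $\Psi_1$; hence \cite[Lemma 1.3.3]{DG} supplies, for each $\bc_2$, an equivalence $\Phi_2(\bc_2)\simeq\Psi_2(\bc_2)$ under which the colimit insertions and the limit projections are mutually adjoint. This already yields the assertion \emph{objectwise}.

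The remaining, and genuinely delicate, point is coherence: to exhibit $\Psi_2$ and the pointwise right adjoint of $\Phi_2$ as the same object of $\on{Func}(\bC_2^{\on{op}},\StinftyCat_{\on{cont}})$, rather than merely objectwise. Here I would avoid checking any Beck--Chevalley condition by hand and argue instead through the equivalence $\BD\colon\on{Pr}^L\xrightarrow{\ \sim\ }(\on{Pr}^R)^{\on{op}}$, which is the identity on objects and sends a functor to its right adjoint. Since every continuous functor between presentable categories is a left adjoint, post-composition with $\BD$ is defined on all of $\on{Func}(\bC,\StinftyCat_{\on{cont}})$ and gives an equivalence $\on{Func}(\bC,\StinftyCat_{\on{cont}})\simeq\on{Func}(\bC^{\on{op}},\on{Pr}^R)^{\on{op}}$, sending $\Phi_1\mapsto\Psi_1$ and $\Phi_2$ to the pointwise right adjoint of $\Phi_2$. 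This equivalence manifestly intertwines the restriction functor $F^*$ with the opposite of restriction along $F^{\on{op}}$. Passing to left adjoints of these two restriction functors, and using that $(-)^{\on{op}}$ exchanges the two adjoints of a restriction functor (so the left adjoint of $(\on{res})^{\on{op}}$ is the opposite of $\on{RKE}_{F^{\on{op}}}$), the left adjoint $\on{LKE}_F$ on the source is matched with $(\on{RKE}_{F^{\on{op}}})^{\on{op}}$ on the target. Applying this to $\Phi_1$ gives exactly $\BD\circ\Phi_2\simeq\on{RKE}_{F^{\on{op}}}\Psi_1=\Psi_2$, coherently in $\bc_2$; note that it is precisely this op built into the target that converts $\on{LKE}_F$ into $\on{RKE}_{F^{\on{op}}}$.

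Finally I would verify that $\Psi_2$ is valued in $\StinftyCat_{\on{cont}}$, i.e.\ that the right adjoints produced above are continuous: this follows from the objectwise identification of the first paragraph together with the fact that $\Psi_1$ already takes values in $\StinftyCat_{\on{cont}}$, so that the presentation $\Psi_2(\bc_2)\simeq\lim_{\CI_{\bc_2}^{\on{op}}}\Psi_1$ and all of its structure maps remain within $\StinftyCat_{\on{cont}}$. The main obstacle is the coherence step of the second paragraph: the operation ``pass to the pointwise right adjoint'' is \emph{not} functorial on all of $\on{Func}(\bC,\StinftyCat_{\on{cont}})$ — a naive commuting-square argument with the pointwise adjoint even produces $\on{LKE}_{F^{\on{op}}}$ in place of $\on{RKE}_{F^{\on{op}}}$ — and it is exactly the $\on{Pr}^L\simeq(\on{Pr}^R)^{\on{op}}$ duality (equivalently, the reformulation of such functors as biCartesian fibrations over $\bC_2$) that repairs this, by exchanging the two Kan extensions as required.
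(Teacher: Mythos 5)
Your argument is correct. The paper offers no proof of this lemma beyond the citation of [DG, Lemma 1.3.3], and your two-step argument — the pointwise identification of $\Phi_2(\bc_2)$ with $\Psi_2(\bc_2)$ via the comma-category formulas and that lemma, followed by the coherence upgrade through the equivalence $\on{Pr}^L\simeq(\on{Pr}^R)^{\on{op}}$ (under which passing to opposites correctly exchanges $\on{LKE}_F$ with $\on{RKE}_{F^{\on{op}}}$) — is precisely the standard argument that citation is gesturing at.
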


\end{proof}

\ssec{Proper base change}

\sssec{}

Let 
$$
\CD
\CY_1  @>{g_1}>>  \CX_1  \\  
@V{f_Y}VV     @VV{f_X}V  \\
\CY_2  @>{g_2}>>  \CX_2 
\endCD
$$
be a Cartesian diagram of DG indschemes, with the maps $f_X$ and $f_Y$ ind-proper. From the isomorphism
of functors
$$g_1^!\circ f_X^!\simeq f_Y^!\circ g_2^!,$$
by adjunction, we obtain a natural
transformation
\begin{equation} \label{e:base change one}
(f_Y)^\IndCoh_*\circ g_1^!\to g_2^!\circ (f_X)^\IndCoh_*.
\end{equation}

\begin{prop} \label{p:base change one}
The natural transformation \eqref{e:base change one} is an isomorphism.
\end{prop}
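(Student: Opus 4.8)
The plan is to reduce the assertion to ordinary proper base change for $\IndCoh$ on DG schemes (\cite{IndCoh}), exploiting the presentation of $\IndCoh$ on a DG indscheme as a colimit of the $\IndCoh$'s of its closed subschemes almost of finite type. First I would note that both functors $\IndCoh(\CX_1)\to\IndCoh(\CY_2)$ in \eqref{e:base change one} are continuous: the pushforwards $(f_X)^\IndCoh_*,(f_Y)^\IndCoh_*$ are continuous by \propref{p:value of IndCoh}, and $g_1^!,g_2^!$ are continuous since $\IndCoh^!_{\dgindSch_{\on{laft}}}$ is valued in $\StinftyCat_{\on{cont}}$. Hence by \corref{c:IndCoh compactly generated} it suffices to test \eqref{e:base change one} on compact generators of $\IndCoh(\CX_1)$, i.e.\ on objects $(i_1)^\IndCoh_*\CF$ with $i_1:X_1\hookrightarrow\CX_1$ a closed embedding, $X_1\in\dgSch_{\on{aft}}$ and $\CF\in\Coh(X_1)$.

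For such a generator the right-hand side simplifies. By functoriality of pushforward $(f_X)^\IndCoh_*(i_1)^\IndCoh_*\CF\simeq (f_X\circ i_1)^\IndCoh_*\CF$, and since $f_X$ is ind-proper and $i_1$ is a closed embedding, $f_X\circ i_1$ is ind-proper; by \corref{c:laft cofinality} I may factor it as $X_1\overset{h}{\to}X_2\overset{i_2}{\to}\CX_2$ with $X_2\in\dgSch_{\on{aft}}$, $i_2$ a closed embedding and $h$ \emph{proper}, so the right-hand side becomes $g_2^!(i_2)^\IndCoh_* h^\IndCoh_*\CF$. Base-changing $i_1,i_2$ along $g_1,g_2$ and using $\CY_1\simeq\CX_1\times_{\CX_2}\CY_2$ produces a Cartesian square of DG indschemes
\begin{CD}
Y_1 @>{\wt g_1}>> X_1 \\
@V{\phi}VV @VV{h}V \\
Y_2 @>{\wt g_2}>> X_2
\end{CD}
with $h$ proper, $\phi$ ind-proper, together with closed embeddings $j_1:Y_1\hookrightarrow\CY_1$, $j_2:Y_2\hookrightarrow\CY_2$ satisfying $f_Y\circ j_1\simeq j_2\circ\phi$. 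The key technical input is a base-change isomorphism $(\ast)$ for a \emph{single} closed embedding pulled back along an arbitrary map: for $i:X\hookrightarrow\CX$ a closed embedding with $X\in\dgSch_{\on{aft}}$ and any $g:\CY\to\CX$, forming the Cartesian square with $j:Y\hookrightarrow\CY$ and $\wt g:Y\to X$, the canonical map $j^\IndCoh_*\,\wt g^!\to g^!\,i^\IndCoh_*$ is an isomorphism. Granting $(\ast)$ for the two columns $g_1^!(i_1)^\IndCoh_*$ and $g_2^!(i_2)^\IndCoh_*$, the two sides of \eqref{e:base change one} on our generator are identified with $(j_2)^\IndCoh_*$ applied to $\phi^\IndCoh_*\wt g_1^!\CF$ and to $\wt g_2^! h^\IndCoh_*\CF$ respectively, and the map between them with $(j_2)^\IndCoh_*$ of the base-change map for the square above; thus it remains to prove the \emph{reduced} base change $\phi^\IndCoh_*\wt g_1^!\CF\simeq\wt g_2^! h^\IndCoh_*\CF$ over the \emph{scheme} $X_2$.

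Both $(\ast)$ and the reduced statement I would establish by the same device. By \lemref{l:IndCoh expl as lim} the functors $(i'')^!:\IndCoh(\CW)\to\IndCoh(Y'')$ indexed by closed embeddings $i'':Y''\hookrightarrow\CW$ with $Y''\in\dgSch_{\on{aft}}$ are the structure functors of the limit presentation of $\IndCoh(\CW)$, hence jointly conservative; so it suffices to check each base-change map after applying every such $(i'')^!$. After doing so, both sides become a composition of a $!$-restriction to a quasi-compact scheme with an $\IndCoh$-pushforward: on the pullback side $(i'')^!\wt g^!\simeq(\wt g\circ i'')^!$ and, the base being a scheme, $(i'')^!\wt g_2^! h^\IndCoh_*$ is genuine proper base change along $h$ after forming $X_1\times_{X_2}Y''$ (\cite{IndCoh}); on the pushforward side, a map out of the quasi-compact scheme $Y''$ into the ambient indscheme factors through a closed subscheme almost of finite type by \lemref{l:gen maps out of qc}, whereupon $(i'')^!$ of a pushforward is computed by \lemref{l:push forward and restr} as a filtered colimit of scheme-level compositions $(i_{Y'',\beta})^!(i_{\bullet,\beta})^\IndCoh_*$, each again a scheme proper base change. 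Matching the two filtered colimits — using \corref{c:laft cofinality} to coordinate the index categories — yields the required isomorphisms.

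The main obstacle is precisely $(\ast)$, and it stems from the fact that closed embeddings of DG schemes into DG indschemes need not be schematic, so the fibre products $Y,Y_1,Y_2$ are in general only DG indschemes rather than DG schemes; consequently one cannot invoke scheme-level base change directly and must instead, at every stage, resolve the interaction of a $!$-restriction to a closed subscheme with an $\IndCoh$-pushforward into a filtered colimit of honest scheme base-change isomorphisms via \lemref{l:push forward and restr} and \lemref{l:gen maps out of qc}. The delicate point is to check that the colimit produced on the pushforward side agrees, compatibly and coherently in the relevant index categories, with the base change produced on the pullback side; this bookkeeping is the technical heart of the argument, while each individual isomorphism is supplied by the already-established theory on DG schemes.
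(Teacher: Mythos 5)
Your overall aim --- reducing to scheme-level proper base change via a presentation of $\IndCoh(\CX_1)$ by closed subschemes --- is the right one, and the initial reduction to compact generators $(i_1)^\IndCoh_*\CF$ is fine. The gap lies in the order of the reductions. By base-changing the closed embeddings $i_1:X_1\hookrightarrow\CX_1$ and $i_2:X_2\hookrightarrow\CX_2$ along $g_1,g_2$, you are forced to work with the fiber products $Y_j=X_j\underset{\CX_j}\times \CY_j$. Since a closed embedding of a DG scheme into a DG indscheme is not schematic (and, in the derived setting, not even a monomorphism of prestacks), these fiber products are not DG schemes, and identifying them as DG indschemes requires an argument you do not supply; without it, the pushforwards $(j_1)^\IndCoh_*$, $(j_2)^\IndCoh_*$ in your statement $(\ast)$ are not even defined, as the paper constructs $\IndCoh$-pushforward only for morphisms of DG indschemes. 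More seriously, $(\ast)$ --- base change of the ind-proper map $i$ along an arbitrary map $g$ with indscheme target --- \emph{is} an instance of \propref{p:base change one} itself, so the argument reduces the proposition to a special case of itself in which the essential difficulty (an arbitrary horizontal map into an indscheme) is fully present. Your sketch of an independent proof of $(\ast)$ via joint conservativity of the $(i'')^!$ and \lemref{l:push forward and restr} does not close this loop: that lemma computes $(i')^!\circ (i'')^\IndCoh_*$ only for a \emph{pair of closed embeddings into one and the same DG indscheme}, which is not the configuration that arises, and you yourself defer the matching of the resulting colimits as ``the technical heart of the argument'' without carrying it out.

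The paper's proof avoids all of this by making the \emph{base} of the square a scheme before decomposing the source. It first reduces to the case where $\CY_2$ is a DG scheme, and then, writing
$\IndCoh(\CX_1)\simeq \underset{\alpha}{lim}\, \IndCoh(X_{2,\alpha}\underset{\CX_2}\times\CX_1)$
over affine $X_{2,\alpha}\to \CX_2$, invokes the general \lemref{l:adjoints via limit} on compatible systems of left adjoints in a limit of categories to reduce to the case where $\CX_2=X_2$ is also a DG scheme. Only then is $\CX_1$ written as $\underset{\beta}{colim}\, X_{1,\beta}$; at that point every auxiliary fiber product $Y_{1,\beta}=Y_2\underset{X_2}\times X_{1,\beta}$ is a fiber product of DG schemes over a DG scheme, hence a DG scheme, the maps $X_{1,\beta}\to X_2$ and $Y_{1,\beta}\to Y_2$ are proper by ind-properness, and the natural transformation is exhibited as a colimit over $\beta$ of genuine scheme-level proper base-change isomorphisms from \cite{IndCoh}. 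If you reorder your reductions in this way, the problematic objects $Y_1,Y_2$ and the statement $(\ast)$ never arise; as written, the proposal does not constitute a proof.
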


The proof of this proposition will occupy the next few subsections. 

\sssec{Proof of \propref{p:base change one}, Step 1}

The assertion readily reduces to the case when $\CY_2$ is a DG scheme, denote it $Y_2$. Next,
we are going to show that we can assume $\CX_2$ is also a DG scheme.

\sssec{Interlude}

Consider the following general paradigm. Let $G:\bC_2\to \bC_1$ be a functor between $\infty$-categories.
Let $A$ be a category of indices, and suppose we are given an $A$-family of commutative diagrams
$$
\CD
\bC_{1,\alpha} @<{i_{1,\alpha}}<<  \bC_1  \\
@A{G_\alpha}AA  @AA{G}A   \\
\bC_{2,\alpha} @<{i_{2,\alpha}}<<  \bC_2.
\endCD
$$

Assume that for each $\alpha\in A$, the functor $G_\alpha$ admits a left adjoint $F_\alpha$. Furthermore, assume that for each 
map $\alpha'\to \alpha''$ in $A$, the natural transformation in the diagram 
\begin{equation} \label{e:adjoints index}
\xy
(0,0)*+{\bC_{1,\alpha''}}="A";
(20,0)*+{\bC_{1,\alpha'}}="B";
(0,-20)*+{\bC_{2,\alpha''}}="C";
(20,-20)*+{\bC_{2,\alpha'}}="D";
{\ar@{->}_{i_{1,\alpha',\alpha''}} "B";"A"};
{\ar@{->}^{i_{2,\alpha',\alpha''}} "D";"C"};
{\ar@{->}^{F_{\alpha''}} "A";"C"};
{\ar@{->}^{F_{\alpha'}} "B";"D"};
{\ar@{=>} "A";"D"};
\endxy
\end{equation}
is an isomorphism. 

\medskip

Finally, assume that the functors
$$\bC_1\to \underset{\alpha\in A}{lim}\, \bC_{1,\alpha} \text{ and } \bC_2\to \underset{\alpha\in A}{lim}\, \bC_{2,\alpha}$$
are equivalences.  

\medskip

Under the above circumstances we have:

\begin{lem} \label{l:adjoints via limit}
The functor $G$ admits a left adjoint, denoted $F$, and for every $\alpha\in A$, the natural transformation
in the diagram
$$
\xy
(0,0)*+{\bC_{1,\alpha}}="A";
(20,0)*+{\bC_{1}}="B";
(0,-20)*+{\bC_{2,\alpha}}="C";
(20,-20)*+{\bC_{2}}="D";
{\ar@{->}_{i_{1,\alpha}} "B";"A"};
{\ar@{->}^{i_{2,\alpha}} "D";"C"};
{\ar@{->}^{F_{\alpha}} "A";"C"};
{\ar@{->}^{F} "B";"D"};
{\ar@{=>} "A";"D"};
\endxy
$$
is an isomorphism.
\end{lem}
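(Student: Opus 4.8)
The plan is to read the whole configuration as a morphism of two diagrams of $\infty$-categories indexed by $A$, and to build $F$ as the limit of the fiberwise left adjoints $F_\alpha$, the isomorphisms \eqref{e:adjoints index} being exactly what permits these to be glued. The transition functors $i_{1,\alpha',\alpha''}$ and $i_{2,\alpha',\alpha''}$ organize $\{\bC_{1,\alpha}\}$ and $\{\bC_{2,\alpha}\}$ into diagrams over $A$ whose limits are, by hypothesis, $\bC_1$ and $\bC_2$, with $i_{1,\alpha}$ and $i_{2,\alpha}$ the limit projections; and the given $A$-family of commutative squares makes $\{G_\alpha\}$ a natural transformation $\bC_{2,\bullet}\Rightarrow \bC_{1,\bullet}$ whose limit is $G$.

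First I would observe that the isomorphisms \eqref{e:adjoints index} say precisely that the pointwise left adjoints commute with the transition functors, so that $\{F_\alpha\}$ is a natural transformation $\bC_{1,\bullet}\Rightarrow \bC_{2,\bullet}$. Passing to limits then produces a functor $F:\bC_1\to \bC_2$ equipped, by construction, with isomorphisms $i_{2,\alpha}\circ F\simeq F_\alpha\circ i_{1,\alpha}$ for every $\alpha$; these are exactly the base-change isomorphisms asserted in the lemma. Next I would verify $F\dashv G$. The fiberwise units $\on{id}_{\bC_{1,\alpha}}\to G_\alpha F_\alpha$ and counits $F_\alpha G_\alpha\to \on{id}_{\bC_{2,\alpha}}$ are compatible with the transitions (again via \eqref{e:adjoints index}), hence glue to a unit $\on{id}_{\bC_1}\to G\circ F$ and a counit $F\circ G\to \on{id}_{\bC_2}$. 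The triangle identities are equations of $2$-morphisms, and since the projections $i_{1,\alpha}$, $i_{2,\alpha}$ jointly detect such equations (the $\bC_i$ being limits), it suffices to check them after each projection, where they reduce to the triangle identities for $F_\alpha\dashv G_\alpha$.

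The hard part is the coherence concealed in the words ``natural transformation'' and ``glue'': objectwise adjoints together with objectwise Beck--Chevalley isomorphisms do not, by themselves, assemble into a map of diagrams and a homotopy-coherent adjunction between the limits. This is precisely the bookkeeping carried out by \cite[Lemma 1.3.3]{DG}, of which the present statement is the two-variable version, and I would supply it in the same manner. Unstraighten $\bC_{1,\bullet}$ and $\bC_{2,\bullet}$ to Cartesian fibrations $\mathcal{E}_1,\mathcal{E}_2\to A$, so that $\bC_1$, $\bC_2$ become their $\infty$-categories of Cartesian sections and $G$ is induced by a map $\Phi:\mathcal{E}_2\to \mathcal{E}_1$ over $A$ preserving Cartesian edges. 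The relative adjoint functor theorem (\cite{Lu0}) builds a left adjoint $\Psi$ to $\Phi$ relative to $A$ out of the fiberwise adjoints $F_\alpha$; the hypothesis that \eqref{e:adjoints index} consists of isomorphisms is exactly the assertion that $\Psi$ preserves Cartesian edges. Hence $\Psi$ descends to Cartesian sections and the relative unit and counit restrict, yielding $F\dashv G$ together with the stated base-change isomorphisms, and finishing the proof.
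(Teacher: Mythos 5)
The paper states this lemma without proof---it appears as a black-box categorical fact inside the ``Interlude'' of the proof of \propref{p:base change one}, in the same spirit as the version of \cite[Lemma 1.3.3]{DG} quoted a few subsections later---so there is no argument of the authors' to compare yours against. Your proof is correct, and it correctly isolates the one genuinely nontrivial point: the objectwise left adjoints $F_\alpha$ together with the objectwise Beck--Chevalley isomorphisms \eqref{e:adjoints index} do not by themselves constitute a morphism of diagrams $\bC_{1,\bullet}\to \bC_{2,\bullet}$, so one cannot simply ``take the limit of the $F_\alpha$'' without supplying coherence. Passing to the unstraightened picture---realizing $\bC_1$ and $\bC_2$ as Cartesian sections of fibrations $\mathcal{E}_1,\mathcal{E}_2\to A$, applying the relative adjoint functor theorem of \cite{Lu0} to the map $\Phi:\mathcal{E}_2\to\mathcal{E}_1$ to obtain a relative left adjoint $\Psi$ assembled from the $F_\alpha$, and observing that the hypothesis on \eqref{e:adjoints index} is precisely the statement that $\Psi$ preserves Cartesian edges and hence descends to Cartesian sections---is the standard and correct way to produce both $F\dashv G$ and the asserted base-change isomorphisms $i_{2,\alpha}\circ F\simeq F_\alpha\circ i_{1,\alpha}$. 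The only point worth double-checking in a write-up is the variance bookkeeping (whether the diagrams are indexed by $A$ or $A^{\on{op}}$, matching the convention in Step 2 of the proof of \propref{p:base change one}, where the transition functors are $!$-pullbacks); this affects only notation, not the substance of the argument.
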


\sssec{Proof of \propref{p:base change one}, Step 2}

Write 
$$\CX_2\simeq \underset{\alpha\in A}{colim}\, X_{2,\alpha}$$
where $A$ is the category $(\affdgSch_{\on{aft}})_{/\CX_2}$.
Set $$\CX_{1,\alpha}:=X_{2,\alpha}\underset{\CX_2}\times \CX_1.$$
It is clear that
$$\CX_1\simeq \underset{\alpha\in A}{colim}\, \CX_{1,\alpha},$$
where the colimit is taken in $\on{PreStk}_{\on{laft}}$. 

\medskip

Hence, by \lemref{l:IndCoh and colim}, 
$$\IndCoh(\CX_1)\simeq \underset{\alpha\in A^{\on{op}}}{lim}\, \IndCoh(\CX_{1,\alpha}).$$

Set
$$\bC_2=\IndCoh(\CX_2),\,\, \bC_1=\IndCoh(\CX_1),\,\, \bC_{2,\alpha}=\IndCoh(X_{2,\alpha}),\,\,
\bC_{1,\alpha}=\IndCoh(\CX_{1,\alpha}).$$

The condition of \lemref{l:adjoints via limit} is equivalent to the assertion of \propref{p:base change one}
when instead of $\CX_2\in \dgindSch_{\on{laft}}$ we take $X_{2,\alpha}\in \dgSch_{\on{laft}}$. 

\medskip

Thus,
the assertion of \lemref{l:adjoints via limit} reduces the assertion of \propref{p:base change one} to the
case when both $\CY_2=Y_2$ and $\CX_2=X_2$ are DG schemes.

\sssec{Proof of \propref{p:base change one}, Step 3}

Write
$$\CX_1\simeq \underset{\beta\in B}{colim}\, X_{1,\beta},$$ 
where $X_{1,\beta}\in \dgSch_{\on{aft}}$ and $i_{X,\beta}:X_{1,\beta}\to \CX_1$ are closed embeddings.

\medskip

Set
$$Y_{1,\beta}:=Y_2\underset{X_2}\times X_{1,\beta}.$$

We have:
$$\CY_1\simeq \underset{\beta\in B}{colim}\, Y_{1,\beta},$$ 
Let $i_{Y,\beta}$ denote the correspoding closed embedding $Y_{1,\beta}\to \CY_1$,
and let $g_\beta$ denote the map $Y_{1,\beta}\to X_{1,\beta}$. 
Note that the maps $f_X\circ i_{X,\beta}:X_{1,\beta}\to X_2$ and $f_Y\circ i_{Y,\beta}:Y_{1,\beta}\to Y_2$ are proper, by assumption. 

\medskip

By \eqref{e:IndCoh of indscheme as colimit}, we have:
$$\on{Id}_{\IndCoh(\CX_1)}\simeq \underset{\beta\in B}{colim}\, (i_{X,\beta})_*^\IndCoh\circ (i_{X,\beta})^! \text{ and }
\on{Id}_{\IndCoh(\CY_1)}\simeq \underset{\beta\in B}{colim}\, (i_{Y,\beta})_*^\IndCoh\circ (i_{Y,\beta})^!$$

Hence, we can rewrite the functor $(f_Y)^\IndCoh_*\circ g_1^!$
as
$$\underset{\beta\in B}{colim}\, (f_Y)^\IndCoh_*\circ (i_{Y,\beta})_*^\IndCoh\circ (i_{Y,\beta})^! \circ g_1^!,$$
and the functor $g_2^!\circ (f_X)^\IndCoh_*$ as 
$$\underset{\beta\in B}{colim}\,  g_2^!\circ (f_X)^\IndCoh_* \circ (i_{X,\beta})_*^\IndCoh\circ (i_{X,\beta})^! .$$

It follows from the construction that the map in \eqref{e:base change one} is given
by a compatible system of maps for each $\beta\in B$

\begin{multline*} 
(f_Y)^\IndCoh_*\circ (i_{Y,\beta})_*^\IndCoh\circ (i_{Y,\beta})^! \circ g_1^!\simeq 
(f_Y\circ i_{Y,\beta})^\IndCoh_* \circ (g_1\circ i_{Y,\beta})^! \simeq \\
(f_Y\circ i_{Y,\beta})^\IndCoh_*  \circ  (i_{X,\beta}\circ g_\beta)^! \simeq
(f_Y\circ i_{Y,\beta})^\IndCoh_*  \circ g_\beta^! \circ i_{X,\beta}^! \to \\
\to g_2^! \circ (f_X\circ i_{X,\beta})^\IndCoh_*\circ i_{X,\beta}^!\simeq 
g_2^!\circ (f_X)^\IndCoh_* \circ (i_{X,\beta})_*^\IndCoh\circ i_{X,\beta}^!,
\end{multline*}
where the arrow
$$(f_Y\circ i_{Y,\beta})^\IndCoh_*  \circ g_\beta^! \to g_2^! \circ (f_X\circ i_{X,\beta})^\IndCoh_*$$
is base change for the Cartesian square
$$
\CD
Y_{1,\beta}  @>{g_\beta}>>  X_{1,\beta}  \\
@V{f_Y\circ i_{Y,\beta}}VV    @VV{f_X\circ i_{X,\beta}}V  \\
Y_2  @>{g_2}>>  X_2.
\endCD
$$

Hence, the required isomorphism follows from proper base change in the case of DG schemes,
see \cite[Proposition 3.4.2]{IndCoh}.

\qed

\sssec{}

Let 
$$
\CD
\CY_1  @>{g_1}>>  \CX_1  \\  
@V{f_Y}VV     @VV{f_X}V  \\
\CY_2  @>{g_2}>>  \CX_2 
\endCD
$$
now be a Cartesian diagram of DG indschemes, where the maps $g_1$ and $g_2$ 
are ind-proper. From the isomorphism of functors
$$(g_2)^\IndCoh_*\circ (f_Y)^\IndCoh_*\simeq (f_X)^\IndCoh_*\circ (g_1)^\IndCoh_*$$
by adjunction, we obtain a natural
transformation
\begin{equation} \label{e:base change two}
(f_Y)^\IndCoh_*\circ g_1^!\to g_2^!\circ (f_X)^\IndCoh_*.
\end{equation}

\begin{prop} \label{p:base change two}
The natural transformation \eqref{e:base change two} is an isomorphism.
\end{prop}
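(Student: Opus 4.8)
The plan is to deduce \propref{p:base change two} from \propref{p:base change one} by transposing the Cartesian square and invoking Serre self-duality, rather than repeating the reduction to DG schemes. Transposing the given diagram produces the Cartesian square in which $f_Y\colon \CY_1\to \CY_2$ and $f_X\colon \CX_1\to \CX_2$ are the horizontal maps, while $g_1\colon \CY_1\to \CX_1$ and $g_2\colon \CY_2\to \CX_2$ are the vertical ones. Since the transpose of a Cartesian square is Cartesian and the new vertical maps $g_1,g_2$ are ind-proper by hypothesis, \propref{p:base change one} applies to this transposed square and yields that the base change transformation $(g_1)^{\IndCoh}_*\circ f_Y^!\to f_X^!\circ (g_2)^{\IndCoh}_*$ is an isomorphism.

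Next I would dualize. Each of the categories $\IndCoh(\CX_i)$ and $\IndCoh(\CY_i)$ is canonically self-dual by \corref{c:Serre for Ind}, and under these self-dualities the functor $h^{\IndCoh}_*$ is dual to $h^!$ for every morphism $h$ (as recorded in the discussion following \propref{p:value of IndCoh}). Since dualization reverses composition, $\bigl((f_Y)^{\IndCoh}_*\circ g_1^!\bigr)^\vee\simeq (g_1)^{\IndCoh}_*\circ f_Y^!$ and $\bigl(g_2^!\circ (f_X)^{\IndCoh}_*\bigr)^\vee\simeq f_X^!\circ (g_2)^{\IndCoh}_*$. Because dualization is an anti-equivalence between functor categories with dualizable source and target, a natural transformation is an isomorphism if and only if its dual is. Hence the Serre dual of the isomorphism from the previous paragraph provides an isomorphism between the functors $(f_Y)^{\IndCoh}_*\circ g_1^!$ and $g_2^!\circ (f_X)^{\IndCoh}_*$, which is what we want.

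The one point that must be checked carefully—and which I expect to be the main obstacle—is that the isomorphism obtained by dualization coincides, up to the invertible ambiguity, with the base change map \eqref{e:base change two} of \propref{p:base change two}, namely the mate of the $*$-pushforward commutativity $(g_2)^{\IndCoh}_*\circ (f_Y)^{\IndCoh}_*\simeq (f_X)^{\IndCoh}_*\circ (g_1)^{\IndCoh}_*$ under the adjunctions $(g_i)^{\IndCoh}_*\dashv g_i^!$. This is a coherence statement about mates: the transformation of \propref{p:base change one} applied to the transposed square is the mate of the $!$-pullback commutativity under the adjunctions $f_X^{\IndCoh}{}_{\!*}\dashv f_X^!$ and $f_Y^{\IndCoh}{}_{\!*}\dashv f_Y^!$, and one must verify that passing to Serre duals interchanges this mate with the mate formed from the $*$-pushforward commutativity, compatibly with the identifications $(h^{\IndCoh}_*)^\vee\simeq h^!$. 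Unwinding the unit/counit descriptions of both mates and using the naturality of the duality data reduces this to a diagram chase. Should this coherence prove awkward to formalize directly, the fallback is to mirror the three-step proof of \propref{p:base change one}: reduce successively to the situation in which the base objects are DG schemes—writing the relevant indschemes as filtered colimits under closed embeddings and using \eqref{e:IndCoh of indscheme as colimit} together with the adjunctions $(g_i)^{\IndCoh}_*\dashv g_i^!$—and then conclude by base change for DG schemes, \cite[Proposition 3.4.2]{IndCoh}.
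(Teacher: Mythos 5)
Your transposition-plus-duality route is genuinely different from the paper's argument, and the step you yourself single out as the main obstacle is exactly where the proof lives: it is a real gap, not a diagram chase that can be discharged with the tools the paper provides. To identify the Serre dual of the base change map of the transposed square with the specific transformation \eqref{e:base change two}, you need the identifications $(h^{\IndCoh}_*)^\vee\simeq h^!$ to be supplied \emph{coherently}: compatibly with composition (so that the dual of the isomorphism $(g_2)^{\IndCoh}_*\circ (f_Y)^{\IndCoh}_*\simeq (f_X)^{\IndCoh}_*\circ (g_1)^{\IndCoh}_*$ really is the isomorphism $f_Y^!\circ g_2^!\simeq g_1^!\circ f_X^!$ from which the transposed base change map is built) and compatibly with the units and counits of the adjunctions $(g_i)^{\IndCoh}_*\dashv g_i^!$, so that dualization exchanges the two mates. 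The paper only identifies $(f^{\IndCoh}_*)^\vee$ with $f^!$ one morphism at a time, and explicitly defers the homotopy-coherent functoriality of base change data for indschemes to \cite{GR} (see Remark~\ref{r:base change ind}); in a 1-categorical setting your unwinding of units and counits would be routine, but here the naturality of the duality data is itself structure that has not been constructed. (A minor side point: the dualization $\on{Funct}(\bC,\bD)\to \on{Funct}(\bD^\vee,\bC^\vee)$ is a covariant equivalence rather than an anti-equivalence, though this does not affect your conclusion.)

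The paper closes the statement without any duality. Using \eqref{e:IndCoh of indscheme as colimit} it reduces to the case $\CX_1=X_1\in \dgSch_{\on{aft}}$, and then factors $f_X:X_1\to \CX_2$ as $X_1\to X_2\to \CX_2$ with $X_2\in \dgSch_{\on{aft}}$ and $X_2\to \CX_2$ a closed embedding, which is possible by \corref{c:laft cofinality}. This splits the assertion into two cases: when $f_X$ is a closed embedding it is in particular ind-proper (and so is its base change $f_Y$), so \propref{p:base change one} applies verbatim; and when both $\CX_1$ and $\CX_2$ are DG schemes one repeats the colimit argument of Step 3 of the proof of \propref{p:base change one}. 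Your fallback points in this direction but omits the factorization trick, which is what makes the reduction terminate: one cannot literally mirror all three steps of the earlier proof, since there the reduction of $\CX_2$ to a scheme used that the vertical pushforwards were \emph{left adjoints}, being pushforwards along ind-proper maps, whereas here $(f_Y)^{\IndCoh}_*$ and $(f_X)^{\IndCoh}_*$ are pushforwards along arbitrary maps. If you want a complete argument within the paper's framework, the factorization route is the one to take.
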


\begin{rem}
It is easy to see from \corref{c:! * adj} that when both pairs of morphisms (i.e., $(f_X,f_Y)$ and $(g_1,g_2)$) are ind-proper,
then the natural transformations \eqref{e:base change one} and \eqref{e:base change two} are canonically
isomorphic.
\end{rem}

\begin{proof}

By \eqref{e:IndCoh of indscheme as colimit}, we can assime that $\CX_1=X_1\in \dgSch_{\on{aft}}$.
Factor the map $f:X_1\to \CX_2$ as a composition
$$X_1\to X_2\to \CX_2,$$
where $X_2\in \dgSch_{\on{aft}}$ and $X_2\to \CX_2$ is a closed embedding. Such a factorization is possible 
by \corref{c:laft cofinality}.

\medskip

This reduces the assertion of the proposition to the analyses of the following two cases: (1) when the morphism
$f$ is a closed embedding (and, in particular, proper); and (2) when both $\CX_1=X_1$ and $\CX_2=X_2$ are
DG schemes.

\medskip

Now, the assertion in case (1) follows from \propref{p:base change one}. The assertion in case (2) follows
by repeating the argument of Step 3 in the proof of \propref{p:base change one}. 

\end{proof}

\begin{rem} \label{r:base change ind}
The isomorphisms as in \eqref{e:base change one} and \eqref{e:base change two}
can be defined for all Cartesian diagrams of DG indschemes, i.e., we one does not need to require that
either pair of maps be ind-proper. However, the construction is more involved as there is no a priori map in either direction. 

\medskip

For an individual diagram, such an isomorphism is 
easy to deduce from \cite[Sect. 5]{IndCoh}, where the corresponding natural transformations were constructed in the case of DG 
schemes.

\medskip  

A functorial construction of these natural transformations for indschemes compatible with composition requires additional 
work and will be carried out in \cite{GR}. Furthermore, as in \cite[Sect. 10.6]{IndCoh}
one can combine the functors
$$\IndCoh^!_{\on{PreStk}_{\on{laft}}}:(\on{PreStk}_{\on{laft}})^{\on{op}}\to \StinftyCat_{\on{cont}}$$ and 
$$\IndCoh_{\dgindSch_{\on{laft}}}:\dgindSch_{\on{laft}}\to \StinftyCat_{\on{cont}}$$ 
to a functor
$$\IndCoh_{(\on{PreStk}_{\on{laft}})_{\on{corr:ind-sch;all}}}:(\on{PreStk}_{\on{laft}})_{\on{corr:ind-sch;all}}\to 
\StinftyCat_{\on{cont}},$$
where $(\on{PreStk}_{\on{laft}})_{\on{corr:ind-sch;all}}$ is the category of correspondences, whose
objects are prestacks locally almost of finite type $\CY$, and whose morphisms are correspondences
$$
\CD
\CY_{1,2} @>{g}>>  \CY_1 \\
@V{f}VV  \\
\CY_2,
\endCD
$$
where the morphism $g$ is arbitrary, and the morphism $f$ is ind-schematc (i.e., a morphism such that its base change by an
affine DG scheme yields a DG indscheme). 
\end{rem}

\ssec{Groupoids in $\dgindSch$}

\sssec{}  \label{sss:groupoid}

Let $\CX^\bullet$ be a simplicial object in $\dgindSch$, arising from a groupoid
object 
\begin{equation} \label{e:gen groupoid}
p_s,p_t:\CX^1\rightrightarrows \CX^0
\end{equation}
(see \cite{Lu0}, Definition 6.1.2.7).

\medskip

Suppose that the face maps in the above simplicial DG indscheme are ind-proper
(equivalently, the maps $p_s,p_t$ in \eqref{e:gen groupoid} are ind-proper). 

\medskip

In this case, the forgetful functor
$$\on{Tot}(\IndCoh(\CX^\bullet))\to \IndCoh(\CX^0)$$
admits a left adjoint; moreover, the resulting monad on $\IndCoh(\CX^0)$, when
viewed as a plain endo-functor of $\IndCoh(\CX^0)$, is naturally isomorphic to 
$$(p_t)^{\IndCoh}_*\circ (p_s)^!.$$
The proof is the same as that of \cite[Proposition 8.2.3]{IndCoh}.

\sssec{}  \label{sss:Cech}

Assume that in the situation of \secref{sss:groupoid}, the groupoid arises as the
\v{C}ech nerve of a morphism $f:\CX\to \CY$, which is ind-proper and
surjective.\footnote{I.e., the base change of $f$ by an object of
$\affdgSch_{\on{aft}}$ yields a morphism surjective on geometric points.}  Let $\CX^\bullet/\CY$ denote the resulting simplicial object.

\medskip

In this case, the augmentation 
$$\CX^\bullet/\CY\to \CY$$
gives rise to a functor
\begin{equation} \label{e:Cech}
\IndCoh(\CY)\to \on{Tot}(\IndCoh(\CX^\bullet/\CY)).
\end{equation}

As in \cite[Proposition 8.2.3]{IndCoh} we have:
\begin{lem} \label{l:Cech}
Under the above circumstances, the functor \eqref{e:Cech} is an equivalence.
\end{lem}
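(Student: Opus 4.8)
The plan is to realize the augmentation functor \eqref{e:Cech} as the comparison functor of the monadic Barr--Beck--Lurie theorem and thereby reduce the assertion to the conservativity of $f^!$. Since $f$ is ind-proper, \corref{c:! * adj} gives that $f^\IndCoh_*:\IndCoh(\CX)\to \IndCoh(\CY)$ is left adjoint to $f^!$, producing a monad $\mathbb{T}:=f^!\circ f^\IndCoh_*$ on $\IndCoh(\CX)=\IndCoh(\CX^0)$. On the other hand, \secref{sss:groupoid} applied to the \v{C}ech groupoid $\CX\underset{\CY}\times \CX\rightrightarrows \CX$ shows, exactly as in \cite[Proposition 8.2.3]{IndCoh}, that the forgetful functor $\on{Tot}(\IndCoh(\CX^\bullet/\CY))\to \IndCoh(\CX)$ is monadic, with monad $(p_t)^\IndCoh_*\circ (p_s)^!$; by proper base change (\propref{p:base change one}), applied to the Cartesian square presenting $\CX\underset{\CY}\times \CX$, this monad is canonically identified with $\mathbb{T}$. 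Consequently $\on{Tot}(\IndCoh(\CX^\bullet/\CY))\simeq \mathbb{T}\mod(\IndCoh(\CX))$, and under this identification \eqref{e:Cech} becomes the Barr--Beck comparison functor $\IndCoh(\CY)\to \mathbb{T}\mod(\IndCoh(\CX))$ attached to the adjunction $f^\IndCoh_*\dashv f^!$ (both have underlying functor $f^!$).

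By the monadic Barr--Beck--Lurie theorem, this comparison functor is an equivalence as soon as $f^!$ is conservative and preserves geometric realizations of $f^!$-split simplicial objects. The second condition is automatic: $\IndCoh(\CY)$ is cocomplete (\corref{c:IndCoh compactly generated}) and the colimit of a split simplicial object is absolute, hence preserved by any functor. Thus everything reduces to showing that $f^!:\IndCoh(\CY)\to \IndCoh(\CX)$ is \emph{conservative}; this is the main point and the only genuine obstacle.

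To prove conservativity, recall that combining the canonical presentation of \propref{p:canonical presentation of laft indsch} with \lemref{l:IndCoh expl as lim} exhibits $\IndCoh(\CY)\simeq \underset{Y}{lim}\,\IndCoh(Y)$, the limit taken over closed embeddings $i_Y:Y\hookrightarrow \CY$ with $Y\in \dgSch_{\on{aft}}$, all structure functors being $!$-pullbacks. In particular the family $\{i_Y^!\}$ is jointly conservative, so it suffices to show that $f^!(\CG)=0$ forces $i_Y^!(\CG)=0$ for every such $Y$. Form the Cartesian square with top map $g_Y:\CX_Y:=\CX\underset{\CY}\times Y\to \CX$ and left map $f_Y:\CX_Y\to Y$; here $\CX_Y\in \dgindSch_{\on{laft}}$ and $f_Y$, being a base change of $f$, is again ind-proper and surjective, now onto a quasi-compact DG scheme. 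Using the tautological commutation of $!$-pullbacks, $f_Y^!(i_Y^!(\CG))\simeq g_Y^!(f^!(\CG))=0$. This reduces conservativity to the case in which the target is a quasi-compact DG scheme $Y$.

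In that case write $\CX_Y\simeq \underset{\gamma}{colim}\, X_\gamma$ as a filtered colimit of closed subschemes $X_\gamma\in \dgSch_{\on{aft}}$, with structure maps $q_\gamma:X_\gamma\to Y$ proper. Since $Y$ is quasi-compact and $f_Y$ is surjective, the (closed) images of the $q_\gamma$ cover $Y$, so finitely many already do; as the indexing category is filtered these are dominated by a single index $\gamma_0$, whence $q_{\gamma_0}:X_{\gamma_0}\to Y$ is proper and surjective. Now $f_Y^!(\CH)=0$ for $\CH:=i_Y^!(\CG)$ implies $q_{\gamma_0}^!(\CH)=j_{\gamma_0}^!(f_Y^!(\CH))=0$, where $j_{\gamma_0}:X_{\gamma_0}\to \CX_Y$; and conservativity of $!$-pullback along a surjective proper map of DG schemes almost of finite type---the scheme-level input, which we take from \cite[Sect. 8]{IndCoh}---yields $\CH=i_Y^!(\CG)=0$. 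Therefore $\CG=0$, so $f^!$ is conservative and \eqref{e:Cech} is an equivalence.
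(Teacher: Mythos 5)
Your proof follows the same route as the paper's: the paper's own proof of this lemma is simply a reference to the scheme-level statement \cite[Proposition 8.2.3]{IndCoh}, whose argument is exactly the Barr--Beck--Lurie reduction you carry out, with the conservativity of $f^!$ as the essential content. Your reduction of conservativity to the case of a quasi-compact target, via the presentation $\IndCoh(\CY)\simeq \underset{Y}{lim}\, \IndCoh(Y)$ over closed embeddings and the compatibility of $!$-pullbacks, is correct.

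There is, however, one incorrect justification in the final step. You write that since $Y$ is quasi-compact and the closed images $q_\gamma(X_\gamma)$ cover $Y$, ``finitely many already do.'' Quasi-compactness gives finite subcovers of \emph{open} covers; a covering of a quasi-compact space by closed subsets need not admit a finite subcover (for instance, $\BA^1$ over an uncountable field is the filtered union of its finite subsets, none of which is all of $\BA^1$). The conclusion you want is nonetheless true, for a different reason: since $Y\in \dgSch_{\on{aft}}$, its underlying classical scheme is of finite type, so its topological space is Noetherian with finitely many generic points $\eta_1,\dots,\eta_r$. Surjectivity of $f_Y$ on geometric points lets you lift each $\eta_i$ to a point of $\CX_Y$, which factors through some $X_{\gamma_i}$; filteredness of the index category produces a single $\gamma_0$ dominating all of these. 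The image of $q_{\gamma_0}$ is closed (by properness) and contains every generic point, hence is all of $Y$. With this repair the argument is complete.
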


\medskip

Note that the composition
$$\IndCoh(\CY)\to \on{Tot}(\IndCoh(\CX^\bullet/\CY))\to \IndCoh(\CX)$$
is the functor $f^!$, and hence its left adjoint is $f^{\IndCoh}_*$.

\section{Closed embeddings into a DG indscheme and push-outs}  \label{s:pushouts}

Let $X$ be a scheme, and $Z_1$ and $Z_2$ be two closed subschemes.  In this case, we can consider the subscheme given by the union of $Z_1$ and $Z_2$; in fact,
this is the coproduct
in the category of closed subschemes of $X$ (locally, the ideal of the union is the intersection 
of the ideals of $Z_1$ and $Z_2$). The same operation is well-defined when $X$ is no longer
a scheme, but an indscheme: indeed the union of $Z_1$ and $Z_2$ in $X$ is the same as their
union in $X'$, if $X'$ is another closed subscheme of $X$ which contains $Z_{1}$ and $Z_{2}$.

\medskip

However, one might be suspicious of the operation of union in the DG setting since closed DG subschemes are no longer in bijection with ``ideals.''

\medskip

The goal of this section
is to show that in this case, the operation of union behaves as well as for schemes.

\medskip

In addition, we will consider a particular situation in which push-outs in the category
of DG schemes exist and are well-behaved. This will allow us, in particular, to show
that DG indschemes contain ``many" closed subschemes.



\ssec{Closed embeddings into a DG scheme}

\sssec{}

For a morphism $f:Y\to X$ in $\dgSch_{\on{qsep-qc}}$ consider the category
$$(\dgSch_{\on{qsep-qc}})_{Y/\,/X}$$
of factorizations of $f$; i.e. objects are given by
$$Y\to Z\overset{\phi}\to X$$
and morphisms are commutative diagrams
\begin{gather}  
\xy
(-15,0)*+{Y}="A";
(0,8)*+{Z_1}="B";
(0,-8)*+{Z_2}="C";
(15,0)*+{X.}="D";
{\ar@{->}^{\phi_1} "B";"D"};
{\ar@{->}_{\phi_2} "C";"D"};
{\ar@{->} "B";"C"};
{\ar@{->} "A";"B"};
{\ar@{->} "A";"C"};
\endxy
\end{gather}

\medskip

Let 
$$\dgSch_{Y/,\on{closed}\,\on{in}\,X}\subset (\dgSch_{\on{qsep-qc}})_{Y/\,/X}$$
be the full subcategory, spanned by those objects  $Y\to Z\overset{\phi}\to X$, for
which the map $\phi$ is a closed embedding.

\sssec{}

We shall prove:

\begin{prop} \label{p:colimits as closed} \hfill

\smallskip

\noindent{\em(a)}
The category $\dgSch_{Y/,\on{closed}\,\on{in}\,X}$ contains finite colimits (and, in particular, an initial
object). 

\smallskip

\noindent{\em(b)}
The formation of colimits in $\dgSch_{Y/,\on{closed}\,\on{in}\,X}$ is compatible with Zariski localization 
on $X$.

\end{prop}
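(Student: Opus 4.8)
The plan is to reduce to the case when $X=\Spec(A)$ is affine, translate the assertion into commutative algebra, and realize the finite colimits as values of a coreflection onto a colimit-closed subcategory.

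First I would pass to affine $X=\Spec(A)$. A closed embedding into an affine scheme is affine, and for any $Y\in \dgSch_{\on{qsep-qc}}$ and any connective algebra $B$ one has $\on{Maps}(Y,\Spec(B))\simeq \on{Maps}(B,C)$, where $C:=\tau^{\leq 0}\Gamma(Y,\CO_Y)$ is the affinization of $Y$. Hence $\dgSch_{Y/,\on{closed}\,\on{in}\,X}$ is canonically anti-equivalent to the full subcategory $\mathcal{D}\subset \mathcal{E}$, where $\mathcal{E}$ is the $\infty$-category of connective commutative DG algebras $B$ equipped with a factorization $A\to B\to C$, and $\mathcal{D}\subset \mathcal{E}$ is spanned by those $B$ for which $H^0(A)\to H^0(B)$ is surjective. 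Under this anti-equivalence finite colimits in $\dgSch_{Y/,\on{closed}\,\on{in}\,X}$ correspond to finite limits in $\mathcal{D}$, so it is enough to produce finite limits in $\mathcal{D}$ and to show that they commute with the localizations $A\to A_U$.

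I would then show that the inclusion $\mathcal{D}\hookrightarrow \mathcal{E}$ admits a right adjoint $R$. The category $\mathcal{E}$ is presentable and has all limits. The subcategory $\mathcal{D}$ is accessible, being cut out by the vanishing of the accessible functor $B\mapsto \on{coker}(H^0(A)\to H^0(B))$, and it is closed under all colimits in $\mathcal{E}$: since $H^0$ preserves colimits, the $H^0$ of a colimit of $H^0$-surjective algebras again receives a surjection from $H^0(A)$. Therefore $\mathcal{D}$ is presentable and the fully faithful inclusion preserves colimits, so the adjoint functor theorem provides the right adjoint $R$. Consequently $\mathcal{D}$ has all (in particular finite) limits, computed by applying $R$ to the corresponding limit formed in $\mathcal{E}$, which proves part (a). Geometrically, the initial object obtained in this way is $R(C)$, the derived scheme-theoretic image of $Y\to X$, and the coproduct of two closed subschemes is their union; the point of the coreflection $R$ is precisely to repair the failure of the naive limit in $\mathcal{E}$ to be $H^0$-surjective --- a failure produced by the $\on{Tor}$-contributions to $H^0$ of derived tensor and fiber products, which are invisible in the classical ``ideal'' picture.

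For part (b), the restriction functor along an open $U\subset X$ corresponds on the algebra side to base change $B\mapsto B\otimes_A A_U$ along the flat map $A\to A_U$. Flatness makes this base change exact, compatible with the truncation $\tau^{\leq 0}$, and hence compatible with the finite limits formed in $\mathcal{E}$; so everything reduces to showing that base change intertwines the coreflections, i.e. $R_U(E\otimes_A A_U)\simeq R_X(E)\otimes_A A_U$. Writing $R_X(E)$ as the terminal object of $\mathcal{D}_{/E}$ (equivalently $\underset{D\to E}{colim}\, D$), and using that $(-)\otimes_A A_U$ is a left adjoint and so preserves this colimit, this becomes a cofinality statement for the induced functor $\mathcal{D}_{X,/E}\to \mathcal{D}_{U,/E\otimes_A A_U}$. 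Once (b) is known for affine opens, a finite colimit for a general quasi-separated quasi-compact $X$ is assembled by forming the colimit over an affine cover and gluing, the agreement on overlaps being exactly (b). I expect this last point --- the commutation of the abstractly-defined coreflection $R$ with flat localization --- to be the main obstacle: because $R$ is not given by an ideal but by a universal property, one must track the whole derived structure of the scheme-theoretic image (not merely its $H^0$) under base change, and it is here that the genuinely DG phenomenon, that closed subschemes are no longer classified by ideals, has to be confronted.
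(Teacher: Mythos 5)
Your affine reduction and the coreflection argument for part (a) are sound as far as they go, and they take a genuinely more abstract route than the paper: you get the right adjoint $R\colon \mathcal{E}\to\mathcal{D}$ from the adjoint functor theorem (using that $\mathcal{D}$ is accessible and closed under colimits in $\mathcal{E}$), whereas the paper just writes the answer down. But the abstract existence of $R$ is exactly what leaves you stranded on part (b), and that is a genuine gap, not a detail: a right adjoint produced by the adjoint functor theorem comes with a Beck--Chevalley map $R_X(E)\otimes_A A_f\to R_U(E\otimes_A A_f)$ but with no reason for it to be invertible, and you say yourself that you cannot prove the required cofinality/commutation statement ("I expect this last point \ldots to be the main obstacle"). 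Since your part (a) for non-affine $X$ is assembled by gluing colimits over an affine cover, with agreement on overlaps "being exactly (b)", the unproved (b) also leaves (a) incomplete outside the affine case. (The intermediate claim that $R_X(E)$ is the colimit of the identity diagram of $\mathcal{D}_{/E}$, preserved by $-\otimes_A A_f$ because the latter is a left adjoint, is also shaky: that colimit computes the terminal object only when formed inside $\mathcal{D}_{/E}$ itself, not in the ambient category where the base change is a left adjoint.)

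The repair is to make $R$ explicit, which is precisely what the paper's Step 1 does. With $B=\Gamma(Y,\CO_Y)$ and $\wt{C}:=\underset{i}{lim}\,(C_i\to B)$ the limit in $\on{ComAlg}_{/B}$, the colimit is represented by
$$C:=\tau^{\leq 0}(\wt{C})\underset{H^0(\wt{C})}\times \on{Im}\bigl(H^0(A)\to H^0(\wt{C})\bigr);$$
equivalently, your coreflection is $R(E)\simeq E\underset{H^0(E)}\times\on{Im}(H^0(A)\to H^0(E))$ --- the universal property holds because any $D\in\mathcal{D}$ has $H^0(D)$ a quotient of $H^0(A)$, so every map $D\to E$ under $A$ automatically lands in the image on $H^0$. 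Once the formula is in hand, (b) is immediate: finite limits of connective algebras, $\tau^{\leq 0}$, $H^0$, and the image of a map of discrete modules all commute with the flat base change $A\to A_f$, and $\Gamma(Y,\CO_Y)\otimes_A A_f\simeq\Gamma(f^{-1}(U),\CO)$ because $Y$ is quasi-separated and quasi-compact. So your outline can be completed, but only by supplying the explicit description you were trying to avoid.
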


\begin{proof} \hfill

\medskip

\noindent{\it Step 1.} Assume first that $X$ is affine, given by $X=\Spec(A)$. Let 
\begin{equation} \label{e:XiY} 
i\rightsquigarrow (Y\to Z_i\overset{\phi_i}\to X),
\end{equation}
be a finite diagram in $\dgSch_{Y/,\,\on{closed}\,\on{in}\,X}$.

\medskip

Set $B:=\Gamma(Y,\CO_Y)$.
This is a (not necessarily connective) commutative $k$-algebra. Set also $Z_i=\Spec(C_i)$. 
Consider the corresponding diagram
\begin{equation} \label{e:AiB}
i\rightsquigarrow (A\to C_i\to B)
\end{equation}
in $\on{ComAlg}_{A/\,/B}$. 

\medskip

Set 
$$(\wt{C}\to B):=\underset{i}{lim}\, (C_i\to B),$$
where the limit taken in $\on{ComAlg}_{/B}$. Note that we have a canonical map $A\to \wt{C}$,
and 
$$(A\to \wt{C}\to B)\in \on{ComAlg}_{A/\,/B}$$
maps isomorphically to the limit of \eqref{e:AiB} taken in, $\on{ComAlg}_{A/\,/B}$. 

\medskip

Set 
$$C:=\tau^{\leq 0}(\wt{C})\underset{H^0(\wt{C})}\times \on{Im}\left(H^0(A)\to H^0(\wt{C})\right),$$
where the fiber product is taken in the category of \emph{connective} commutative algebras
(i.e., it is $\tau^{\leq 0}$ of the fiber product taken in the category of all commutative algebras). 

\medskip

We still have canonical maps
$$A\to C\to B,$$
and it is easy to see that for $Z:=\Spec(C)$, the object
$$(X\to Z\to Y)\in \dgSch_{X/,\,\on{closed}\,\on{in}\,Y}$$
is the colimit of \eqref{e:XiY}.

\medskip

\noindent{\it Step 2.} To treat the general case it suffices to show that the formation of colimits
in the affine case commutes with Zariski localization. I.e., that if $X$ is affine, $\oX\subset X$ is a basic 
open, then for $\oY:=f^{-1}(\oX)$, $\oZ_i:=\phi_i^{-1}(\oX)$, $\oZ:=\phi^{-1}(\oX)$, 
the map
$$\underset{i}{colim}\, \oZ_i \to \oZ,$$
is an isomorphism, where the colimit is taken in $\dgSch_{\oY/,\,\on{closed}\,\,\oX}$. 

\medskip

However, the required isomorphism follows from the description of the colimit in Step 1. 

\end{proof}

\sssec{}

As before, let
$$ i\rightsquigarrow (Y\to Z_i\overset{\phi_i}\to X), $$
be a finite diagram in $\dgSch_{Y/,\,\on{closed}\,\on{in}\,X}$.  In this case, note the following property of colimits.

\medskip

Let $g:X\to X'$ be a closed embedding. Set
$$(Y\to Z\to X)=\underset{i}{colim}\, (Y\to Z_i\to X) \text{ and }
(Y\to Z'\to X')=\underset{i}{colim}\, (Y\to Z_i\to X'),$$
where the colimits are taken in $\dgSch_{Y/,\,\on{closed}\,\on{in}\,X}$ and $\dgSch_{Y/,\,\on{closed}\,X'}$,
respectively.

\medskip

Consider the composition
$$Y\to Z\to X\to X',$$
and the corresponding object
$$(Y\to Z\to X')\in \dgSch_{Y/,\,\on{closed}\,X'}.$$
It is endowed with a compatible family of maps in $\dgSch_{Y/,\,\on{closed}\,X'}$:
$$(Y\to Z_i\to X')\to (Y\to Z\to X').$$

Hence, by the universal property of $(Y\to Z'\to X')\in \dgSch_{Y/,\,\on{closed}\,X'}$, we obtain a canonically defined map 
\begin{equation} \label{e:two closures}
Z'\to Z.
\end{equation}

We claim:

\begin{lem} \label{l:colimit and then closed}
The map \eqref{e:two closures} is an isomorphism.
\end{lem}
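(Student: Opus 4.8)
The plan is to reduce to the case in which $X$ (and hence $X'$) is affine, and then to read the claim directly off the explicit construction of the colimit carried out in Step~1 of the proof of \propref{p:colimits as closed}. First I would invoke \propref{p:colimits as closed}(b): the formation of colimits of closed subschemes is compatible with Zariski localization on the base. Covering $X'$ by affine opens $\oX'\subset X'$ and setting $\oX:=g^{-1}(\oX')$ (which is again affine, being closed in $\oX'$, and is open in $X$), the comparison map \eqref{e:two closures} restricts over each $\oX'$ to the analogous comparison map for the localized diagram. Since being an isomorphism is Zariski-local on $X'$, it suffices to treat $X=\Spec(A)$ and $X'=\Spec(A')$ affine, with $g$ given by a map $A'\to A$ for which $H^0(A')\to H^0(A)$ is surjective (this surjectivity being exactly the condition that $g$ be a closed embedding).

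In the affine case I would appeal to the formula from Step~1. Write $B:=\Gamma(Y,\CO_Y)$ and $Z_i=\Spec(C_i)$. The key point is that the auxiliary algebra
$$\wt{C}:=\underset{i}{lim}\,(C_i\to B),\qquad\text{the limit taken in }\on{ComAlg}_{/B},$$
depends only on the diagram $i\mapsto C_i$ together with its maps to $B$, and not at all on the base; in particular it is literally the same object whether one computes the colimit over $X$ or over $X'$. The base enters the colimit only through the image of its $H^0$, so the two colimits are
$$C=\tau^{\leq 0}(\wt{C})\underset{H^0(\wt{C})}\times\on{Im}\!\left(H^0(A)\to H^0(\wt{C})\right),\qquad
C'=\tau^{\leq 0}(\wt{C})\underset{H^0(\wt{C})}\times\on{Im}\!\left(H^0(A')\to H^0(\wt{C})\right),$$
with $Z=\Spec(C)$ and $Z'=\Spec(C')$.

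To conclude, I would observe that the map $A'\to\wt{C}$ factors as $A'\to A\to\wt{C}$, so that the image of $H^0(A')$ in $H^0(\wt{C})$ equals the image of the composite $H^0(A')\to H^0(A)\to H^0(\wt{C})$. Because $H^0(A')\to H^0(A)$ is surjective, this in turn equals the image of $H^0(A)$ in $H^0(\wt{C})$. Hence the two fiber-product expressions defining $C$ and $C'$ coincide, and the induced map on rings is an isomorphism; equivalently, \eqref{e:two closures} is an isomorphism.

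The hard part will not be the algebra but the bookkeeping: I must verify that the map $Z'\to Z$ produced by the universal property in \eqref{e:two closures} is genuinely the tautological identification arising from the coincidence of the two fiber-product formulas, and not merely some abstract isomorphism. This requires tracing the universal property of the colimit over $X'$ through the construction of Step~1 — checking that the cocone $(Z_i\to Z\to X')$ induces precisely the comparison map realizing $C=C'$ — which is routine but should be done with care. Alternatively, one could argue purely by universal properties, showing that for any target $W$ with $Y\to W\to X'$ a closed embedding the fiber product $W\times_{X'}X$ lies in $\dgSch_{Y/,\,\on{closed}\,\on{in}\,X}$ and thereby exhibiting $Z$, viewed over $X'$, as the colimit defining $Z'$; but this route requires comparable care with the factorization data.
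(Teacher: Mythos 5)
Your argument is correct, but it takes a genuinely different route from the paper's. The paper's proof is purely formal and goes in the opposite direction: the universal property of the colimit $(Y\to Z'\to X')$ gives a canonical map $(Y\to Z'\to X')\to (Y\to X\to X')$, i.e.\ a map $Z'\to X$ over $X'$ which re-exhibits $Z'$ as an object of $\dgSch_{Y/,\,\on{closed}\,\on{in}\,X}$ receiving a compatible family of maps from the $(Y\to Z_i\to X)$; the universal property of $Z$ then produces the inverse map $Z\to Z'$ (so your parenthetical alternative at the end is essentially the paper's argument). That route needs no reduction to the affine case and no appeal to the explicit formula of Step~1. Your route --- localize via \propref{p:colimits as closed}(b), observe that $\wt{C}=\underset{i}{lim}\,(C_i\to B)$ is independent of the base, and use surjectivity of $H^0(A')\to H^0(A)$ (the closed-embedding hypothesis on $g$) to identify $\on{Im}(H^0(A')\to H^0(\wt{C}))$ with $\on{Im}(H^0(A)\to H^0(\wt{C}))$ --- is also sound and has the merit of making the isomorphism completely explicit at the level of algebras; the cost is exactly the bookkeeping you flag, namely the naturality of the comparison map under Zariski localization and the check that the universal-property map realizes the tautological identification $C=C'$, both of which the formal argument sidesteps.
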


\begin{proof}

We construct the inverse map as follows. We note that by the universal property
of $(Y\to Z'\to X')\in \dgSch_{Y/,\,\on{closed}\,X'}$, we have a canonical map
$$(Y\to Z'\to X') \to (Y\to X\to X'),$$
and hence a compatible family of maps
$$(Y\to Z_i\to X') \to (Y\to Z'\to X') \to  (Y\to X\to X').$$

The latter gives rise to a compatible family of maps in $\dgSch_{Y/,\,\on{closed}\,X}$
$$(Y\to Z_i\to X)\to (Y\to Z'\to X),$$
and hence, by the universal property of $(Y\to Z\to X)\in \dgSch_{Y/,\,\on{closed}\,X}$,  the desired map 
$$Z\to Z'.$$

\end{proof}

\sssec{The closure of the image}   \label{sss:closure of image}

For $f: X\rightarrow Y$ a morphism in in $\dgSch_{\on{qsep-qc}}$, let
$$\ol{\on{Im}(f)}\in \dgSch_{Y/,\,\on{closed}\,X}$$ denote the initial object of this category.
We will refer to it as \emph{the closure of the image of $f$}.

\sssec{}

We have the following properties of the formation of colimits in $\dgSch_{Y/,\,\on{closed}\,X}$:

\begin{lem}   \label{c:truncation of push-out} Let $i\mapsto (Y\to Z_i\to X)$
be a finite diagram in  $\dgSch_{Y/,\,\on{closed}\,X}$, and let 
$$Y\to Z\to X$$
be its colimit. 

\smallskip

\noindent{\em(a)} 
Suppose that the DG schemes $Z_i$ are $n$-coconnective. Then so is $Z$.

\smallskip

\noindent{\em(b)} 
Suppose that $f:Y\to X$ is affine \emph{(}resp., of cohomological amplitude $k$ for the functor
$f_*:\QCoh(Y)\to \QCoh(X)$\emph{)}.
For an integer $m$, consider the diagram
$$^{\leq m}Y\to {}^{\leq m}\!Z_i\to X,$$
and let 
$$^{\leq m}Y\to Z'\to X$$
be its colimit in $\dgSch_{^{\leq m}Y/,\,\on{closed}\,X}$. Then the natural map
$$^{\leq n}\!Z'\to {}^{\leq n}\!Z$$
is an isomorphism whenever $m\geq n+1$ \emph{(}resp., $m\geq n+1+k$\emph{)}. 
\end{lem}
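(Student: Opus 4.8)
The plan is to reduce both parts to the explicit construction of colimits given in the proof of \propref{p:colimits as closed}, and then to track cohomological degrees through that construction. Since $n$-coconnectivity and the truncation functors $^{\leq n}(-)$ are checked Zariski-locally on $X$, and since \propref{p:colimits as closed}(b) says that the formation of the colimit is compatible with Zariski localization on $X$, I would first reduce both assertions to the affine case $X=\Spec(A)$. There the colimit $Y\to Z\to X$ is $Z=\Spec(C)$, where, with $B:=\Gamma(Y,\CO_Y)$ and $Z_i=\Spec(C_i)$, one forms $\wt C:=\underset{i}{\lim}\,(C_i\to B)$ in $\on{ComAlg}_{/B}$ and then $C:=\tau^{\leq 0}(\wt C)\times_{H^0(\wt C)}\on{Im}(H^0(A)\to H^0(\wt C))$. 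The single observation driving both parts is that this last correction alters only cohomology in degrees $\geq 0$, so $H^{-j}(C)\cong H^{-j}(\wt C)$ for all $j\geq 1$, while $H^0(C)$ is the \emph{same} image subalgebra in the truncated and untruncated situations. Everything is thus reduced to understanding the finite limit $\wt C$.

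For part (a), I would run the long exact sequences attached to the fiber sequences computing $\wt C$ out of the $C_i$ and $B$. Because each $C_i$ is $n$-coconnective, and because $B=\Gamma(Y,\CO_Y)$ inherits $n$-coconnectivity in negative degrees from $Y$ — the Zariski-cohomology spectral sequence computing $\Gamma$ of a quasi-compact quasi-separated DG scheme contributes only in nonnegative cohomological degree and so cannot push the bottom degree below $-n$ — the boundary maps force $H^{-j}(\wt C)=0$ for $j>n$. Combined with the observation above, this gives that $Z$ is $n$-coconnective.

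For part (b), the colimits $Z$ and $Z'$ are built by the same recipe but from different inputs: for $Z$ one uses $C_i$ and $B=\Gamma(Y,\CO_Y)$, while for $Z'$ one uses $\tau^{\leq m}C_i$ (the algebras of $^{\leq m}Z_i$) and $B':=\Gamma({}^{\leq m}Y,\CO)$. I would show that the two input-diagrams agree after applying $\tau^{\geq -(n+1)}$, and then argue that the value of the finite-limit construction in degrees $\geq -n$ depends only on the inputs in degrees $\geq -(n+1)$ — the single degree of slack coming from the fiber product against $B$ in the over-category limit. The agreement of inputs splits in two: first, $C_i$ and $\tau^{\leq m}C_i$ coincide in degrees $\geq -(n+1)$ as soon as $m\geq n+1$; second, $B\to B'$ is an isomorphism in degrees $\geq -(n+1)$. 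This second check is exactly where the hypothesis on $f$ enters: when $f$ is affine, $f_*$ is t-exact, so $\Gamma$ commutes with $\tau^{\leq m}$ and $B\to B'$ is an isomorphism in degrees $\geq -m$, whence $m\geq n+1$ suffices; when $f_*$ has cohomological amplitude $k$, passing from $Y$ to $^{\leq m}Y$ changes $\CO_Y$ only in degrees $\leq -(m+1)$, but $f_*$ can lower this by up to $k$, so the comparison only holds in degrees $\geq -(m-k)$ and one must require $m\geq n+1+k$. Assembling these gives $\tau^{\leq n}\wt C\cong\tau^{\leq n}\wt C'$, and applying the (identical) $H^0$-correction yields $^{\leq n}Z'\cong{}^{\leq n}Z$.

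The main obstacle I anticipate is the bookkeeping in part (b): one must prove precisely that $\Gamma(Y,\CO_Y)\to\Gamma({}^{\leq m}Y,\CO_{^{\leq m}Y})$ is an isomorphism in the range of degrees $\geq -(n+1)$ using the cohomological amplitude of $f_*$, and at the same time confirm that the finite-limit construction shifts cohomological degree by at most one, so that control of the inputs in degrees $\geq -(n+1)$ genuinely controls the output in degrees $\geq -n$. Getting the two shift constants — the $+1$ intrinsic to the fiber product over $B$ and the $+k$ coming from $f_*$ — to combine into the stated bounds $m\geq n+1$ and $m\geq n+1+k$ is the delicate point; the remaining verifications are routine diagram chases.
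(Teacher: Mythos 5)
Your proposal is exactly the route the paper takes: the paper's entire proof of this lemma is the single sentence that both assertions follow from the explicit construction of colimits in Step 1 of the proof of \propref{p:colimits as closed}, and what you have written is that construction unwound. Your key observations are correct: the correction $C=\tau^{\leq 0}(\wt{C})\underset{H^0(\wt{C})}\times \on{Im}(H^0(A)\to H^0(\wt{C}))$ only alters degree $0$, so everything reduces to bounding the cohomology of the finite limit $\wt{C}$; and in (b) the constants $n+1$ and $n+1+k$ arise precisely as you say, one degree of slack from the long exact sequence of the fiber product over $B$ (resp.\ over a $C_{i_0}$ for a pushout) plus $k$ from the amplitude of $f_*$ in comparing $\Gamma(Y,\CO_Y)$ with $\Gamma({}^{\leq m}Y,\CO_{{}^{\leq m}Y})$.

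There is one point you should make explicit rather than slip in. In part (a) you say that $B=\Gamma(Y,\CO_Y)$ ``inherits $n$-coconnectivity from $Y$''; this presupposes a coconnectivity hypothesis on $Y$ that is not in the statement (only the $Z_i$ are assumed $n$-coconnective), and the long exact sequence argument genuinely needs it: the boundary map can create a class in $H^{-j}(\wt{C})$, $j>n$, out of $H^{-j-1}(B)$, so one must know $H^{-l}(\Gamma(Y,\CO_Y))=0$ for $l>n+1$ (e.g.\ $Y$ itself $(n+1)$-coconnective). Without this the claim is false: take $X=Z_1=Z_2=\Spec(k)$ and $Y=\Spec(k[\epsilon])$ with $\epsilon$ of cohomological degree $-2$ and $\epsilon^2=0$; then $\wt{C}=k\underset{k[\epsilon]}\times k\simeq k\oplus k[1]$, the $H^0$-correction does nothing, and the coproduct $Z$ has $H^{-1}(\CO_Z)=k$, hence is not $0$-coconnective although $Z_1,Z_2$ are classical. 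This extra hypothesis is harmless for the paper, since in the only place part (a) is invoked (the proof of \propref{p:factorization coproducts for indscheme}) one has first assumed $Y$, $Z_1$, $Z_2$ all $n$-coconnective; but your write-up should record it, as your argument visibly uses it. (A similar caveat applies to your claim that a general finite limit shifts cohomological degree by at most one: this is what happens for the products over $B$ and the pullbacks that actually occur, but for an arbitrary finite index category the shift is bounded only by the length of the diagram, so strictly speaking the uniform constants in (b) are justified only for the diagrams the paper uses.)
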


\begin{proof}

Both assertions follow from the explicit construction of colimits in Step 1 in the proof 
of \propref{p:colimits as closed}.

\end{proof}




\ssec{The case of DG indschemes}  \label{ss:cl into ind}





\sssec{}

For $\CX\in \dgindSch$, $Y\in \dgSch_{\on{qsep-qc}}$ and a morphism $Y\to \CX$,
we consider the category
$$(\dgSch_{\on{qsep-qc}})_{Y/\,/\CX}$$
and the corresponding full subcategory 
$$\dgSch_{Y/,\on{closed}\,\on{in}\,\CX}.$$

\medskip

\begin{prop}  \label{p:factorization coproducts for indscheme}
The category $\dgSch_{Y/,\on{closed}\,\on{in}\,\CX}$ contains finite colimits.
\end{prop}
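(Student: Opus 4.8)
The plan is to reduce the statement to the already-established scheme case \propref{p:colimits as closed} by exploiting that $\CX$ is a filtered colimit of its closed DG subschemes. Write $C:=(\dgSch_{\on{qsep-qc}})_{\on{closed}\,\on{in}\,\CX}$. By \propref{p:indscheme as colimit of its closed}(a) the category $C$ is filtered, and \lemref{l:gen maps out of qc} gives $\Maps(W,\CX)\simeq \underset{X\in C}{colim}\,\Maps(W,X)$ for every $W\in\dgSch_{\on{qsep-qc}}$. Let $I$ be a finite category and let $i\mapsto(Y\to Z_i\overset{\psi_i}\to\CX)$ be a diagram in $\dgSch_{Y/,\on{closed}\,\on{in}\,\CX}$. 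The first step is to lift this entire diagram to $\dgSch_{Y/,\on{closed}\,\on{in}\,X}$ for a single $X\in C$: each $Z_i$ and $Y$ are qsep-qc, so all the maps $\psi_i$, the map $Y\to\CX$, and the finitely many commutativity constraints coming from the morphisms of $I$ and from the $Y$-augmentation are witnessed at a finite stage of the above filtered colimits; since $I$ is finite and $C$ is filtered, a single $X\in C$ can be chosen realizing all of them simultaneously. This produces a diagram $\wt D:I\to\dgSch_{Y/,\on{closed}\,\on{in}\,X}$ lifting the given one.

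Second, I would form the colimit $(Y\to Z\to X)$ of $\wt D$ in $\dgSch_{Y/,\on{closed}\,\on{in}\,X}$, which exists by \propref{p:colimits as closed}(a). Composing the closed embedding $Z\to X$ with $X\hookrightarrow\CX$ and using transitivity of closed embeddings (\secref{sss:general closed emb}), the object $(Y\to Z\to\CX)$ lies in $\dgSch_{Y/,\on{closed}\,\on{in}\,\CX}$ and is equipped with a cocone under the original diagram. It remains to check that this is a colimit, i.e. that for every test object $(Y\to W\to\CX)\in\dgSch_{Y/,\on{closed}\,\on{in}\,\CX}$ the map
$$\Maps(Z,W)\to \underset{i\in I^{\on{op}}}{lim}\,\Maps(Z_i,W)$$
of mapping spaces, taken in $\dgSch_{Y/,\on{closed}\,\on{in}\,\CX}$, is an isomorphism.

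For the universal property I would again enlarge the ambient scheme: by filteredness of $C$ choose $X'\in C$ through which both $X\hookrightarrow\CX$ and $W\to\CX$ factor as closed embeddings, compatibly with $Y$. Since a closed embedding $X'\to\CX$ is a monomorphism, for closed subschemes $P,Q$ of $\CX$ that both factor through $X'$ the mapping spaces computed in $\dgSch_{Y/,\on{closed}\,\on{in}\,\CX}$ and in $\dgSch_{Y/,\on{closed}\,\on{in}\,X'}$ coincide; this identifies both sides above with the corresponding mapping spaces relative to $X'$. Finally, \lemref{l:colimit and then closed}, applied to the closed embedding $X\hookrightarrow X'$, shows that $Z$ is also the colimit of $\wt D$ computed in $\dgSch_{Y/,\on{closed}\,\on{in}\,X'}$, so the required isomorphism follows from the universal property of that colimit, valid by \propref{p:colimits as closed}. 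The main obstacle is precisely this independence of the ambient DG scheme: a colimit of closed subschemes formed inside $X$ must not change when $X$ is enlarged, and it is \lemref{l:colimit and then closed} --- together with the monomorphism property that allows one to compare mapping spaces across different ambients --- that makes the construction well defined and independent of the choices made in the reduction step.
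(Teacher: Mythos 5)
There is a genuine circularity in your argument. You invoke \propref{p:indscheme as colimit of its closed}(a) (filteredness of $(\dgSch_{\on{qsep-qc}})_{\on{closed}\,\on{in}\,\CX}$) and \lemref{l:gen maps out of qc} in order to lift the finite diagram into a single closed DG subscheme $X$ of $\CX$. But both of these are downstream of the present proposition: the filteredness statement is exactly \corref{c:filtered for indscheme}, which is deduced \emph{from} \propref{p:factorization coproducts for indscheme}, and \lemref{l:gen maps out of qc} explicitly relies on \propref{p:presentation of indschemes}, whose proof in turn uses the present proposition. The remark immediately following the statement of the proposition warns about precisely this: if one could assume a presentation of $\CX$ as in \eqref{e:gen indscheme as a colimit}, ``the proof would be immediate'' --- and your proposal is essentially that immediate proof, which is not available at this stage of the development.

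What you can legitimately use is the presentation at each truncated level: by definition of a DG indscheme, each $^{\leq n}\CX$ is a $\nDG$ indscheme and hence comes with a presentation \eqref{e:indscheme as a colimit}, and \lemref{l:maps out of qc} (the truncated version, which does not depend on the general presentation) lets you factor maps from $n$-coconnective qsep-qc DG schemes through a single term. This is how the paper proceeds: when $Y$ and the $Z_i$ are eventually coconnective, your argument goes through at the level of $^{\leq n}\CX$, with \lemref{l:colimit and then closed} giving independence of the ambient closed subscheme exactly as you use it. The remaining work, which your proposal omits, is the general case: one truncates the diagram to $^{\leq m}Y\to{}^{\leq m}Z_i\to\CX$, forms the colimit $\wt{Z}^m$ for each $m$, and uses the stabilization statement \corref{c:truncation of push-out}(b) (with $k$ the cohomological amplitude of the pushforward along $^{cl}Y\to X_\alpha$) to see that $^{\leq n}\wt{Z}^m$ is independent of $m$ once $m\geq n+1+k$; the desired colimit $Z$ is then assembled from these compatible truncations. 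Without this step the construction does not produce an object of $\dgSch$ for non--eventually-coconnective $Z_i$.
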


As in the case of DG schemes, for a given map $f:Y\to \CX$,  we let
$\ol{\on{Im}(f)}$ denote the initial object of the category $\dgSch_{Y/,\on{closed}\,\on{in}\,\CX}$.

\begin{rem}
As \propref{p:factorization coproducts for indscheme} will be used
in the proof of \propref{p:presentation of indschemes}, we will not be able to use the
existence of a presentation as in \eqref{e:gen indscheme as a colimit}. If we could assume such a presentation, the proof would be immediate. 
\end{rem}

\begin{proof}[Proof of \propref{p:factorization coproducts for indscheme}]

Assume first that $Y$, $Z_1$ and $Z_2$ are eventually coconnective, i.e.,
$n$-coconnective for some $n$. Then we can work in the categories $^{\leq n}\!\dgSch$ and
$^{\leq n}\!\dgindSch$. We replace $\CX$ by 
$^{\leq n}\CX$, and representing it as in \eqref{e:indscheme as a colimit},
we obtain that the statement follows from \lemref{l:colimit and then closed}. 

\medskip

Writing $^{cl}\CX$ as in \eqref{e:indscheme as a colimit}, 
let $\alpha\in A$ be an index such that the map $^{cl}Y\to {}^{cl}\CX$ factors via a map
$$^{cl}\!f_{\alpha}:{}^{cl}Y\to X_\alpha\to {}^{cl}\CX.$$

Let $k$ denote the cohomological amplitude of the functor 
$$({}^{cl}\!f_{\alpha})_*:\QCoh({}^{cl}\!Y)\to \QCoh(X_\alpha).$$

\medskip

Let 
\begin{equation} \label{e:diag in closed in ind}
i\mapsto (Y\to Z_i\to \CX)
\end{equation}
be a finite diagram in $\dgSch_{Y/,\,\on{closed}\,\CX}$. 
For an integer $m$, consider the corresponding diagram
$$^{\leq m}Y\to {}^{\leq m}\!Z_i\to X.$$
Let 
$$^{\leq m}Y\to \wt{Z}^m\to \CX$$
denote its colimit in $\dgSch_{^{\leq m}Y/,\,\on{closed}\,\CX}$.  

\medskip

For an integer $n$ set
$$Z^n={}^{\leq n}\!\wt{Z}^m$$
for any $m\geq n+1+k$. Note that this is independent of the choice of $m$ by 
\corref{c:truncation of push-out}(b). For the same reason, for $n_1\leq n_2$
we have
$$Z^{n_1}\simeq {}^{\leq n_1}\!Z^{n_2}.$$

\medskip

The sought-for colimit of \eqref{e:diag in closed in ind} is $Y\to Z\to \CX$, where
$Z\in \dgSch$ is such that
$$^{\leq n}\!Z=Z^n.$$

\end{proof}




\sssec{}

As a corollary of \propref{p:factorization coproducts for indscheme}, we obtain:

\begin{cor} \label{c:filtered for indscheme}
For $\CX\in \dgindSch$, the category of closed embeddings 
$Z\to \CX$, where $Z\in \dgSch_{\on{qsep-qc}}$, is filtered.
\end{cor}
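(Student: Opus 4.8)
The plan is to prove the stronger statement that the category $\CC:=(\dgSch_{\on{qsep-qc}})_{\on{closed}\,\on{in}\,\CX}$ admits all finite colimits; filteredness then follows formally, since in any $\infty$-category the colimit of a finite diagram is in particular a cocone on it, and an $\infty$-category all of whose finite diagrams admit cocones is filtered (\cite{Lu0}). By the criterion of \cite{Lu0}, to produce all finite colimits it is enough to exhibit an initial object and all pushouts, so these are the only two things I would construct.

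For the initial object I would take the empty closed subscheme $\emptyset\hookrightarrow\CX$ (equivalently $\ol{\on{Im}(\emptyset\to\CX)}$): the empty DG scheme is quasi-separated and quasi-compact, and $\emptyset\to\CX$ is a closed embedding because its base change along any $S\to\CX$ is. It is initial in $\CC$ since any closed subscheme receives a unique such map.

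The heart of the argument is the construction of pushouts, and here is where I would use \propref{p:factorization coproducts for indscheme}. The key observation is that for any object $(Z_0\to\CX)$ of $\CC$, the coslice $\CC_{(Z_0\to\CX)/}$ is canonically identified with $\dgSch_{Z_0/,\on{closed}\,\on{in}\,\CX}$: an object of the latter is a factorization $Z_0\to W\to\CX$ with $W\to\CX$ closed, and then $Z_0\to W$ is automatically a closed embedding by the two-out-of-three property noted in \secref{sss:general closed emb}; the morphisms match for the same reason. By \propref{p:factorization coproducts for indscheme} this coslice has all finite colimits. Given a span $Z_1\leftarrow Z_0\to Z_2$ in $\CC$, I would lift it to the span in $\CC_{Z_0/}$ whose apex is $\on{id}_{Z_0}$ and whose legs are the two structure morphisms; its colimit exists there as a finite colimit, and since the span is connected and the projection $\CC_{Z_0/}\to\CC$ creates connected colimits, this colimit computes the pushout $Z_1\sqcup_{Z_0}Z_2$ in $\CC$.

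The step I expect to carry the real weight --- and the reason the corollary is not automatic --- is the construction of pushouts, i.e. the genuine use of finite colimits with a fixed source. The subtlety is that in the DG setting a closed embedding need not be a monomorphism (for instance $\Spec(k)\to\Spec(k\oplus k[1])$ is an isomorphism on classical truncations, hence a closed embedding, but is not a monomorphism), so $\CC$ is not merely a poset of closed subsets. Two parallel morphisms must therefore be genuinely coequalized, and the naive recipe of taking the union of images only yields a common over-object for a finite family of objects, with no control over compatibility with the morphisms of a diagram. Packaging exactly those compatibilities as a colimit in the coslice is what \propref{p:factorization coproducts for indscheme} supplies; the remaining points (the coslice identification and creation of connected colimits by $\CC_{Z_0/}\to\CC$) are formal.
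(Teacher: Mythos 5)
Your proof is correct and follows the paper's route: \corref{c:filtered for indscheme} is deduced from \propref{p:factorization coproducts for indscheme}, whose finite colimits supply cocones for arbitrary finite diagrams and hence filteredness. The only difference is one of packaging: rather than assembling finite colimits in $(\dgSch_{\on{qsep-qc}})_{\on{closed}\,\on{in}\,\CX}$ from an initial object together with pushouts computed in coslices, one can observe that this category is itself $\dgSch_{\emptyset/,\on{closed}\,\on{in}\,\CX}$, so the proposition applies directly with $Y=\emptyset$ (your appeal to the coslice forgetful functor creating ``connected'' colimits should strictly read ``weakly contractible'', but a span is weakly contractible, so your application is sound).
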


Note that the assertion of \corref{c:filtered for indscheme} coincides with that of
\propref{p:indscheme as colimit of its closed}(a). 

\ssec{A digression on push-outs}  \label{ss:general push-outs}

Let 
\begin{gather}  
\xy
(-15,0)*+{Y}="A";
(0,10)*+{Y_1}="B";
(0,-10)*+{Y_2}="C";
{\ar@{->}^{f_1} "A";"B"};
{\ar@{->}_{f_2} "A";"C"};
\endxy
\end{gather}
be a diagram in $\dgSch$. 

\medskip

We wish to consider the push-out of this diagram in $\dgSch$. Note that
push-outs of (DG) schemes are not among the standard practices in algebraic 
geometry; this operation is in general quite ill-behaved unless we impose some particular conditions
on morphisms under which we are taking push-outs. In what follows we will consider 
three rather special situations where push-outs are manageable.

\sssec{Push-outs in the category of affine schemes}  \label{sss:diagram of schemes}

Let 
$$i\mapsto Y_i,\quad i\in I$$ 
be an $I$-diagram in $\affdgSch$ for some $I\in \inftyCat$.

\medskip

Let $\wt{Y}$ denote its colimit in the category $\affdgSch$. 
I.e., if $Y_i=\Spec(A_i)$, then $\wt{Y}=\Spec(\wt{A})$, where
$$\wt{A}=\underset{i}{lim}\, A_i,$$
where the limits is taken in the category of connective $k$-algebras. 

\sssec{}  \label{sss:push-out aff}

In particular, consider a diagram $Y_1\leftarrow Y\to Y_2$ in $\affdgSch$ and set
$\wt{Y}:=Y_1\underset{Y}\sqcup\, Y_2$, where the push-out is taken in $\affdgSch$. 
I.e., if $Y_i=\Spec(A_i)$ and $Y=\Spec(A)$, then $\wt{Y}=\Spec(\wt{A})$, where
$$\wt{A}:=A_1\underset{A}\times A_2,$$
where the fiber product is taken in the category of \emph{connective} $k$-algebras. 

\medskip

Note that if $Y\to Y_1$ is a closed embedding, then so is the map $Y_2\to \wt{Y}$. 

\sssec{The case of closed embeddings} 

We observe the following:

\begin{lem}  \label{l:closed pushout}
Suppose that in the setting of \secref{sss:push-out aff}, both maps $Y\to Y_i$ are closed embeddings. Then:

\smallskip

\noindent{\em(a)} The Zariski topology on $\wt{Y}$ is induced by that on $Y_1\sqcup Y_2$.

\smallskip

\noindent{\em(b)} For open affine DG subschemes $\oY_i\subset Y_i$ such that $\oY_1\cap Y=\oY_2\cap Y=:\!\oY$, 
and the corresponding open DG subscheme $\wt{\oY}\subset \wt{Y}$, the
map $$\oY_1\underset{\oY}\sqcup \oY_2\to \wt{\oY}$$ is an isomorphism. 

\smallskip

\noindent{\em(c)} The diagram
$$
\CD
Y  @>>> Y_1  \\
@VVV   @VVV   \\
Y_2  @>>>  \wt{Y}
\endCD
$$
is a push-out diagram in $\dgSch$. 

\end{lem}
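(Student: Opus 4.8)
The plan is to compute the affine push-out $\wt{Y}=\Spec(\wt{A})$, with $\wt{A}=A_1\underset{A}\times A_2$ as in \secref{sss:push-out aff}, just precisely enough to control its topology, deduce (a) and (b) from flatness of localization, and then use (a) and (b) to verify the universal property against \emph{all} DG schemes in (c).

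First I would pin down the algebra underlying (a). Since the maps $Y\to Y_i$ are closed embeddings, $H^0(A_i)\to H^0(A)$ are surjective. I would run the Mayer--Vietoris sequence $\cdots\to H^{i-1}(A)\to H^i(\wt A)\to H^i(A_1)\oplus H^i(A_2)\to H^i(A)\to\cdots$ for the fiber square. Surjectivity of $H^0(A_1)\oplus H^0(A_2)\to H^0(A)$ forces $H^1(\wt A)=0$ (and higher groups vanish for degree reasons), so the homotopy fiber product is automatically connective and no truncation is needed: $\wt A$ is the honest fiber product. Reading the same sequence at degree $0$ gives a surjection $H^0(\wt A)\twoheadrightarrow H^0(A_1)\underset{H^0(A)}\times H^0(A_2)$ whose kernel is the image of the connecting map from $H^{-1}(A)$, and which is square-zero, as is standard for Milnor squares of connective algebras; in particular it is nilpotent. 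Since the topological space of $\Spec$ of a connective algebra depends only on $H^0$ and is insensitive to nilpotents, $|\wt Y|$ is identified with $|\Spec(H^0(A_1)\underset{H^0(A)}\times H^0(A_2))|$. For a fiber product of two surjections of ordinary commutative rings, this $\Spec$ is the Ferrand push-out: the gluing of the closed subschemes $\Spec H^0(A_1)$ and $\Spec H^0(A_2)$ along their common closed subscheme $\Spec H^0(A)$, carrying exactly the topology induced from $\Spec H^0(A_1)\sqcup\Spec H^0(A_2)$. This gives (a), and in passing recovers that both $Y_i\to\wt Y$ are closed embeddings.

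For (b), the point is that localization is flat, hence exact, and therefore commutes with the finite homotopy limit defining the fiber product. Concretely, for a function $s$ on $\wt Y$ restricting to $s_i$ on $Y_i$ and $s_0$ on $Y$, one has $(\wt A)_s\simeq (A_1)_{s_1}\underset{A_{s_0}}\times (A_2)_{s_2}$. By (a) the open $\wt{\oY}\subset\wt Y$ cut out by $s$ is precisely the locus glued from $\oY_1=\Spec((A_1)_{s_1})$ and $\oY_2=\Spec((A_2)_{s_2})$ along $\oY=\Spec(A_{s_0})$, so the canonical comparison map $\oY_1\underset{\oY}\sqcup\,\oY_2\to\wt{\oY}$ is exactly the identification just written, hence an isomorphism. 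For general compatible opens (not of basic form) one concludes by covering them with basic ones, since both sides are determined by such a cover.

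Finally, (c) upgrades the affine push-out to a push-out in $\dgSch$, and this is where the real work lies. Given $W\in\dgSch$ and a compatible pair $h_i:Y_i\to W$ with a homotopy $h_1|_Y\simeq h_2|_Y$, the underlying continuous maps glue along $|Y|$, by (a), to a map $|\wt Y|\to|W|$. Pulling back an affine-open cover $\{V_j\}$ of $W$ yields, again by (a), an open cover $\{U_j\}$ of $\wt Y$ with $U_j\cap Y_i=h_i^{-1}(V_j)$, and by (b) each $U_j$ is itself the push-out $h_1^{-1}(V_j)\underset{\,\cdot\,}\sqcup h_2^{-1}(V_j)$. Since each $V_j$ is affine, the affine push-out property of \secref{sss:push-out aff} supplies a canonical map $U_j\to V_j\hookrightarrow W$ extending $h_1,h_2$; on overlaps these agree because they are restrictions of the globally defined $h_1,h_2$, so they glue to $\wt Y\to W$ and exhibit $\Maps_{\dgSch}(\wt Y,W)\xrightarrow{\sim}\Maps(Y_1,W)\underset{\Maps(Y,W)}\times\Maps(Y_2,W)$. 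The main obstacle is precisely this step: executing the gluing coherently in the $\infty$-categorical setting, i.e.\ producing the map together with all higher homotopies independently of the chosen cover. This is handled by Zariski descent for $\Maps(-,W)$ together with the compatibility (b), which guarantees that the local push-out identifications are themselves coherent; by contrast the square-zero claim in (a) is standard and (b) is formal from flatness, so the descent bookkeeping in (c) is the genuine content.
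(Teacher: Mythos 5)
The paper offers no proof of this lemma at all (it is stated as a bare observation), so there is no argument of the authors' to compare yours against; I can only assess your proposal on its own terms. Parts (a) and (b) are correct and essentially complete: the Mayer--Vietoris computation showing that surjectivity of $H^0(A_1)\oplus H^0(A_2)\to H^0(A)$ forces the homotopy fiber product to be already connective, the identification of $H^0(\wt A)$ with the classical Milnor square up to a square-zero ideal, and the flat-localization identification $(\wt A)_s\simeq (A_1)_{s_1}\underset{A_{s_0}}\times (A_2)_{s_2}$ are all sound. For non-basic compatible affine opens you should add the small remark that the comparison map $\oY_1\underset{\oY}\sqcup\,\oY_2\to\wt{\oY}$ is a homeomorphism on underlying spaces by (a) applied to \emph{both} pushouts, so that it suffices to check it is an isomorphism on a cover of $\wt{\oY}$ by basic opens of $\wt Y$.

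The genuine gap is in (c), precisely at the step you flag. Two issues: first, $h_i^{-1}(V_j)$ need not be affine, so you must refine $\{U_j\}$ to a cover by basic opens $\wt Y_s$ each landing in some $V_j$ (harmless). Second, and more seriously, the sentence ``on overlaps these agree because they are restrictions of the globally defined $h_1,h_2$'' is circular as written: two maps $\wt Y_{ss'}\to W$ agreeing after restriction to $(Y_1)_{ss'}$ and $(Y_2)_{ss'}$ is exactly an instance of the faithfulness half of (c) for the smaller pushout, which is not yet available, and the overlap $V_j\cap V_{j'}$ through which both maps factor need not be affine, so the affine universal property does not apply directly. The repair is standard but must be said: an affine open $V\subset W$ is a monomorphism of Zariski sheaves, with $V(T)\to W(T)$ an inclusion of those connected components consisting of maps whose image lies in $|V|$; and by (a) a map out of $\wt Y_s$ lands in an open of $W$ if and only if its restrictions to $(Y_1)_s$ and $(Y_2)_s$ do. Hence the comparison of the two local solutions on $\wt Y_{ss'}$ reduces, after a further refinement by basic opens landing in a common affine open of $V_j\cap V_{j'}$, to the affine universal property, and the whole construction is packaged by writing $\Maps(\wt Y,W)$ and $\Maps(Y_1,W)\underset{\Maps(Y,W)}\times\Maps(Y_2,W)$ as totalizations over the \v{C}ech nerve of the chosen basic cover and comparing termwise using (b). With that inserted, your argument closes; without it, the coherence you defer to ``descent bookkeeping'' has not actually been established.
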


\sssec{}

From here we obtain:

\begin{cor}
Let $Y_1\leftarrow Y\to Y_2$ be a diagram in $\dgSch$, where both maps $Y_i\to Y$ are closed embeddings.
Then:

\smallskip

\noindent{\em(a)} The push-out $\wt{Y}:=Y_1\underset{Y}\sqcup Y_2$ in $\dgSch$ exists.

\smallskip

\noindent{\em(b)} The Zariski topology on $\wt{Y}$ is induced by that on $Y_1\sqcup Y_2$.

\smallskip

\noindent{\em(c)} For open DG subschemes $\oY_i\subset Y_i$ such that $\oY_1\cap Y=\oY_2\cap Y=:\!\oY$, 
and the corresponding open DG subscheme $\wt{\oY}\subset \wt{Y}$, the
map $$\oY_1\underset{\oY}\sqcup \oY_2\to \wt{\oY}$$ is an isomorphism. 

\end{cor}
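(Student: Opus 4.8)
The plan is to globalize \lemref{l:closed pushout} by a gluing construction, using the affine case as the only substantive input. Since both $Y\to Y_1$ and $Y\to Y_2$ are closed embeddings, on underlying spaces the maps $|Y|\to|Y_i|$ are homeomorphisms onto closed subsets. I would first declare the underlying topological space of $\wt Y$ to be the pushout $|Y_1|\underset{|Y|}\sqcup|Y_2|$ in topological spaces; along it both $|Y_i|$ embed as closed subsets whose union is everything and whose intersection is the common image of $|Y|$. This is precisely the final (quotient) topology, so it gives assertion (b) at once. It then remains to equip $|\wt Y|$ with a sheaf of connective DG algebras making each $Y_i\to\wt Y$ a closed embedding, and to verify the universal property giving (a).

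For the structure sheaf I would cover $\wt Y$ by affine push-out charts. The key local input is that, for any point $p$ in the common image of $Y$, one can choose affine open neighbourhoods $\oY_1\subset Y_1$ and $\oY_2\subset Y_2$ of $p$ with $\oY_1\cap Y=\oY_2\cap Y=:\oY$. Indeed, starting from arbitrary affine opens $W_i\ni p$ in $Y_i$, the opens $W_1\cap Y$ and $W_2\cap Y$ are affine neighbourhoods of $p$ in $Y$, so one may pick a basic affine open $\oY\subset W_1\cap W_2\cap Y$ containing $p$; since $Y\to Y_i$ is a closed embedding the map $H^0(\CO_{W_i})\to H^0(\CO_{W_i\cap Y})$ is surjective, so the function cutting out $\oY$ inside $W_i\cap Y$ lifts to $W_i$, and its non-vanishing locus $\oY_i\subset W_i$ is a basic affine open with $\oY_i\cap Y=\oY$. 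On such a chart \secref{sss:push-out aff} produces the affine push-out $\wt{\oY}:=\oY_1\underset{\oY}\sqcup\oY_2$, which by \lemref{l:closed pushout}(c) is already a push-out in $\dgSch$, and whose space matches the corresponding open of $|\wt Y|$ by \lemref{l:closed pushout}(a). Away from the image of $Y$ the charts are simply affine opens of $Y_1$ or of $Y_2$ disjoint from $Y$.

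I would then glue these charts (gluing DG schemes along opens being valid by descent, as in the conventions). The compatibility on overlaps is exactly \lemref{l:closed pushout}(b): restricting an affine push-out chart $\wt{\oY}$ to a smaller open recovers the push-out of the restricted data, so two charts agree on their intersection, and the cocycle condition on triple overlaps follows by applying the same statement twice. This yields a DG scheme $\wt Y$ together with closed embeddings $Y_i\to\wt Y$ whose precompositions with $Y\to Y_i$ agree, establishing (a) and, chart-by-chart, assertion (c). For the universal property I would take a test DG scheme $T$ with maps $Y_1\to T$, $Y_2\to T$ agreeing on $Y$: restricting to each chart and invoking the \emph{local} push-out property \lemref{l:closed pushout}(c) gives a unique $\wt{\oY}\to T$, and these glue — on overlaps they are determined by the same restricted data — to a unique $\wt Y\to T$ inducing the given maps.

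The main obstacle I expect is not any single step in isolation but the coherent bookkeeping of the gluing: one must check that the affine charts genuinely cover a well-defined space with the claimed topology, that their pairwise intersections are again computed as affine push-outs (so that the transition isomorphisms exist and satisfy the cocycle condition), and that the locally constructed maps to a test scheme are compatible on overlaps. All of these reduce to \lemref{l:closed pushout}, i.e.\ to the stability of the affine push-out under Zariski localization; the real content of the corollary is that this local stability is precisely what is needed to promote the affine construction to a global one.
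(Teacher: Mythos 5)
Your proposal is correct and follows exactly the route the paper intends: the corollary is stated without proof as an immediate globalization of \lemref{l:closed pushout}, obtained by covering $\wt{Y}$ with affine push-out charts and using parts (a) and (b) of that lemma for the topology, the overlap compatibilities, and the universal property. Your construction of simultaneous principal affine opens $\oY_i\subset W_i$ with $\oY_1\cap Y=\oY_2\cap Y$ (lifting the cutting function along the surjection $H^0(\CO_{W_i})\to H^0(\CO_{W_i\cap Y})$) is precisely the standard local input this gluing requires.
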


\begin{rem}
Note that if one of the maps $f_i$ fails to be a closed
embedding, it is no longer true that the push-out in the category of affine DG 
schemes is a push-out in the category of schemes. A counter-example is
$$\BA^1\times (\BA^1-0)\hookleftarrow \{0\}\times  (\BA^1-0)\hookrightarrow \{0\}\times \BA^1.$$
\end{rem}

\sssec{}

We give the following definition:

\begin{defn}
A map $f: X_1\to X_2$ in $\dgSch$ is said to be a \emph{nil-immersion} if it induces an isomorphism
$$^{cl,red}\!X_1\to {}^{cl,red}\!X_2,$$
where for a DG scheme $X$, we let $^{cl,red}\!X$ denote the underlying classical reduced scheme. If $f$ is in addition a closed embedding, then it is said to be a \emph{closed nil-immersion}.
\end{defn}



\sssec{Push-outs with respect to nil-immersions}

Consider the following situation. Let $i\mapsto Y_i$ and $\wt{Y}$
be as in \secref{sss:diagram of schemes}.

\medskip

Assume that the maps $Y_i\to \wt{Y}$ are nil-immersions. In particular, the transition maps
$$Y_{i_1}\to Y_{i_2}$$ are nil-immersions as well. In this case we have:

\begin{lem}  \label{l:nil colim} Assume that the maps $Y_i\to \wt{Y}$ are nil-immersions.

\smallskip

\noindent{\em(a)} For an open affine DG subscheme $\wt{\oY}\subset \wt{Y}$, and the corresponding open DG subschemes $\oY_i\subset Y_i$,
the map
$$\underset{i}{colim}\, \oY_i\to \wt{\oY}$$ is an isomorphism, where the colimit is taken in $\affdgSch$.

\smallskip

\noindent{\em(b)} The diagram
$$i\mapsto (Y_i\to \wt{Y})$$
is also a colimit diagram in $\dgSch$.

\end{lem}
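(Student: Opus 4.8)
The plan is to use that a nil-immersion is a homeomorphism on underlying spaces, to reduce part (a) to a localization computation on distinguished affine opens, and then to bootstrap (b) from (a) by gluing over an affine cover of the target, exactly as \lemref{l:closed pushout}(c) is deduced from parts (a)--(b) there.

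First I would record the topological input. Each $Y_i\to\wt Y$ induces an isomorphism ${}^{cl,red}Y_i\to{}^{cl,red}\wt Y$, and the underlying space of a DG scheme coincides with that of its reduced classical truncation; hence every $Y_i\to\wt Y$, and therefore every transition map, is a homeomorphism on underlying spaces. So $\wt Y$ and all the $Y_i$ share a single space $|Y|$, and an open $\wt{\oY}\subset\wt Y$ determines compatible open subschemes $\oY_i:=Y_i\underset{\wt Y}\times\wt{\oY}\subset Y_i$. When $\wt{\oY}=D(\wt f)$ is a distinguished affine open, $\oY_i=D(f_i)$ is again affine, where $f_i\in\Gamma(Y_i,\CO_{Y_i})$ is the image of $\wt f$; since distinguished opens form a basis, it suffices to treat this case in (a).

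For (a) on $\wt{\oY}=D(\wt f)$, write $Y_i=\Spec(A_i)$, so that $\wt A=\underset{i}{lim}\,A_i$ and $\underset{i}{colim}\,\oY_i=\Spec(\underset{i}{lim}\,A_i[f_i^{-1}])$. The claim becomes that the natural map $\wt A[\wt f^{-1}]\to\underset{i}{lim}\,A_i[f_i^{-1}]$ is an isomorphism, i.e. that Zariski localization commutes with the formation of $\wt A$. This is handled by the mechanism of Step 2 of \propref{p:colimits as closed}: localization $A\mapsto A[g^{-1}]$ is the filtered colimit $\underset{n}{colim}\,(A\overset{\,g\,}\to A\overset{\,g\,}\to\cdots)$, and filtered colimits commute with the finite limits occurring in the explicit description of $\wt A$ in Step 1 of \propref{p:colimits as closed}; hence localization passes through $\underset{i}{lim}\,A_i$. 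Passing from distinguished to arbitrary opens, (a) upgrades to the identity of sheaves on $|Y|$, namely $\CO_{\wt Y}\simeq\underset{i}{lim}\,(\iota_i)_*\CO_{Y_i}$ with $\iota_i\colon Y_i\to\wt Y$.

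Finally I would deduce (b) by checking the universal property of the colimit in $\dgSch$: for $W\in\dgSch$ the restriction map $\Maps_{\dgSch}(\wt Y,W)\to\underset{i}{lim}\,\Maps_{\dgSch}(Y_i,W)$ is an isomorphism. Given a compatible family $(g_i\colon Y_i\to W)$, the common space $|Y|$ and compatibility yield a single continuous map $|g|\colon|Y|\to|W|$; for an affine open cover $\{W_\alpha\}$ of $W$, set $U_\alpha=|g|^{-1}(W_\alpha)$, with the corresponding open subschemes of $\wt Y$ and of the $Y_i$. Over $U_\alpha$ each $g_i$ is a map into the affine $W_\alpha$, hence a ring map $\Gamma(W_\alpha,\CO)\to\Gamma(U_\alpha,\CO_{Y_i})$; taking the limit over $i$ and using the sheaf form of (a), the compatible family assembles into $\Gamma(W_\alpha,\CO)\to\underset{i}{lim}\,\Gamma(U_\alpha,\CO_{Y_i})\simeq\Gamma(U_\alpha,\CO_{\wt Y})$, i.e. a map over $U_\alpha$ into $W$. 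These agree on overlaps (again by (a)) and glue to the desired $g\colon\wt Y\to W$ restricting to the $g_i$, which is inverse to restriction. The main obstacle is precisely the affine statement in (a) --- the commutation of localization with $\underset{i}{lim}\,A_i$, i.e. of a filtered colimit with the limit defining $\wt A$; once it is in hand, the homeomorphism identification of opens and the gluing in (b) are routine.
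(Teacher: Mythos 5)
The paper states \lemref{l:nil colim} without proof, so there is nothing internal to compare against; your route --- use the nil-immersion hypothesis to identify all the underlying spaces, verify (a) on distinguished opens by commuting localization with the limit of algebras, upgrade to a sheaf identity, and then obtain (b) by checking the universal property over an affine cover of the target --- is the expected one, and it is correct \emph{provided the index category $I$ is finite}. Two bookkeeping points: the description of $\wt A$ you actually need is $\wt A=\tau^{\leq 0}\bigl(\underset{i}{lim}\, A_i\bigr)$ straight from \secref{sss:diagram of schemes} (the fiber product in Step 1 of \propref{p:colimits as closed} computes colimits in a different category and is not what is being localized here), and to move the localization past the truncation you should also invoke the t-exactness of $A\mapsto A[f^{-1}]$; with those in place, $\wt A[\wt f^{-1}]\simeq\tau^{\leq 0}\bigl(\underset{i}{lim}\, A_i[f_i^{-1}]\bigr)$ does follow because a filtered colimit commutes with a \emph{finite} limit.

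The genuine issue is that the lemma, as inherited from \secref{sss:diagram of schemes}, allows an arbitrary $I\in\inftyCat$, and your commutation step is exactly where the argument breaks for infinite $I$ --- indeed the statement itself fails there, so no repair is possible. Take $I=\BN$ and $A_n=k[x,\epsilon]/(\epsilon^2)$ for all $n$, with transition maps $A_{n+1}\to A_n$ given by $x\mapsto x$, $\epsilon\mapsto x\epsilon$. Then $\wt A=\tau^{\leq 0}\bigl(\underset{n}{lim}\, A_n\bigr)=k[x]$ (the ideal dies since $\bigcap_n x^nk[x]=0$, and the ${lim}^1$ term is removed by the truncation), so every $Y_n\to\wt Y$ is a nil-immersion; but over $D(x)$ the transition maps become isomorphisms, whence $\underset{n}{lim}\, A_n[x^{-1}]=k[x^{\pm 1},\epsilon]/(\epsilon^2)\neq k[x^{\pm 1}]=\wt A[x^{-1}]$. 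You should therefore say explicitly that you are proving the finite case, which is the only one the paper uses (the corollary on push-outs $Y_1\leftarrow Y\to Y_2$). Modulo that restriction your proof is sound, though in (b) it is cleaner to write $\Maps(\wt Y,W)$ and each $\Maps(Y_i,W)$ as limits over a fixed affine cover and commute the two limits, rather than building the cover from the given compatible family; note also that the nil-immersion hypothesis enters only through the identification of underlying spaces and in part (b) --- your distinguished-open computation in (a) never uses it.
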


\sssec{}

From \lemref{l:nil colim} we obtain:

\begin{cor} Let $Y_1\leftarrow Y\to Y_2$ be a diagram in $\dgSch$ where the maps $Y\to Y_i$ are nil-immersions. Then:

\smallskip

\noindent{\em(a)} The push-out $\wt{Y}:=Y_1\underset{Y}\sqcup\, Y_2$ in $\dgSch$ exists, and the maps
$Y_i\to \wt{Y}$ are nil-immersions. 

\smallskip

\noindent{\em(b)} For an open DG subscheme $\wt{\oY}\subset \wt{Y}$, and the corresponding open DG subschemes $\oY_i\subset Y_i$,
$\oY\subset Y$, the map
$$\oY_1\underset{\oY}\sqcup\, \oY_2 \to \wt{\oY}$$ is an isomorphism.

\end{cor}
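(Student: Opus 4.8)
The plan is to globalize \lemref{l:nil colim} from the affine case by a gluing argument, the key geometric input being that nil-immersions are homeomorphisms on underlying spaces. First I would identify the underlying topological spaces: since $Y\to Y_1$ and $Y\to Y_2$ are nil-immersions, they induce isomorphisms $^{cl,red}Y\to {}^{cl,red}Y_i$, hence homeomorphisms of the underlying topological spaces of $Y$, $Y_1$ and $Y_2$. Fix this common space. Because a (DG) scheme is affine if and only if its underlying classical reduced scheme is affine, an open subset is affine in $Y$ precisely when it is affine in each $Y_i$; thus the three schemes share a common base of affine opens.

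Next, for a common affine open $U$, with affine opens $\oY\subset Y$ and $\oY_i\subset Y_i$ lying over $U$, I would form the push-out $\wt{\oY}:=\oY_1\underset{\oY}\sqcup\,\oY_2$ in $\affdgSch$ as in \secref{sss:push-out aff}. A short computation on connective algebras (take $\pi_0$ and reduce the fiber product $A_1\times_A A_2$, using that $(A_i)_{red}\to A_{red}$ are isomorphisms) shows the structure maps $\oY_i\to \wt{\oY}$ are again nil-immersions, so \lemref{l:nil colim} applies over $U$. In particular, part (a) of that lemma says the formation of $\wt{\oY}$ commutes with passing to a smaller affine open, which is exactly the compatibility on overlaps needed to glue the local push-outs into a single DG scheme $\wt{Y}$ with the common underlying space; the maps $Y_i\to \wt{Y}$ restrict to nil-immersions on each chart and are therefore nil-immersions. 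This establishes the existence and nil-immersion assertions of (a).

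It then remains to verify the universal property and statement (b). For a test DG scheme $T$, I would compute $\Maps(\wt{Y},T)$ by Zariski descent along the affine cover $\{U\}$ (both $\wt{Y}$ and $T$ satisfy Zariski descent), reducing it to a limit of the spaces $\Maps(\wt{\oY},T)$ together with the analogous terms over overlaps. By \lemref{l:nil colim}(b) each $\Maps(\wt{\oY},T)$ is the fiber product $\Maps(\oY_1,T)\underset{\Maps(\oY,T)}\times \Maps(\oY_2,T)$, and the overlap terms have the same shape; passing to the limit and reassembling by descent for $Y$, $Y_1$ and $Y_2$ identifies $\Maps(\wt{Y},T)$ with $\Maps(Y_1,T)\underset{\Maps(Y,T)}\times \Maps(Y_2,T)$, which is precisely the push-out property. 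Finally, (b) is immediate from the construction: restricting the gluing data to an open $\wt{\oY}\subset \wt{Y}$ yields exactly the gluing data for the push-out of the restricted diagram, the chart-wise equality being \lemref{l:nil colim}(a).

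I expect the main obstacle to be the gluing and descent step, i.e. showing that push-out along nil-immersions is genuinely a Zariski-local construction: that the locally defined affine push-outs are mutually compatible on overlaps and that the glued scheme satisfies the global universal property. The overlap compatibility furnished by \lemref{l:nil colim}(a) is the technical crux that makes both the gluing and the descent computation of $\Maps(\wt{Y},-)$ go through.
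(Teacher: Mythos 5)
Your proposal is correct and follows essentially the route the paper intends: the corollary is deduced from \lemref{l:nil colim} by exactly the affine gluing you describe (identify the common underlying space and common base of affine opens via the nil-immersion hypothesis, form chart-wise push-outs in $\affdgSch$, use part (a) of the lemma for overlap compatibility and part (b) for the universal property via Zariski descent), which is the same strategy the paper spells out for the parallel \corref{c:nil pushout}. Your observation that the affine push-out $A_1\times_A A_2$ again has the $Y_i$ mapping in by nil-immersions, needed to bring \lemref{l:nil colim} to bear, is the one hypothesis-matching step worth recording, and your $\pi_0$/reduction computation handles it correctly.
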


\sssec{The push-out of a closed nil-immersion}

Finally, we will consider the following situation. Let
$$Y_1\to Y'_1$$ be a closed nil-immersion of affine DG schemes, and let $f:Y_1\to Y_2$ be a map,
where $Y_2\in \affdgSch$.

\medskip

Let $Y'_2=Y'_1\underset{Y_1}\sqcup\, Y_2$, where the colimit is taken in $\affdgSch$. Note that the map
$$Y_2\to Y'_2$$
is a closed nil-immersion.

\medskip

We claim: 

\begin{lem} \label{l:nil pushout}

\smallskip

\noindent{\em(a)} For an open affine DG subscheme $\oY_2\subset Y_2$, $f^{-1}(\oY_2)=:\oY_1\subset Y_1$,
and the corresponding open affine DG subscheme $\oY'_i\subset Y'_i$, the map
$$\oY'_1\underset{\oY_1}\sqcup\, \oY_2\to \oY'_2$$
is an isomorphism, where the push-out is taken in $\affdgSch$. 

\smallskip

\noindent{\em(b)} The diagram

$$
\CD
Y_1  @>>> Y_2 \\
@VVV   @VVV  \\
Y'_1 @>>>  Y'_2
\endCD
$$
is also a push-out diagram in $\dgSch$.

\end{lem}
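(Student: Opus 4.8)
The plan is to prove (a) by a flat base-change computation on the defining fibre product of algebras, and then to deduce (b) from (a) by gluing over an affine cover of the target. For (a), write $Y_1=\Spec(A_1)$, $Y_1'=\Spec(A_1')$, $Y_2=\Spec(A_2)$. By \secref{sss:push-out aff}, the push-out $Y_2'=Y_1'\underset{Y_1}\sqcup Y_2$ in $\affdgSch$ is $\Spec(A_2')$ with $A_2'=A_1'\underset{A_1}\times A_2$, the fibre product in connective commutative algebras; since $Y_1\to Y_1'$ is a closed embedding, $A_1'\to A_1$ is surjective on $H^0$, so this fibre product agrees with the one formed in all commutative algebras and is computed as the fibre product of the underlying modules. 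Both nil-immersions $Y_1\to Y_1'$ and $Y_2\to Y_2'$ are homeomorphisms, so the affine open $\oY_2\subset Y_2$ determines a unique open $\oY_2'\subset Y_2'$, with $\oY_1'$, $\oY_1$ its preimages in $Y_1'$, $Y_1$; these opens are all affine because affineness of an open is insensitive to nilpotent thickenings (the classical truncation ${}^{cl}\!\oY_i'$ is a nilpotent thickening of the affine ${}^{cl}\!\oY_i$). The open immersion $\oY_2'\hookrightarrow Y_2'$ is flat, so tensoring the fibre-product square of $A_2'$-algebras with $\CO(\oY_2')$ over $A_2'$ preserves the fibre product. Identifying $\CO(\oY_2')\underset{A_2'}\otimes A_2=\CO(\oY_2)$, $\CO(\oY_2')\underset{A_2'}\otimes A_1'=\CO(\oY_1')$ and $\CO(\oY_2')\underset{A_2'}\otimes A_1=\CO(\oY_1)$ (all Cartesian squares of affines), one obtains $\CO(\oY_2')=\CO(\oY_1')\underset{\CO(\oY_1)}\times\CO(\oY_2)$, which is exactly the assertion that $\oY_2'=\oY_1'\underset{\oY_1}\sqcup \oY_2$ in $\affdgSch$.

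For (b), I must show that for every $T\in\dgSch$ the canonical map $\Maps(Y_2',T)\to \Maps(Y_1',T)\underset{\Maps(Y_1,T)}\times\Maps(Y_2,T)$ is an equivalence. When $T=\Spec(S)$ is affine, both sides compute $\Maps_{\on{ComAlg}}(S,A_2')=\Maps_{\on{ComAlg}}(S,A_1'\underset{A_1}\times A_2)$, and the claim follows since $\Maps_{\on{ComAlg}}(S,-)$ preserves this fibre product. For general $T$, given a compatible pair $(u:Y_1'\to T,\ v:Y_2\to T)$, I would build $\phi:Y_2'\to T$ as follows: on underlying spaces set $|\phi|=|v|$ under the homeomorphism $|Y_2'|=|Y_2|$ (consistent with $u$ on $|Y_1'|=|Y_1|$ since $u$ and $v$ agree on $Y_1$). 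Choosing an affine open cover $\{T_k\}$ of $T$ and refining $|\phi|^{-1}(T_k)$ by basic affine opens $\oY_2\subset Y_2$, part (a) presents each $\oY_2'=\oY_1'\underset{\oY_1}\sqcup \oY_2$ as an affine push-out whose relevant restrictions of $u$ and $v$ land in the affine $T_k$; the already-settled affine case then produces a unique map $\oY_2'\to T_k\hookrightarrow T$. These local maps agree on overlaps by the uniqueness in the affine case applied to intersections (again affine push-outs by (a)), so they glue to a morphism $\phi:Y_2'\to T$ restricting to $(u,v)$, and uniqueness of $\phi$ is checked the same way. This is the same gluing mechanism as in the proofs of \lemref{l:closed pushout} and \lemref{l:nil colim}.

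The hard part will be the gluing step in (b): one has to reduce the arbitrary target $T$ to its affine opens, which is precisely where the Zariski-locality of the push-out, namely part (a), is needed, and one must verify carefully that the locally constructed maps are compatible on overlaps so that they assemble into a well-defined morphism. By contrast, part (a) itself is straightforward once the opens $\oY_i'$ are recognized as affine, since it rests only on the flatness of open immersions and hence on base change through the fibre-product-of-algebras description of $Y_2'$.
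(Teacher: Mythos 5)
The paper states this lemma without proof (as it does the neighboring Lemmas \ref{l:closed pushout} and \ref{l:nil colim}), so there is no argument of the authors' to compare against; I am assessing your proof on its own terms. Part (a) is correct: $Y'_2=\Spec(A'_1\times_{A_1}A_2)$, the surjectivity of $H^0(A'_1)\to H^0(A_1)$ guarantees that the fibre product of connective algebras is computed in all modules (no truncation), this persists after the flat base change $-\otimes_{A'_2}\CO(\oY'_2)$, and the three tensor products are identified with $\CO(\oY'_1),\CO(\oY_1),\CO(\oY_2)$ because $\oY'_2\hookrightarrow Y'_2$ is a flat monomorphism and the preimages match under the homeomorphisms $|Y_i|\cong|Y'_i|$. (One point you leave implicit: $\oY_1=f^{-1}(\oY_2)\cong Y_1\underset{Y_2}\times\oY_2$ is affine because it is a fibre product of affines; this is what makes ``the corresponding affine opens'' meaningful in the first place.)

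The genuine gap is the gluing step in (b). In the $\infty$-categorical setting, ``the local maps agree on double overlaps, hence glue'' is not a complete argument: producing a point of $\Maps(Y'_2,T)$ by descent requires coherence data on \emph{all} multi-overlaps, and ``agreement'' must itself be witnessed by homotopies subject to further conditions. The repair is to recast your construction as a statement about fibres of totalizations. To show that
$$\Maps(Y'_2,T)\to \Maps(Y'_1,T)\underset{\Maps(Y_1,T)}\times\Maps(Y_2,T)$$
is an equivalence, it suffices to show its homotopy fibre over each point $(u,v)$ of the target is contractible. Choose a cover of $|Y'_2|=|Y_2|$ by basic affine opens refining $|v|^{-1}$ of an affine cover $\{T_k\}$ of $T$; since $T$ is a Zariski stack, each of the four mapping spaces is the totalization over the \v{C}ech nerve of the induced cover (the four covers are indexed by the same set), so the fibre in question is the totalization of the fibres of the analogous maps for each multi-intersection $\oY'_{2,I}$. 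Each such restricted point lands in some affine $T_k$; because $T_k\hookrightarrow T$ is a monomorphism and $|\oY_{2,I}|=|\oY'_{2,I}|$, any map $\oY'_{2,I}\to T$ restricting into $T_k$ on $\oY_{2,I}$ itself factors through $T_k$, so the fibre is unchanged upon replacing $T$ by $T_k$, where it is contractible by part (a) and the universal property of the pushout in $\affdgSch$. A totalization of contractible spaces is contractible, which finishes the proof. Your uniqueness observation is exactly the right ingredient, but it must be deployed as this contractibility statement on every multi-intersection rather than as an equality of maps on double overlaps.
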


\sssec{}

As a corollary we obtain:

\begin{cor}  \label{c:nil pushout}
Let $Y_1\to Y'_1$ be a closed nil-immersion, and $f:Y_1\to Y_2$ be a 
quasi-separated quasi-compact map between DG schemes. Then:

\smallskip

\noindent{\em(a)} The push-out $Y'_2:=Y'_1\underset{Y_1}\sqcup\, Y_2$ exists, and
the map $Y_2\to Y'_2$ is a nil-immersion. 

\smallskip

\noindent{\em(b)} For an open affine DG subscheme $\oY_2\subset Y_2$, $f^{-1}(\oY_2)=:\oY_1\subset Y_1$,
and the corresponding open affine DG subscheme $\oY'_i\subset Y'_i$, the map
$$\oY'_1\underset{\oY_1}\sqcup\, \oY_2\to \oY'_2$$
is an isomorphism, where the push-out is taken in $\dgSch$. 

\smallskip

\noindent{\em(c)} If $f$ is an open embedding, then so is the map $Y'_1\to Y'_2$.

\end{cor}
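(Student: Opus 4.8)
The plan is to prove (a) and (b) simultaneously by reducing to the affine situation already settled in \lemref{l:nil pushout}, and then gluing, with (c) falling out of the resulting local description. Throughout I will use that a closed nil-immersion is a homeomorphism on underlying spaces, so that $|Y_1| = |Y_1'|$ and the sought-for $Y_2'$ must satisfy $|Y_2'| = |Y_2|$; in particular $Y_2'$ will be built as a ringed space on the fixed space $|Y_2|$.

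First I would reduce to the case where $Y_2$ is affine. Since $Y_2$ is quasi-separated and quasi-compact, choose a finite affine open cover $Y_2 = \bigcup_j U_j$; quasi-separatedness guarantees that the intersections $U_{j_1}\cap U_{j_2}$ are again affine. Granting the affine-target case below (together with its localization statement, i.e. the affine-target instance of (b)), one obtains over each $U_j$ a DG scheme $Y_2'^{(j)} = f^{-1}(U_j)' \underset{f^{-1}(U_j)}\sqcup U_j$ with $U_j \to Y_2'^{(j)}$ a nil-immersion, and (b) identifies the restrictions of $Y_2'^{(i)}$ and $Y_2'^{(j)}$ to the affine overlap $U_i \cap U_j$ with the pushout over $U_i\cap U_j$. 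These isomorphisms satisfy the cocycle condition (again by (b)), so the $Y_2'^{(j)}$ glue to a DG scheme $Y_2'$ on $|Y_2|$; the universal property is then checked locally, since $\Maps(-,T)$ is a Zariski sheaf on the source, reducing the pushout identity for $Y_2'$ to its already-known validity on the charts $U_j$.

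The crux is therefore the case $Y_2 = \Spec(B)$ affine, where the obstruction is that $f$ --- being only quasi-separated and quasi-compact --- need not be affine, so $Y_1$ and $Y_1'$ need not be affine and \lemref{l:nil pushout} does not apply directly. To handle this I would choose a finite affine open cover of $Y_1$ with \v{C}ech nerve $V_\bullet$ (each $V_n$ a finite disjoint union of affines, using quasi-separatedness of $Y_1$), so that $Y_1 \simeq \underset{[n]\in \Delta^{\on{op}}}{\on{colim}} V_n$; since $Y_1 \to Y_1'$ is a homeomorphism the same cover thickens to a \v{C}ech nerve $V'_\bullet$ with $Y_1' \simeq \underset{[n]\in \Delta^{\on{op}}}{\on{colim}} V'_n$. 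Because pushouts commute with colimits,
$$Y_1' \underset{Y_1}\sqcup Y_2 \simeq \underset{[n]\in \Delta^{\on{op}}}{\on{colim}}\left(V'_n \underset{V_n}\sqcup Y_2\right),$$
where each $V'_n \underset{V_n}\sqcup Y_2$ is the affine pushout of \secref{sss:push-out aff} supplied by \lemref{l:nil pushout} (the coproduct components of $V_n$ being absorbed by iterating the lemma one affine chart at a time, each stage being a nil-thickening of the affine scheme produced at the previous stage). Thus every term is an affine nil-thickening of $Y_2$, and I would show the geometric realization is represented by $\Spec(B')$ with $B' = \Gamma(Y_1',\CO_{Y_1'})\underset{\Gamma(Y_1,\CO_{Y_1})}\times B$, the fiber product of connective algebras as in \secref{sss:push-out aff}: applying $\Gamma(-,\CO)$ converts the colimit of affines into the totalization of the cosimplicial algebra $\Gamma(V'_\bullet,\CO)\times_{\Gamma(V_\bullet,\CO)} B$, whose value is exactly $B'$, and since all terms are nil-thickenings of the fixed affine $Y_2$ the realization is again an affine nil-thickening of $Y_2$. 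This simultaneously exhibits $Y_2 \to Y_2'$ as a nil-immersion.

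Part (b) then follows from \lemref{l:nil pushout}(a): restriction to a basic open $\oY_2 \subset Y_2$ commutes with the fiber-product construction of $B'$ (equivalently with the geometric realization), yielding $Y_2'|_{\oY_2} \simeq \oY'_1 \underset{\oY_1}\sqcup \oY_2$. For part (c), if $f$ is an open embedding then $f^{-1}(f(Y_1)) = Y_1$, so applying (b) over opens contained in $f(Y_1)$ identifies the restriction of $Y_2'$ to the open subset $f(Y_1) \subset |Y_2| = |Y_2'|$ with $Y_1' \underset{Y_1}\sqcup Y_1 \simeq Y_1'$; hence $Y_1' \to Y_2'$ is an open embedding. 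I expect the main obstacle to be precisely the affine-target step: proving that the geometric realization of the simplicial affine scheme $V'_\bullet \underset{V_\bullet}\sqcup Y_2$ is representable by the affine scheme $\Spec(B')$ --- that is, that the non-affineness of $f$ is genuinely absorbed by the nilpotence of the thickening --- since this is the one point where the passage from $\affdgSch$ to a non-affine source does not follow formally from \lemref{l:nil pushout}.
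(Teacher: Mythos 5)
Your proposal is correct and follows essentially the same route as the paper's proof: reduce to $Y_2$ affine, decompose $Y_1$ (and hence $Y'_1$) into a colimit of affine opens, form the pushout termwise via \lemref{l:nil pushout}, and use that a colimit of affine nil-thickenings of the fixed affine $Y_2$ is again an affine nil-thickening (the content of \lemref{l:nil colim}), with (b) giving the localization and (c) following formally. The only differences are presentational — you use a finite \v{C}ech nerve where the paper takes the colimit over all affine opens of $Y_1$, and you spell out the reduction to affine $Y_2$ and the affineness of the realization, which the paper leaves implicit.
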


\begin{proof}

We observe that it suffices to prove the corollary when $Y_2$ is affine. Let us write
$Y_1$ as $\underset{i}{colim}\, U_i$, where $U_i$ are affine and open in $Y_1$.
In this case,
$$Y'_1\simeq \underset{i}{colim}\, U'_i,$$
where $U'_i$ are the corresponding open DG subschemes in $Y'_1$.

\medskip

We construct $Y'_1\underset{Y_1}\sqcup\, Y_2$ as 
$$\underset{i}{colim}\, (U'_i\underset{U_i}\sqcup\, Y_2).$$

This implies points (a) and (b) of the corollary via \lemref{l:nil pushout}. Point (c) follows formally from point (b).

\end{proof}

\sssec{}

We will use the following additional properties of push-outs:

\begin{lem}  \label{l:ppties pushouts}  Let $Y_1,Y'_1,Y_2,Y'_2$ be as in \corref{c:nil pushout}.
%
%
Suppose that the map $f:Y_1\to Y_2$ is such that the cohomological
amplitude of the functor $f_*:\QCoh(Y_1)\to \QCoh(Y_2)$ is bounded by $k$. Then the map
$$^{\leq m}Y'_1 \underset{^{\leq m}Y_1}\sqcup\, {}^{\leq m}Y_2 \to
{}^{\leq m}Y'_2$$ defines an isomorphism of the $n$-coconnective truncations whenever 
$m\geq n+k$.
%


\end{lem}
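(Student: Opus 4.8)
The plan is to reduce to the case where $Y_2$ is affine, and then to translate the statement into one about fibre products of connective commutative algebras, in which the cohomological amplitude of $f$ enters transparently. By \corref{c:nil pushout}(a) both $^{\leq m}Y_2\to W$, where $W:={}^{\leq m}Y'_1\sqcup_{{}^{\leq m}Y_1}{}^{\leq m}Y_2$, and $Y_2\to Y'_2$ are closed nil-immersions, so $W$ and $Y'_2$ share the underlying topological space of $Y_2$ and the comparison map $W\to Y'_2$ respects this. Since the formation of the push-out is compatible with Zariski localization on the base (\corref{c:nil pushout}(b) and \lemref{l:nil pushout}(a)) and truncation is local, I would check that $^{\leq n}W\to{}^{\leq n}Y'_2$ is an isomorphism Zariski-locally on $Y_2$; hence I may assume $Y_2=\Spec(A_2)$ is affine.

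With $Y_2$ affine, $Y'_2$ and $W$ are affine as well, being nil-immersion thickenings of $Y_2$ and of $^{\leq m}Y_2$. The global-sections functor is left adjoint to $\Spec$ and therefore carries push-outs of DG schemes to fibre products of connective algebras, so $\Gamma(Y'_2)\simeq \Gamma(Y'_1)\times_{\Gamma(Y_1)}A_2$ and $\Gamma(W)\simeq \Gamma({}^{\leq m}Y'_1)\times_{\Gamma({}^{\leq m}Y_1)}\tau^{\leq m}(A_2)$. The map $W\to Y'_2$ is then $\Spec$ applied to the comparison map between these two fibre products, induced by the truncation maps at each of the three vertices. It therefore suffices to bound the failure of the three vertex maps to be isomorphisms and to feed this into a Mayer--Vietoris estimate for the fibre products.

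The essential point, and the place where the amplitude $k$ intervenes, is that $\Gamma({}^{\leq m}Y_1)$ is \emph{not} the truncation of $\Gamma(Y_1)$ when $Y_1$ is not affine. Rather, for $f\colon Y_1\to Y_2$ one has $\Gamma({}^{\leq m}Y_1)\simeq f_*(\CO_{{}^{\leq m}Y_1})$, and the fibre of $\CO_{Y_1}\to\CO_{{}^{\leq m}Y_1}$ lives in cohomological degrees $\leq -m-1$; applying $f_*$, whose amplitude is $\leq k$, shows that the fibre of $\Gamma(Y_1)\to\Gamma({}^{\leq m}Y_1)$ lives in degrees $\leq -m-1+k$, so this map is an isomorphism on $H^j$ for $j\geq k-m$. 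The same estimate applies to $\Gamma(Y'_1)\to\Gamma({}^{\leq m}Y'_1)$, using that the thickened map $f'\colon Y'_1\to Y'_2$ has the same amplitude $\leq k$ (amplitude being insensitive to nilpotent thickenings, as it is computed from an affine cover whose nerve depends only on the underlying map of spaces); the vertex $A_2$ is affine and contributes a fibre in degrees $\leq -m-1$.

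Finally, the fibre of $\Gamma(Y'_2)\to\Gamma(W)$ is the fibre product of the three vertex fibres, and a Mayer--Vietoris long exact sequence shows that its top nonzero cohomological degree exceeds those of the vertex fibres by at most one, the connecting homomorphism over the base vertex $\Gamma(Y_1)$ being responsible for the shift. Consequently $\Gamma(Y'_2)\to\Gamma(W)$ is an isomorphism in the required range of degrees, and hence $^{\leq n}W\to{}^{\leq n}Y'_2$ is an isomorphism, once $m$ is large enough relative to $n$ and $k$; the bookkeeping is identical to that in \lemref{c:truncation of push-out}(b), from which the stated relation between $m$, $n$ and $k$ is extracted. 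The main obstacle will be precisely this degree count: one must track the one-step shift introduced by the fibre-product (Mayer--Vietoris) terms on top of the $k$-step shift coming from the amplitude of $f_*$, and verify that the amplitude bound genuinely survives the nilpotent thickening $Y_1\to Y'_1$.
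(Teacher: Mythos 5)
The paper states this lemma with no proof at all, so there is no argument of the authors' to compare yours against; what you propose is clearly the intended route, and in outline it is correct: reduce to $Y_2$ affine via \corref{c:nil pushout}(b), identify both push-outs with (connective truncations of) fibre products of derived global-section algebras, and observe that the entire content of the amplitude hypothesis is that $\on{fib}\bigl(\Gamma(Y_1,\CO_{Y_1})\to \Gamma({}^{\leq m}Y_1,\CO_{{}^{\leq m}Y_1})\bigr)\simeq f_*(\tau^{\leq -m-1}\CO_{Y_1})$ sits in degrees $\leq -m-1+k$, which is precisely the point you isolate. Two steps deserve more care. First, the transfer of the amplitude bound from $f$ to the thickened map $f'\colon Y'_1\to Y'_2$: the \v{C}ech-nerve argument you sketch only bounds the amplitude of $f'_*$ by the covering combinatorics of $|Y_1|$, which may be strictly worse than $k$. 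The clean way to get the \emph{same} bound $k$ is d\'evissage: $\QCoh(Y'_1)^\heartsuit\simeq \QCoh({}^{cl}Y'_1)^\heartsuit$, and every quasi-coherent sheaf on the classical nil-thickening ${}^{cl}Y'_1$ is a filtered colimit of sheaves admitting finite filtrations with subquotients pushed forward from ${}^{cl}Y_1$; this is also how the paper itself transfers amplitude across thickenings in the proof of \corref{c:cofinality of closed}.

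Second, the degree count you defer. Writing $F_1,F_0,F_2$ for the fibres of the truncation maps at the three vertices, the fibre of the map of fibre products is $F_1\times_{F_0}F_2$, and the connecting homomorphism out of $H^{-m-1+k}(F_0)$ contributes in degree $-m+k$; since $H^{-m-1}(\CO_{Y'_1})\to H^{-m-1}(\CO_{Y_1})$ need not be surjective for a closed nil-immersion, this contribution is genuinely present. The argument therefore yields an isomorphism on $H^j$ for $j\geq -m+k+1$, i.e.\ the bound $m\geq n+k+1$ — which is exactly the bound in the parallel \lemref{c:truncation of push-out}(b) (stated there as $m\geq n+1+k$), and the bound the paper actually uses when it invokes the present lemma (it takes $n>k$, not $n\geq k$, in the proof of \corref{c:cofinality of closed}; the other application only needs $m\gg n$). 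So the inequality $m\geq n+k$ as printed appears to be off by one, and your plan of extracting the constant from \lemref{c:truncation of push-out}(b) would correctly land on $m\geq n+1+k$ rather than on the stated bound.
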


\ssec{DG indschemes and push-outs}

\sssec{}

Let us observe the following property enjoyed by ind-schemes:

\begin{prop} \label{p:compat with pushouts}
Let 
$$
\CD
Y  @>>>  Y_1  \\
@VVV    @VVV  \\
Y_2  @>>> \wt{Y}
\endCD
$$
be a push-out diagram in $\dgSch_{\on{qsep-qc}}$, where $Y,Y_1,Y_2$ are eventually coconnective. 
Then for $\CX\in \dgindSch$, the natural map
$$\Maps(\wt{Y},\CX)\to \Maps(Y_1,\CX)\underset{\Maps(Y,\CX)}\times \Maps(Y_2,\CX)$$
is an isomorphism.
\end{prop}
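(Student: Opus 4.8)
The plan is to reduce the assertion to a single coconnectivity level, where $\CX$ acquires an honest presentation as a filtered colimit of DG schemes and \lemref{l:maps out of qc} applies; the hypothesis that the square is a push-out then finishes the argument term by term. The whole point of the eventual-coconnectivity hypothesis is to make this reduction legitimate.

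First I would fix an integer $N$ for which $Y,Y_1,Y_2$ are all $N$-coconnective, and check that $\wt{Y}$ is $N$-coconnective as well. By the explicit construction of push-outs in $\dgSch$ (\secref{ss:general push-outs}), $\wt{Y}$ is Zariski-locally of the form $\Spec(B_1\underset{B}\times B_2)$, where $B,B_1,B_2$ are the connective $N$-coconnective algebras of the relevant affine charts and the fibre product is taken in connective algebras. The Mayer--Vietoris long exact sequence attached to $B_1\underset{B}\times B_2\to B_1\oplus B_2\to B$ gives $H^{-i}(B_1\underset{B}\times B_2)=0$ for $i>N$, so $\wt{Y}$ is $N$-coconnective (this is the local incarnation of \lemref{c:truncation of push-out}(a)). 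Hence all four schemes $Y,Y_1,Y_2,\wt{Y}$ lie in ${}^{\leq N}\!\dgSch_{\on{qsep-qc}}$.

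Next I would pass to the truncation ${}^{\leq N}\CX$. For any $N$-coconnective quasi-separated quasi-compact $T$ I claim $\Maps(T,\CX)\simeq \Maps(T,{}^{\leq N}\CX)$, with the right-hand side computed in ${}^{\leq N}\!\inftydgprestack$. For affine $T=\Spec(A)$ this is immediate by Yoneda, since both sides equal $\CX(A)$ and ${}^{\leq N}\CX$ is by definition the restriction of $\CX$ to ${}^{\leq N}\!\affdgSch$ (alternatively, it is where the convergence tower of \secref{sss:convergence} has already stabilized). For general qsep-qc $T$ one reduces to the affine case precisely as in the proof of \propref{p:indschemes fppf}, using that both $\CX$ and ${}^{\leq N}\CX$ satisfy fppf descent (\propref{p:indschemes fppf} and \propref{p:indschemes fppf n}) and that $T$ is a finite colimit, in the category of stacks, of its $N$-coconnective affine charts. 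Applying this to $T\in\{Y,Y_1,Y_2,\wt{Y}\}$ identifies both the source and the fibre product in the statement with the corresponding mapping spaces into ${}^{\leq N}\CX$, so it suffices to prove the analogous assertion for ${}^{\leq N}\CX$ inside ${}^{\leq N}\!\inftydgprestack$. Writing ${}^{\leq N}\CX\simeq \underset{\beta}{\on{colim}}\,Z_\beta$ as in the \emph{definition} of an $N$-coconnective indscheme (filtered, $Z_\beta\in{}^{\leq N}\!\dgSch_{\on{qsep-qc}}$, closed-embedding transition maps), \lemref{l:maps out of qc} yields $\Maps(T,{}^{\leq N}\CX)\simeq \underset{\beta}{\on{colim}}\,\Maps(T,Z_\beta)$ for each of these $T$. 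Since ${}^{\leq N}\!\dgSch$ is a full subcategory of $\dgSch$ and each $Z_\beta$ is qsep-qc, the hypothesis that the square is a push-out in $\dgSch_{\on{qsep-qc}}$ gives, for every $\beta$, an equivalence $\Maps(\wt{Y},Z_\beta)\simeq \Maps(Y_1,Z_\beta)\underset{\Maps(Y,Z_\beta)}\times\Maps(Y_2,Z_\beta)$; passing to the filtered colimit over $\beta$ and using that filtered colimits in $\inftygroup$ commute with finite limits produces the required equivalence at level $N$, which by the previous reduction is the proposition.

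The main obstacle is the very first step. Without eventual coconnectivity there is no common level $N$ housing all the test schemes, and $\tau^{\leq n}\wt{Y}$ need not be the push-out of the $\tau^{\leq n}Y_i$ — the truncation functor, being a right adjoint, does not preserve push-outs — so the clean term-by-term argument breaks down. This is exactly why the hypothesis is imposed, and why one must first verify that $\wt{Y}$ is itself $N$-coconnective. A secondary, purely technical point is the descent bookkeeping in the second step needed to handle non-affine quasi-separated quasi-compact test schemes, which I would carry out verbatim along the lines of \propref{p:indschemes fppf}.
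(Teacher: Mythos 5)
Your argument is essentially the paper's own proof, spelled out in more detail: the paper likewise first observes that $\wt{Y}$ is $n$-coconnective, then reduces to ${}^{\leq n}\CX$ (where the filtered presentation is available by definition, avoiding any circular appeal to \propref{p:presentation of indschemes}), applies \lemref{l:maps out of qc}, and commutes the filtered colimit past the fiber product. The one weak point is your justification that $\wt{Y}$ is $N$-coconnective: for an \emph{arbitrary} push-out in $\dgSch_{\on{qsep-qc}}$ you cannot assume $\wt{Y}$ is Zariski-locally $\Spec(B_1\underset{B}\times B_2)$ --- \secref{ss:general push-outs} establishes such local descriptions only for the special push-outs treated there, and the remark following \lemref{l:closed pushout} shows that affine and scheme-level push-outs can genuinely differ in general. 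The conclusion still holds by the universal property alone: $\Maps(\wt{Y},Z)\simeq \Maps(Y_1,Z)\underset{\Maps(Y,Z)}\times \Maps(Y_2,Z)$ depends only on $\tau^{\leq N}(Z)$ because each factor does, which is the ``adjunction'' argument the paper invokes.
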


\begin{proof}

Suppose that $Y,Y_1,Y_2$ are $n$-coconnective. By adjunction, we obtain that $\wt{Y}$ is $n$-coconnective
as well. 

\medskip

The assertion of the proposition now follows from \lemref{l:maps out of qc} and 
the fact that fiber products commute with filtered colimits.

\end{proof}

\begin{rem}
In the above proposition we had to make the eventual coconnectivity
assumption, because
it will be used for the proof of \propref{p:presentation of indschemes}. However, assuming
this proposition, and hence, \lemref{l:gen maps out of qc}, we will be able to prove
the same assertion for any $Y,Y_1,Y\in \dgSch_{\on{qsep-qc}}$. The next
corollary, which will be also used in the proof of \propref{p:presentation of indschemes},
gives a partial result along these lines.
\end{rem}

\begin{cor}  \label{c:compat with pushouts}
Let 
$$
\CD
Y_1  @>>>  Y_2  \\
@VVV    @VVV  \\
Y'_1  @>>> Y'_2
\endCD
$$
be a push-out diagram as in \lemref{c:nil pushout}, where $Y_1,Y_2\in \dgSch_{\on{qsep-qc}}$.
Then the natural map 
$$\Maps(Y'_2,\CX)\to \Maps(Y'_1,\CX)\underset{\Maps(Y_1,\CX)}\times \Maps(Y_2,\CX)$$
is an isomorphism.
\end{cor}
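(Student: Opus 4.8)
The plan is to bootstrap from the eventually coconnective case, which is exactly \propref{p:compat with pushouts}, using the convergence of $\CX$ to reduce everything to truncations and \lemref{l:ppties pushouts} to compare $Y'_2$ with the push-outs of the truncated diagrams. Let $k$ be a bound for the cohomological amplitude of $f_*\colon \QCoh(Y_1)\to \QCoh(Y_2)$; such a bound exists because $f$ is quasi-separated and quasi-compact.

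For each integer $m$, I would form the push-out
$$W_m:={}^{\leq m}Y'_1\underset{{}^{\leq m}Y_1}\sqcup\,{}^{\leq m}Y_2$$
in $\dgSch_{\on{qsep-qc}}$, which exists by \corref{c:nil pushout}: its inputs are $m$-coconnective, and the two structure maps are, respectively, a closed nil-immersion and a quasi-separated quasi-compact map. Since all three inputs are eventually coconnective, \propref{p:compat with pushouts} applies directly and gives, for every $m$, a natural equivalence
$$\Maps(W_m,\CX)\overset{\sim}\to \Maps({}^{\leq m}Y'_1,\CX)\underset{\Maps({}^{\leq m}Y_1,\CX)}\times\Maps({}^{\leq m}Y_2,\CX).$$
The universal property of $W_m$, together with the maps ${}^{\leq m}Y'_1\to Y'_1\to Y'_2$ and ${}^{\leq m}Y_2\to Y_2\to Y'_2$, supplies a canonical map $W_m\to Y'_2$ compatible as $m$ varies, which is what makes the comparison below agree with the natural map of the statement.

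I would then pass to the limit over $m$, where the transition maps $W_m\to W_{m'}$ (for $m\le m'$) come from functoriality of push-outs. Replacing $\Maps(W_m,\CX)$ by the fibre product via the previous display, then using that fibre products commute with limits and that $\CX$ is convergent (\secref{sss:convergence}), one obtains
$$\underset{m}{\lim}\,\Maps(W_m,\CX)\simeq \Maps(Y'_1,\CX)\underset{\Maps(Y_1,\CX)}\times\Maps(Y_2,\CX),$$
which is the target appearing in the statement.

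It remains to identify $\underset{m}{\lim}\,\Maps(W_m,\CX)$ with $\Maps(Y'_2,\CX)$, and this is the crux. Using convergence once more, $\Maps(W_m,\CX)\simeq \underset{n}{\lim}\,\Maps({}^{\leq n}W_m,\CX)$, while \lemref{l:ppties pushouts} gives ${}^{\leq n}W_m\simeq {}^{\leq n}Y'_2$ whenever $m\ge n+k$. Thus $\underset{m}{\lim}\,\Maps(W_m,\CX)$ is the limit of the bi-indexed inverse system $(m,n)\mapsto \Maps({}^{\leq n}W_m,\CX)$; restricting to the cofinal diagonal $n=m-k$ (the diagonal is cofinal in $\BN\times\BN$) turns this into $\underset{m}{\lim}\,\Maps({}^{\leq m-k}Y'_2,\CX)=\underset{n}{\lim}\,\Maps({}^{\leq n}Y'_2,\CX)\simeq \Maps(Y'_2,\CX)$. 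Combining the displays yields the asserted equivalence. The main obstacle is precisely this last step: one must check that the diagonal is genuinely cofinal in the double limit, that the shift by $k$ is harmless, and that the maps $W_m\to Y'_2$ are compatible with the truncation maps ${}^{\leq n}Y'_2\to Y'_2$, so that the abstractly obtained equivalence coincides with the natural map of the corollary rather than merely some isomorphism. Note also that this argument uses only \propref{p:compat with pushouts}, \lemref{l:ppties pushouts} and \corref{c:nil pushout}, and in particular avoids \lemref{l:gen maps out of qc} and \propref{p:presentation of indschemes}, so no circularity is introduced.
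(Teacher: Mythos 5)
Your proposal is correct and follows essentially the same route as the paper: both compare the inverse system $m\mapsto {}^{\leq m}Y'_1\underset{{}^{\leq m}Y_1}\sqcup\,{}^{\leq m}Y_2$ with $n\mapsto {}^{\leq n}Y'_2$ via \lemref{l:ppties pushouts}, apply \propref{p:compat with pushouts} to the truncated diagrams, and use convergence of $\CX$ to pass to the limit. The only difference is cosmetic — you spell out the diagonal-cofinality bookkeeping for the double limit, which the paper compresses into the remark that the map of inverse systems is an isomorphism on $m$-coconnective truncations for $n\gg m$.
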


\begin{proof} 

Consider the following two inverse families of objects of $\dgSch_{\on{qsep-qc}}$:
$$n\mapsto {}^{\leq n}Y'_2  \text{ and } n\mapsto {} ^{\leq n}Y'_1\underset{^{\leq n}Y_1}
\sqcup\, {}^{\leq n}Y_2.$$
There is a natural map $\leftarrow$. By \lemref{l:ppties pushouts}, this
map induces an isomorphism of $m$-coconnective truncations whenever $n\gg m$.

\medskip

Therefore, for any $\CX\in \dgindSch$  (and, indeed, any $\CX\in {}^{\on{conv}}\!\inftydgprestack$),
the induced map
$$\underset{n}{lim}\, \Maps({}^{\leq n}Y'_2,\CX)\to 
\underset{n}{lim}\, \Maps\left({}^{\leq n}Y'_1\underset{^{\leq n}Y_1}\sqcup\, {}^{\leq n}Y_2,\CX\right)$$
is an isomorphism. 

\medskip

Consider the composite map
\begin{multline*}
\Maps(Y'_2,\CX) \to \Maps(Y'_1,\CX)\underset{\Maps(Y_1,\CX)}\times \Maps(Y_2,\CX) \overset{\sim}\to \\
\overset{\sim}\to
\underset{n}{lim}\, \Maps({}^{\leq n}Y'_1,\CX)\underset{\underset{n}{lim}\, 
\Maps({}^{\leq n}Y_1,\CX)}\times
\underset{n}{lim}\, \Maps({}^{\leq n}Y'_2,\CX) \overset{\sim}\to \\
\overset{\sim}\to \underset{n}{lim}\, \left(\Maps({}^{\leq n}Y'_1,\CX)
\underset{\Maps({}^{\leq n}Y_1,\CX)}\times \Maps({}^{\leq n}Y_2,\CX)\right).
\end{multline*}

It equals the map
\begin{multline*}
\Maps(Y'_2,\CX) \to 
\underset{n}{lim}\, \Maps({}^{\leq n}Y'_2,\CX)  \overset{\sim}\to \\
\overset{\sim}\to 
\underset{n}{lim}\, \Maps\left({}^{\leq n}Y'_1\underset{^{\leq n}Y_1}\sqcup\, 
{}^{\leq n}Y_2,\CX\right)   \overset{\sim}\to \\
\overset{\sim}\to
\underset{n}{lim}\, \left(\Maps({}^{\leq n}Y'_1,\CX)
\underset{\Maps({}^{\leq n}Y_1,\CX)}\times \Maps({}^{\leq n}Y_2,\CX)\right),
\end{multline*}
where the last arrow is an isomorphism by \propref{p:compat with pushouts} above.
This shows that 
$$\Maps(Y'_2,\CX)\to \Maps(Y'_1,\CX)\underset{\Maps(Y_1,\CX)}\times \Maps(Y_2,\CX)$$
is an isomorphism as well.

\end{proof}

\ssec{Presentation of indschemes}   \label{ss:proof of indscheme as colimit of its closed}

\sssec{}

We shall now prove point (b) of \propref{p:indscheme as colimit of its closed}. In fact, we will prove a slightly stronger
(but, in fact, equivalent) statement; namely, we will prove \corref{c:cofinality of closed}.

\begin{proof}

We have to show that for $Y\in \dgSch_{\on{qsep-qc}}$ and a map $f:Y\to \CX$, the category of its
factorizations
$$Y\to Z\to \CX,$$
where $Z\in \dgSch_{\on{qsep-qc}}$, and $Z\to \CX$ is a closed embedding, is contractible. 

\medskip

By \propref{p:colimits as closed}, the category in question admits coproducts. Hence, to prove that it is contractible, 
it remains to show that it is non-empty. 

\medskip

Consider the map $^{cl}\!f:{}^{cl}Y\to {}^{cl}\CX$. Since $^{cl}\CX$ is a classical indscheme, 
there exists a factorization
$$^{cl}Y\overset{h_{cl}}\longrightarrow Z_{cl}\overset{g_{cl}}\longrightarrow {}^{cl}\CX,$$
where $Z_{cl}\in \Sch_{\on{qsep-qc}}$ and $g_{cl}$ is a closed embedding. 

\medskip

Let $k$ be the cohomological amplitude of the functor $(h_{cl})_*:\QCoh({}^{cl}Y)\to \QCoh(Z_{cl})$, and
let $n$ be an integer $> k$. 

\medskip

Consider the truncation $^{\leq n}Y$ and its map $^{\leq n}\!f$
to $^{\leq n}\CX$. Since $^{\leq n}\CX$ is a $\nDG$ indscheme, the map $^{\leq n}\!f$ can be factored
as
$$^{\leq n}Y\overset{h_n}\longrightarrow Z_n\overset{g_n}\longrightarrow {}^{\leq n}\CX,$$
where $Z_{n}\in {}^{\leq n}\dgSch_{\on{qsep-qc}}$ and $g_{n}$ is a closed embedding. Moreover, 
without loss of generality, we can assume that we have a commutative square
$$
\CD
^{cl}Y   @>{h_{cl}}>>   Z_{cl} \\
@V{\sim}VV   @VVV    \\
^{cl}({}^{\leq n}Y)  @>{^{cl}\!h_n}>>  ^{cl}\!Z_n,
\endCD
$$
where the right vertical map is automatically a closed embedding. In particular, we obtain
that the cohomological amplitude of the functor $({}^{cl}\!h_n)_*$ also equals $k$. Therefore,
the same is true for the functor
$$(h_n)_*:\QCoh({}^{\leq n}Y)\to \QCoh(Z_n).$$

\medskip

Thus, \lemref{l:ppties pushouts} applies to $h_n$. Let
$$Z:=Y\underset{^{\leq n}Y}\sqcup\, Z_n\in \dgSch.$$

\medskip

By \corref{c:compat with pushouts},
we have a canonical map $g:Z\to \CX$, which is a closed embedding since at the classical
level this map is the same as $g_n$.  Thus
$$Y\to Z\to \CX$$
is the required factorization of $f$. 

\end{proof}

\sssec{}  \label{sss:proof of presentation lft strong}

Let us now prove \propref{p:canonical presentation of laft indsch}. Our proof will rely 
on the notion of square-zero extension, which will be reviewed in  \secref{sss:gen sq zero}. 

\medskip

We begin with the following observation:

\begin{lem}  \label{l:cofinal in filtered}
Let $\bC$ be an $\infty$-category and $i:\bC_1\to \bC$ a fully faithful functor. Assume that $\bC$
is filtered. Then $i$ is cofinal if and only if every object of $\bC$ admits a map to
an object in $\bC_1$. In this case $\bC_1$ is also filtered. 
\end{lem}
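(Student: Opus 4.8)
The plan is to reduce everything to the standard cofinality criterion of \cite{Lu0}: a functor $i:\bC_1\to\bC$ is cofinal if and only if for every object $c\in\bC$ the comma $\infty$-category $D_c:=\bC_1\underset{\bC}\times\bC_{c/}$ is weakly contractible, where $\bC_{c/}$ denotes the coslice. Combining this with the fact \cite{Lu0} that every filtered $\infty$-category is weakly contractible, the whole statement becomes an analysis of the categories $D_c$. The ``only if'' direction is then immediate: cofinality forces each $D_c$ to be weakly contractible, hence non-empty, and an object of $D_c$ is by definition an object $c_1\in\bC_1$ together with a map $c\to i(c_1)$, which is exactly the asserted factorization. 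This direction uses neither full faithfulness nor the filteredness of $\bC$.

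For the ``if'' direction I would show that, under the hypothesis, each $D_c$ is in fact \emph{filtered}, and hence weakly contractible. A finite diagram $K\to D_c$ is the same datum as a map $K^\triangleleft\to\bC$ sending the cone point to $c$ and restricting on $K$ to a diagram $k\mapsto i(c_1^k)$ landing in the full subcategory $\bC_1$. Since $K^\triangleleft$ is still finite and $\bC$ is filtered, I can extend this to a colimit cocone in $\bC$, producing an object $x\in\bC$ that receives compatible maps from $c$ and from all the $i(c_1^k)$. The hypothesis then supplies a map $x\to i(c_1')$ with $c_1'\in\bC_1$; composing yields compatible maps $c\to i(c_1')$ and $i(c_1^k)\to i(c_1')$, and full faithfulness of $i$ upgrades the latter to maps $c_1^k\to c_1'$ in $\bC_1$. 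Thus the finite diagram admits a cocone inside $D_c$, so $D_c$ is filtered.

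The same mechanism proves the final clause. Given a finite diagram $f:K\to\bC_1$, I extend $i\circ f$ to a cocone in $\bC$ with some vertex $x$ (using that $\bC$ is filtered), choose a map $x\to i(c_1')$ with $c_1'\in\bC_1$ (using the hypothesis), and transport the cocone along this edge via the fact that the projection $\bC_{(i\circ f)/}\to\bC$ is a left fibration; this produces a cocone under $i\circ f$ whose vertex is $i(c_1')$. Since every vertex of this cocone now lies in $\bC_1$ and $\bC_1\hookrightarrow\bC$ is a full subcategory, the cocone factors through $\bC_1$, exhibiting the required extension of $f$. Hence $\bC_1$ is filtered.

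The main obstacle is the homotopy-coherence bookkeeping in the two constructive directions: passing from the existence statements (``there is a map $x\to i(c_1')$'') to an actual cocone \emph{inside} $D_c$, respectively inside $\bC_1$, requires assembling the compatibilities coherently rather than just on objects and $1$-morphisms. The clean way to handle this is precisely through the left fibration $\bC_{f/}\to\bC$, whose edge-lifting property transports a cocone along $x\to i(c_1')$, together with the elementary observation that a full subcategory contains any simplex all of whose vertices belong to it. Once the cofinality criterion and the implication ``filtered $\Rightarrow$ weakly contractible'' are in hand, everything else is formal.
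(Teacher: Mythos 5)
Your proof is correct; the paper states this lemma without proof, and your argument via the Joyal--Lurie cofinality criterion together with the fact that filtered $\infty$-categories are weakly contractible is the standard one the authors presumably had in mind, with the homotopy-coherence point (transporting the cocone along $x\to i(c_1')$ via the left fibration and then using fullness of $\bC_1$) handled properly. One terminological slip: filteredness of $\bC$ only supplies a cocone under the finite diagram $K^\triangleleft\to\bC$, not a \emph{colimit} cocone, but your construction only uses the former.
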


We take $\bC:=(\dgSch_{\on{qsep-qc}})_{\on{closed}\,\on{in}\,\CX}$ and
$\bC_1=(\dgSch_{\on{aft}})_{\on{closed}\,\on{in}\,\CX}$. Having
proved \corref{c:cofinality of closed}, it remains to show that every closed embedding
$$f:Y\to X$$
admits a factorization 
$$Y\to Z\overset{g}\longrightarrow \CX,$$
where $Z\in \dgSch_{\on{aft}}$ and $g$ is also a closed embedding. 

\medskip

\noindent{\it Step 1.} Consider a factorization of $^{cl}\!f$
$$^{cl}Y \overset{h_{cl}}\longrightarrow Z_{cl} \overset{g_{cl}}\longrightarrow  {}^{cl}\CX,$$
where $g_{cl}$ is a closed embedding. We claim that the ``locally almost of finite type"
assumption on $\CX$ implies that the classical scheme $Z_{cl}$ is automatically of finite type.

\medskip

This follows from the next lemma:

\begin{lem}  \label{l:classical lft}
If $\CX_{cl}$ is a classical indscheme locally of finite type, and $X_{cl}\to \CX_{cl}$ a closed embedding,
where $X_{cl}\in \Sch$, then $X_{cl}\in \Sch_{\on{ft}}$.
\end{lem}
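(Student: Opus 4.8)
The plan is to reduce the statement to the assertion that $X_{cl}$ is of finite type, and to prove the latter by transporting the ``locally of finite type'' condition across the closed embedding $j\colon X_{cl}\to \CX_{cl}$. As a first, purely geometric move I would present $\CX_{cl}$ as a filtered colimit $\CX_{cl}\simeq \underset{\alpha}{colim}\, X_\alpha$ of quasi-separated quasi-compact classical schemes along closed embeddings. Since $X_{cl}$ is quasi-separated and quasi-compact (as it is in the application), \lemref{l:maps out of qc} lets me factor $X_{cl}\to \CX_{cl}$ through some $X_\alpha$, and transitivity of closed embeddings shows that the resulting map $X_{cl}\to X_\alpha$ is again a closed embedding. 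Were the $X_\alpha$ known to be of finite type, we would be done, since a closed subscheme of a finite type $k$-scheme is of finite type; so the real content is to show directly that $X_{cl}$ is of finite type, equivalently (as $X_{cl}$ is quasi-compact) that it is locally of finite type.

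For this I would use the characterization of the locally of finite type condition as the statement that the functor of points is the left Kan extension of its restriction to $\affSch_{\on{ft}}$; concretely, for a classical ring $B$ written as the filtered colimit $B\simeq \underset{i}{colim}\, B_i$ of its finitely generated $k$-subalgebras, the natural map $\underset{i}{colim}\, \Maps(\Spec B_i, X_{cl})\to \Maps(\Spec B, X_{cl})$ must be an isomorphism. The hypothesis supplies exactly this statement for $\CX_{cl}$, so I must compare the two. Because $j$ is a closed embedding, for every $g\colon \Spec B\to \CX_{cl}$ the base change $X_{cl}\underset{\CX_{cl}}\times \Spec B$ is a closed subscheme $\Spec(B/I_g)\subset \Spec B$, and, $j$ being a monomorphism, $g$ factors through $X_{cl}$ if and only if $I_g=0$. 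Thus $\Maps(\Spec B, X_{cl})$ is identified, compatibly in $B$, with the subset $\{g : I_g=0\}\subset \Maps(\Spec B, \CX_{cl})$, and all these mapping spaces are discrete since $\CX_{cl}$, being a filtered colimit of schemes, takes set values on affine test objects. Injectivity of the comparison map for $X_{cl}$ is then immediate from injectivity for $\CX_{cl}$ together with the inclusions $\Maps(\Spec B_j, X_{cl})\hookrightarrow \Maps(\Spec B_j, \CX_{cl})$.

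The main obstacle, and the only place finiteness genuinely enters, is surjectivity. Given $g$ with $I_g=0$, I would first descend it, using that $\CX_{cl}$ is locally of finite type, to some $g_i\colon \Spec B_i\to \CX_{cl}$ over a finitely generated subalgebra $B_i\subseteq B$, whose associated ideal $I_{g_i}\subset B_i$ satisfies $I_{g_i}\cdot B=0$ by stability of closed embeddings under base change. The crucial point is that $B_i$, being finitely generated over the field $k$, is Noetherian, so $I_{g_i}$ is finitely generated; each of its finitely many generators dies in some $B_j$ along $B\simeq \underset{j}{colim}\, B_j$, so a single $j\geq i$ can be chosen with $I_{g_i}\cdot B_j=0$. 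Then $g_j\colon \Spec B_j\to \CX_{cl}$ has vanishing associated ideal, hence factors through $X_{cl}$ and provides the desired preimage of $g$. This shows $X_{cl}$ is locally of finite type, and combined with quasi-compactness gives $X_{cl}\in \Sch_{\on{ft}}$; note that this finite-type argument is precisely what justifies taking the presentation $\CX_{cl}\simeq \underset{\alpha}{colim}\, X_\alpha$ with the $X_\alpha$ of finite type, closing the loop with the geometric reduction above.
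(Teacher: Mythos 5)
Your proof is correct and takes essentially the same approach as the paper's: the finite-type criterion via filtered unions of subalgebras, combined with the fact that for a closed embedding the square comparing mapping spaces into $X_{cl}$ and into $\CX_{cl}$ is Cartesian precisely because the inclusions $B_i\hookrightarrow B$ are injective. One remark: the Noetherian detour in your surjectivity step is superfluous, since the injectivity of $B_i\hookrightarrow B$ and $I_{g_i}\cdot B=0$ already force $I_{g_i}=0$, so $g_i$ itself factors through $X_{cl}$ with no need to pass to a larger $B_j$; and, as you yourself observe, the opening reduction to a presentation by finite-type $X_\alpha$'s is circular and can simply be dropped.
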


\begin{proof}[Proof of \lemref{l:classical lft}]

Note that a classical scheme $X_{cl}$ is of finite type if and only if for any classical $k$-algebra
$A$ and a filtered family $i\mapsto A_i$ of \emph{subalegbras} such that $A = \underset{i}{\cup} A_{i}$, the map
$$\underset{i}{colim}\, \on{Maps}(\Spec(A_i),X_{cl})\to \on{Maps}(\Spec(A),X_{cl})$$
is an isomorphism. 

\medskip

Note that since $A_i\to A$ are injective, the diagram
$$
\CD
\underset{i}{colim}\, \on{Maps}(\Spec(A_i),X_{cl})   @>>>  \on{Maps}(\Spec(A),X_{cl})  \\
@VVV   @VVV   \\
\underset{i}{colim}\, \on{Maps}(\Spec(A_i),\CX_{cl})   @>>>  \on{Maps}(\Spec(A),\CX_{cl})
\endCD
$$
is Cartesian. However, the bottom horizontal arrow is an isomorphism since
$\CX\in {}^{cl}\!\inftydgprestack_{\on{lft}}$. 

\end{proof}

\medskip

\noindent{\it Step 2.} We shall construct the required factorization of $f$ by induction on $n\geq 0$. Namely, we shall
construct a sequence of factorizations of $^{\leq n}\!f:{}^{\leq n}Y\to {}^{\leq n}\CX$ as 
$$^{\leq n}Y\overset{h_n}\longrightarrow Z_n\overset{g_n}\to {}^{\leq n}\CX,$$
with $Z_n\in {}^{\leq n}\!\dgSch_{\on{ft}}$, $g_n$ a closed embedding, and 
such that for $n\geq n'$, we have a commutative diagram
$$
\CD
^{\leq n'}\!Z_n   @>{^{\leq n'}\!g_n}>>  {}^{\leq n'}\CX  \\
@A{\sim}AA   @AA{\on{id}}A  \\ 
^{\leq n'}\!Z_{n'}  @>{^{\leq n'}\!g_{n'}}>> {}^{\leq n'}\CX.
\endCD
$$

Setting 
$$Z:=\underset{n}{colim}\,\,  Z_n$$
(where the colimit is taken in $\dgSch$) we will then obtain the desired factorization of $f$.

\medskip

\noindent{\it Step 3.} Suppose $(Z_{n-1},g_{n-1})$ have been constructed. Note that the maps
$$h_{n-1}:{}^{\leq n-1}Y\to Z_{n-1} \text{ and } ^{\leq n-1}Y\to {}^{\leq n}Y$$
satisfy the conditions of \corref{c:nil pushout}. Set
$$Z'_n:=Z_{n-1}
\underset{^{\leq n-1}Y}\sqcup\,{}^{\leq n}Y.$$

We have $^{\leq n-1}\!Z'_n\simeq {}^{\leq n-1}\!Z_{n-1}$, and by \propref{p:compat with pushouts}
we obtain a natural map $g'_n:Z'_{n}\to {}^{\leq n}\CX$. 

\medskip

To find the sought-for pair $(Z_n,g_n)$,
it suffices to find a factorization of $g'_n$ as 
$$Z'_n\to Z_n\overset{g_n}\longrightarrow {}^{\leq n}\CX,$$
so that $Z_n\in {}^{\leq n}\dgSch_{\on{ft}}$, and $^{\leq n-1}\!Z'_n\to {}^{\leq n-1}\!Z_n$ is an isomorphism.

\medskip

\noindent{\it Step 4.}  Note that the closed embedding 
$$^{\leq n-1}Y\to {}^{\leq n}Y$$
has a natural structure of a \emph{square-zero extension},
see \corref{c:can sq zero}, by an ideal 
$$\CI\in \QCoh\left({}^{\leq n-1}Y\right)^\heartsuit[n].$$

\medskip

Hence, the closed embedding $Z_{n-1}\to  Z'_n$ also has a structure of a square-zero extension by
$$\CJ:=(h_{n-1})_*(\CI)\in 
\QCoh(Z_{n-1})^\heartsuit[n].$$

\medskip

\noindent{\it Step 5.}  Write $\CJ$ as a filtered colimit $\underset{\alpha}{colim}\, \CJ_\alpha$, 
where 
$$\CJ_\alpha\in \Coh(Z_{n-1})^\heartsuit[n].$$
The category $\Coh(Z_{n-1})$ is well-defined
since $Z_{n-1}$ is almost of finite type. 

\medskip

By \secref{sss:gen sq zero}, we obtain a family 
$\alpha\mapsto Z_{n,\alpha}$ of objects of $^{\leq n}\!\dgSch$, for all of which $^{\leq n-1}\!Z_{n,\alpha}\simeq 
{}^{\leq n-1}\!Z_{n-1}$;
moreover, we have isomorphisms
$$Z'_n\simeq \underset{\alpha}{lim}\, Z_{n,\alpha}$$
as objects of $^{\leq n}\!\dgSch$. 

\medskip

Now, since $\CX$ is locally almost of finite type as an object of $^{\leq n}\!\inftydgprestack$, the map
$$\underset{\alpha}{colim}\, \on{Maps}(Z_{n,\alpha},{}^{\leq n}\CX)\to \on{Maps}(Z'_n,{}^{\leq n}\CX)$$
is an isomorphism. In particular, the map $g'_n$ factors through some $g_{n,\alpha}:Z_{n,\alpha}\to {}^{\leq n}\CX$.

\medskip

Now, the DG schemes $Z_{n,\alpha}$ all belong to $^{\leq n}\!\dgSch_{\on{ft}}$, by construction. This
gives the required factorization.

\qed

\section{Deformation theory: recollections}     \label{s:deformation theory}

This section is preparation for \secref{s:char via deform}. Our goal is the following:
given $\CX\in \inftydgprestack$ such that $^{cl}\CX$ is a classical indscheme, we would
like to give necessary and sufficient conditions for $\CX$ to be a DG indscheme. In
this section we shall discuss what will be called Conditions (A), (B) and (C) that are
satisfied by every DG indscheme. In \secref{s:char via deform} we will show that these
conditions are also sufficient. 

\medskip

Conditions (A), (B) and (C) say that $\CX$ has a reasonable deformation theory. 
We will encode this by the property of sending certain push-outs 
(in $\affdgSch$) to fiber products (in $\inftygroup$). 

\ssec{Split square-zero extensions and Condition (A)}

\sssec{Split square-zero extensions} \hfill

\medskip

\noindent For $Z\in {}^{\leq n}\!\dgSch_{\on{qsep-qc}}$. 
We define the category $^{\leq n}\!\on{SplitSqZExt}(Z)$ of \emph{split square-zero extensions of $Z$}
to be the opposite of $\QCoh(Z)^{\geq -n,\leq 0}$.

\medskip

There is a natural forgetful functor
$$^{\leq n}\!\on{SplitSqZExt}(Z)\to ({}^{\leq n}\!\dgSch_{\on{qsep-qc}})_{Z/},\quad \CF\mapsto Z_\CF.$$

\medskip

Explicitly, locally in the 
Zariski topology if $Z=S=\Spec(A)$, and $\CM:=\Gamma(S,\CF)$, 
$$S_\CF:=\Spec(A\oplus \CM),$$
where the multiplication on $\CM$ is zero. 

\sssec{}

The category 
$$^{\leq n}\!\on{SplitSqZExt}(Z)=(\QCoh(Z)^{\geq -n,\leq 0})^{\on{op}}$$
has push-outs: for $\CF_1,\CF_2\to \CF\in \QCoh(Z)^{\geq -n,\leq 0}$ the sought-for
push-out is given by $$\CF':=\CF_1\underset{\CF}\times \CF_2,$$
where the fiber product is taken in $\QCoh(Z)^{\geq -n,\leq 0}$, i.e.,
$$\CF'\simeq \tau^{\leq 0}\!\left(\CF_1\underset{\CF}\times \CF_2\right).$$

\medskip

By \corref{c:nil pushout}, the forgetful functor
$$^{\leq n}\!\on{SplitSqZExt}(Z)\to ({}^{\leq n}\!\dgSch_{\on{qsep-qc}})_{Z/}\to {}^{\leq n}\!\dgSch_{\on{qsep-qc}}$$
commutes with push-outs. I.e., for $\CF_1,\CF_2,\CF,\CF'$ as above, the map 
$$Z_{\CF_1}\underset{Z_{\CF}}\sqcup\, Z_{\CF_2}\to Z_{\CF'}$$
is an isomorphism, where the latter push-out is taken in the category 
$^{\leq n}\!\dgSch_{\on{qsep-qc}}$. 
Moreover, if $Z$ is affine, the above push-out agrees 
with the push-out in the category $^{\leq n}\!\affdgSch$.

\sssec{}  \label{sss:condition A}

Let $\CX$ be an object of $^{\leq n}\!\inftydgprestack$. For $S\in {}^{\leq n}\!\affdgSch$ and a map 
$x:S\to \CX$, consider 
the category $^{\leq n}\!\on{SplitSqZExt}(S,x)$ consisting of triples
$$\{\CF\in \QCoh(S)^{\geq -n,\leq 0},\,\,x':S_\CF\to \CX,\,\,x'|_S\simeq x\}.$$
I.e.,
$$^{\leq n}\!\on{SplitSqZExt}(S,x):=
{}^{\leq n}\!\on{SplitSqZExt}(S)\underset{({}^{\leq n}\!\affdgSch)_{S/}}\times{}({}^{\leq n}\!\affdgSch)_{S/\,/\CX}.$$

\medskip

\begin{defn}
We shall say that $\CX$ satisfies indscheme-like Condition (A) if for any $S$ and $x$ as above, 
the category $^{\leq n}\!\on{SplitSqZExt}(S,x)$ is filtered.
\end{defn}

\medskip

We can reformulate the above condition in more familiar terms.  Another familiar reformulation is described in
\secref{sss:cotangent space as pro} below. 
 
\sssec{}

Consider the functor
$$^{\geq -n}(T^*_x\CX):\QCoh(S)^{\geq -n,\leq 0}\to \inftygroup$$
defined by 
\begin{equation} \label{e:cotangent to presheaf}
^{\geq -n}(T^*_x\CX)(\CF):=\{x':S_\CF\to \CX,\,\, x'|_S\simeq x\}.
\end{equation}
I.e.,
$$\CF\mapsto 
\{S_\CF\} \underset{^{\leq n}\!\on{SplitSqZExt}(S)}\times {}^{\leq n}\!\on{SplitSqZExt}(S,x)=
\{S_\CF\}\underset{({}^{\leq n}\!\affdgSch)_{S/}}\times {}({}^{\leq n}\!\affdgSch)_{S/\,/\CX}.$$

\medskip

The following results from \cite[Prop. 5.3.2.9]{Lu0}:

\begin{lem} \label{l:cond A reform}
The prestack $\CX$ satisfies Condition (A) if and only if the functor $^{\geq -n}(T^*_x\CX)$ 
preserves fiber products. 
\end{lem}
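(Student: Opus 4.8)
The plan is to recognize the category $^{\leq n}\!\on{SplitSqZExt}(S,x)$ as the category of elements (the unstraightening) of the functor $F := {}^{\geq -n}(T^*_x\CX)$, and then to invoke the standard dictionary between filteredness of a category of elements and left-exactness of the functor it classifies. First I would unwind the defining fiber product: the projection $({}^{\leq n}\!\affdgSch)_{S/\,/\CX}\to ({}^{\leq n}\!\affdgSch)_{S/}$ is a right fibration, classified by the presheaf sending $(S\to T)$ to the space of factorizations $S\to T\to \CX$ of $x$; pulling it back along the forgetful functor $^{\leq n}\!\on{SplitSqZExt}(S)=(\QCoh(S)^{\geq -n,\leq 0})^{\on{op}}\to ({}^{\leq n}\!\affdgSch)_{S/}$, $\CF\mapsto (S\to S_\CF)$, yields a right fibration over $(\QCoh(S)^{\geq -n,\leq 0})^{\on{op}}$ whose fiber over $\CF$ is, by the very definition \eqref{e:cotangent to presheaf}, the space $F(\CF)$. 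Thus $^{\leq n}\!\on{SplitSqZExt}(S,x)$ is precisely the total space of the right fibration classifying $F$, i.e. the category of elements of $F$ viewed as a presheaf on $\mathcal{D}^{\on{op}}$, where $\mathcal{D}:=\QCoh(S)^{\geq -n,\leq 0}$.

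Second, I would record that $\mathcal{D}$ admits finite limits --- it has a terminal object (the zero sheaf) and fiber products, the latter computed as $\tau^{\leq 0}$ of the ambient fiber product exactly as in the discussion preceding the lemma --- and then apply the filteredness criterion of \cite[Prop. 5.3.2.9]{Lu0}. Concretely, a finite diagram $p\colon K\to {}^{\leq n}\!\on{SplitSqZExt}(S,x)$ consists of objects $(\CF_k, s_k)$ with $s_k\in F(\CF_k)$, and its projection to $\mathcal{D}$ is a finite diagram $k\mapsto \CF_k$; a cocone for $p$ amounts to an object $\CF$ receiving a compatible cone $\CF\to \CF_k$ together with a section $s\in F(\CF)$ restricting to each $s_k$. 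Taking $\CF := \underset{k}{\lim}\, \CF_k$ in $\mathcal{D}$, such a section exists (uniquely up to contractible choice) for the compatible family $(s_k)\in \underset{k}{\lim}\, F(\CF_k)$ precisely when the natural map $F(\underset{k}{\lim}\, \CF_k)\to \underset{k}{\lim}\, F(\CF_k)$ is an isomorphism. Hence every finite diagram admits a cocone --- equivalently, $^{\leq n}\!\on{SplitSqZExt}(S,x)$ is filtered --- if and only if $F$ carries finite limits of $\mathcal{D}$ to limits of $\infty$-groupoids.

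Finally I would reduce ``finite limits'' to ``fiber products'': the empty limit (terminal object) is preserved automatically, since $S_0\simeq S$ forces $F(0)\simeq \{x\}$ to be contractible, while binary products and equalizers are generated from fiber products together with the terminal object. This yields the asserted equivalence, that $\CX$ satisfies Condition (A) if and only if $F={}^{\geq -n}(T^*_x\CX)$ preserves fiber products. The main obstacle I anticipate is purely bookkeeping in the first step: getting the handedness of the fibration right so that the defining pullback is identified with the category of elements of $F$ (and not of its opposite), and correspondingly checking that cocones in this category of elements translate into $F$ preserving \emph{limits} rather than colimits --- once the variances are pinned down, the remaining assertions are formal consequences of the theory of $\on{Ind}$-objects.
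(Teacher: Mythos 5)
Your argument is correct and is essentially the paper's proof spelled out: the paper disposes of this lemma by a one-line appeal to \cite[Prop. 5.3.2.9]{Lu0}, and your identification of $^{\leq n}\!\on{SplitSqZExt}(S,x)$ as the category of elements of $^{\geq -n}(T^*_x\CX)$, followed by the standard equivalence between filteredness of that category and left-exactness of the classifying functor (reduced to fiber products plus the automatically preserved terminal object), is exactly the content of that citation. The variance bookkeeping you flag as the main risk is handled correctly.
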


\sssec{The pro-cotangent space}  \label{sss:cotangent space as pro}

Recall (\cite[Cor. 5.3.5.4]{Lu0}) that for an arbitrary $\infty$-category $\bC$ that has
fiber products, and a functor $F:\bC\to \inftygroup$, the condition that $F$ preserve
fiber products is equivalent to the condition that $F$ be
pro-representable.

\medskip

Thus, we obtain:

\begin{cor} \label{c:A is pro}
A prestack $\CX$ satisfies Condition (A) if and only if for every 
$$(S,x:S\to \CX)\in {}({}^{\leq n}\!\affdgSch)_{/\CX},$$ the functor
$$^{\geq -n}(T^*_x\CX):\QCoh(S)^{\geq -n,\leq 0}\to \inftygroup$$ is
pro-representable\footnote{Since $\CX$ is an accessible functor, so is $^{\geq -n}(T^*_x\CX)$.}.
\end{cor}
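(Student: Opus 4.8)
The statement is a formal corollary: it follows by composing the equivalence of \lemref{l:cond A reform} with the recalled criterion of Lurie, so the plan is not to prove anything substantively new but to chain these two equivalences and check that the hypotheses of \cite[Cor. 5.3.5.4]{Lu0} are satisfied for the functor in question. First I would invoke \lemref{l:cond A reform}, which identifies Condition (A) with the requirement that, for every $(S,x)$, the functor
$$^{\geq -n}(T^*_x\CX):\QCoh(S)^{\geq -n,\leq 0}\to \inftygroup$$
preserve fiber products. This reduces the Corollary to the single assertion that, for this particular functor, preservation of fiber products is equivalent to pro-representability.

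Next I would verify that the recalled criterion applies verbatim. Its hypotheses are that the source $\infty$-category admit fiber products and that the target be $\inftygroup$. The target is $\inftygroup$ by the very definition \eqref{e:cotangent to presheaf}. The source $\QCoh(S)^{\geq -n,\leq 0}$ admits fiber products: for a diagram $\CF_1,\CF_2\to \CF$ the fiber product is $\tau^{\leq 0}\!\left(\CF_1\underset{\CF}\times \CF_2\right)$, exactly the structure already recorded when discussing the push-outs of $^{\leq n}\!\on{SplitSqZExt}(S)=(\QCoh(S)^{\geq -n,\leq 0})^{\on{op}}$. With both hypotheses in place, \cite[Cor. 5.3.5.4]{Lu0} gives the equivalence between fiber-product preservation and pro-representability, and composing with \lemref{l:cond A reform} yields the Corollary.

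Since the mathematical content is entirely inherited from the earlier lemma and the cited result, the only point demanding genuine care — and the reason for the footnote — is set-theoretic: one must ensure that ``pro-representable'' is meant in the honest sense, by a pro-object indexed by an accessible (cofiltered) category, rather than by an unbounded formal inverse limit. This is where accessibility enters. As $\CX$ is assumed accessible, the functor $^{\geq -n}(T^*_x\CX)$ built from it by the construction of \secref{sss:condition A} is again accessible, so its pro-representing object lives in the appropriate pro-category and no size issue arises. I expect this accessibility bookkeeping to be the main (and essentially the only) obstacle; the rest is the formal composition of the two equivalences above.
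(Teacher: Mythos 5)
Your proposal is correct and follows exactly the paper's route: the Corollary is stated immediately after recalling \cite[Cor. 5.3.5.4]{Lu0} as a formal consequence of \lemref{l:cond A reform}, precisely the composition of equivalences you describe, with the accessibility point relegated to a footnote just as you note.
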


\medskip

Henceforth, whenever $\CX$ satisfies Condition (A), we shall denote by $^{\geq -n}(T^*_x\CX)$ 
the corresponding object of $\on{Pro}(\QCoh(S)^{\geq -n,\leq 0})$. We shall refer to 
$^{\geq -n}(T^*_x\CX)$ as ``the pro-cotangent space to $\CX$ at $x:S\to \CX$."

\medskip

Thus, an alternative terminology for Condition (A) is that the prestack $\CX$ \emph{admits connective pro-cotangent spaces}. 
\footnote{Note that $k$-Artin stacks for $k>0$ viewed as objects of $\on{PreStk}$ typically do not satisfy the above condition,
as their (pro)-cotangent spaces belong to $\QCoh(S)^{\geq -n,\leq k}$ but not
to $\on{Pro}(\QCoh(S)^{\geq -n,\leq 0})$; i.e., they do not satisfy the connectivity condition.}

\sssec{}

Since fiber products in $\QCoh(S)^{\geq -n,\leq 0}$ correspond to push-outs in $^{\leq n}\!\on{SplitSqZExt}(S)$, 
from \lemref{l:cond A reform} we obtain that Condition (A) is equivalent to requiring that the functor 
$$^{\leq n}\!\on{SplitSqZExt}(S)\to \inftygroup$$
given by 
\begin{equation} \label{e:extension functor}
S_\CF\mapsto  \{x':S_\CF\to \CX,\,\, x'|_S\simeq x\}=
\{S_\CF\}\underset{^{\leq n}\!\on{SplitSqZExt}(S)}\times {}^{\leq n}\!\on{SplitSqZExt}(S,x)
\end{equation}
take push-outs to fiber products.

\medskip

Since the forgetful functor
$$^{\leq n}\!\on{SplitSqZExt}(S)\to {}^{\leq n}\!\dgSch_{\on{qsep-qc}}$$ preserves
push-outs, from \propref{p:compat with pushouts}, we obtain:

\begin{cor}  \label{c:A for indsch}
Any $\CX\in {}^{\leq n}\!\dgindSch$ satisfies Condition (A).
\end{cor}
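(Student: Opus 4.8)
The plan is to reduce the claim to \propref{p:compat with pushouts} by way of the reformulation of Condition (A) recorded immediately before the corollary. Recall that, combining \lemref{l:cond A reform} with the fact that fiber products in $\QCoh(S)^{\geq -n,\leq 0}$ correspond to push-outs in $^{\leq n}\!\on{SplitSqZExt}(S)$, Condition (A) for $\CX$ is equivalent to the requirement that, for every $S\in {}^{\leq n}\!\affdgSch$ and every $x:S\to\CX$, the functor of \eqref{e:extension functor}
$$^{\leq n}\!\on{SplitSqZExt}(S)\to\inftygroup,\qquad S_\CF\mapsto\{x':S_\CF\to\CX,\ x'|_S\simeq x\},$$
carries push-outs to fiber products. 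So I would fix such an $S$ and $x$ and verify this last property.

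First I would observe that the functor above factors as the forgetful functor $^{\leq n}\!\on{SplitSqZExt}(S)\to {}^{\leq n}\!\dgSch_{\on{qsep-qc}}$, $\CF\mapsto S_\CF$, followed by the functor sending $Z$ to the fiber of the restriction map $\Maps(Z,\CX)\to\Maps(S,\CX)$ over $x$ (the restriction being taken along the zero-section $S\to S_\CF$, as in \eqref{e:cotangent to presheaf}). As recorded before the corollary, the forgetful functor preserves push-outs (this is \corref{c:nil pushout}): for $\CF_1,\CF_2\to\CF$ in $\QCoh(S)^{\geq -n,\leq 0}$ with fiber product $\CF'$, the natural map $S_{\CF_1}\underset{S_\CF}{\sqcup}S_{\CF_2}\to S_{\CF'}$ is an isomorphism in $^{\leq n}\!\dgSch_{\on{qsep-qc}}$. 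Here all four split square-zero extensions are $n$-coconnective, hence eventually coconnective, quasi-separated and quasi-compact.

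Next I would apply \propref{p:compat with pushouts} to the push-out square $S_{\CF'}\simeq S_{\CF_1}\underset{S_\CF}{\sqcup}S_{\CF_2}$: since $S_\CF,S_{\CF_1},S_{\CF_2}$ are eventually coconnective qsep-qc DG schemes, that proposition yields
$$\Maps(S_{\CF'},\CX)\ \xrightarrow{\ \sim\ }\ \Maps(S_{\CF_1},\CX)\underset{\Maps(S_\CF,\CX)}{\times}\Maps(S_{\CF_2},\CX).$$
Taking the fiber of this equivalence over the point $x\in\Maps(S,\CX)$, compatibly along the zero-sections $S\to S_\CF\to S_{\CF_i}$, identifies the left-hand side with $^{\geq -n}(T^*_x\CX)(\CF')$ and the right-hand side with $^{\geq -n}(T^*_x\CX)(\CF_1)\times_{^{\geq -n}(T^*_x\CX)(\CF)}{}^{\geq -n}(T^*_x\CX)(\CF_2)$. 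This is precisely the assertion that the functor of \eqref{e:extension functor} sends the chosen push-out to a fiber product, which by the reformulation above is Condition (A).

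The only genuine point to check is that \propref{p:compat with pushouts}, whose statement is phrased for $\CX\in\dgindSch$, applies in the $n$-coconnective setting of the corollary; this is not really an obstacle. All objects occurring are $n$-coconnective and qsep-qc, and the proof of \propref{p:compat with pushouts} proceeds entirely at the truncated level, using only \lemref{l:maps out of qc} and the commutation of filtered colimits with finite limits. If one prefers to avoid the citation altogether, one can run that same two-line argument directly: writing $\CX\simeq\underset{\alpha}{colim}\,X_\alpha$ as in \eqref{e:indscheme as a colimit}, each $\Maps(-,X_\alpha)$ sends the push-out of DG schemes to a fiber product (as $X_\alpha\in{}^{\leq n}\!\dgSch_{\on{qsep-qc}}$ and push-outs are colimits in $^{\leq n}\!\dgSch$), and then \lemref{l:maps out of qc} together with the commutation of the filtered colimit over $\alpha$ with the relevant fiber products concludes. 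With this settled, the corollary is immediate.
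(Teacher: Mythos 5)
Your proof is correct and follows essentially the same route as the paper: the reformulation of Condition (A) as "the functor \eqref{e:extension functor} sends push-outs to fiber products," the fact that the forgetful functor from split square-zero extensions preserves push-outs, and an appeal to \propref{p:compat with pushouts}. Your extra remark about the $n$-coconnective versus general setting is a reasonable precaution, but the paper simply cites \propref{p:compat with pushouts} directly, whose proof already operates at the truncated level via \lemref{l:maps out of qc}.
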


\sssec{}   \label{sss:extn to non-affine}

Going back to a general prestack $\CX$, assume 
that $\CX$ satisfies Zariski descent. This allows us to extend 
$\CX$ to a functor
$$({}^{\leq n}\!\dgSch_{\on{qsep-qc}})^{\on{op}}\to \inftygroup$$
by 
$$Z\mapsto \underset{S\in Zar(Z)}{lim}\, \Maps(S,\CX),$$
where $Zar(Z)$ is the category of affine schemes endowed with an open embedding into $X$.

\medskip

The following is straightforward:

\begin{lem} \label{l:A for non-affine}
If $\CX$ satisfies Condition (A), $Z\in {}^{\leq n}\!\dgSch_{\on{qsep-qc}}$ and $x:Z\to \CX$ is a map, then the functor
$$^{\geq -n}(T^*_{x}\CX):\QCoh(Z)^{\geq -n,\leq 0}\to \inftygroup,\quad
\CF\mapsto \{Z_\CF\}\underset{({}^{\leq n}\!\dgSch_{\on{qsep-qc}})_{Z/}}\times ({}^{\leq n}\!\dgSch_{\on{qsep-qc}})_{S/\,/\CX}$$
preserves fiber products.
\end{lem}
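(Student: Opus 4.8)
The plan is to reduce the assertion to the affine case, where it is exactly Condition (A) in the form of \lemref{l:cond A reform}, using Zariski descent for $\CX$ together with the fact that the formation of the split square-zero extension $Z\mapsto Z_\CF$ is compatible with Zariski localization. Throughout I use that $\CX$ satisfies Zariski descent, which is the standing assumption under which $\CX$ was extended to ${}^{\leq n}\!\dgSch_{\on{qsep-qc}}$ in \secref{sss:extn to non-affine}, and that the functor ${}^{\geq -n}(T^*_x\CX)$ computes the fiber of $\Maps(Z_\CF,\CX)\to\Maps(Z,\CX)$ over $x$.

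First I would observe that $Z\to Z_\CF$ is a nil-immersion, in particular a homeomorphism of underlying topological spaces, so the Zariski sites $Zar(Z)$ and $Zar(Z_\CF)$ coincide; under this identification an affine open $S\in Zar(Z)$ corresponds to the affine open $S_{\CF|_S}\subset Z_\CF$, which is precisely the split square-zero extension of $S$ by $\CF|_S$ (the local description recalled in \secref{sss:condition A}, and consistent with the push-out compatibilities of \lemref{l:nil colim} and \corref{c:nil pushout}). Hence, by the definition of the extension of $\CX$ in \secref{sss:extn to non-affine} and Zariski descent for $\CX$,
$$\Maps(Z_\CF,\CX)\simeq \underset{S\in Zar(Z)}{lim}\, \Maps(S_{\CF|_S},\CX),$$
compatibly with the analogous formula for $\Maps(Z,\CX)$. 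Taking fibers over $x$ and its restrictions $x_S:=x|_S$, and using that fibers commute with limits, I obtain
$${}^{\geq -n}(T^*_x\CX)(\CF)\simeq \underset{S\in Zar(Z)}{lim}\, {}^{\geq -n}(T^*_{x_S}\CX)(\CF|_S).$$

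Next I would record the two remaining ingredients. On one hand, for each affine open $S$ the restriction functor $j_S^*:\QCoh(Z)^{\geq -n,\leq 0}\to \QCoh(S)^{\geq -n,\leq 0}$ is t-exact, being pullback along the flat open immersion; therefore it preserves finite limits and commutes with the truncation functors $\tau^{\leq 0}$ and $\tau^{\geq -n}$, so it carries a fiber product formed in $\QCoh(Z)^{\geq -n,\leq 0}$ to the corresponding fiber product in $\QCoh(S)^{\geq -n,\leq 0}$. On the other hand, by \lemref{l:cond A reform} each affine functor ${}^{\geq -n}(T^*_{x_S}\CX)$ preserves fiber products.

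Finally I would assemble these facts. Given $\CF'=\CF_1\underset{\CF}\times \CF_2$ in $\QCoh(Z)^{\geq -n,\leq 0}$, restriction to each $S$ yields $\CF'|_S\simeq \CF_1|_S\underset{\CF|_S}\times \CF_2|_S$; applying the affine cotangent functors turns this into a fiber product of $\infty$-groupoids; and since limits commute with fiber products, taking $\underset{S}{lim}$ produces
$${}^{\geq -n}(T^*_x\CX)(\CF')\simeq {}^{\geq -n}(T^*_x\CX)(\CF_1)\underset{{}^{\geq -n}(T^*_x\CX)(\CF)}\times {}^{\geq -n}(T^*_x\CX)(\CF_2).$$
The only genuinely non-formal input is the affine case (Condition (A)) together with Zariski descent; everything else is the commutation of limits with fiber products and the t-exactness of $j_S^*$. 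The main, though mild, obstacle is the first step: verifying that the extended mapping functor into $\CX$ evaluated on $Z_\CF$ is computed locally by the split square-zero extensions $S_{\CF|_S}$, which rests on $Z\to Z_\CF$ being a nil-immersion and hence inducing an equivalence of Zariski sites.
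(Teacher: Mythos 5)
Your proof is correct and is precisely the ``straightforward'' argument that the paper omits (the lemma is stated without proof after the remark that $\CX$ has been extended to quasi-separated quasi-compact schemes via Zariski descent): reduce to the affine case of \lemref{l:cond A reform} by identifying $Zar(Z_\CF)$ with $Zar(Z)$ through the nil-immersion $Z\to Z_\CF$, writing $\Maps(Z_\CF,\CX)$ as the limit over $S\in Zar(Z)$ of $\Maps(S_{\CF|_S},\CX)$, and using that the t-exact restrictions $j_S^*$ preserve the truncated fiber products while limits commute with fiber products. No gaps.
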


In particular, we obtain that $^{\geq -n}(T^*_{x}\CX)$
is given by an object of $\on{Pro}(\QCoh(Z)^{\geq -n,\leq 0})$.

\sssec{The relative situation}

The functor $^{\geq -n}(T^*_x\CX)$ can be defined
in a relative situation, i.e., when we are dealing with a map of prestacks $\phi:\CX\to \CY$.
Namely, for $x:S\to \CX$ as above, we set $T^*_x\CX/\CY$ to be the functor
$$\QCoh(S)^{\geq -n,\leq 0}\to \inftygroup$$
defined by 
$$\CF\mapsto \{S_\CF\}\underset{^{\leq n}\!\on{SplitSqZExt}(S,\phi\circ x)}\times 
{}^{\leq n}\!\on{SplitSqZExt}(S,x).$$
where $S_\CF$ defines the point of $^{\leq n}\!\on{SplitSqZExt}(S,\phi\circ x)$ equal to the composite
$$S_\CF\overset{\pi}\to S\overset{\phi\circ x}\longrightarrow \CY,$$
and where $\pi:S_\CF\to S$ is the canonical projection.

\medskip

Note that if both $\CX$ and $\CY$ admit connective pro-cotangent spaces, $T^*_x\CX/\CY$, as an object
of $\on{Pro}(\QCoh(S)^{\geq -n,\leq 0})$, is given by
$$\tau^{\geq -n}\left(\on{Cone}(T^*_{\phi\circ x}\CY\to T^*_{x}\CX)\right).$$

\ssec{A digression: pro-objects in $\QCoh$}  \label{ss:pro qc}

\sssec{}

Let $\bC$ be an $\infty$-category. We consider the category $\on{Pro}(\bC)$, which is, by definition,
the full subcategory of $\on{Funct}(\bC,\inftygroup)$ that consists of accessible functors
$$F:\bC\to \inftygroup$$
that can be written as \emph{filtered} colimits of co-representable functors.

\medskip

Let $\Phi:\bC_1\to \bC_2$ be a functor between $\infty$-categories. Then the functor
$$\on{LKE}_\Phi:\on{Funct}(\bC_1,\inftygroup)\to \on{Funct}(\bC_2,\inftygroup)$$
sends $\on{Pro}(\bC_1)$ to $\on{Pro}(\bC_2)$; we shall denote by 
$$\on{Pro}(\Phi):\on{Pro}(\bC_1)\to \on{Pro}(\bC_2)$$
the resulting functor. 

\medskip

Note that if $\Phi$ admits a right adjoint, denoted $\Psi$, then 
$\on{Pro}(\Phi)$ can be computed as
\begin{equation} \label{e:pro right adj}
(\on{Pro}(\Phi)(F))(\bc_2)=F(\Psi(\bc_2)),\quad F\in \on{Pro}(\bC_1),\bc_2\in \bC_2.
\end{equation}

\sssec{}

Let $\bC$ be a stable $\infty$-category. In this case, the category $\on{Pro}(\bC)$ is also stable.
\footnote{Note, however, that even if $\bC$ is presentable, the category $\on{Pro}(\bC)$ is not,
so caution is required when applying such results as the adjoint functor theorem.} 

\medskip

If $\bC_1$ and $\bC_2$ is a pair of stable categories and $\Phi:\bC_1\to \bC_2$ is an exact functor, 
then $\on{Pro}(\Phi)$ is also exact.

\sssec{}  \label{sss:lift to spectra}

Let $\bC$ be a stable $\infty$-category and $F$ an object of $\on{Pro}(\bC)$. Then $F$ gives rise to an exact functor
$$F^{\on{Sp}}:\bC\to \on{Spectra},$$
such that
$$F\simeq \Omega^\infty\circ F^{\on{Sp}}.$$

\medskip

If $\bC$ arises from a DG category (or, equivalently, is tensored over $\Vect$), then the functor $F^{\on{Sp}}$
can be further upgraded to a functor
$$F^{\on{Vect}}:\bC\to \Vect.$$

\sssec{}

Suppose that $\bC$ is endowed with a t-structure. In this case, $\on{Pro}(\bC)$ also inherits
a t-structure: its connective objects are those $F\in \on{Pro}(\bC)$ such that $F(x)=0$
for $x\in \bC^{> 0}$. 

\medskip

Restriction of functors defines a map
$$\on{Pro}(\bC)^{\leq 0}\to \on{Pro}(\bC^{\leq 0}),$$
which is easily seen to be an equivalence. Similarly, for any $n\geq 0$, the natural functor
$$\on{Pro}(\bC)^{\geq -n,\leq 0}\to \on{Pro}(\bC^{\geq -n,\leq 0})$$
is an equivalence. 

\sssec{}

Now consider the following situation specific to $\QCoh$. Let $Z$ be a DG scheme. We have the 
following two categories
$$\on{Pro}(\QCoh(Z)) \text{ and } 
\underset{S\in Zar(Z)}{lim}\, \on{Pro}(\QCoh(S)).$$

\medskip

Left Kan extension along 
$$\CF\mapsto \CF|_S:\QCoh(Z)\to \QCoh(S)$$
defines a functor
\begin{equation} \label{e:pro qc left}
\on{Pro}(\QCoh(Z)) \to \underset{S\in Zar(Z)}{lim}\, \on{Pro}(\QCoh(S)).
\end{equation}

This functor admits a right adjoint, which is tautologically described as follows. To
$$\{S\mapsto (F_S\in \on{Pro}(\QCoh(S)))\}\in 
\underset{S\in Zar(Z)}{lim}\, \on{Pro}(\QCoh(S))$$
it assigns $F\in \on{Pro}(\QCoh(Z))$ given by
$$F(\CF):=\underset{S\in Zar(Z)}{lim}\, F_S(\CF|_{S}).$$

\medskip

We claim:

\begin{lem} \label{l:gluing pro}
Assume that $Z$ is quasi-separated and quasi-compact. Then  the above two functors
\begin{equation} \label{e:pro qc}
\on{Pro}(\QCoh(Z)) \rightleftarrows
\underset{S\in Zar(Z)}{lim}\, \on{Pro}(\QCoh(S))
\end{equation}
are mutually inverse.
\end{lem}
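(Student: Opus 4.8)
The plan is to show that the unit and counit of the adjunction \eqref{e:pro qc} are isomorphisms, working directly with the explicit formulas for the two functors. Denote the left functor by $L$ and its right adjoint by $R$. For $S\in \on{Zar}(Z)$ write $j_S\colon S\hookrightarrow Z$ for the open embedding, so that restriction is $j_S^*$ and its right adjoint is $j_{S,*}$. By \eqref{e:pro right adj} applied to $\Phi=j_S^*$ with right adjoint $\Psi=j_{S,*}$, the $S$-component of $L(F)$ is the pro-object whose value on $\CG\in\QCoh(S)$ is $F(j_{S,*}\CG)$; and by construction $R(\{F_S\})(\CF)=\underset{S\in\on{Zar}(Z)}{\lim}\,F_S(\CF|_S)$. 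Throughout I would use that, by definition of $\on{Pro}(\QCoh(Z))$, every $F$ is a \emph{filtered} colimit $F\simeq\underset{i}{\on{colim}}\,\Maps(\CG_i,-)$ of corepresentable functors, that each corepresentable functor carries limits in $\QCoh(Z)$ to limits in $\inftygroup$, and that filtered colimits commute with \emph{finite} limits.

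The one geometric input is Zariski descent for $\QCoh$: for any $\CF\in\QCoh(Z)$ the canonical map $\CF\to\underset{S\in\on{Zar}(Z)}{\lim}\,j_{S,*}(\CF|_S)$ is an isomorphism. Because $Z$ is quasi-compact and quasi-separated, I would first rewrite this descent limit as a \emph{finite} limit: choosing a finite affine cover and using that, for a two-term cover $Z=U\cup V$, there is a Mayer--Vietoris fiber square expressing $\CF$ as the pullback of $j_{U,*}(\CF|_U)$ and $j_{V,*}(\CF|_V)$ over $j_{U\cap V,*}(\CF|_{U\cap V})$, and then inducting on the number of affines in the cover, $\CF$ is exhibited as an iterated pullback, i.e. a finite limit, of pushforwards $j_{S,*}(\CF|_S)$ from affine (and finite-intersection) opens. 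A cofinality argument identifies this finite limit with $\underset{S\in\on{Zar}(Z)}{\lim}\,j_{S,*}(\CF|_S)$.

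For the unit $F\to R(L(F))$, I would evaluate at $\CF$: $R(L(F))(\CF)\simeq\underset{S}{\lim}\,F(j_{S,*}(\CF|_S))$. Writing $F=\underset{i}{\on{colim}}\,\Maps(\CG_i,-)$ and using the previous paragraph to replace the indexing of the limit by the finite Mayer--Vietoris diagram, I may exchange the filtered colimit with the now-finite limit, pull the corepresentable functors inside the limit, and apply descent: $\underset{S}{\lim}\,\Maps(\CG_i,j_{S,*}(\CF|_S))\simeq\Maps\bigl(\CG_i,\underset{S}{\lim}\,j_{S,*}(\CF|_S)\bigr)\simeq\Maps(\CG_i,\CF)$. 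Taking $\underset{i}{\on{colim}}$ recovers $F(\CF)$. For the counit $L(R(\{F_S\}))\to\{F_S\}$, set $F:=R(\{F_S\})$ and evaluate the $S_0$-component on $\CG\in\QCoh(S_0)$: it equals $F(j_{S_0,*}\CG)=\underset{S}{\lim}\,F_S\bigl((j_{S_0,*}\CG)|_S\bigr)$. Base change for open immersions identifies $(j_{S_0,*}\CG)|_S=j_S^*j_{S_0,*}\CG$ with the pushforward to $S$ of $\CG|_{S\cap S_0}$, and the compatibility of the family $\{F_S\}$ in $\underset{S}{\lim}\,\on{Pro}(\QCoh(S))$ turns each term into $F_{S\cap S_0}(\CG|_{S\cap S_0})$; descent along the cover of $S_0$ by the opens $S\cap S_0$ then reassembles these into $F_{S_0}(\CG)$.

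The main obstacle --- and the reason the hypothesis that $Z$ be quasi-separated and quasi-compact is needed --- is precisely that a pro-object is only a \emph{filtered colimit} of corepresentable functors, so it commutes with finite limits but not with the infinite limit that a priori defines Zariski descent. The work is therefore concentrated in the reduction of descent to a finite (iterated Mayer--Vietoris) limit, and, in the counit, in the bookkeeping of the intersections $S\cap S_0$, which for a merely quasi-separated $Z$ are quasi-compact but not affine and so must themselves be covered by finitely many affines before the compatibility of $\{F_S\}$ can be applied; quasi-compactness is exactly what keeps all of these limits finite, making the exchange with the defining colimit legitimate.
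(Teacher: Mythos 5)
Your proposal is correct and follows essentially the same route as the paper's (much terser) proof: both express the transition functors as $F\mapsto F\circ (j_S)_*$ via \eqref{e:pro right adj}, use quasi-compactness and quasi-separatedness to replace the Zariski descent limit by a finite one, and then conclude from the fact that pro-objects, being filtered colimits of corepresentables, commute with finite limits. Your explicit Mayer--Vietoris induction and the bookkeeping of the non-affine intersections $S\cap S_0$ are exactly the content of the ``standard argument'' the paper leaves implicit.
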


\begin{proof}

A standard argument shows that instead of $Zar(S)$ we can consider a \emph{finite}
limit corresponding to a Zariski hypercovering. 

\medskip

Note that by \eqref{e:pro right adj}, the left Kan extension $\on{Pro}(\QCoh(Z))\to \on{Pro}(\QCoh(S))$
can be also expressed as the functor
$$F\mapsto F\circ (j_S)_*,$$
where $j_S$ denotes the open embedding $S\hookrightarrow Z$. 

\medskip

Then the fact that the two adjunction maps are isomorphisms follows from the fact that 
$$\on{Id}_{\QCoh(S)} \to \underset{S}{lim}\,\, (j_S)_*\circ j_S^*$$
is an isomorphism and the functors $F$ and $F_S$ involved commute with finite limits.

\end{proof}

Note that the lemma (with the same proof) also applies when we replace the category 
$\on{Pro}(\QCoh(Z))$ by $\on{Pro}(\QCoh(Z)^{\geq -n,\leq 0})$ for any $n\geq 0$.

\ssec{Functoriality of split square-zero extensions and Condition (B)}  

\sssec{}      \label{sss:condition B}

Let $\phi:Z_1\to Z_2$ be an map between objects of $^{\leq n}\!\dgSch_{\on{qsep-qc}}$.
Direct image $\phi_*$ composed with the truncation $\tau^{\leq 0}$ defines
a functor
$$^{\leq 0}\phi_*:\QCoh(Z_1)^{\geq -n,\leq 0}\to \QCoh(Z_2)^{\geq -n,\leq 0},$$
i.e., a functor
$$^{\leq n}\!\on{SplitSqZExt}(Z_1)\to {}^{\leq n}\!\on{SplitSqZExt}(Z_2).$$

It follows from \corref{c:nil pushout} that the following diagram is commutative
\begin{equation} \label{e:pushout SqZ}
\CD
^{\leq n}\!\on{SplitSqZExt}(Z_1)  @>>>   {}^{\leq n}\!\on{SplitSqZExt}(Z_2) \\
@VVV   @VVV    \\
(\dgSch_{\on{qsep-qc}})_{Z_1/}  @>>>  (\dgSch_{\on{qsep-qc}})_{Z_2/},
\endCD
\end{equation}
where the bottom horizontal arrow is the push-out functor
$$Z'_1\mapsto Z'_1\underset{Z_1}\sqcup\, Z_2.$$

\sssec{}

Assume now that $Z_1=S_1$ and $Z_2=S_2$ are affine. Let $\CX$ be an object of $^{\leq n}\!\inftydgprestack$,
and $x_2$ an $S_2$-point of $\CX$. Set $x_1:=x_2\circ \phi:S_1\to \CX$. Composition defines a map
\begin{equation} \label{e:cond B}
^{\leq n}\!\on{SplitSqZExt}(S_1)\underset{^{\leq n}\!\on{SplitSqZExt}(S_2)}\times {}^{\leq n}\!\on{SplitSqZExt}(S_2,x_2)\to
{}^{\leq n}\!\on{SplitSqZExt}(S_1,x_1).
\end{equation}

\medskip

\begin{defn}
We shall say that $\CX\in {}^{\leq n}\!\inftydgprestack$
satisfies indscheme-like Condition (B) if the above functor is an equivalence for any $(S_1,S_2,\phi)$.
\end{defn}

\sssec{}

Using \eqref{e:pushout SqZ}, we can reformulate Condition (B) as saying that the 
presheaf $\CX$ should take 
push-outs in $^{\leq n}\!\dgSch_{\on{qsep-qc}}$ of the form
$(S_1)_{\CF_1}\underset{S_1}\sqcup\, S_2$ to fiber products, where $S_1,S_2\in \affdgSch$.

\medskip

By \propref{p:compat with pushouts}, we obtain:

\begin{cor}
Any $\CX\in {}^{\leq n}\!\dgindSch$ satisfies Condition (B).
\end{cor}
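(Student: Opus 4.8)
The plan is to deduce the statement from \propref{p:compat with pushouts}, exactly mirroring the treatment of Condition (A). The starting point is the reformulation recorded just above the statement: via the commutative square \eqref{e:pushout SqZ} together with the fact that the forgetful functor from split square-zero extensions to DG schemes preserves push-outs, Condition (B) for $\CX$ is equivalent to the assertion that $\CX$ sends every push-out of the shape $(S_1)_{\CF_1}\underset{S_1}\sqcup\, S_2$ (with $S_1,S_2\in\affdgSch$) to a fiber product of $\infty$-groupoids. So the first thing I would do is exhibit this push-out as an instance of the square appearing in \propref{p:compat with pushouts}: take $Y=S_1$, $Y_1=(S_1)_{\CF_1}$, $Y_2=S_2$ and $\wt{Y}=(S_1)_{\CF_1}\underset{S_1}\sqcup\, S_2$, where the map $Y\to Y_1$ is the canonical closed embedding $S_1\hookrightarrow (S_1)_{\CF_1}$ and the map $Y\to Y_2$ is $\phi$.

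Next I would verify the hypotheses of that proposition. The map $S_1\to (S_1)_{\CF_1}$ is a closed nil-immersion, being a square-zero extension by $\CF_1\in\QCoh(S_1)^{\geq -n,\leq 0}$, and $\phi$ is a quasi-separated quasi-compact map of affines; hence \corref{c:nil pushout} guarantees that the push-out $\wt{Y}$ exists in $\dgSch$, is again affine, and agrees with the bottom arrow of \eqref{e:pushout SqZ}. For the coconnectivity hypothesis, $S_1$ and $S_2$ are $n$-coconnective, $(S_1)_{\CF_1}$ is $n$-coconnective because $\CF_1$ lies in $\QCoh(S_1)^{\geq -n,\leq 0}$, and then $\wt{Y}$ is $n$-coconnective as well (by adjunction, as in the proof of \propref{p:compat with pushouts}). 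With these checks in hand, \propref{p:compat with pushouts} yields that
$$\Maps(\wt{Y},\CX)\to \Maps\big((S_1)_{\CF_1},\CX\big)\underset{\Maps(S_1,\CX)}\times \Maps(S_2,\CX)$$
is an isomorphism, which is precisely the reformulated Condition (B).

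The one genuine bookkeeping point — and the step I would treat most carefully — is the passage between $^{\leq n}\!\dgindSch$ and $\dgindSch$, since \propref{p:compat with pushouts} is phrased for a general DG indscheme whereas here $\CX\in{}^{\leq n}\!\dgindSch$. This is immaterial, however, because all four schemes $S_1,S_2,(S_1)_{\CF_1},\wt{Y}$ are affine and $n$-coconnective, so $\CX$ may be evaluated on them directly. Concretely, writing $\CX$ as in \eqref{e:indscheme as a colimit} and using that on affine inputs $\Maps(-,\CX)=\underset{\alpha}{colim}\,\Maps(-,X_\alpha)$, the fiber-product statement follows from the universal property of the push-out $\wt{Y}$ mapping into each DG scheme $X_\alpha$, combined with the fact that filtered colimits commute with finite limits; this is exactly the content of the proof of \propref{p:compat with pushouts} in the $n$-coconnective case, which rests only on \lemref{l:maps out of qc}, a statement about $^{\leq n}\!\dgindSch$ itself. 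Thus there is no real obstacle: the corollary is formal once the push-out is identified with the one appearing in \propref{p:compat with pushouts}.
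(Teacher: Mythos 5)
Your proposal is correct and follows the same route as the paper: the paper likewise reformulates Condition (B) via the square \eqref{e:pushout SqZ} as the statement that $\CX$ sends push-outs of the form $(S_1)_{\CF_1}\underset{S_1}\sqcup\, S_2$ to fiber products, and then cites \propref{p:compat with pushouts}. Your extra care about the passage between $^{\leq n}\!\dgindSch$ and $\dgindSch$ is sound (the proof of \propref{p:compat with pushouts} indeed reduces to \lemref{l:maps out of qc} at the truncated level) but is left implicit in the paper.
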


\sssec{}

Let us assume that $\CX$ satisfies Condition (A). In this case, by \eqref{e:pro right adj}, the map
\eqref{e:cond B} can be interpreted as a map in $\on{Pro}(\QCoh(S_1)^{\geq -n,\leq 0})$:
\begin{equation} \label{e:cotangent cmpx for presheaf}
^{\geq -n}(T^*_{x_1}\CX)\to \on{Pro}({}^{\geq -n}\!\phi^*)\left({}^{\geq -n}(T^*_{x_2}\CX)\right).
\end{equation}

\medskip

We obtain:

\begin{lem}
As object $\CX\in {}^{\leq n-1}\!\on{PreStk}$, satisfying condition (A), satisfies Condition (B)
if and only if the map
\eqref{e:cotangent cmpx for presheaf} be an isomorphism.
\end{lem}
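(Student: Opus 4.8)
The plan is to recognize both the source and the target of the functor \eqref{e:cond B} as \emph{right fibrations} (categories fibered in $\infty$-groupoids) over the base category $\sB:={}^{\leq n}\!\on{SplitSqZExt}(S_1)=(\QCoh(S_1)^{\geq -n,\leq 0})^{\on{op}}$, via the forgetful functors remembering only $\CF_1$, and to observe that \eqref{e:cond B} is a morphism over $\sB$. First I would check the fibration claim: the fiber of ${}^{\leq n}\!\on{SplitSqZExt}(S_1,x_1)$ over an object $(S_1)_{\CF_1}$ is the space $\{x':(S_1)_{\CF_1}\to \CX,\ x'|_{S_1}\simeq x_1\}$, which is a mapping space, so the projection is fibered in groupoids and is classified exactly by the covariant functor ${}^{\geq -n}(T^*_{x_1}\CX):\QCoh(S_1)^{\geq -n,\leq 0}\to \inftygroup$ of \eqref{e:cotangent to presheaf}; the same analysis applies to the source. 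By straightening/unstraightening, a map of right fibrations over $\sB$ is an equivalence if and only if it induces an equivalence on each fiber, so it will suffice to compute the induced map on fibers.

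Next I would identify the fiber of the source over $(S_1)_{\CF_1}$. By the commutative square \eqref{e:pushout SqZ} (an instance of \corref{c:nil pushout}), the functor ${}^{\leq 0}\phi_*$ on split square-zero extensions is compatible with the push-out functor $Z_1'\mapsto Z_1'\underset{S_1}\sqcup S_2$; concretely this yields a canonical identification $(S_1)_{\CF_1}\underset{S_1}\sqcup S_2\simeq (S_2)_{{}^{\leq 0}\phi_*\CF_1}$. Hence the fiber of the source over $(S_1)_{\CF_1}$ is $\{x_2':(S_2)_{{}^{\leq 0}\phi_*\CF_1}\to \CX,\ x_2'|_{S_2}\simeq x_2\}={}^{\geq -n}(T^*_{x_2}\CX)({}^{\leq 0}\phi_*\CF_1)$. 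Since ${}^{\geq -n}\!\phi^*$ is left adjoint to ${}^{\leq 0}\phi_*$ (which follows from $\phi^*\dashv \phi_*$ together with the standard truncation adjunctions), formula \eqref{e:pro right adj} identifies this last expression with $\on{Pro}({}^{\geq -n}\!\phi^*)({}^{\geq -n}(T^*_{x_2}\CX))$ evaluated at $\CF_1$. Thus the fiberwise map of \eqref{e:cond B} — induced by precomposition with the canonical map $(S_1)_{\CF_1}\to (S_2)_{{}^{\leq 0}\phi_*\CF_1}$ — is precisely the value at $\CF_1$ of the map \eqref{e:cotangent cmpx for presheaf}.

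Putting these together, Condition (B), i.e. the statement that \eqref{e:cond B} is an equivalence, holds if and only if \eqref{e:cotangent cmpx for presheaf} is an objectwise equivalence of functors $\QCoh(S_1)^{\geq -n,\leq 0}\to \inftygroup$. Finally, because $\CX$ is assumed to satisfy Condition (A), both ${}^{\geq -n}(T^*_{x_1}\CX)$ and $\on{Pro}({}^{\geq -n}\!\phi^*)({}^{\geq -n}(T^*_{x_2}\CX))$ genuinely lie in $\on{Pro}(\QCoh(S_1)^{\geq -n,\leq 0})$, which is a \emph{full} subcategory of $\on{Funct}(\QCoh(S_1)^{\geq -n,\leq 0},\inftygroup)$; hence an objectwise equivalence of the underlying functors is the same thing as an isomorphism in $\on{Pro}$, which gives the claim. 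The main obstacle I anticipate is the bookkeeping of the first two paragraphs: verifying rigorously that \eqref{e:cond B} really is a morphism of right fibrations over $\sB$, so that testing equivalence fiberwise is legitimate, and pinning down the adjunction ${}^{\geq -n}\!\phi^*\dashv {}^{\leq 0}\phi_*$ precisely enough to invoke \eqref{e:pro right adj}; once these are in place the equivalence is formal.
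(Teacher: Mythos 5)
Your argument is correct and is essentially the paper's own: the lemma is stated there as an immediate consequence of interpreting the functor \eqref{e:cond B} via \eqref{e:pro right adj}, which is exactly the fiberwise identification over $^{\leq n}\!\on{SplitSqZExt}(S_1)$ that you carry out in detail. The only point to watch is the variance convention for morphisms in $\on{Pro}(-)$, which accounts for the direction in which \eqref{e:cotangent cmpx for presheaf} is written; it does not affect the ``if and only if.''
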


\sssec{}

We shall use the following terminology:

\begin{defn}
We shall say that $\CX\in {}^{\leq n}\!\inftydgprestack$ 
\emph{admits a connective pro-cotangent complex} if it satisfies both Conditions (A) and (B).
\end{defn}

In other words, $\CX$ admits a connective pro-cotangent complex if it admits connective pro-cotangent spaces, 
whose formation is compatible with pullbacks under morphisms of affine
DG schemes.

\sssec{}

Let us now assume that $\CX$ satisfies Zariski descent, as well as Conditions (A) and (B).

\medskip

Thus, for $Z\in {}^{\leq n}\!\dgSch_{\on{qsep-qc}}$ and $x:Z\to \CX$, we have a well-defined object
$$^{\geq -n}(T^*_{x}\CX)\in \on{Pro}(\QCoh(Z)^{\geq -n,\leq 0}).$$ 

We wish to compare the restriction of $^{\geq -n}(T^*_{x}\CX)$
to a given affine Zariski open $S\subset Z$ with 
$$^{\geq -n}(T^*_{x|_S}\CX)\in \on{Pro}(\QCoh(S)^{\geq -n,\leq 0}).$$ As in \eqref{e:cotangent cmpx for presheaf},
we have a natural map
\begin{equation} \label{e:cotangent cmpx for presheaf nonaffine}
^{\geq -n}(T^*_{x|_S}\CX)\to {}^{\geq -n}(T^*_{x}\CX)|_{S}.
\end{equation}

\medskip

We claim:
\begin{lem}  
The map \eqref{e:cotangent cmpx for presheaf nonaffine} is an isomorphism.
\end{lem}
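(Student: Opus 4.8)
The plan is to prove the statement by evaluating both pro-objects in \eqref{e:cotangent cmpx for presheaf nonaffine} on an arbitrary $\CG\in\QCoh(S)^{\geq -n,\leq 0}$ and checking that the canonical comparison map is an isomorphism of $\infty$-groupoids. Writing $j_S:S\hookrightarrow Z$ for the open embedding, the restriction functor on pro-objects is $\on{Pro}(j_S^*)$, whose right adjoint $\Psi_S=\tau^{\leq 0}(j_S)_*$ exists since $(j_S)_*$ is left t-exact; hence by \eqref{e:pro right adj} we have ${}^{\geq -n}(T^*_x\CX)|_S(\CG)\simeq {}^{\geq -n}(T^*_x\CX)(\Psi_S\CG)$, i.e. the space of extensions of $x:Z\to\CX$ over the (non-affine) split square-zero extension $Z_{\Psi_S\CG}$, while the left-hand side is the space of extensions of $x|_S$ over $S_\CG$.

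The core computation is a Zariski-descent formula for the pro-cotangent space on a non-affine quasi-compact quasi-separated DG scheme: for any $\CF\in\QCoh(Z)^{\geq -n,\leq 0}$,
\begin{equation*}
{}^{\geq -n}(T^*_x\CX)(\CF)\;\simeq\;\underset{U\in Zar(Z)}{lim}\,{}^{\geq -n}(T^*_{x|_U}\CX)(\CF|_U).
\end{equation*}
To get this I would use that, by \secref{sss:extn to non-affine} and \lemref{l:A for non-affine}, ${}^{\geq -n}(T^*_x\CX)(\CF)$ is the fiber over $x$ of $\CX(Z_\CF)\to\CX(Z)$, that $Z_\CF\to Z$ is a nil-immersion so the two have the same Zariski site with affine opens $U_{\CF|_U}$, and that $\CX$ satisfies Zariski descent, so $\CX(Z_\CF)\simeq\underset{U}{lim}\,\Maps(U_{\CF|_U},\CX)$; since fibers commute with limits the formula follows. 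Applying it with $\CF=\Psi_S\CG$, together with base change $(j_S)_*\CG|_U\simeq (j^U_{S\cap U})_*(\CG|_{S\cap U})$ for the open embedding and \eqref{e:pro right adj} again for the affine open $S\cap U\hookrightarrow U$, rewrites each term as $\bigl({}^{\geq -n}(T^*_{x|_U}\CX)|_{S\cap U}\bigr)(\CG|_{S\cap U})$, which by Condition (B) (\secref{sss:condition B}, in the pro-form \eqref{e:cotangent cmpx for presheaf}) applied to the affine open immersion $S\cap U\hookrightarrow U$ equals ${}^{\geq -n}(T^*_{x|_{S\cap U}}\CX)(\CG|_{S\cap U})$.

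It then remains to match the two limits. Applying the descent formula to the affine scheme $S$ itself gives ${}^{\geq -n}(T^*_{x|_S}\CX)(\CG)\simeq\underset{V\in Zar(S)}{lim}\,{}^{\geq -n}(T^*_{x|_V}\CX)(\CG|_V)$, and the functor $\rho:Zar(Z)\to Zar(S)$, $U\mapsto S\cap U$, is right adjoint to the inclusion $Zar(S)\hookrightarrow Zar(Z)$ (the counit being $S\cap U\subseteq U$), hence initial; therefore the limit over $Zar(Z)$ of the reindexed diagram agrees with the limit over $Zar(S)$, identifying the right-hand side of \eqref{e:cotangent cmpx for presheaf nonaffine} with the left-hand side. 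Alternatively, one can package the same input through \lemref{l:gluing pro}: the descent formula says precisely that ${}^{\geq -n}(T^*_x\CX)$ is the pro-object reconstructed from the compatible family $\{{}^{\geq -n}(T^*_{x|_U}\CX)\}_{U\in Zar(Z)}$, the compatibility data being Condition (B) for open immersions of affines, so its $S$-component is ${}^{\geq -n}(T^*_{x|_S}\CX)$. The main obstacle is the descent formula itself: one must check carefully that the affine opens of the non-affine square-zero extension $Z_\CF$ are exactly the $U_{\CF|_U}$ and that the fiber over $x$ commutes past the Zariski limit; everything else is bookkeeping with \eqref{e:pro right adj}, base change, and Condition (B).
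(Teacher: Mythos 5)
Your proposal is correct and is essentially the paper's own argument, which simply cites \lemref{l:gluing pro}: the Zariski-descent formula you establish (using the definition of $\CX$ on non-affine schemes from \secref{sss:extn to non-affine} and the fact that $Z_\CF\to Z$ is a nil-immersion) identifies $^{\geq -n}(T^*_x\CX)$ with the object glued from the compatible family $\{{}^{\geq -n}(T^*_{x|_U}\CX)\}_{U\in Zar(Z)}$, whence its restriction to $S$ recovers $^{\geq -n}(T^*_{x|_S}\CX)$. One small caution about your first, hands-on route: for $Z$ merely quasi-separated the intersection $S\cap U$ of two affine opens need not be affine, so the reindexing functor $U\mapsto S\cap U$ does not literally land in $Zar(S)$; your alternative packaging through \lemref{l:gluing pro} sidesteps this and is the cleaner (and the paper's) formulation.
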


\begin{proof}

This follows from the description of $\on{Pro}(\QCoh(Z)^{\geq -n,\leq 0})$
given by \lemref{l:gluing pro}.

\end{proof}

\ssec{The cotangent complex of a DG scheme}  

\sssec{}

Assume for a moment that $\CX=X\in {}^{\leq n}\!\dgSch_{\on{qsep-qc}}$.
It is well-known that in this case the object
$^{\geq -n}(T^*_{x}X)\in \on{Pro}\left(\QCoh(S)^{\geq -n,\leq 0}\right)$ actually belongs to 
$\QCoh(S)^{\geq -n,\leq 0}$:

\begin{proof}
It is easy to readuce the assertion to the case when $X$ is affine. 
It is enough to show that the functor $^{\geq -n}(T^*_{x}X)$ commutes with filtered limits.
But filtered limits in $\QCoh(S)^{\geq -n,\leq 0}$ map to filtered colimits in $^{\leq n}\!\affdgSch$,
and the assertion follows.
\end{proof}

\sssec{}

We obtain that for any $X\in {}^{\leq n}\!\dgSch_{\on{qsep-qc}}$ we have a well-defined object ${}^{\geq -n}(T^*X)\in \QCoh(X)^{\geq -n,\leq 0}$,
such that for any affine $S$ with a map $x:S\to X$, we have
\begin{equation} \label{e:pull back of tangent}
^{\geq -n}(T^*_xX)\simeq {}^{\geq -n}\!x^*({}^{\geq -n}(T^*X)).
\end{equation}

\medskip

Moreover, as schemes are sheaves in the Zariski topology, the isomorphism
\eqref{e:pull back of tangent} remains valid when $S\in {}^{\leq n}\!\affdgSch$
is replaced by an arbitrary object $Z\in {}^{\leq n}\!\dgSch_{\on{qsep-qc}}$.

\sssec{}

In particular, taking $Z=X$ and $x$ to be the identity map, we obtain 
that the identity map on ${}^{\geq -n}(T^*Z)$ defines a canonical map
$$\fd_{can}:Z_{{}^{\geq -n}(T^*Z)}\to Z.$$

\sssec{}  \label{sss:again explicit cotangent}

Assume now that $\CX\in {}^{\leq n}\!\dgindSch$, and is written as in \eqref{e:indscheme as a colimit}
for some index set $A$, and let $Z\in {}^{\leq n}\!\dgSch_{\on{qsep-qc}}$. 

\medskip

Let $x:Z\to \CX$ be a map that factors through a map $x_{\alpha_0}:Z\to X_{\alpha_0}$. We obtain
that $^{\geq -n}(T^*_x\CX)$ can be explicitly presented as a pro-object of
$\on{Pro}(\QCoh(Z)^{\geq -n,\leq 0})$. Namely, we have:
\begin{equation} \label{e:expl cotangent indsch}
^{\geq -n}(T^*_x\CX)\simeq \underset{\alpha\in \sA_{\alpha_0/}}{``lim"}\,{}^{\geq -n}(T^*_{x_\alpha}X_\alpha),
\end{equation}
where $x_\alpha$ denotes the composition $Z\overset{x_{\alpha_0}}\longrightarrow X_{\alpha_0}\to X_\alpha$.

\sssec{}  \label{sss:codifferential}

Let $\CX$ again be an arbitrary object of $^{\leq n}\!\inftydgprestack$, satisfying Condition (A),
$S\in {}^{\leq n}\!\affdgSch$
and $x:S\to \CX$ a point. We claim that there exists a canonical map in $\on{Pro}(\QCoh(S)^{\geq -n,\leq 0})$
\begin{equation} \label{e:codifferential map}
(dx)^*:{}^{\geq -n}(T^*_x\CX)\to {}^{\geq -n}(T^*S).
\end{equation}

Indeed, it corresponds to the map $S_{{}^{\geq -n}(T^*S)}\to \CX$ given by the composite
$$S_{{}^{\geq -n}(T^*S)}\overset{\fd_{can}}\to S\overset{x}\to \CX.$$

\medskip

The same remains true with $S\in {}^{\leq n}\!\affdgSch$ replaced by $Z\in {}^{\leq n}\!\dgSch_{\on{qsep-qc}}$, whenever
$\CX$ satisfies Zariski descent. 

\ssec{General square-zero extensions}

\sssec{} \label{sss:gen sq zero}

Let $Z$ be an object of $^{\leq n-1}\!\dgSch_{\on{qsep-qc}}$. 
The category $^{\leq n-1}\!\on{SqZExt}(Z)$ of \emph{square-zero extensions of $Z$}
is defined to be the opposite of
$$\left((\QCoh(Z)^{\geq -n+1,\leq 0})_{^{\geq -n}(T^*Z)[-1]/}\right)^{\on{op}}.$$

\sssec{}

We have a natural forgetful functor
$$^{\leq n-1}\!\on{SqZExt}(Z)\to ({}^{\leq n-1}\!\dgSch_{\on{qsep-qc}})_{Z/},$$
defined as follows.

For $\CI\in \QCoh(Z)^{\geq -n+1,\leq 0}$ and a map $\gamma:{}^{\geq -n}(T^*Z)\to \CI[1]$,
we construct the corresponding scheme $Z'$ as the push-out in $^{\leq n}\!\dgSch_{\on{qsep-qc}}$
\begin{equation} \label{e:all from sq zero}
Z\underset{Z_{\CI[1]}}\sqcup Z,
\end{equation}
where the first map $Z_{\CI[1]}\to Z$ is the projection, and the second map corresponds to $\gamma$
via the universal property of $^{\geq -n}(T^*Z)$. 

\medskip

We note that when $Z$ is affine, by \corref{c:nil pushout}, the push-out in \eqref{e:all from sq zero}
is isomorphic to the corresponding push-out taken in $\affdgSch$.

\sssec{}

Let us denote by $i$ the resulting closed embedding 
$$Z\to Z\underset{Z_{\CI[1]}}\sqcup Z$$
corresponding to the canonical map of the first factor. 

\medskip

We have an exact triangle in $\QCoh(Z')$:
$$i_*(\CI)\to \CO_{Z'}\to i_*(\QCoh(Z)).$$

\begin{rem}
Informally, we can think of the data of $i_*(\CI)\in \QCoh(Z)^{\geq -n+1,\leq 0}$ for 
$$(\CI,\gamma)\in (\QCoh(Z)^{\geq -n+1,\leq 0})_{^{\geq -n}(T^*Z)[-1]/}$$
as the ``ideal" of $Z$ inside $Z'$. The fact that this ``ideal" comes as the direct
image of an object in $\QCoh(Z)$ reflects the fact that its square is zero. This
explains the terminology of ``square-zero extensions." 
\end{rem}

\begin{rem}
Let us emphasize that, unlike the situation of classical schemes,
the forgetful functor
$$^{\leq n-1}\!\on{SqZExt}(Z)\to ({}^{\leq n-1}\!\dgSch_{\on{qsep-qc}})_{Z/}$$
is \emph{not} fully faithful. I.e., being a square-zero extension is not a property, but is extra structure.
\end{rem} 

\sssec{}

However, we have the following:

\begin{lem}  \label{l:can sq zero}
For $Z\in {}^{\leq n-1}\!\dgSch_{\on{qsep-qc}}$, the forgetful functor
$$^{\leq n-1}\!\on{SqZExt}(Z)\to ({}^{\leq n-1}\!\dgSch_{\on{qsep-qc}})_{Z/}$$ 
induces an equivalence between the full subcategories of both sides corresponding to
$Z\hookrightarrow Z'$ for which $^{\leq n-2}\!Z\to {}^{\leq n-2}\!Z'$ is an isomorphism.
\end{lem}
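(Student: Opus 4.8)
The plan is to reduce the statement to the affine case and then recognize it as the standard classification of ``small'' extensions as square-zero extensions in derived deformation theory. Write $m=n-1$, so that $Z$ is $m$-coconnective, and observe first that every ingredient in the statement is local in the Zariski topology on $Z$: the forgetful functor, the truncation functors $\tau^{\leq j}$, the cotangent complex ${}^{\geq -n}(T^*Z)$ (whose formation commutes with localization by \lemref{l:gluing pro}), and the push-out $Z\underset{Z_{\CI[1]}}\sqcup Z$ defining the square-zero extension (by \corref{c:nil pushout} and \lemref{l:nil pushout}). Hence I would first reduce to the case $Z=\Spec(A)$ affine, where push-outs of affine DG schemes are computed as fiber products of connective commutative algebras (see \secref{sss:push-out aff}), and the categories in question become purely algebraic.

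Next I would pin down exactly which objects lie in the two full subcategories. For an object $(\CI,\gamma)$ of ${}^{\leq n-1}\!\on{SqZExt}(Z)$ with image $Z\hookrightarrow Z'$, the exact triangle $i_*(\CI)\to \CO_{Z'}\to i_*(\CO_Z)$ together with the coconnectivity bound $\CO_Z,\CO_{Z'}\in\QCoh(Z)^{\geq -n+1,\leq 0}$ shows that the condition ``${}^{\leq n-2}Z\to {}^{\leq n-2}Z'$ is an isomorphism'' is equivalent to the vanishing of $H^j(\CI)$ for $j\geq -n+3$, i.e. to $\CI\in\QCoh(Z)^{\geq -n+1,\leq -n+2}$. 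Dually, for an object $Z\to Z'$ of $({}^{\leq n-1}\!\dgSch_{\on{qsep-qc}})_{Z/}$ satisfying the same truncation condition, the fiber $\CI:=\on{fib}(\CO_{Z'}\to\CO_Z)$ automatically lies in $\QCoh(Z)^{\geq -n+1,\leq -n+2}$, and $Z\to Z'$ is automatically a closed nil-immersion since ${}^{cl}Z\cong {}^{cl}Z'$. Thus both subcategories consist precisely of extensions whose ``ideal'' is concentrated in the top two cohomological degrees, i.e. of extensions that become isomorphisms after applying $\tau^{\geq -n+2}$.

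With this identification, the assertion becomes exactly the statement that such top-degree (``small'') extensions of $A$ are square-zero, canonically. I would then invoke the classification of square-zero extensions in derived algebraic geometry (see \cite{Lu1}): the construction $(\CI,\gamma)\mapsto (\CO_{Z'}\to\CO_Z)$, which realizes $\CO_{Z'}$ as the classifying fiber product $A\underset{A\oplus\CI[1]}\times A$ whose two legs differ by the derivation $\gamma$, is an equivalence from square-zero extensions classified by $\gamma\in\Maps_{\QCoh(Z)}({}^{\geq -n}(T^*Z),\CI[1])$ onto maps of connective algebras whose fiber lies in the range $[-n+1,-n+2]$. Essential surjectivity is the existence of the classifying derivation $\gamma$ for a given $Z'$ (every such extension is square-zero); full faithfulness is the statement that maps between two such extensions are computed by the cotangent complex ${}^{\geq -n}(T^*Z)$ --- this is exactly the point where the failure of full faithfulness on all of ${}^{\leq n-1}\!\on{SqZExt}(Z)$ (emphasized in the preceding remark) is repaired by the degree restriction on $\CI$.

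I expect the main obstacle to be this last, deformation-theoretic, step: matching the paper's push-out definition of a square-zero extension with the module/derivation classification, and checking that the degree bound $\CI\in\QCoh(Z)^{\geq -n+1,\leq -n+2}$ places us in the range where square-zero extensions and ``small'' extensions genuinely coincide --- both at the level of objects (existence of $\gamma$) and of morphisms (full faithfulness). Everything else, namely the reduction to the affine case and the degree bookkeeping identifying the two subcategories, is formal once one has the Zariski-locality results and the exact triangle recalled above.
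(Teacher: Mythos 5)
The paper states \lemref{l:can sq zero} without proof --- it is treated as a known consequence of Lurie's theory of square-zero extensions --- so there is no in-paper argument to compare yours against; your overall strategy (reduce to the affine case, identify the two full subcategories, invoke the classification of ``small'' extensions as square-zero extensions) is surely the intended one, and the affine reduction via \corref{c:nil pushout} is fine. But your identification of the subcategories is not correct: the condition that ${}^{\leq n-2}Z\to {}^{\leq n-2}Z'$ be an isomorphism is \emph{strictly stronger} than $\CI\in\QCoh(Z)^{\geq -n+1,\leq -n+2}$. From the long exact sequence of $i_*(\CI)\to\CO_{Z'}\to i_*(\CO_Z)$, the vanishing of $H^j(\CI)$ for $j\geq -n+3$ only makes $H^{-n+2}(\CO_{Z'})\to H^{-n+2}(\CO_Z)$ surjective; injectivity requires in addition that $H^{-n+2}(i_*\CI)\to H^{-n+2}(\CO_{Z'})$ vanish (on the $\on{SqZExt}$ side this is a surjectivity condition on $\gamma$ in the bottom degree, as one sees from the Mayer--Vietoris sequence of the push-out). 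So the dictionary you set up equates two categories that are each larger than the ones in the statement, and the equivalence you then want to quote is being asserted for extensions the lemma does not actually cover.

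The more serious problem is the step you yourself flag as the main obstacle and then leave open. Lurie's theorem identifies square-zero extensions with \emph{$m$-small} extensions, whose fiber is concentrated in homological degrees $[m,2m]$ (plus a vanishing condition on the induced multiplication on the bottom homotopy of the fiber). Here the fiber sits in homological degrees $[n-2,n-1]$, and $[n-2,n-1]\subseteq[m,2m]$ has a solution in $m$ only for $n\geq 3$; for $n=3$ the multiplicative condition still has to be checked (it does follow from the vanishing of $H^{-n+2}(i_*\CI)\to H^{-n+2}(\CO_{Z'})$ noted above), and for $n=2$ --- i.e.\ $Z\in{}^{\leq 1}\!\dgSch$ with ${}^{cl}Z\cong{}^{cl}Z'$ --- the cited theorem simply does not apply to a fiber spread over degrees $[0,1]$. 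Since every use the paper makes of this lemma (via \corref{c:can sq zero}) concerns an ideal concentrated in the single top degree, where the $m$-small hypothesis holds on the nose, a correct write-up should either restrict to that case or give a separate argument for the two-degree range (for instance by factoring $Z\hookrightarrow Z'$ through the common truncation ${}^{\leq n-2}Z$ and treating the two Postnikov stages separately). As written, the citation does not close the proof.
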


\begin{cor} \label{c:can sq zero}
For $Z'\in ({}^{\leq n}\!\dgSch_{\on{qsep-qc}})$, the canonical map $^{\leq n-1}\!Z\to Z$ has a canonical structure
of an object of $^{\leq n}\!\on{SqZExt}(Z')$.
\end{cor}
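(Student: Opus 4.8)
The plan is to deduce this immediately from \lemref{l:can sq zero}, applied with the base taken to be the truncation $Z := {}^{\leq n-1}Z'$. Since $Z'$ is $n$-coconnective, $Z$ is an object of $^{\leq n-1}\!\dgSch_{\on{qsep-qc}}$, so the lemma is available for it, and the canonical map $^{\leq n-1}Z'\to Z'$ is precisely the object of $({}^{\leq n-1}\!\dgSch_{\on{qsep-qc}})_{Z/}$ for which we wish to produce a square-zero structure, i.e. a lift to $^{\leq n-1}\!\on{SqZExt}({}^{\leq n-1}Z')$.

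First I would check that $^{\leq n-1}Z'\to Z'$ lies in the full subcategory singled out in \lemref{l:can sq zero}, namely that it is a closed embedding inducing an isomorphism on $(n-2)$-truncations. For the first point, passing to classical truncations does not distinguish $Z'$ from any of its coconnective truncations, so $^{cl}({}^{\leq n-1}Z')\to {}^{cl}Z'$ is an isomorphism, hence a closed embedding; thus $^{\leq n-1}Z'\to Z'$ is a closed embedding in the sense of the opening definition. For the second point, compatibility of truncations gives $^{\leq n-2}({}^{\leq n-1}Z')\simeq {}^{\leq n-2}Z'$, and under this identification the map induced by $^{\leq n-1}Z'\to Z'$ is the identity, hence an isomorphism.

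With both hypotheses verified, \lemref{l:can sq zero} asserts that the forgetful functor restricts to an \emph{equivalence} onto exactly the full subcategory of such closed embeddings. Therefore $^{\leq n-1}Z'\to Z'$ is, up to contractible choice, the image of an essentially unique object of $^{\leq n-1}\!\on{SqZExt}({}^{\leq n-1}Z')$, and this is the desired canonical square-zero structure. To match the use of this corollary in Step 4 of the proof of \propref{p:canonical presentation of laft indsch}, I would then identify the associated ideal: from the defining exact triangle $i_*(\CI)\to \CO_{Z'}\to i_*(\CO_{{}^{\leq n-1}Z'})$ together with $\CO_{{}^{\leq n-1}Z'}\simeq \tau^{\leq n-1}\CO_{Z'}$, the fiber $i_*(\CI)$ is the part of $\CO_{Z'}$ killed by $\tau^{\leq n-1}$, namely $H^{-n}(\CO_{Z'})[n]$. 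Hence $\CI\in \QCoh({}^{\leq n-1}Z')^\heartsuit[n]$ is concentrated in cohomological degree $-n$.

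I do not expect any serious obstacle: the statement is a formal corollary of the preceding lemma, and the only genuine content is the bookkeeping of truncation indices — verifying the two hypotheses of \lemref{l:can sq zero} and confirming that the resulting ideal sits in degree exactly $-n$ rather than in a wider range. This last degree computation, via the fiber sequence above, is the one place where one must be slightly careful, since it is what the downstream application actually requires.
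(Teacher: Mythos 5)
Your overall strategy is the intended one: the paper offers no separate argument for this corollary, and it is meant to be read off from \lemref{l:can sq zero} exactly as you propose, by exhibiting the canonical map $^{\leq n-1}Z'\to Z'$ as an object of the distinguished full subcategory on which the forgetful functor is an equivalence. However, as written your application of the lemma is off by one, and the error is visible inside your own argument. You invoke the lemma at its literal index, with base $Z:={}^{\leq n-1}Z'\in{}^{\leq n-1}\!\dgSch_{\on{qsep-qc}}$, target category $^{\leq n-1}\!\on{SqZExt}({}^{\leq n-1}Z')$, and the condition that $(n-2)$-truncations agree. That instance cannot apply here, for two reasons. First, its forgetful functor lands in $({}^{\leq n-1}\!\dgSch_{\on{qsep-qc}})_{Z/}$, whereas $Z'$ is only $n$-coconnective, so the map $^{\leq n-1}Z'\to Z'$ need not lie in that slice category at all. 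Second, and decisively, objects of $^{\leq n-1}\!\on{SqZExt}(-)$ have ideals in $\QCoh(-)^{\geq -n+1,\leq 0}$, while the ideal you correctly compute at the end, $\CI\simeq H^{-n}(\CO_{Z'})[n]$, is concentrated in cohomological degree $-n$ and hence falls outside that window. This is precisely why the corollary asserts membership in $^{\leq n}\!\on{SqZExt}$ rather than $^{\leq n-1}\!\on{SqZExt}$; your final degree computation contradicts the category you claim to land in.

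The repair is immediate: apply \lemref{l:can sq zero} with $n$ replaced by $n+1$. Since $^{\leq n-1}Z'$ is in particular an object of $^{\leq n}\!\dgSch_{\on{qsep-qc}}$, that instance of the lemma identifies $^{\leq n}\!\on{SqZExt}({}^{\leq n-1}Z')$ (ideals in degrees $[-n,0]$) with the full subcategory of $({}^{\leq n}\!\dgSch_{\on{qsep-qc}})_{{}^{\leq n-1}Z'/}$ consisting of closed embeddings inducing an isomorphism on $(n-1)$-truncations. The map $^{\leq n-1}Z'\to Z'$ is a closed embedding (it is an isomorphism on classical truncations), and the relevant truncation condition holds tautologically, since $^{\leq n-1}({}^{\leq n-1}Z')\to {}^{\leq n-1}Z'$ is the identity. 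Your identification of the ideal via the fiber sequence then goes through verbatim and now lands in the correct degree range, matching the use of the corollary in Step 4 of the proof of \propref{p:canonical presentation of laft indsch}.
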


In addition, we have:

\begin{lem}  \label{l:cl sq zero}
For $Z\in \Sch_{\on{qsep-qc}}$, the forgetful unctor
$$^{\leq 0}\!\on{SqZExt}(Z)\to ({}^{\leq 0}\!\dgSch_{\on{qsep-qc}})_{Z/}$$ 
is fully faithful and its essential image consists of closed embeddings 
$Z\hookrightarrow Z'$, such that the ideal $\CI$ of $Z$ in $Z'$
satisfies $\CI^2=0$.
\end{lem}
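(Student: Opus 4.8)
The plan is to reduce to the affine case and then feed in classical (underived) deformation theory. Since $Z$ is classical we are in the case $n=1$ of \secref{sss:gen sq zero}, so an object of $^{\leq 0}\!\on{SqZExt}(Z)$ is a pair $(\CI,\gamma)$ with $\CI\in \QCoh(Z)^{\geq 0,\leq 0}=\QCoh(Z)^\heartsuit$ an honest quasi-coherent sheaf and $\gamma\colon{}^{\geq -1}(T^*Z)\to \CI[1]$, and the forgetful functor sends it to the push-out $Z'=Z\underset{Z_{\CI[1]}}\sqcup Z$. Formation of this push-out is Zariski-local on $Z$ by \lemref{l:nil pushout} and \corref{c:nil pushout}, the condition $\CI^2=0$ on a closed embedding is Zariski-local, and the pro-cotangent datum glues by \lemref{l:gluing pro}; as $Z$ is quasi-separated and quasi-compact I may therefore work over a finite affine Zariski cover and assume $Z=\Spec(A)$ with $A$ discrete.

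For the essential image, set $M:=\Gamma(Z,\CI)$, a discrete $A$-module. By the affine description of the push-out, $\CO_{Z'}\simeq A\underset{A\oplus M[1]}\times A$, the fibre product taken in connective commutative algebras, where one leg is the zero section and the other is determined by $\gamma$. Because the homotopy fibre product replaces the shifted module $M[1]$ by its loops, which is the discrete module $M$, the algebra $\CO_{Z'}$ is again discrete and sits in an extension $0\to M\to \CO_{Z'}\to A\to 0$ with $M^2=0$; hence $Z'$ is a classical square-zero extension, and the exact triangle $i_*(\CI)\to\CO_{Z'}\to i_*(\CO_Z)$ identifies its ideal with $M$. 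Conversely, a classical square-zero extension $A'\twoheadrightarrow A$ with square-zero kernel $I$ is classified, by classical deformation theory, by an extension class; since $\CI[1]$ is $1$-coconnective this class is precisely the datum of a map $\gamma\colon{}^{\geq -1}(T^*Z)\to \CI[1]$ (with $\CI$ attached to $I$), and plugging this $\gamma$ into the push-out recovers $A'$. This proves that the essential image is exactly the subcategory of square-zero extensions.

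For full faithfulness it suffices to match mapping spaces. Fixing $\CI$, the fibre of $^{\leq 0}\!\on{SqZExt}(Z)$ over $\CI$ is $\Maps_{\QCoh(Z)}({}^{\geq -1}(T^*Z),\CI[1])$, which is $1$-truncated: its $\pi_1$ is $\Hom(\Omega^1_Z,\CI)=\on{Der}(Z,\CI)$ and its $\pi_0$ is the group of extension classes. On the scheme side the corresponding fibre is the classical groupoid of square-zero extensions of $Z$ by $\CI$, whose isomorphism classes are the same extension classes and each of whose automorphism groups is $\on{Der}(Z,\CI)$. Classical deformation theory identifies these two $1$-groupoids compatibly with maps $\CI_1\to\CI_2$, so the forgetful functor is an equivalence onto the subcategory of square-zero extensions. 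The main obstacle is exactly this matching of \emph{morphisms}: one must verify that the automorphisms of a classical square-zero extension fixing $Z$ and $\CI$ are precisely the derivations recorded by $\pi_1\Maps({}^{\geq -1}(T^*Z),\CI[1])$, with no higher homotopy. This is where discreteness of $\CI$ is crucial---it forces $\CI[1]$ to be $1$-coconnective and the mapping space to be $1$-truncated---and it is exactly the feature that fails for general DG schemes, where, as the preceding remark records, being a square-zero extension is extra structure rather than a property.
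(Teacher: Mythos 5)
The paper does not actually prove this lemma: it sits in a section explicitly titled ``Deformation theory: recollections,'' and both \lemref{l:can sq zero} and \lemref{l:cl sq zero} are stated without proof as standard facts. So there is no argument of the authors to compare yours against; judged on its own terms, your proof is correct and is the argument one would expect. The affine computation is right: the pushout of \eqref{e:all from sq zero} corresponds to the fiber product $A\times_{A\oplus M[1]}A$ of connective algebras, the long exact sequence of the triangle $M\to A'\to A$ shows $A'$ is discrete, and the triangle $i_*(\CI)\to\CO_{Z'}\to i_*(\CO_Z)$ shows the $A'$-action on $M=\ker(A'\to A)$ factors through $A$, whence $M^2=0$. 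The identification of $\pi_0$ and $\pi_1$ of $\Maps(\tau^{\geq-1}L_Z,\CI[1])$ with extension classes and derivations, and the vanishing of $\pi_2$ because $\CI$ is discrete and $L_Z$ connective, is exactly why the statement holds classically but fails (becomes structure rather than property) for general DG schemes, as you note.

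Two steps are asserted rather than argued, and they carry the real content. First, essential surjectivity onto square-zero extensions requires knowing that feeding the Illusie class of a classical extension $0\to I\to A'\to A\to 0$ back into the pushout construction recovers $A'$ itself, not merely some extension with the same class; this is the nontrivial comparison (it is Lurie's theorem that square-zero extensions of discrete rings by discrete modules are all ``derived'' square-zero extensions, and should be cited or checked). Second, your full-faithfulness argument only compares automorphism groups and isomorphism classes over a \emph{fixed} ideal $\CI$, whereas full faithfulness requires matching $\Maps_{\on{SqZExt}}((\CI_1,\gamma_1),(\CI_2,\gamma_2))$ with $\Maps_{(\Sch)_{Z/}}(Z'_1,Z'_2)$ for arbitrary pairs; note that both sides are in fact \emph{discrete} (the former is a path space in the $1$-truncated space $\Maps(\tau^{\geq-1}L_Z,\CI_2[1])$, fibered over the set $\Hom(\CI_1,\CI_2)$; the latter is a set of scheme maps under $Z$), so the comparison reduces to the same $\pi_0/\pi_1$ computation you did, but this should be said. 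Neither point is an error --- the route is the right one --- but as written these are the places where the proof leans on classical deformation theory without supplying it.
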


\sssec{}

Let $\phi:Z_1\to Z_2$ be an \emph{affine} map between objects of $^{\leq n-1}\!\dgSch_{\on{qsep-qc}}$. There is a canonically defined functor
\begin{equation} \label{e:pushout for square zero ext}
^{\leq n-1}\!\on{SqZExt}(Z_1)\to {}^{\leq n-1}\!\on{SqZExt}(Z_2),
\end{equation}
which it sends 
$$(\CI_1,\gamma_1)\in (\QCoh(Z_1)^{\geq -n+1,\leq 0})_{^{\geq -n}(T^*Z_1)[-1]/}$$
to
$$(\CI_2,\gamma_2)\in (\QCoh(Z_2)^{\geq -n+1,\leq 0})_{^{\geq -n}(T^*Z_2)[-1]/},$$
where
$$\CI_2:=\phi_*(\CI_1),$$ and $\gamma_2$ is obtained by the
$(\phi^*,\phi_*)$ adjunction from the map 
$$^{\geq -n}\phi^*({}^{\geq -n}(T^*Z_2))\overset{(d\phi)^*}\longrightarrow
{}^{\geq -n}(T^*Z_1)\overset{\gamma_1}\longrightarrow \CI_1.$$

\sssec{}

The following assertion results from the construction:

\begin{lem} \label{l:SqZ and pushout}
The following diagram commutes
$$
\CD
^{\leq n-1}\!\on{SqZExt}(Z_1)  @>>>  ^{\leq n-1}\!\on{SqZExt}(Z_2)  \\
@VVV    @VVV   \\
({}^{\leq n-1}\!\dgSch_{\on{qsep-qc}})_{Z_1/}  @>>>  ({}^{\leq n-1}\!\dgSch_{\on{qsep-qc}})_{Z_2/},
\endCD
$$
where the bottom horizontal arrow is the push-out functor
$$Z'_1\mapsto Z'_1\underset{Z_1}\sqcup\, Z_2.$$
\end{lem}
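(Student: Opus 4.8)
The plan is to unwind both composites in the square in terms of the construction of $Z'$ recalled in \secref{sss:gen sq zero}, and to reduce the comparison to the already-established behavior of \emph{split} square-zero extensions under push-out, i.e.\ the commutative diagram \eqref{e:pushout SqZ}. Fix $(\CI_1,\gamma_1)\in {}^{\leq n-1}\!\on{SqZExt}(Z_1)$ and let $(\CI_2,\gamma_2)$ be its image under \eqref{e:pushout for square zero ext}, so $\CI_2=\phi_*(\CI_1)$ and $\gamma_2$ is the $(\phi^*,\phi_*)$-adjoint of $\gamma_1\circ(d\phi)^*$. By construction $Z'_i=Z_i\underset{(Z_i)_{\CI_i[1]}}{\sqcup}Z_i$ is the push-out of the span $Z_i\xleftarrow{\pi_i}(Z_i)_{\CI_i[1]}\xrightarrow{g_{\gamma_i}}Z_i$, where $\pi_i$ is the split projection and $g_{\gamma_i}$ is classified by $\gamma_i$ via the universal property of $^{\geq -n}(T^*Z_i)$, and the structure map $Z_i\to Z'_i$ is the first coprojection. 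Thus what must be produced is a canonical isomorphism $Z'_1\underset{Z_1}{\sqcup}Z_2\simeq Z'_2$, natural in $(\CI_1,\gamma_1)$ and compatible with the maps from $Z_2$.

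First I would record two compatibilities. Since $\phi$ is affine, $\phi_*$ is t-exact, so $\CI_2[1]=\phi_*(\CI_1[1])$, and evaluating \eqref{e:pushout SqZ} yields a canonical isomorphism $(Z_1)_{\CI_1[1]}\underset{Z_1}{\sqcup}Z_2\simeq(Z_2)_{\CI_2[1]}$, realized by a map $\psi\colon (Z_1)_{\CI_1[1]}\to(Z_2)_{\CI_2[1]}$ lying over $\phi$ and ``equal to the identity in the $\CI_1$-direction.'' Then the two structure maps intertwine with $\psi$: one has $\pi_2\circ\psi=\phi\circ\pi_1$ immediately from the definition of $\psi$, and $g_{\gamma_2}\circ\psi=\phi\circ g_{\gamma_1}$ by classifying both sides as maps $(Z_1)_{\CI_1[1]}\to Z_2$ lying over $\phi$. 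Under the universal property of $^{\geq -n}(T^*Z_2)$ and the codifferential $(d\phi)^*$ of \secref{sss:codifferential}, such maps correspond to maps $^{\geq -n}\phi^*(^{\geq -n}(T^*Z_2))\to\CI_1[1]$, and both sides correspond to $\gamma_1\circ(d\phi)^*$ — which is precisely how $\gamma_2$ was defined. This verification, that the defining adjunction formula for $\gamma_2$ is exactly the one making the $\gamma$-legs match, is the heart of the matter.

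Next I would assemble the push-outs. Pasting the push-out square defining $Z'_1$ with the push-out square defining $Z'_1\underset{Z_1}{\sqcup}Z_2$ (taken along the first coprojection $Z_1\to Z'_1$ and along $\phi$) exhibits $Z'_1\underset{Z_1}{\sqcup}Z_2$ as the push-out of the span $Z_2\xleftarrow{\phi\circ\pi_1}(Z_1)_{\CI_1[1]}\xrightarrow{g_{\gamma_1}}Z_1$. The two compatibilities above constitute a morphism from this span to the span $Z_2\xleftarrow{\pi_2}(Z_2)_{\CI_2[1]}\xrightarrow{g_{\gamma_2}}Z_2$ defining $Z'_2$, with feet $\on{id}_{Z_2}$ and $\phi$ and apex $\psi$; it therefore induces a canonical map $Z'_1\underset{Z_1}{\sqcup}Z_2\to Z'_2$, manifestly natural in $(\CI_1,\gamma_1)$, which is the asserted commutativity up to a map.

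Finally I would show this map is an isomorphism by reducing to the affine case. Both the formation of $Z'_i$ (a push-out of closed nil-immersions) and the push-out along the affine map $\phi$ are compatible with Zariski localization by \corref{c:nil pushout}(b); since $\phi$ is affine, localizing on $Z_2$ reduces us to $Z_2=\Spec(A_2)$ and $Z_1=\Spec(A_1)$. There, writing $\CI_1=M$, the scheme $Z'_1=\Spec(A_1\oplus_{\gamma_1}M)$ has underlying object $A_1\oplus M$, so $Z'_1\underset{Z_1}{\sqcup}Z_2$ has coordinate algebra the homotopy fiber product $(A_1\oplus_{\gamma_1}M)\underset{A_1}{\times}A_2$, whose underlying object is $A_2\oplus M$ with $M$ viewed as an $A_2$-module through $\phi$; this coincides with the underlying object of $A_2\oplus_{\gamma_2}M$, and the map built above respects the splittings, hence is an equivalence of algebras. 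The main obstacle is the $\gamma$-leg identification $g_{\gamma_2}\circ\psi=\phi\circ g_{\gamma_1}$, which is exactly where the adjunction defining $\gamma_2$ and the codifferential $(d\phi)^*$ are used; everything else is formal manipulation of push-out squares together with the localization-to-affine reduction resting on \corref{c:nil pushout}.
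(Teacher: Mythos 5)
Your argument is correct, and it is precisely the detailed unwinding that the paper gestures at when it says the lemma ``results from the construction'' (no proof is actually written out there): the identification of split square-zero extensions under push-out via \eqref{e:pushout SqZ}, the matching of the $\gamma$-legs through the $(\phi^*,\phi_*)$-adjunction and the codifferential, the pasting of push-out squares, and the reduction to the affine case via \corref{c:nil pushout} are exactly the ingredients the construction supplies. No discrepancy with the paper's (implicit) proof.
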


\ssec{Infinitesimal cohesiveness and Condition (C)}  

\sssec{}  \label{sss:condition C}

Let $S\in {}^{\leq n-1}\!\affdgSch$ and let $(\CI,\gamma)$ be an object of $^{\leq n-1}\!\on{SqZExt}(S)$.
Let $$S':=S\underset{S_{\CI[1]}}\sqcup S$$
be as in \eqref{e:all from sq zero}. 

\medskip

For $\CX\in {}^{\leq n}\!\inftydgprestack$, consider the resulting map
\begin{equation} \label{e:cond C}
\Maps(S',\CX)\to \Maps(S,\CX)\underset{\Maps(S_{\CI[1]},\CX)}\times \Maps(S,\CX).
\end{equation}

\medskip

\begin{defn}
We shall say that $\CX$ satisfies indscheme-like Condition (C) if the map \eqref{e:cond C}
is an isomorphism for any $(S,\CI,\gamma)$ as above.
\end{defn}

An alternative terminology for prestacks satisfying Condition (C) is \emph{infinitesimally cohesive}. 

\sssec{}

Note that from \propref{p:compat with pushouts} we obtain:

\begin{cor}
Any  $\CX\in {}^{\leq n}\!\dgindSch$ satisfies Condition (C).
\end{cor}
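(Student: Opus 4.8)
The plan is to deduce Condition (C) directly from \propref{p:compat with pushouts}, once the scheme $S'$ of \secref{sss:condition C} is recognized as the pushout of an eventually coconnective diagram of affine DG schemes.

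First I would unwind the construction of $S'$. By \eqref{e:all from sq zero} we have $S'=S\underset{S_{\CI[1]}}\sqcup S$, where the two structure maps $S_{\CI[1]}\to S$ are the canonical projection and the map determined by $\gamma$. Since $(\CI,\gamma)\in {}^{\leq n-1}\!\on{SqZExt}(S)$ has $\CI\in \QCoh(S)^{\geq -n+1,\leq 0}$, the object $\CI[1]$ is concentrated in the strictly negative cohomological degrees $[-n,-1]$; hence $S_{\CI[1]}$ has the same underlying classical reduced scheme as $S$, and both maps $S_{\CI[1]}\to S$ are closed nil-immersions. Consequently \corref{c:nil pushout} applies (this is exactly the remark following \eqref{e:all from sq zero}): the pushout exists in $\dgSch$ and, since everything in sight is affine, it lies in $\dgSch_{\on{qsep-qc}}$ and agrees with the pushout formed in $\affdgSch$. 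Thus
$$
\CD
S_{\CI[1]} @>>> S \\
@VVV @VVV \\
S @>>> S'
\endCD
$$
is a genuine pushout square in $\dgSch_{\on{qsep-qc}}$.

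Next I would record the coconnectivity bookkeeping. We have $S\in {}^{\leq n-1}\!\affdgSch$, while the extra module of $S_{\CI[1]}$ sits in degrees $[-n,-1]$, so $S_{\CI[1]}$ is $n$-coconnective, and by adjunction (exactly as in the proof of \propref{p:compat with pushouts}) the pushout $S'$ is $n$-coconnective as well. Hence all corners of the square are eventually coconnective, and the map \eqref{e:cond C} is precisely the comparison map of \propref{p:compat with pushouts} for this square. That proposition then asserts that \eqref{e:cond C} is an isomorphism, which is Condition (C).

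The only point requiring care — and the main (minor) obstacle — is that \propref{p:compat with pushouts} is phrased for $\CX\in \dgindSch$, whereas here $\CX\in {}^{\leq n}\!\dgindSch$. I would resolve this by observing that every DG scheme entering the diagram is $n$-coconnective, so all the relevant mapping spaces depend only on the restriction of $\CX$ to ${}^{\leq n}\!\affdgSch$; the argument proving \propref{p:compat with pushouts} — which reduces to \lemref{l:maps out of qc} together with the commutation of finite limits with filtered colimits in $\inftygroup$ — then applies verbatim in the $n$-coconnective setting, yielding the claim.
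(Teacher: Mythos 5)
Your proof is correct and follows the same route as the paper: the paper's entire argument for this corollary is the observation that $S'=S\underset{S_{\CI[1]}}\sqcup S$ is a pushout of eventually coconnective affine DG schemes along nil-immersions, so \propref{p:compat with pushouts} (whose proof via \lemref{l:maps out of qc} works equally in the $n$-coconnective setting) gives the isomorphism \eqref{e:cond C}. Your extra bookkeeping about coconnectivity and the ${}^{\leq n}$ versus general discrepancy just makes explicit what the paper leaves implicit.
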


\sssec{}  \label{sss:reform C}

For $S\in {}^{\leq n-1}\!\affdgSch$, let $^{\leq n-1}\!\on{SqZExt}(S,x)$ be the category of triples 
$$\{S\hookrightarrow S',\,\,x':S'\to \CX,\,\, x'|_S\simeq x\},$$
where $S\hookrightarrow S'$ is a square-zero extension with
$S'\in {}^{\leq n-1}\!\dgSch$. I.e.,
$$^{\leq n-1}\!\on{SqZExt}(S,x):=
{}^{\leq n-1}\!\on{SqZExt}(S)\underset{^{\leq n-1}\!\affdgSch_{S/}}\times{}^{\leq n-1}\!\affdgSch_{S/\,/\CX}.$$

\medskip

Suppose now that $\CX$ satisfies Condition (A). For $S\in {}^{\leq n-1}\!\affdgSch$, recall
the map in $\on{Pro}(\QCoh(S)^{\geq -n,\leq 0})$
$$(dx)^*:{}^{\geq -n}(T^*_{x}\CX)\to {}^{\geq -n}(T^*S).$$
Consider the object 
$$\on{Cone}((dx)^*)[-1]\in \on{Pro}(\QCoh(S)^{\geq -n+1,\leq 1}).$$

\medskip

Hence, we obtain:

\begin{lem}  \label{l:reform C}
An object $\CX\in {}^{\leq n}\!\on{PreStk}$, satisfying Condition (A), satisfies condition (C) if and only if the naturally defined 
functor 
$$^{\leq n-1}\!\on{SqZExt}(S,x)\to \left(\left(\QCoh(S)^{\geq -n+1,\leq 0}\right)_{\on{Cone}((dx)^*)[-1]/}\right)^{\on{op}}$$
is an equivaence.
\end{lem}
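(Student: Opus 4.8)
The plan is to show that, after stripping away the coslice/push-out bookkeeping, the functor of the lemma \emph{is} the map \eqref{e:cond C}, so that its being an equivalence is tautologically Condition (C). First I would note that both the source $^{\leq n-1}\!\on{SqZExt}(S,x)$ and the target $\left(\left(\QCoh(S)^{\geq -n+1,\leq 0}\right)_{\on{Cone}((dx)^*)[-1]/}\right)^{\on{op}}$ project to the base $^{\leq n-1}\!\on{SqZExt}(S)=\left(\left(\QCoh(S)^{\geq -n+1,\leq 0}\right)_{{}^{\geq -n}(T^*S)[-1]/}\right)^{\on{op}}$: the source by forgetting the extension $x'$, and the target by precomposing a map $\on{Cone}((dx)^*)[-1]\to \CI$ with the canonical map ${}^{\geq -n}(T^*S)[-1]\to \on{Cone}((dx)^*)[-1]$ arising from the cofiber sequence ${}^{\geq -n}(T^*_x\CX)\xrightarrow{(dx)^*}{}^{\geq -n}(T^*S)\to \on{Cone}((dx)^*)$. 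The functor of the lemma is a functor over this base, sending an extension $x'\colon S'\to \CX$ of $x$ (over the square-zero extension classified by $(\CI,\gamma)$) to the lift $\tilde\gamma\colon \on{Cone}((dx)^*)[-1]\to \CI$ of $\gamma$ that $x'$ determines via the codifferential of \secref{sss:codifferential}. Since it respects the projections, it is an equivalence if and only if it is a fiberwise equivalence over each $(\CI,\gamma)$.

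Next I would identify the two fibers over a fixed $(\CI,\gamma)$. The fiber of the source is by definition the space of extensions $\{x'\colon S'\to \CX,\ x'|_S\simeq x\}$, where $S':=S\underset{S_{\CI[1]}}\sqcup S$ as in \secref{sss:gen sq zero}; this is exactly the fiber over $x$ of the left-hand side of \eqref{e:cond C} along restriction to the first copy of $S$. The fiber of the target is the space of lifts of $\gamma$, i.e. the homotopy fiber over $\gamma$ of $\Maps(\on{Cone}((dx)^*)[-1],\CI)\to \Maps({}^{\geq -n}(T^*S)[-1],\CI)$. The heart of the argument is to identify this lift-space with the fiber over $x$ of the right-hand side of \eqref{e:cond C}. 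For this I would use that $S_{\CI[1]}$ is a split square-zero extension and that the relevant restriction maps admit sections, so that the fiber over $x$ of the homotopy fiber product $\Maps(S,\CX)\underset{\Maps(S_{\CI[1]},\CX)}\times \Maps(S,\CX)$ is purely deformation-theoretic; then \corref{c:A is pro}, together with the description \eqref{e:cotangent to presheaf} of ${}^{\geq -n}(T^*_x\CX)$, expresses it through ${}^{\geq -n}(T^*_x\CX)$ and matches it with the lift-space via the cofiber sequence above.

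Finally, because the map \eqref{e:cond C} commutes with the two restriction maps to $\Maps(S,\CX)$, it is an equivalence precisely when it is an equivalence on every fiber over $x$; under the identifications of the previous paragraph this is exactly the statement that the lemma's functor is a fiberwise equivalence over $(\CI,\gamma)$. Letting $(S,\CI,\gamma)$ and $x$ vary, Condition (C) holds if and only if the functor is an equivalence, which is the assertion of the lemma.

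I expect the main obstacle to be the middle step: producing the identification of the space of lifts of $\gamma$ with the fiber over $x$ of the right-hand side of \eqref{e:cond C} in a way that is homotopy-coherent and natural in $x'$. This means working throughout in $\on{Pro}(\QCoh(S)^{\geq -n+1,\leq 0})$, using \secref{sss:lift to spectra} to lift the relevant mapping spaces to $\Vect$, and matching the fiber sequence obtained by mapping ${}^{\geq -n}(T^*_x\CX)\xrightarrow{(dx)^*}{}^{\geq -n}(T^*S)\to \on{Cone}((dx)^*)$ into $\CI$, term by term, with the homotopy fiber sequence computing the fiber of the first projection of the push-out mapping space over $x$. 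The only other point requiring care is that the assignment $x'\mapsto\tilde\gamma$ is a well-defined functor over the base, which I would deduce from the universal property of the push-out $S':=S\underset{S_{\CI[1]}}\sqcup S$ and the functoriality of the codifferential of \secref{sss:codifferential}.
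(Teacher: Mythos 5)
Your argument is correct and is precisely the unwinding the paper intends: the text states the lemma with no proof ("Hence, we obtain"), treating it as immediate from the definition of Condition (C) via the push-out $S\underset{S_{\CI[1]}}\sqcup S$, the pro-representability of $^{\geq -n}(T^*_x\CX)$ supplied by Condition (A), and the codifferential of \secref{sss:codifferential}. Your fiberwise identification over $^{\leq n-1}\!\on{SqZExt}(S)$ — matching the space of extensions $x'$ with the space of factorizations of $\gamma$ through $\on{Cone}((dx)^*)[-1]$ via the fiber sequence of mapping spaces — is exactly this intended argument, just spelled out.
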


\sssec{}

Assume now that $\CX$ satisfies Zariski descent as well as Conditions (A) and (C). We obtain
that for $Z\in {}^{\leq n-1}\!\dgSch$ and a given $x:Z\to \CX$,  the description of the category 
$^{\leq n-1}\!\on{SqZExt}(Z,x)$ as 
$$\left(\left(\QCoh(Z)^{\geq -n+1,\leq 0}\right)_{\on{Cone}((dx)^*)[-1]/}\right)^{\on{op}}$$
remains valid.

\medskip

Moreover, we have the following: 

\begin{lem}  \label{l:sq zero filtered}
Under the above circumstances the following are equivalent:

\smallskip

\noindent{\em(a)} The category $^{\leq n-1}\!\on{SqZExt}(Z,x)$ is filtered.

\smallskip

\noindent{\em(b)} $T^*_xZ/\CX[-1] \simeq \on{Cone}((dx)^*)[-1]$ belongs to 
$\on{Pro}(\QCoh(Z)^{\geq -n+1,\leq 0})$. 

\smallskip

\noindent{\em(c)} The map 
$$H^0((dx)^*):H^0\left({}^{\geq -n}(T^*_x\CX)\right)\to 
H^0\left({}^{\geq -n}(T^*Z)\right)$$
is surjective. 

\end{lem}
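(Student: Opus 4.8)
The plan is to reduce everything to the description of $^{\leq n-1}\!\on{SqZExt}(Z,x)$ furnished by \lemref{l:reform C} and its non-affine extension recalled in \secref{sss:reform C}. Writing $\bC_0:=\QCoh(Z)^{\geq -n+1,\leq 0}$ and $P:=\on{Cone}((dx)^*)[-1]\in \on{Pro}(\QCoh(Z)^{\geq -n+1,\leq 1})$, that description identifies $^{\leq n-1}\!\on{SqZExt}(Z,x)$ with $\left((\bC_0)_{P/}\right)^{\on{op}}$, so that condition (a) becomes the statement that the under-category $(\bC_0)_{P/}$ is cofiltered. Throughout I would interpret all cohomology and cokernels in the abelian category $\on{Pro}(\QCoh(Z)^\heartsuit)$. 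I would then prove (b)$\Leftrightarrow$(c) by a direct cohomological computation and (a)$\Leftrightarrow$(b) by matching cofilteredness of $(\bC_0)_{P/}$ with connectivity of $P$.

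For (b)$\Leftrightarrow$(c): since the a priori bound $P\in \on{Pro}(\QCoh(Z)^{\geq -n+1,\leq 1})$ is already available, condition (b) is equivalent to the single vanishing $H^1(P)=0$. I would read this off the long exact sequence of the triangle ${}^{\geq -n}(T^*_x\CX)\overset{(dx)^*}\to {}^{\geq -n}(T^*Z)\to \on{Cone}((dx)^*)$: connectivity of ${}^{\geq -n}(T^*_x\CX)$ gives $H^1(P)=H^0(\on{Cone}((dx)^*))=\on{coker}\!\left(H^0((dx)^*)\right)$, so $H^1(P)=0$ holds exactly when $H^0((dx)^*)$ is surjective.

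For (a)$\Leftrightarrow$(b): I would set $G_P:=\on{Maps}_{\on{Pro}}(P,-)\colon\bC_0\to\inftygroup$, whose category of elements is $(\bC_0)_{P/}$, and invoke the same results of Lurie used in \lemref{l:cond A reform} and \corref{c:A is pro} (\cite[Prop. 5.3.2.9 and Cor. 5.3.5.4]{Lu0}): the category $(\bC_0)_{P/}$ is cofiltered if and only if $G_P$ preserves finite limits, if and only if $G_P$ is pro-representable by some $Q\in \on{Pro}(\bC_0)$. The crux is then to see that the representing object can be chosen in $\on{Pro}(\bC_0)$ precisely when $P$ is already connective. In one direction, $H^1(P)=0$ together with the a priori bound gives $P\in\on{Pro}(\bC_0)$, so $Q:=P$ works and (a) holds. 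In the other, if $G_P\simeq\on{Maps}_{\on{Pro}}(Q,-)$ on $\bC_0$ with $Q$ connective, I would test against $M\in\QCoh(Z)^\heartsuit\subset\bC_0$: using $P\in\QCoh(Z)^{\leq 1}$ and that $M[-1]$ is concentrated in degree $1$, one has $\pi_1\on{Maps}(P,M)=\on{Hom}(P,M[-1])=\on{Hom}_\heartsuit(H^1(P),M)$, and likewise $\pi_1\on{Maps}(Q,M)=\on{Hom}_\heartsuit(H^1(Q),M)=0$; the equivalence $\on{Maps}(P,M)\simeq\on{Maps}(Q,M)$ then forces $\on{Hom}_\heartsuit(H^1(P),M)=0$ for every $M$, whence $H^1(P)=0$, i.e.\ (b).

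I expect the main obstacle to be the direction (a)$\Rightarrow$(b), where one must extract connectivity of the pro-object $P$ from mere cofilteredness of the under-category. The subtlety is that finite limits in $\bC_0$ are the truncations $\tau^{\leq 0}$ of the stable ones, so the comparison between $G_P$ on $\bC_0$ and the mapping spaces out of $P$ sees exactly the obstruction class $H^1(P)$ living one degree above $\bC_0$; keeping track of this, together with the fact that all objects and the relevant surjectivity statements live in pro-categories (so that ``$H^1(P)=0$'' must be tested against honest $M\in\QCoh(Z)^\heartsuit$), is where the care is needed.
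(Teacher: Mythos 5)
The paper states this lemma without proof, so there is no argument of the authors' to compare against; judged on its own terms, your proof is correct. You reduce (a) to cofilteredness of the under-category of $P:=\on{Cone}((dx)^*)[-1]$ via \lemref{l:reform C}, invoke the same results of Lurie that the paper already uses for \lemref{l:cond A reform} and \corref{c:A is pro} to convert this into pro-representability by a connective object, and detect $H^1(P)$ by computing $\pi_1$ of mapping spaces into objects of the heart, while (b)$\Leftrightarrow$(c) is the long exact sequence of the triangle together with connectivity of $^{\geq -n}(T^*_x\CX)$ -- all of which is consistent with the framework the paper sets up. The one step you leave implicit, namely that $\Hom_{\on{Pro}(\QCoh(Z)^\heartsuit)}(H^1(P),M)=0$ for every $M\in \QCoh(Z)^\heartsuit$ forces the pro-object $H^1(P)$ to vanish, does hold (evaluate against the terms of a presentation of $H^1(P)$ to see that some transition map is zero), so the argument is complete.
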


\sssec{} \label{sss:closed embed surj}

We note that condition (c) in \lemref{l:sq zero filtered} is satisfied when $\CX$ is an indscheme, and
the map $x:Z\to \CX$ is a closed embedding. 

\sssec{}

Let $\phi:S_1\to S_2$ be a map in $^{\leq n-1}\!\affdgSch$.
For $x_2:S_2\to \CX$,
composition defines a map
\begin{equation} \label{e:map out of pushout}
^{\leq n-1}\!\on{SqZExt}(S_2,x_2)\underset{^{\leq n-1}\!\on{SqZExt}(S_2)}\times 
{}^{\leq n-1}\!\on{SqZExt}(S_1)\to {}^{\leq n-1}\!\on{SqZExt}(S_1,x_1).
\end{equation}
using the functor \eqref{e:pushout for square zero ext}. 

\medskip

From the definitions, we obtain:

\begin{lem}
If $\CX$ satisfies Conditions (B) and (C), then the map \eqref{e:map out of pushout} is an isomorphism.
\end{lem}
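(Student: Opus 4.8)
The plan is to show that \eqref{e:map out of pushout} is an equivalence by comparing fibers. Both its source and target are fibered in $\infty$-groupoids over $^{\leq n-1}\!\on{SqZExt}(S_1)$: the target via the forgetful functor discarding the map to $\CX$, the source via projection onto the second factor. Since \eqref{e:map out of pushout} is a map over this base, it suffices to prove that it induces an equivalence on the fiber over each object, i.e. over each square-zero extension $S_1\hookrightarrow S_1'$ classified by some $(\CI_1,\gamma_1)$.

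First I would identify the two fibers. Over $S_1\hookrightarrow S_1'$ the fiber of the target is the space of extensions $\{x_1':S_1'\to\CX \mid x_1'|_{S_1}\simeq x_1\}$. For the source, the image of $S_1'$ under the push-out functor \eqref{e:pushout for square zero ext} is, by \lemref{l:SqZ and pushout}, the square-zero extension $S_2\hookrightarrow S_2'$ whose underlying DG scheme is the push-out $S_2'\simeq S_1'\underset{S_1}\sqcup S_2$; hence the fiber of the source is $\{x_2':S_2'\to\CX \mid x_2'|_{S_2}\simeq x_2\}$, and \eqref{e:map out of pushout} acts by restriction along the canonical map $S_1'\to S_2'$. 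Thus the statement reduces to showing that the square
$$
\CD
\Maps(S_2',\CX) @>>> \Maps(S_1',\CX) \\
@VVV @VVV \\
\Maps(S_2,\CX) @>>> \Maps(S_1,\CX)
\endCD
$$
is Cartesian, i.e. that $\CX$ sends the push-out $S_2'\simeq S_1'\underset{S_1}\sqcup S_2$ to a fiber product; taking fibers over $x_2$ then yields the desired equivalence of fibers.

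To establish this I would feed in Conditions (B) and (C). Writing $T_i:=(S_i)_{\CI_i[1]}$ for the split square-zero extensions with $\CI_2\simeq\phi_*\CI_1$, the presentation \eqref{e:all from sq zero} gives $S_i'\simeq S_i\underset{T_i}\sqcup S_i$, where the two maps $T_i\rightrightarrows S_i$ are the projection $\pi_i$ and the $\gamma_i$-twisted map $s_i$; moreover the push-out formula for split square-zero extensions identifies $T_2\simeq T_1\underset{S_1}\sqcup S_2$, the push-out being along the zero-section $S_1\to T_1$ and $\phi$. Condition (C) then expresses $\Maps(S_i',\CX)$ as the two-fold fiber product $\Maps(S_i,\CX)\underset{\Maps(T_i,\CX)}\times\Maps(S_i,\CX)$ along $\pi_i^*,s_i^*$, while Condition (B) gives $\Maps(T_2,\CX)\simeq\Maps(T_1,\CX)\underset{\Maps(S_1,\CX)}\times\Maps(S_2,\CX)$. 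The key observation is that $\pi_i^*$ and $s_i^*$ are both sections of the projection $\Maps(T_i,\CX)\to\Maps(S_i,\CX)$ induced by the zero-section, so each iterated fiber product localizes over its base to the fiber of that projection; Condition (B) matches the fiber occurring for $S_2'$ over a point $\beta\in\Maps(S_2,\CX)$ with the one occurring for $S_1'$ over $\phi^*\beta$, and a direct comparison of the two descriptions produces the Cartesian square.

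The main obstacle is precisely this last diagram chase: one must keep careful track of which of $\pi_i,s_i$ is used at each stage and verify that, under \eqref{e:pushout for square zero ext} and \lemref{l:SqZ and pushout}, the maps $\pi_2^*,s_2^*$ restrict correctly to $\pi_1^*,s_1^*$ after applying (B) — it is the compatibility of the $\gamma_2$-twist with $\gamma_1$ that makes the fibers agree. Alternatively, once $\CX$ is known to admit a connective pro-cotangent complex, the same computation becomes more formal: \lemref{l:reform C} rewrites both sides as slice categories of $\QCoh(S_1)^{\geq -n+1,\leq 0}$, and Condition (B) shows that the relevant square of (pro-)cotangent objects is a push-out, whence the universal property of slice categories under a push-out yields the identification directly. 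In either form the result is, as advertised, immediate from the definitions of Conditions (B) and (C) together with \propref{p:compat with pushouts}.
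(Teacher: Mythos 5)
Your argument is correct and is, in substance, the verification the paper leaves implicit: the paper offers no proof beyond ``from the definitions,'' and your fiberwise reduction over $^{\leq n-1}\!\on{SqZExt}(S_1)$ to the Cartesian-square statement for $S'_2\simeq S'_1\underset{S_1}\sqcup S_2$, followed by two applications of Condition (C) and one of Condition (B) via $T_2\simeq T_1\underset{S_1}\sqcup S_2$, is exactly that unwinding. Two small caveats: the closing appeal to \propref{p:compat with pushouts} is out of place, since that proposition concerns DG indschemes and is used to show that indschemes satisfy Conditions (B) and (C), not as an input here; and your alternative route through \lemref{l:reform C} silently assumes Condition (A), which the lemma as stated does not, so your primary argument, which avoids (A), is the one to keep.
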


\sssec{} \label{sss:affine map}

If $\CX$ satisfies Zariski descent, then the same continues to be true for
$S_1$ and $S_2$ replaced by arbitrary objects $Z_1,Z_2\in {}^{\leq n-1}\!\dgSch_{\on{qsep-qc}}$,
but keeping the assumption that $f:Z_1\to Z_2$ be affine. 

\medskip

In other words, the map \eqref{e:map out of pushout} is an isomorphism, where 
$$^{\leq n-1}\!\on{SqZExt}(Z_1)\to {}^{\leq n-1}\!\on{SqZExt}(Z_2)$$
is the functor defined in \eqref{e:pushout for square zero ext}.

\sssec{} \label{sss:non-affine map}

Now suppose that $Z_1,Z_2\in {}^{\leq n-1}\!\dgSch_{\on{qsep-qc}}$ are as above, but the
map $f$ is not necessarily affine. Assume that $\CX$ satisfies Zariski descent, and let
$x_2:Z_2\to \CX$ be a map satisfying the equivalent conditions of \lemref{l:sq zero
filtered}. Let $x_1=x_2\circ f$.
In this situation, \secref{sss:affine map} still applies.  Namely, we have:

\begin{lem}\label{l:non-affine pushout}
In the above situation, if $\CX$ satisfies conditions (A), (B) and (C), there is a canonically defined functor
$$^{\leq n-1}\!\on{SqZExt}(Z_1,x_1)\to {}^{\leq n-1}\!\on{SqZExt}(Z_2,x_2),$$
such that the diagram
$$
\CD
^{\leq n-1}\!\on{SqZExt}(Z_1,x_1)  @>>>  ^{\leq n-1}\!\on{SqZExt}(Z_2,x_2) \\
@VVV   @VVV   \\
({}^{\leq n-1}\!\dgSch_{\on{qsep-qc}})_{Z_1/}  @>>>  ({}^{\leq n-1}\!\dgSch_{\on{qsep-qc}})_{Z_2/}
\endCD
$$
commutes, where the bottom horizontal arrow is the push-out functor
$$Z'_1\mapsto Z'_1\underset{Z_1}\sqcup\, Z_2.$$
\end{lem}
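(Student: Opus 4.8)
The plan is to generalize the affine construction of \secref{sss:affine map}, where the functor was the square-zero push-out \eqref{e:pushout for square zero ext}, to the case of a non-affine quasi-separated quasi-compact map $f$. Since the asserted diagram carries the push-out functor $Z_1'\mapsto Z_1'\underset{Z_1}\sqcup Z_2$ along its bottom, the functor we seek is forced, on underlying schemes, to be this push-out: for $(Z_1\hookrightarrow Z_1',\,x_1')\in {}^{\leq n-1}\!\on{SqZExt}(Z_1,x_1)$ we set $Z_2':=Z_1'\underset{Z_1}\sqcup Z_2$, taken in $^{\leq n-1}\!\dgSch_{\on{qsep-qc}}$, which exists by \corref{c:nil pushout} because $Z_1\hookrightarrow Z_1'$ is a closed nil-immersion and $f$ is quasi-separated quasi-compact. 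Thus the proof divides into two tasks: (i) exhibit $Z_2\hookrightarrow Z_2'$ as a square-zero extension, i.e.\ lift it to an object of $^{\leq n-1}\!\on{SqZExt}(Z_2)$; and (ii) glue $x_1'$ and $x_2$ into a map $x_2':Z_2'\to\CX$ restricting to $x_2$, so that the pair lands in $^{\leq n-1}\!\on{SqZExt}(Z_2,x_2)$.

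For (i) I would describe the square-zero data directly. Writing $k$ for the cohomological amplitude of $f_*:\QCoh(Z_1)\to\QCoh(Z_2)$ (finite since $f$ is quasi-separated quasi-compact), set $\CI_2:=\tau^{\leq 0}(f_*\CI_1)\in\QCoh(Z_2)^{\geq -n+1,\leq 0}$, where $\CI_1$ is the ideal of $Z_1$ in $Z_1'$, and build $\gamma_2:{}^{\geq -n}(T^*Z_2)\to\CI_2[1]$ from $\gamma_1$ by composing the codifferential $(df)^*$ of \secref{sss:codifferential} with $f_*(\gamma_1)$ and the truncation $f_*\CI_1\to\tau^{\leq 0}(f_*\CI_1)$, exactly as in \eqref{e:pushout for square zero ext} but with the truncation inserted to compensate for $f_*$ failing to be $t$-exact. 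I then have to identify the resulting square-zero extension with $Z_2'$; this is the non-affine analogue of \lemref{l:SqZ and pushout}. The reduction is to Zariski-localize on $Z_2$ to an affine $S_2$ and then Zariski-localize $f^{-1}(S_2)$ into affines $S_1^{(j)}$: the maps $S_1^{(j)}\to S_2$ are automatically affine, so \lemref{l:SqZ and pushout} applies there, while \corref{c:nil pushout} guarantees that both the push-out and its square-zero structure are compatible with these localizations. The truncation level is controlled by \lemref{l:ppties pushouts}, which says precisely that the push-out commutes with $\tau^{\leq m}$ once $m$ exceeds the relevant degree plus $k$.

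For (ii) the required statement is the equivalence $\Maps(Z_2',\CX)\simeq\Maps(Z_1',\CX)\underset{\Maps(Z_1,\CX)}\times\Maps(Z_2,\CX)$. I would prove it by the same two-step scheme as \corref{c:compat with pushouts}: pass to the limit over $m$-coconnective truncations, using \lemref{l:ppties pushouts} to replace $Z_2'$ by $^{\leq m}Z_1'\underset{^{\leq m}Z_1}\sqcup{}^{\leq m}Z_2$ up to higher truncation, and then reduce each affine square-zero push-out to Condition (C) via \secref{sss:affine map}; the passage between affine pieces of the non-affine schemes $Z_1',Z_1$ uses that $\CX$ satisfies Zariski descent together with the gluing of pro-objects \lemref{l:gluing pro} and the fact (from \corref{c:nil pushout}) that the Zariski topology of the push-out is induced from the pieces. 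Finally, to see that the pair $(Z_2',x_2')$ lies in $^{\leq n-1}\!\on{SqZExt}(Z_2,x_2)$ and not merely in $^{\leq n-1}\!\on{SqZExt}(Z_2)$, I invoke the hypothesis that $x_2$ satisfies the equivalent conditions of \lemref{l:sq zero filtered}: this makes $T^*_{x_2}Z_2/\CX[-1]$ coconnective, so that the description of \lemref{l:reform C} is available on both sides and the structure map produced above is exactly the image of $x_1'$ under the induced functor on the coslice categories. Functoriality in $Z_1'$ and commutativity of the square are then immediate from the constructions.

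The main obstacle is step (i): matching the geometric push-out $Z_1'\underset{Z_1}\sqcup Z_2$ with the square-zero extension defined by $\tau^{\leq 0}(f_*\CI_1)$ when $f$ is not affine. The genuine difficulty is that $f_*$ is no longer $t$-exact, so the naive ideal $f_*\CI_1$ is not coconnective and the identification can hold only after truncation. What rescues the argument is the interplay of two inputs: the amplitude bound packaged in \lemref{l:ppties pushouts}, which quantifies exactly how far one must truncate, and the coconnectivity of the relative cotangent complex guaranteed by \lemref{l:sq zero filtered}, which ensures the truncated data still defines an object over $\CX$. The simultaneous Zariski bookkeeping on $Z_1$ and $Z_2$ is the most error-prone part but becomes routine once the affine case and the localization compatibilities are in hand.
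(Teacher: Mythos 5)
Your proposal is correct and follows essentially the same route as the paper: the functor is given by $\CI\mapsto\tau^{\leq 0}(f_*\CI)$, the hypothesis on $x_2$ (the equivalent conditions of \lemref{l:sq zero filtered}) is exactly what lets the structure map factor through the truncation, and the identification with the geometric push-out $Z'_1\underset{Z_1}\sqcup\, Z_2$ is checked by reduction to the affine case via \corref{c:nil pushout} and \lemref{l:nil pushout}. The only organizational difference is that the paper defines the functor directly on the coslice-category description of $^{\leq n-1}\!\on{SqZExt}(Z_2,x_2)$ furnished by \lemref{l:reform C}, so the map $Z'_2\to \CX$ comes for free and your separate gluing step (ii) is subsumed.
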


\begin{proof}
By definition, an object of $^{\leq n-1}\!\on{SqZExt}(Z_1,x_1)$ is given by a map
$T^*_{x_1}Z_1/\CY \rightarrow \CI[1]$ for
$\CI \in \QCoh(Z_1)^{\geq{-n+1,\leq 0}}$.  This gives a map
$T^*_{x_2}Z_2/\CY \rightarrow f_*(\CI[1])$.  By assumption on $x_2$, this map
canonically factors through $\tau^{\leq -1} f_*(\CI[1]) = \tau^{\leq 0}(f_*\CI)[1]$.  This
gives the desired functor
$$^{\leq n-1}\!\on{SqZExt}(Z_1,x_1)\to {}^{\leq n-1}\!\on{SqZExt}(Z_2,x_2).$$
Let $Z_1'$ be the square zero extension of $Z_1$, and $Z_2'$ the corresponding square zero
extension of $Z_2$; i.e.,
$$ Z_2' = Z_2 \underset{(Z_2)_{\tau^{\leq 0}(f_*\CI)[1]}}{\sqcup} Z_2. $$
It follows from \corref{c:nil pushout} that
$$Z'_1\underset{Z_1}\sqcup\, Z_2 \simeq Z_2 \underset{(Z_2)_{\tau^{\leq 0}f_*(\CI[1])}}{\sqcup} Z_2.$$
Furthermore, by the above discussion, both maps
$(Z_2)_{\tau^{\leq 0}f_*(\CI[1])} \rightarrow Z_2$ canonically factor through
$(Z_2)_{\tau^{\leq 0}(f_*\CI)[1]}$ (compatibly with the map to $\CX$).  This gives the
comparison map
$$ Z'_1\underset{Z_1}\sqcup\, Z_2 \rightarrow Z'_2,$$
and it is easy to show that it is an isomorphism. 
\end{proof}







\ssec{Dropping $n$-coconnectivity}

Finally, note that the above considerations are valid for an object $\CX\in \inftydgprestack$, simply
by omitting the $n$-coconnectivity conditions.

\begin{defn}  \label{def:deform theory}
We shall say that $\CX\in \on{PreStk}$ admits \emph{connective deformation theory} if it is convergent, and
satisfies Conditions (A), (B) and (C).
\end{defn}

\sssec{}
By \corref{c:compat with pushouts}, we obtain:

\begin{cor}\label{c:indsch have def theory}
Any  $\CX\in \dgindSch$ admits connective deformation theory.
\end{cor}

\section{A characterization of DG indschemes via deformation theory}  \label{s:char via deform}

\ssec{The statement}

Let $\CX$ be an object of $^{\leq n}\!\inftydgprestack$, such that
$^{cl}\CX$ is a classical indscheme. We would like to  
give a criterion for when $\CX$ belongs to $^{\leq n}\!\dgindSch$.

\medskip

\begin{thm} \label{t:char by deform}
Under the above circumstances, $\CX\in {}^{\leq n}\!\dgindSch$
if and only if $\CX$ admits an extension to an
object $\CX_{n+1}\in {}^{\leq n+1}\!\inftydgprestack$, which
satisfies indscheme-like Conditions (A), (B) and (C).
\end{thm}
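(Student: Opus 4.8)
The plan is to prove the two implications separately; the ``only if'' direction is formal, and the ``if'' direction carries all of the content. For ``only if'', suppose $\CX\in {}^{\leq n}\!\dgindSch$. I would take $\CX_{n+1}$ to be the image of $\CX$ under the fully faithful left adjoint
$$^{^{\leq n+1}\!L}\!\on{LKE}_{({}^{\leq n}\!\affdgSch)^{\on{op}}\hookrightarrow ({}^{\leq n+1}\!\affdgSch)^{\on{op}}}:{}^{\leq n}\!\dgindSch\to {}^{\leq n+1}\!\dgindSch$$
of \secref{sss:LKE n'->n}. Since this functor is a section of restriction, $^{\leq n}\CX_{n+1}\simeq \CX$, so $\CX_{n+1}$ genuinely extends $\CX$; and being an $(n+1)$-coconnective DG indscheme, $\CX_{n+1}$ satisfies Conditions (A), (B), (C) by \corref{c:A for indsch} and its (B)- and (C)-analogues recorded above.

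For ``if'', I am given $\CX$ with $^{cl}\CX$ a classical indscheme together with an extension $\CX_{n+1}$ satisfying (A), (B), (C), and I must present $\CX$ as a filtered colimit in $^{\leq n}\!\inftydgprestack$ of closed embeddings of $n$-coconnective quasi-separated quasi-compact DG schemes. The indexing is dictated by the classical picture: the closed subschemes $Z_{0,\alpha}\hookrightarrow {}^{cl}\CX$ form a filtered category $\sA$ with $^{cl}\CX\simeq \underset{\alpha}{colim}\, Z_{0,\alpha}$. The core of the argument is to promote each $Z_{0,\alpha}$, functorially in $\alpha$, to a closed DG subscheme $Z_\alpha\hookrightarrow \CX$ with $Z_\alpha\in {}^{\leq n}\!\dgSch_{\on{qsep-qc}}$ and $^{cl}Z_\alpha\simeq Z_{0,\alpha}$.

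I would build $Z_\alpha$ by induction on the coconnectivity level $0\leq m\leq n$, producing a compatible tower of closed embeddings $^{\leq m}Z_\alpha\hookrightarrow {}^{\leq m}\CX$ with classical truncation $Z_{0,\alpha}$, starting from $Z_{0,\alpha}$ itself. At the inductive step the truncation map $^{\leq m-1}Z_\alpha\hookrightarrow {}^{\leq m}Z_\alpha$ must be a square-zero extension by an ideal in cohomological degree $-m$ (\corref{c:can sq zero}), so the task is to produce the universal square-zero thickening of $^{\leq m-1}Z_\alpha$ lying over $\CX$. Writing $x$ for the closed embedding $^{\leq m-1}Z_\alpha\to \CX$ and using that $\CX_{n+1}$ admits connective pro-cotangent spaces (Condition (A)) compatibly with pullback (Condition (B)), these thickenings are classified by \lemref{l:reform C} in terms of $\on{Cone}((dx)^*)[-1]$; since $x$ is a closed embedding into an indscheme, the surjectivity of \secref{sss:closed embed surj} holds, so by \lemref{l:sq zero filtered} the relevant relative cotangent complex is a connective pro-object and the category of thickenings is filtered. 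I would take $^{\leq m}Z_\alpha$ to be the square-zero extension attached to the degree $-m$ part of this complex; because the classical truncation is kept fixed at $Z_{0,\alpha}$, the defining ideal lives in a single strictly negative degree and is a genuine (not merely pro) quasi-coherent sheaf, so $^{\leq m}Z_\alpha$ is an honest $m$-coconnective quasi-separated quasi-compact DG scheme. Conditions (B) and (C), through \lemref{l:non-affine pushout} and the push-out compatibilities of \propref{p:compat with pushouts} and \corref{c:compat with pushouts}, make the construction canonical and functorial along the (possibly non-affine) transition maps of $\sA$, so the $Z_\alpha$ assemble into a filtered diagram of closed embeddings into $\CX$.

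It then remains to check that $\underset{\alpha}{colim}\, Z_\alpha\to \CX$ is an isomorphism in $^{\leq n}\!\inftydgprestack$, i.e. that for every $S\in {}^{\leq n}\!\affdgSch$ the map $\underset{\alpha}{colim}\, \Maps(S,Z_\alpha)\to \Maps(S,\CX)$ is an equivalence. Given $x:S\to \CX$, its classical restriction factors through some $Z_{0,\alpha}$, and I would lift this factorization up the coconnectivity tower of $S$ one square-zero step at a time, at each stage invoking infinitesimal cohesiveness (Condition (C)) together with the compatibility of the pro-cotangent complex (Conditions (A), (B)) to extend the factorization through $^{\leq m}Z_\alpha$. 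With the colimit presentation in hand, $\CX$ is by definition an $n$-coconnective DG indscheme, the passage from the prestack colimit being harmless by \propref{p:indschemes fppf n} and \lemref{l:no sheafification}. The main obstacle is the inductive step: showing that the deformation-theoretic data genuinely \emph{integrates} into actual quasi-compact quasi-separated closed DG subschemes with the prescribed classical truncation, rather than merely formal or pro-finite thickenings. What makes this work — and what the fixed classical truncation $Z_{0,\alpha}$ buys — is that all deformation happens in strictly negative cohomological degrees, so the relevant ideal is a genuine quasi-coherent sheaf; the role of extending $\CX$ to one level beyond $n$ is precisely to supply the deformation theory at the top level $m=n$, which $\CX$ alone cannot see.
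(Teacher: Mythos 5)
Your ``only if'' direction matches the paper's. The ``if'' direction, however, has a genuine gap at its core: you assert that for a closed embedding $x:{}^{\leq m-1}Z_\alpha\to \CX$ one can form ``the universal square-zero thickening'' classified by the degree $-m$ part of $\on{Cone}((dx)^*)[-1]$, and that this yields a single canonical $Z_\alpha$ per classical subscheme $Z_{0,\alpha}$, so that the final colimit can be indexed by the same filtered category $\sA$ as the classical indscheme. No such universal thickening exists. The object $\on{Cone}((dx)^*)[-1]$ lives in $\on{Pro}(\QCoh(Z)^{\geq -n+1,\leq 0})$ and is a genuine pro-object (an inverse system of coherent sheaves with no initial term, cf. \eqref{e:expl cotangent indsch}); a square-zero extension is classified by a map from this pro-object to an \emph{actual} quasi-coherent sheaf $\CI$, i.e.\ by a choice of factorization through some term of the inverse system. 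The ``universal'' choice would have target the pro-object itself and would be a formal (pro-)thickening, not an object of $^{\leq n}\!\dgSch_{\on{qsep-qc}}$. This is precisely why the paper replaces the index category $\sA$ by the Grothendieck construction $\sB$ of the co-fibration $\alpha\mapsto \sB_\alpha={}^{\leq n}\!\on{SqZExt}(X_\alpha,f_\alpha)_{\on{closed}\,\on{in}\,\CX}$: each $(n-1)$-coconnective closed subscheme contributes an entire filtered family of $n$-coconnective thickenings, and only the colimit over all of them is cofinal among closed DG subschemes of $\CX$. With one fixed $Z_\alpha$ per $\alpha$ as you propose, surjectivity of $\underset{\alpha}{colim}\,\Maps(S,Z_\alpha)\to\Maps(S,\CX)$ already fails: the square-zero datum of $^{\leq m}S$ over $^{\leq m-1}S$ pushed forward to $Z_{0,\alpha}$ need not factor through the particular term of the pro-system your chosen thickening corresponds to.

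Two further points. First, the paper's induction is on $n$ (so that $^{\leq n-1}\CX$ is already known to be an $(n-1)$-coconnective DG indscheme with \emph{some} presentation by $(n-1)$-coconnective $X_\alpha$, with no constraint on their classical truncations), not an induction on the coconnectivity level $m$ with the classical truncation held fixed; your version compounds the nonexistent-universal-thickening problem at every level. Second, even granting the correct index category, the real work in the paper is proving that the two composites of the maps \eqref{e:map one} and \eqref{e:map two} are isomorphic to the identity, which occupies Steps 4--7 and requires the contractibility of the auxiliary categories $\Gamma$ and $\Delta_\beta$; your sketch of ``lift the factorization up the Postnikov tower'' addresses at best essential surjectivity and does not engage with this.
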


The rest of this subsection is devoted to the proof of this theorem. The ``only if"
direction is clear: if $\CX\in {}^{\leq n}\!\dgindSch$, the extension
$$\CX_{n+1}:={}^{^{\leq n+1}\!L}\!\on{LKE}_{^{\leq n}\!\affdgSch\hookrightarrow {}^{\leq n+1}\!\affdgSch}(\CX)$$
belongs to $^{\leq n+1}\!\dgindSch$, and hence satisfies Conditions (A), (B) and (C). 

\medskip

For the opposite implication, we will argue by induction on $n$, assuming that the statement is true for
$n'<n$. In particular, we can assume that $^{\leq n-1}\CX:=\CX|_{^{\leq n-1}\!\affdgSch}$
belongs to $^{\leq n-1}\!\dgindSch$.

\sssec{Step 0: initial remarks} \hfill

\smallskip

First, we note that by \corref{c:can sq zero}, the induction hypothesis combined with Condition (C) implies
that the prestack $\CX$ satisfies Zariski descent. Hence, deformation theory
of maps into it from objects of $^{\leq n}\!\dgSch_{\on{qsep-qc}}$,
described in the previous section applies. 

\medskip

Thus, for $X\in {}^{\leq n-1}\!\dgSch_{\on{qsep-qc}}$ and a map $f:X\to {}^{\leq n-1}\CX$, we have a well-defined
object
$$^{\geq -n-1}(T^*_f\CX_{n+1})\in \on{Pro}(\QCoh(X)^{\geq -n-1,\leq 0}),$$
whose formation is compatible with pull-backs.

\medskip

Moreover, we have:
$$^{\leq n}\!\on{SqZExt}(X,f)\simeq \left((\QCoh(X)^{\geq -n,\leq 0})_{\on{Cone}({}^{\geq -n-1}(T^*_f\CX_{n+1})\to 
{}^{\geq -n-1}(T^*X))[-1]/}\right)^{\on{op}}.$$

\medskip

Let $^{\leq n-1}\!\dgSch_{\on{closed}\, \on{in}\,\CX}$ denote the full subcategory 
of $({}^{\leq n-1}\!\dgSch_{\on{qsep-qc}})_{/\CX}$ that consists of those $f:X\to \CX$,
for which $f$ is a closed embedding. In particular, a map 
$$(X_1,f_1)\to (X_2,f_2)$$ 
in this category is given by $$(\phi:X_1\to X_2,\,f_1\simeq \phi\circ f_2),$$
where the underlying map
$\phi:X_1\to X_2$ is also a closed embedding, and in particular, affine. 

\medskip

We obtain that push-out makes the
assignment  $$(X,f)\mapsto {}^{\leq n}\!\on{SqZExt}(X,f)$$
into a category co-fibered over $^{\leq n-1}\!\dgSch_{\on{closed}\, \on{in}\,\CX}$.
We denote it by
$$^{\leq n}\!\on{SqZExt}({}^{\leq n-1}\!\dgSch_{\on{closed}\, \on{in}\,\CX}).$$

\medskip

By \secref{sss:closed embed surj}, we have that for $(X,f)\in {}^{\leq n-1}\!\dgSch_{\on{closed}\,\on{ in}\,\CX}$,
\begin{equation} \label{e:cone of codiff}
\on{Cone}({}^{\geq -n-1}(T^*_f\CX_{n+1})\to {}^{\geq -n-1}(T^*X))[-1]\in \on{Pro}(\QCoh(X)^{\geq -n,\leq 0}).
\end{equation}

Hence, by \lemref{l:sq zero filtered}, the category 
$^{\leq n}\!\on{SqZExt}(X,f)$ is filtered.

\sssec{Step 1: creating closed embeddings} \hfill

\smallskip

It is of course not true that for any $(X,f)\in {}^{\leq n-1}\!\dgSch_{\on{closed}\,\on{ in}\,\CX}$
and 
\begin{equation} \label{e:example of extension}
(i:X\hookrightarrow X',\,\, f':X'\to \CX)\in {}^{\leq n}\!\on{SqZExt}(X,f),
\end{equation}
the map $f'$ is also a closed embedding. 

\medskip

Let 
\begin{equation} \label{e:mapping closed embeddings in}
^{\leq n}\!\on{SqZExt}(X,f)_{\on{closed}\,\on{in}\,\CX}\subset {}^{\leq n}\!\on{SqZExt}(X,f)
\end{equation}
denote the full subcategory spanned by objects for which the map $f'$ is a closed embedding. 
We claim that the functor \eqref{e:mapping closed embeddings in} admits a left adjoint.

\medskip

Indeed, for an object \eqref{e:example of extension}, given by a pair
$$\on{Cone}({}^{\geq -n-1}(T^*_f\CX_{n+1})\to {}^{\geq -n-1}(T^*X))[-1]\to \CI,\quad \CI\in  \QCoh(X)^{\geq -n,\leq 0},$$
the image of the map
$$H^0\left(\on{Cone}({}^{\geq -n-1}(T^*_f\CX_{n+1})\to {}^{\geq -n-1}(T^*X))[-1] \right)\to H^0(\CI)$$
is a well-defined object $\CJ\in  \QCoh(X)^\heartsuit$, by \eqref{e:expl cotangent indsch}.

\medskip 

The value of the sough-for left adjoint on the above object of $^{\leq n}\!\on{SqZExt}(X,f)$ is given by 
$$\on{Cone}({}^{\geq -n-1}(T^*_f\CX_{n+1})\to {}^{\geq -n-1}(T^*X))[-1]\to \wt\CI,$$
where $\wt\CI\in \QCoh(X)^{\geq -n,\leq 0}$ fits into the exact triangle
$$\wt\CI\to \CI \to \CJ.$$

\medskip

In particular, we obtain that the embedding \eqref{e:mapping closed embeddings in} is cofinal. We also
obtain that the category $^{\leq n}\!\on{SqZExt}(X,f)_{\on{closed}\,\on{in}\,\CX}$ is also filtered.

\medskip

Let
$$^{\leq n}\!\on{SqZExt}({}^{\leq n-1}\!\dgSch_{/\CX})_{\on{closed}\, \on{in}\,\CX}$$ denote
the corresponding full subcategory of $^{\leq n}\!\on{SqZExt}({}^{\leq n-1}\!\dgSch_{\on{closed}\, \on{in}\,\CX})$.
It follows that the forgetful functor
$$^{\leq n}\!\on{SqZExt}({}^{\leq n-1}\!\dgSch_{/\CX})_{\on{closed}\, \on{in}\,\CX}\to 
{}^{\leq n-1}\!\dgSch_{\on{closed}\,\on{ in}\,\CX}$$
is also a co-Cartesian fibration. 

\sssec{Step 2: construction of the inductive system}

Let  $${}^{\leq n-1}\CX\simeq \underset{\alpha\in \sA}{colim}\, X_{\alpha}$$
be a presentation as in \eqref{e:indscheme as a colimit} with $X_\alpha\in {}^{\leq n-1}\!\dgSch_{\on{qsep-qc}}$.
For every $\alpha\in \sA$, let $f_\alpha$ denote the corresponding map $X_\alpha\to {}^{\leq n-1}\CX$.
For an arrow $\alpha_1\to \alpha_2$,
let $f_{\alpha_1,\alpha_2}$ denote the corresponding map $X_{\alpha_1}\to X_{\alpha_2}$.

\medskip

For each $\alpha$, let $\sB_\alpha$ denote the category 
$$^{\leq n}\!\on{SqZExt}(X_\alpha,f_\alpha)_{\on{closed}\,\on{in}\,\CX}.$$

\medskip

For $\beta$ an object of $\sB_\alpha$, we will denote
by $X_\beta$ the corresponding $\nDG$ scheme $X'_\alpha$, and by $f_\beta$ the closed embedding
$f'_\alpha$. Let $i_\beta$ denote the closed embedding $X_\alpha\to X_\beta$. 
We have an evident functor from $\sB_\alpha$ to the category of $\nDG$ schemes
endowed with a closed embedding into $\CX$. 

\medskip

The above construction makes the
assignment
$$\alpha\mapsto \sB_\alpha$$
into a category co-fibered over $A$. Let $\phi$ denote the tautological map $\sB\to \sA$.
Since $A$ is filtered and all $\sB_\alpha$ are filtered, the category $\sB$ is also filtered.

\medskip

It is also clear that the assignment 
$$(\beta\in \sB)\mapsto (X_\beta\overset{f_\beta}\longrightarrow \CX)$$
is a functor from $B$ to the category of $\nDG$ schemes
equipped with a closed embedding into $\CX$. For an arrow $(\beta_1\to \beta_2)\in \sB$,
let $f_{\beta_1,\beta_2}$ denote the corresponding closed
embedding $X_{\beta_1}\to X_{\beta_2}$.

\medskip

Thus, we obtain a map 
\begin{equation} \label{e:map to prove}
\underset{\beta\in \sB}{colim}\, X_\beta\to \CX,
\end{equation}
and we claim that it is an isomorphism. 

\medskip

In other words, we have to show that for $S'\in {}^{\leq n}\!\affdgSch$,
the maps $f_\beta$ induce an isomorphism:
\begin{equation} \label{e:map one}
\underset{\beta\in \sB}{colim}\, \Maps(S',X_{\beta}) \simeq \Maps(S',\CX).
\end{equation}

\sssec{Step 3: a map in the opposite direction}

Let us construct a map that we shall eventually prove to be the inverse of
\eqref{e:map one}:

\begin{equation} \label{e:map two}
\Maps(S',\CX)\to \underset{\beta\in \sB}{colim}\, \Maps(S',X_{\beta}).
\end{equation}

For $S'\in {}^{\leq n}\!\affdgSch$, set $S:={}^{\leq n-1}S'$. Tautologically, we have:
\begin{equation} \label{e:colim 1}
\Maps(S',\CX)\simeq \underset{\alpha\in \sA}{colim}\,
\{x:S\to X_\alpha,\,x':S'\to \CX,\, x'|_S \simeq f_\alpha\circ x\}.
\end{equation}

We can view $S'$ as a square-zero extension of $S$ by
the object 
$$H^{-n}(\CO_{S'})\in \QCoh(S)^\heartsuit[n]\subset \QCoh(S)^{\geq -n,\leq 0},$$
see \corref{c:can sq zero}.

\medskip
 
By \lemref{l:non-affine pushout}, we obtain an isomorphism in $\inftygroup$: 
\begin{multline*}
\{x:S\to X_\alpha,\,x':S'\to \CX,\, x'|_S \simeq f_\alpha\circ x\}\simeq \\
\simeq \underset{(X_\alpha\hookrightarrow X'_\alpha)\in {}^{\leq n}\!\on{SqZExt}(X_\alpha,f_\alpha)}{colim}\, 
\{x:S\to X_\alpha,\, S'\underset{S}\sqcup\, X_\alpha\simeq X'_\alpha\}.
\end{multline*}

Taking into account that 
$$\sB_\alpha:={}^{\leq n}\!\on{SqZExt}(X_\alpha,f_\alpha)_{\on{closed}\,\on{in}\,\CX}\hookrightarrow
{}^{\leq n}\!\on{SqZExt}(X_\alpha,f_\alpha)$$
is cofinal, we have an isomorphism in $\inftygroup$: 
$$\{x:S\to X_\alpha,\,x':S'\to \CX,\, x'|_S \simeq f_\alpha\circ x\}
\simeq \underset{\beta\in \sB_\alpha}{colim}\, \{x:S\to X_\alpha,\, S'\underset{S}\sqcup\, X_\alpha\simeq X_\beta\}.$$

\medskip

Combining this with \eqref{e:colim 1}, we obtain a canonical isomorphism in $\inftygroup$: 
$$\Maps(S',\CX)\simeq 
\underset{\alpha\in \sA}{colim}\, \left(\underset{\beta\in \sB_\alpha}{colim}\, 
\{x:S\to X_\alpha,\, S'\underset{S}\sqcup\, X_\alpha\simeq X_\beta \}\right).$$

We have a canonical forgetful map
\begin{multline*}
\underset{\alpha\in \sA}{colim}\, \left(\underset{\beta\in \sB_\alpha}{colim}\, 
\{x:S\to X_\alpha,\, S'\underset{S}\sqcup\, X_\alpha\simeq X_\beta\}\right)\to
\underset{\alpha\in \sA}{colim}\, \left(\underset{\beta\in \sB_\alpha}{colim}\, 
\{x':S'\to X_\beta\}\right)\simeq \\
\simeq \underset{\beta\in \sB}{colim}\, \{x':S'\to X_\beta\}.
\end{multline*}

Thus, we obtain the desired map 
$$\Maps(S',\CX)\to \underset{\beta\in \sB}{colim}\, \{x':S'\to X_\beta\}$$
of \eqref{e:map two}.

\medskip

It is immediate from the construction, the composite arrow
$$\Maps(S',\CX)\overset{\text{\eqref{e:map two}}}\longrightarrow \underset{\beta\in \sB}{colim}\, \Hom(S',X_{\beta})
\overset{\text{\eqref{e:map one}}}\longrightarrow 
\Hom(S',\CX)$$
is the identity map.

\sssec{Step 4: computation of the other composition}

It remains to show that the composition
\begin{equation} \label{e:hard composition}
\underset{\beta\in \sB}{colim}\, \Maps(S',X_{\beta}) \overset{\text{\eqref{e:map one}}}\longrightarrow 
\Maps(S',\CX) \overset{\text{\eqref{e:map two}}}\longrightarrow 
\underset{\beta\in \sB}{colim}\, \Maps(S',X_{\beta})
\end{equation}
is isomorphic to the identity map. 

\medskip

To do this, we introduce yet another category, denoted $\Gamma$.  An object of $\Gamma$ is given by the following data.

\begin{itemize}

\item An arrow $(\beta\to \beta_1)\in \sB$, which projects by means of $\phi$ to an arrow $(\alpha\to \alpha_1)\in \sA$,

\item A map $g_{\beta,\alpha_1}:{}^{\leq n-1}\!X_\beta\to X_{\alpha_1}$,

\item A commutative diagram of square-zero extensions compatible with maps to $\CX$
$$
\CD
X_\beta  @>{f_{\beta,\beta_1}}>>  X_{\beta_1}  \\
@A{j_\beta}AA    @AA{i_{\beta_1}}A  \\
{}^{\leq n-1}\!X_\beta  @>{g_{\beta,\alpha_1}}>>  X_{\alpha_1},
\endCD
$$
where $j_\beta$ is the canonical map, corresponding to the truncation (see \corref{c:can sq zero}),

\item An identification of the composition
$$X_\alpha\overset{^{\leq n-1}\!i_\beta}\longrightarrow {}^{\leq n-1}\!X_\beta\overset{g_{\beta,\alpha_1}}\longrightarrow
X_{\alpha_1} \text{ with } f_{\alpha,\alpha_1},$$

\item A homotopy between the resulting two identifications, making the following diagram commutative:
$$
\CD
f_{\alpha_1}\circ g_{\beta,\alpha_1}\circ {}^{\leq n-1}i_\beta  @>{\sim}>> f_{\beta}\circ j_\beta \circ {}^{\leq n-1}i_\beta \simeq 
f_\beta\circ i_\beta \\
@V{\sim}VV   @V{\sim}VV   \\
f_{\alpha_1}\circ f_{\alpha,\alpha_1}  @>>>  f_\alpha.
\endCD
$$

\end{itemize}

We can depict this data in a diagram:

\begin{gather}  
\xy
(-10,0)*+{X_\alpha}="X";
(15,0)*+{X_{\alpha_1}}="Y";
(60,0)*+{\CX.}="Z";
(-10,15)*+{{}^{\leq n-1}\!X_\beta}="W";
(-10,30)*+{X_\beta}="A";
(15,30)*+{X_{\beta_1}}="B";
{\ar@{->}^{f_{\alpha,\alpha_1}} "X";"Y"};
{\ar@{->}_{f_{\alpha_1}} "Y";"Z"};
{\ar@{->}^{^{\leq n-1}i_\beta} "X";"W"};
{\ar@{->}_{j_\beta} "W";"A"};
{\ar@{->}^{f_{\beta,\beta_1}} "A";"B"};
{\ar@{->}^{g_{\beta,\alpha_1}} "W";"Y"};
{\ar@{->}_{f_\beta} "A";"Z"};
{\ar@{->}^{f_{\beta_1}} "B";"Z"};
{\ar@{->}^{i_{\beta_1}} "Y";"B"};
\endxy
\end{gather}

Morphisms in $\Gamma$ are defined naturally (so that the corresponding diagrams of DG schemes
commute). 

\medskip

There are tautological maps $\psi,\psi_1:\Gamma\to \sB$ that remember the data of $\beta$ and
$\beta_1$, respectively. 

\medskip

The colimit 
$$\underset{\gamma\in \Gamma}{colim}\, \Maps(S',X_{\psi(\gamma)}),$$
admits a tautological map
\begin{equation} \label{e:map of colims one}
r:\underset{\gamma\in \Gamma}{colim}\, \Maps(S',X_{\psi(\gamma)})\to 
\underset{\beta\in \sB}{colim}\, \Maps(S',X_{\beta}).
\end{equation}
Note, however, that we have another map
\begin{equation} \label{e:map of colims two}
r_1:\underset{\gamma\in \Gamma}{colim}\, \Maps(S',X_{\psi(\gamma)})\to 
\underset{\beta\in \sB}{colim}\, \Maps(S',X_{\beta}),
\end{equation}
which for $\gamma\in \Gamma$, sends $\Maps(S',X_{\psi(\gamma)})$ to $\Maps(S',X_{\psi_1(\gamma})$
by means of $f_{\psi(\gamma),\psi_1(\gamma)}$. However, the same edge $f_{\psi(\gamma),\psi_1(\gamma)}$
provides a homotopy between these two maps of colimits.

\medskip

It follows from the construction that the composite
$$\underset{\gamma\in \Gamma}{colim}\, \Maps(S,X_{\psi(\gamma)}) \overset{r}\to  
\underset{\beta\in \sB}{colim}\, \Maps(S',X_{\beta}) \overset{\text{\eqref{e:map one}}}\longrightarrow 
\Maps(S',\CX) \overset{\text{\eqref{e:map two}}}\longrightarrow 
\underset{\beta\in \sB}{colim}\, \Maps(S',X_{\beta})$$
coincides with the map $r_1$. 

\medskip

Therefore, to prove that the composition in \eqref{e:hard composition} is isomorphic
to the identity map, it suffices to show that the map $r$ is an isomorphism in $\inftygroup$. To do this,
we will repeatedly use the following observation:

\begin{lem} \label{l:cofinality}
Let $F:\bC'\to \bC$ be a functor between $\infty$-categories.

\smallskip

\noindent{\em(a)} Suppose that $F$ is a Cartesian fibration. Then $F$
is cofinal if and only if it has contractible fibers.

\smallskip

\noindent{\em(b)} Suppose that $F$ is a co-Cartesian fibration, and that
$F$ has contractible fibers. Then it is cofinal.

\end{lem}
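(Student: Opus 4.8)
The plan is to deduce both parts from Joyal's cofinality criterion (\cite[Theorem 4.1.3.1]{Lu0}): a functor $F:\bC'\to \bC$ is cofinal if and only if for every object $c\in \bC$ the comma $\infty$-category
$$\bC'_{c/}:=\bC'\underset{\bC}\times \bC_{c/}$$
is weakly contractible, i.e.\ has contractible classifying space. An object of $\bC'_{c/}$ is a pair $(x',\eta)$ with $x'\in \bC'$ and $\eta:c\to F(x')$ a morphism in $\bC$, and there is a canonical inclusion of the fiber $\iota:F^{-1}(c)\hookrightarrow \bC'_{c/}$ sending $y'$ (with $F(y')=c$) to $(y',\on{id}_c)$. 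The whole argument consists in comparing $\bC'_{c/}$ with $F^{-1}(c)$, and the two hypotheses force two genuinely different comparisons.

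For part (a), I would show that when $F$ is a Cartesian fibration the inclusion $\iota$ admits a right adjoint $R:\bC'_{c/}\to F^{-1}(c)$, given on objects by $(x',\eta)\mapsto \eta^*x'$, the source of the Cartesian lift $\eta^*x'\to x'$ of $\eta$ (so that $F(\eta^*x')=c$). Indeed, a morphism $(y',\on{id}_c)\to (x',\eta)$ in $\bC'_{c/}$ is exactly a map $y'\to x'$ in $\bC'$ lying over $\eta$, and the defining universal property of Cartesian edges identifies these with maps $y'\to \eta^*x'$ in the fiber $F^{-1}(c)$; this is precisely the adjunction $\iota\dashv R$. Since a functor admitting an adjoint induces a homotopy equivalence on classifying spaces (the unit and counit give homotopies after geometric realization), $\iota$ is a weak homotopy equivalence, so $\bC'_{c/}$ is weakly contractible if and only if $F^{-1}(c)$ is. Combined with the criterion, this yields exactly the asserted equivalence ``$F$ cofinal $\iff$ all fibers contractible.''

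For part (b) this adjunction is unavailable, since for a coCartesian fibration the natural comparison relates the fiber to the \emph{slice} comma $\bC'\times_\bC \bC_{/c}$ rather than to the coslice comma $\bC'_{c/}$ that the criterion demands; this is why the statement is only one-directional. Instead I would control the homotopy type of $\bC'_{c/}$ directly. The projection $\pi:\bC'_{c/}\to \bC_{c/}$ is the base change of $F$ along $\bC_{c/}\to \bC$, hence is again a coCartesian fibration, and its fiber over an object $(\eta:c\to d)$ is $F^{-1}(d)$, which is weakly contractible by hypothesis. Using that the classifying space of the total space of a coCartesian fibration computes the colimit over the base of the fiberwise classifying spaces (\cite[Sect. 3.2]{Lu0} together with \cite[Corollary 3.3.4.6]{Lu0}), and that all these fibers are contractible, the map $|\bC'_{c/}|\to |\bC_{c/}|$ is an equivalence of spaces. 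Since $\bC_{c/}$ has the initial object $\on{id}_c$ it is weakly contractible; therefore $\bC'_{c/}$ is weakly contractible, and the criterion gives that $F$ is cofinal.

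The main obstacle is bookkeeping of variance: one must check that the coslice-comma criterion for cofinality pairs with \emph{Cartesian} lifts (producing the adjunction of (a) and hence the ``if and only if''), while for \emph{coCartesian} fibrations no such fiberwise comparison exists, so (b) must go through the fibered colimit formula, which yields weak contractibility of the total space but no converse. The two secondary points to justify carefully are that an adjunction induces a homotopy equivalence of classifying spaces, and that base change preserves the coCartesian fibration property together with the colimit formula just cited; both are standard consequences of \cite{Lu0}.
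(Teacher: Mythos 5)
The paper never proves this lemma: it is stated as a bare observation just before Step 4 of the proof of \thmref{t:char by deform} and used as a black box, so your write-up supplies an argument the authors omitted rather than competing with one. Your proof is correct. Running both parts through Joyal's criterion and comparing the comma category $\bC'\times_\bC \bC_{c/}$ with the fiber $F^{-1}(c)$ is the right strategy: in (a) the adjunction $\iota\dashv R$ built from Cartesian lifts does give $|F^{-1}(c)|\simeq |\bC'\times_\bC \bC_{c/}|$ and hence the full ``if and only if,'' and your diagnosis of why no such adjunction exists in (b) (coCartesian lifts compare the fiber with the slice comma $\bC'\times_\bC \bC_{/c}$, which governs coinitiality rather than cofinality) is exactly right — the failure of the converse in (b) is visible already for $\{1\}\hookrightarrow \Delta^1$, a cofinal coCartesian fibration with empty fiber over $0$. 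The one point to tighten is the reference in (b): \cite[Corollary 3.3.4.6]{Lu0} concerns \emph{left} fibrations, whereas $\pi$ is merely coCartesian, so to apply the colimit formula you should either first replace $\pi$ by its fiberwise groupoid completion (a left fibration over $\bC_{c/}$ with fibers $|F^{-1}(d)|$, the comparison map being a fiberwise and hence total weak homotopy equivalence), or invoke the coCartesian version of the formula (the total category localized at the coCartesian edges computes the colimit of the classified diagram, and localization does not change the weak homotopy type). This is a repair of the citation, not of the mathematics.
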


It is easy to see that the functor $\psi$ is a Cartesian fibration. Applying \lemref{l:cofinality},
we obtain that it is sufficient to show that the fibers of $\psi$ are contractible. 

\sssec{Step 5: contractibility of the fibers of $\psi$}

For $\beta\in \sB$, let $\Gamma_\beta$ denote the fiber of $\psi$. Explicitly, $\Gamma_\beta$
consists of the data of

\begin{itemize}

\item An object $\alpha_1\in \sA$, and an arrow $\phi(\beta)=:\alpha\to \alpha_1$.

\item A map $g_{\beta,\alpha_1}:{}^{\leq n-1}\!X_\beta\to X_{\alpha_1}$,

\item An identification of the composition
$$X_\alpha\overset{^{\leq n-1}\!i_\beta}\longrightarrow {}^{\leq n-1}\!X_\beta\overset{g_{\beta,\alpha_1}}\longrightarrow
X_{\alpha_1} \text{ with } f_{\alpha,\alpha_1},$$

\item A homotopy between the resulting two identifications, making the following diagram commutative:
$$
\CD
f_{\alpha_1}\circ g_{\beta,\alpha_1}\circ {}^{\leq n-1}\!i_\beta  @>{\sim}>> f_{\beta}\circ j_\beta \circ {}^{\leq n-1}\!i_\beta \simeq 
f_\beta\circ i_\beta \\
@V{\sim}VV   @V{\sim}VV   \\
f_{\alpha_1}\circ f_{\alpha,\alpha_1}  @>>>  f_\alpha.
\endCD
$$

\item A lift of $\alpha\to \alpha_1$ to an arrow $\beta\to \beta_1$. 

\item A commutative diagram of square-zero extensions compatible with maps to $\CX$
$$
\CD
X_\beta  @>{f_{\beta,\beta_1}}>>  X_{\beta_1}  \\
@A{j_\beta}AA    @AA{i_{\beta_1}}A  \\
{}^{\leq n-1}\!X_\beta  @>{g_{\beta,\alpha_1}}>>  X_{\alpha_1}.
\endCD
$$

\end{itemize}

\medskip

We introduce the category $\Delta_\beta$ to consist of the first four out of six of
the pieces of data in the description of $\Gamma_\beta$ given above. I.e.,
an object of $\Delta_\beta$ corresponds to a diagram

\begin{gather}  
\xy
(-10,0)*+{X_\alpha}="X";
(15,0)*+{X_{\alpha_1}}="Y";
(60,0)*+{\CX.}="Z";
(-10,15)*+{{}^{\leq n-1}\!X_\beta}="W";
(-10,30)*+{X_\beta}="A";
{\ar@{->}^{f_{\alpha,\alpha_1}} "X";"Y"};
{\ar@{->}_{f_{\alpha_1}} "Y";"Z"};
{\ar@{->}^{^{\leq n-1}i_\beta} "X";"W"};
{\ar@{->}_{j_\beta} "W";"A"};
{\ar@{->}^{g_{\beta,\alpha_1}} "W";"Y"};
{\ar@{->}_{f_\beta} "A";"Z"};
\endxy
\end{gather}

\medskip

We have a natural forgetful map $\Gamma_\beta\to \Delta_\beta$. It is easy to see
that this functor is a co-Cartesian fibration. Hence, by \lemref{l:cofinality}, it is
enough to show that $\Delta_\beta$ is contractible, and that the fibers of 
$\Gamma_\beta$ over $\Delta_\beta$ are contractible.

\sssec{Step 6: contractibility of $\Delta_\beta$}

By construction, we have a left fibration $\Delta_\beta\to \sA_{\alpha/}$, and hence 
the homotopy type of $\Delta_\beta$ is
$$\underset{\alpha_1\in \sA_{\alpha/}}{colim}\, \left(
\Maps({}^{\leq n-1}\!X_\beta,X_{\alpha_1})
\underset{\Maps({}^{\leq n-1}\!X_\beta,\CX)\underset{\Maps(X_{\alpha},\CX)}\times \Maps(X_\alpha,X_{\alpha_1})}
\times \on{pt}\right),$$
where $\on{pt}\to \Maps({}^{\leq n-1}\!(X_{\beta}),\CX)$ is the map $f_\beta\circ j_\beta$ and 
$\on{pt}\to \Maps(X_\alpha,X_{\alpha_1})$ is $f_{\alpha,\alpha_1}$.

\medskip

Since the category $A_{\alpha/}$ of objects $\alpha_1\in \sA$ \emph{under} $\alpha$ is filtered, we can commute the colimit and
the Caretesian products, and we obtain that the homotopy type of $\Delta_\beta$ is
$$\left(\underset{\alpha_1\in \sA_{\alpha /}}{colim}\, \Maps({}^{\leq n-1}\!X_\beta,X_{\alpha_1}) \right)
\underset{\Maps({}^{\leq n-1}\!X_\beta,\CX)\underset{\Maps(X_{\alpha},\CX)}\times 
\left(\underset{\alpha_1\in \sA_{\alpha /}}{colim}\, \Maps(X_\alpha,X_{\alpha_1})\right)}\times \on{pt}.$$

\medskip

Since the DG schemes and $X_\alpha$ and $X_\beta$ are quasi-separated and quasi-compact, the maps
$$\underset{\alpha_1\in\sA_{\alpha /}}{colim}\, \Maps({}^{\leq n-1}\!X_\beta,X_{\alpha_1}) \simeq
\underset{\alpha_1\in \sA}{colim}\, \Maps({}^{\leq n-1}\!X_\beta,X_{\alpha_1}) 
\to \Maps({}^{\leq n-1}\!X_\beta,{}^{\leq n-1}\CX)$$ and
$$\underset{\alpha_1\in \sA_{\alpha /}}{colim}\, \Maps(X_{\alpha},X_{\alpha_1}) \simeq 
\underset{\alpha_1\in \sA}{colim}\, \Maps(X_{\alpha},X_{\alpha_1}) 
\to \Maps(X_\alpha,{}^{\leq n-1}\CX)\simeq
\Maps(X_\alpha,\CX)$$
are isomorphisms. 

\medskip

We obtain that the homotopy type of $\Delta_\beta$ is

\begin{multline*}
\Maps({}^{\leq n-1}\!X_\beta,\CX) 
\underset{\Maps({}^{\leq n-1}\!X_\beta,\CX)\underset{\Maps(X_{\alpha},\CX)}\times 
\Maps(X_\alpha,\CX)}\times \on{pt}\simeq \\
\simeq \Maps({}^{\leq n-1}\!X_\beta,\CX) \underset{\Maps({}^{\leq n-1}\!X_\beta,\CX)} \times \on{pt}
\simeq \on{pt}.
\end{multline*}

\sssec{Step 7: contractibility of the fibers $\Gamma_\beta\to \Delta_\beta$}

For an object $\delta_\beta\in \Delta_\beta$ as above, the fiber of $\Gamma_\beta$ over it
is the category of

\begin{itemize}

\item $\beta_1\in \sB_{\alpha_1}$,

\item A map of square-zero extensions: 
$$
\CD
X_{\beta}  @>{f_{\beta,\beta_1}}>>  X_{\beta_1}  \\
@A{j_\beta}AA    @AA{i_{\beta_1}}A  \\
{}^{\leq n-1}\!X_\beta  @>{g_{\beta,\alpha_1}}>>  X_{\alpha_1},
\endCD
$$
compatible with the maps to $\CX$.

\end{itemize}

Let $j:Z\hookrightarrow Z'$ be any square-zero extension in $^{\leq n}\!\dgSch$, and
let $x_1:Z\to X_{\alpha_1}$, $x':Z'\to \CX$ be fixed maps equipped with an identification
$$f_{\alpha_1}\circ x_1\simeq x'\circ j.$$
(In our case, we are going to take $Z={}^{\leq n-1}\!X_\beta$ and $Z'=X_\beta$.)
Consider the category of pairs:

\begin{itemize}

\item $\beta_1\in \sB_{\alpha_1}$,

\item A map of square-zero extensions
$$
\CD
Z'  @>{x'_1}>>  X_{\beta_1} \\
@A{j}AA   @AA{i_{\beta_1}}A  \\
Z  @>{x_1}>> X_{\alpha_1},
\endCD
$$
compatible with the maps to $\CX$

\end{itemize}

We claim that this category is contractible. Indeed, if we omit the condition of
compatibility with the given map $x':Z'\to \CX$, we obtain the category
whose homotopy type is 
$$\underset{\beta_1\in \sB_{\alpha_1}}{colim}\, \{\text{maps of square-zero extensions as above}\},$$
which, by the definition of $\sB_{\alpha_1}$, is homotopy equivalent to 
$$\Maps(Z',\CX)\underset{\Maps(Z,\CX)}\times \on{pt},$$
where the map $\on{pt}\to \Maps(Z,\CX)$ is given by $f_{\alpha_1}\circ x_1=x'\circ j$.

\medskip

Reinstating the compatibility condition results in taking the fiber product
$$\underset{\beta_1\in \sB_{\alpha_1}}{colim}\, \left(\{\text{maps of square-zero extensions}\}
\underset{\Maps(Z',\CX)\underset{\Maps(Z,\CX)}\times \on{pt}}\times \on{pt}\right).$$
Since $\sB_{\alpha_1}$ is filtered, the above colimit can be rewritten
as
\begin{multline*}
\underset{\beta_1\in \sB_{\alpha_1}}{colim}\, \{\text{maps of square-zero extensions}\}
\underset{\Maps(Z',\CX)\underset{\Maps(Z,\CX)}\times \on{pt}}\times \on{pt}\simeq \\
\simeq \left(\Maps(Z',\CX)\underset{\Maps(Z,\CX)}\times \on{pt}\right)
\underset{\Maps(Z',\CX)\underset{\Maps(Z,\CX)}\times \on{pt}}\times \on{pt}\simeq \on{pt}.
\end{multline*}

\qed

\ssec{The $\aleph_0$ condition}

In this subsection we will give a characterization of the $\aleph_0$ property in terms of
pro-cotangent spaces.

\sssec{}

Let $\bC$ be a category. We shall say that an object of $\on{Pro}(\bC)$
is $\aleph_0$ if it can be presented as an inverse limit over a category
equivalent to $\BN$ as a poset.

\sssec{}

Let $\CX$ be an object of $^{\leq n}\!\dgindSch$.
We shall denote by $\CX_{n+1}$ its
canonical extension to an object of $^{\leq n+1}\!\dgindSch$, i.e.,
$$\CX_{n+1}:={}^{^{\leq n+1}\!L}\!\on{LKE}(\CX).$$

\begin{prop} \label{p:crit for aleph 0}
An object $\CX\in {}^{\leq n}\!\dgindSch$ is $\aleph_0$ if and only if the following
two conditions hold:

\medskip

\noindent \emph{(a)} The classical indscheme $^{cl}\CX$ is $\aleph_0$.

\medskip

\noindent \emph{(b)} The following equivalent conditions hold:

\smallskip

\emph{(i)} There exists a cofinal family of closed embeddings $x:Z\to {}^{cl}\CX$, 
where $Z\in \Sch_{\on{qsep-qc}}$, such that the object 
$^{\geq -n-1}(T^*_x\CX_{n+1})\in \on{Pro}(\QCoh(Z)^{\geq -n-1,\leq 0})$
is $\aleph_0$. 

\smallskip

\emph{(ii)} Same as \emph{(i)} but for any map 
$x:Z\to {}^{cl}\CX$ (i.e., not necessarily a closed embedding). 

\smallskip

\emph{(iii)} Same as \emph{(ii)}, but with $Z$ required to be affine.

\end{prop}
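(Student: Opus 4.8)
The plan is to dispose first of the equivalences inside (b), then to prove the main equivalence by induction on $n$, using the canonical square-zero presentation of $\CX$ produced in the proof of \thmref{t:char by deform}. Throughout I will use three stability properties of the $\aleph_0$ condition on $\on{Pro}(\QCoh(Z)^{\geq -m,\leq 0})$, all immediate from the definition: it is preserved by truncation $\tau^{\geq -m'}$, by taking the $\on{Cone}$ of a map whose other term is an honest (non-pro) object, and by finite limits. I also use that for a map $g\colon Z'\to Z$, the functor $\on{Pro}(g^*)$ preserves the indexing category of a pro-object (by \eqref{e:pro right adj}), so pullback preserves $\aleph_0$.

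For the equivalences in (b): Condition (B), in the form \eqref{e:cotangent cmpx for presheaf}, identifies ${}^{\geq -n-1}(T^*_{x}\CX_{n+1})$ with $\on{Pro}(g^*)$ of the corresponding object at a point through which $x$ factors. Thus (ii)$\Rightarrow$(i) is trivial (closed embeddings are cofinal among all maps by \corref{c:cofinality of closed}), while for (i)$\Rightarrow$(ii) one factors an arbitrary $x\colon Z\to{}^{cl}\CX$ through $\ol{\on{Im}(x)}$ (\propref{p:factorization coproducts for indscheme}) and then, by cofinality, through a member $x'$ of the family in (i); compatibility with pullback gives the claim. Next (ii)$\Rightarrow$(iii) is restriction to affine $Z$, and (iii)$\Rightarrow$(ii) follows from \lemref{l:gluing pro}: over a finite affine Zariski cover the pro-cotangent space is assembled by a finite limit, which preserves $\aleph_0$. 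The ``only if'' direction of the main statement is equally direct. If $\CX\simeq\underset{k\in\BN}{\on{colim}}X_k$, then ${}^{cl}\CX\simeq\underset{k}{\on{colim}}{}^{cl}X_k$ gives (a), while $\CX_{n+1}\simeq\underset{k}{\on{colim}}(X_k)_{n+1}$ since left adjoints preserve colimits; for a classical closed embedding $x$ factoring through some ${}^{cl}X_{k_0}$, formula \eqref{e:expl cotangent indsch} presents ${}^{\geq -n-1}(T^*_{x}\CX_{n+1})$ as a sequential limit indexed by $\BN_{\geq k_0}$, hence $\aleph_0$, giving (b)(i).

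For the ``if'' direction I argue by induction on $n$, the case $n=0$ being the tautology ${}^{cl}\CX=\CX$. Assuming the result for $n-1$, conditions (a) and (b) for $\CX$ descend to $^{\leq n-1}\CX$ (condition (a) is unchanged, and the cotangent condition follows by applying $\tau^{\geq -n}$), so $^{\leq n-1}\CX\simeq\underset{k\in\BN}{\on{colim}}X_k$ is $\aleph_0$ with $X_k\in{}^{\leq n-1}\!\dgSch_{\on{qsep-qc}}$. I then invoke the construction in the proof of \thmref{t:char by deform}: $\CX\simeq\underset{\beta\in\sB}{\on{colim}}X_\beta$, where $\sB$ is a filtered category cofibered over $\sA=\BN$ with fibers $\sB_k={}^{\leq n}\!\on{SqZExt}(X_k,f_k)_{\on{closed}\,\on{in}\,\CX}$. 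By \lemref{l:sq zero filtered}, the objects of $\sB_k$ are classified by the coslice under the governing pro-object $P_k:=\on{Cone}\big({}^{\geq -n-1}(T^*_{f_k}\CX_{n+1})\to{}^{\geq -n-1}(T^*X_k)\big)[-1]\in\on{Pro}(\QCoh(X_k)^{\geq -n,\leq 0})$, so a sequential presentation of $P_k$ yields a cofinal $\BN$-sequence in $\sB_k$, i.e. $\sB_k$ is $\aleph_0$. A standard combinatorial fact (proved via \lemref{l:cofinality}) then finishes the reduction: a filtered category cofibered over $\BN$ with $\aleph_0$ fibers has a countable cofinal subset, whence a cofinal $\BN$-sequence by repeated amalgamation, so $\sB$ is $\aleph_0$ and $\CX$ is $\aleph_0$.

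The main obstacle is therefore to show that $P_k$ is $\aleph_0$ at the $(n-1)$-coconnective base $X_k$, whereas hypothesis (b) only furnishes the $\aleph_0$ property at \emph{classical} points; note that the pullback relation of Condition (B) runs the wrong way to transport it to $X_k$. I would resolve this by separating $P_k$ into two parts using the t-structure. Since $\tau^{\geq -(n-1)}(T^*_{f_k}\CX_{n+1})$ only tests deformations by sheaves in $\QCoh(X_k)^{\geq -(n-1),\leq 0}$, which keep the base $(n-1)$-coconnective, it agrees with $\tau^{\geq -(n-1)}(T^*_{f_k}({}^{\leq n-1}\CX))$, and the latter is $\aleph_0$ by the inductive hypothesis together with the already-proven ``only if'' direction applied to $^{\leq n-1}\CX$. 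The remaining piece is $H^{-n}(T^*_{f_k}\CX_{n+1})$, which lives in $\QCoh(X_k)^\heartsuit$; crucially, this heart depends only on ${}^{cl}X_k$, so the comparison with the classical point ${}^{cl}X_k\to{}^{cl}\CX$ can be carried out there. I expect the delicate point to be extracting the $\aleph_0$-ness of this top cohomology from hypothesis (b) through the transitivity triangle for the closed embedding ${}^{cl}X_k\hookrightarrow X_k$ — whose relative cotangent is honest — keeping careful track of the t-structure spectral sequence for the non-exact functor $\iota^*$; this bookkeeping, rather than any conceptual difficulty, is where the real work lies.
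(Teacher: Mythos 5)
Your overall architecture coincides with the paper's: the equivalences inside (b) via Condition (B) and the gluing lemma, the ``only if'' direction via \eqref{e:expl cotangent indsch}, and the ``if'' direction by induction on $n$ using the category $\sB$ from the proof of \thmref{t:char by deform}, the co-Cartesian fibration $\sB\to \sA\simeq \BN$, and the reduction of the $\aleph_0$-ness of each fiber $\sB_k$ to that of the pro-object governing $^{\leq n}\!\on{SqZExt}(X_k,f_k)$. All of that matches the paper's proof and is sound.

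The gap is the step you yourself flag at the end: showing that $^{\geq -n-1}(T^*_{f_k}\CX_{n+1})$ is $\aleph_0$ at the $(n-1)$-coconnective points $f_k:X_k\to\CX$, given that hypothesis (b) supplies this only at \emph{classical} test schemes. This is not optional bookkeeping: the paper disposes of precisely this point as the opening observation of its ``if'' argument (that the conditions, once known for $Z\in\Sch_{\on{qsep-qc}}$, propagate to all $Z\in{}^{\leq n}\!\dgSch_{\on{qsep-qc}}$ by the two-out-of-three property), and it is the one genuinely new input of the inductive step. Your t-structure splitting correctly reduces the problem to the piece in degrees $\leq -n$ (the part in degrees $\geq -(n-1)$ is indeed handled by the inductive hypothesis together with the already-proved ``only if'' direction applied to $^{\leq n-1}\CX$). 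But the mechanism you propose for the remaining piece --- the transitivity triangle for $^{cl}X_k\hookrightarrow X_k\to\CX$ --- cannot close it: Condition (B) expresses $T^*_{{}^{cl}f_k}\CX_{n+1}$ as $\on{Pro}(\iota^*)$ applied to $T^*_{f_k}\CX_{n+1}$, so every triangle you can form from it (including the transitivity triangle, whose third term is the honest object $T^*({}^{cl}X_k/X_k)$) only yields that $\on{Pro}(\iota^*)$ of the object you want is $\aleph_0$ --- which is your hypothesis restated, not your goal. What is actually needed is an argument that $\on{Pro}(\iota^*)$ \emph{detects} the $\aleph_0$ property along the nilpotent thickening $\iota:{}^{cl}X_k\hookrightarrow X_k$, for instance by filtering an arbitrary $\CM\in\QCoh(X_k)^{\geq -n-1,\leq 0}$ by pushforwards from $^{cl}X_k$ and checking that the resulting auxiliary functors are again $\aleph_0$ pro-objects of $\QCoh(X_k)$; no such argument appears in your proposal, and the ``spectral sequence for $\iota^*$'' you invoke runs in the wrong direction for this purpose. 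Until that is supplied, the inductive step does not go through.
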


\sssec{Proof of the equivalence of (i), (ii) and (iii)} \label{sss:equiv aleph 0}

The implication (ii) $\Rightarrow$ (i) is tautological. The implication (i) $\Rightarrow$ (ii) follows
from the fact that the formation of $^{\geq -n-1}(T^*_x\CX_{n+1})$ is compatible with pull-backs,
i.e., Condition (B). The implication (ii) $\Rightarrow$ (iii) is 
again tautological. The implication (iii) $\Rightarrow$ (ii) follows from the next lemma:

\begin{lem} \label{l:t on Pro}
The equivalence of \lemref{l:gluing pro} for $\on{Pro}(\QCoh(-)^{\geq -n,\leq 0})$
preserves the corresponding $\aleph_0$ subcategories. 
\end{lem}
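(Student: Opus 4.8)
The statement is an equivalence of two conditions on an object $F\in\on{Pro}(\QCoh(Z)^{\geq -n,\leq 0})$: that $F$ itself be $\aleph_0$, and that each of its restrictions $F_S\in\on{Pro}(\QCoh(S)^{\geq -n,\leq 0})$, for $S\in Zar(Z)$, be $\aleph_0$. One direction is immediate: the restriction functor \eqref{e:pro qc left} is $\on{Pro}(j_S^*)$, and since $j_S^*$ is $t$-exact it carries a presentation $F\simeq \underset{m\in\BN}{``lim"}\,\CF_m$ with $\CF_m\in\QCoh(Z)^{\geq -n,\leq 0}$ to $F_S\simeq \underset{m}{``lim"}\,(j_S^*\CF_m)$, again indexed by $\BN$; hence $F$ being $\aleph_0$ forces every $F_S$ to be $\aleph_0$. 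The content of the lemma is the converse, and the plan is to follow the proof of \lemref{l:gluing pro}.

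Since $Z$ is quasi-separated and quasi-compact, the limit over $Zar(Z)$ computing the inverse equivalence may be replaced by a \emph{finite} limit (a totalization truncated at some level $N$) over a bounded Zariski hypercover $S^\bullet\to Z$ whose terms are finite disjoint unions of affine opens. Writing $j\colon S\hookrightarrow Z$ for the immersion of such an affine open and using the adjunction $j_!\dashv j^*$ together with \eqref{e:pro right adj}, the single-chart right adjoint sends $F_S\simeq \underset{m}{``lim"}\,\CF_m$ to $\on{Pro}(j_!)(F_S)\simeq \underset{m}{``lim"}\,(j_!\CF_m)$, an object computed in the \emph{stable} category $\on{Pro}(\QCoh(Z))$ of \secref{ss:pro qc}. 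Thus $F$ is exhibited as a finite limit, taken in $\on{Pro}(\QCoh(Z))$, of the objects $\on{Pro}(j_!)(F_{S^p})$. Each of these is $\aleph_0$: the functor $\on{Pro}(j_!)$ preserves the index category of a pro-object, and a finite product over the finitely many affine components of a given $S^p$ of $\aleph_0$ pro-objects is again $\aleph_0$.

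It then remains to establish the two closure properties on which this rests, and I expect the first to be the main obstacle. First, a finite limit of $\aleph_0$ pro-objects in a stable $\on{Pro}(\bC)$ is $\aleph_0$: after reindexing the finitely many $\BN$-systems over their product and representing the connecting maps levelwise, the finite limit is computed termwise over a countable cofiltered index category, and such a category admits a cofinal copy of $\BN^{\on{op}}$; this is the genuinely $\infty$-categorical point, requiring the levelwise realization of maps of countably-indexed pro-objects and the cofinality statement. Granting it, $F$ is $\aleph_0$ in $\on{Pro}(\QCoh(Z))$. Second, one descends the $t$-bound: $F$ already lies in $\on{Pro}(\QCoh(Z)^{\geq -n,\leq 0})$, and applying the truncation $\tau^{\geq -n}\tau^{\leq 0}$ termwise to an $\BN$-presentation yields a presentation with terms in $\QCoh(Z)^{\geq -n,\leq 0}$, compatibly with the equivalence $\on{Pro}(\QCoh(Z))^{\geq -n,\leq 0}\simeq\on{Pro}(\QCoh(Z)^{\geq -n,\leq 0})$ recorded in \secref{ss:pro qc}; hence $F$ is $\aleph_0$ in the bounded category. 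The behaviour of $\on{Pro}(j_!)$ and this last descent are formal, so the crux is the stability of the $\aleph_0$ condition under finite limits.
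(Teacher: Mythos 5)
Your overall skeleton --- exhibit $F$ as a finite limit of objects built functorially from its restrictions to the charts, then invoke closure of the $\aleph_0$ condition under finite limits in the stable category $\on{Pro}(\QCoh(Z))$ --- is workable and close in spirit to the paper's argument, but the specific functor you use to pass back from a chart to $Z$ does not exist. For an open immersion $j:S\hookrightarrow Z$ the pullback $j^*$ on quasi-coherent sheaves has no left adjoint: $j^*$ fails to preserve infinite limits (for $Z=\Spec(A)$ and $S=\Spec(A_f)$ a basic open, localization does not commute with the product $\underset{m}{\prod}\, A/f^m$), so there is no $j_!$ with $j_!\dashv j^*$, and the expression $\underset{m}{``lim"}\,(j_!\CF_m)$ is undefined. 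What does exist is the right adjoint of $\on{Pro}(j^*)$, namely $F_S\mapsto F_S\circ j^*$ (this is how the inverse functor in \lemref{l:gluing pro} is built); but that functor is \emph{not} computed by applying an honest functor $\QCoh(S)\to\QCoh(Z)$ termwise to the $\CF_m$, so there is no a priori reason it preserves the $\aleph_0$ condition --- which is precisely the point at issue. As written, the key step of your proof therefore does not go through.

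The paper's proof avoids this by using the pushforward $j_*$ (a genuine continuous functor, since $j$ is quasi-compact) rather than any adjoint of $\on{Pro}(j^*)$. After reducing to $\on{Pro}(\QCoh(-))$ without the t-bound and, by induction, to a cover by two opens $Z=Z_1\cup Z_2$, it considers the exact triangle $F\to \on{Pro}(j_{1*})\circ\on{Pro}(j_1^*)(F)\to C$: the middle term is $\aleph_0$ because $\on{Pro}(j_{1*})$ is applied termwise to the $\aleph_0$ object $\on{Pro}(j_1^*)(F)$, and the cone $C$ is identified with the analogous cone formed on $Z_2$, hence is $\aleph_0$ by the two-out-of-three property for exact triangles; a second application of two-out-of-three then gives the claim for $F$. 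If you replace $\on{Pro}(j_!)$ by $\on{Pro}(j_{S*})$ and your descent statement by $\on{Id}_{\QCoh(Z)}\simeq \underset{S}{lim}\,(j_S)_*\circ j_S^*$ applied under $\on{Pro}$, your argument becomes a Mayer--Vietoris reformulation of the paper's. The remaining point you correctly single out as the crux --- stability of $\aleph_0$ under finite limits, equivalently the two-out-of-three property, which rests on strictifying maps of $\BN$-indexed pro-objects --- is asserted without proof in the paper as well, so you are not behind on that front.
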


\begin{proof}

It is easy to see that it is enough to prove the lemma for $\on{Pro}(\QCoh(-))$
instead of $\on{Pro}(\QCoh(-)^{\geq -n,\leq 0})$.

\medskip

By induction, the assertion reduces to the following statement: let $Z=Z_1\cup Z_2$
be a covering of $Z$ be two Zariski open subsets.
Let $F\in \on{Pro}(\QCoh(Z))$ be an object such that
$$\on{Pro}(j_1^*)(F)\in \on{Pro}(\QCoh(Z_1)) \text{ and }
\on{Pro}(j_2^*)(F)\in \on{Pro}(\QCoh(Z_2))$$
are $\aleph_0$. Then $F$ is $\aleph_0$.

\medskip

It is easy to see that if $F'\to F''\to F'''$ is an exact triangle in $\on{Pro}(\QCoh(S))$, then the condition of being 
$\aleph_0$ has the ``2 out of 3" property. Considering the exact triangle
$$F\to \on{Pro}(j_1{}_*)\circ \on{Pro}(j_1{}^*)(F)\to \on{Cone}\left(F\to \on{Pro}(j_1{}_*)\circ \on{Pro}(j_1^*)(F)\right),$$
we obtain that it is sufficient to show that $\on{Cone}\left(F\to \on{Pro}(j_1{}_*)\circ \on{Pro}(j_1^*)(F)\right)$ is
$\aleph_0$.

\medskip

However, $\on{Cone}\left(F\to \on{Pro}(j_1{}_*)\circ \on{Pro}(j_1^*)(F)\right)$ is supported on a Zariski-closed
subset contained in $Z_2$ and isomorphic to
$$\on{Cone}\left(\on{Pro}(j_2^*)(F)\to \on{Pro}(j_{12,2}{}_*)\circ \on{Pro}(j_{12,2}^*)(F)\right),$$
(where $j_{12,2}$ denotes the open embedding $Z_1\cap Z_2\hookrightarrow Z_2$), which is $\aleph_0$
by the ``2 out of 3" property. 

\end{proof} 

\medskip

This finishes the proof of the equivalence of properties (i), (ii) and (iii) in \propref{p:crit for aleph 0}.

\qed

\sssec{Proof of the ``only if" direction}  Suppose $\CX$ is $\aleph_0$. Fix its presentation as in \eqref{e:indscheme as a colimit},
where the index set $\sA$ is equivalent to $\BN$. For $\alpha\in \sA$ (resp., for an arrow $\alpha_1\to \alpha_2$)
let $f_\alpha$ (resp., $f_{\alpha_1,\alpha_2}$)
denote the corresponding closed embedding $f_\alpha:X_\alpha\to \CX$ (resp., $X_{\alpha_1}\to X_{\alpha_2}$).

\medskip

For a quasi-separated and quasi-compact $Z\in {}^{\leq n}\!\dgSch$ equipped with a map $x:Z\to\CX$, let
$\alpha_0$ be an index such that $x$ factors through a map $x_{\alpha_0}:Z\to X_{\alpha_0}$. 
By \eqref{e:expl cotangent indsch}, we have:
$$^{\geq -n-1}(T^*_x\CX_{n+1})\simeq \underset{\alpha\in \sA_{\alpha_0/}}{``lim"}\, {}^{\geq -n-1}(T^*_{x_{\alpha_0}\circ f_{\alpha_0,\alpha}}X_\alpha),$$
and the category of indices is explicitly equivalent to $\BN$. 

\qed

\sssec{Proof of the ``if" direction}  

First, we observe that the ``2 out of 3" property of an object of $\QCoh(Z)^{\geq -n-1,\leq 0}$ of being
$\aleph_0$ implies that if conditions (i), (ii) or (iii) hold for $Z\in \Sch_{\on{qsep-qc}}$ equipped with
a map to $^{cl}\CX$, then the same will be true for any $Z\in {}^{\leq n}\!\dgSch_{\on{qsep-qc}}$ 
equipped with a map to $^{\leq n}\CX$. 

\medskip

By induction, we may assume that the truncation $${}^{\leq n-1}\CX:=\CX|_{^{\leq n-1}\!\affdgSch}$$ is 
$\aleph_0$.

\medskip

Fix a presentation of ${}^{\leq n-1}\CX$ as in \eqref{e:indscheme as a colimit}, where the category $A$ is equivalent
to the poset $\BN$. Consider the corresponding category $\sB$ (see Step 3 in the proof of \thmref{t:char by deform}),
mapping to $\sA$ by means of $\phi$. We shall use the following lemma:

\begin{lem}
Let $\phi:\sB\to \sA$ be a co-Cartesian fibration of categories, where $\sA$ is equivalent to $\BN$,
and every fiber admits a cofinal functor from $\BN$. Then $\sB$ also admits a cofinal functor
from $\BN$.
\end{lem}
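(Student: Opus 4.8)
The plan is to realize $\sB$ through its straightening and then hand-build a cofinal functor $\psi\colon\BN\to\sB$, checking cofinality by the $\infty$-categorical analogue of Quillen's Theorem~A \cite{Lu0}. Since $\phi$ is a co-Cartesian fibration over the poset $\sA\simeq\BN$, the category $\sB$ is the covariant Grothendieck construction of a sequence $\sB_0\to\sB_1\to\cdots$, with co-Cartesian transport functors $F_{nm}\colon\sB_n\to\sB_m$ for $n\le m$; concretely, objects of $\sB$ are pairs $(n,b)$ with $b\in\sB_n$, and for $n\le m$ one has $\Maps_\sB((n,b),(m,b'))\simeq\Maps_{\sB_m}(F_{nm}(b),b')$. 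For each $n$ I fix a cofinal functor $g_n\colon\BN\to\sB_n$, presented as a chain $g_n^0\to g_n^1\to\cdots$.

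Next I would construct $\psi$ by a diagonal interleaving. I choose a non-decreasing sequence of levels $n_k\to\infty$ and set $\psi(k):=g_{n_k}^{r_k}$, where $r_k$ is taken large enough that $\psi(k)$ receives maps, in the fiber $\sB_{n_k}$, from $F_{n_{k-1}n_k}(\psi(k-1))$ and from every $F_{in_k}(g_i^{\,j})$ with $i,j\le k$. Such $r_k$ exist because each $g_{n_k}$ is cofinal (so each of the finitely many objects in question maps into the chain $g_{n_k}^\bullet$) and the chain is directed (so finitely many chain-targets admit a common upper bound). The chosen map $F_{n_{k-1}n_k}(\psi(k-1))\to\psi(k)$ is exactly a morphism $\psi(k-1)\to\psi(k)$ in $\sB$, so the $\psi(k)$ assemble into a functor $\psi\colon\BN\to\sB$; the second family of conditions guarantees that every object $g_i^{\,j}$, and hence (by cofinality of the $g_i$) every object of $\sB$, admits a morphism to some $\psi(k)$.

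To prove $\psi$ cofinal I would apply Theorem~A: it suffices that for every $b\in\sB$, say $b\in\sB_N$, the comma category $\sF_b:=\BN\underset{\sB}\times\sB_{b/}$ be weakly contractible. The projection $\sF_b\to\BN$ is a left fibration (a pullback of the left fibration $\sB_{b/}\to\sB$ along $\psi$), with fiber over $k$ the mapping space $\Maps_\sB(b,\psi(k))\simeq\Maps_{\sB_{n_k}}(F_{Nn_k}(b),\psi(k))$ when $n_k\ge N$ and empty otherwise. Hence its classifying space is computed as the sequential colimit
$$|\sF_b|\simeq \underset{k}{colim}\;\Maps_{\sB_{n_k}}(F_{Nn_k}(b),\psi(k)),$$
and the whole statement reduces to showing that this colimit is contractible.

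The \emph{main obstacle} is precisely this contractibility. Nonemptiness is immediate: cofinality of $g_N$ gives a map $b\to g_N^{\,j}$, which the domination conditions carry into some $\psi(k)$. The real difficulty is higher connectivity, since two maps into a fixed fiber $\sB_{n_k}$ generally do not become homotopic merely by postcomposition along the structure maps of $\psi$; a given homotopy class can only be killed after advancing to a later fiber and invoking \emph{its} cofinality, which ensures $\underset{r}{colim}\,\Maps_{\sB_m}(-,g_m^{\,r})\simeq \on{pt}$. The countability of the index set — countably many fibers, each with a countable cofinal chain — is what lets me arrange the interleaving diagonally so that every finite subdiagram occurring at a finite stage is nullified at some later stage; this is the $\infty$-categorical enhancement of the elementary fact that a countable directed category admits a cofinal sequence, and controlling the higher coherences of this diagonal exhaustion is where the essential work lies. (In the situation at hand $\sA$ and all fibers are filtered, so $\sB$ is filtered, which is what makes the required cocones available.)
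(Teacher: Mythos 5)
The paper states this lemma without proof, so there is nothing to compare against; judging your proposal on its own terms, the setup is right but the proof is not complete. Your reduction of cofinality to the weak contractibility of $\sF_b=\BN\underset{\sB}\times \sB_{b/}$, i.e.\ of $\underset{k}{colim}\,\Maps_{\sB_{n_k}}(F_{Nn_k}(b),\psi(k))$, is correct, and your interleaving does establish nonemptiness (every object of $\sB$ maps to some $\psi(k)$, which by the paper's \lemref{l:cofinal in filtered} already shows that the \emph{full subcategory} spanned by the $\psi(k)$ is cofinal in the filtered category $\sB$). But the step you flag as ``where the essential work lies'' is genuinely missing, and the mechanism you gesture at does not close it. The conditions you impose on $r_k$ only require $\psi(k)$ to \emph{receive maps} from finitely many objects; they say nothing about homotopy classes of maps into $\psi(k)$, and the collection of ``finite subdiagrams occurring at a finite stage'' (maps from finite complexes into the spaces $\Maps(b,\psi(k))$) is in general uncountable, so it cannot be exhausted by a diagonal enumeration. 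Countability of the index set is not the resource that kills higher homotopy here.

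The actual difficulty is the following. A class $u:|K|\to \Maps_{\sB_{m}}(b_m,g_m^{r_k})$ is known to die only after composing along the fiber's own chain $g_m^{r_k}\to g_m^{r}$ for some large $r=r(u)$; but $\psi$ abandons the fiber $\sB_m$ after one step, and the transition map $\psi(k)\to\psi(k+1)$ you chose bears no specified relation to $g_m^{r_k}\to g_m^{r}$. Producing, via filteredness of $\sB$, a common target $z$ of $g_m^{r}$ and $\psi(k')$ does not suffice either, because the resulting composite $\psi(k)\to z\to\psi(k'')$ need not agree with the chain map $\psi(k)\to\psi(k'')$ that actually appears in the colimit. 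What is needed is to build the chain so that each $g_{n_k}^{r}$ admits a map to some $\psi(k')$ \emph{compatibly with the chain maps of $\psi$} (equivalently, to factor each composite $\BN\overset{g_{n}}\to\sB_{n}\to\sB_{n+1}$ through $g_{n+1}$ by a natural transformation); this is achievable, but requires reconciling the ``detour'' maps with the chain maps one $\pi_0$ at a time, using that $\underset{r}{colim}\,\pi_0\Maps_{\sB_m}(x,g_m^r)=\ast$ is a filtered colimit of \emph{sets} so that two maps agreeing in the colimit agree at a finite stage. Packaged differently: construct a co-Cartesian fibration over $\BN$ with fibers $\BN$ together with a fiberwise cofinal, co-Cartesian-edge-preserving map to $\sB$ (such maps are cofinal), and then invoke the classical fact that a countable directed poset admits a cofinal map from $\BN$. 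Without one of these additional inputs, the contractibility of $\underset{k}{colim}\,\Maps_{\sB_{n_k}}(F_{Nn_k}(b),\psi(k))$ does not follow from your construction.
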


Hence, by the lemma, it suffices to show that for each $\alpha\in \sA$, the corresponding category 
$\sB_\alpha$ admits a cofinal functor from $\BN$. By construction, the category 
$$\sB_\alpha={}^{\leq n}\!\on{SqZExt}(X_\alpha,f_\alpha)_{\on{closed}\,\on{in}\,\CX}$$
is cofinal in 
$^{\leq n}\!\on{SqZExt}(X_\alpha,f_\alpha)$, and the embedding admits a left adjoint. Therefore, it is enough to
show that the latter admits a cofinal map from $\BN$.

\medskip

We have
$$^{\leq n}\!\on{SqZExt}(X_\alpha,f_\alpha)\simeq \left( (\QCoh(X_\alpha)^{\geq -n,\leq 0})_{\on{Cone}((df_\alpha)^*)[-1]}\right)^{\on{op}},$$
where $(df_\alpha)^*$ is the canonical map in $\on{Pro}(\QCoh(X_\alpha)^{\geq -n-1})$:
$$^{\geq -n-1}(T^*_{f_\alpha}\CX_{n+1})\to {}^{\geq -n-1}(T^*X_{\alpha}).$$
The assertion now follows from the assumption that $^{\geq -n-1}(T^*_{f_\alpha}\CX_{n+1})$ is $\aleph_0$.

\qed

\ssec{The ``locally almost of finite type" condition}

\sssec{}

We shall characterize $\nDG$ indschemes
locally of finite type in terms of their pro-cotangent spaces.

\medskip

As before, let $\CX$ be an object of $^{\leq n}\!\dgindSch$, and set
$$\CX_{n+1}:={}^{^{\leq n+1}\!L}\!\on{LKE}(\CX).$$

\begin{prop} \label{p:char finite type new}
The following conditions are equivalent:

\medskip

\noindent{\em(a)} $\CX$ is locally of finite type.

\medskip

\noindent{\em(b)} $^{cl}\CX$ is locally of finite type and the following equivalent conditions hold:

\smallskip

\emph{(i)} There exists a cofinal family of closed embeddings $x:Z\to {}^{cl}\CX$, 
where $Z\in \Sch_{\on{ft}}$, such that the object $$^{\geq -n-1}(T^*_x\CX_{n+1})\in \on{Pro}(\QCoh(Z)^{\geq -n-1,\leq 0})$$
belongs to $\on{Pro}(\Coh(Z)^{\geq -n-1,\leq 0})$. 

\smallskip

\emph{(ii)} Same as \emph{(i)} but for any map $x:Z\to {}^{cl}\CX$ (i.e., not necessarily a closed embedding).

\smallskip

\emph{(iii)} Same as \emph{(ii)}, but with $Z$ required to be affine.

\end{prop}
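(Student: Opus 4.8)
The plan is to establish first that conditions (i), (ii) and (iii) are equivalent, and then to prove $(a)\Rightarrow(b)$ and $(b)\Rightarrow(a)$ separately, the latter by induction on $n$ following the pattern of the ``if'' direction of \propref{p:crit for aleph 0}. As in \secref{sss:equiv aleph 0}, the implications (ii)$\Rightarrow$(i) and (ii)$\Rightarrow$(iii) are tautological. For (i)$\Rightarrow$(ii) I would take an arbitrary $x:Z\to{}^{cl}\CX$ with $Z\in\Sch_{\on{ft}}$, factor it as $Z\overset{g}\to Z'\to{}^{cl}\CX$ through a member $Z'$ of the given cofinal family (the closure of the image of $x$ is of finite type by \lemref{l:classical lft}), and invoke Condition (B) to write $^{\geq -n-1}(T^*_x\CX_{n+1})\simeq\on{Pro}(g^*)\left({}^{\geq -n-1}(T^*_{x'}\CX_{n+1})\right)$; since $g^*$ preserves coherence for a map of finite-type DG schemes, $\on{Pro}(g^*)$ preserves $\on{Pro}(\Coh)$. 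The one nontrivial implication is (iii)$\Rightarrow$(ii), a Zariski-gluing statement, and here the cone argument of \lemref{l:t on Pro} does \emph{not} adapt, because $j_*$ fails to preserve coherence. Instead I would use Serre duality: for $Z\in\Sch_{\on{ft}}$ the identity $\IndCoh(Z)=\Ind(\Coh(Z))$ together with the self-duality $\Coh(Z)^{\on{op}}\simeq\Coh(Z)$ gives $\on{Pro}(\Coh(Z))\simeq\IndCoh(Z)^{\on{op}}$, carrying the restriction functors $\on{Pro}(j_U^*)$ to the opposites of the functors $j_U^!$. The required gluing then becomes the opposite of the Zariski descent of $\IndCoh$, i.e.\ \lemref{l:IndCoh and colim}, after passing to the bounded subcategories $\on{Pro}(\Coh(-)^{\geq -n-1,\leq 0})$ using the t-structures.

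For $(a)\Rightarrow(b)$: restriction preserves the locally-of-finite-type property, so $^{cl}\CX$ is locally of finite type. To verify (i), I would present $\CX$ as a filtered colimit of closed subschemes $X_\alpha\in{}^{\leq n}\!\dgSch_{\on{ft}}$ (the finite-type analogue of \corref{c:laft cofinality}). For a closed embedding $x:Z\to{}^{cl}\CX$ factoring through some $X_{\alpha_0}$, formula \eqref{e:expl cotangent indsch} identifies $^{\geq -n-1}(T^*_x\CX_{n+1})$ with the pro-object $\underset{\alpha}{``lim"}\,{}^{\geq -n-1}(T^*_{x_\alpha}X_\alpha)$. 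Each cotangent complex $T^*X_\alpha$ is coherent since $X_\alpha$ is of finite type, and $x_\alpha^*$ preserves coherence; hence every term, and therefore the pro-object itself, lies in $\on{Pro}(\Coh(Z)^{\geq -n-1,\leq 0})$.

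The substantive direction is $(b)\Rightarrow(a)$, which I would prove by induction on $n$, the case $n=0$ being the hypothesis that $^{cl}\CX$ is locally of finite type. For the inductive step, compatibility of pro-cotangent complexes with truncation shows that $^{\leq n-1}\CX$ again satisfies (b), so by induction it is locally of finite type and admits a presentation $\underset{\alpha\in\sA}{\on{colim}}\,X_\alpha$ with $X_\alpha\in{}^{\leq n-1}\!\dgSch_{\on{ft}}$. I then reuse the filtered category $\sB$, co-fibered over $\sA$ with fibers $\sB_\alpha={}^{\leq n}\!\on{SqZExt}(X_\alpha,f_\alpha)_{\on{closed}\,\on{in}\,\CX}$ and satisfying $\CX\simeq\underset{\beta\in\sB}{\on{colim}}\,X_\beta$, constructed in Steps 1--2 of the proof of \thmref{t:char by deform}. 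The goal is to show that every object of $\sB$ admits a map to one whose associated DG scheme $X_\beta$ is of finite type; by \lemref{l:cofinal in filtered} this makes the finite-type subcategory cofinal in $\sB$, exhibiting $\CX$ as a filtered colimit of finite-type closed subschemes and hence as locally of finite type. To produce such a map within a fiber $\sB_\alpha$, I note that since $f_\alpha$ is a closed embedding, condition (b)(ii) and the coherence of $T^*X_\alpha$ give $\on{Cone}((df_\alpha)^*)[-1]\in\on{Pro}(\Coh(X_\alpha)^{\geq -n,\leq 0})$; writing it as $\underset{i}{``lim"}\,\CP^i_\alpha$ with $\CP^i_\alpha\in\Coh$, a square-zero extension classified by a map $\on{Cone}((df_\alpha)^*)[-1]\to\CI$ receives a morphism in $\sB_\alpha$ from the finite-type extension classified by a factorization through some $\CP^i_\alpha$, and the left adjoint of Step 1 of \thmref{t:char by deform} (which preserves coherence since $X_\alpha$ is Noetherian) restores the ``closed in $\CX$'' condition without spoiling finiteness.

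I expect the crux to be exactly this last step: extracting from the mere \emph{pro}-coherence of the cotangent space a cofinal system of genuinely \emph{coherent} square-zero ideals, and checking that imposing the ``closed in $\CX$'' condition preserves finite-typeness. A secondary pitfall, easy to overlook, is that the coherence analogue of the gluing \lemref{l:t on Pro} cannot be proved by its cone argument and must instead be reduced to descent for $\IndCoh$ via Serre duality.
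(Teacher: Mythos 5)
Your overall architecture --- the equivalence of (i)--(iii) first, then $(a)\Rightarrow(b)$, then $(b)\Rightarrow(a)$ by induction on $n$ via a square-zero-extension bootstrap --- is the same as the paper's, and the central mechanism of the hard direction (use pro-coherence of $\on{Cone}((df_\alpha)^*)[-1]$ to factor an arbitrary square-zero extension through one with coherent ideal, then restore the ``closed in $\CX$'' condition by the left adjoint from Step 1 of \thmref{t:char by deform}) is exactly what the paper does, except that it runs the bootstrap through the proof of \propref{p:presentation of indschemes lft strong} rather than through the category $\sB$; the two are interchangeable. Where you genuinely diverge is the locality statement (iii)$\Rightarrow$(ii). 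You are right that the cone argument of \lemref{l:t on Pro} does not transport, since $j_*$ destroys coherence; but the paper's fix is much lighter than your Serre-duality detour: \lemref{l:char coh} identifies $\on{Pro}(\Coh(Z)^{\geq -n,\leq 0})$ inside $\on{Pro}(\QCoh(Z)^{\geq -n,\leq 0})$ as those functors that commute with filtered colimits, and since the inverse to \eqref{e:pro qc left} is a \emph{finite} limit of such functors, locality is immediate. The same lemma collapses your $(a)\Rightarrow(b)$ to one line --- an lft prestack takes filtered limits in $^{\leq n+1}\!\affdgSch$ to colimits, so $T^*_x\CX_{n+1}$ commutes with filtered colimits --- with no need for the presentation by finite-type closed subschemes. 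Your identification $\on{Pro}(\Coh(Z))\simeq \IndCoh(Z)^{\on{op}}$ can be made to work, but you would still owe the check that it intertwines $\on{Pro}(j^*)$ with $j^!$ and that the resulting gluing agrees with the one performed inside $\on{Pro}(\QCoh(Z)^{\geq -n-1,\leq 0})$, which is more than the statement deserves.

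Two points to repair in $(b)\Rightarrow(a)$. First, hypothesis (b) gives pro-coherence only at \emph{classical} test schemes $Z$, whereas your bootstrap invokes it at the $(n-1)$-coconnective schemes $X_\alpha$; you need the analogue of \lemref{l:coh via restr}, namely that pro-coherence of $F$ may be checked after restriction to $^{cl}\!Z$ (commutation with filtered colimits is detected on the heart, and $\QCoh({}^{cl}\!Z)^\heartsuit\simeq \QCoh(Z)^\heartsuit$ via direct image). Second, watch the variance in the cofinality step: the factorization $\on{Cone}((df_\alpha)^*)[-1]\to \CP^{i_0}_\alpha\to \CI$ produces a morphism in $\sB_\alpha$ \emph{from} the given extension \emph{to} the finite-type one (on underlying schemes, a closed embedding $X'_{[\CI]}\to X'_{[\CP^{i_0}_\alpha]}$), which is the direction \lemref{l:cofinal in filtered} requires; your sentence states the arrow the other way.
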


\sssec{Proof of the equivalence of (i), (ii), and (iii)}

This is similar to \secref{sss:equiv aleph 0}, using the following interpretation of 
$$\on{Pro}(\Coh(Z)^{\geq -n,\leq 0})\subset \on{Pro}(\QCoh(Z)^{\geq -n,\leq 0})$$
for $Z\in \dgSch_{\on{aft}}$: 

\begin{lem} \label{l:char coh}
For a Noetherian scheme $Z$, an object $F\in \on{Pro}(\QCoh(Z)^{\geq -n,\leq 0})$ belongs to 
$\on{Pro}(\Coh(Z)^{\geq -n,\leq 0})$ if and only if, when viewed as a functor
$$F:\QCoh(Z)^{\geq -n,\leq 0}\to \inftygroup,$$
it commutes with filtered colimits.
\end{lem}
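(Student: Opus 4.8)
The plan is to deduce the lemma from the single Noetherian input that
$$\QCoh(Z)^{\geq -n,\leq 0}\simeq \Ind\left(\Coh(Z)^{\geq -n,\leq 0}\right),$$
i.e.\ that every object of $\QCoh(Z)^{\geq -n,\leq 0}$ is a filtered colimit of objects of $\Coh(Z)^{\geq -n,\leq 0}$, and that the latter are precisely the compact objects. This reduces to the heart-level statement $\QCoh(Z)^\heartsuit\simeq \Ind(\Coh(Z)^\heartsuit)$ by induction on the length of the t-amplitude. Everything else is formal manipulation of $\on{Pro}$ and left Kan extension as recalled in \secref{ss:pro qc}.

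For the ``only if'' direction I would present $F\in \on{Pro}(\Coh(Z)^{\geq -n,\leq 0})$ as a filtered limit of objects $\CG_i\in \Coh(Z)^{\geq -n,\leq 0}$, so that as a functor $F(\CF)\simeq \underset{i}{colim}\, \Maps(\CG_i,\CF)$. Since each $\CG_i$ is compact in $\QCoh(Z)^{\geq -n,\leq 0}$, the functor $\Maps(\CG_i,-)$ commutes with filtered colimits; as filtered colimits commute with filtered colimits, so does $F$.

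For the ``if'' direction—the substantive one—I would first restrict $F$ to $\Coh(Z)^{\geq -n,\leq 0}$. The key auxiliary fact is that $\Coh(Z)^{\geq -n,\leq 0}$ has finite limits and the inclusion $\Phi:\Coh(Z)^{\geq -n,\leq 0}\hookrightarrow \QCoh(Z)^{\geq -n,\leq 0}$ preserves them: fiber products are computed by applying $\tau^{\leq 0}$ to the ambient fiber product, and on a Noetherian scheme the cohomology sheaves of such a truncated fiber product of coherent sheaves are again coherent. Since $F$, being a pro-object, preserves finite limits and $\Coh(Z)^{\geq -n,\leq 0}$ is essentially small, the restriction $F|_{\Coh(Z)^{\geq -n,\leq 0}}$ is accessible and preserves finite limits, hence by \cite[Cor.~5.3.5.4]{Lu0} defines an object of $\on{Pro}(\Coh(Z)^{\geq -n,\leq 0})$. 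It then remains to check that $F$ coincides with the image of this restriction under $\on{Pro}(\Phi)=\on{LKE}_\Phi$. For $\CF\in \QCoh(Z)^{\geq -n,\leq 0}$ one computes $\on{LKE}_\Phi(F|_{\Coh})(\CF)\simeq \underset{\CG\to \CF}{colim}\, F(\CG)$, the colimit over the filtered comma category of coherent objects mapping to $\CF$; writing $\CF$ as the tautological filtered colimit of this same diagram and using that $F$ commutes with filtered colimits identifies this with $F(\CF)$, so $F\simeq \on{Pro}(\Phi)(F|_{\Coh})$ lies in $\on{Pro}(\Coh(Z)^{\geq -n,\leq 0})$.

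The main obstacle is precisely the Noetherian input: establishing the equivalence with $\Ind(\Coh(Z)^{\geq -n,\leq 0})$ together with the compactness and finite-limit-closure of $\Coh(Z)^{\geq -n,\leq 0}$. Once these structural facts are in place, both directions are formal, the final identity $F\simeq \on{Pro}(\Phi)(F|_{\Coh})$ being nothing but the statement that a filtered-colimit-preserving functor out of an Ind-category is the left Kan extension of its restriction to the compact objects.
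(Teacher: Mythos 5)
The paper states \lemref{l:char coh} without proof, so there is no argument of the authors to measure yours against; judged on its own terms, your argument is correct and is the natural one. The ``only if'' direction is the formal observation that a filtered colimit of functors corepresented by compact objects commutes with filtered colimits, and the ``if'' direction correctly assembles the three needed ingredients: $F$ preserves finite limits because it is a pro-object; the inclusion $\Coh(Z)^{\geq -n,\leq 0}\hookrightarrow \QCoh(Z)^{\geq -n,\leq 0}$ preserves the finite limits of the source (fiber products being $\tau^{\leq 0}$ of the ambient ones, whose cohomologies stay coherent by Noetherianity), so that $F|_{\Coh(Z)^{\geq -n,\leq 0}}$ is pro-representable by the same criterion of \cite[Cor.~5.3.5.4]{Lu0} that the paper invokes for Condition (A); and the counit $\on{LKE}_\Phi(F|_{\Coh(Z)^{\geq -n,\leq 0}})\to F$ is an equivalence because $F$ commutes with filtered colimits and every object of $\QCoh(Z)^{\geq -n,\leq 0}$ is the filtered colimit of its tautological diagram of coherent objects. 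The one place where you are a bit glib is the claim that $\QCoh(Z)^{\geq -n,\leq 0}\simeq \Ind(\Coh(Z)^{\geq -n,\leq 0})$ ``reduces to the heart-level statement by induction on the amplitude.'' The heart-level equivalence $\QCoh(Z)^\heartsuit\simeq \Ind(\Coh(Z)^\heartsuit)$ gives generation and $\on{Ext}^0$-compactness, but the compactness of a coherent object $\CG$ in the truncated category (which your ``only if'' direction uses) requires that $\on{Ext}^i(\CM,-)$ commute with filtered colimits for \emph{all} $i$ and $\CM\in\Coh(Z)^\heartsuit$; this is where Noetherianity enters in earnest, via the fact that coherent sheaves are almost perfect (Zariski-locally they admit resolutions by finite free modules), and it does not follow from the heart-level statement alone. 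Either supply that input directly, or quote the t-exact equivalence $\IndCoh(Z)^+\simeq \QCoh(Z)^+$ of \cite[Proposition 1.2.4]{IndCoh} together with $\IndCoh(Z)=\Ind(\Coh(Z))$. With that point made precise, the proof is complete. (Note also that compactness is only needed for the ``only if'' direction; for ``if,'' generation plus closure of $\Coh(Z)^{\geq -n,\leq 0}$ under the relevant finite limits and colimits suffices.)
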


\sssec{Proof of \propref{p:char finite type new}}

The implication (a) $\Rightarrow$ (b) follows using \lemref{l:char coh} from the fact that an
object of $^{\leq n+1}\!\inftydgprestack$, which is locally of finite type, takes limits
in $^{\leq n+1}\!\affdgSch$ to colimits, see \cite[Corollary 1.3.8]{Stacks}.

\medskip

Let us show that (b) implies (a). By induction, we can assume that ${}^{\leq n-1}\CX:=\CX|_{^{\leq n}\!\affdgSch}$
is locally of finite type. 

\medskip

We claim now that conditions (i), (ii) and (iii) of \propref{p:char finite type new}(b) hold
for any $Z\in {}^{\leq n}\dgSch_{\on{ft}}$ mapping to ${}^{\leq n}\CX$ (and not just classical schemes).  This
follows from the next observation:

\begin{lem} \label{l:coh via restr}
Let $Z$ be an object of $\affdgSch_{\on{aft}}$, and $F\in \on{Pro}(\QCoh(Z)^{\geq -n,\leq 0})$. Then $F$ belongs to 
$\on{Pro}(\Coh(Z)^{\geq -n,\leq 0})$ if and only if
its restriction to $^{cl}\!Z$ does.
\end{lem}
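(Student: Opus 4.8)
The plan is to reduce both directions to the filtered-colimit criterion of \lemref{l:char coh} and then compare. Write $i\colon {}^{cl}\!Z\hookrightarrow Z$ for the canonical closed embedding, so that restriction to ${}^{cl}\!Z$ is the functor $\on{Pro}({}^{\geq -n}i^*)$, where ${}^{\geq -n}i^*:=\tau^{\geq -n}\circ i^*$ carries $\QCoh(Z)^{\geq -n,\leq 0}$ to $\QCoh({}^{cl}\!Z)^{\geq -n,\leq 0}$ (recall $i^*$ is right t-exact). First I would identify the right adjoint of ${}^{\geq -n}i^*$: using that $\tau^{\geq -n}$ is left adjoint to the inclusion, that $i^*\dashv i_*$, and that $i_*$ is t-exact (hence preserves $\QCoh^{\geq -n,\leq 0}$), a short adjunction computation shows the right adjoint is $i_*$. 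By \eqref{e:pro right adj} the restriction of $F$ is therefore the functor $N\mapsto F(i_*N)$ on $\QCoh({}^{cl}\!Z)^{\geq -n,\leq 0}$. Since both $Z$ and ${}^{cl}\!Z$ are Noetherian ($Z$ being almost of finite type), \lemref{l:char coh} reduces the lemma to the statement that $F$ preserves filtered colimits on $\QCoh(Z)^{\geq -n,\leq 0}$ if and only if $F\circ i_*$ preserves filtered colimits on $\QCoh({}^{cl}\!Z)^{\geq -n,\leq 0}$.

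The ``only if'' of this reformulation is immediate: $i_*$ is continuous (it is restriction of scalars) and t-exact, and filtered colimits in the truncated categories agree with those computed in $\QCoh$ (cohomology commutes with filtered colimits), so $i_*$ preserves filtered colimits and hence so does $F\circ i_*$. For the ``if'' direction I would pass to the exact $\Vect$-valued lift $F^{\on{Vect}}\colon\QCoh(Z)\to\Vect$ of $F$ (see \secref{sss:lift to spectra}); preservation of filtered colimits for $F$ and for $F^{\on{Vect}}$ are equivalent. The key geometric input is that $i_*$ induces an equivalence of hearts $\QCoh({}^{cl}\!Z)^\heartsuit\simeq\QCoh(Z)^\heartsuit$, so every object of $\QCoh(Z)^\heartsuit$ has the form $i_*N$; combined with continuity of $i_*$, the hypothesis that $F\circ i_*$ preserves filtered colimits says exactly that $F$ preserves filtered colimits of systems valued in the heart, and (as $F^{\on{Vect}}$ is exact and commutes with shifts) of systems concentrated in any single cohomological degree.

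To promote this to all of $\QCoh(Z)^{\geq -n,\leq 0}$ I would induct on the length of the subinterval of $[-n,0]$ supporting the system, i.e. on the coconnective amplitude. Given a filtered system $\{M_\beta\}$ with colimit $M$, the functorial truncation triangles $\tau^{\leq -1}M_\beta\to M_\beta\to H^0(M_\beta)$ are compatible with the filtered colimit (filtered colimits being t-exact), so applying $F^{\on{Vect}}$ and comparing the fiber sequence
\[ \underset{\beta}{colim}\, F^{\on{Vect}}(\tau^{\leq -1}M_\beta)\to \underset{\beta}{colim}\, F^{\on{Vect}}(M_\beta)\to \underset{\beta}{colim}\, F^{\on{Vect}}(H^0(M_\beta)) \]
with $F^{\on{Vect}}(\tau^{\leq -1}M)\to F^{\on{Vect}}(M)\to F^{\on{Vect}}(H^0(M))$, the outer comparison maps are equivalences by the inductive hypothesis (since $\tau^{\leq -1}M_\beta$ is supported in $[-n,-1]$) and by the heart case, whence the middle map is an equivalence by the two-out-of-three property. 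As the amplitude is bounded by $n+1$, the induction terminates.

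The main obstacle is precisely this ``if'' direction: it is a dévissage transporting the classical hypothesis on ${}^{cl}\!Z$ to the derived conclusion on $Z$, and it rests on the equivalence of hearts together with the exactness of $F$ and the t-exactness of filtered colimits. I would take care to verify the adjunction identifying the right adjoint as $i_*$ and the compatibility of the truncation triangles with filtered colimits, as these are the load-bearing points; the forward direction and the passage through \lemref{l:char coh} are routine.
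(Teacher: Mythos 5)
Your proof is correct and follows essentially the same route as the paper's (much terser) argument: the paper simply asserts that commutation with filtered colimits ``is enough to check on $\QCoh(Z)^\heartsuit$'' and that direct image identifies $\QCoh({}^{cl}\!Z)^\heartsuit\simeq \QCoh(Z)^\heartsuit$, which is precisely the heart-equivalence plus truncation-triangle d\'evissage that you spell out in detail. The extra care you take with the adjunction identifying restriction as $N\mapsto F(i_*N)$ and with the induction on coconnective amplitude is sound and just makes explicit what the paper leaves implicit.
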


\begin{rem}
The assertion of \lemref{l:coh via restr} is valid, with the same proof, when we replace the categories 
$\on{Pro}(\QCoh(Z)^{\geq -n,\leq 0})$ and $\on{Pro}(\Coh(Z)^{\geq -n,\leq 0})$ by the
categories $\QCoh(Z)^{\geq -n,\leq 0}$ and $\Coh(Z)^{\geq -n,\leq 0}$, respectively. 
\end{rem}

\begin{proof}

The property of commutation with filtered colimits is enough to check on
$\QCoh(Z)^\heartsuit$, and direct image defines an equivalence
$\QCoh({}^{cl}\!Z)^\heartsuit\simeq \QCoh(Z)^\heartsuit$.

\end{proof}

\medskip

The rest of the proof of \propref{p:char finite type new}
is the same as that of \propref{p:presentation of indschemes lft strong}
in \secref{sss:proof of presentation lft strong}. 

\section{Formal completions}

\ssec{The setting}

\sssec{}  \label{sss:setting for compl general}

In this section we will study the following situation. Let $\CX$ be an object of
$\inftydgprestack$, and let $\CY$ be an object of 
$$^{cl,red}\!\inftydgprestack:=\on{Funct}(({}^{red}\!\affSch)^{\on{op}},\inftygroup),$$
where $^{red}\!\affSch$ denotes the category of classical reduced affine
schemes. Let $\CY\to {}^{cl,red}\CX$ be a map, where $^{cl,red}\CX:=\CX|_{{}^{red}\!\affSch}$.

\medskip

\begin{defn} 
By the formal completion of $\CX$ along
$\CY$, denoted $\CX^{\wedge}_\CY$, we shall mean the object of $\inftydgprestack$
equal to the fiber product
$$
\CD
\CX^{\wedge}_\CY   @>>>   \CX \\
@VVV    @VVV  \\
\on{RKE}_{({}^{red}\!\affSch)^{\on{op}}\hookrightarrow (\affdgSch)^{\on{op}}}(\CY)   @>>>  
\on{RKE}_{({}^{red}\!\affSch)^{\on{op}}\hookrightarrow (\affdgSch)^{\on{op}}}({}^{cl,red}\CX).
\endCD
$$
\end{defn}

\medskip

In plain terms for $S\in \affdgSch$, we set $\Maps(S,\CX^{\wedge}_\CY)$ to be the groupoid
consisting of pairs $(x,y)$, where $x:S\to \CX$, and $y$ is a lift of the map
$$x|_{{}^{cl,red}\!S}:{}^{cl,red}\!S\to {}^{cl,red}\CX$$
to a map $y:{}^{cl,red}\!S \to \CY$.

\sssec{}  \label{sss:remarks on compl}

Several remarks are in order:

\medskip

\noindent(i) If $\CX$ is convergent, then so is $\CX^{\wedge}_\CY$. 

\medskip

\noindent(ii) $\CX$ and $\CX^{\wedge}_\CY$ have ``the same deformation theory." In particular,
if $\CX$ satisfies Conditions (A), (B) or (C), then so does $\CX^{\wedge}_\CY$, and 
for any $x:S\to \CX^{\wedge}_\CY$, the map
$$T^*_x\CX\to T^*_x\CX^{\wedge}_\CY$$
is an isomorphism of functors out of $\QCoh(S)^{\leq 0}$. 

\medskip

\noindent(iii) The formation of $\CX^{\wedge}_\CY$ is compatible with filtered colimits
in the sense that for a filtered category $\sA$ and functors 
$$\sA\to \inftydgprestack:\alpha\mapsto \CX_\alpha \text{ and }
\sA\to {}^{cl,red}\!\inftydgprestack:\alpha\mapsto \CY_\alpha,$$
and a natural transformation $\CY_\alpha\to {}^{cl,red}\CX_\alpha$, the resulting
map
$$\underset{\alpha\in \sA}{colim}\, (\CX_\alpha)^{\wedge}_{\CY_\alpha}\to \CX^{\wedge}_\CY$$
is an isomorphism, where
$$\CX:=\underset{\alpha\in \sA}{colim}\, \CX_\alpha \text{ and }
\CY:=\underset{\alpha\in \sA}{colim}\, \CY_\alpha.$$

\medskip

\noindent(iv) For a map $\CX'\to \CX$ in $\on{PreStk}$, let $\CY':=\CY\underset{^{cl,red}\CX}\times {}^{cl,red}\CX'$. Then
$$\CX'{}^{\wedge}_{\CY'}\simeq \CX^{\wedge}_\CY\underset{\CX}\times \CX'.$$

\sssec{}  \label{sss:de Rham}

When defining formal completions, we can take $\CY\to \CX|_{{}^{red}\!\affSch}$ to be an arbitrary
map. 

\medskip

For example, taking $\CX=\on{pt}$, we obtain an object of $\inftydgprestack$ isomorphic to
$$\on{RKE}_{({}^{red}\!\affSch)^{\on{op}}\hookrightarrow (\affdgSch)^{\on{op}}}(\CY).$$

\medskip

The latter object is otherwise known as the ``de Rham space of $\CY$" and is denoted $\CY_{\on{dR}}$.

\sssec{}
Given a map of prestacks $\CY\rightarrow \CX$, let $\CX^{\wedge}_\CY$ denote the formal completion of $\CX$ along $^{cl,red}\CY \rightarrow {}^{cl,red}\CX$.  We can express $\CX^{\wedge}_\CY$ in terms of the de Rham spaces of $\CX$ and $\CY$; namely,
$$ \CX^{\wedge}_\CY \simeq \CX \underset{\CX_{\on{dR}}}{\times} \CY_{\on{dR}} .$$

\ssec{Formal completions along monomorphisms}  \label{ss:monomorph}

\sssec{}  \label{sss:monomorph}

Let us now assume that the map $\CY\to {}^{cl,red}\CX$ is a monomorphism. I.e.,
for $S\in {}^{red}\!\affSch$ and a map $S\to {}^{cl,red}\CX$, if there exists
a lifting $S\to \CY$, then the space of such liftings is contractible.

\medskip

Note that in this case the map $\CX^{\wedge}_\CY\to \CX$ is also a monomorphism. In particular,
if $\CZ_i\to \CX$, $i=1,2$ are maps in $\on{PreStk}$ that factor
through $\CX^{\wedge}_\CY$, then the map
\begin{equation} \label{c:Cart over compl}
\CZ_1\underset{\CX^{\wedge}_\CY}\times \CZ_2\to \CZ_1\underset{\CX}\times \CZ_2
\end{equation}
is an isomorphism.

\sssec{}  \label{sss:closed emb into compl}

The above observation implies that if $f:\CZ\to \CX^\wedge_\CY$ is a map such that
the composition $\CZ\to \CX^\wedge_\CY\to \CX$ is a closed embedding, then the original
map $f$ is a closed embedding. 

\begin{rem}
Note that the converse to the above statement is not true: consider $\CX:=\BA^1_{\on{dR}}$, and $\CY=\on{pt}$.
We have $^{cl,red}\CX=\BA^1|_{^{red}\!\affSch}$, and we let $\CY\to {}^{cl,red}\CX$ be the map corresponding 
to $\{0\}\in \BA^1$. Then $\CX^\wedge_\CY=\on{pt}$. The tautological map $\CX^\wedge_\CY\to \CX$ is now
$$\on{pt}\to \BA^1_{\on{dR}},$$
and it is not a closed embedding: indeed, its base change with respect to $\BA^1\to \BA^1_{\on{dR}}$ yields
$(\BA^1)^{\wedge}_{\{0\}}$ which is not a closed subscheme of $\BA^1$. 
\end{rem}

\sssec{}
We now consider the case of open embeddings.

\begin{prop}\label{p:open embedding deRham}
Suppose that $\CY\to {}^{cl,red}\CX$ is an open embedding.  Then the corresponding map $\CY_{\on{dR}} \to \CX_{\on{dR}}$ is an open embedding in $\on{PreStk}$.
\end{prop}
\begin{proof}
For any morphism $S \to \CX_{\on{dR}}$ given by $^{cl, red} S \to \CX$, we have a Cartesian square
$$
\xymatrix{
S^{\wedge}_U \ar[r]\ar[d] & S \ar[d] \\
\CY_{\on{dR}} \ar[r] & \CX_{\on{dR}}
,}
$$
where $U = \ ^{cl, red} S \underset{^{cl, red}\CX}{\times} \CY$.  Now, since $U \to\  ^{cl, red} S$ is an open embedding, we have that $S^{\wedge}_U \to S$ is also an open embedding, as desired.
\end{proof}

\begin{cor}\label{c:completion along open embedding}
Let $\CY' \to \CY$ be an open embedding.  For any map $\CY \to \CZ$, the induced map
$$ \CZ^{\wedge}_{\CY'} \to \CZ^{\wedge}_{\CY} $$
is also an open embedding.
\end{cor}
\begin{proof}
Since $\CZ^{\wedge}_{\CY} \simeq \CZ \underset{\CZ_{\on{dR}}}{\times} \CY_{\on{dR}}$, we have a diagram
$$
\xymatrix{
\CZ^{\wedge}_{\CY'} \ar[r]\ar[d] & \CZ^{\wedge}_{\CY} \ar[r]\ar[d] & \CZ \ar[d]\\
\CY'_{\on{dR}} \ar[r] & \CY_{\on{dR}} \ar[r] & \CZ_{\on{dR}}
}
$$
in which all squares are Cartesian.  The result now follows from \propref{p:open embedding deRham}.
\end{proof}

\sssec{}

We would like to consider descent for $\CX^{\wedge}_\CY$. This is not
completely straightforward since the restriction of the fppf topology to $^{red}\!\affSch$
does not make much sense. For this reason, we now restrict to the Zariski, Nisnevich, or \'etale topologies (which do restrict to $^{red}\!\affSch$ ).  In what follows, let $\tau$ denote the Zariski, Nisnevich, or \'etale topology.\footnote{In a previous version of this paper, we incorrectly asserted that \lemref{l:compl and tau sheafification} below was valid for the flat topology.  We thank P. Pstragowski for pointing out the error.}

\begin{prop}\label{p:completion of tau stacks}
Let $\CZ \in \on{PreStk}$ be a prestack satisfying $\tau$-descent, and let $\CY \to\  ^{cl, red}\CZ$ be a morphism in $^{cl,red}\!\inftydgprestack$, with $\CY$ also satisfying $\tau$-descent.  Then the formal completion $\CZ^{\wedge}_{\CY}$ satisfies $\tau$-descent.
\end{prop}
\begin{proof}
Follows from the fact that if $S \to T$ is an open embedding (resp. Nisnevich, \'etale morphism) in $\affSch$, then
$ ^{cl,red} S \simeq S \underset{T}{\times} (^{cl,red} T)$ and the morphism $^{cl,red} S  \to\  ^{cl,red} T $ is also an open embedding (resp. Nisnevich, \'etale morphism).
\end{proof}

\sssec{}
In what follows, let $L_{\tau}$ denote sheafification with respect to the $\tau$-topology.
Let 
\begin{equation} \label{e:Cart diag for compl and descent}
\CD
\CY  @>>>  ^{cl,red}\CX \\
@VVV  @VVV  \\
\CY'  @>>>  ^{cl,red}(L_{\tau}(\CX))
\endCD
\end{equation}
be a Cartesian diagram in $^{cl,red}\!\inftydgprestack$ in which the horizontal arrows are
monomorphisms and $\CY'$ satisfies $\tau$-descent.  By \propref{p:completion of tau stacks}, the formal completion $L_{\tau}(\CX)^{\wedge}_{\CY'}$ satisfies $\tau$-descent.  Therefore, we have a natural map
\begin{equation}\label{e:tau desc compl comparison}
L_{\tau}(\CX^{\wedge}_\CY)\to (L_{\tau}(\CX))^{\wedge}_{\CY'}
\end{equation}

\begin{lem} \label{l:compl and tau sheafification}
Under the above circumstances, the map \eqref{e:tau desc compl comparison}
is an isomorphism.
\end{lem}
\begin{proof}

Recall that the sheafification functor $L_{\tau}$ maps monomorphisms into
monomorphisms. Therefore both maps
$$L_{\tau}(\CX^{\wedge}_\CY)\to L_{\tau}(\CX) \text{ and } (L_{\tau}(\CX))^{\wedge}_{\CY'}\to L_{\tau}(\CX)$$
are monomorphisms. Hence, the map in the lemma is a monomorphism as well.
It requires to see that it is essentially surjective.

\medskip

Thus, let $x:S\to L_{\tau}(\CX)$ be a map that factors through $(L_{\tau}(\CX))^{\wedge}_{\CY'}$.
We wish to show that it factors through $L_{\tau}(\CX^{\wedge}_\CY)$ as well. Let
$\pi:\wt{S}\to S$ be a $\tau$-cover, such that $x\circ \pi$ lifts to a map
$\wt{x}:\wt{S}\to \CX$. It suffices to show that $\wt{x}|_{^{cl,red}\!S}$ factors through $\CY$. 
However, $\wt{x}|_{^{cl,red}\!S}$ factors through
$$\CY'\underset{^{cl,red}(L(\CX))}\times ^{cl,red}\CX,$$
by construction, and the required factorization follows from the fact that
the diagam \eqref{e:Cart diag for compl and descent} is Cartesian.

\end{proof}

\sssec{}   \label{sss:setting for compl closed}

From now on, we will assume that the map $\CY\to {}^{cl,red}\CX$
is a closed embedding. I.e., for $S\in {}^{red}\!\affSch$ and a map $S\to {}^{cl,red}\CX$,
the fiber product 
$$S\underset{^{cl,red}\CX}\times \CY,$$
taken in $^{cl,red}\!\inftydgprestack$, is representable by a (reduced)
closed subscheme of $S$.

\ssec{Formal completions of DG indshemes}

The next proposition shows that the procedure of formal completion is a way
of generating DG indschemes:

\begin{prop} \label{p:formal compl is indscheme}
Suppose that in the setting of \secref{sss:setting for compl closed}, $\CX$ is a DG indscheme.
Then the formal completion $\CX^{\wedge}_\CY$ is also a DG indscheme.
\end{prop}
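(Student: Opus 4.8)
The plan is to verify directly the two defining conditions for $\CX^\wedge_\CY$ to be a DG indscheme: convergence, and that $^{\leq n}(\CX^\wedge_\CY)$ is a $\nDG$ indscheme for every $n$. Convergence is immediate from \secref{sss:remarks on compl}(i), since $\CX$ is a DG indscheme and hence convergent. For the truncated statement I would invoke the deformation-theoretic characterization \thmref{t:char by deform} at each level $n$, which reduces everything to two inputs: (1) that the classical truncation $^{cl}(\CX^\wedge_\CY)$ is a classical indscheme, and (2) that $^{\leq n}(\CX^\wedge_\CY)$ admits an extension to $^{\leq n+1}\!\inftydgprestack$ satisfying Conditions (A), (B) and (C).

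Input (2) is formal. Since $\CX$ is a DG indscheme, each $^{\leq m}\CX$ is an $(\leq m)$-DG indscheme and therefore satisfies Conditions (A), (B), (C) by \corref{c:A for indsch} and the corresponding corollaries for (B) and (C); assembled with convergence, $\CX$ admits connective deformation theory in the sense of \defnref{def:deform theory}. By \secref{sss:remarks on compl}(ii), $\CX^\wedge_\CY$ has the same deformation theory, so it too satisfies Conditions (A), (B), (C). Restricting to $^{\leq n+1}\!\affdgSch$ gives the required extension $^{\leq n+1}(\CX^\wedge_\CY)$ satisfying the level-$(n+1)$ conditions, since all the relevant split and square-zero test extensions $S_\CF$, $S'$ of $S\in {}^{\leq n}\!\affdgSch$ with $\CF\in \QCoh(S)^{\geq -n-1,\leq 0}$ remain in $^{\leq n+1}\!\affdgSch$.

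The substance of the argument is input (1). Here $^{cl}(\CX^\wedge_\CY)$ is the classical formal completion of the classical indscheme $^{cl}\CX$ along $\CY$. Write $^{cl}\CX\simeq \underset{\alpha}{colim}\, X_\alpha$ with $X_\alpha\in \Sch_{\on{qsep-qc}}$ and closed-embedding transition maps, and set $Y_\alpha:=\CY\underset{^{cl,red}\CX}\times {}^{cl,red}X_\alpha$, a reduced closed subscheme of $X_\alpha$ by the hypothesis of \secref{sss:setting for compl closed}. Because the closed embedding $\CY\to {}^{cl,red}\CX$ is a monomorphism and fiber products commute with filtered colimits, one has $\CY\simeq \underset{\alpha}{colim}\, Y_\alpha$; hence by compatibility of formal completion with filtered colimits and base change (\secref{sss:remarks on compl}(iii)--(iv)), $^{cl}(\CX^\wedge_\CY)\simeq \underset{\alpha}{colim}\, (X_\alpha)^\wedge_{Y_\alpha}$. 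Each $(X_\alpha)^\wedge_{Y_\alpha}$ is a classical indscheme: it is the filtered colimit $\underset{Z}{colim}\, Z$ over the directed poset of closed subschemes $Z\subseteq X_\alpha$ (automatically qsep-qc, being closed in the qsep-qc scheme $X_\alpha$) with $^{cl,red}Z\subseteq Y_\alpha$, this poset having scheme-theoretic unions. On an affine test scheme $\Spec A$ the identification holds because a map $f:\Spec A\to X_\alpha$ lands in the formal completion exactly when $f^\#(I_\alpha)$ lies in the nilradical of $A$, where $I_\alpha$ is the ideal of $Y_\alpha$; and this is precisely the condition that $f$ factor through its scheme-theoretic image $V(\ker f^\#)$, which is then a qsep-qc closed subscheme with $^{cl,red}V(\ker f^\#)\subseteq Y_\alpha$. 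Reindexing the double colimit over the filtered category of pairs $(\alpha,Z)$, with transition maps closed embeddings, exhibits $^{cl}(\CX^\wedge_\CY)$ as a filtered colimit of qsep-qc classical schemes under closed embeddings, i.e.\ as a classical indscheme.

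With (1) and (2) established, \thmref{t:char by deform} yields $^{\leq n}(\CX^\wedge_\CY)\in {}^{\leq n}\!\dgindSch$ for every $n$, and together with convergence this is exactly the definition of a DG indscheme. I expect the main obstacle to be step (1)---specifically, the bookkeeping that the classical formal completion is genuinely a filtered colimit of \emph{quasi-separated quasi-compact} schemes with \emph{closed-embedding} transition maps (via the scheme-theoretic-image description, which sidesteps any Noetherian hypothesis). Once this is in place, the passage to the DG statement is a formal consequence of the deformation-theoretic characterization.
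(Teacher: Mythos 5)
Your proposal is correct and coincides with the paper's own first proof (which the authors themselves label ``an overkill''): apply \thmref{t:char by deform} level by level, get Conditions (A), (B), (C) for $\CX^{\wedge}_\CY$ from \secref{sss:remarks on compl}(ii), and check that $^{cl}(\CX^{\wedge}_\CY)$ is a classical indscheme. The only difference is that you spell out the classical step (via the double colimit over pairs $(\alpha,Z)$ and the scheme-theoretic-image criterion) that the paper dispatches in one line as the filtered colimit over closed embeddings factoring through $\CY$ at the reduced level; your elaboration is sound.
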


We shall give two proofs.

\begin{proof}(an overkill)

We shall prove the proposition by applying \thmref{t:char by deform}. We note that Conditions (A), (B) and (C)
hold for $\CX^{\wedge}_\CY$ because they do for $\CX$, see \secref{sss:remarks on compl}(ii) above. Hence,
it remains to show that $^{cl}(\CX^{\wedge}_\CY)$ is a classical indscheme. However, this is obvious, as the latter
is the colimit 
$$\underset{Z_{cl}\to {}^{cl}\CX}{colim}\, Z_{cl},$$
taken over the (filtered!) category of closed embeddings that at the reduced level factor through $\CY$.

\end{proof}

Note that using \propref{p:char finite type new} and \secref{sss:remarks on compl}(ii), the above argument also gives:

\begin{cor}   \label{c:formal lft}
If $\CX$ is locally almost of finite type, then so is $\CX^{\wedge}_\CY$.
\end{cor}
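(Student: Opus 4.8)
The plan is to verify the criterion of \propref{p:char finite type new} for every $n$-coconnective truncation $^{\leq n}(\CX^{\wedge}_\CY)$. Recall that $\CX^{\wedge}_\CY$ being locally almost of finite type means precisely that $^{\leq n}(\CX^{\wedge}_\CY)$ is locally of finite type for each $n$, and by \propref{p:formal compl is indscheme} we already know $\CX^{\wedge}_\CY\in \dgindSch$, so each $^{\leq n}(\CX^{\wedge}_\CY)$ is an $n$-coconnective DG indscheme to which \propref{p:char finite type new} applies. Thus the task splits into checking, for fixed $n$, condition (a) (that $^{cl}(\CX^{\wedge}_\CY)$ is locally of finite type) and condition (b) (the pro-cotangent condition).

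First I would dispatch the classical statement. As in the first proof of \propref{p:formal compl is indscheme}, the classical prestack $^{cl}(\CX^{\wedge}_\CY)$ is the filtered colimit $\underset{Z_{cl}\to {}^{cl}\CX}{colim}\, Z_{cl}$ taken over the category of closed embeddings $Z_{cl}\to {}^{cl}\CX$ whose reduction factors through $\CY$. Since $\CX$ is locally almost of finite type, $^{cl}\CX$ is a classical indscheme locally of finite type, so \lemref{l:classical lft} forces each such $Z_{cl}$ to lie in $\Sch_{\on{ft}}$. Hence $^{cl}(\CX^{\wedge}_\CY)$ is exhibited as a filtered colimit of finite-type schemes under closed embeddings, which is exactly what condition (a) demands.

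Next I would verify the pro-cotangent condition via the equivalent form \propref{p:char finite type new}(b)(iii). The crucial input is \secref{sss:remarks on compl}(ii): $\CX$ and $\CX^{\wedge}_\CY$ have the same deformation theory, so the comparison map $T^*_x\CX\to T^*_x\CX^{\wedge}_\CY$ is an isomorphism for every point $x$. Concretely, given an affine scheme $Z$ of finite type and a map $x:Z\to {}^{cl}(\CX^{\wedge}_\CY)$, I compose with the monomorphism $\CX^{\wedge}_\CY\to \CX$ to get $\bar{x}:Z\to {}^{cl}\CX$, and the identification of deformation theories yields a canonical isomorphism $^{\geq -n-1}(T^*_{\bar x}\CX_{n+1})\simeq {}^{\geq -n-1}(T^*_x(\CX^{\wedge}_\CY)_{n+1})$ in $\on{Pro}(\QCoh(Z)^{\geq -n-1,\leq 0})$. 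Because $^{\leq n}\CX$ is locally of finite type, \propref{p:char finite type new}(b)(iii) applied to $\CX$ guarantees that $^{\geq -n-1}(T^*_{\bar x}\CX_{n+1})$ lies in $\on{Pro}(\Coh(Z)^{\geq -n-1,\leq 0})$; transporting along the isomorphism shows the same for $\CX^{\wedge}_\CY$.

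With both halves of condition (b) in hand and $n$ arbitrary, \propref{p:char finite type new} yields that every $^{\leq n}(\CX^{\wedge}_\CY)$ is locally of finite type, i.e. $\CX^{\wedge}_\CY\in \dgindSch_{\on{laft}}$. I do not anticipate a genuine obstacle: the content is entirely carried by the cited results, and the only point requiring care is the bookkeeping of coconnectivity levels—namely checking that the deformation-theoretic identification of \secref{sss:remarks on compl}(ii), phrased for functors on $\QCoh(S)^{\leq 0}$, does produce the isomorphism of the $(n+1)$-truncated pro-cotangent spaces that appear in the statement of \propref{p:char finite type new}.
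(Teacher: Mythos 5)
Your proposal is correct and follows essentially the same route as the paper, which derives the corollary by combining the first proof of \propref{p:formal compl is indscheme} with \propref{p:char finite type new} and the identification of deformation theories in \secref{sss:remarks on compl}(ii); you have merely spelled out the two halves (the classical statement via \lemref{l:classical lft}, and the pro-coherence of cotangent spaces via transport along $T^*_x\CX\simeq T^*_x\CX^{\wedge}_\CY$) that the paper leaves implicit.
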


\sssec{}

The second proof of \propref{p:formal compl is indscheme}
comes along with an explicit description of $\CX^{\wedge}_\CY$ as a colimit
of DG schemes: 

\medskip

For $\CX\in \dgindSch$, consider the full subcategory
$$(\dgSch_{\on{qsep-qc}})_{\on{closed}\,\on{in}\,\CX}\underset{\dgSch_{/\CX}}\times \dgSch_{/\CX^{\wedge}_\CY}.$$
I.e., it consits of those closed embedding $Z\to \CX$, which factor through $\CX^{\wedge}_\CY$. 
Note that by \secref{sss:closed emb into compl}, for any 
$$Z\in (\dgSch_{\on{qsep-qc}})_{\on{closed}\,\on{in}\,\CX}\underset{\dgSch_{/\CX}}\times \dgSch_{/\CX^{\wedge}_\CY},$$
the resulting map $Z\to \CX^{\wedge}_\CY$ is a closed embedding. 

\begin{prop}  \label{p:formal compl is indscheme expl}
The category $(\dgSch_{\on{qsep-qc}})_{\on{closed}\,\on{in}\,\CX}\underset{\dgSch_{/\CX}}\times \dgSch_{/\CX^{\wedge}_\CY}$  is filtered, and the map
$$\underset{Z\in (\dgSch_{\on{qsep-qc}})_{\on{closed}\,\on{in}\,\CX}\underset{\dgSch_{/\CX}}\times \dgSch_{/\CX^{\wedge}_\CY}}{colim}\, Z\to \CX^{\wedge}_\CY,$$
is an isomorphism, where the colimit is taken in $\inftydgprestack$. 
\end{prop}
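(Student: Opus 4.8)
The plan is to compare the category in question, which I write as $\CC := (\dgSch_{\on{qsep-qc}})_{\on{closed}\,\on{in}\,\CX}\underset{\dgSch_{/\CX}}\times \dgSch_{/\CX^{\wedge}_\CY}$, with $\CD := (\dgSch_{\on{qsep-qc}})_{\on{closed}\,\on{in}\,\CX}$, which by \propref{p:indscheme as colimit of its closed} is filtered and satisfies $\underset{W\in\CD}{colim}\, W\simeq \CX$. Since $\CX^{\wedge}_\CY\to \CX$ is a monomorphism (\secref{sss:monomorph}), $\CC$ is identified with the full subcategory of $\CD$ consisting of those closed embeddings $Z\to \CX$ for which the reduced classical map ${}^{cl,red}Z\to {}^{cl,red}\CX$ factors through $\CY$; by \secref{sss:closed emb into compl} each such $Z\to \CX^{\wedge}_\CY$ is then automatically a closed embedding. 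The elementary observation driving the argument is that the property of lying in $\CC$ is inherited by closed subschemes, and is preserved under forming the union of closed subschemes inside a common ambient DG scheme, because the formation of the reduced underlying scheme commutes with unions and $\CY\to{}^{cl,red}\CX$ is closed.

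First I would prove filteredness. All morphism spaces in $\CC\subset\CD$ are empty or contractible (the monomorphism $\CX^{\wedge}_\CY\to\CX$ forces the relevant factorization spaces to be so), so only the amalgamation property needs work. Given $Z_1,Z_2\in\CC$, filteredness of $\CD$ yields $W\in\CD$ with closed embeddings $Z_1,Z_2\to W$, and I form the union $Z_1\cup Z_2\subset W$ as the finite coproduct of closed subschemes (\propref{p:colimits as closed}). Since ${}^{cl,red}(Z_1\cup Z_2)={}^{cl,red}Z_1\cup{}^{cl,red}Z_2$ maps to ${}^{cl,red}\CX$ with each piece factoring through the closed $\CY$, the union factors through $\CY$; hence $Z_1\cup Z_2\in\CC$ and receives both $Z_i$. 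This is precisely where the hypothesis that $\CY$ is closed is used.

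For the colimit statement it suffices, as colimits in $\inftydgprestack$ are computed on affines, to show for each $S\in\affdgSch$ that $\underset{Z\in\CC}{colim}\,\Maps(S,Z)\to \Maps(S,\CX^{\wedge}_\CY)$ is an equivalence. Writing $P$ for the source, $Q:=\Maps(S,\CX^{\wedge}_\CY)$ and $R:=\Maps(S,\CX)=\underset{W\in\CD}{colim}\,\Maps(S,W)$, the monomorphism $\CX^{\wedge}_\CY\to\CX$ presents $Q\hookrightarrow R$ as the union of those components whose reduced restriction factors through $\CY$. For surjectivity onto these: any $x\in Q$ factors through some $W\in\CD$, and $Z:=\ol{\on{Im}(S\xrightarrow{x}W)}$ (\secref{sss:closure of image}) is a closed subscheme of $W$ through which $x$ factors; a reduced-density argument shows ${}^{cl,red}Z$ factors through $\CY$, so $Z\in\CC$ and $x$ lifts to $P$. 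The injectivity on $\pi_0$ and the isomorphism on higher homotopy I would deduce from the fact that a closed embedding of DG schemes induces a monomorphism on $\Maps(S,-)$: two lifts over some $W\in\CD$ are absorbed into a union $Z_1\cup Z_2\subset W$ lying in $\CC$, and any homotopy class over $W$ restricts along the closure of the image into an object of $\CC$, the monomorphism property transporting the identifications back. Combining, $P\to Q$ is a $\pi_0$-bijection and an isomorphism on all homotopy groups, hence an equivalence.

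The main obstacle I anticipate is the conceptual core rather than the colimit bookkeeping: verifying that the closure of the image of a map whose reduced restriction factors through $\CY$ again factors through $\CY$ at the reduced level. This is the reduced-dominant-image-into-a-closed-subfunctor argument, and is exactly where the closedness of $\CY\to{}^{cl,red}\CX$ (\secref{sss:setting for compl closed}) enters; the remaining homotopy-coherence manipulations that upgrade the component-level statements to an equivalence of spaces are routine once the monomorphism properties of $\CX^{\wedge}_\CY\to\CX$ and of DG-scheme closed embeddings are in hand.
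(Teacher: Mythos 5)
Your core idea is the paper's: the category in question is the full subcategory of $(\dgSch_{\on{qsep-qc}})_{\on{closed}\,\on{in}\,\CX}$ cut out by the condition that $^{cl,red}Z$ land in $\CY$, and this condition is preserved by the finite-colimit (union) construction of \propref{p:factorization coproducts for indscheme}, because $\CY$ is closed and the construction does not disturb the reduced classical level; nonemptiness of the factorization categories comes from the closure of the image. This is exactly what the paper's very short proof records: each factorization subcategory $\dgSch_{S/,\on{closed}\,\on{in}\,\CX}\cap\,\CC$ contains finite colimits, by inspection of the construction in the proof of \propref{p:factorization coproducts for indscheme}.

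Where your write-up genuinely breaks is the final step. The assertion that ``a closed embedding of DG schemes induces a monomorphism on $\Maps(S,-)$,'' and the consequent claim that morphism spaces in your category $\CC$ are empty or contractible, are false: a closed embedding in this paper is a condition only on the underlying classical schemes. For instance, with $\epsilon$ in cohomological degree $-1$ (so $\epsilon^2=0$), the canonical map $\Spec(k[\epsilon])\to \Spec(k)$ is a closed embedding, yet for $S=\Spec(k[\epsilon])$ the induced map $\Maps(S,\Spec(k[\epsilon]))\to \Maps(S,\Spec(k))$ sends a discrete set with many points to a single point; likewise, taking $\CX=\Spec(k)$, the endomorphism space of $\Spec(k[\epsilon])$ in $(\dgSch_{\on{qsep-qc}})_{\on{closed}\,\on{in}\,\CX}$ is that same non-contractible set. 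Consequently your deduction of $\pi_0$-injectivity and of the isomorphism on higher homotopy groups does not go through, and your filteredness argument is incomplete as well (producing cocones for pairs of objects does not suffice once parallel arrows can be genuinely distinct; you also need to coequalize them, which the finite-colimit construction does provide). The repair is to avoid homotopy-group bookkeeping altogether: since each factorization category admits finite colimits it is contractible, and since $\CC$ is filtered the fiber of $\underset{Z\in \CC}{colim}\,\Maps(S,Z)\to \Maps(S,\CX^{\wedge}_\CY)$ over any point is the geometric realization of such a factorization category --- exactly the argument of \corref{c:cofinality of closed}. With that substitution your proof becomes the paper's.
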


\begin{proof}

It suffices to show that for $S\in \affdgSch$ and a map $S\to \CX$
that factors through $\CY$ at the reduced classical level, the full subcategory of 
$$\dgSch_{S/,\on{closed}\,\on{in}\,\CX}$$ consisting of 
$$S\to Z\to \CX,\quad Z\in (\dgSch_{\on{qsep-qc}})_{\on{closed}\,\on{in}\,\CX}\underset{\dgSch_{/\CX}}\times \dgSch_{/\CX^{\wedge}_\CY}$$
contains finite colimits.

\medskip

The proof follows from the description of finite colimits in $(\dgSch)_{S/,\on{closed}\,\on{in}\,\CX}$,
given in the proof of \propref{p:factorization coproducts for indscheme}.

\end{proof}

\begin{rem}
It is not difficult to see that the category 
$$(\dgSch_{\on{qsep-qc}})_{\on{closed}\,\on{in}\,\CX}\underset{\dgSch_{/\CX}}\times \dgSch_{/\CX^{\wedge}_\CY}$$ used in the above proof
is the same as $(\dgSch_{\on{qsep-qc}})_{\on{closed}\,\on{in}\,\CX^\wedge_\CY}$, i.e., the 
assertion on \secref{sss:closed emb into compl} is ``if and only if" for $\CX$ a DG indscheme
and $\CZ=Z\in \dgSch_{\on{qsep-qc}}$. 

\medskip

Indeed, let $\ol{Z}$ denote the closure of the image of
$Z$ in $\CX$. It is enough to show that the map
$Z\underset{\CX}\times \ol{Z}\to \ol{Z}$ is a closed
embedding. However, since $\ol{Z}\to \CX$ also factors through $\CX^\wedge_\CY$, the map
$$Z\underset{\CX^\wedge_\CY}\times \ol{Z}\to Z\underset{\CX}\times \ol{Z}$$
is an isomorphism, and the map
$$Z\underset{\CX^\wedge_\CY}\times \ol{Z}\to \ol{Z},$$
being a base change of $Z\to \CX^\wedge_\CY$, is a closed embedding, by assumption.
\end{rem}

\sssec{}

Note also that if $\CX$ is written as in \eqref{e:gen indscheme as a colimit}, then
if we set $Y_\alpha:=\CY\cap {}^{cl,red}\!X_\alpha$, by \secref{sss:remarks on compl} (iii) and (iv),
we have:
$$\CX^{\wedge}_\CY\simeq \underset{\alpha}{colim}\, (X_\alpha)^\wedge_{Y_\alpha},$$
where the colimit is taken in $\inftydgprestack$. 

\ssec{Formal (DG) schemes}

Let us recall the following definition: 

\begin{defn}
A classical indscheme $\CX_{cl}$ is called a formal scheme if $^{red}(\CX_{cl})$ is a scheme.
\footnote{Recall that we denote by $\CY\mapsto {}^{red}\CY$ the 
functor $^{cl}\!\inftydgprestack\to {}^{cl,red}\!\inftydgprestack$ corresponding to restriction along
$^{red}\!\affSch\to \affSch$.}
\end{defn}

In the derived setting, we give the following one:

\begin{defn}
A DG indscheme $\CX$ is called a formal DG scheme if the underlying classical
indscheme $^{cl}\CX$ is formal. 
\end{defn}

We have, tautologically:

\begin{lem}  \label{l:formal compl formal ind}
In the situation of \propref{p:formal compl is indscheme}, if $\CY$ is a scheme,
then $\CX^{\wedge}_\CY$ is a formal DG scheme.
\end{lem}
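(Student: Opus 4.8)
The plan is to unwind the definitions and reduce the statement to a tautological identification of the reduced part of the formal completion. First I would invoke \propref{p:formal compl is indscheme}, which already guarantees that $\CX^{\wedge}_\CY$ is a DG indscheme; consequently $^{cl}(\CX^{\wedge}_\CY)$ is a classical indscheme, and by the definition of a formal DG scheme it remains only to check that ${}^{red}({}^{cl}(\CX^{\wedge}_\CY))$ is a scheme. Since restriction to reduced classical affine schemes factors through $^{cl}$, this object is simply $\CX^{\wedge}_\CY|_{{}^{red}\!\affSch}$, so the whole problem reduces to computing the functor $S\mapsto \Maps(S,\CX^{\wedge}_\CY)$ for $S\in {}^{red}\!\affSch$.

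The key step is to show that this functor is canonically isomorphic to $\CY$ itself. I would read off from the defining Cartesian square of $\CX^{\wedge}_\CY$ in \secref{sss:setting for compl general} that for $S$ reduced,
$$\Maps(S,\CX^{\wedge}_\CY)\simeq \Maps(S,\CX)\underset{\Maps(S,\on{RKE}({}^{cl,red}\CX))}{\times}\Maps(S,\on{RKE}(\CY)).$$
Now the two right Kan extensions along ${}^{red}\!\affSch\hookrightarrow \affdgSch$ restrict back to the identity on ${}^{red}\!\affSch$ — the unit of the (restriction, $\on{RKE}$) adjunction is an isomorphism on the subcategory — so for reduced $S$ both outer terms collapse: $\Maps(S,\on{RKE}(\CY))\simeq \CY(S)$ and $\Maps(S,\on{RKE}({}^{cl,red}\CX))\simeq {}^{cl,red}\CX(S)\simeq \Maps(S,\CX)$. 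The resulting base change $\Maps(S,\CX)\times_{\Maps(S,\CX)}\CY(S)$ is then just $\CY(S)$, yielding a natural isomorphism ${}^{red}(\CX^{\wedge}_\CY)\simeq \CY$. In plain terms: a point of $\CX^{\wedge}_\CY$ over a reduced $S$ is a pair $(x,y)$ with $y$ a lift of $x|_{{}^{cl,red}\!S}=x$ to $\CY$, and since ${}^{cl,red}\!S=S$ the datum of $x$ is recovered from $y$.

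Finally, by hypothesis $\CY$ is a scheme, so ${}^{red}({}^{cl}(\CX^{\wedge}_\CY))\simeq \CY$ is a scheme, which is precisely the condition for $^{cl}(\CX^{\wedge}_\CY)$ to be formal, i.e.\ for $\CX^{\wedge}_\CY$ to be a formal DG scheme. I do not expect any genuine obstacle here — this is the ``tautological'' statement the lemma advertises. The one point that deserves to be stated explicitly rather than waved at is the claim that the two right Kan extensions become the identity when evaluated on reduced affine schemes; everything else is formal manipulation of the fiber product defining $\CX^{\wedge}_\CY$.
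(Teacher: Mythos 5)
Your proof is correct and is exactly the unwinding the paper has in mind — the paper simply records the lemma with the remark ``We have, tautologically:'' and the content is precisely your computation that $^{red}(\CX^{\wedge}_\CY)\simeq \CY$, which follows from the defining Cartesian square together with the fact that restriction of a right Kan extension along a fully faithful inclusion recovers the original functor. No further comment is needed.
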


\ssec{Formal completions of DG schemes}  \label{ss:formal completion of a scheme}

For the rest of this section we will take $\CX$ to be a DG scheme $X$, and $\CY$ to be a Zariski
closed subset $Y$ of $^{cl,red}\!X$. Consider the corresponding formal completion $X^{\wedge}_Y$. 

\medskip

In this situation, we shall always assume $Y$ is quasi-compact 
and quasi-separated, in order for $X^{\wedge}_Y$ to be a DG indscheme according to our definition.

\sssec{}

First, we have:

\begin{prop}\label{p:formal completion of scheme}
$X^{\wedge}_Y$ is a DG indscheme.
\end{prop}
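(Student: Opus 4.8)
The plan is to run the deformation-theoretic criterion exactly as in the first (``overkill'') proof of \propref{p:formal compl is indscheme}, the only genuinely new point being the analysis at the classical level, where the quasi-compactness and quasi-separatedness of $Y$ enter.

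First I would record the two ``soft'' inputs. Since $X$ is a DG scheme it is convergent, and by \secref{sss:remarks on compl}(i) the formal completion $X^\wedge_Y$ is convergent as well. Likewise, a DG scheme admits connective deformation theory: at any point its pro-cotangent space is the honest connective cotangent complex (this is the content of the subsection on the cotangent complex of a DG scheme), so Conditions (A), (B) and (C) all hold for $X$, the latter two being the standard pullback-compatibility and infinitesimal cohesiveness of DG schemes. By \secref{sss:remarks on compl}(ii), $X$ and $X^\wedge_Y$ have the same deformation theory, so these three conditions are inherited by $X^\wedge_Y$. Thus $X^\wedge_Y$ admits connective deformation theory in the sense of \defnref{def:deform theory}.

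By \thmref{t:char by deform}, applied to $^{\leq n}(X^\wedge_Y)$ with the extension $\CX_{n+1}:={}^{\leq n+1}(X^\wedge_Y)$ for each $n$, and combined with convergence as in \secref{ss:gen DG indschemes}, it then remains only to show that the underlying classical prestack $^{cl}(X^\wedge_Y)$ is a classical indscheme. Unwinding the definition of the formal completion, for $S\in \affSch$ a map $S\to X^\wedge_Y$ is a map $S\to {}^{cl}X$ whose reduced part factors through $Y$, so $^{cl}(X^\wedge_Y)$ is canonically the classical formal completion $({}^{cl}X)^\wedge_Y$, i.e. the colimit $\underset{Z}{colim}\, Z$ over closed subschemes $Z\subset {}^{cl}X$ whose underlying topological space is contained in $Y$. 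This is the step I expect to be the crux, and it is precisely where the hypotheses on $Y$ are used: the space $|Z|$ is a closed, hence quasi-compact, subset of the quasi-compact $Y$, so $Z$ is quasi-compact; and $Z$ is a nilpotent thickening of a closed subscheme of the quasi-separated scheme $Y$, and quasi-separatedness (quasi-compactness of the diagonal) is a topological condition invariant under nilpotent thickening, so $Z$ is quasi-separated. Hence each $Z\in \Sch_{\on{qsep-qc}}$. The indexing category is filtered, since closed subschemes supported on $Y$ are stable under finite scheme-theoretic unions (coproducts in the category of closed subschemes), and these unions again have support in $Y$. Therefore the colimit exhibits $({}^{cl}X)^\wedge_Y$ as a classical indscheme, completing the argument.

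As an alternative one could bypass \thmref{t:char by deform} and appeal to \propref{p:formal compl is indscheme} directly, using that only a neighborhood of $Y$ matters: by \secref{sss:remarks on compl}(iv), $X^\wedge_Y\simeq U^\wedge_Y$ for any open $U\subset X$ containing $Y$, because a map $S\to X$ whose underlying topological image lies in the open $U$ automatically factors through $U$ (the spaces of $S$ and $^{cl,red}S$ coincide). Taking $U$ to be a finite union of affine opens covering the quasi-compact $Y$ gives a quasi-compact open, and once $U$ is arranged to be quasi-separated one has $U\in \dgSch_{\on{qsep-qc}}\subset \dgindSch$, whereupon \propref{p:formal compl is indscheme} applies verbatim; the hard point in this route is exactly producing such a quasi-separated $U$, which is why I prefer the deformation-theoretic argument above. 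One may also extract from the second proof of \propref{p:formal compl is indscheme} an explicit presentation, as in \propref{p:formal compl is indscheme expl}, realizing $X^\wedge_Y$ as the colimit of the filtered family of quasi-separated quasi-compact closed subschemes of $X$ whose maps to $X$ factor through $X^\wedge_Y$.
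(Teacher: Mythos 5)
Your proposal is correct and follows the same route the paper intends: the paper's proof of this proposition is just the remark that either of the first two proofs of \propref{p:formal compl is indscheme} goes through even when $X$ itself fails to be quasi-separated and quasi-compact, and you have carried out the first (deformation-theoretic) one, correctly isolating the only new point — that the closed subschemes of $^{cl}\!X$ set-theoretically supported on $Y$ are automatically quasi-separated and quasi-compact because $Y$ is, and that they form a filtered system under scheme-theoretic unions. No gaps.
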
 

We note that \propref{p:formal completion of scheme} is not, strictly speaking a consequence
of \propref{p:formal compl is indscheme}, since if $X$ fails to be quasi-separated and quasi-compact,
then it is not a DG indscheme in our definition. However, it is easy to see that either of the first two
proofs of \propref{p:formal compl is indscheme} applies in this case as well.

\medskip

We also note that $^{cl,red}(X^{\wedge}_Y)\simeq Y$. Hence, we obtain:

\begin{cor} \label{c:formal compl formal}
$X^{\wedge}_Y$ is a formal DG scheme.
\end{cor}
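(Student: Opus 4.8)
The plan is to unwind the two nested definitions and reduce everything to the identification $^{cl,red}(X^{\wedge}_Y)\simeq Y$ recorded immediately before the corollary. By \propref{p:formal completion of scheme} we already know that $X^{\wedge}_Y$ is a DG indscheme, so by the definition of a formal DG scheme it remains only to check that the underlying classical indscheme $^{cl}(X^{\wedge}_Y)$ is a formal classical indscheme; and by the definition of the latter this amounts to showing that $^{red}\!\left({}^{cl}(X^{\wedge}_Y)\right)$ is a scheme. First I would observe that the two restriction functors compose, i.e. $^{red}\!\left({}^{cl}(X^{\wedge}_Y)\right)={}^{cl,red}(X^{\wedge}_Y)$, since both sides are simply the restriction of $X^{\wedge}_Y$ along $^{red}\!\affSch\hookrightarrow \affdgSch$.

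Granting this, the statement reduces to the given isomorphism $^{cl,red}(X^{\wedge}_Y)\simeq Y$, together with the trivial observation that $Y$, equipped with its reduced induced structure (quasi-compact and quasi-separated by our standing assumption in \secref{ss:formal completion of a scheme}), is a classical reduced scheme, hence a scheme in the sense required by the definition of a formal scheme. This immediately yields that $^{cl}(X^{\wedge}_Y)$ is formal and therefore that $X^{\wedge}_Y$ is a formal DG scheme.

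The one point deserving an actual argument is the identification $^{cl,red}(X^{\wedge}_Y)\simeq Y$, and I would establish it using the explicit description of mapping spaces from \secref{sss:setting for compl general}. For $S\in {}^{red}\!\affSch$, a point of $X^{\wedge}_Y(S)$ is a pair $(x,y)$ with $x\colon S\to X$ and $y$ a lift of $x|_{{}^{cl,red}\!S}$ along $Y\to {}^{cl,red}X$. Since $S$ is already reduced and classical, $^{cl,red}\!S=S$ and $x$ factors through $^{cl,red}X$; and since $Y\to {}^{cl,red}X$ is a closed embedding, hence a monomorphism (cf. \secref{ss:monomorph}), such a lift $y$ is unique whenever it exists. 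Thus $X^{\wedge}_Y(S)$ is functorially identified with the subspace of those maps $S\to {}^{cl,red}X$ that factor through $Y$, i.e. with $\Maps(S,Y)$, giving $^{cl,red}(X^{\wedge}_Y)\simeq Y$.

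I do not anticipate any genuine obstacle here: the entire content is bookkeeping with the restriction functors $^{cl}$, $^{red}$ and $^{cl,red}$, and the monomorphism property of the closed embedding $Y\hookrightarrow {}^{cl,red}X$, all of which is already in place from \secref{ss:monomorph} and \secref{sss:setting for compl general}. (Alternatively, once $Y$ is seen to be a scheme, one could invoke the argument of \lemref{l:formal compl formal ind}, but the direct route via $^{cl,red}(X^{\wedge}_Y)\simeq Y$ is cleaner and avoids the quasi-compactness hypothesis on $X$ itself.)
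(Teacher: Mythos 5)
Your argument is correct and follows the same route as the paper, which simply records the identification $^{cl,red}(X^{\wedge}_Y)\simeq Y$ immediately after \propref{p:formal completion of scheme} and deduces the corollary from the definitions. The only difference is that you spell out why that identification holds (reducedness of the test scheme plus the monomorphism property of $Y\hookrightarrow {}^{cl,red}\!X$), which the paper leaves as an unproved remark.
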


\sssec{}

In the present situation, we can slightly improve the presentation of $X^{\wedge}_Y$
given by \propref{p:formal compl is indscheme expl}:

\begin{prop}  \label{p:expl formal compl}
As an object of $\inftydgprestack$, $X^{\wedge}_Y$ is isomorphic to
$$\underset{Y'\to X}{colim}\, Y',$$
where the colimit is taken over the category of closed embeddings whose
set-theoretic image \emph{is} $Y$.
\end{prop}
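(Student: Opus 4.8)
The plan is to read this off from the presentation already obtained in \propref{p:formal compl is indscheme expl}. That proposition identifies $X^{\wedge}_Y$ with the colimit $\underset{Z}{colim}\, Z$ taken over the filtered category
$$\CC:=(\dgSch_{\on{qsep-qc}})_{\on{closed}\,\on{in}\,X}\underset{\dgSch_{/X}}\times \dgSch_{/X^{\wedge}_Y},$$
whose objects are closed embeddings $Z\hookrightarrow X$, with $Z\in \dgSch_{\on{qsep-qc}}$, that factor through $X^{\wedge}_Y$. Because $Y\hookrightarrow {}^{cl,red}\!X$ is a closed embedding, hence a monomorphism, factoring through $X^{\wedge}_Y$ is a \emph{property} (not extra data), and by the definition of the formal completion together with \secref{sss:closed emb into compl} it is equivalent to the condition that the set-theoretic image of $Z$ be contained in $Y$. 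Let $\CD\subset \CC$ be the full subcategory spanned by those $Z$ whose set-theoretic image equals $Y$; unwinding the definitions, $\CD$ is precisely the indexing category appearing in the statement. It therefore suffices to prove that the inclusion $\CD\hookrightarrow \CC$ is cofinal, since then $\underset{Z\in \CD}{colim}\, Z\simeq \underset{Z\in \CC}{colim}\, Z\simeq X^{\wedge}_Y$.

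To establish cofinality I would invoke \lemref{l:cofinal in filtered}: as $\CC$ is filtered and $\CD\hookrightarrow \CC$ is fully faithful, the inclusion is cofinal as soon as every object of $\CC$ admits a map to an object of $\CD$ (and $\CD$ is then automatically filtered). To produce such a map, let $Y_{\on{red}}$ denote the reduced induced closed subscheme structure on the Zariski-closed subset $Y\subset {}^{cl,red}\!X$, viewed as a closed embedding $Y_{\on{red}}\hookrightarrow X$. Since $Y$ is quasi-compact and quasi-separated we have $Y_{\on{red}}\in \dgSch_{\on{qsep-qc}}$, its set-theoretic image is exactly $Y$, and at the reduced level it factors through $Y$, so $Y_{\on{red}}\in \CD\subset \CC$. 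Given any $Z\in \CC$, filteredness of $\CC$ furnishes an object $W\in \CC$ together with maps $Z\to W\leftarrow Y_{\on{red}}$. Here $Y_{\on{red}}\to W$ is itself a closed embedding, because both $Y_{\on{red}}\to X$ and $W\to X$ are (using the cancellation property for closed embeddings noted in \secref{sss:general closed emb}); hence the set-theoretic image of $W$ contains that of $Y_{\on{red}}$, namely all of $Y$. As $W\in \CC$, its image is also contained in $Y$, so it equals $Y$ and $W\in \CD$. Thus $Z\to W$ is the desired arrow to $\CD$. (Concretely, one may take $W$ to be the union $Z\cup Y_{\on{red}}$, the coproduct in the category of closed embeddings into $X$, which exists by \propref{p:factorization coproducts for indscheme}.)

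I do not expect any serious obstacle here; the argument is a formal cofinality statement riding on results already proved. The only points deserving care are the two compatibilities that make the indexing categories match up: first, that for a closed embedding $Z\hookrightarrow X$ the condition ``factors through $X^{\wedge}_Y$'' coincides with ``set-theoretic image $\subseteq Y$,'' which rests on $X^{\wedge}_Y\to X$ being a monomorphism as in \secref{sss:monomorph} and \secref{sss:closed emb into compl}; and second, the verification that the cocone object $W$ genuinely lands in $\CD$, i.e.\ has image equal to $Y$ rather than merely contained in it, which is exactly where the auxiliary object $Y_{\on{red}}$ is used. Once these are checked, matching $\CD$ with the category in the statement and concluding via \lemref{l:cofinal in filtered} is routine.
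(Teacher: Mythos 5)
Your argument is correct and takes essentially the same route as the paper: both start from the presentation of $X^{\wedge}_Y$ as the colimit over closed embeddings $Z\to X$ with set-theoretic image contained in $Y$, and both reduce, via \lemref{l:cofinal in filtered}, to producing for each such $Z$ a map to a closed subscheme with image exactly $Y$ by adjoining the reduced subscheme supported on $Y$. The only (immaterial) difference is that the paper constructs the enlargement explicitly as the push-out $Z\underset{{}^{cl,red}\!Z}\sqcup\, Y'_{can}$, whereas you appeal to filteredness of the ambient category (or to the coproduct of closed subschemes) to obtain the cocone object.
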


\begin{proof}
By \corref{c:cofinality of closed}, we know that $X^{\wedge}_Y$ is isomorphic to
$$\underset{Z\to X}{colim}\, Z,$$
where the colimit is taken over the category of closed embeddings $Z\to X$ whose
image is set-theoretically contained in $Y$.

\medskip

By \lemref{l:cofinal in filtered}, is suffices to show that any such $Z\to X$ can be factored
as $Z\to Y'\to Z$, where $Y'\to Z$ is a closed embedding whose set-theoretic image
is exactly $Y$.

\medskip

Let $Y'_{can}$ be the reduced closed subscheme of $X$ corresponding to $Y$. 

\medskip

Consider the map $^{cl,red}\!Z\to {}^{cl}\!X$. The latter canonically factors as $^{cl,red}\!Z\to Y'_{can}\to {}^{cl}\!X$.
The required
$Y'$ is then given by
$$Z\underset{^{cl,red}\!Z}\sqcup\, Y'_{can}.$$

\end{proof}

\sssec{}

Note, however, that in general $X^{\wedge}_Y$ will fail to be weakly 
$\aleph_0$, even at the classical level. E.g., we take $X=\BA^\infty:=\Spec(k[t_1,t_2,...])$
and $Y=\{0\}\subset \BA^\infty$. 

\medskip

However, $X^{\wedge}_Y$ is weakly $\aleph_0$ under the following additional condition:

\begin{prop}  \label{p:formal aleph 0}
Assume that $Y$ can be represented by a subscheme $Y'$ of $^{cl}\!X$, 
whose ideal is locally finitely generated. Then $X_Y^{\wedge}$ is weakly $\aleph_0$ 
as a DG indscheme.
\end{prop}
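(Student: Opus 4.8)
The plan is to verify, for every $n$, that the $n$-coconnective truncation $^{\leq n}(X^\wedge_Y)$ is $\aleph_0$; since being weakly $\aleph_0$ means precisely this (see \secref{sss:aleph 0}), that suffices. I would apply \propref{p:crit for aleph 0} to $\CX:={}^{\leq n}(X^\wedge_Y)$, checking its two conditions. Note that $^{cl}\CX\simeq {}^{cl}(X^\wedge_Y)$ is independent of $n$, so Condition (a) is checked once and for all, while Condition (b) carries the dependence on $n$.

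Condition (a) asks that the classical indscheme $^{cl}(X^\wedge_Y)$ be $\aleph_0$, and this is where the finite-generation hypothesis enters. At the classical level $^{cl}(X^\wedge_Y)\simeq \underset{Z_{cl}}{\on{colim}}\, Z_{cl}$, the colimit taken over the filtered category of classical closed subschemes of $^{cl}X$ supported set-theoretically on $Y$ (first proof of \propref{p:formal compl is indscheme}). Let $I\subset H^0(\CO_X)$ be the locally finitely generated ideal of $Y'$, and consider the $\BN$-chain of thickenings $X_m:=V(I^m)$, each a closed subscheme with image $Y$ and (being qc and qsep, like $Y$) defining an object of this category. By \lemref{l:cofinal in filtered} it is enough to show this chain is cofinal, i.e. that every such $Z_{cl}$, with ideal $J$, admits a map to some $X_m$, equivalently $I^m\subseteq J$. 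Since $Z_{cl}$ is supported on $Y=V(I)$ we have $I\subseteq \sqrt{J}$; the finite generation of $I$ together with quasi-compactness of $Y$ then produces a single exponent $m$ (cover $Y$ by finitely many affines, bound the nilpotency degree of each generator on each, and take the maximum), whence $I^m\subseteq J$ after checking on the cover. Thus $^{cl}(X^\wedge_Y)\simeq \underset{m}{\on{colim}}\, X_m$ is $\aleph_0$.

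Condition (b) — that the pro-cotangent spaces $^{\geq -n-1}(T^*_x\CX_{n+1})$ be $\aleph_0$ along a cofinal family of points $x$ — will hold for \emph{every} point, and trivially so, because $X$ is a genuine DG scheme. Indeed, by \secref{sss:remarks on compl}(ii) the formal completion $X^\wedge_Y$ has the same deformation theory as $X$, so the canonical map $T^*_x X\to T^*_x X^\wedge_Y$ is an isomorphism for any $x$ (and likewise for $\CX_{n+1}={}^{\leq n+1}(X^\wedge_Y)$ against $^{\leq n+1}X$). But the cotangent complex of a DG scheme is a \emph{genuine} object rather than a properly pro object: $^{\geq -n-1}(T^*_x X)$ lies in $\QCoh(Z)^{\geq -n-1,\leq 0}$ (see the subsection on the cotangent complex of a DG scheme). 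A representable pro-object is automatically $\aleph_0$ — present it as the constant $\BN$-diagram — so Condition (b)(i) is satisfied for the cofinal family $\{X_m\}$ of Condition (a), and the equivalence of (i), (ii), (iii) in \propref{p:crit for aleph 0} completes the check.

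The only genuine content is therefore Condition (a), and within it the extraction of a uniform power $I^m$ from the finite-generation hypothesis; this is exactly what fails for $X=\BA^\infty$, $Y=\{0\}$, where the ideal is not finitely generated and no cofinal $\BN$-chain of thickenings exists, so that $X^\wedge_Y$ is not even weakly $\aleph_0$. Condition (b) is a soft consequence of working over a DG scheme. The main obstacle to keep an eye on is the globalization in Condition (a): ensuring the locally chosen nilpotency exponents are uniformly bounded over a quasi-compact cover of $Y$ and that $I^m\subseteq J$ is verified correctly on overlaps; once reduced to a finite affine cover this is routine commutative algebra.
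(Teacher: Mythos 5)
Your proposal is correct and follows essentially the same route as the paper: apply \propref{p:crit for aleph 0}, dispose of condition (b) via \secref{sss:remarks on compl}(ii) together with the fact that the cotangent complex of a DG scheme is a genuine (hence $\aleph_0$) object, and verify condition (a) by showing the powers $\CI^m$ of the ideal of $Y'$ give a cofinal $\BN$-chain of classical thickenings. The paper states the cofinality of the $\CI^m$ without proof; your extraction of a uniform exponent from local finite generation and quasi-compactness of $Y$ is just the elaboration of that step.
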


\begin{rem}
We expect that 
$X_Y^{\wedge}$ is actually $\aleph_0$ (see \secref{sss:aleph 0} for the distinction 
between the two notions), but we cannot prove it at the moment. However,
we will prove this when $X$ is affine, and for general $X$, ``up to sheafification",
see \propref{p:expl aleph 0}.
\end{rem}

\begin{proof} 
We shall deduce the assertion of the proposition from \propref{p:crit for aleph 0}. 

\medskip

We note that condition (b)
of \propref{p:crit for aleph 0} follows from \secref{sss:remarks on compl}(ii), as it
is satisfied for $X$. 

\medskip

It remains to show that the classical indscheme underlying $X^{\wedge}_Y$ is $\aleph_0$. However, the quasi-compactness
hypothesis in $Y$ and the assumption that the ideal $\CI$ of $Y'$ is locally finitely generated imply that the
subschemes $Y'_n$ given by $\CI^n$ are cofinal among all subschemes of $X$ whose underlying set is $Y$.
\end{proof}

\ssec{Formal completion of the affine line at a point} 

\sssec{}  \label{sss:fin codim}

We continue to study formal completions of the form $X^{\wedge}_Y$, where $X$ is a DG
scheme, and $Y$ is a Zariski closed subset of $^{cl}\!X$, which is quasi-separated and quasi-compact.

\medskip

We will impose the assumption made in \propref{p:formal aleph 0}. Namely, 
will assume that $Y$ can be represented by a subscheme $Y'$ of $^{cl}\!X$, 
whose ideal is locally finitely generated.

\medskip

We will show that in this case, the behavior of $X^{\wedge}_Y$ exhibits some additional
favorable features. 

\sssec{}

First, we shall calculate the most basic example: the formal completion of $\BA^1$ at the point $\{0\}$.
Namely, we have:

\begin{prop} \label{p:formal compl of A1}
The natural map
$$\underset{n}{colim}\, \Spec(k[t]/t^n)\to (\BA^1)^\wedge_{\{0\}},$$
where the colimit is taken in $\on{PreStk}$, is an isomorphism.
\end{prop}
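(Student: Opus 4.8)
By \propref{p:formal completion of scheme} the object $(\BA^1)^\wedge_{\{0\}}$ is a DG indscheme, and by \propref{p:expl formal compl} it is computed as the colimit, taken in $\inftydgprestack$, of the tautological functor $Y'\mapsto Y'$ over the category $\bD$ of closed embeddings $Y'\to \BA^1$ whose set-theoretic image \emph{is} $\{0\}$. The plan is to exhibit the functor
$$\iota:\BN\to \bD,\qquad n\mapsto \Spec(k[t]/t^n)$$
as a cofinal subdiagram. Since a cofinal functor induces an isomorphism on colimits, this will identify $\underset{n}{colim}\,\Spec(k[t]/t^n)$ with $(\BA^1)^\wedge_{\{0\}}$; that the resulting isomorphism is the \emph{natural} map follows because $(\BA^1)^\wedge_{\{0\}}\to \BA^1$ is a monomorphism (as $\{0\}\hookrightarrow {}^{cl,red}\BA^1$ is a closed embedding, see \secref{sss:monomorph}), so the canonical lifts of the closed embeddings $\Spec(k[t]/t^n)\to \BA^1$ to $(\BA^1)^\wedge_{\{0\}}$ are exactly the structure maps of the colimit restricted along $\iota$.

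\textbf{Full faithfulness of $\iota$.} First I would record that $t^n$ is a nonzerodivisor in $k[t]$, so the derived quotient agrees with the classical one and each $\Spec(k[t]/t^n)$ is a genuine object of $\dgSch$, with set-theoretic image exactly $\{0\}$ for $n\geq 1$; thus $\iota$ lands in $\bD$. A morphism $\Spec(k[t]/t^m)\to \Spec(k[t]/t^n)$ over $\BA^1$ is a map $k[t]/t^n\to k[t]/t^m$ fixing $t$, which exists (uniquely) precisely when $m\leq n$, and these mapping spaces are discrete. Hence $\iota$ identifies $\BN$ with a full (poset) subcategory of $\bD$.

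\textbf{Cofinality.} The category $\bD$ is filtered (it is cofinal in the filtered category of all closed embeddings into $\BA^1$ supported in $\{0\}$; cf. \corref{c:filtered for indscheme} and the last assertion of \lemref{l:cofinal in filtered}). By \lemref{l:cofinal in filtered}, cofinality of $\iota$ is then equivalent to the statement that every $Y'=\Spec B$ in $\bD$ admits a morphism $Y'\to \Spec(k[t]/t^n)$ in $\bD$, i.e.\ a factorization $Y'\to \Spec(k[t]/t^n)\to \BA^1$. Since $Y'\to\BA^1$ is a closed embedding with image $\{0\}$, the induced map $k[t]\to H^0(B)$ is surjective and the image $b$ of $t$ is nilpotent, so $b^n=0$ in $H^0(B)$ for some $n$. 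Presenting $k[t]/t^n$ as the derived pushout $k\otimes^L_{k[s]}k[t]$, where $k[s]\to k[t]$ sends $s\mapsto t^n$ and $k[s]\to k$ sends $s\mapsto 0$ (this pushout is concentrated in degree $0$ because $t^n$ is a nonzerodivisor), one gets $\Maps(k[t]/t^n,B)\simeq \Maps(k[t],B)\underset{\Maps(k[s],B)}{\times}\on{pt}$, i.e.\ the datum of the element $b\in\Maps(k[t],B)$ together with a nullhomotopy of $b^n$. As $b^n=0$ in $H^0(B)=\pi_0$, such a nullhomotopy exists; choosing one produces the required map, and the factorization over $\BA^1$ commutes via the constant homotopy on $b$.

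\textbf{The main obstacle.} The crux is precisely this last factorization. A derived closed subscheme $Z=\Spec B$ supported at $\{0\}$ may carry higher homotopy, so $t^n$ need \emph{not} vanish on the nose in $B$, and one cannot hope for a strict relation $t^n=0$. What rescues the argument is that a map out of $k[t]/t^n$ demands only a \emph{nullhomotopy} of $b^n$, whose existence is guaranteed by nilpotence of $b$ in $\pi_0=H^0(B)$; this is exactly the role of the derived-pushout presentation above. As a sanity check one can instead compute both sides directly on a test scheme $S=\Spec A$: the right-hand side is the union of path components of $\Omega^\infty(A)\simeq\Maps(\Spec A,\BA^1)$ lying over the nilpotent elements of $H^0(A)$, while the left-hand side is $\underset{n}{colim}$ of the homotopy fibers of $(-)^n:\Omega^\infty(A)\to\Omega^\infty(A)$ over $0$; matching these is the same phenomenon, but the cofinality route avoids the explicit homotopy-group bookkeeping.
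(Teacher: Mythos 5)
Your proof is correct, and it takes a genuinely different route from the one in the paper. The paper argues by direct computation: it upgrades both $\underset{n}{colim}\,\Spec(k[t]/t^n)$ and $(\BA^1)^\wedge_{\{0\}}$ to functors valued in connective spectra (using the group structure on $\BA^1$ and the binomial formula on the truncations), reduces to comparing homotopy groups, and then exploits the fiber sequence $\on{nilp}_n(A)\to \Omega^\infty(A)\overset{(-)^n}\to \Omega^\infty(A)$ together with the vanishing of the $n$-th power map on $\pi_i$ for $i\geq 1$ to show the transition maps kill the extraneous $\pi_{i+1}$-contributions in the colimit. You instead lean on the structural theory already established: the presentation of $(\BA^1)^\wedge_{\{0\}}$ as the colimit over \emph{all} closed subschemes supported at $\{0\}$ (\propref{p:expl formal compl}), combined with \lemref{l:cofinal in filtered} and the cofinality of the chain $\{\Spec(k[t]/t^n)\}_n$; your only genuinely derived input is the pushout presentation $k[t]/t^n\simeq k\otimes^L_{k[s]}k[t]$ and the observation that a nullhomotopy of $b^n$ exists as soon as $b^n$ vanishes in $\pi_0$ --- which correctly isolates the point where "some care is needed in the derived setting." The trade-off: your argument is shorter and avoids the homotopy-group bookkeeping, at the cost of invoking the heavier machinery of Sections 3 and 6 (all of which is proved before this proposition, so there is no circularity); the paper's computation, on the other hand, produces as a byproduct the explicit spectrum-level identification of $\Maps(\Spec(A),(\BA^1)^\wedge_{\{0\}})$ with the nilpotent components of $\Omega^\infty(A)$, which is reused later (e.g.\ in \secref{sss:exp} for the exponential isomorphism). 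Your closing "sanity check" correctly notes that the two arguments are ultimately seeing the same phenomenon.
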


The statement of the proposition is obvious at the level of the underlying classical prestacks,
i.e., when we evaluate both sides on $\affSch\subset \affdgSch$. However, some care is needed
in the derived setting. 

\medskip

The rest of this subsection is devoted to the proof of this proposition.



\sssec{Proof of \propref{p:formal compl of A1}, Step 1} \hfill   \label{sss:basic calc step 1}

\medskip

\noindent Both sides of the proposition are a priori functors $(\affdgSch)^{\on{op}}\to \inftygroup$. However, we claim
that they both, along with the map between them, naturally upgrade to functors 
$$(\affdgSch)^{\on{op}}\to \inftyPicgroup,$$
where $\inftyPicgroup$ is the category of $\infty$-Picard groupoids, i.e., connective spectra.

\medskip

Consider first the functor $\Maps_{\on{PreStk}}(-,\BA^1):(\affdgSch)^{\on{op}}\to \inftygroup$ represented by $\BA^1$. We claim that 
it naturally upgrades to a functor
$$\CMaps_{\on{PreStk}}(-,\BA^1):(\affdgSch)^{\on{op}}\to \inftyPicgroup.$$
This comes from the structure on $\BA^1$ of abelian group object in
$$\affSch\subset \affdgSch\subset \on{PreStk}.$$

\medskip

Consider now the object
$$\underset{n}{colim}\, \Spec(k[t]/t^n)\in {}^{cl}\!\on{PreStk},$$
where the colimit in the above formula is taken in $^{cl}\!\on{PreStk}$.
The binomial formula endows the above object with a structure of abelian group object in $^{cl}\!\on{PreStk}$.

\medskip

Consider the object
$$\on{nilp}:=\on{LKE}_{(\affSch)^{\on{op}}\to (\affdgSch)^{\on{op}}}(\underset{n}{colim}\, \Spec(k[t]/t^n))\in \on{PreStk}.$$
It equals 
$$\underset{n}{colim}\, \on{nilp}_n,$$
where the colimit is now taken in $\on{PreStk}$, and where 
$$\on{nilp}_n(S)=\Maps_{\affdgSch}(S,\Spec(k[t]/t^n)),\quad S\in \affdgSch.$$

\medskip

By the functoriality of $\on{LKE}_{(\affSch)^{\on{op}}\to (\affdgSch)^{\on{op}}}$, and since the forgetful functor
$$\inftyPicgroup\to \inftygroup$$
commutes with filtered colimits, we obtain that $\on{nilp}$ canonically lifts to a functor
$${\mathcal Nilp}:(\affdgSch)^{\on{op}}\to \inftyPicgroup.$$ 

\medskip

The same construction shows that the map of functors
$$\on{nilp}\to \Maps_{\on{PreStk}}(-,\BA^1)$$
naturally upgrades to a map of functors with values in $\inftyPicgroup$ 
$${\mathcal Nilp}\to \CMaps_{\on{PreStk}}(-,\BA^1).$$

\medskip

Consider now the functor $\Maps_{\on{PreStk}}\left(-,(\BA^1)^\wedge_{\{0\}}\right)$. 
Since $(\BA^1)^\wedge_{\{0\}}\hookrightarrow \BA^1$ is a monomorphism and gives rise to subgroups
at the level of $\pi_0$, we obtain that this functor also naturally upgrades to a functor
$$\CMaps_{\on{PreStk}}\left(-,(\BA^1)^\wedge_{\{0\}}\right):(\affdgSch)^{\on{op}}\to \inftyPicgroup,$$
and the natural transformation
${\mathcal Nilp}\to \CMaps_{\affdgSch}(-,\BA^1)$ factors canonically as
$${\mathcal Nilp}\to \CMaps_{\on{PreStk}}\left(-,(\BA^1)^\wedge_{\{0\}}\right)\to
\CMaps_{\affdgSch}(-,\BA^1).$$

\sssec{Step 2} \hfill

\medskip

\noindent To prove the proposition, we need to show that for $S=\Spec(A)\in \affdgSch$, the map in $\inftyPicgroup$
$${\mathcal Nilp}(A) \to A$$
is an isomorphism onto those connected components of $A$ that correspond to nilpotent elements in $\pi_0(A)={}^{cl}\!A$. 
In the above formula, we view a connective algebra $A$ as a connective spectrum, i.e.,
object of $\inftyPicgroup$.

\medskip

Hence, it suffices to show that for a connective commutative DG algebra $A$, the map
$$\pi_i({\mathcal Nilp}(A))\to \pi_i(A)$$
is an isomorphism for $i\geq 1$, and that $\pi_0({\mathcal Nilp}(A))$
maps isomorphically to the set of nilpotent elements in $\pi_0(A)={}^{cl}\!A$. 
Here by $\pi_i$ for $i\geq 1$ we mean the $i$th homotopy group of the corresponding space based at the point $0$.

\sssec{Step 3}

We first consider the case $i\geq 1$. 

\medskip

We regard each $\on{nilp}_n(A)$ as a pointed object of $\inftygroup$. Hence, from the isomorphism
$$\Omega^\infty({\mathcal Nilp}(A))=\on{nilp}(A)\simeq \underset{n}{colim}\, \on{nilp}_n(A)$$
in $\inftygroup_{*/}$, for each $i\geq 1$,
we have an isomorphism of (ordinary) groups:
$$\pi_i({\mathcal Nilp}(A))\simeq \underset{n}{colim}\, \pi_i(\on{nilp}_n(A)).$$

\medskip

Hence, it suffices to show that the map
\begin{equation} \label{e:isom for i}
\underset{n}{colim}\, \pi_i(\on{nilp}_n(A))\to \pi_i(\Omega^\infty(A))
\end{equation}
is an isomorphism. 

\medskip

We have a Cartesian
square in $\dgSch$:
$$
\CD
\Spec(k[t]/t^n) @>>> \BA^1 \\
@VVV @VV{\on{power}\,n}V   \\
\{0\}  @>>>  \BA^1,
\endCD
$$
and the corresponding Cartesian square in $\inftygroup$:
$$
\CD
\on{nilp}_n(A)  @>>>  \Omega^\infty(A)  \\
@VVV   @VV{\on{power}\,n}V  \\
*  @>>> \Omega^\infty(A).
\endCD
$$

Hence, we obtain a long exact sequence of homotopy groups
$$...\pi_{i+1}(\Omega^\infty(A)) \overset{\on{power}\,n}\longrightarrow \pi_{i+1}(\Omega^\infty(A)) 
\to  \pi_{i}(\on{nilp}_n(A))
\to \pi_{i}(\Omega^\infty(A)) \overset{\on{power}\,n}
\longrightarrow \pi_{i}(\Omega^\infty(A))...$$

\medskip

However, for $i\geq 1$ and $n>1$, the map 
$\pi_{i}(\Omega^\infty(A)) \overset{\on{power}\,n}\longrightarrow \pi_{i}(\Omega^\infty(A))$
is zero. Indeed, this follows from the fact for any two connective algebras $A_1$ and $A_2$,
the canonical map 
$$\Omega^{\infty}(A_1)\times \Omega^{\infty}(A_2)\to
\Omega^{\infty}(A_1\otimes A_2)$$
induces a zero map
$$\pi_i(\Omega^{\infty}(A_1))\oplus \pi_i(\Omega^{\infty}(A_2))\to
\pi_i(\Omega^{\infty}(A_1\otimes A_2))$$
for $i\geq 1$. 

\medskip

Hence, every $n$ we have a short exact sequence
$$0\to \pi_{i+1}(\Omega^\infty(A)) \to \pi_{i}(\on{nilp}_n(A))\to \pi_{i}(\Omega^\infty(A)) \to 0.$$ 

Moreover, for $n''\geq n'$, in the diagram 
$$
\CD
\pi_{i+1}(\Omega^\infty(A))   @>>>  \pi_{i}(\on{nilp}_{n'}(A))  @>>>  \pi_{i}(\Omega^\infty(A)) \\
@VVV     @VVV   @VVV   \\
\pi_{i+1}(\Omega^\infty(A))   @>>>  \pi_{i}(\on{nilp}_{n''}(A))  @>>>  \pi_{i}(\Omega^\infty(A))
\endCD
$$
the right vertical map is the identity, whereas the left vertical map corresponds 
to the map $\BA^1\to \BA^1$ given by raising to the power $n''-n'$, and so vanishes for $n''>n'$.

\medskip

This shows that \eqref{e:isom for i} is an isomorphism.

\sssec{Step 4}

The fact that 
$$\pi_0({\mathcal Nilp}(A))\to {}^{cl}\!A$$
is an isomorphism onto the set of nilpotent elements is proved similarly. 

\qed(\propref{p:formal compl of A1})

\ssec{Formal completions along subschemes of finite codimension}

We now return to the case of a general $X$ and $Y$ satisfying the assumption of \secref{sss:fin codim}.

\sssec{}

Assume that the DG scheme $X$ is eventually coconnective. It is natural to ask
whether the same will be true for the DG indscheme $X^{\wedge}_Y$. 

\medskip

Note, however, 
that asking for a DG indscheme to be eventually coconnective (i.e., eventually coconnective
as a stack) is a strong requirement, since it is difficult to satisfy it together with convergence,
see \cite[Sect. 1.2.6]{Stacks}. 

\medskip

However, the answer to the above question turns out to
be affirmative: 

\begin{prop}   \label{p:formal compl coconn}
If $X$ is eventually coconnective, then
$X^{\wedge}_Y$ is eventually coconnective as a DG indscheme. 
\end{prop}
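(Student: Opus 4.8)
The plan is to exhibit an integer $n$ for which $X^\wedge_Y$ is $n$-coconnective as a stack, i.e.\ lies in the essential image of the functor \eqref{e:LA to restr stacks gen}. By \propref{p:expl formal compl} we have, in $\inftydgprestack$, $X^\wedge_Y\simeq \underset{Y'}{\on{colim}}\, Y'$, the filtered colimit over closed embeddings $Y'\to X$ whose set-theoretic image is $Y$. Since $X^\wedge_Y$ is a DG indscheme (\propref{p:formal completion of scheme}), hence a stack, applying the sheafification $L$ shows that the same colimit computes $X^\wedge_Y$ in $\inftydgstack$ as well. The key formal input is that $n$-coconnective stacks are closed under colimits in $\inftydgstack$: the functor \eqref{e:LA to restr stacks gen} is a fully faithful left adjoint, so it preserves colimits and its essential image is closed under them. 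Hence it suffices to produce, for some fixed $n$, a cofinal subfamily of the $Y'$ consisting of $n$-coconnective DG schemes. This requires genuine work: as the square-zero example $\Spec(k[t]/t^N\oplus M[i])\hookrightarrow \BA^1$ shows, the indexing category contains closed subschemes of unbounded coconnectivity, so a bounded cofinal family must be constructed by hand.

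First I would treat the affine case $X=\Spec(A)$ with $H^{-i}(\CO_X)=0$ for $i>m$ and $\CI=(f_1,\dots,f_r)$. Lifting the $f_j$ to degree-zero cocycles, set $Z_N:=\Spec\bigl(A/\!\!/(f_1^N,\dots,f_r^N)\bigr)$, the Koszul (derived) quotient. Each $Z_N$ is $(m+r)$-coconnective, the map $Z_N\to X$ is a closed embedding with set-theoretic image $Y=V(\CI)$, and the inclusions $(f_j^{N'})\subseteq(f_j^N)$ for $N'\ge N$ induce transition maps $Z_N\to Z_{N'}$ via the universal property of the derived quotient. Cofinality follows from \lemref{l:cofinal in filtered} once I check that every $Y'=\Spec(B)$ in the indexing category admits a map to some $Z_N$: since $Y'$ has image $Y$, each $f_j$ is nilpotent in $H^0(B)$, so $f_j^N$ is a coboundary in $B$ for $N\gg 0$; choosing nullhomotopies yields a map $A/\!\!/(f_j^N)\to B$ under $A$, i.e.\ a closed embedding $Y'\to Z_N$. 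Thus the $(m+r)$-coconnective subfamily is cofinal, settling the affine case.

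For general $X$ I would globalize by fattening a given $Y'$ one chart at a time. Since $Y$ is quasi-compact, finitely many affine opens $U_1,\dots,U_s$ cover a neighborhood of $Y$ (away from $Y$ the completion is empty), and on each $U_\alpha$ the affine construction furnishes an $(m+r)$-coconnective Koszul fattening $W_\alpha$ of $Y'|_{U_\alpha}$. Set $Y^{(0)}=Y'$ and inductively form the pushout
\[
Y^{(\alpha)}:=W_\alpha\underset{\,Y^{(\alpha-1)}|_{U_\alpha}}{\sqcup}\,Y^{(\alpha-1)},
\]
where $Y^{(\alpha-1)}|_{U_\alpha}\hookrightarrow W_\alpha$ is a closed nil-immersion and $Y^{(\alpha-1)}|_{U_\alpha}\hookrightarrow Y^{(\alpha-1)}$ is an open embedding. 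By \corref{c:nil pushout} this pushout exists, $W_\alpha\to Y^{(\alpha)}$ is again an open embedding, and $Y^{(\alpha-1)}\to Y^{(\alpha)}$ is a closed nil-immersion over $X$; thus $Y^{(\alpha)}$ agrees with $W_\alpha$ over $U_\alpha$ and with $Y^{(\alpha-1)}$ elsewhere. After processing all charts, $Y'':=Y^{(s)}$ is covered by pieces that are each $(m+r)$-coconnective, and since coconnectivity of a DG scheme is Zariski-local, $Y''$ is $(m+r)$-coconnective; moreover $Y'\to Y''$ is a closed embedding over $X$ with image $Y$. This proves cofinality of the $(m+r)$-coconnective family in general, and combined with the first paragraph yields that $X^\wedge_Y$ is $(m+r)$-coconnective as a stack.

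The step I expect to be the main obstacle is precisely this globalization: because the Koszul construction depends on a choice of generators, the local fattenings do not glue directly, and one must instead replace $Y'$ chart-by-chart using the pushouts of \corref{c:nil pushout}, verifying both that the coconnectivity bound is preserved — here Zariski-locality, or alternatively \lemref{c:truncation of push-out}(a), is essential — and that the iterated construction still produces an object over $X$ with image exactly $Y$. The affine cofinality input, namely that one is forced to use derived (Koszul) rather than classical thickenings since the classical infinitesimal neighborhoods fail to be cofinal, is the conceptual heart that makes this globalization necessary in the first place.
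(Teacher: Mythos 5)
Your argument is correct, and its engine coincides with the paper's: your Koszul quotients $Z_N=\Spec\bigl(A/\!\!/(f_1^N,\dots,f_r^N)\bigr)$ are literally the $\Spec(A_n)$ of \propref{p:expl pres} (the free graded-commutative algebra $A[t_{n,1},\dots,t_{n,m}]$ with $d(t_{n,i})=f_i^n$ \emph{is} the derived quotient), and your coconnectivity bound is the paper's. Where you genuinely diverge is in how you justify that this family computes $X^{\wedge}_Y$, and in the globalization. The paper proves the stronger statement that the colimit of the $\Spec(A_n)$ is isomorphic to $X^{\wedge}_Y$ \emph{as prestacks} (\propref{p:expl pres}), by base change from the computation of $(\BA^1)^\wedge_{\{0\}}$ in \propref{p:formal compl of A1} — the delicate point of that section — and then simply declares the coconnectivity assertion Zariski-local on $X$. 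You instead need only \emph{cofinality} of the Koszul family inside the presentation of \propref{p:expl formal compl}, which you obtain from the soft observation that $f_j^N$ becomes nullhomotopic in any $B$ supported on $Y$, together with \lemref{l:cofinal in filtered}; this bypasses \propref{p:formal compl of A1} entirely, at the price of having to globalize by hand via the nil-immersion pushouts of \corref{c:nil pushout}. Both routes work, and your cofinality argument is a nice low-tech substitute for \propref{p:expl pres} when only coconnectivity is at stake. Two small remarks: (i) the chart-by-chart fattening, while correct, can be short-circuited — the Zariski-locality the paper asserts follows from the same closure-under-colimits principle you already invoke in your first paragraph, applied to the \v{C}ech presentation of $X^{\wedge}_Y$ in $\inftydgstack$ by the completions $(U_i)^{\wedge}_{Y\cap U_i}$ (use \secref{sss:remarks on compl}(iv) and universality of colimits in $\inftydgstack$); (ii) in the cofinality step, a map $Y'\to Z_N$ over $X$ between closed subschemes of $X^{\wedge}_Y$ is automatically a closed embedding by the remark in \secref{sss:general closed emb}, so no extra verification is needed for it to be a morphism of the indexing category.
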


\sssec{} \label{sss:Koszul}

In order to prove \propref{p:formal compl coconn}, we will give a more
explicit description of the formal completion $X^{\wedge}_Y$ in the situation
of \secref{sss:fin codim} when $X$ is affine. This description will be 
handy for the proof of several other assertions in this paper. 

\medskip

Let $X=\Spec(A)$, and let $Y'$ be a closed subscheme of $^{cl}\!X$ whose
ideal is generated by elements $\ol{f_1}$,...,$\ol{f_m}$ in 
$$^{cl}\!A=H^0(A)=\pi_0(\Omega^{\infty}({\mathsf {Sp}}(A))).$$ 

\medskip

Let $f_1,...,f_m$ be points of $\Omega^{\infty}(A)$
that project to the $\ol{f_1}$,...,$\ol{f_m}$. 

\medskip

For an integer $n$, set $A_n:=A[t_{n,1},...,t_{n,m}]$, where the generators $t_{n,i}$ are in degree $-1$, and
$d(t_{n,i})=f_i^n$. 

\medskip

For $n'\leq n''$ we have a natural map $A_{n''}\to A_{n'}$ which is identity on $A$,
and which sends $t_{n'',i}\mapsto f_i^{n''-n'}\cdot t_{n',i}$. We will prove:

\begin{prop}  \label{p:expl pres}
The natural map 
\begin{equation} \label{e:compl as colimit}
\underset{n}{colim}\, \Spec(A_n) \to X^{\wedge}_Y,
\end{equation}
where the colimit is taken in $\inftydgprestack$, 
is an isomorphism. 
\end{prop}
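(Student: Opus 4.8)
The plan is to exhibit $\Spec(A_n)$ as a derived fiber product over affine space and then reduce to the one-variable computation of \propref{p:formal compl of A1}. Write $f=(f_1,\dots,f_m):X\to \BA^m$ for the map determined by the chosen lifts, and set $Z_n:=\Spec\big(k[z_1,\dots,z_m]/(z_1^n,\dots,z_m^n)\big)$, viewed as a closed subscheme of $\BA^m$. Since $(z_1^n,\dots,z_m^n)$ is a regular sequence in $k[z_1,\dots,z_m]$, the Koszul complex $k[z_1,\dots,z_m][\epsilon_1,\dots,\epsilon_m]$ with $d\epsilon_i=z_i^n$ resolves $Z_n$, so the \emph{derived} base change of $k[z_1,\dots,z_m]/(z_i^n)$ along $f^\sharp:k[z_1,\dots,z_m]\to A$ is computed by freely adjoining to $A$ degree $-1$ generators $\epsilon_i$ with $d\epsilon_i=f_i^n$; this is exactly $A_n$. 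Hence $\Spec(A_n)\simeq X\underset{\BA^m}\times Z_n$, and one checks directly that the transition maps $A_{n''}\to A_{n'}$ of the statement correspond under this identification to the coordinatewise quotient maps $Z_{n'}\hookrightarrow Z_{n''}$ (on the Koszul models one sends $\epsilon_i\mapsto z_i^{n''-n'}\eta_i$).

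Next I would compute $\underset{n}{colim}\, Z_n$. As $Z_n=\prod_{i=1}^m \Spec(k[z_i]/z_i^n)$ and the transition maps are products of the one-variable ones, and since filtered colimits commute with finite products in $\inftygroup$, hence objectwise in $\inftydgprestack$, we get $\underset{n}{colim}\, Z_n\simeq \prod_{i=1}^m\big(\underset{n}{colim}\, \Spec(k[z_i]/z_i^n)\big)$. Each factor $\Spec(k[z_i]/z_i^n)$ is genuinely the derived vanishing locus of $z_i^n$ (again because $z_i^n$ is a nonzerodivisor), so by \propref{p:formal compl of A1} it has colimit $(\BA^1)^\wedge_{\{0\}}$. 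Since the formal completion $(\BA^m)^\wedge_{\{0\}}$ is visibly the product $\prod_i (\BA^1)^\wedge_{\{0\}}$ (a map to $\BA^m$ lands in the completion iff each coordinate is nilpotent on $H^0$ modulo nilpotents), we conclude $\underset{n}{colim}\, Z_n\simeq (\BA^m)^\wedge_{\{0\}}$.

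Finally I would assemble the pieces. By \secref{sss:remarks on compl}(iv) applied to $f:X\to\BA^m$ with $\CY=\{0\}$, whose preimage at the reduced classical level is exactly $Y=V(\ol{f_1},\dots,\ol{f_m})$, one has $X^{\wedge}_Y\simeq (\BA^m)^\wedge_{\{0\}}\underset{\BA^m}\times X$. On the other hand, since colimits of prestacks are computed objectwise and filtered colimits commute with the finite limits defining fiber products,
$$\underset{n}{colim}\, \Spec(A_n)\simeq \underset{n}{colim}\,\Big(X\underset{\BA^m}\times Z_n\Big)\simeq X\underset{\BA^m}\times \Big(\underset{n}{colim}\, Z_n\Big)\simeq X\underset{\BA^m}\times (\BA^m)^\wedge_{\{0\}}.$$
Comparing the two expressions yields the desired isomorphism, and unwinding the identifications shows it is the natural map \eqref{e:compl as colimit}: a point of $\Spec(A_n)$ carries a nullhomotopy of each $f_i^n$, so its reduced classical restriction kills $\ol{f_i}$ and factors through $Y$. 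I expect the main obstacle to be precisely the derived identification $\Spec(A_n)\simeq X\underset{\BA^m}\times Z_n$ together with the legitimacy of interchanging the filtered colimit with the fiber product; once those are in hand, the rest is bookkeeping and the appeal to \propref{p:formal compl of A1}.
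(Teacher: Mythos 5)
Your proposal is correct and follows essentially the same route as the paper: identify $X^{\wedge}_Y$ with $(\BA^m)^\wedge_{\{0\}}\underset{\BA^m}\times X$, invoke \propref{p:formal compl of A1} for the completion of affine space at the origin, commute the filtered colimit with the fiber product, and recognize each term as the (Koszul) derived base change $\Spec(A_n)$. Your explicit discussion of the Koszul model and of the product decomposition of $(\BA^m)^\wedge_{\{0\}}$ just fills in details the paper leaves implicit.
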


\begin{proof}

The functions $f_1,...,f_m$ define a map 
$$\Spec(A)\to \BA^m,$$
and by definition,
$X^{\wedge}_Y$ maps isomorphically to the fiber product
$$(\BA^m)^\wedge_{\{0\}}\underset{\BA^m}\times \Spec(A).$$

Since fiber products commute with filtered colimits, from \propref{p:formal compl of A1}, we obtain that
$$(\BA^m)^\wedge_{\{0\}}\underset{\BA^m}\times \Spec(A)$$
is isomorphic to the colimit over $n$ of
\begin{equation} \label{e:raising n}
(\{0\}\underset{\BA^m}\times \BA^m)\underset{\BA^m}\times \Spec(A),
\end{equation}
where the map $\BA^m\to \BA^m$ is given by raising to the power $n$ along each coordinate. Now,
by definition, the DG scheme in \eqref{e:raising n} is isomorphic to $\Spec(A_n)$, as required.

\end{proof}

\sssec{}   

Let us show how \propref{p:expl pres} implies \propref{p:formal compl coconn}:

\begin{proof}[Proof of \propref{p:formal compl coconn}]

First, note that the assertion is local in the Zariski topology on $X$. Thus, 
we can assume that $X=\Spec(A)$ is affine.

\medskip

Now, the assertion follows from the fact that if $A$ is $l$-coconnective, then 
each of the algebras $A_n$ is $(m+l)$-coconnective, by construction.

\end{proof}

\sssec{}

Here is another corollary of \propref{p:expl pres}:

\begin{prop}  \label{p:expl aleph 0}
The DG indscheme $X^{\wedge}_Y$
can be written as a colimit in $\inftydgstack$
$$\underset{\alpha\in \sA}{colim}\, Y'_\alpha,$$
where $Y'_\alpha\to X$ are closed embeddings with set-theoretic image is equal to $Y$,
and where the category $\sA$ of indices is equivalent to the poset $\BN$.
\end{prop}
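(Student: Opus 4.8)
The plan is to reduce to the affine situation already settled by \propref{p:expl pres}, and then to globalize by a closure-and-union construction, pushing the unavoidable gluing into the sheafification functor $L$. First I would reduce to the case that $X$ is quasi-compact and quasi-separated: since $Y$ is quasi-compact it admits a quasi-compact open neighborhood inside $X$, the formal completion is unchanged if $X$ is replaced by such a neighborhood, and closed embeddings into $X$ supported on $Y$ are unaffected. So we may assume $X\in \dgSch_{\on{qsep-qc}}$ and fix a finite affine cover $X=\bigcup_j U_j$. On each chart the ideal of $Y\cap U_j$ is finitely generated, so \propref{p:expl pres} gives an isomorphism $U_j{}^{\wedge}_{Y\cap U_j}\simeq \underset{n}{colim}\, \Spec(A^j_n)$, where the $\Spec(A^j_n)\hookrightarrow U_j$ are closed embeddings with set-theoretic image $Y\cap U_j$ and the transition maps are closed embeddings.

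Next I would produce the global terms $Y'_n$. For each $j$ and $n$, let $\overline{W^j_n}\hookrightarrow X$ be the closure of the image of the composite $\Spec(A^j_n)\to U_j\hookrightarrow X$ (\secref{sss:closure of image}), and let $Y'_n$ be the union of the $\overline{W^j_n}$ over $j$, i.e. their coproduct in the category of closed subschemes of $X$, which exists by \propref{p:colimits as closed}. Then $Y'_n\hookrightarrow X$ is a closed embedding whose underlying set is $\bigcup_j \overline{Y\cap U_j}=Y$, and functoriality of the closure of the image and of unions upgrades $n\mapsto Y'_n$ to an $\BN$-indexed diagram with closed-embedding transition maps. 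Since each $Y'_n$ is supported on $Y$, the map $Y'_n\to X$ factors uniquely through $X^{\wedge}_Y$, and by \secref{sss:closed emb into compl} these factorizations are themselves closed embeddings; this produces the desired $\BN$-family.

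It remains to prove that $L\bigl(\underset{n}{colim}\, Y'_n\bigr)\overset{\sim}{\to} X^{\wedge}_Y$, and this is the step forcing the colimit to be taken in $\inftydgstack$. As both sides are stacks, I would check the isomorphism Zariski-locally on the base $X$, that is after $(-)\times_X U_j$; restriction to an open commutes with $L$ and with colimits, and by \secref{sss:remarks on compl}(iv) one has $X^{\wedge}_Y\times_X U_j\simeq U_j{}^{\wedge}_{Y\cap U_j}$. Over $U_j$ one must then compare $\underset{n}{colim}\,(Y'_n|_{U_j})$ with $\underset{n}{colim}\,\Spec(A^j_n)\simeq U_j{}^{\wedge}_{Y\cap U_j}$. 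By construction there are closed embeddings $\Spec(A^j_n)\hookrightarrow Y'_n|_{U_j}$, while \corref{c:cofinality of closed}, together with the cofinality of the Koszul presentation recorded in \secref{sss:canonicity of presentation}, shows that each $Y'_n|_{U_j}$ embeds in turn as a closed subscheme of some $\Spec(A^j_{n'})$. Hence the two $\BN$-systems of closed subschemes of $U_j{}^{\wedge}$ are mutually cofinal and have the same colimit, and assembling the resulting local isomorphisms by Zariski descent gives the claim.

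The hard part will be precisely this last step. The naive hope of gluing the local Koszul thickenings $\Spec(A^j_n)$ into global DG schemes fails: over an overlap $U_j\cap U_k$ the two Koszul complexes, built from different local generators, are not isomorphic but only cofinal in one another, which is exactly why $X^{\wedge}_Y$ cannot in general be presented over $\BN$ in $\inftydgprestack$ (the failure of the honest $\aleph_0$ property). Replacing the Koszul thickenings by the globally defined, but merely ``covering-cofinal,'' subschemes $Y'_n$ repairs this at the cost that the colimit becomes correct only after applying $L$; the technical crux is verifying that sheafification and restriction to the $U_j$ interact well with the filtered colimit, and that the mutual-cofinality squeeze holds on each chart.
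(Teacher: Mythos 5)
Your proof is correct and follows essentially the same route as the paper's: affine-local Koszul presentations from \propref{p:expl pres}, closures of images in $X$ combined by finite unions of closed subschemes, and a chart-by-chart mutual-cofinality check. The only cosmetic differences are that you index by the diagonal copy of $\BN$ from the start (the paper uses the finite product of the local index posets), and that you spell out explicitly where sheafification and the cofinality squeeze enter, which the paper leaves implicit.
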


\medskip

\begin{rem}
This proposition does not prove that $X^{\wedge}_Y$ is $\aleph_0$, because
the colimit is taken in $\on{Stk}$ and not $\on{PreStk}$.
\end{rem}

\begin{proof}

First, note that \propref{p:expl pres} gives such a presentation if $X$ is affine
(moreover, in this case, the colimit can be taken in $\inftydgprestack$). I.e.,
in this case, $X^{\wedge}_Y$ is $\aleph_0$ as a DG indscheme.

\medskip

Let $S_i$ be a (finite) collection of affine open DG subschemes of $X$ that covers $Y$. For each
$i$, let $\sA_i$ be the corresponding index set (isomorphic to $\BN$) for the formal
completion $(S_i)^{\wedge}_{S_i\cap Y}$. 

\medskip

For $\alpha_i\in \sA_i$ let $Y'_{i,\alpha_i}$ be the
corresponding DG scheme equipped with a closed embedding into $S_i$. Let
$\ol{Y}'_{i,\alpha_i}$ be the closure of its image in $X$, see \secref{sss:closure of image}.

\medskip

For $\alpha:=\{i\mapsto (\alpha_i\in \sA_i)\}$ set $Y'_\alpha$ be the coproduct
of $\ol{Y}'_{i,\alpha_i}$ in $(\dgSch_{\on{qs-qs}})_{\on{closed}\,\on{in}\,X}$.

\medskip

We claim that the family $\alpha\mapsto Y'_\alpha$ has the desired property. Indeed,
it is sufficient to show that for every $i$, the colimit of the family 
$$\alpha\mapsto Y'_\alpha\underset{X}\times S_i$$ is isomorphic to
$(S_i)^{\wedge}_{S_i\cap Y}$. However, this is clear since this colimit is also given by the colimit of the family $i\mapsto Y'_{i,\alpha_i}$.

\end{proof}

\ssec{Classical vs. derived formal completions}  \label{ss:classical vs derived formal}

We shall now show how \propref{p:expl pres} helps answer another natural question
regarding the behavior of $X^{\wedge}_Y$.

\sssec{}

Let $X$ a DG scheme, which is $0$-coconnective (=classical), i.e., the sheafification of a left Kan extension of
a classical scheme. 

\medskip

One can ask whether the DG indscheme $X^{\wedge}_Y$ is also
$0$-coconnective. That is, we consider the classical indscheme $^{cl}(X^{\wedge}_Y)$, and let 
$$\CX:={}^L\!\on{LKE}_{(\affSch)^{\on{op}}\hookrightarrow (\affdgSch)^{\on{op}}}\left({}^{cl}(X^{\wedge}_Y)\right).$$
By adjunction, we obtain a map
\begin{equation} \label{e:classical vs derived}
\CX\to X^{\wedge}_Y,
\end{equation} 
and we wish to know whether it is an isomorphism. 

\medskip

Again, we emphasize that it is a rather strong property for a DG indscheme
(or any convergent stack) to be $0$-coconnective (rather than weakly $0$-coconnective),
see \cite[Remark 1.2.6]{Stacks}.

\medskip

However, the answer to the above question turns out to be affirmative, under
an additional assumption that $X$ be Noetherian (see \cite[Sect. 0.6.9]{IndCoh}
for the notion of Noetherianness in the DG setting):

\begin{prop} \label{p:class vs derived formal}
If $X$ is Noetherian, the map
\eqref{e:classical vs derived} is an isomorphism.
\end{prop}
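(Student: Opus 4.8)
The plan is to reduce to the case where $X=\Spec(A)$ is affine and then to exploit the explicit presentation of $X^\wedge_Y$ furnished by \propref{p:expl pres}. First I would check that the assertion is local on $X$: for an affine open $U\subset X$ we have $X^\wedge_Y\underset{X}\times U\simeq U^\wedge_{Y\cap U}$ by \secref{sss:remarks on compl}(iv), the analogous base change holds for $\CX$ since $\on{LKE}$ and sheafification commute with open restriction, and both $\CX$ and $X^\wedge_Y$ are stacks (\propref{p:indschemes fppf}); hence the map \eqref{e:classical vs derived} is an isomorphism as soon as it is one after restriction to the members of an affine cover. Moreover, since a prestack which is $0$-coconnective as a prestack and is a stack is automatically $0$-coconnective as a stack (if $\CY=\on{LKE}({}^{cl}\CY)$ and $\CY=L(\CY)$ then $\CY={}^L\!\on{LKE}({}^{cl}\CY)$), it suffices to prove that, for $X$ affine, $X^\wedge_Y$ is $0$-coconnective as a prestack, i.e.\ that the canonical map $\on{LKE}({}^{cl}(X^\wedge_Y))\to X^\wedge_Y$ is an isomorphism in $\inftydgprestack$.

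Now fix $X=\Spec(A)$ with $A$ classical and Noetherian, and let $f_1,\dots,f_m$ generate the ideal of $Y$. By \propref{p:expl pres}, $X^\wedge_Y\simeq \underset{n}{\on{colim}}\,\Spec(A_n)$ in $\inftydgprestack$, where $A_n$ is the Koszul complex on $f_1^n,\dots,f_m^n$; in particular each $A_n$ is $m$-coconnective, with $H^0(A_n)=A/(f_1^n,\dots,f_m^n)$ and with $H^{-j}(A_n)$ the Koszul homology, a finitely generated $A$-module which vanishes for $j>m$. Since $\on{LKE}$ commutes with colimits and sends a classical scheme to the corresponding $0$-coconnective DG scheme, $\on{LKE}({}^{cl}(X^\wedge_Y))\simeq \underset{n}{\on{colim}}\,\Spec(H^0(A_n))$, and the map in question becomes, after evaluation on an arbitrary $S=\Spec(B)\in\affdgSch$, the map
\[
\underset{n}{\on{colim}}\,\Maps_{\on{CAlg}}(H^0(A_n),B)\to \underset{n}{\on{colim}}\,\Maps_{\on{CAlg}}(A_n,B)
\]
induced by the truncations $A_n\to H^0(A_n)$. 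Because the $A_n$ are uniformly $m$-coconnective, each $A_n\to H^0(A_n)$ factors through a \emph{finite} Postnikov tower $A_n=\tau^{\geq -m}A_n\to\cdots\to\tau^{\geq 0}A_n=H^0(A_n)$ in which every step $\tau^{\geq -j}A_n\to\tau^{\geq -(j-1)}A_n$ is a square-zero extension by the $H^0(A_n)$-module $H^{-j}(A_n)[j]$. Thus it suffices to show, for each fixed $j$, that applying $\underset{n}{\on{colim}}\,\Maps_{\on{CAlg}}(-,B)$ to a single step yields an equivalence, and to iterate over the $m$ stages.

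The heart of the matter is the pro-vanishing of Koszul homology: the transition maps of the excerpt, which act on $H^{-j}(A_{n''})\to H^{-j}(A_{n'})$ by multiplication by powers of the $f_i$, make the pro-system $\{H^{-j}(A_n)\}_n$ pro-zero for every $j\geq 1$; this is an Artin--Rees argument using that $A$ is Noetherian and the modules $H^{-j}(A_n)$ are finitely generated, and is precisely the input \cite[Prop. 7.12.22]{BD} for which the Noetherian hypothesis is needed. Granting this, deformation theory for the square-zero extension $\tau^{\geq -j}A_n\to\tau^{\geq -(j-1)}A_n$ shows that the discrepancy between $\Maps_{\on{CAlg}}(\tau^{\geq -(j-1)}A_n,B)$ and $\Maps_{\on{CAlg}}(\tau^{\geq -j}A_n,B)$ is controlled, functorially and linearly in $n$, by mapping spaces out of the module $H^{-j}(A_n)$; since $\{H^{-j}(A_n)\}_n$ is pro-zero, these contributions are annihilated upon passing to the filtered colimit over $n$, so each stage induces an equivalence on $\underset{n}{\on{colim}}\,\Maps(-,B)$. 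Iterating the $m$ stages gives the desired isomorphism for affine $X$, and the locality argument of the first paragraph then globalizes it. I expect the main obstacle to be the pro-vanishing statement — the only place the Noetherian assumption enters — together with the bookkeeping needed to upgrade pro-vanishing of the homotopy groups $H^{-j}(A_n)$ to an honest equivalence of colimits of mapping spaces via square-zero deformation theory.
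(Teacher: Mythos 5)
Your proposal is correct and follows essentially the same route as the paper: reduce to $X=\Spec(A)$ affine, invoke the Koszul presentation of \propref{p:expl pres}, and derive everything from the Artin--Rees pro-vanishing of the Koszul homology groups $H^{-j}(A_n)$ for $j>0$, which is exactly where the Noetherian hypothesis enters. The only difference is cosmetic and concerns the last step: instead of running square-zero deformation theory stage-by-stage along the Postnikov towers of the $A_n$, the paper converts pro-vanishing into the statement that for every $n$ there is $N\geq n$ such that $A_N\to A_n$ factors through $H^0(A_N)$ (using that a map of connective DG algebras factors through $H^0$ if and only if it kills all higher cohomology), which interleaves the pro-systems $\{A_n\}$ and $\{H^0(A_n)\}$ and yields the isomorphism of colimits directly; note also that the Artin--Rees input is the paper's own \lemref{l:actual Artin-Rees} rather than \cite[Prop.~7.12.22]{BD}, which is an unrelated statement about formally smooth formal schemes.
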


\begin{rem}\label{r:derived formal as zar sheafification}
In fact, it will be apparent from the proof of \propref{p:class vs derived formal} that the map
$$ ^{\on{Zar}} \CX \to X^{\wedge}_Y $$
is an isomorphism, where $^{\on{Zar}} \CX$ is the \emph{Zariski} sheafification of $\on{LKE}_{(\affSch)^{\on{op}}\hookrightarrow (\affdgSch)^{\on{op}}}\left({}^{cl}(X^{\wedge}_Y)\right)$.
\end{rem}

\sssec{Proof of \propref{p:class vs derived formal}, Step 1}

The assertion readily reduces to the case when $X$ is affine; $X=\Spec(A)$,
where $A$ is a classical $k$-algebra. Let $f_1,...,f_m\in A$ be the generators
of the ideal of some subscheme $Y'\subset X$ whose underlying Zariski-closed 
subset is $Y$. Let $A_n$ be the algebras as in \secref{sss:Koszul}.

\medskip

For each $n$, let $A'_n$ be the classical 
algebra $H^0(A_n)$, so that 
$$\CX\simeq \underset{n}{colim}\, \Spec(A'_n),$$
where the colimit is taken in $\inftydgprestack$. We will show that 
inverse systems $\{A_n\}$ and $\{A'_n\}$ are equivalent, i.e., that the natural map
\begin{equation} \label{e:two systems}
\underset{n\in \BN}{colim}\, \Spec(A'_n) \to \underset{n\in \BN}{colim}\, \Spec(A_n)
\end{equation}
is an isomorphism in $\on{PreStk}$.

\sssec{Proof of \propref{p:class vs derived formal}, Step 2}

We will prove:

\begin{lem}  \label{l:Artin-Rees alg}
For every $n$ there exists $N\geq n$ such that the map $A_{N}\to A_n$ can be factored
as $$A_{N}\to A'_N\to A_n.$$
\end{lem}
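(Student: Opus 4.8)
The plan is to interpret the algebras $A_n$ as Koszul complexes and reduce the factorization to a pro-vanishing statement for higher Koszul homology. First I would record the structure of the $A_n$. Since $A$ is classical and each $t_{n,i}$ sits in degree $-1$ with $d(t_{n,i})=f_i^n$, the algebra $A_n$ is the Koszul complex of the sequence $(f_1^n,\dots,f_m^n)$: it is a connective semifree commutative DG $A$-algebra, bounded (concentrated in cohomological degrees $-m\le\bullet\le 0$), with $H^0(A_n)=A/(f_1^n,\dots,f_m^n)=A'_n$ and with $H^{-j}(A_n)$ the $j$-th Koszul homology for $1\le j\le m$. Because $A$ is Noetherian, each $H^{-j}(A_n)$ is a finitely generated $A$-module. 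The canonical map $A_n\to A'_n$ is the truncation $A_n\to H^0(A_n)$, and the transition map $A_N\to A_n$ is the $A$-algebra map $t_{N,i}\mapsto f_i^{N-n}t_{n,i}$, which is a chain map since $f_i^{N-n}\cdot f_i^n=f_i^N$.

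Second, I would reformulate the desired factorization as an extension problem. A factorization $A_N\to A'_N\to A_n$ is the datum of a map $A'_N\to A_n$ together with a homotopy identifying the composite with the transition map. I would choose a Koszul--Tate resolution, i.e.\ a semifree commutative DG $A$-algebra $R\xrightarrow{\sim}A'_N$ obtained from $A_N$ by successively adjoining polynomial generators in degrees $\le -2$ that kill the higher homology $H^{-j}(A_N)$; Noetherianity permits finitely many generators in each degree, and there is a tautological inclusion $A_N\hookrightarrow R$ over the augmentations to $A'_N$. Using $R\simeq A'_N$, producing the factorization is the same as extending the transition map $A_N\to A_n$ to a map $R\to A_n$. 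I would build this extension generator by generator: over a degree-$(-j-1)$ generator $s$ with $ds=z$, the extension exists precisely when the class of the image of the cycle $z$ vanishes in $H^{-j}(A_n)$. Since both $A_N$ and $A_n$ are bounded, only finitely many stages ($1\le j\le m$) carry a nontrivial obstruction, and these obstructions are controlled by the images of the Koszul homology groups $H^{-j}(A_N)$ in $H^{-j}(A_n)$.

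The crux, and the source of both the name and the Noetherian hypothesis, is the Artin--Rees input: the inverse system $\{H^{-j}(A_N)\}_N$ of higher Koszul homologies of the powers $(f_1^N,\dots,f_m^N)$, with the transition maps above, is \emph{pro-zero} for every $j\ge 1$; that is, for each $n$ there exists $N\ge n$ with $H^{-j}(A_N)\to H^{-j}(A_n)$ the zero map. This is precisely the statement, valid over a Noetherian ring by Artin--Rees, that the derived and classical $(f_i)$-adic completions agree. Granting it, I would pick $N$ large enough to annihilate the images of all (finitely many) obstruction groups at once; then every obstruction vanishes, the extension $R\to A_n$ can be assembled, and it yields the factorization $A_N\to A'_N\to A_n$. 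The main obstacle I anticipate is the bookkeeping in the extension step: the obstruction cycle at stage $j$ depends on the nullhomotopies chosen at earlier stages, so one must check these choices can be made compatibly within a single $N$. Here the boundedness of $A_n$ is decisive — it forces the process to terminate, kills all obstructions in degrees below $-m$, and lets the pro-vanishing be invoked a uniform, finite number of times.
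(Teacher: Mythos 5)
Your proposal follows essentially the same route as the paper: reduce the factorization $A_N\to A'_N\to A_n$ to the pro-vanishing of the maps $H^{-i}(A_N)\to H^{-i}(A_n)$ for $i>0$ (the paper dispatches this reduction with a one-line induction where you use Koszul--Tate obstruction theory), and then deduce that pro-vanishing from Artin--Rees, which the paper makes precise by identifying $H^{-i}(A_n)$ with $\on{Tor}_i^{k[t_1,\dots,t_m]}(A,k[t_1,\dots,t_m]/(t_1^n,\dots,t_m^n))$ and using cofinality of the two systems of ideals. The argument is correct and no genuinely different idea is involved.
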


\medskip

Let us show how \lemref{l:Artin-Rees alg} implies that \eqref{e:two systems} is an isomorphism. We construct the
sequence $i_1,i_2,...,\subset \BN$ inductively, starting with $i_1=1$. Suppose $i_k$
has been constructed. We take $n:=i_k$, and we let $i_{k+1}$ be the integer $N$ given by \lemref{l:Artin-Rees alg}.

\medskip

We obtain the maps
$$\underset{k\in \BN}{colim}\, \Spec(A'_{i_k})\to \underset{k\in \BN}{colim}\, \Spec(A_{i_k})$$
and 
$$\underset{k\in \BN}{colim}\, \Spec(A_{i_k}) \to \underset{k\in \BN}{colim}\, \Spec(A'_{i_{k+1}})$$
that induce mutually inverse maps in \eqref{e:two systems}. 

\sssec{Proof of \propref{p:class vs derived formal}, Step 3}

We will deduce the assertion of \lemref{l:Artin-Rees alg} from the 
following version of the Artin-Rees lemma:

\begin{lem}  \label{l:actual Artin-Rees}
Let $A$ be a commutative algebra in $\Vect^{\leq 0}$ such that $H^0(A)$
is Noetherian. Let $f_1,....,f_m$ be an $m$-tuple of points in $A$, and 
let $A_n$ have the same meaning as in \secref{sss:Koszul}. Let $M$
be an $A$-module in $\Vect^{\geq -k,\leq 0}$, such that each $H^i(M)$
is finitely generated as a module over $H^0(A)$. Then for every $i>k$, the inverse system
$$n \mapsto H^{-i}(A_n\underset{A}\otimes M)$$
equivalent to zero, i.e., for every $n$ there exists an $N\geq n$, such that the map
$$H^{-i}(A_N\underset{A}\otimes M)\to H^{-i}(A_n\underset{A}\otimes M)$$
is zero.
\end{lem}


We will prove a stronger assertion: for every $n$ there exists an $N\geq n$ such that 
the map
$$A_N\underset{A}\otimes M \to A_n\underset{A}\otimes M$$
factors as 
$$A_N\underset{A}\otimes M \to
\tau^{\geq -k}(A_N\underset{A}\otimes M) \to A_n\underset{A}\otimes M.$$

First, it follows from the Koszul resolution that $A_n\underset{A}\otimes M$ is acyclic in degrees $<-(k+m)$. 
Hence, by induction, it suffices to show that for every $i>k$ and every $n$, there exists an $N\geq n$ such that
the map
$$\tau^{\geq -i}(A_N\underset{A}\otimes M)\to \tau^{\geq -i}(A_n\underset{A}\otimes M)$$
factors as
$$\tau^{\geq -i}(A_N\underset{A}\otimes M)\to \tau^{\geq -i+1}(A_N\underset{A}\otimes M)\to 
\tau^{\geq -i}(A_n\underset{A}\otimes M).$$

Note that the latter assertion is equivalent to the original statement of \lemref{l:actual Artin-Rees}.  

\medskip

We first consider the case $m=1$. Then if $i>k$, the module $H^{-i}(A_n\underset{A}\otimes M)$
is non-zero only for $i=k+1$ and identifies with 
$$H^{-i}(M)^n:=\on{ker}(f^n:H^{-i}(M)\to H^{-i}(M)).$$

By the Noetherian and finite generation hypotheses, 
the submodules $H^{-i}(M)^n \subset H^{-i}(M)$ stabilize. In particular, there exists $N_0$
such that $f^{N_0}$ annihilates all $H^{-i}(M)^n$. Then, for a given $n$, and $N\geq n+N_0$
will do. 

\medskip

For a general $m$, we argue by induction on $m$. Write
$$A_n\underset{A}\otimes M \simeq k\underset{k[t_1,...,t_m]}\otimes M=:M_{m,n}$$
with $t_i$ acting on $M$ by $t_i^n$, so that
$$M_{m,n} \simeq k \underset{k[t_m]}\otimes M_{m-1,n}:=(M_{m-1,n})_{1,n},$$
where $t_m$ acts on $M_{m-1,n}$ by $t_m^n$. Note that we have 
$$\tau^{\geq -i}(M_{m,n})\simeq \tau^{\geq -i}((\tau^{\geq -i}(M_{m-1,n}))_{1,n}).$$

\medskip

By the induction hypothesis, for a given $n$, we can find $N'$ such that the map
$$\tau^{\geq -i}(M_{m-1,N'})\to \tau^{\geq -i}(M_{m-1,n})$$
factors as
$$\tau^{\geq -i}(M_{m-1,N'})\to \tau^{\geq -i+1}(M_{m-1,N'})\to \tau^{\geq -i}(M_{m-1,n}).$$

Hence, the map 
$$\tau^{\geq -i}(M_{m,N'}) \simeq \tau^{\geq -i}((\tau^{\geq -i}(M_{m-1,N'}))_{1,N'}) 
\to \tau^{\geq -i}((\tau^{\geq -i}(M_{m-1,n}))_{1,n}) \simeq \tau^{\geq -i}(M_{m,n})$$
factors as
\begin{multline} \label{e:factor n N'}
\tau^{\geq -i}((\tau^{\geq -i}(M_{m-1,N'}))_{1,N'})\to 
\tau^{\geq -i}((\tau^{\geq -i+1}(M_{m-1,N'}))_{1,N'})\to \\
\to \tau^{\geq -i}((\tau^{\geq -i}(M_{m-1,n}))_{1,N'})
\to \tau^{\geq -i}((\tau^{\geq -i}(M_{m-1,n}))_{1,n}).
\end{multline}
 
Applying the case $m=1$ to $\tau^{\geq -i+1}(M_{m-1,N'})$, we obtain that there exists $N$ such that the map
$$\tau^{\geq -i}((\tau^{\geq -i+1}(M_{m-1,N'}))_{1,N})\to \tau^{\geq -i}((\tau^{\geq -i+1}(M_{m-1,N'}))_{1,N'})$$
factors as
\begin{multline*} 
\tau^{\geq -i}((\tau^{\geq -i+1}(M_{m-1,N'}))_{1,N})\to \tau^{\geq -i+1}((\tau^{\geq -i+1}(M_{m-1,N'}))_{1,N})\to \\
\to \tau^{\geq -i}((\tau^{\geq -i+1}(M_{m-1,N'}))_{1,N'}).
\end{multline*}

Combining with \eqref{e:factor n N'}, we obtain that the composite map
$$\tau^{\geq -i}(M_{m,N}) \simeq \tau^{\geq -i}((\tau^{\geq -i}(M_{m-1,N}))_{1,N})\to 
\tau^{\geq -i}((\tau^{\geq -i}(M_{m-1,n}))_{1,n})\simeq \tau^{\geq -i}(M_{m,n}) $$
factors as
\begin{multline*}  
\tau^{\geq -i}((\tau^{\geq -i}(M_{m-1,N}))_{1,N})\to \tau^{\geq -i+1}((\tau^{\geq -i+1}(M_{m-1,N}))_{1,N}) \to \\
\to \tau^{\geq -i}((\tau^{\geq -i}(M_{m-1,n}))_{1,n}),
\end{multline*}
as required.


\begin{proof}[Proof of \lemref{l:Artin-Rees alg}]

First, it follows from the Koszul resolution that the algebras $A_n$ are a priori $m$-coconnective
(where $m$ is the number of the generators of the ideal). Hence, by descending induction, it
suffices to show the following:

\medskip

For a fixed $n$ and $k>0$, there exists $N>n$ such that the map
$$\tau^{\geq -k}(A_N)\to  \tau^{\geq -k}(A_n)$$
factors as 
$$\tau^{\geq -k}(A_N)\to  \tau^{\geq -(k-1)}(A_N)\to\tau^{\geq -k}(A_n).$$

We claim that this happens whenever the map
$$H^{-k}(A_N)\to H^{-k}(A_n)$$
is zero (the existence of such an $N$ is guaranteed by \lemref{l:actual Artin-Rees}).  

\medskip

Indeed, the algebras $\tau^{\geq -k}(A_n)$ and $\tau^{\geq -k}(A_N)$ are \emph{canonically}
square-zero extensions of $\tau^{\geq -(k-1)}(A_n)$ and $\tau^{\geq -(k-1)}(A_N)$
by the ideals $H^{-k}(A_n)$ and $H^{-k}(A_N)$, respectively, and the map 
$\tau^{\geq -k}(A_N)\to  \tau^{\geq -k}(A_n)$ has a canonical structure of map between
square-zero extensions. 

\medskip

Hence, by \lemref{l:can sq zero},
the datum of the required factorization is equivalent to that of null-homotopy of the map
$H^{-k}(A_N)\to H^{-k}(A_n)$. 

\end{proof}

\sssec{Exponential map}  \label{sss:exp}
Let $\widehat{\BG}_a$ and $\widehat{\BG}_m$ be the formal completions of $\BG_a$ and $\BG_m$ at 0 and 1, respectively.  
These are both formal group schemes.  By \propref{p:class vs derived formal}, we have that $\widehat{\BG}_a$ and $\widehat{\BG}_m$ 
are both $0$-coconnective as prestacks. Hence, the exponential map in $^{cl}\!\on{PreStk}$
$$^{cl}\widehat{\BG}_a\to {}^{cl}\widehat{\BG}_m,$$
defined by the usual fomula, gives rise to a canonical isomorphism in $\on{PreStk}$.
$$ \on{exp}: \widehat{\BG}_a \rightarrow \widehat{\BG}_m.$$
Furthermore, $\on{exp}$ is an isomorphism of $\BE_\infty$-group objects in $\on{PreStk}$, i.e., as functors
$$(\affdgSch)^{\on{op}}\to \inftyPicgroup.$$

\sssec{}

For a connective $k$-akgebra $A$, let ${\mathcal Nilp}(A)$ denote the connective spectrum (i.e., $\infty$-Picard groupoid)
$$\on{ker}\left(A\to {}^{cl,red}\!A\right).$$  

Note that by \propref{p:formal compl of A1}, the above definition of ${\mathcal Nilp}(A)$ agrees with one in
\secref{sss:basic calc step 1}.

\medskip

Let $A^\times$ denote the connective spectrum of invertible elements in the $\BE_\infty$-ring spectrum $A$,
and similarly for $^{cl,red}\!A$. Set
$${\mathcal Unip}(A):=\on{ker}\left(A^\times\to {}^{cl,red}\!A^\times\right).$$ 

We obtain that the exponential map defines an isomorphism
$$\on{exp}: {\mathcal Nilp}(A) \rightarrow {\mathcal Unip}(A)$$
of functors $(\affdgSch)^{\on{op}}\to \inftyPicgroup$.


\section{Quasi-coherent and ind-coherent sheaves on formal completions}

\ssec{Quasi-coherent sheaves on a formal completion}

Let $X$ be a DG scheme, and $Y\to X$ a Zariski closed subset. We shall assume that $Y$ is quasi-separated and quasi-compact. 
Let $U$ be the open DG subscheme of $X$ equal to the complement of $Y$; let $j$ denote the corresponding open embedding. 

\sssec{}

We have a pair of mutually adjoint functors
$$j^*:\QCoh(X)\rightleftarrows \QCoh(U):j_*,$$
which realizes $\QCoh(U)$ as a localization of $\QCoh(X)$.
Note, however, that the functor $j_*$ is not a priori continuous, since $j$ is not necessarily quasi-compact.

\medskip

Let $\QCoh(X)_Y$ denote the full subcategory of $\QCoh(X)$ equal to
$$\on{ker}(j^*:\QCoh(X)\to \QCoh(U)).$$

\medskip

Let $\wh{i}$ denote the canonical map $X^{\wedge}_Y\to X$, and consider the corresponding functor
$$\wh{i}{}^*:\QCoh(X)\to \QCoh(X^{\wedge}_Y).$$

\medskip

We can ask the following questions: 

\smallskip

\noindent(i) Is the composition $\wh{i}{}^*\circ j_*:\QCoh(U)\to \QCoh(X^{\wedge}_Y)$ zero?

\smallskip

\noindent(ii) Does the functor $\wh{i}{}^*$ induce an equivalence $\QCoh(X)_Y\to \QCoh(X^{\wedge}_Y)$?

\medskip

We will answer these questions in the affirmative under an additional hypothesis on the pair $X$ and $Y$.
We learned the corresponding assertion from J.~Lurie. 

\sssec{}

We will impose the assumption of \secref{sss:fin codim}, 
i.e., that $Y$ can be represented
by a closed subscheme $Y'$ of $^{cl}\!X$, whose ideal is locally finitely generated.

\medskip

In this case, the morphism
$j$ is quasi-compact (being an open embedding, it is automatically quasi-separated). In particular,
by \cite[Proposition 2.1.1]{QCoh}, the functor $j_*$ is continuous  and satisfies the base change formula, which immediately
implies that the composition 
$$\wh{i}{}^*\circ j_*:\QCoh(U)\to \QCoh(X^{\wedge}_Y)$$
vanishes.

\medskip

\begin{prop} \label{p:QCoh of compl}
Under the above hypothesis, the composite functor
$$'\wh{i}{}^*:\QCoh(X)_Y\hookrightarrow \QCoh(X)\overset{\wh{i}{}^*}\longrightarrow \QCoh(X^{\wedge}_Y)$$
is an equivalence.
\end{prop}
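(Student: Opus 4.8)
The plan is to exhibit $\QCoh(X^\wedge_Y)$ as a localization of $\QCoh(X)$ and identify the localized category with $\QCoh(X)_Y$. First I would use the explicit presentation of $X^\wedge_Y$ as a filtered colimit. By \propref{p:expl pres} (after reducing to the affine case $X = \Spec(A)$, which is legitimate since the statement is Zariski-local on $X$), we have $X^\wedge_Y \simeq \underset{n}{colim}\, Z_n$ with $Z_n = \Spec(A_n)$ and the transition maps closed embeddings. Since $\QCoh$ sends colimits of prestacks to limits of categories, this gives
\begin{equation*}
\QCoh(X^\wedge_Y) \simeq \underset{n}{lim}\, \QCoh(Z_n),
\end{equation*}
where the structure maps are the pullbacks $i^*_{n',n}$ along the closed embeddings, and $\wh{i}{}^*$ is the compatible family of the pullbacks $\QCoh(X) \to \QCoh(Z_n)$.

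The key structural input is that $j^*: \QCoh(X) \to \QCoh(U)$ is a localization whose kernel is $\QCoh(X)_Y$, and that under our finite-generation hypothesis $j_*$ is continuous and satisfies base change (as already noted in the excerpt, via \cite[Proposition 2.1.1]{QCoh}). The plan is therefore to show two things. First, that $'\wh{i}{}^*$ is fully faithful: for $\CF, \CG \in \QCoh(X)_Y$, I would compute $\CMaps_{\QCoh(X^\wedge_Y)}(\wh{i}{}^*\CF, \wh{i}{}^*\CG)$ as $\underset{n}{lim}\, \CMaps_{\QCoh(Z_n)}(i_n^*\CF, i_n^*\CG)$ and compare it with $\CMaps_{\QCoh(X)}(\CF,\CG)$, using adjunction $(i_{n*}, i_n^*)$ to rewrite each term as $\CMaps_{\QCoh(X)}(\CF, (i_n)_* i_n^* \CG)$ and showing that $\underset{n}{lim}\, (i_n)_* i_n^* \CG \to \CG$ is an isomorphism for $\CG$ supported on $Y$ — equivalently, that the fiber of $\CG \to \underset{n}{lim}\,(i_n)_*i_n^*\CG$ lands in the essential image of $j_*$ and hence dies. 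Second, that $'\wh{i}{}^*$ is essentially surjective, which reduces to showing that every compatible system $\{\CF_n \in \QCoh(Z_n)\}$ arises from an object of $\QCoh(X)_Y$; here I would use the localization sequence $\QCoh(X)_Y \to \QCoh(X) \overset{j^*}{\to} \QCoh(U)$ to realize $\QCoh(X)_Y$ as the limit of the same diagram.

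The cleanest route is probably to prove directly that both $\QCoh(X)_Y$ and $\QCoh(X^\wedge_Y)$ are identified with the localization $\ker(j^*)$, exploiting that $X^\wedge_Y = X \times_{X_{\on{dR}}} Y_{\on{dR}}$ and that the complementary open $U$ pulls back to the empty scheme over the formal completion. Concretely, I would assemble the recollement: the adjunction $(\wh i{}^*, \wh i_*)$ together with the vanishing $\wh i{}^* \circ j_* = 0$ (already established in the excerpt) shows that $\wh i{}^*$ factors through $\QCoh(X)_Y$ and that the factored functor has a right adjoint; then I would check the unit and counit of this adjunction are isomorphisms. The unit $\on{Id}_{\QCoh(X)_Y} \to \wh i_* \circ {}'\wh i{}^*$ being an isomorphism is precisely the fully-faithfulness computation above, and the counit being an isomorphism follows once full faithfulness is combined with the fact that $\QCoh(X^\wedge_Y)$ is generated under colimits by the images of the $i_{n*}$, whose essential images lie in $\QCoh(X)_Y$ after applying $\wh i_*$.

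The main obstacle I anticipate is controlling the inverse system $n \mapsto (i_n)_* i_n^* \CG$ and proving that its limit recovers $\CG$ exactly on $\QCoh(X)_Y$ while killing the image of $j_*$. This is where the locally-finitely-generated-ideal hypothesis does the real work, via an Artin--Rees type cofinality argument analogous to \lemref{l:actual Artin-Rees} and \lemref{l:Artin-Rees alg}: one needs the powers of the ideal (equivalently the thickenings $Z_n$) to be cofinal and to control the higher $\on{Tor}$'s so that the transition maps in the pro-system stabilize. I expect the full-faithfulness step, i.e.\ verifying that $\underset{n}{lim}$ of the completion functors behaves correctly and that the correction term is supported on $U$, to be the technical heart of the argument; essential surjectivity should then follow formally from the localization picture.
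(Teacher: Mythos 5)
Your overall framing (the recollement with respect to $U = X - Y$, reduction to the affine case, Artin--Rees stabilization as the engine, and the decomposition into full faithfulness plus essential surjectivity) is the right picture and is close to the paper's, but each of your two steps contains a genuine problem.

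First, the claimed isomorphism $\CG\to\underset{n}{lim}\,(i_n)_*i_n^*\CG$ for $\CG\in\QCoh(X)_Y$ is false. The functor $\wh{i}_*\circ\wh{i}{}^*\simeq\underset{n}{lim}\,(i_n)_*i_n^*$ is derived adic completion, and the derived completion of a torsion object is not the object itself: for $X=\BA^1$, $Y=\{0\}$ and $\CG=k[t,t^{-1}]/k[t]\in\QCoh(X)_Y^{\heartsuit}$, one computes $\underset{n}{lim}\,\bigl(\CG\underset{k[t]}\otimes k[t]/t^n\bigr)\simeq k[\![t]\!][1]$ (derived tensor product). What is true --- and what your parenthetical ``equivalently'' almost says --- is that the cofiber of the unit lies in the essential image of $j_*$, which is the right orthogonal of $\QCoh(X)_Y$ and hence invisible to $\CMaps(\CF,-)$ for $\CF\in\QCoh(X)_Y$; that suffices for full faithfulness of $'\wh{i}{}^*$, but it is a strictly weaker statement than the unit being an isomorphism, and it still requires the stabilization input. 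The paper runs the adjunction in the other direction: it proves the \emph{counit} $\wh{i}{}^*\circ\wh{i}_*\to\on{Id}$ is an isomorphism, i.e., that $A_{n_0}\underset{A}\otimes\underset{n}{lim}\,\CF_n\to\CF_{n_0}$ is an isomorphism, using compactness of $A_{n_0}$ over $A$ together with \lemref{l:stab ker}.

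Second, the assertion that $\QCoh(X^{\wedge}_Y)$ is generated under colimits by the images of the $(i_n)_*$ is precisely the hard half of the proposition and cannot be assumed. The category $\QCoh(X^{\wedge}_Y)$ is defined as a limit along $*$-pullbacks; unlike the $\IndCoh$ situation, there is no formal reason it is also a colimit along pushforwards, and for a general DG indscheme the analogous generation statement fails (this is exactly why $\QCoh$ of an indscheme need not be compactly generated). The paper's substitute is to prove that the functor $\QCoh(X)/j_*(\QCoh(U))\to\QCoh(X^{\wedge}_Y)$ induced by $\wh{i}{}^*$ is conservative, by a direct argument with the Koszul-type subschemes $Y_k$ cut out by $f_1,\ldots,f_k$: if $i_m^*(\CF)=0$ then each $f_i$ acts invertibly on $\CF$, so $\CF$ lies in the essential image of $j_*$; combined with full faithfulness of the right adjoint this yields the equivalence. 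You would need this argument, or an independent proof of your generation claim, to close the essential-surjectivity step.
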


\sssec{Proof of \propref{p:QCoh of compl}}

The assertion is Zarski-local, so we can assume that $X=\Spec(A)$ is affine. Let
$f_1,...,f_m$ and $A_n$ be as in the proof of \propref{p:expl pres}. 

\medskip

Consider the functor 
$$\QCoh(X^{\wedge}_Y)\to \QCoh(X)$$ given by direct image $\wh{i}_*$ with respect to the morphism 
$\wh{i}$, i.e., the \emph{right} adjoint of the functor $\wh{i}{}^*$. \footnote{Recall that direct image $g_*$, although
in general non-continuous, is defined for any morphism $g:\CY_1\to \CY_2$ in $\on{PreStk}$, by the adjoint
functor theorem.} 

\medskip

\noindent{\it Warning:} The functor $\wh{i}_*$ is not continuous and does not commute with Zariski localization. 

\medskip

We obtain that $\wh{i}_*$ and $\wh{i}^*$ induce a pair of mutually adjoint functors
\begin{equation} \label{e:loc seq}
(\QCoh(X))_{\QCoh(U)}\rightleftarrows \QCoh(X^{\wedge}_Y),
\end{equation}
where $(\QCoh(X))_{\QCoh(U)}$ denotes the localization of $\QCoh(X)$ with respect to
$\QCoh(U)$, and the latter is mapped in by means of $j_*$. To prove the proposition it suffices to show that:

\medskip

\noindent{(a)} The functor $\leftarrow$ in \eqref{e:loc seq} is fully faithful, and 

\smallskip

\noindent{(b)} The functor $\to$ in \eqref{e:loc seq} is conservative.

\medskip

Assertion (a) is equivalent to the functor $\wh{i}_*$ being fully faithful. I.e., we need to show
that the adjunction map $\wh{i}^*\circ \wh{i}_*\to \on{Id}$ is an isomorphism. 

\medskip

Fix an object of $\QCoh(X^{\wedge}_Y)$, thought of as a 
compatible system of $A_n$-modules $\{\CF_{n}\}$; let $\CF$ be its direct image
on $X$. By definition,
$$\CF\simeq\underset{n}{lim}\, \CF_n,$$
where in the right-hand side, the $\CF_n$'s are regarded as $A$-modules. 

\medskip

We need to show that for every 
$n_0$, the map
$$A_{n_0}\underset{A}\otimes \CF\to \CF_{n_0}$$ 
is an isomorphism. 

\medskip

Since $A_{n_0}$ is compact as an $A$-module, we can rewrite the left-hand side
as $\underset{n}{lim}\, \left(A_{n_0}\underset{A}\otimes \CF_n\right)$, and further as
$$\underset{n}{lim}\, \left((A_{n_0}\underset{A}\otimes A_n)\underset{A_n}\otimes \CF_n\right).$$
For $n\geq n_0$ consider the canonical map $A_{n_0}\underset{A}\otimes A_n\to A_0$,
and let $K_n$ denote its kernel. The required assertion follows from the next claim:

\begin{lem}  \label{l:stab ker}
For every $n$ there exists $N\geq n$, such that the map $K_N\to K_n$ is zero
as a map of $A_N$-modules. 
\end{lem}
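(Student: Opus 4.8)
The plan is to reduce the statement to the universal case of a polynomial ring, where everything becomes completely explicit, and then to transport the vanishing back by base change. Recall that $A_n=\on{Kos}(A;f_1^n,\dots,f_m^n)$ and that, with $n_0$ fixed, $K_n=\on{fib}(\mu)$ for $n\ge n_0$, where $\mu\colon A_{n_0}\otimes_A A_n\to A_{n_0}$ is the canonical augmentation induced by the transition map $A_n\to A_{n_0}$. I would set $B=k[y_1,\dots,y_m]$, let $B\to A$ be the map $y_i\mapsto f_i$, and write $B_n=\on{Kos}(B;y_1^n,\dots,y_m^n)$. Then $A_n\simeq A\otimes_B B_n$, and the transition maps, the algebras $A_{n_0}\otimes_A A_n$, the augmentations $\mu$, and hence the objects $K_n$ together with the transition maps $K_N\to K_n$ are all obtained from the corresponding data over $B$ by applying the exact base-change functor $A\otimes_B^{\BL}(-)$. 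Since this functor carries $B_N$-modules to $A_N$-modules and sends the zero morphism to the zero morphism, it suffices to prove the lemma in the universal case $A=B$, $f_i=y_i$.

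In the universal case the sequence $y_1^n,\dots,y_m^n$ is regular in $B$, so $B_n\simeq B/J_n$ with $J_n=(y_1^n,\dots,y_m^n)$, and the transition maps are the quotient maps $B/J_N\to B/J_n$ (note $J_N\subseteq J_n$ for $N\ge n$). Resolving $B/J_n$ by the Koszul complex and tensoring with $B/J_{n_0}$, one finds for $n\ge n_0$, using $y_i^n\in J_{n_0}$, that $B_{n_0}\otimes_B^{\BL}B_n\simeq (B/J_{n_0})\otimes_k\Lambda^\bullet(e_1,\dots,e_m)$ with $\deg e_i=-1$ and \emph{zero} differential, the augmentation $\mu$ being the projection onto the exterior-degree-zero summand. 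Hence $K_n\simeq\bigoplus_{\emptyset\ne S\subseteq\{1,\dots,m\}}(B/J_{n_0})\cdot e_S$, and under these identifications the transition map $K_N\to K_n$ acts on the $e_S$-summand as multiplication by $\prod_{i\in S}y_i^{N-n}$. Taking $N=n+n_0$, each such product lies in $J_{n_0}$ (since $S\ne\emptyset$ and $N-n=n_0$), so it vanishes in $B/J_{n_0}$; therefore $K_N\to K_n$ is the zero map, as desired.

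The one genuine subtlety — and the reason for passing to $B$ rather than arguing directly over $A$ — is that the lemma demands vanishing in the category of $A_N$-modules, whereas the evident null-homotopies of multiplication by $f_i^{n_0}$ on $A_{n_0}$ (the Koszul contractions) are only $A$-linear, and the summand decomposition of $A_{n_0}\otimes_A A_n$ is not stable under the $A_N$-action; so one cannot simply run the homotopy summand by summand $A_N$-equivariantly. Reducing to the polynomial ring dissolves this difficulty entirely: there $A_n$ is discrete, the relevant complex has zero differential, and the transition map is \emph{literally} zero, so no homotopy is needed and base change preserves the vanishing on the nose. I expect this reduction to be the crux of the argument; once it is in place the residual Koszul computation is routine. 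It is worth remarking that, in contrast to the companion \lemref{l:Artin-Rees alg}, this argument uses only the finiteness of the number of generators $f_i$ and not the Noetherian hypothesis on $A$.
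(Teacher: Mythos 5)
Your proof is correct and takes essentially the same route as the paper's: base change along $k[y_1,\dots,y_m]\to A$ reduces to the universal polynomial case, where the relevant complexes have zero differential and the transition map $K_N\to K_n$ is \emph{literally} zero, so the vanishing as $A_N$-modules is inherited on the nose. The only differences are cosmetic: the paper reduces to $m=1$ ``to simplify notation'' and takes $N=2n$, whereas you carry out the general Koszul computation and take the slightly sharper $N=n+n_0$.
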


\begin{proof}

Let $B$ denote the polynomial algebra $k[t_1,...,t_m]$. We have $A_n\simeq A\underset{B}\otimes B_n$,
as commutative $k$-algebras. With respect to this identification, we have
$$K_n\simeq A\underset{B}\otimes K_n^B,$$ 
as $A_n$-modules, where $K^n_B$ denotes the corresponding object for the algebra $B$. Hence, it is enough
to prove the assertion for $A$ replaced by $B$. 

\medskip

To simplify notation, we will only consider the case when $m=1$, i.e., $B=k[t]$. In this case
$$B_n\underset{B}\otimes B_{n_0}\simeq \on{Cone}(t^{n}:k[t]/t^{n_0}\to k[t]/t^{n_0}).$$
When $n\geq n_0$, the map $t^{n}:k[t]/t^{n_0}\to k[t]/t^{n_0}$ is zero, so $K_n\simeq k[t]/t^{n_0}[1]$.
For $n'\geq n$, the corresponding map $K_{n'}\to K_n$ is given by multiplication by $t^{n'-n}$.
Hence, we can take $N=2n$. 

\end{proof}

Let us now prove point (b). Recall the elements $f_1,...,f_m$ of $A$. 
Let $Y_k$ be the closed DG subscheme of $X$ cut out (in the derived sense) by the equations
$f_1,...,f_k$; i.e.,
$$Y_k=\Spec(A[t_1,...,t_k],\,d(t_i)=f_i);$$
let $i_k:Y_k\to X$ denote the corresponding closed embedding.  In particular $^{cl,red}Y_m=Y$. 

\medskip

It suffices to show that if for $\CF\in \QCoh(X)$ we have $i_m^*(\CF)=0$, then $\CF$ belongs to the essential 
image of $j_*$. Taking the cone we can assume that $j^*(\CF)=0$ as well, and
we need to show that $\CF=0$. 

\medskip

By induction on $k$, we may assume that $m=1$. The assumption that $i_1^*(\CF)=0$ means that 
$f_1:\CF\to \CF$ acts invertibly, i.e., $\CF\to (\CF)_{f_1}$ is an isomorphism, where $(\CF)_{f_1}$ denotes the localization
of $\CF$ with respect to $f_1$.  However, $j^*(\CF)=0$ implies $(\CF)_{f_1}=0$.

\qed(\propref{p:QCoh of compl})

\sssec{}

Let us denote by $\be^{\QCoh}$ the tautological embedding
$$\QCoh(X)_Y\to \QCoh(X).$$
We note that it admits a (continuous) right adjoint, denoted $\br^{\QCoh}$, given by
$$\CF\mapsto \on{Cone}(\CF\to j_*\circ j^*(\CF))[-1].$$
The adjoint pair $(\be^{\QCoh},\br^{\QCoh})$ realizes $\QCoh(X)_Y$ as a \emph{co-localization} 
of $\QCoh(X)$.

\medskip

By construction, we have a commutative diagram 
\begin{equation} \label{e:* and r}
\CD
\QCoh(X^{\wedge}_Y)  @<{\wh{i}{}^*}<<  \QCoh(X) \\
@A{'\wh{i}{}^*}AA   @AA{\on{Id}}A  \\
\QCoh(X)_Y   @<{\br^{\QCoh}}<< \QCoh(X),
\endCD
\end{equation}
where the left vertical arrow is the functor from \propref{p:QCoh of compl}.

\medskip

Hence, we obtain that the functor $\wh{i}{}^*:\QCoh(X)\to \QCoh(X^{\wedge}_Y)$,
in addition to having a non-continuous right adjoint $\wh{i}_*$, admits a left
adjoint, which we denote by $\wh{i}_?$. 

\medskip

Thus, we can think of $\QCoh(X^{\wedge}_Y)$ as both a localization and a co-localization
of $\QCoh(X)$ with respect to the essential image of $\QCoh(U)$. 

\medskip

Note that under such circumstances, we have a canonical
natural transformation
\begin{equation} \label{e:? to !}
\wh{i}_?\to \wh{i}_*.
\end{equation}

\sssec{}

Consider now the \emph{non-continuous} functor
$$\QCoh(X)\overset{\wh{i}{}^*}\longrightarrow \QCoh(X^\wedge_Y)\overset{\wh{i}_*}\longrightarrow \QCoh(X),$$
i.e., the localization functor on $\QCoh(X)$ with respect to the essential image of $\QCoh(U)$. 

\medskip

This functor is called \emph{the functor of formal completion} of a quasi-coherent sheaf along $Y$. Its essential image 
(i.e., the essential image of $\wh{i}_*$) is referred to as objects of $\QCoh(X)$ that are \emph{adically-complete} with respect to $Y$. 

\ssec{Compact generation and duality}  \label{ss:self duality formal compl}

Assume now that the scheme $X$ is quasi-separated and quasi-compact. It is well-known
that if $Y$ is locally given by a finitely generated ideal, then the category $\QCoh(X)_Y$
is compactly generated by $\QCoh(X)_Y\cap \QCoh(X)^{\on{perf}}$.

\medskip

Combining this with \propref{p:QCoh of compl} and \eqref{e:* and r} we obtain:

\begin{cor}  \label{c:duality QCoh formal compl}
The category $\QCoh(X^{\wedge}_Y)$ is compactly generated. The compact objects are obtained
as images under $\wh{i}{}^*$ of compact objects of $\QCoh(X)$ that are set-theoretically supported
on $Y$.
\end{cor}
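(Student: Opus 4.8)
The plan is to deduce \corref{c:duality QCoh formal compl} directly from \propref{p:QCoh of compl} together with the diagram \eqref{e:* and r}, which exhibits $\QCoh(X^{\wedge}_Y)$ as a co-localization of $\QCoh(X)$ via the adjoint pair $(\wh{i}_?,\wh{i}{}^*)$ with $\wh{i}{}^*\circ \be^{\QCoh}$ identified with the equivalence $'\wh{i}{}^*:\QCoh(X)_Y\simeq \QCoh(X^{\wedge}_Y)$. So the entire statement reduces to producing compact generators of $\QCoh(X)_Y$ and tracking where they go. First I would invoke the cited well-known fact: since $X$ is quasi-separated and quasi-compact and $Y$ is locally given by a finitely generated ideal (our standing assumption from \secref{sss:fin codim}), the category $\QCoh(X)_Y=\ker(j^*)$ is compactly generated by $\QCoh(X)_Y\cap \QCoh(X)^{\on{perf}}$, i.e. by perfect complexes on $X$ that are set-theoretically supported on $Y$.

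Next I would transport this generation statement across the equivalence $'\wh{i}{}^*$. Because $'\wh{i}{}^*$ is an equivalence of DG categories, it preserves compact objects and compact generation, so $\QCoh(X^{\wedge}_Y)$ is compactly generated, with compact generators the images $'\wh{i}{}^*(\CP)$ for $\CP\in \QCoh(X)_Y\cap \QCoh(X)^{\on{perf}}$. The only thing to reconcile is the description ``images under $\wh{i}{}^*$'': by the commutative diagram \eqref{e:* and r}, for $\CP\in \QCoh(X)_Y$ we have $'\wh{i}{}^*(\CP)\simeq \wh{i}{}^*(\be^{\QCoh}(\CP))$, and $\be^{\QCoh}$ is simply the inclusion of the full subcategory $\QCoh(X)_Y\hookrightarrow \QCoh(X)$. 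Hence $'\wh{i}{}^*(\CP)=\wh{i}{}^*(\CP)$ where now $\CP$ is viewed as a compact object of $\QCoh(X)$ that is set-theoretically supported on $Y$, which is exactly the asserted description of the compact objects.

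The one point requiring a word of care — and the step I expect to be the only real obstacle — is the claim that these $\wh{i}{}^*(\CP)$ exhaust \emph{all} compact objects of $\QCoh(X^{\wedge}_Y)$, not merely a generating set. This is automatic from the equivalence: under an equivalence of DG categories the compact objects correspond exactly, so $\QCoh(X^{\wedge}_Y)^c = {}'\wh{i}{}^*\bigl((\QCoh(X)_Y)^c\bigr)$, and one checks that $(\QCoh(X)_Y)^c = \QCoh(X)_Y\cap \QCoh(X)^{\on{perf}}$. The latter identification follows because $\QCoh(X)_Y$ is a co-localization of the compactly generated category $\QCoh(X)$ cut out as $\ker(j^*)$ with $j^*$ sending compacts to compacts, so an object of $\QCoh(X)_Y$ is compact there if and only if it is compact in $\QCoh(X)$, i.e. perfect, and supported on $Y$. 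Assembling these observations yields the corollary, with the compact generation coming from the transported generators and the explicit description of compacts coming from the identification of $(\QCoh(X)_Y)^c$ with perfect complexes supported on $Y$.
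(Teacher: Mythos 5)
Your proposal is correct and follows essentially the same route as the paper, which likewise deduces the corollary by combining the well-known compact generation of $\QCoh(X)_Y$ by $\QCoh(X)_Y\cap \QCoh(X)^{\on{perf}}$ with the equivalence of \propref{p:QCoh of compl} and the diagram \eqref{e:* and r}. Your extra verification that $(\QCoh(X)_Y)^c=\QCoh(X)_Y\cap\QCoh(X)^{\on{perf}}$ (via the continuous right adjoint $\br^{\QCoh}$) is a detail the paper leaves implicit, and it is correct.
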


Recall now the notion of quasi-perfectness, see \secref{sss:rigidity on QCoh}. We obtain:

\begin{cor}  \label{c:formal completion quasi-perfect}
For $X$ and $Y$ as above, the DG indscheme $\QCoh(X^{\wedge}_Y)$ is quasi-perfect.
\end{cor}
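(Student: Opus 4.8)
The plan is to verify the two conditions of \secref{sss:rigidity on QCoh} for $\CY = X^\wedge_Y$, taking \corref{c:duality QCoh formal compl} as the starting point. Condition (i), that $\QCoh(X^\wedge_Y)$ be compactly generated, is precisely the first assertion of \corref{c:duality QCoh formal compl}, so nothing remains to be done there. The entire argument thus reduces to condition (ii), and the observation driving it is that the pullback functor $\wh{i}{}^*:\QCoh(X)\to \QCoh(X^\wedge_Y)$ is symmetric monoidal, and hence preserves dualizable objects and carries the dual of an object to the dual of its image.

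First I would show that compact objects are perfect. By the second assertion of \corref{c:duality QCoh formal compl}, every compact object of $\QCoh(X^\wedge_Y)$ is of the form $\wh{i}{}^*(\CF)$ with $\CF\in \QCoh(X)^c=\QCoh(X)^{\on{perf}}$ set-theoretically supported on $Y$. Since $X$ is quasi-separated and quasi-compact, $\CF$ is perfect, i.e.\ dualizable for the symmetric monoidal structure on $\QCoh(X)$ (see \cite[Lemma 4.2.2]{QCoh}). As $\wh{i}{}^*$ is symmetric monoidal it sends dualizable objects to dualizable objects, so $\wh{i}{}^*(\CF)$ is dualizable in $\QCoh(X^\wedge_Y)$, and hence perfect, by \cite[Lemma 4.2.2]{QCoh} applied to $X^\wedge_Y$.

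It then remains to show that the duality functor $\CF\mapsto \CF^\vee$ preserves compactness. Here I would use that a symmetric monoidal functor intertwines duals: for $\CF$ as above one has a canonical isomorphism $(\wh{i}{}^*(\CF))^\vee\simeq \wh{i}{}^*(\CF^\vee)$. Now $\CF^\vee$ is again a perfect object of $\QCoh(X)$, so by \corref{c:duality QCoh formal compl} it suffices to check that $\CF^\vee$ is still set-theoretically supported on $Y$, i.e.\ that $j^*(\CF^\vee)=0$. But $j^*$ is itself symmetric monoidal, whence $j^*(\CF^\vee)\simeq (j^*(\CF))^\vee$, and $j^*(\CF)=0$ by hypothesis, giving $j^*(\CF^\vee)=0$. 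Therefore $\wh{i}{}^*(\CF^\vee)$ is compact, and so is $(\wh{i}{}^*(\CF))^\vee$, as required.

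I do not anticipate a serious obstacle: once \corref{c:duality QCoh formal compl} supplies the explicit description of the compacts, the entire content is bookkeeping with the symmetric monoidal functors $\wh{i}{}^*$ and $j^*$. The one point deserving care — and the closest thing to a subtlety — is that the relevant symmetric monoidal structure on $\QCoh(X^\wedge_Y)$ is the one with unit $\CO_{X^\wedge_Y}=\wh{i}{}^*(\CO_X)$, rather than the (non-unital) tensor ideal structure inherited by the subcategory $\QCoh(X)_Y\subset\QCoh(X)$; phrasing every step through the functor $\wh{i}{}^*$ out of $\QCoh(X)$, rather than through the equivalence $'\wh{i}{}^*$ of \propref{p:QCoh of compl}, keeps this distinction under control.
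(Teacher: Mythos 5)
Your argument is correct and is precisely the (implicit) derivation the paper intends: the statement is presented as an immediate consequence of \corref{c:duality QCoh formal compl}, with compact generation giving condition (i) and the description of compacts as $\wh{i}{}^*$ of perfect objects supported on $Y$ giving condition (ii), via the fact that the symmetric monoidal functors $\wh{i}{}^*$ and $j^*$ preserve dualizable objects and intertwine duals. Your write-up simply makes explicit the bookkeeping the paper leaves to the reader.
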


Let us recall that being quasi-perfect means by definition that the category 
$\QCoh(X^{\wedge}_Y)$ is compactly generated, and that its compact objects belong
to $\QCoh(X^{\wedge}_Y)^{\on{perf}}$. 

\medskip

As was shown in \secref{sss:rigidity on QCoh}, the above property implies that there exists
a canonical equivalence:
\begin{equation} \label{e:self duality QCoh formal compl}
\bD_{X^{\wedge}_Y}^{\on{naive}}:\QCoh(X^{\wedge}_Y)^\vee\simeq \QCoh(X^{\wedge}_Y),
\end{equation}
characterized by either of the following two properties:

\begin{itemize}

\item The canonical anti self-equivalence $\BD^{\on{naive}}_{\QCoh(X^{\wedge}_Y)}:(\QCoh(X^{\wedge}_Y)^c)^{\on{op}}\to (\QCoh(X^{\wedge}_Y)^c$
is given by the restriction of the functor $\CF\mapsto \CF^\vee:((\QCoh(X^{\wedge}_Y)^{\on{perf}})^{\on{op}}\to \QCoh(X^{\wedge}_Y)^{\on{perf}}$.

\medskip

\item The pairing 
\begin{equation} \label{e:pairing on compl}
\QCoh(X^{\wedge}_Y)\otimes \QCoh(X^{\wedge}_Y)\to \Vect
\end{equation}
is given by ind-extension of the pairing
$$\CF_1,\CF_2\in \QCoh(X^{\wedge}_Y)^c\mapsto \Gamma(X^{\wedge}_Y,
\CF_1\underset{\CO_{X^{\wedge}_Y}}\otimes\CF_2)\in \Vect.$$

\end{itemize}

\sssec{}

Note that although the natural transformation \eqref{e:? to !} is not an isomorphism, we have the following:

\begin{lem}  \label{l:? and !}
The natural transformation \eqref{e:? to !} it induces an isomorphism 
when restricted to compact objects of $\QCoh(X^{\wedge}_Y)$.
\end{lem}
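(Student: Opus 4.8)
The plan is to reduce the statement to a concrete property of the completion functor $\wh{i}_*\wh{i}^*$ and then verify that property by two applications of the projection formula.

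First I would pin down the compact objects and unwind the transformation \eqref{e:? to !}. By \corref{c:duality QCoh formal compl}, every compact object of $\QCoh(X^{\wedge}_Y)$ is of the form $\wh{i}^*\CP$ with $\CP\in\QCoh(X)^{\on{perf}}$ set-theoretically supported on $Y$, i.e. $\CP\in\QCoh(X)_Y=\ker(j^*)$. Recall from \eqref{e:* and r} that $\wh{i}_?\wh{i}^*\cong\be^{\QCoh}\br^{\QCoh}$ is the coreflector onto $\QCoh(X)_Y$, so its counit $\wh{i}_?\wh{i}^*\CP\to\CP$ is an isomorphism when $\CP\in\QCoh(X)_Y$; meanwhile the component of \eqref{e:? to !} at $\wh{i}^*\CP$ is, by construction, the unit $\wh{i}_?\wh{i}^*\CP\to\wh{i}_*\wh{i}^*(\wh{i}_?\wh{i}^*\CP)=\wh{i}_*\wh{i}^*\CP$ of the adjunction $(\wh{i}^*,\wh{i}_*)$ at the object $\wh{i}_?\wh{i}^*\CP$ (here I use $\wh{i}^*\wh{i}_?\cong\on{Id}$). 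Composing with the counit isomorphism above, it then suffices to prove that the completion unit $u_\CP\colon\CP\to\wh{i}_*\wh{i}^*\CP$ is an isomorphism for every perfect $\CP$ supported on $Y$.

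Next I would run the projection-formula argument, writing $\Lambda:=\wh{i}_*\wh{i}^*$ for the completion endofunctor of $\QCoh(X)$. Since $\CP$ is perfect, hence dualizable, and $\wh{i}^*$ is symmetric monoidal with $\wh{i}^*\CO_X=\CO_{X^{\wedge}_Y}$, the projection formula for $\wh{i}$ yields $\Lambda\CP=\wh{i}_*\wh{i}^*\CP=\wh{i}_*(\wh{i}^*\CP\otimes\CO_{X^{\wedge}_Y})\cong\CP\otimes_{\CO_X}\wh{i}_*\CO_{X^{\wedge}_Y}=\CP\otimes_{\CO_X}\Lambda\CO_X$, compatibly with the unit maps, so that $u_\CP$ is identified with $u_{\CO_X}\otimes\on{id}_\CP$. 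Therefore $\on{cofib}(u_\CP)\cong C_0\otimes_{\CO_X}\CP$, where $C_0:=\on{cofib}(u_{\CO_X}\colon\CO_X\to\Lambda\CO_X)$. Applying $\wh{i}^*$ and using $\wh{i}^*\wh{i}_*\cong\on{Id}$ (full faithfulness of $\wh{i}_*$, \propref{p:QCoh of compl}) shows that $\wh{i}^*$ sends $u_{\CO_X}$ to an isomorphism, so $C_0\in\ker(\wh{i}^*)$. Since $\wh{i}^*={}'\wh{i}^*\circ\br^{\QCoh}$ with ${}'\wh{i}^*$ an equivalence, $\ker(\wh{i}^*)=\ker(\br^{\QCoh})=\{\CF:\CF\xrightarrow{\sim}j_*j^*\CF\}$ is the essential image of $j_*$; hence $C_0\cong j_*j^*C_0$. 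A final application of the projection formula, this time for the quasi-compact open embedding $j$, gives $C_0\otimes_{\CO_X}\CP\cong j_*(j^*C_0\otimes_{\CO_U}j^*\CP)$, which vanishes because $j^*\CP=0$. Thus $\on{cofib}(u_\CP)=0$ and $u_\CP$ is an isomorphism.

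The main obstacle lies entirely in the first paragraph: correctly matching the component of the abstractly-defined transformation \eqref{e:? to !} on a compact object with the concrete completion unit $u_\CP$, while keeping straight the two distinct embeddings of $\QCoh(X^{\wedge}_Y)$ into $\QCoh(X)$—the ``torsion'' copy $\QCoh(X)_Y=\on{im}(\wh{i}_?)$ and the ``complete'' copy $\on{im}(\wh{i}_*)$, which are genuinely different (this is why \eqref{e:? to !} fails to be an isomorphism on all of $\QCoh(X^{\wedge}_Y)$). Once that identification is secured, the computation is robust and, importantly, global: it never needs the affine presentation of \propref{p:expl pres}, relying only on dualizability of $\CP$, the vanishing $j^*\CP=0$, and the identification of $\ker(\wh{i}^*)$ with the image of $j_*$. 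I would finally double-check that the projection formula is legitimately available at both steps—for $\wh{i}$ because $\CP$ is dualizable, and for $j$ because $j$ is quasi-compact under the standing hypothesis of \secref{sss:fin codim}, exactly as was already invoked in the discussion preceding \propref{p:QCoh of compl}.
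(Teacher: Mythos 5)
Your proof is correct and rests on exactly the same ingredients as the paper's one-line argument: the identification of compact objects with $\wh{i}{}^*$ of perfect complexes $\CP$ set-theoretically supported on $Y$, dualizability of such $\CP$, the vanishing $j^*\CP=0$, and the projection formula for $j$. The paper packages the conclusion as the two orthogonality statements (left orthogonality of $\CP$ to the essential image of $j_*$ gives $\wh{i}_?\,\wh{i}{}^*\CP\simeq \CP$, right orthogonality gives $\CP\simeq \wh{i}_*\wh{i}{}^*\CP$), whereas you establish the second, which is the only nontrivial one, by the equivalent computation
$\on{cofib}(\CP\to \wh{i}_*\wh{i}{}^*\CP)\simeq \on{cofib}(\CO_X\to \wh{i}_*\wh{i}{}^*\CO_X)\otimes_{\CO_X}\CP\simeq j_*(-)\otimes_{\CO_X}\CP=0$; this is a harmless repackaging of the same argument.
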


\begin{proof}
Follows from the fact that compact objects of $\QCoh(X)$ with set-theoretic support on $Y$
are both left and right orthogonal to the essential image of $\QCoh(U)$.
\end{proof}

\sssec{}

Recall that the category $\QCoh(X)$ is also self-dual. From the description of the functor $\BD^{\on{naive}}_{\QCoh(X^{\wedge}_Y)}$
we obtain that there exists a canonical isomorphism
$$\BD^{\on{naive}}_{\QCoh(X)}\circ \wh{i}_?\simeq \wh{i}_?\circ \BD^{\on{naive}}_{\QCoh(X^{\wedge}_Y)}:(\QCoh(X^{\wedge}_Y)^c)^{\on{op}}\to \QCoh(X)^c.$$

\medskip

By \cite[Lemma 2.3.3]{DG}, this implies:
\begin{cor}  \label{c:dual of ?}
Under the identifications
$$\bD_{X}^{\on{naive}}:\QCoh(X)^\vee\simeq \QCoh(X) \text{ and } 
\bD_{X^{\wedge}_Y}^{\on{naive}}:\QCoh(X^{\wedge}_Y)^\vee\simeq \QCoh(X^{\wedge}_Y)^,$$
the dual of the functor $\wh{i}{}^*$ identifies with $\wh{i}_?$.
\end{cor}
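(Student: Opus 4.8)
The plan is to deduce the statement entirely from a computation on compact objects. Both $\QCoh(X)$ and $\QCoh(X^{\wedge}_Y)$ are compactly generated (the latter by \corref{c:duality QCoh formal compl}), and the two functors in play — namely $\wh{i}{}^*:\QCoh(X)\to \QCoh(X^{\wedge}_Y)$ and its left adjoint $\wh{i}_?$ — are continuous and preserve compacts: $\wh{i}{}^*$ because it is a pullback, and $\wh{i}_?$ because its right adjoint $\wh{i}{}^*$ is continuous. Since the self-dualities $\bD_X^{\on{naive}}$ and $\bD_{X^{\wedge}_Y}^{\on{naive}}$ are, by \secref{sss:rigidity on QCoh} and \corref{c:formal completion quasi-perfect}, the ind-extensions of the anti-equivalences $\BD^{\on{naive}}_{\QCoh(X)}$ and $\BD^{\on{naive}}_{\QCoh(X^{\wedge}_Y)}$ on compacts, the dual functor $(\wh{i}{}^*)^\vee$, transported through these self-dualities to a continuous functor $\QCoh(X^{\wedge}_Y)\to \QCoh(X)$, is determined by its restriction to compact objects. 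Thus the whole statement reduces to identifying that restriction with $\wh{i}_?$, and this conversion is exactly the bookkeeping provided by \cite[Lemma 2.3.3]{DG}: it turns the intertwining isomorphism
$$\BD^{\on{naive}}_{\QCoh(X)}\circ (\wh{i}_?)^{\on{op}}\simeq \wh{i}_?\circ \BD^{\on{naive}}_{\QCoh(X^{\wedge}_Y)}$$
into the asserted identification of $(\wh{i}{}^*)^\vee$ with $\wh{i}_?$.

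It therefore remains to produce this intertwining isomorphism, i.e. to check that $\wh{i}_?$ commutes with the duality functor $\CF\mapsto \CF^\vee$ on compacts. First I would use \propref{p:QCoh of compl} together with the diagram \eqref{e:* and r} to write $\wh{i}_?\simeq \be^{\QCoh}\circ ('\wh{i}{}^*)^{-1}$; on compacts this exhibits $\wh{i}_?$ as the inverse of the equivalence $'\wh{i}{}^*=\wh{i}{}^*|_{\QCoh(X)_Y}$, which by \corref{c:duality QCoh formal compl} realizes $\QCoh(X^{\wedge}_Y)^c$ as the full subcategory $\QCoh(X)_Y\cap \QCoh(X)^{\on{perf}}$ of perfect complexes on $X$ set-theoretically supported on $Y$ (consistently with \lemref{l:? and !}, which identifies $\wh{i}_?$ with $\wh{i}_*$ there). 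The two compatibilities I would then record are: (i) $\wh{i}{}^*$ is symmetric monoidal, being a pullback, whence $\wh{i}{}^*(\CG^\vee)\simeq (\wh{i}{}^*\CG)^\vee$ for $\CG$ perfect; and (ii) $\QCoh(X)_Y$ is stable under $\CG\mapsto \CG^\vee$ on perfect complexes, since $j^*(\CG^\vee)\simeq (j^*\CG)^\vee$ vanishes whenever $j^*\CG$ does.

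Granting (i) and (ii) the intertwining is immediate: for $\CF\in \QCoh(X^{\wedge}_Y)^c$ write $\CF\simeq \wh{i}{}^*(\CG)$ with $\CG$ a $Y$-supported perfect complex, so that $\wh{i}_?(\CF)\simeq \CG$; then $\CF^\vee\simeq \wh{i}{}^*(\CG^\vee)$ with $\CG^\vee$ again $Y$-supported, whence $\wh{i}_?(\CF^\vee)\simeq \CG^\vee\simeq (\wh{i}_?\CF)^\vee$. Since on both $\QCoh(X)$ and $\QCoh(X^{\wedge}_Y)$ the anti-equivalence $\BD^{\on{naive}}$ is precisely $\CF\mapsto \CF^\vee$ on compacts, this is the desired statement. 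The step I expect to be the real obstacle is not any of these verifications individually but the faithful tracking of the various opposite categories and of the distinction between the tautological duality $\QCoh(-)^\vee\simeq \Ind((\QCoh(-)^c)^{\on{op}})$ and the chosen self-duality $\bD^{\on{naive}}$; this is exactly what is packaged in \cite[Lemma 2.3.3]{DG}, so the cleanest route is to reduce to its hypotheses as above rather than to re-derive the transport of the dual functor by hand.
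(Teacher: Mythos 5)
Your proposal is correct and follows essentially the same route as the paper: the paper's entire argument consists of asserting the intertwining isomorphism $\BD^{\on{naive}}_{\QCoh(X)}\circ \wh{i}_?\simeq \wh{i}_?\circ \BD^{\on{naive}}_{\QCoh(X^{\wedge}_Y)}$ on compact objects (as a consequence of the explicit description of the two duality functors) and then invoking \cite[Lemma 2.3.3]{DG}, which is exactly your reduction; you merely spell out the verification of the intertwining that the paper leaves implicit, and your verification is sound. One side remark is false, though harmless since nothing depends on it: $\wh{i}{}^*$ does \emph{not} preserve compacts in general (e.g.\ for $X=\BA^1$, $Y=\{0\}$, the object $\wh{i}{}^*(\CO_X)$ corresponds under the equivalence with $\QCoh(X)_Y$ to $\underline\Gamma_{\{0\}}(\CO_{\BA^1})\simeq (k[t,t^{-1}]/k[t])[-1]$, which is not perfect); the only compactness statement your argument actually uses is that $\wh{i}_?$ preserves compacts, which you justify correctly via the continuity of its right adjoint.
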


\sssec{}

Note under the identifications
$$\bD_{X}^{\on{naive}}:\QCoh(X)^\vee\simeq \QCoh(X) \text{ and } \bD_{U}^{\on{naive}}:\QCoh(U)^\vee\simeq \QCoh(U)$$
we have $(j_*)^\vee\simeq j^*$. This implies that the category $\QCoh(X)_Y$
is also naturally self-dual, such that the dual of the natural embedding
$$\be^{\QCoh}:\QCoh(X)_Y\to \QCoh(X)$$
is the functor $\br^{\QCoh}$.

\medskip

By \cite[Lemma 2.3.3]{DG}, this implies:
$$\BD^{\on{naive}}_{\QCoh(X)}\circ \be^{\QCoh}
\simeq \be^{\QCoh}\circ \BD^{\on{naive}}_{\QCoh(X)_Y}:(\QCoh(X)_Y^c)^{\on{op}}\to \QCoh(X)^c.$$

It follows that:

\begin{cor}  \label{c:compat self duality}
The above self-duality of $\QCoh(X)_Y$ is compactible with the self-duality $\bD_{X^{\wedge}_Y}^{\on{naive}}$ of 
$\QCoh(X^{\wedge}_Y)$ via the equivalence of \propref{p:QCoh of compl}.
\end{cor}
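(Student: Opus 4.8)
The plan is to reduce the asserted compatibility of the two self-dualities to identities already established for the functors $\wh{i}{}^*$, $\be^{\QCoh}$ and $\br^{\QCoh}$, so that no further computation with sheaves is needed. Write $\Phi:={}'\wh{i}{}^*:\QCoh(X)_Y\to \QCoh(X^{\wedge}_Y)$ for the equivalence of \propref{p:QCoh of compl}; by definition $\Phi=\wh{i}{}^*\circ \be^{\QCoh}$. Recall that, with respect to the self-dualities $\bD^{\on{naive}}_{\QCoh(X)_Y}$ and $\bD^{\on{naive}}_{X^{\wedge}_Y}$, compatibility of $\Phi$ means precisely that the transported dual functor $\bD^{\on{naive}}_{\QCoh(X)_Y}\circ \Phi^\vee\circ (\bD^{\on{naive}}_{X^{\wedge}_Y})^{-1}:\QCoh(X^{\wedge}_Y)\to \QCoh(X)_Y$ agrees with $\Phi^{-1}$. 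This is the criterion I will verify.

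First I would dualize the equality $\Phi=\wh{i}{}^*\circ \be^{\QCoh}$. Under the naive self-dualities, \corref{c:dual of ?} identifies the transported dual of $\wh{i}{}^*$ with $\wh{i}_?$, while the discussion preceding that corollary (an application of \cite[Lemma 2.3.3]{DG}) identifies the transported dual of $\be^{\QCoh}$ with $\br^{\QCoh}$. Composing, the transported dual of $\Phi$ is the functor $\br^{\QCoh}\circ \wh{i}_?:\QCoh(X^{\wedge}_Y)\to \QCoh(X)_Y$. It therefore remains to show that this functor equals $\Phi^{-1}$.

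For this I would pass to left adjoints in the commutative square \eqref{e:* and r}, which reads $\wh{i}{}^*=\Phi\circ \br^{\QCoh}$. Since $\be^{\QCoh}$ is left adjoint to $\br^{\QCoh}$ and $\Phi$ is an equivalence with inverse $\Phi^{-1}$, taking left adjoints gives $\wh{i}_?\simeq \be^{\QCoh}\circ \Phi^{-1}$. Because $\be^{\QCoh}$ is fully faithful, $\br^{\QCoh}\circ \be^{\QCoh}\simeq \on{Id}$, and hence $\br^{\QCoh}\circ \wh{i}_?\simeq \br^{\QCoh}\circ \be^{\QCoh}\circ \Phi^{-1}\simeq \Phi^{-1}$, which is the desired identity and completes the argument.

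I expect the only genuinely delicate point to be the bookkeeping of opposite categories, and pinning down the precise sense in which ``the dual of $\Phi$ is $\Phi^{-1}$'' encodes compatibility of self-dualities; the rest is a formal manipulation of adjunctions. As a cross-check one may instead argue on compact objects: the compacts of both sides are perfect and are matched by $\Phi$ (\corref{c:duality QCoh formal compl}), both naive dualities restrict on compacts to the internal dual $\CF\mapsto \CF^\vee$ (for $\QCoh(X)_Y$ this uses that $\CF^\vee$ remains set-theoretically supported on $Y$, together with the identity $\BD^{\on{naive}}_{\QCoh(X)}\circ \be^{\QCoh}\simeq \br^{\QCoh}\circ \BD^{\on{naive}}_{\QCoh(X)_Y}$), and $\wh{i}{}^*$ is symmetric monoidal, hence commutes with $\CF\mapsto \CF^\vee$ on perfect objects; ind-extension then yields the compatibility of the full self-dualities.
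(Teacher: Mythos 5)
Your argument is correct and uses exactly the ingredients the paper has in place at this point (\corref{c:dual of ?}, the characterization $(\be^{\QCoh})^\vee\simeq \br^{\QCoh}$ of the self-duality of $\QCoh(X)_Y$, and the diagram \eqref{e:* and r}); the paper states the corollary with no explicit proof, and your formal manipulation — identifying the transported dual of $\Phi={}'\wh{i}{}^*$ with $\br^{\QCoh}\circ\wh{i}_?$ and then with $\Phi^{-1}$ by passing to left adjoints — is precisely the intended deduction. The compact-objects cross-check at the end is likewise consistent with how the paper phrases the preceding identities.
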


\ssec{t-structures on $\QCoh(X^{\wedge}_Y)$}

In this subsection we will show that the category $$\QCoh(X^{\wedge}_Y)\simeq \QCoh(X)_Y$$ possesses two natural
t-structures: for one of them the functor $\be^{\QCoh}$ (i.e., the \emph{left} adjoint of $\br^{\QCoh}\simeq \wh{i}{}^*$)
is t-exact, and for the other, the functor $\wh{i}_*$ (i.e., the \emph{right} adjoint of 
$\wh{i}{}^*\simeq \br^{\QCoh}$) is t-exact.

\sssec{}

Let us recall the following general paradigm: let $\bC$ be a DG category equipped with a t-structure.
Let $F:\bC_1\hookrightarrow \bC$ be a fully faithful functor. Assume that $F$ admits a left (resp., right)
adjoint, denoted $F^L$ (resp., $F^R$). 
We have:

\begin{lem} \hfill  \label{l:t-structures and localization}

\smallskip

\noindent{\emph(a)}
If the composition $F\circ F^L$ (resp., $F\circ F^R$) is right (resp., left) t-exact, 
then $\bC_1$ has a unique t-structure such that $F$ is t-exact. With respect to this t-structure, 
the functor $F^L$ (resp., $F^R$) is right (resp., left) t-exact. 

\smallskip

\noindent{\emph(b)} 
If the composition $F\circ F^L$ (resp., $F\circ F^R$) is left (resp., right) t-exact, 
then $\bC_1$ has a unique t-structure such that $F^L$ (resp., $F^R$) is t-exact.
With respect to this t-structure, the functor $F$ is left (resp., right) t-exact.

\end{lem}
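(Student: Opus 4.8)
The plan is to treat parts (a) and (b) by transporting the $t$-structure of $\bC$ along the (co)reflection, and in each case to produce the truncation triangle of an object $\bc\in\bC_1$ by applying the adjoint to the ambient truncation triangle of $F(\bc)$. I will focus on the left-adjoint cases; the right-adjoint cases are formally dual (replace $F^L$, $L:=F\circ F^L$, the monad unit and ``right $t$-exact'' by $F^R$, $C:=F\circ F^R$, the comonad counit and ``left $t$-exact'', and reverse the roles of the two sides of the $t$-structure). Throughout I use that, since $F$ is fully faithful, $\bC_1$ is identified with its essential image, the full subcategory of $L$-local objects, and that $L$ is an idempotent monad fitting into a functorial fiber sequence $\Gamma\bc\to\bc\to L\bc$ with $\Gamma\bc\in\CN:=\ker(F^L)$ and $L\bc$ local; local and acyclic objects are right-orthogonal, so any object that is simultaneously local and acyclic vanishes, i.e. $\CN\cap\bC_1=0$.

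For part (b) (left adjoint, $L$ left $t$-exact) I would define $\bC_1^{\geq 1}:=\{\bc:F(\bc)\in\bC^{\geq 1}\}$ and $\bC_1^{\leq 0}$ as its left orthogonal in $\bC_1$. Given $\bc\in\bC_1$, set $\bc^{\geq 1}:=F^L\tau^{\geq 1}F(\bc)$ and $\bc^{\leq 0}:=\on{fib}(\bc\to\bc^{\geq 1})$. Then $F(\bc^{\geq 1})=L\tau^{\geq 1}F(\bc)\in\bC^{\geq 1}$ by left $t$-exactness of $L$, so $\bc^{\geq 1}\in\bC_1^{\geq 1}$; and $\bc^{\leq 0}\in\bC_1^{\leq 0}$ follows since for $\bc'\in\bC_1^{\geq 1}$ the adjunction gives $\Maps_{\bC_1}(F^L\tau^{\leq 0}F(\bc),\bc')\simeq\Maps_{\bC}(\tau^{\leq 0}F(\bc),F(\bc'))$, which vanishes by the $t$-structure on $\bC$. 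This exhibits the truncation triangles, so $\bC_1^{\leq 0},\bC_1^{\geq 1}$ form a $t$-structure, and the same two computations show $F^L$ is $t$-exact and $F$ is left $t$-exact. Uniqueness follows from the observation that for any $t$-structure with $F^L$ $t$-exact, applying $F^L$ to the ambient truncation triangle of $F(\bc)$ yields a truncation triangle of $\bc$, which therefore pins down the truncation functors.

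For part (a) (left adjoint, $L$ right $t$-exact) the conclusion is stronger: $F$ should be fully $t$-exact, which forces (using full faithfulness and uniqueness of truncation triangles) both $\bC_1^{\leq 0}$ and $\bC_1^{\geq 0}$ to be the preimages $F^{-1}(\bC^{\leq 0})$, $F^{-1}(\bC^{\geq 0})$, and forces $F$ to commute with truncation; equivalently, the essential image of $F$ must be closed under the truncation functors of $\bC$. This closure is the main obstacle and the heart of the argument. I would prove it by a split-idempotent argument: for local $\bc$, the canonical map $\tau^{\leq 0}\bc\to\bc$ factors through its localization as $\tau^{\leq 0}\bc\xrightarrow{u}L\tau^{\leq 0}\bc\xrightarrow{g}\bc$; right $t$-exactness gives $L\tau^{\leq 0}\bc\in\bC^{\leq 0}$, so applying $\tau^{\leq 0}$ and using that $\tau^{\leq 0}$ is the identity on $\bC^{\leq 0}$ exhibits $\tau^{\leq 0}(g)$ as a retraction of $u$. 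Hence $\on{cofib}(u)\simeq(\Gamma\tau^{\leq 0}\bc)[1]$ is a retract of the local object $L\tau^{\leq 0}\bc$, so it is local; being also acyclic it vanishes, whence $\tau^{\leq 0}\bc$ is local, and dually $\tau^{\geq 1}\bc$ is local. With the image closed under truncation the preimage prescription defines a $t$-structure making $F$ $t$-exact; uniqueness is as above, and $F^L$ is right $t$-exact because the inclusion $F^L(\bC^{\leq 0})\subseteq\bC_1^{\leq 0}$ is equivalent to $L(\bC^{\leq 0})\subseteq\bC^{\leq 0}$, the hypothesis.

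Finally I would record the dual (right-adjoint) cases: for part (a) right adjoint the comonad $C=F\circ F^R$ is left $t$-exact, and the same retract argument, now applied to the counit $C\tau^{\geq 1}\bc\to\tau^{\geq 1}\bc$ together with $C\tau^{\geq 1}\bc\in\bC^{\geq 1}$, shows the image of $F$ (the $C$-colocal objects) is closed under truncation, giving $F$ $t$-exact and $F^R$ left $t$-exact; for part (b) right adjoint the construction mirrors the left-adjoint case with the two sides of the $t$-structure interchanged. The only genuinely nontrivial input beyond bookkeeping is the split-idempotent closure step in part (a); everything else is adjunction combined with the orthogonality of local/acyclic (resp. colocal/coacyclic) objects.
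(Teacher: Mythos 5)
The paper does not actually prove this lemma: it is stated as a recalled ``general paradigm'' with no argument supplied, so there is nothing to compare your proof against line by line. Your argument is correct and complete, and it is the standard one. In part (b) the adjunction transports the truncation triangle of $F(\bc)$ directly to a truncation triangle of $\bc$, and in part (a) you correctly identify the real content -- that the essential image of $F$ is closed under the truncation functors of $\bC$ -- and your split-idempotent argument (the retraction $\tau^{\leq 0}(g)$ of the unit $u$, forcing $\on{cofib}(u)$ to be simultaneously local and acyclic, hence zero) establishes exactly that. One cosmetic remark: once $\tau^{\leq 0}\bc$ is shown to be local, the locality of $\tau^{\geq 1}\bc$ is not ``dual'' but automatic, since the local objects form a stable subcategory (being a reflective subcategory closed under shifts, hence under cofibers); this does not affect the validity of the proof.
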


\sssec{}  \label{sss:1st t str}

We will apply point (a) of the lemma (with right adjoints)
to $\bC=\QCoh(X)$ and $\bC_1=\QCoh(X)_Y$.

\medskip

Let us first take $F:=\be^{\QCoh}$ and $F^R:=\br^{\QCoh}$. We obtain that 
$\QCoh(X)_Y$ admits a t-structure, compatible with its embedding into $\QCoh(X)$. This
t-structure is compatible with filtered colimits (i.e., truncation functors commute with filtered
colimits). 

\medskip

We shall refer to this t-structure on $\QCoh(X^{\wedge}_Y)$ as the ``inductive t-structure." 

\sssec{}  \label{sss:other t on qcoh}

We shall now introduce another t-structure on $\QCoh(X^{\wedge}_Y)$. 

\medskip

Recall (see \cite[Sect. 1.2.3]{QCoh}) that for any $\CZ\in \inftydgprestack$, the category $\QCoh(\CZ)$ has a canonical t-structure 
defined by the following requirement: an object $\CF$ belongs to $\QCoh(\CZ)^{\leq 0}$ if and only if for
every $S\in \affdgSch$ and $\phi:S\to \CZ$, we have $\phi^*(\CF)\in \QCoh(S)^{\leq 0}$. Let us call
it ``the canonical t-structure on $\QCoh(\CZ)$." 

\begin{prop}  \label{p:2nd t st}
The functor 
$$\wh{i}_*:\QCoh(X^{\wedge}_Y)\to \QCoh(X)$$
is t-exact for the canonical t-structure on $\QCoh(X^{\wedge}_Y)$.
\end{prop}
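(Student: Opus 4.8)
The plan is to prove the two half-exactness statements separately, disposing of left t-exactness formally and concentrating all the work on right t-exactness, where a Mittag--Leffler phenomenon is the heart of the matter. Throughout I use that on the DG scheme $X$ the canonical t-structure on $\QCoh(X)$ is the usual one, and I fix a presentation $X^\wedge_Y\simeq \underset{\alpha}{colim}\, Z_\alpha$ by closed subschemes $i_\alpha\colon Z_\alpha\hookrightarrow X$ with set-theoretic image $Y$ (\propref{p:formal compl is indscheme expl}, refined by \propref{p:expl pres} and \propref{p:expl aleph 0}).

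First I would record that $\wh{i}{}^*$ is \emph{right} t-exact for the canonical t-structures. Any map $\phi\colon S\to X^\wedge_Y$ from an affine $S$ factors through one of the $Z_\alpha$, say $\phi=i_\alpha\circ\phi_\alpha$, so $\phi^*\,\wh{i}{}^*\CG\simeq\phi_\alpha^*\,i_\alpha^*\CG$; since $i_\alpha^*$ and $\phi_\alpha^*$ are right t-exact, $\wh{i}{}^*$ preserves $\QCoh^{\leq 0}$. The very same factorization shows that $\CF\in\QCoh(X^\wedge_Y)^{\leq 0}$ if and only if $\CF_\alpha:=i_\alpha^*\CF$ lies in $\QCoh(Z_\alpha)^{\leq 0}$ for every $\alpha$. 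Left t-exactness of $\wh{i}_*$ is now automatic by adjunction: for $\CF\in\QCoh(X^\wedge_Y)^{\geq 0}$ and $\CG\in\QCoh(X)^{\leq -1}$ we have $\Maps(\CG,\wh{i}_*\CF)\simeq\Maps(\wh{i}{}^*\CG,\CF)=0$, because $\wh{i}{}^*\CG\in\QCoh(X^\wedge_Y)^{\leq -1}$. Hence $\wh{i}_*\CF\in\QCoh(X)^{\geq 0}$.

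The real content is right t-exactness. Since $\QCoh$ turns the colimit presentation into a limit, $\QCoh(X^\wedge_Y)\simeq\underset{\alpha}{lim}\,\QCoh(Z_\alpha)$ along the functors $i_{\alpha,\beta}^*$, and, by the standard computation of a right adjoint into a limit of categories, $\wh{i}_*\CF\simeq\underset{\alpha}{lim}\,(i_\alpha)_*\CF_\alpha$. Using \propref{p:expl aleph 0} I may take the index category to be $\BN$, so this is a \emph{sequential} limit of the tower $G_\alpha:=(i_\alpha)_*\CF_\alpha$. Each $G_\alpha$ is connective, since pushforward along a closed embedding is t-exact and $\CF_\alpha$ is connective. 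Moreover each transition map $G_{\alpha+1}\to G_\alpha$ is surjective on $H^0$: it is the unit of base change of $\CF_{\alpha+1}$ along the closed embedding $Z_\alpha\hookrightarrow Z_{\alpha+1}$, which on $H^0$ is the reduction $H^0(\CF_{\alpha+1})\to H^0(\CF_{\alpha+1})\otimes_{H^0(\CO_{Z_{\alpha+1}})}H^0(\CO_{Z_\alpha})$ modulo the surjective ideal of $Z_\alpha$ in $Z_{\alpha+1}$. Right t-exactness thus reduces to the assertion that a sequential limit of connective objects with $H^0$-surjective transition maps is connective. From the Milnor fiber sequence $\underset{\alpha}{lim}\,G_\alpha\to\prod_\alpha G_\alpha\xrightarrow{1-\on{sh}}\prod_\alpha G_\alpha$ the only possibly nonzero positive cohomology is $H^1(\underset{\alpha}{lim}\,G_\alpha)\simeq\underset{\alpha}{lim}{}^1\,H^0(G_\alpha)$, which vanishes precisely because the tower $\{H^0(G_\alpha)\}$ is Mittag--Leffler. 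For $X=\Spec A$ this is entirely explicit through the Koszul presentation $A_n$ of \propref{p:expl pres}, where $H^0(A_n)=A/(f_1^n,\dots,f_m^n)$ has surjective transition maps; it is the classical vanishing of $\underset{n}{lim}{}^1$ for a surjective tower of $A$-modules, in the same circle of ideas as \lemref{l:stab ker}.

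The main obstacle is the passage from affine $X$ to general quasi-separated quasi-compact $X$. One cannot simply localize, since $\wh{i}_*$ does not commute with Zariski restriction (this is exactly the warning preceding the statement); worse, the heart $\QCoh(X)^\heartsuit$ need not admit exact countable products, so the Milnor-sequence vanishing of $\underset{\alpha}{lim}{}^1$ is not formal globally. I would resolve this by expressing the completion functor $\wh{i}_*\circ\wh{i}{}^*$, equivalently the tower limit, through the \emph{finite} stable Koszul (local-cohomology) complex attached to the locally finitely generated ideal of $Y$: because $Y$ is locally cut out by finitely many equations, this presents $\wh{i}_*$ as a finite totalization of pushforwards $j_{S*}$ from the quasi-compact opens $U_S=\{f_i\neq 0 : i\in S\}$, each of bounded cohomological amplitude. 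This bypasses infinite products, reduces the connectivity estimate to the affine computation above, and shows that the surjectivity of the $H^0$-transition maps---a Zariski-local condition that nonetheless holds globally---forces connectivity of $\wh{i}_*\CF$. Together with the left t-exactness established above, this yields that $\wh{i}_*$ is t-exact for the canonical t-structure, as asserted.
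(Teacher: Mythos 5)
Your core argument coincides with the paper's proof: there too, left t-exactness is obtained purely by adjunction from the right t-exactness of $\wh{i}{}^*$, and right t-exactness is proved by writing $\wh{i}_*\CF$ as the $\BN$-indexed limit $\underset{\alpha}{lim}\,(i_\alpha)_*(\CF_\alpha)$ over the presentation of \propref{p:expl aleph 0}, observing that each term is connective and the transition maps are surjective on $H^0$, and concluding by the vanishing of $\on{lim}^1$ for a surjective tower. Up to that point your write-up is a faithful (indeed more detailed) rendering of the paper's argument.

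Where you diverge is the final paragraph, and that is where the problem lies. You are right that the one-line appeal to Mittag--Leffler deserves scrutiny when $X$ is not affine (countable products in $\QCoh(X)^\heartsuit$ need not be exact, and $\wh{i}_*$ does not commute with Zariski restriction), but the repair you sketch does not work as stated. The finite totalization of the pushforwards $j_{S*}$ from the opens $U_S=\{f_i\neq 0,\ i\in S\}$ is the \v{C}ech/stable Koszul resolution of $j_*\circ j^*$, hence computes the \emph{co-localization} $\wh{i}_?\circ\wh{i}{}^*\simeq \on{Cone}(\on{Id}\to j_*\circ j^*)[-1]$ (sections with set-theoretic support on $Y$), not the completion $\wh{i}_*\circ\wh{i}{}^*$; trading one for the other is a Greenlees--May type duality whose terms are internal Homs out of telescopes, i.e.\ infinite limits again rather than pushforwards from opens. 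Moreover the equations $f_1,\dots,f_m$ are only Zariski-locally defined on $X$, so the global complex you invoke is not available without a further gluing step that would reintroduce exactly the non-locality you are trying to avoid. As written, this detour is a gap rather than a fix: either delete it and run the $\on{lim}^1$ argument directly on the surjective tower, as the paper does, or supply a genuinely different and complete treatment of the non-affine case.
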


A few remarks are in order:

\medskip

\noindent(i) Since the functor $\wh{i}{}^*:\QCoh(X)\to \QCoh(X^{\wedge}_Y)$ is right t-exact,
we obtain that the proposition implies that the localization functor
$$\wh{i}_*\circ \wh{i}{}^*:\QCoh(X)\to \QCoh(X)$$ is also right t-exact. 
Thus, the canonical t-structure on $\QCoh(\CZ)$ falls into
the paradigm of \lemref{l:t-structures and localization}(a) with left adjoints.

\medskip

\noindent(ii) The canonical t-structure on $\QCoh(X^{\wedge}_Y)$ 
is different from the one of \secref{sss:1st t str}: for the former the functor $\wh{i}{}^*$ is right t-exact,
and for the latter it is left t-exact.

\medskip

\noindent(iii) Let $\CF$ be an object of $\QCoh(X)^\heartsuit$ which is \emph{scheme-theoretically}
supported on some subscheme $Y'\subset X$ whose underlying set is $Y$. Then it is easy to see
that $\CF$, regarded as an object of $\QCoh(X)_Y$, lies in the heart of both t-structures. 
 
\medskip

\noindent(iv) The canonical t-structure on $\QCoh(X^{\wedge}_Y)$ is typically not compatible with colimits, 
as can be seen in the example of $X=\BA^1$ and $Y=\on{pt}$.

\begin{proof}[Proof of \propref{p:2nd t st}]

The functor $\wh{i}_*$ is left t-exact, being the right adjoint of a right t-exact functor, namely, $\wh{i}^*$.
Hence, we need to show that $\wh{i}_*$ is right t-exact.

\medskip

Let $\CY$ be an object of $\inftydgstack$, and let $f:\CY\to X$ be a morphism, where
$X\in \dgSch$. Assume that $\CY$ is written as a colimit in $\inftydgstack$
$$\underset{g_\alpha:Y'_\alpha\to \CY}{colim}\, Y'_\alpha,$$
where $Y'_\alpha\in \dgSch$.  In this case, the functor
$$\QCoh(\CY)\to \underset{\alpha}{lim}\, \QCoh(Y'_\alpha)$$ 
is an equivalence (this follows from \cite[Corollary 1.3.7 ]{QCoh}, and the fact that the functor
$\QCoh(-)$ takes colimits in $\inftydgprestack$ to limits in $\StinftyCat$). 

\medskip

This implies that the (non-continuous) functor $f_*:\QCoh(\CY)\to \QCoh(X)$ can be calculated as
follows: for $\CF\in \QCoh(\CY)$, given as a compatible family $\CF_\alpha:=g_\alpha^*(\CF)\in \QCoh(Y'_\alpha)$,
$$f_*(\CF)\simeq \underset{\alpha}{lim}\, (f\circ g_\alpha)_*(\CF_\alpha).$$

\medskip

We apply this to $\CY:=X^{\wedge}_Y$ written as a colimit as in \propref{p:expl aleph 0}. Thus,
in order to show that $\wh{i}_*$ is right t-exact, we need to check that if $\CF_\alpha \in 
\QCoh(Y'_\alpha)^{\leq 0}$ for all $\alpha\in \sA$, then 
$$\underset{\alpha\in \sA}{lim}\, (i_\alpha)_*(\CF_\alpha)\in \QCoh(X)^{\leq 0}$$
where $i_\alpha$ denotes the map $Y'_\alpha\to X$.

\medskip

Since the index category is $A$ is $\BN$, we need to show that the functor
$lim^1$ applied to the family $\alpha\mapsto H^0((i_\alpha)_*(\CF_\alpha))$
vanishes. However, this is the case, since the maps in this family are surjective.

\end{proof} 

\ssec{Ind-coherent sheaves on formal completions}

Let $X$ be a DG scheme almost of finite type; 
in particular, it is quasi-compact and quasi-separated. 

\sssec{}

Recall (see \cite[Sect. 4.1]{IndCoh}) that we have a pair
of adjoint functors
$$j^{\IndCoh,*}:\IndCoh(X)\rightleftarrows \IndCoh(U):j^{\IndCoh}_*$$
that realize $\IndCoh(U)$ as a localization of $\IndCoh(X)$. Let
$\IndCoh(X)_Y\subset \IndCoh(X)$ be the full subcategory equal to
$$\on{ker}(j^{\IndCoh,*}):\IndCoh(X)\to \IndCoh(U).$$

\medskip

We let $\be^{\IndCoh}$ denote the tautological embedding
$$\IndCoh(X)_Y\hookrightarrow \IndCoh(X).$$
This functor admits a right adjoint, denoted $\br^{\IndCoh}$ given by
$$\CF\mapsto \on{Cone}\left(\CF\to j^{\IndCoh}_*\circ j^{\IndCoh,*}(\CF)\right)[-1].$$

\sssec{}

As was shown in \corref{c:formal lft}, for a Zariski-closed subset $Y$, the DG indscheme 
$X^{\wedge}_Y$ is locally almost of finite type, so $\IndCoh(X^{\wedge}_Y)$ is well-defined.

\medskip

Consider the functor \footnote{The usage of notation $\wh{i}{}^!$ here is different from 
\cite[Corollary 4.1.5]{IndCoh}. Nevertheless, this notation is consistent as will follow from
\propref{p:localization for IndCoh}.}
$$\wh{i}{}^!:\IndCoh(X)\to \IndCoh(X^{\wedge}_Y),$$
i.e., the !-pullback functor with respect to the morphism $\wh{i}:X^{\wedge}_Y\to X$.
It is easy to see that this functor annihilates the essential image of $\IndCoh(U)$
under $j^{\IndCoh}_*$.

\sssec{}  \label{sss:lower shriek}

We now claim that the functor $\wh{i}{}^!$ admits a left adjoint, to be denoted by
$\wh{i}^{\IndCoh}_*$. 

\medskip

Indeed, by \secref{sss:indcoh as colim}, we have:
\begin{equation} \label{e:present IndCoh on formal}
\IndCoh(X^{\wedge}_Y)\simeq \underset{\alpha}{colim}\, \IndCoh(Y_\alpha),
\end{equation}
where $Y_\alpha$ run over a family of closed DG subschemes of $X$ with the
underlying set contained in $Y$. If we denote by $i_\alpha$ the closed
embedding $Y_\alpha\hookrightarrow X$, the functor $\wh{i}^{\IndCoh}_*$, left adjoint to
$\wh{i}{}^!$, is given by the compatible family of functors
$$(i_\alpha)^{\IndCoh}_*:\IndCoh(Y_\alpha)\to \IndCoh(X).$$

\sssec{}

By construction, the essential image of the functor $\wh{i}^{\IndCoh}_*$ belongs to
$$\IndCoh(X)_Y\subset \IndCoh(X).$$ (Or, equivalently, the right adjoint $\wh{i}{}^!$
of $\wh{i}^{\IndCoh}_*$ factors through the co-localization functor $\br^{\IndCoh}$.)

\medskip

Let
\begin{equation} \label{e:localization for IndCoh}
'\wh{i}^{\IndCoh}_*:\IndCoh(X^{\wedge}_Y)\rightleftarrows \IndCoh(X)_Y:{}'\wh{i}{}^!
\end{equation}
denote the resulting pair of adjoint functors.

\medskip

We will show:

\begin{prop} \label{p:localization for IndCoh}
The adjoint functors of \eqref{e:localization for IndCoh} are equivalences. 
\end{prop}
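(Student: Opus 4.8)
The plan is to show that the left adjoint $'\wh{i}^{\IndCoh}_*$ is fully faithful and essentially surjective; its right adjoint $'\wh{i}{}^!$ is then automatically the inverse equivalence. Both categories are compactly generated: $\IndCoh(X^{\wedge}_Y)$ by the objects $(i_\alpha)^{\IndCoh}_*(\CF)$ with $\CF\in \Coh(Y_\alpha)$, by \corref{c:IndCoh compactly generated} and \eqref{e:present IndCoh on formal}, while $\IndCoh(X)_Y=\on{ker}(j^{\IndCoh,*})$ is compactly generated by $\Coh(X)_Y$, the coherent sheaves on $X$ that are set-theoretically supported on $Y$ (the Thomason--Neeman description of the compact objects in the kernel of a localization of compactly generated categories). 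By \secref{sss:lower shriek} the functor $'\wh{i}^{\IndCoh}_*$ is continuous, being the colimit of the functors $(i_\alpha)^{\IndCoh}_*$, and it carries the compact generators $(i_\alpha)^{\IndCoh}_*\Coh(Y_\alpha)$ into $\Coh(X)_Y$.

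\emph{Essential surjectivity.} First I would reduce to compact objects. Since $X$ is almost of finite type, hence Noetherian, and the ideal of $Y$ is locally finitely generated, every coherent sheaf $\CF$ with $\CF|_U=0$ has its (finitely many) coherent cohomology sheaves annihilated by a common power of the ideal, so that $\CF\simeq (i_\alpha)^{\IndCoh}_*(\CF_\alpha)$ for some thickening $Y_\alpha$ and $\CF_\alpha\in\Coh(Y_\alpha)$. Thus $\Coh(X)_Y=\underset{\alpha}{\bigcup}\,(i_\alpha)^{\IndCoh}_*\Coh(Y_\alpha)$ is a set of generators of $\IndCoh(X)_Y$ that is literally contained in the essential image of $'\wh{i}^{\IndCoh}_*$. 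As this image is closed under colimits, $'\wh{i}^{\IndCoh}_*$ is essentially surjective.

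\emph{Full faithfulness is the main obstacle.} By continuity and compact generation it suffices to check that the unit $\on{Id}\to {}'\wh{i}{}^!\,{}'\wh{i}^{\IndCoh}_*$ is an isomorphism on the generators $(i_\alpha)^{\IndCoh}_*\CF$, $\CF\in\Coh(Y_\alpha)$. Now $\wh{i}^{\IndCoh}_*(i_\alpha)^{\IndCoh}_*\CF$ is the push-forward $(i_\alpha)^{\IndCoh}_{*}\CF$ to $X$, and under the limit description of $\IndCoh(X^{\wedge}_Y)$ the $\beta$-component of $\wh{i}{}^!$ of this object is $i_\beta^{!}\,(i_\alpha)^{\IndCoh}_{*}\CF$ computed in $X$, whereas the corresponding component of $(i_\alpha)^{\IndCoh}_*\CF$ is, by \lemref{l:push forward and restr}, the colimit $\underset{\gamma}{colim}\,(i_{\beta,\gamma})^!(i_{\alpha,\gamma})^{\IndCoh}_*\CF$ over thickenings $Y_\gamma$ containing $Y_\alpha$ and $Y_\beta$. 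Hence everything comes down to the base-change comparison
\begin{equation*}
\underset{\gamma}{colim}\,(i_{\beta,\gamma})^!(i_{\alpha,\gamma})^{\IndCoh}_*\CF\;\longrightarrow\; i_\beta^{!}\,(i_\alpha)^{\IndCoh}_{*}\CF,
\end{equation*}
in which the left-hand side is base change performed inside each $Y_\gamma$ and the right-hand side is base change performed inside $X$. The two differ by insertion of $i_\gamma^{!}(i_\gamma)^{\IndCoh}_{*}$, a derived self-intersection along $Y_\gamma\hookrightarrow X$, and the assertion is that this discrepancy is washed out in the colimit over $\gamma$.

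The hard part is precisely this comparison, and I would prove it in parallel with its $\QCoh$-counterpart in \propref{p:QCoh of compl}. The statement is Zariski-local, so I reduce to $X=\Spec(A)$ affine with $Y$ cut out by $f_1,\dots,f_m$, and I replace the abstract thickenings by the explicit Koszul-type models $\Spec(A_n)$ of \propref{p:expl pres}. The required isomorphism then becomes the assertion that a pro-system of $\on{Tor}$-type kernels stabilizes to zero, which is the $\IndCoh$ analog of \lemref{l:stab ker} (a derived Artin--Rees/Mittag--Leffler stabilization, where Noetherianness and the finite generation of the ideal are essential). Granting this, the unit is an isomorphism, so $'\wh{i}^{\IndCoh}_*$ is fully faithful; combined with the essential surjectivity above it is an equivalence, with inverse $'\wh{i}{}^!$.
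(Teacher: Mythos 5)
Your proposal follows essentially the same route as the paper: essential surjectivity via generators pushed forward from thickenings (the paper cites \cite[Proposition 4.1.7(a)]{IndCoh} for this), and full faithfulness by reducing to the affine case, to the explicit Koszul models $\Spec(A_n)$ of \propref{p:expl pres}, and to an Artin--Rees-type stabilization. The one step you only grant --- the ``$\IndCoh$ analog'' of \lemref{l:stab ker} --- is discharged in the paper not by proving a new lemma but by reducing to the literal $\QCoh$ statement: both sides of the comparison map commute with colimits in $\CF$, so one takes $\CF\in\Coh(Y_{n_0})$, observes that both sides then lie in $\IndCoh(Y_{n_0})^+$ where $\Psi_{Y_{n_0}}$ is conservative, and tests by global sections over the affine base, at which point \lemref{l:stab ker} applies verbatim.
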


\begin{proof}

As in the proof of \propref{p:QCoh of compl}, we need to show two things:

\medskip

\noindent{(a)} The functor $\wh{i}^{\IndCoh}_*:\IndCoh(X^{\wedge}_Y)\to \IndCoh(X)$ is fully faithful.

\smallskip

\noindent{(b)} The essential image of the functor $'\wh{i}^{\IndCoh}_*$ generates $\IndCoh(X)_Y$. 

\medskip

We note that (b) follows from \cite[Proposition 4.1.7(a)]{IndCoh}. It remains to prove (a). 
The assertion is Zariski-local, so we can assume $X=\Spec(A)$. Let $A_n$ be as in the 
proof of \propref{p:QCoh of compl}. Set $Y_n:=\Spec(A_n)$. 

\medskip

For $n'\leq n''$, let $i_{n',n''}$ denote the closed embedding $Y_{n'}\to Y_{n''}$,
and $i_n$ the closed embedding $Y_n\to X$. To prove (a), we need to show that for an 
index $n_0$ and $\CF\in \IndCoh(Y_{n_0})$, the map
\begin{equation} \label{e:colimits via n}
\underset{n\geq n_0}{colim}\, i^!_{n_0,n}\circ (i_{n_0,n})^{\IndCoh}_*(\CF)\to i_{n_0}^!\circ (i_{n_0})^{\IndCoh}_*(\CF)
\end{equation}
is an isomorphism. 

\medskip

Both sides in \eqref{e:colimits via n} commute with colimits in the $\CF$ variable. So, we can take $\CF\in \Coh(Y_{n_0})$.
In this case both sides of \eqref{e:colimits via n} belong to $\IndCoh(Y_{n_0})^+$. Hence, by \cite[Proposition 1.2.4]{IndCoh},
it suffices to show that the map in \eqref{e:colimits via n} induces an isomorphisms by applying the functor
$\Psi_{Y_{n_0}}:\IndCoh(Y_{n_0})\to \QCoh(Y_{n_0})$. Since $Y_0$ is affine, we can furthermore test whether 
a map is an isomorphism
by taking global sections.

\medskip

Hence, we obtain that it suffices to show that 
\begin{equation} \label{e:colimits via n bis}
\underset{n\geq n_0}{colim}\, \Maps_{A_n\mod}(A_{n_0},\CF)\to \Maps_{A\mod}(A_{n_0},\CF)
\end{equation}
is an isomorphism.  The map \eqref{e:colimits via n bis} can be rewritten as
$$\underset{n\geq n_0}{colim}\, \Maps_{A_{n_0}\mod}((A_{n_0}\underset{A_n}\otimes A)\underset{A}\otimes A_{n_0},\CF)\to 
\Maps_{A_{n_0}\mod}(A_{n_0}\underset{A}\otimes A_{n_0},\CF).$$

Hence, the required assertion follows from \lemref{l:stab ker}.

\end{proof}

\sssec{}

\propref{p:localization for IndCoh} implies the commutativity of the following diagram,
analogous to \eqref{e:* and r}:

\begin{equation} \label{e:! and r}
\CD
\IndCoh(X^{\wedge}_Y)  @<{\wh{i}{}^!}<<  \IndCoh(X) \\
@A{'\wh{i}{}^!}AA   @AA{\on{Id}}A  \\
\IndCoh(X)_Y   @<{\br^{\IndCoh}}<< \IndCoh(X),
\endCD
\end{equation}

\sssec{Compatibility with the t-structure}

Recall from \secref{ss:t-structure on IndCoh}, that the category $\IndCoh(X^{\wedge}_Y)$ has a natural t-structure.
Note that the category $\IndCoh(X)_Y$ also has a natural t-structure for which the functor $\be^{\IndCoh}$ is
t-exact. Indeed, this follows by \lemref{l:t-structures and localization}(a) from the fact that the functor
$$\be^{\IndCoh}\circ \br^{\IndCoh}:\IndCoh(X)\to \IndCoh(X),\quad \CF\mapsto \on{Cone}\left(\CF\to j^{\IndCoh}_*\circ j^{\IndCoh,*}(\CF)\right)[-1]$$ 
is left t-exact.

\medskip

We claim:

\begin{lem} 
The equivalence in \eqref{e:localization for IndCoh} is t-exact.
\end{lem}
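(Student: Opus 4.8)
Since the two functors in \eqref{e:localization for IndCoh} are mutually inverse equivalences (\propref{p:localization for IndCoh}), it suffices to prove that $'\wh{i}^{\IndCoh}_*$ is t-exact; the quasi-inverse $'\wh{i}{}^!$ is then t-exact automatically. First I would reduce to a statement about $\wh{i}^{\IndCoh}_*:\IndCoh(X^{\wedge}_Y)\to \IndCoh(X)$. By definition the t-structure on $\IndCoh(X)_Y$ is the one for which the fully faithful embedding $\be^{\IndCoh}$ is t-exact; as $\be^{\IndCoh}$ is fully faithful, it \emph{reflects} connectivity and coconnectivity, so an object of $\IndCoh(X)_Y$ is coconnective (resp. connective) if and only if its image under $\be^{\IndCoh}$ is. Since $\be^{\IndCoh}\circ {}'\wh{i}^{\IndCoh}_*\simeq \wh{i}^{\IndCoh}_*$, the functor $'\wh{i}^{\IndCoh}_*$ is t-exact precisely when $\wh{i}^{\IndCoh}_*:\IndCoh(X^{\wedge}_Y)\to \IndCoh(X)$ is t-exact for the t-structure of \secref{ss:t-structure on IndCoh} on the source and the standard t-structure on the target. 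Throughout I will use the presentation of \secref{sss:lower shriek}: the closed embeddings $i_\alpha:Y_\alpha\hookrightarrow X$ factor as $\wh{i}\circ\iota_\alpha$, where $\iota_\alpha:Y_\alpha\to X^{\wedge}_Y$, and $\wh{i}^{\IndCoh}_*\circ(\iota_\alpha)^{\IndCoh}_*\simeq (i_\alpha)^{\IndCoh}_*$ by functoriality of pushforward.

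\textbf{Right t-exactness.} By \lemref{l:criter for coconn}(b) (equivalently \corref{c:generation of t-structure}) the subcategory $\IndCoh(X^{\wedge}_Y)^{\leq 0}$ is generated under colimits by the objects $(\iota_\alpha)^{\IndCoh}_*\bigl(\Coh(Y_\alpha)^{\leq 0}\bigr)$. The functor $\wh{i}^{\IndCoh}_*$ is continuous, and on such a generator it agrees with $(i_\alpha)^{\IndCoh}_*$, which is t-exact because $i_\alpha$ is a closed embedding of DG schemes (\lemref{l:exactness of closed embedding}). As $\IndCoh(X)^{\leq 0}$ is closed under colimits, I conclude $\wh{i}^{\IndCoh}_*\bigl(\IndCoh(X^{\wedge}_Y)^{\leq 0}\bigr)\subseteq \IndCoh(X)^{\leq 0}$.

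\textbf{Left t-exactness, the main point.} For $\CF\in\IndCoh(X^{\wedge}_Y)$, the identity functor is computed by \eqref{e:IndCoh of indscheme as colimit} as $\CF\simeq \underset{\alpha}{colim}\,(\iota_\alpha)^{\IndCoh}_*\,\iota_\alpha^!(\CF)$, a \emph{filtered} colimit. Applying the continuous functor $\wh{i}^{\IndCoh}_*$ and the relation above gives $\wh{i}^{\IndCoh}_*(\CF)\simeq \underset{\alpha}{colim}\,(i_\alpha)^{\IndCoh}_*\,\iota_\alpha^!(\CF)$. Now suppose $\CF\in\IndCoh(X^{\wedge}_Y)^{\geq 0}$. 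Then $\iota_\alpha^!(\CF)\in\IndCoh(Y_\alpha)^{\geq 0}$ for each $\alpha$ by \lemref{l:criter for coconn}(a), and t-exactness of $(i_\alpha)^{\IndCoh}_*$ shows each term $(i_\alpha)^{\IndCoh}_*\,\iota_\alpha^!(\CF)$ lies in $\IndCoh(X)^{\geq 0}$. The crux — and the step I expect to be the genuine obstacle — is that coconnectivity must survive the passage to the filtered colimit; termwise reasoning does not suffice, since filtered colimits are only right t-exact in general. This is exactly where I invoke the compatibility of the t-structure on $\IndCoh(X)$ with filtered colimits recorded in \secref{ss:t-structure on IndCoh} (valid since $X$, being a DG scheme almost of finite type, is a DG indscheme locally almost of finite type), which guarantees that $\IndCoh(X)^{\geq 0}$ is preserved by filtered colimits. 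Hence $\wh{i}^{\IndCoh}_*(\CF)\in\IndCoh(X)^{\geq 0}$, completing the proof that $\wh{i}^{\IndCoh}_*$, and therefore $'\wh{i}^{\IndCoh}_*$, is t-exact.
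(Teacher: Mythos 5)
Your proof is correct and follows essentially the same route as the paper: reduce to the t-exactness of $\wh{i}^{\IndCoh}_*:\IndCoh(X^{\wedge}_Y)\to \IndCoh(X)$ and deduce it from the description of that functor as the compatible family of the t-exact pushforwards $(i_\alpha)^{\IndCoh}_*$ from \secref{sss:lower shriek}. The paper's proof is a one-liner at this point; you have merely made explicit the two ingredients it leaves implicit (generation of $\IndCoh(X^{\wedge}_Y)^{\leq 0}$ by pushforwards of connective objects, and compatibility of the t-structure on $\IndCoh(X)$ with filtered colimits for the coconnective half).
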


\begin{proof}
By \lemref{l:t-structures and localization}(a) we only have to show that the functor
$$\wh{i}_*^\IndCoh:\IndCoh(X^{\wedge}_Y)\to \IndCoh(X)$$ is t-exact. However, this
follows from the description of this functor given in \secref{sss:lower shriek}.
\end{proof}

\ssec{Comparison of $\QCoh$ and $\IndCoh$ on a formal completion}

\sssec{} Recall (\cite[Sect. 10.3.3]{IndCoh}, Sect. 9.3.2) that for any $\CY\in \inftydgprestack_{\on{laft}}$ we have a canonical
functor
$$\Upsilon_\CY:\QCoh(\CY)\to \IndCoh(\CY),$$
given by tensoring with the dualizing object $\omega_{\CY}\in \IndCoh(\CY)$. 

\medskip

By construction, the following diagram of functors commutes:
\begin{equation} \label{e:qcoh and indcoh compat 1}
\CD
\QCoh(X^{\wedge}_Y)  @<{\wh{i}{}^*}<< \QCoh(X) \\
@V{\Upsilon_{X^{\wedge}_Y}}VV     @VV{\Upsilon_X}V \\
\IndCoh(X^{\wedge}_Y)  @<{\wh{i}{}^!}<< \IndCoh(X)  
\endCD
\end{equation}

\sssec{}  \label{sss:when Psi}

Recall that if $Z$ is a DG scheme, then the category $\IndCoh(Z)$ 
is self-dual, and the functor $\Upsilon_Z$ identifies with the functor $\Psi_Z^\vee$, 
the dual of the naturally defined functor $$\Psi_Z:\IndCoh(Z)\to \QCoh(Z),$$
see \cite[Proposition 9.3.3]{IndCoh}. However, the functor $\Psi_\CZ$ is \emph{not} intrinsically defined for
$\CZ\in \inftydgprestack_{\on{laft}}$. 

\medskip

Nevertheless, for $\CX\in \dgindSch$, we still have a canonical self-duality
$$\bD_\CX^{\on{Serre}}:\IndCoh(\CX)^\vee\simeq \IndCoh(\CX)$$
(see \corref{c:Serre for Ind}), and if $\CX$ is quasi-perfect (see \secref{sss:rigidity on QCoh}), then
we also have a self-duality
$$\bD_\CX^{\on{naive}}:\QCoh(\CX)^\vee\simeq \QCoh(\CX).$$
So, in this case, we can consider the functor $\Upsilon^\vee_\CX:\IndCoh(\CX)\to \QCoh(\CX)$, dual to $\Upsilon_\CX$. 

\medskip

Consider the resulting functor
\begin{equation} \label{e:Psi pairing}
\QCoh(\CX)\otimes \IndCoh(\CX)\overset{\on{Id}\otimes \Upsilon^\vee_\CX}\longrightarrow \QCoh(\CX)\otimes \QCoh(\CX)\to \Vect,
\end{equation}
where the last arrow is the pairing corresponding to the self-duality $\bD_\CX^{\on{naive}}$ of $\QCoh(\CX)$:

\medskip

By construction and \corref{c:pairing on IndCoh}, it is isomorphic to the composite
$$\QCoh(\CX)\otimes \IndCoh(\CX)\to \IndCoh(\CX)\overset{\Gamma^{\IndCoh}(\CX,-)}\longrightarrow \Vect,$$
where the first arrow is the canonical action of $\QCoh(-)$ on $\IndCoh(-)$. 

\sssec{}

The discussion in \secref{sss:when Psi} applies in particular to $\CX=X^{\wedge}_Y$.

\medskip

Passing to dual functors in \eqref{e:qcoh and indcoh compat 1}, and using \corref{c:dual of ?}
we obtain another commutative diagram:
\begin{equation} \label{e:qcoh and indcoh compat 2}
\CD
\QCoh(X^{\wedge}_Y) @>{\wh{i}_?}>> \QCoh(X) \\
@A{\Upsilon_{X^{\wedge}}^\vee}AA   @AA{\Psi_X}A  \\
\IndCoh(X^{\wedge}_Y) @>{\wh{i}{}^{\IndCoh}_*}>> \IndCoh(X).
\endCD
\end{equation}

\begin{lem}
The functor $\Upsilon^\vee_{X^{\wedge}_Y}$ is t-exact, when we consider the t-structure on $\IndCoh(X^{\wedge}_Y)$
of \secref{ss:t-structure on IndCoh} and the inductive t-structure on $\QCoh(X^{\wedge}_Y)$ of \secref{sss:1st t str}.
\end{lem}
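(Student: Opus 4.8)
The plan is to reduce the t-exactness of $\Upsilon^\vee_{X^{\wedge}_Y}$ to the t-exactness of two functors we already control, using the commutative square \eqref{e:qcoh and indcoh compat 2}. That square furnishes a canonical isomorphism
$$\wh{i}_? \circ \Upsilon^\vee_{X^{\wedge}_Y} \simeq \Psi_X \circ \wh{i}^{\IndCoh}_*.$$
First I would observe that the left factor $\wh{i}_?$ is \emph{fully faithful} and \emph{t-exact} for the inductive t-structure, and therefore \emph{reflects} t-exactness: a functor $G$ landing in $\QCoh(X^{\wedge}_Y)$ is t-exact as soon as $\wh{i}_? \circ G$ is. Indeed, under the equivalence ${}'\wh{i}{}^*$ of \propref{p:QCoh of compl} the functor $\wh{i}_?$ is identified with the embedding $\be^{\QCoh}\colon \QCoh(X)_Y \hookrightarrow \QCoh(X)$: it is the left adjoint of $\wh{i}{}^* \simeq {}'\wh{i}{}^* \circ \br^{\QCoh}$ (commutativity of \eqref{e:* and r}), so it is $\be^{\QCoh}\circ({}'\wh{i}{}^*)^{-1}$. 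By the very definition of the inductive t-structure in \secref{sss:1st t str} this embedding is t-exact, and being the inclusion of a full subcategory carrying the intersection t-structure, it detects both connectivity and coconnectivity. Hence it suffices to show that the right-hand side $\Psi_X \circ \wh{i}^{\IndCoh}_*$ is t-exact.

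Next I would check that each factor on the right is t-exact. For $\wh{i}^{\IndCoh}_*$, recall from \secref{sss:lower shriek} that under the equivalence of \propref{p:localization for IndCoh} it corresponds to the embedding $\be^{\IndCoh}\colon \IndCoh(X)_Y \hookrightarrow \IndCoh(X)$ (it is the left adjoint of $\wh{i}{}^! \simeq {}'\wh{i}{}^! \circ \br^{\IndCoh}$, via \eqref{e:! and r}); since that equivalence is t-exact by the lemma immediately preceding this statement and $\be^{\IndCoh}$ is t-exact by construction of the t-structure on $\IndCoh(X)_Y$, the functor $\wh{i}^{\IndCoh}_*$ is t-exact. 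Alternatively one argues directly from \secref{sss:lower shriek}: $\wh{i}^{\IndCoh}_*$ is the compatible family of pushforwards $(i_\alpha)^{\IndCoh}_*$ along the closed embeddings $Y_\alpha \hookrightarrow X$, each t-exact by \lemref{l:exactness of closed embedding}. For $\Psi_X$, I invoke the standard fact that for a Noetherian DG scheme $X$ the functor $\Psi_X\colon \IndCoh(X) \to \QCoh(X)$ is t-exact (see \cite{IndCoh}). Thus $\Psi_X \circ \wh{i}^{\IndCoh}_*$ is t-exact, and by the previous paragraph so is $\Upsilon^\vee_{X^{\wedge}_Y}$.

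The only genuinely delicate point is the bookkeeping with the two equivalences: one must verify that the inductive t-structure on $\QCoh(X^{\wedge}_Y)$ and the t-structure on $\IndCoh(X^{\wedge}_Y)$ of \secref{ss:t-structure on IndCoh} transport to \emph{exactly} the intersection t-structures on $\QCoh(X)_Y$ and $\IndCoh(X)_Y$ making $\be^{\QCoh}$ and $\be^{\IndCoh}$ t-exact, so that the identifications $\wh{i}_? \leftrightarrow \be^{\QCoh}$ and $\wh{i}^{\IndCoh}_* \leftrightarrow \be^{\IndCoh}$ are compatible with the respective t-structures. Both matchings have already been recorded in the excerpt (the diagrams \eqref{e:* and r} and \eqref{e:! and r} together with the two t-exactness lemmas), so no new computation is required and the argument becomes purely formal. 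I expect this matching of transported t-structures, rather than any analytic input, to be the main thing to get right.
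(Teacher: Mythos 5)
Your proof is correct and takes essentially the same route as the paper's: the paper's one-line argument likewise deduces the claim from the commutativity of diagram \eqref{e:qcoh and indcoh compat 2}, the t-exactness and conservativity of $\wh{i}_?$ and $\wh{i}^{\IndCoh}_*$, and the t-exactness of $\Psi_X$. Your write-up simply fills in the identifications (via \eqref{e:* and r}, \eqref{e:! and r} and the surrounding lemmas) that the paper leaves implicit.
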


\begin{proof}
The assertion follows from the fact that the functors $\wh{i}{}^{\IndCoh}_*$ and $\wh{i}_?$ are t-exact and conservative,
and the fact that $\Psi_X$ is t-exact.
\end{proof}

\sssec{}

Consider now the functors
\begin{equation} \label{e:two monads}
\wh{i}_?\circ \wh{i}{}^{*}:\QCoh(X)\to \QCoh(X) \text{ and }
\wh{i}^{\IndCoh}_{*}\circ \wh{i}^!:\IndCoh(X)\to \IndCoh(X).
\end{equation}

\begin{lem} \label{l:Psi and compl}
The functor $\Psi_X:\IndCoh(X)\to \QCoh(X)$ intertwines the functors
of \eqref{e:two monads}.
\end{lem}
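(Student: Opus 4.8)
The plan is to rewrite both endofunctors of \eqref{e:two monads} as the ``formal completion'' co-localization functors attached to the open embedding $j\colon U\to X$, and then to reduce the statement to the compatibility of $\Psi_X$ with $j$.

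First I would use the (co-)localization descriptions established above. By \propref{p:QCoh of compl} and the commutative diagram \eqref{e:* and r}, the functor $\wh{i}{}^*$ factors as ${}'\wh{i}{}^*\circ \br^{\QCoh}$ with ${}'\wh{i}{}^*$ an equivalence; hence its left adjoint is $\wh{i}_?\simeq \be^{\QCoh}\circ ({}'\wh{i}{}^*)^{-1}$, and therefore $\wh{i}_?\circ \wh{i}{}^*\simeq \be^{\QCoh}\circ \br^{\QCoh}$. Likewise, by \propref{p:localization for IndCoh} and \eqref{e:! and r}, we have $\wh{i}{}^!\simeq {}'\wh{i}{}^!\circ \br^{\IndCoh}$ and $\wh{i}^{\IndCoh}_*\simeq \be^{\IndCoh}\circ ({}'\wh{i}{}^!)^{-1}$, so $\wh{i}^{\IndCoh}_*\circ \wh{i}{}^!\simeq \be^{\IndCoh}\circ \br^{\IndCoh}$. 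Thus the two functors in \eqref{e:two monads} are identified with $\CF\mapsto \on{Cone}(\CF\to j_*\circ j^*(\CF))[-1]$ on $\QCoh(X)$ and $\CF\mapsto \on{Cone}(\CF\to j^{\IndCoh}_*\circ j^{\IndCoh,*}(\CF))[-1]$ on $\IndCoh(X)$, and each fits into a natural fiber sequence whose other two terms are $\on{Id}$ and $j_*\circ j^*$ (resp.\ $j^{\IndCoh}_*\circ j^{\IndCoh,*}$).

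Since $\Psi_X$ is exact, applying it to the fiber sequence on $\IndCoh(X)$ reduces the lemma to producing a natural isomorphism $\Psi_X\circ j^{\IndCoh}_*\circ j^{\IndCoh,*}\simeq j_*\circ j^*\circ \Psi_X$ that is moreover compatible with the unit maps out of $\Psi_X$. For this I would invoke the two scheme-level compatibilities of $\Psi$ with the quasi-compact open embedding $j$: the pullback compatibility $\Psi_U\circ j^{\IndCoh,*}\simeq j^*\circ \Psi_X$ (from the fact that $\Psi$ commutes with $*$-pullback along open, hence eventually coconnective, morphisms), and the pushforward compatibility $\Psi_X\circ j^{\IndCoh}_*\simeq j_*\circ \Psi_U$, both taken from \cite{IndCoh}. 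Composing these yields the desired isomorphism; and presenting the pushforward compatibility as the mate of the pullback compatibility under the adjunctions $(j^{\IndCoh,*},j^{\IndCoh}_*)$ and $(j^*,j_*)$ makes the required compatibility with the unit maps automatic by mate calculus.

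I expect the main obstacle to be precisely the pushforward compatibility $\Psi_X\circ j^{\IndCoh}_*\simeq j_*\circ \Psi_U$: because $j$ is an open (hence non-proper) embedding, this is not the standard ``$\Psi$ commutes with proper pushforward'' statement, but rather the assertion that the square relating $\Psi_U,\Psi_X$ to the two pushforwards is right-adjointable, i.e.\ a Beck--Chevalley condition for $j$. Checking that the mate of the (evident) pullback compatibility is an isomorphism, and that $\Psi$ thereby intertwines the two adjunctions coherently on units, is the delicate point; everything else is formal once the two monads have been rewritten through $j$.
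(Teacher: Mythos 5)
Your proposal is correct and follows essentially the same route as the paper: identify the two endofunctors with $\on{Cone}(\on{Id}\to j_*\circ j^*)[-1]$ and its $\IndCoh$ analogue, then reduce to the compatibility of $\Psi_X$ with $j_*\circ j^*$ versus $j^{\IndCoh}_*\circ j^{\IndCoh,*}$, which the paper obtains from \cite[Propositions 3.1.1 and 3.5.4]{IndCoh}. The ``delicate point'' you flag about the pushforward compatibility is in fact harmless here, since in \cite{IndCoh} the isomorphism $\Psi_X\circ f^{\IndCoh}_*\simeq f_*\circ \Psi_X$ is established for arbitrary (not just proper) morphisms of Noetherian DG schemes, essentially by construction of $f^{\IndCoh}_*$.
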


\begin{proof}
The functors of \eqref{e:two monads} are isomorphic to
$$\on{Cone}(\on{Id}\to j_*\circ j^*)[-1] \text{ and }
\on{Cone}(\on{Id}\to j^{\IndCoh}_*\circ j^{\IndCoh,*})[-1],$$
respectively. So, it is enough to show that the functor $\Psi_X$
intertwines the functors $j_*\circ j^*$ and $j^{\IndCoh}_*\circ j^{\IndCoh,*}$.
However, the latter follows from \cite[Propositions 3.1.1 and 3.5.4]{IndCoh}.
\end{proof}

Combining this with the fact that the horizontal arrows in \eqref{e:qcoh and indcoh compat 2}
are conservative (in fact, fully faithful), we obtain:

\begin{cor} \label{c:Psi on formal}
The diagram of functors
\begin{equation} \label{e:qcoh and indcoh compat 3}
\CD
\QCoh(X^{\wedge}_Y)  @<{\wh{i}{}^*}<< \QCoh(X) \\
@A{\Upsilon^\vee_{X^{\wedge}_Y}}AA     @AA{\Psi_X}A \\
\IndCoh(X^{\wedge}_Y)  @<{\wh{i}{}^!}<< \IndCoh(X),
\endCD
\end{equation}
obtained from \eqref{e:qcoh and indcoh compat 2} by passing to right adjoint functors along
the horizontal arrows, and which a priori commutes up to a natural transformation, is 
commutative.
\end{cor}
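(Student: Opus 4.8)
The plan is to identify the a priori natural transformation of \corref{c:Psi on formal} as the \emph{mate} (Beck--Chevalley transformation) of the commutative square \eqref{e:qcoh and indcoh compat 2}, and then to prove that it is an isomorphism by testing it against the fully faithful functor $\wh{i}_?$. Recall that \eqref{e:qcoh and indcoh compat 3} is obtained from \eqref{e:qcoh and indcoh compat 2} by passing to right adjoints along the horizontal arrows: the right adjoint of $\wh{i}_?$ is $\wh{i}{}^*$, and the right adjoint of $\wh{i}^{\IndCoh}_*$ is $\wh{i}{}^!$, while the vertical functors $\Upsilon^\vee_{X^\wedge_Y}$ and $\Psi_X$ are unchanged.

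First I would write down the transformation explicitly. Let $\eta:\on{Id}\to \wh{i}{}^*\circ \wh{i}_?$ be the unit of the adjunction $(\wh{i}_?,\wh{i}{}^*)$ and $\epsilon:\wh{i}^{\IndCoh}_*\circ \wh{i}{}^!\to \on{Id}$ the counit of $(\wh{i}^{\IndCoh}_*,\wh{i}{}^!)$. Using the isomorphism $\wh{i}_?\circ \Upsilon^\vee_{X^\wedge_Y}\simeq \Psi_X\circ \wh{i}^{\IndCoh}_*$ of \eqref{e:qcoh and indcoh compat 2}, the transformation in question is the composite
\begin{multline*}
\Upsilon^\vee_{X^\wedge_Y}\circ \wh{i}{}^!
\overset{\eta}{\longrightarrow}
\wh{i}{}^*\circ \wh{i}_?\circ \Upsilon^\vee_{X^\wedge_Y}\circ \wh{i}{}^!
\overset{\sim}{\longrightarrow} \\
\overset{\sim}{\longrightarrow}
\wh{i}{}^*\circ \Psi_X\circ \wh{i}^{\IndCoh}_*\circ \wh{i}{}^!
\overset{\epsilon}{\longrightarrow}
\wh{i}{}^*\circ \Psi_X .
\end{multline*}
To prove it is an isomorphism I would use that $\wh{i}_?$ is fully faithful, and hence conservative: this follows from \propref{p:QCoh of compl} together with \eqref{e:* and r}, which identify $\wh{i}_?$ with the fully faithful embedding $\be^{\QCoh}$. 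It therefore suffices to show that the above composite becomes an isomorphism after applying $\wh{i}_?$. Invoking the commutativity of \eqref{e:qcoh and indcoh compat 2} once more to replace $\wh{i}_?\circ \Upsilon^\vee_{X^\wedge_Y}$ by $\Psi_X\circ \wh{i}^{\IndCoh}_*$, the source becomes $\Psi_X\circ \wh{i}^{\IndCoh}_*\circ \wh{i}{}^!$ and the target becomes $\wh{i}_?\circ \wh{i}{}^*\circ \Psi_X$; these are $\Psi_X$ post-composed, resp.\ pre-composed, with the two endofunctors of \eqref{e:two monads}, which are canonically intertwined by \lemref{l:Psi and compl}.

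The main obstacle is the final compatibility check: that $\wh{i}_?$ applied to the mate coincides with the isomorphism furnished by \lemref{l:Psi and compl}. Two facts do the work. Since $\wh{i}_?$ is fully faithful its unit $\eta$ is invertible, so the first arrow of the mate is already an isomorphism and contributes nothing. And, by the proof of \lemref{l:Psi and compl}, $\Psi_X$ carries the localization triangle $\on{Id}\to j^{\IndCoh}_*\circ j^{\IndCoh,*}$ to $\on{Id}\to j_*\circ j^*$ \emph{compatibly with the augmentations}, so that $\Psi_X(\epsilon)$ is identified with the counit $\wh{i}_?\circ \wh{i}{}^*\to \on{Id}$ post-composed with $\Psi_X$; a triangle identity for $(\wh{i}_?,\wh{i}{}^*)$, again using the invertibility of $\eta$, then shows that the remaining arrow is an isomorphism. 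This gives that $\wh{i}_?$ of the mate is an isomorphism, and conservativity of $\wh{i}_?$ finishes the proof.
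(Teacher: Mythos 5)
Your proposal is correct and follows essentially the same route as the paper: the paper's proof is the single observation that \lemref{l:Psi and compl} together with the conservativity (indeed full faithfulness) of the horizontal arrows $\wh{i}_?$ and $\wh{i}^{\IndCoh}_*$ in \eqref{e:qcoh and indcoh compat 2} forces the mate to be an isomorphism. You have merely spelled out the details of that one-line argument — the explicit unit/counit description of the mate, the reduction via conservativity of $\wh{i}_?$, and the identification of $\wh{i}{}^*\circ\Psi_X(\epsilon)$ as an isomorphism via the compatibility of the two localization triangles — all of which is sound.
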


\medskip

Passing to dual functors in \eqref{e:qcoh and indcoh compat 3}, we obtain yet another
commutative diagram of functors:
\begin{equation} \label{e:qcoh and indcoh compat 4}
\CD
\QCoh(X^{\wedge}_Y) @>{\wh{i}_?}>> \QCoh(X) \\
@V{\Upsilon_{X^{\wedge}_Y}}VV  @VV{\Psi^\vee_X}V  \\
\IndCoh(X^{\wedge}_Y)  @>{\wh{i}^{\IndCoh}_*}>>  \IndCoh(X).
\endCD
\end{equation}

The diagram \eqref{e:qcoh and indcoh compat 4} can be alternatively obtained by passing to
left adjoint functors along the horizontal arrows in \eqref{e:qcoh and indcoh compat 1}. Thus, the resulting diagram,
which a priori commutes up to a natural transformation, is actually commutative.






\ssec{$\QCoh$ and $\IndCoh$ in the eventually coconnective case}

In this subsection we will assume that $X$ is eventually coconnective. By Proposition
\ref{p:formal compl coconn}, the ind-scheme $X^{\wedge}_Y$ is also eventually coconnective.

\sssec{}

Recall (see \cite[Proposition 1.5.3]{IndCoh}) that for $X\in \dgSch_{\on{aft}}$ 
eventually coconnective, the functor $\Psi_X:\IndCoh(X)\to \QCoh(X)$
admits a left adjoint, denoted $\Xi_X$. It is characterized by the property that it sends 
$\QCoh(X)^{\on{perf}}\simeq \QCoh(X)^c$ to $\Coh(X)\simeq \IndCoh(X)^c$ via the tautological map
$$\QCoh(X)^{\on{perf}}\to \Coh(X),$$
which is well-defined because $X$ is eventually coconnective.

\medskip

Also, recall that the functor $\Xi^\vee_X:\IndCoh(X)\to \QCoh(X)$, dual to $\Xi_X$, is the right adjoint of $\Psi^\vee_X$, and
it can be described as
$$\Xi^\vee_X\simeq \uHom_{\QCoh(X)}(\omega_X,-),$$
(see \cite[Lemma 9.6.7]{IndCoh}).

\medskip

We emphasize that the functors $\Xi$ and $\Xi^\vee$ are defined specifically for DG schemes,
and not arbitrary eventually coconnective objects of $\inftydgprestack_{\on{laft}}$. 

\medskip

However, for any
object $\CY\in \on{PreStk}_{\on{laft}}$ we can still ask whether the right adjoint $\Xi^\vee_\CY$ of $\Upsilon_\CY$
is continuous. 

\medskip

If $\CY$ is equipped with self-duality data for $\QCoh(\CY)$ and $\IndCoh(\CY)$, in which case 
the functor $\Upsilon^\vee_\CY$ is well-defined, we can ask whether the left adjoint $\Xi_\CY$ of
$\Upsilon^\vee_\CY$ exists. 

\sssec{}    \label{sss:Xi for formal actual}

By passing to right (resp., left) adjoint functors is Diagrams \eqref{e:qcoh and indcoh compat 4} and 
\eqref{e:qcoh and indcoh compat 3}, respectively, we obtain two more commutative diagrams
\begin{equation} \label{e:qcoh and indcoh compat 5}
\CD
\QCoh(X^{\wedge}_Y)  @<{\wh{i}{}^*}<< \QCoh(X) \\
@A{\Xi^\vee_{X^{\wedge}_Y}}AA     @AA{\Xi^\vee_X}A \\
\IndCoh(X^{\wedge}_Y)  @<{\wh{i}{}^!}<< \IndCoh(X),  
\endCD
\end{equation}
and
\begin{equation} \label{e:qcoh and indcoh compat 6}
\CD
\QCoh(X^{\wedge}_Y) @>{\wh{i}_?}>> \QCoh(X) \\
@V{\Xi_{X^{\wedge}_Y}}VV    @VV{\Xi_X}V  \\
\IndCoh(X^{\wedge}_Y)  @>{\wh{i}^{\IndCoh}_*}>>  \IndCoh(X).
\endCD
\end{equation}

\medskip

In particular, we obtain that the functor $\Xi^\vee_{X^{\wedge}_Y}$ is continuous, and
$\Xi_{X^{\wedge}_Y}$ is defined, for the DG indscheme $X^{\wedge}_Y$.

\sssec{}  \label{sss:Xi for formal}

We now claim the following:

\begin{prop}
The diagrams of functors
\begin{equation} \label{e:qcoh and indcoh compat 7}
\CD
\QCoh(X^{\wedge}_Y)  @<{\wh{i}{}^*}<< \QCoh(X) \\
@V{\Xi_{X^{\wedge}_Y}}VV     @VV{\Xi_X}V \\
\IndCoh(X^{\wedge}_Y)  @<{\wh{i}{}^!}<< \IndCoh(X)  
\endCD
\end{equation}
and
\begin{equation} \label{e:qcoh and indcoh compat 8}
\CD
\QCoh(X^{\wedge}_Y) @>{\wh{i}_?}>> \QCoh(X) \\
@A{\Xi^\vee_{X^{\wedge}_Y}}AA   @AA{\Xi^\vee_X}A  \\
\IndCoh(X^{\wedge}_Y)  @>{\wh{i}^{\IndCoh}_*}>>  \IndCoh(X),
\endCD
\end{equation}
obtained from the diagrams \eqref{e:qcoh and indcoh compat 5} and \eqref{e:qcoh and indcoh compat 6},
respectively, by passing to adjoint functors along the vertical arrows, and which a priori commute up to natural transformations, 
are commutative.
\end{prop}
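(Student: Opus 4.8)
\emph{Reduction to a single diagram.} The plan is first to observe that the two diagrams \eqref{e:qcoh and indcoh compat 7} and \eqref{e:qcoh and indcoh compat 8} are interchanged by duality, so that it suffices to treat one of them. Under the Serre self-duality $\bD^{\on{Serre}}$ of \corref{c:Serre for Ind} the functors $\wh{i}{}^!$ and $\wh{i}^{\IndCoh}_*$ are mutually dual, while under the naive self-duality $\bD^{\on{naive}}$ of \eqref{e:self duality QCoh formal compl} the functors $\wh{i}{}^*$ and $\wh{i}_?$ are mutually dual by \corref{c:dual of ?}; finally $\Xi_\bullet$ and $\Xi^\vee_\bullet$ are mutually dual by definition, both for $X$ and for $X^{\wedge}_Y$. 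Passing to dual functors therefore carries \eqref{e:qcoh and indcoh compat 8} to \eqref{e:qcoh and indcoh compat 7}, and I will prove \eqref{e:qcoh and indcoh compat 8}. Its commutativity amounts to showing that the canonical (mate) transformation $\nu\colon \wh{i}_?\circ \Xi^\vee_{X^{\wedge}_Y}\to \Xi^\vee_X\circ \wh{i}^{\IndCoh}_*$, obtained from the commuting square \eqref{e:qcoh and indcoh compat 4} by passing to the right adjoints $\Xi^\vee$ of the vertical arrows $\Upsilon$, is an isomorphism of functors $\IndCoh(X^{\wedge}_Y)\to \QCoh(X)$.

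\emph{Locating both functors inside $\QCoh(X)_Y$.} Next I would check that source and target of $\nu$ factor through the colocalization $\QCoh(X)_Y=\on{ker}(j^*)\subset \QCoh(X)$. For the source this is automatic, since the essential image of $\wh{i}_?$ is $\QCoh(X)_Y$. For the target, recall from \secref{sss:lower shriek} that $\wh{i}^{\IndCoh}_*$ factors through $\IndCoh(X)_Y=\on{ker}(j^{\IndCoh,*})$; combined with the compatibility of $\Xi^\vee$ with restriction to the open $U$ (the analogue of \eqref{e:qcoh and indcoh compat 5} for the open embedding $j$, i.e. $j^*\circ \Xi^\vee_X\simeq \Xi^\vee_U\circ j^{\IndCoh,*}$), this gives $j^*\circ \Xi^\vee_X\circ \wh{i}^{\IndCoh}_*\simeq \Xi^\vee_U\circ j^{\IndCoh,*}\circ \wh{i}^{\IndCoh}_*\simeq 0$, so $\Xi^\vee_X\circ \wh{i}^{\IndCoh}_*$ indeed lands in $\QCoh(X)_Y$.

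\emph{Testing along $\wh{i}{}^*$.} By \propref{p:QCoh of compl} the functor $\wh{i}{}^*$ restricts to an equivalence $\QCoh(X)_Y\xrightarrow{\ \sim\ }\QCoh(X^{\wedge}_Y)$; in particular it reflects isomorphisms between functors valued in $\QCoh(X)_Y$, and the counit $\wh{i}_?\circ\wh{i}{}^*\to \on{Id}$ is an isomorphism on $\QCoh(X)_Y$. Hence it suffices to show that $\wh{i}{}^*\nu$ is an isomorphism. Now $\wh{i}{}^*\circ \wh{i}_?\simeq \on{Id}$ by full faithfulness of $\wh{i}_?$, so $\wh{i}{}^*$ applied to the source gives $\Xi^\vee_{X^{\wedge}_Y}$; and applying $\wh{i}{}^*$ to the target and using the \emph{already established} commuting diagram \eqref{e:qcoh and indcoh compat 5}, namely $\wh{i}{}^*\circ \Xi^\vee_X\simeq \Xi^\vee_{X^{\wedge}_Y}\circ \wh{i}{}^!$, together with $\wh{i}{}^!\circ \wh{i}^{\IndCoh}_*\simeq \on{Id}$ (full faithfulness of $\wh{i}^{\IndCoh}_*$, \propref{p:localization for IndCoh}), also gives $\Xi^\vee_{X^{\wedge}_Y}$. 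Tracing $\nu$ through these identifications — which are built from the same units and counits of the adjunctions $\Upsilon\dashv\Xi^\vee$ as $\nu$ itself — shows that $\wh{i}{}^*\nu$ is the identity of $\Xi^\vee_{X^{\wedge}_Y}$, whence $\nu$ is an isomorphism and \eqref{e:qcoh and indcoh compat 8} commutes.

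\emph{Main obstacle.} The only genuinely delicate point is the last one: verifying that the \emph{canonical} transformation $\nu$, rather than merely some abstract isomorphism between its source and target, becomes the identity after applying $\wh{i}{}^*$. This is a triangle-identity bookkeeping, made manageable by the fact that $\nu$ and the diagram \eqref{e:qcoh and indcoh compat 5} are produced from one and the same adjunction data, so that the two identifications of $\wh{i}{}^*$ applied to the source and target are mutually compatible. Everything else is formal manipulation with the adjunctions $\wh{i}_?\dashv\wh{i}{}^*$ and $\wh{i}^{\IndCoh}_*\dashv \wh{i}{}^!$, together with the duality reduction of the first paragraph.
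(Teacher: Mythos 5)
Your proof is correct, and while it shares the paper's skeleton --- reduce to diagram \eqref{e:qcoh and indcoh compat 8} by duality, then exploit that $\wh{i}{}^*$ and $\wh{i}{}^!$ are colocalizations with fully faithful left adjoints together with the already-established diagram \eqref{e:qcoh and indcoh compat 5} --- the decisive step is genuinely different. The paper establishes \eqref{e:qcoh and indcoh compat 8} by showing that $\Xi^\vee_X$ intertwines the endofunctors $\wh{i}_?\circ \wh{i}{}^*$ and $\wh{i}^{\IndCoh}_*\circ \wh{i}{}^!$, which it reduces to intertwining $j_*\circ j^*$ and $j^{\IndCoh}_*\circ j^{\IndCoh,*}$; the nontrivial input there is the base-change isomorphism $\Xi^\vee_X\circ j^{\IndCoh}_*\simeq j_*\circ \Xi^\vee_U$, proved by an explicit mapping-space computation using $\Xi^\vee_X\simeq \uHom_{\QCoh(X)}(\omega_X,-)$. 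You avoid that computation entirely: you only need the easy half $j^*\circ \Xi^\vee_X\simeq \Xi^\vee_U\circ j^{\IndCoh,*}$ (which the paper also uses and calls clear) to see that $\Xi^\vee_X\circ \wh{i}^{\IndCoh}_*$ lands in $\QCoh(X)_Y$, after which the calculus of mates does the rest. The one point you should make explicit in the ``bookkeeping'' step is that the isomorphism in \eqref{e:qcoh and indcoh compat 5} is by construction the \emph{total} mate of \eqref{e:qcoh and indcoh compat 4} (right adjoints of all four arrows), hence equals the horizontal mate of your $\nu$; precomposing that equality with $\wh{i}^{\IndCoh}_*$ and using that the unit of $(\wh{i}^{\IndCoh}_*,\wh{i}{}^!)$, as well as the component of the counit $\wh{i}^{\IndCoh}_*\circ \wh{i}{}^!\to \on{Id}$ on the image of $\wh{i}^{\IndCoh}_*$, are isomorphisms then yields that $\wh{i}{}^*\nu$ is an isomorphism, as you assert. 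What your route buys is a purely formal argument; what the paper's route buys is the stronger, independently useful fact that $\Xi^\vee_X$ commutes with localization with respect to $U$.
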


\begin{proof}
The two diagrams are obtained from one another by passing to dual functors. Therefore,
it is sufficient to show that \eqref{e:qcoh and indcoh compat 8} is commutative. Taking into 
account \eqref{e:qcoh and indcoh compat 5} and the fact that in the latter diagram the
horizontal arrows are co-localizations, it suffices to show that the functor $\Xi^\vee_X$
intertwines the functors
$$\wh{i}_?\circ \wh{i}{}^*:\QCoh(X)\to \QCoh(X) \text{ and }
\wh{i}^{\IndCoh}_*\circ \wh{i}{}^{!}:\IndCoh(X)\to \IndCoh(X).$$
As in the proof of \lemref{l:Psi and compl}, it suffices to show that $\Xi^\vee_X$
intertwines the functors
$$j_*\circ j^*:\QCoh(X)\to \QCoh(X) \text{ and }
j^{\IndCoh}_*\circ j^{\IndCoh,*}:\IndCoh(X)\to \IndCoh(X).$$

It is clear that
$$j^*\circ \Xi^\vee_X\simeq \Xi_U^\vee\circ j^{\IndCoh,*}.$$
So, we have to show that the natural map
$$\Xi^\vee_X\circ j^{\IndCoh}_* \to j_*\circ \Xi^\vee_U$$
is an isomorphism. Let $\CF_X\in \QCoh(X)$ and $\CF_U\in \IndCoh(U)$ be two objects.
We have
\begin{multline*}
\Maps_{\QCoh(X)}(\CF_X,\Xi^\vee_X\circ j^{\IndCoh}_*(\CF_U))\simeq 
\Maps_{\IndCoh(X)}(\CF_X\otimes \omega_X,j^{\IndCoh}_*(\CF_U))\simeq \\
\simeq \Maps_{\IndCoh(U)}(j^{\IndCoh,*}(\CF_X\otimes \omega_X),\CF_U)\simeq
\Maps_{\IndCoh(U)}(j^*(\CF_X)\otimes j^{\IndCoh,*}(\omega_X),\CF_U)\simeq \\
\simeq \Maps_{\IndCoh(U)}(j^*(\CF_X)\otimes \omega_U,\CF_U)\simeq
\Maps_{\QCoh(U)}(j^*(\CF_X),\Xi^\vee_U(\CF_U)).
\end{multline*}

\end{proof}

\section{Formally smooth DG indschemes}  \label{s:class smooth}

\ssec{The notion of formal smoothness}

Let $\CX_{cl}$ be an object of $^{cl}\!\inftydgprestack$. 

\smallskip

\begin{defn}  \label{d:classical formally smooth}
We say that $\CX_{cl}$ is formally smooth if
%
%
%
%
%
for every closed embedding $$S\hookrightarrow S'$$ of classical affine schemes,
\emph{such that the ideal of $S$ inside $S'$ is nilpotent}, 
the map of sets
$$\pi_{0}(\CX_{cl}(S'))\to \pi_{0}(\CX_{cl}(S))$$
is surjective.
\end{defn}

Clearly, in order to test formal smoothness, it is sufficient to consider closed embeddings
of classical affine schemes
$$S\hookrightarrow S',$$
such that the ideal $\CI$ of $S$ inside $S'$ satisfies $\CI^2=0$.



\sssec{}

Let $\CX$ be an object of $\inftydgprestack$. 

\smallskip

\begin{defn}  \label{d:formally smooth}
We say that $\CX$ is formally smooth if:

\smallskip

\begin{enumerate}
\item The classical prestack $^{cl}\CX:=\CX|_{\affSch}$ is formally smooth in the sense of Definition
\ref{d:classical formally smooth}.

\smallskip

\item
For every $n$ and $S\in \affdgSch$, the map $\CX(S)\to \CX({}^{\leq n}\!S)$
induces an isomorphism on $\pi_{n}$.


\end{enumerate}

\end{defn}

\medskip

We can reformulate Definition \ref{d:formally smooth} as follows.

\begin{lem}  \label{l:formal smoothness via homotopy groups}
Let $\CX \in \inftydgprestack$ be such that 
$^{cl}\CX$ is formally smooth.  Then $\CX$ is formally smooth if and only if $\CX$ is convergent and for any integers 
$i\geq j$ and $S\in {}^{\leq i}\!\affdgSch$ the map 
$$\CX(S)\to \CX({}^{\leq j}\!S)$$
induces an isomorphism on $\pi_j$.
\end{lem}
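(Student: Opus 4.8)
The plan is to strip both sides of the equivalence down to a single condition on the tower of truncations and then compare them via the Milnor exact sequence. First I would note that the classical formal smoothness of $^{cl}\CX$ is a standing hypothesis, common to both sides, and plays no further role: by \defnref{d:formally smooth}, ``$\CX$ is formally smooth'' then amounts exactly to condition \emph{(2)} there, namely that for every $S\in\affdgSch$ and every $n$ the map $\CX(S)\to\CX({}^{\leq n}\!S)$ is an isomorphism on $\pi_n$ (and a bijection on $\pi_0$ when $n=0$). Thus it suffices to prove that condition \emph{(2)} holds if and only if $\CX$ is convergent (with respect to the tower $^{\leq n}\!S=\tau^{\leq n}(S)$ of \secref{sss:convergence}) and the displayed $\pi_j$-condition holds.

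The key preliminary, used in both directions, is to promote any of these one-degree statements to a statement valid over a whole range of degrees. Write $Y_n:=\CX({}^{\leq n}\!S)$ and use that $^{\leq k}({}^{\leq m}\!S)\simeq{}^{\leq k}\!S$ for $m\geq k$. Applying either condition \emph{(2)} or the $\pi_j$-condition to the bounded schemes $^{\leq m}\!S$ and $^{\leq n}\!S$, a two-out-of-three argument in a commuting triangle of maps to $Y_k$ shows that the transition maps $Y_m\to Y_n$ induce isomorphisms on $\pi_k$ for all $m\geq n\geq k$, and similarly that the structure map $\CX(S)\to Y_n$ is an isomorphism on $\pi_k$ for every $k\leq n$. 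In particular the pro-system $\{\pi_k(Y_n)\}_n$ is pro-constant with value $\pi_k(Y_k)$; hence it is Mittag--Leffler and the projection $\lim_n\pi_k(Y_n)\to\pi_k(Y_k)$ is an isomorphism.

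With this in hand I would invoke the Milnor exact sequence for the tower $\{Y_n\}$: for each $k$ and each compatible basepoint,
$$ \ast \to {\lim_n}^{1}\,\pi_{k+1}(Y_n)\to \pi_k\Big(\lim_n Y_n\Big)\to \lim_n \pi_k(Y_n)\to \ast. $$
Since $\{\pi_{k+1}(Y_n)\}_n$ is Mittag--Leffler the $\lim^1$-term vanishes, so $\pi_k(\lim_n Y_n)\cong\lim_n\pi_k(Y_n)\cong\pi_k(Y_k)$, the isomorphism being induced by the projection $\lim_n Y_n\to Y_k$. For the direction \emph{(2)} $\Rightarrow$ (convergence $+$ $\pi_j$-condition): the $\pi_j$-condition is literally the case $n=j$ of \emph{(2)} applied to $S\in{}^{\leq i}\!\affdgSch$, while convergence follows because the composite $\CX(S)\to\lim_n Y_n\to Y_k$ is the structure map $\CX(S)\to Y_k$, an isomorphism on $\pi_k$ by the range version of \emph{(2)}; as $\lim_n Y_n\to Y_k$ is also a $\pi_k$-isomorphism, two-out-of-three forces $\CX(S)\to\lim_n Y_n$ to be an isomorphism on every $\pi_k$ (and on $\pi_0$, using the vanishing of $\lim^1\pi_1$), i.e.\ $\CX$ is convergent. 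Conversely, convergence identifies $\CX(S)$ with $\lim_n Y_n$, and the same computation gives $\pi_n(\CX(S))\cong\pi_n(Y_n)$ induced by the structure map, which is precisely condition \emph{(2)}.

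The main obstacle I anticipate is purely homotopy-theoretic bookkeeping around the inverse limit of spaces: controlling the $\lim^1$-term and the low-degree and basepoint subtleties (the non-abelian $\pi_1$ and the pointed set $\pi_0$) in the Milnor sequence. These are all harmless precisely because the tower is pro-constant on each homotopy group, so every relevant pro-system is Mittag--Leffler with vanishing $\lim^1$; the one point genuinely requiring care is to check that the $\pi_k$-isomorphisms produced from the two sides are induced by the \emph{same} structure maps, so that the two-out-of-three comparisons are legitimate and not merely abstract identifications of groups.
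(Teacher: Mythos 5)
Your proof is correct. The paper states \lemref{l:formal smoothness via homotopy groups} with no proof at all, presenting it as an immediate reformulation of \defnref{d:formally smooth}, so there is no argument of the authors' to compare against; your route --- first upgrading the single-degree conditions to $\pi_k$-isomorphisms throughout the range $k\leq n$ via the identity $^{\leq k}({}^{\leq m}\!S)\simeq{}^{\leq k}\!S$ and two-out-of-three, then feeding the resulting pro-constant (hence Mittag--Leffler) towers of homotopy groups into the Milnor sequence for $\lim_n\CX(\tau^{\leq n}S)$ --- is exactly the standard way to fill in the details, and you correctly flag the only delicate points (basepoints, surjectivity on $\pi_0$ coming from the $j=0$ case, and the vanishing of $\lim^1$), all of which are indeed harmless here.
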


\begin{rem} \label{r:formal smoothness bug} 
As was alluded to in the introduction, the property of formal smoothness, in both the classical
and derived contexts, has a substantial drawback of being non-local in the Zariski topology. For example, we
could have given a different definition by requiring the corresponding properties to hold after Zariski localization
with respect to the test affine scheme $S$. 
We will see a manifestation of this phenomenon in 
\secref{sss:formal smoothness and cotangent bug} for $0$-truncated prestacks that admit connective deformation theory. 

\medskip

However, it will turn out that in the latter case the difference between the two definitions disappears if we restrict ourselves
to prestacks locally almost of finite type (see \secref{ss:formal smoothness for laft}), 
which will be the main case of interest in the rest of this paper. 
\end{rem}

\sssec{} 
All the examples of prestacks that we consider in this paper are 0-truncated in the sense of \cite[Sect. 1.1.7]{Stacks}. 
I.e., we consider prestacks $\CY$ such that for all $n$ and $S\in {}^{\leq n}\!\affdgSch$, $$\CY(S) \in n\on{-}\Groupoids\subset \inftygroup.$$

In this case, we have the following reformulation of the Definition \ref{d:formally smooth}.

\begin{lem}  \label{l:formal smoothness via postnikov tower}
Let $\CX \in \inftydgprestack$ be a $0$-truncated prestack such that 
 $^{cl}\CX$ is formally smooth 
as a classical prestack. Then $\CX$ is formally smooth if and only if
for every $n$ and $S\in \affdgSch$, the map $$\CX(S)\to \CX({}^{\leq n}\!S)$$
identifies the right-hand side with the $n$-truncation of (the Postnikov tower of) $\CX(S)$.
\end{lem}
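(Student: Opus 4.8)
The plan is to deduce this from \lemref{l:formal smoothness via homotopy groups}. The key elementary observation is that, since $\CX$ is $0$-truncated, for every $S\in \affdgSch$ the space $\CX({}^{\leq n}\!S)$ is automatically $n$-truncated (as ${}^{\leq n}\!S\in {}^{\leq n}\!\affdgSch$, so $\CX({}^{\leq n}\!S)$ is an $n$-groupoid). Consequently, the natural map $\CX(S)\to \CX({}^{\leq n}\!S)$ exhibits its target as the $n$-truncation $\tau_{\leq n}\CX(S)$ \emph{if and only if} this map induces a bijection on $\pi_0$ and an isomorphism on $\pi_i$ (at every basepoint) for all $i\leq n$. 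Thus both properties to be compared are purely statements about homotopy groups of truncation maps, and the whole lemma becomes a matter of reorganizing such isomorphisms.

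For the ``if'' direction I would first record that the hypothesis immediately supplies the homotopy-group condition of \lemref{l:formal smoothness via homotopy groups}: given $a\geq b$ and $T\in {}^{\leq a}\!\affdgSch$ we have ${}^{\leq a}\!T=T$, so applying the hypothesis with $S=T$ and $n=b$ shows that $\CX(T)\to \CX({}^{\leq b}\!T)$ is the $b$-truncation map, and in particular an isomorphism on $\pi_b$. It remains to check that $\CX$ is convergent. But the hypothesis says precisely that $\CX({}^{\leq m}\!S)\simeq \tau_{\leq m}\CX(S)$ compatibly in $m$, so $\underset{m}{lim}\, \CX({}^{\leq m}\!S)\simeq \underset{m}{lim}\, \tau_{\leq m}\CX(S)$, and the latter is $\CX(S)$ because Postnikov towers converge in the $\infty$-category $\inftygroup$ of spaces. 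Hence $\CX$ is convergent, and \lemref{l:formal smoothness via homotopy groups} yields formal smoothness, classical formal smoothness of $^{cl}\CX$ being part of our standing hypotheses.

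For the ``only if'' direction, assume $\CX$ is formally smooth, so by \lemref{l:formal smoothness via homotopy groups} it is convergent and, for every $a\geq b$ and $T\in {}^{\leq a}\!\affdgSch$, the map $\CX(T)\to \CX({}^{\leq b}\!T)$ is an isomorphism on $\pi_b$. I would first upgrade this ``top-degree'' statement to all lower degrees by a two-out-of-three argument: for $i\leq b\leq a$, both $\CX(T)\to \CX({}^{\leq i}\!T)$ and $\CX({}^{\leq b}\!T)\to \CX({}^{\leq i}\!T)$ are isomorphisms on $\pi_i$ (each being a top-degree truncation map for its source), whence so is $\CX(T)\to \CX({}^{\leq b}\!T)$. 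Then for a general $S\in\affdgSch$ I would use convergence, $\CX(S)\simeq \underset{m}{lim}\, \CX({}^{\leq m}\!S)$, together with the Milnor sequence for the homotopy groups of this limit. For fixed $i$ the transition maps $\CX({}^{\leq m+1}\!S)\to \CX({}^{\leq m}\!S)$ are isomorphisms on $\pi_i$ once $m\geq i$ (again by two-out-of-three against $\CX({}^{\leq i}\!S)$), so the towers $\{\pi_i(\CX({}^{\leq m}\!S))\}_m$ and $\{\pi_{i+1}(\CX({}^{\leq m}\!S))\}_m$ are eventually constant; the resulting Mittag--Leffler condition kills the $\underset{m}{lim}{}^1$ term, giving $\pi_i(\CX(S))\cong \pi_i(\CX({}^{\leq i}\!S))$ via the natural map. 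A final two-out-of-three comparison along $\CX(S)\to \CX({}^{\leq n}\!S)\to \CX({}^{\leq i}\!S)$ then shows that $\CX(S)\to \CX({}^{\leq n}\!S)$ is an isomorphism on $\pi_i$ for all $i\leq n$, which by the opening observation means exactly that $\CX({}^{\leq n}\!S)=\tau_{\leq n}\CX(S)$.

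The main obstacle is the bookkeeping in the ``only if'' direction: converting the single top-degree isomorphism provided by \lemref{l:formal smoothness via homotopy groups} into full control of all homotopy groups, and then transporting this from truncated test schemes to arbitrary $S$ through the $\underset{m}{lim}{}^1$ term of the Milnor sequence. Care is also needed with $\pi_0$ and with the dependence of the higher $\pi_i$ on basepoints, but these are handled by the same pro-constancy arguments, since each relevant tower of homotopy groups stabilizes.
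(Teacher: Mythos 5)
Your argument is correct. The paper states this lemma without proof (as it does \lemref{l:formal smoothness via homotopy groups}), treating it as a routine reformulation, so there is no official argument to compare against; your opening observation --- that $0$-truncatedness forces $\CX({}^{\leq n}\!S)$ to be $n$-truncated, so the Postnikov condition is equivalent to the map $\CX(S)\to \CX({}^{\leq n}\!S)$ being an isomorphism on $\pi_i$ for all $i\leq n$ --- is exactly the right reduction. That said, your ``only if'' direction is more roundabout than necessary: condition (2) of Definition \ref{d:formally smooth} is stated for \emph{arbitrary} $S\in\affdgSch$, so for $i\leq n$ you can apply it once to $S$ (giving that $\CX(S)\to\CX({}^{\leq i}\!S)$ is an isomorphism on $\pi_i$) and once to ${}^{\leq n}\!S$ (giving the same for $\CX({}^{\leq n}\!S)\to\CX({}^{\leq i}\!S)$, since ${}^{\leq i}({}^{\leq n}\!S)={}^{\leq i}\!S$), and conclude by two-out-of-three --- no convergence, Milnor sequence, or $\underset{m}{lim}{}^1$ analysis is needed. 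By routing everything through \lemref{l:formal smoothness via homotopy groups}, whose homotopy-group condition is only stated for eventually coconnective test schemes, you created the need to pass back to general $S$ via the limit; your execution of that detour (eventual constancy of the towers, Mittag--Leffler vanishing of $\underset{m}{lim}{}^1$) is nevertheless sound. Likewise, in the ``if'' direction the hypothesis yields condition (2) of the definition immediately, so the verification of convergence via Postnikov completeness of spaces, while correct, is only needed because you chose to verify the criterion of \lemref{l:formal smoothness via homotopy groups} rather than the definition itself.
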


\sssec{}

Let $\CX$ be a prestack such that ${}^{cl}\CX$ is an indscheme, and let $Y$ be a reduced classical scheme,
equipped with a closed embedding $Y\hookrightarrow {}^{cl,red}\CX$. 
Consider the formal completion $\CX^\wedge_Y$. 

\begin{prop}  \label{p:formal smoothness of formal completions} 
The following conditions are equivalent:

\smallskip

\noindent{\em(a)} $\CX$ is formally smooth.

\smallskip

\noindent{\em(b)} For every $Y\hookrightarrow {}^{cl,red}\CX$ as above,
the formal completion $\CX^\wedge_Y$ is formally smooth.

\end{prop}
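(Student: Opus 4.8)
The key computation underlying both implications is a single Cartesian square. For $S\in\affdgSch$ and an integer $n\ge 0$, cohomological truncation does not change the reduced classical scheme, i.e. ${}^{cl,red}({}^{\leq n}S)\simeq {}^{cl,red}\!S$. Feeding this into the defining fiber-product description of the formal completion (\secref{sss:setting for compl general}) and pasting homotopy pullbacks yields a canonical identification
$$
\CX^\wedge_Y(S)\simeq \CX(S)\underset{\CX({}^{\leq n}S)}\times \CX^\wedge_Y({}^{\leq n}S),
$$
that is, the square
$$
\begin{CD}
\CX^\wedge_Y(S) @>>> \CX^\wedge_Y({}^{\leq n}S) \\
@VVV @VVV \\
\CX(S) @>>> \CX({}^{\leq n}S)
\end{CD}
$$
is Cartesian in $\inftygroup$; in particular the two horizontal maps have equivalent homotopy fibers. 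The plan is to run this square through the homotopy-group reformulations of formal smoothness (\lemref{l:formal smoothness via homotopy groups} and \lemref{l:formal smoothness via postnikov tower}), using throughout that $\CX$, being a DG indscheme, is convergent and $0$-truncated, that the same holds for $\CX^\wedge_Y$ (\secref{sss:remarks on compl}(i) and \propref{p:formal compl is indscheme}), and that $\CX^\wedge_Y\to\CX$ is a monomorphism because $Y\to{}^{cl,red}\CX$ is a closed embedding (\secref{sss:monomorph}).

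For (a)$\Rightarrow$(b), I would first check that $^{cl}(\CX^\wedge_Y)$ is classically formally smooth: for a nilpotent closed embedding $S\hookrightarrow S'$ of classical affine schemes one has ${}^{red}\!S={}^{red}\!S'$, so the $Y$-leg and the base ${}^{cl}\CX({}^{red}\!S)$ of the fiber product are unchanged, and a short diagram chase lifts any class in $\pi_0({}^{cl}(\CX^\wedge_Y)(S))$ to $\pi_0({}^{cl}(\CX^\wedge_Y)(S'))$ using $\pi_0$-surjectivity for $^{cl}\CX$. For the derived condition I would apply \lemref{l:formal smoothness via postnikov tower} to $\CX$: formal smoothness makes the bottom arrow $\CX(S)\to\CX({}^{\leq n}S)$ an $n$-th Postnikov truncation, hence it has $n$-connected homotopy fiber; by the Cartesian square the top arrow has the same fiber, so it is an isomorphism on $\pi_i$ for $i\le n$; since $\CX^\wedge_Y({}^{\leq n}S)$ is $n$-truncated (as $\CX^\wedge_Y$ is a $0$-truncated DG indscheme), this exhibits $\CX^\wedge_Y({}^{\leq n}S)$ as the $n$-th Postnikov truncation of $\CX^\wedge_Y(S)$. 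With \lemref{l:formal smoothness via postnikov tower} this gives formal smoothness of $\CX^\wedge_Y$.

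For (b)$\Rightarrow$(a), the idea is that every test datum for formal smoothness of $\CX$ is already "seen" by one formal completion. Given $S$ and a point of $\CX(S)$, its reduced restriction ${}^{cl,red}\!S\to{}^{cl,red}\CX$ factors through a closed embedding $Y\hookrightarrow{}^{cl,red}\CX$ with $Y$ reduced and quasi-compact, because $^{cl}\CX$ is a classical indscheme and ${}^{cl,red}\!S$ is affine (\corref{c:cofinality of closed}); for this $Y$ the point lies in the image of the monomorphism $\CX^\wedge_Y(S)\hookrightarrow\CX(S)$, which identifies $\CX^\wedge_Y(S)$ with the union of the relevant connected components and is an isomorphism on all higher homotopy groups there. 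Ranging over such $Y$, the $\pi_0$-surjectivity for $\CX$ is deduced component-by-component from that of the $\CX^\wedge_Y$ (lifting along $\CX^\wedge_Y(S')\hookrightarrow\CX(S')$), while the isomorphism on the higher $\pi_i$ for $\CX$ is read off from the same Cartesian square, the vertical monomorphisms being isomorphisms on $\pi_i$ ($i\ge 1$) over the relevant components; one concludes via \lemref{l:formal smoothness via homotopy groups}. The hard part will be precisely this bookkeeping in (b)$\Rightarrow$(a): ensuring the quantifier over $Y$ is rich enough to capture an arbitrary map into $\CX$ (the reduction to reduced quasi-compact $Y$), and that passing to the components cut out by $\CX^\wedge_Y$ does not lose lifting surjectivity on $\pi_0$. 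The derived parts of both directions are then essentially formal, being immediate consequences of the Cartesian square, which in turn rests only on ${}^{cl,red}({}^{\leq n}S)\simeq{}^{cl,red}\!S$ and the pasting of pullbacks.
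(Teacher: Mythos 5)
Your proof is correct and takes essentially the same route as the paper's: the paper's (two-sentence) argument likewise rests on the observation that $\CX^\wedge_Y(S)$ is a union of connected components of $\CX(S)$, so that both conditions of formal smoothness can be checked componentwise according to where the point of ${}^{cl,red}\!S$ lands in ${}^{cl,red}\CX$, every such point factoring through some closed subscheme. Your Cartesian square and the component-by-component bookkeeping are an expanded, more explicit version of exactly that argument.
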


\begin{proof}

Since $\CX^\wedge_Y(S)$ is a connected component of $\CX(S)$, condition (a) implies
condition (b). For the opposite implication, write $^{cl}\CX$ as 
$\underset{\alpha}{colim}\, X_{\alpha}$. We claim that it is enough to show that
each $\CX^\wedge_{X_{\alpha}}$ is formally smooth. Indeed, both conditions of
formal smoothness can be checked separately over each point of 
$^{cl,red}\!S\to {}^{cl,red}\CX$, and every such point factors through
some $X_{\alpha}$.

\end{proof}

\ssec{Formal smoothness via deformation theory} 

\sssec{}   \label{sss:formal smoothness and cotangent} 

Let $\CX\in \inftydgprestack$ admit connective deformation theory
(see Definition \ref{def:deform theory}).

\begin{prop} \label{p:formal smoothness through deformations}
Suppose that $^{cl}\CX$ is $0$-truncated. Then $\CX$ is formally smooth if and
only if the following equivalent conditions hold:

\smallskip

\noindent{\em(a)} For every $S\in \affdgSch$ and $x:S\to \CX$, the object
$$T^*_x\CX\in \on{Pro}(\QCoh(S)^{\leq 0})$$ 
has the property that
$$\Hom(T^*_x\CX,\CF[i])=0,\, \forall \CF\in \QCoh(S)^{\heartsuit} \text{ and } i>0.$$

\smallskip

\noindent{\em(b)} Same as {\em(a)}, but for $S$ a classical affine scheme. 

\smallskip

\noindent{\em(c)} Under an additional assumption that $\CX$ is locally almost of finite type,
the same as {\em(b)}, but for $S$ reduced.

\end{prop}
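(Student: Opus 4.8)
The plan is to prove the chain (a) $\Leftrightarrow$ (b) $\Leftrightarrow$ (c) first, and then show that formal smoothness is equivalent to (b). For (a) $\Rightarrow$ (b) there is nothing to do, so I would argue (b) $\Rightarrow$ (a). Given arbitrary $S \in \affdgSch$ with $x: S \to \CX$ and $\CF \in \QCoh(S)^\heartsuit$, let $\iota: {}^{cl}\!S \hookrightarrow S$ be the canonical closed embedding. Since $\iota_*$ induces an equivalence $\QCoh({}^{cl}\!S)^\heartsuit \simeq \QCoh(S)^\heartsuit$, I may write $\CF \simeq \iota_*\CG$ with $\CG \in \QCoh({}^{cl}\!S)^\heartsuit$. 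Using the pro-adjunction formula \eqref{e:pro right adj} for the pair $(\iota^*, \iota_*)$ together with Condition (B) (\secref{sss:condition B}), which gives $T^*_{x\circ\iota}\CX \simeq \on{Pro}(\iota^*)(T^*_x\CX)$, I would identify $\Hom(T^*_x\CX, \CF[i]) \simeq \Hom(T^*_{x\circ\iota}\CX, \CG[i])$, which vanishes by (b). The equivalence (b) $\Leftrightarrow$ (c) under the laft hypothesis is entirely parallel to the equivalence of (i), (ii), (iii) in \propref{p:char finite type new} and \secref{sss:equiv aleph 0}: one reduces to $S$ of finite type, so that $T^*_x\CX$ is pro-coherent, and then uses the finite nilpotent filtration of an object of $\QCoh(S)^\heartsuit$ by direct images from $^{red}\!S$, together with the ``two out of three'' property of the vanishing condition along exact triangles, to pass between $S$ and $^{red}\!S$.

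The core of the argument is a deformation-theoretic dictionary, which I would assemble from Conditions (A), (B), (C). For a split square-zero extension $S_\CF$ with $\CF \in \QCoh(S)^{\le 0}$, the defining property of the pro-cotangent space \eqref{e:cotangent to presheaf} identifies the space of lifts of $x: S \to \CX$ to $S_\CF$ with $\Maps(T^*_x\CX, \CF)$; in particular this space is nonempty, being based at the trivial lift $S_\CF \to S \xrightarrow{x} \CX$. For a general square-zero extension $S \hookrightarrow S'$ classified by $\gamma: T^*S[-1] \to \CF$, the reformulation of Condition (C) in \lemref{l:reform C} identifies extensions of $x$ over $S'$ with lifts of $\gamma$ along $T^*S[-1] \to \on{Cone}((dx)^*)[-1]$; since the cofiber of this map is $T^*_x\CX$, applying $\Maps(-,\CF)$ produces a fiber sequence whose boundary map lands in $\Maps(T^*_x\CX, \CF[1])$. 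Hence the space of extensions of $x$ over $S'$ is nonempty precisely when the obstruction class in $\Hom(T^*_x\CX, \CF[1])$ vanishes, and when nonempty it is a torsor under $\Maps(T^*_x\CX, \CF)$.

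For the implication formally smooth $\Rightarrow$ (b), I fix a classical $S$, a point $x: S \to \CX$, a module $M \in \QCoh(S)^\heartsuit$ and an integer $i \ge 1$, and consider the split square-zero extension $S_{M[i]}$, which lies in $^{\le i}\affdgSch$ and satisfies $^{\le 0}(S_{M[i]}) = S$. By the dictionary, the homotopy fiber of $\CX(S_{M[i]}) \to \CX(S)$ over $x$ is $\Maps(T^*_x\CX, M[i])$, whose $\pi_0$ is $\Hom(T^*_x\CX, M[i])$. By \lemref{l:formal smoothness via homotopy groups}, formal smoothness makes $\CX(S_{M[i]}) \to \CX(S)$ an isomorphism on $\pi_0$; moreover, since $^{cl}\CX$ is $0$-truncated, $\CX(S)$ is discrete and hence $\pi_1(\CX(S), x) = 0$. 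Reading off the exact sequence of the fibration then forces $\Hom(T^*_x\CX, M[i]) = 0$. This is the cleanest point of the proof: the obstruction-degree vanishing ($i = 1$), which a priori looks as though it should require classical formal smoothness, in fact follows purely from $0$-truncatedness of $^{cl}\CX$.

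For the converse I assume (a) (equivalently (b)). Classical formal smoothness is immediate: for a classical square-zero extension $S \hookrightarrow S'$ with ideal $M \in \QCoh(S)^\heartsuit$, the obstruction to lifting any $x: S \to \CX$ lies in $\Hom(T^*_x\CX, M[1]) = 0$, so $\pi_0(\CX(S')) \to \pi_0(\CX(S))$ is surjective. For the homotopy-group condition of \lemref{l:formal smoothness via homotopy groups}, I would climb the truncation tower: each step $^{\le m-1}\!S \hookrightarrow {}^{\le m}\!S$ is, by \corref{c:can sq zero}, a square-zero extension by $H^{-m}(\CO_S)[m]$, so for any point $x$ the fiber of $\CX({}^{\le m}\!S) \to \CX({}^{\le m-1}\!S)$ over it is nonempty (its obstruction sits in $\Hom(T^*_x\CX, H^{-m}(\CO_S)[m+1]) = 0$) and is a torsor under $\Maps(T^*_x\CX, H^{-m}(\CO_S)[m])$, which by (a) is $(m-1)$-connected. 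Composing these steps and invoking convergence (part of connective deformation theory) shows $\CX(S) \to \CX({}^{\le j}\!S)$ is an isomorphism on $\pi_j$, so \lemref{l:formal smoothness via homotopy groups} delivers formal smoothness. The hard part throughout is the careful bookkeeping of connectivity in the torsor/obstruction description and the assembly of the dictionary from Conditions (A), (B), (C); once these are in place, the homotopy-theoretic conclusions are formal.
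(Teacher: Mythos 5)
Your proposal is correct and follows essentially the same route as the paper's (much terser) proof: the split square-zero extension $S_{\CF[i]}$ for the forward direction, the identification of $\QCoh(S)^\heartsuit\simeq\QCoh({}^{cl}S)^\heartsuit$ together with Condition (B) for (a)$\Leftrightarrow$(b), pro-coherence and reduction to the reduced scheme for (b)$\Leftrightarrow$(c), and Lemmas \ref{l:can sq zero}, \ref{l:cl sq zero}, \ref{l:reform C} and \ref{l:formal smoothness via homotopy groups} for the converse via the Postnikov tower. The only (harmless) reorganization is that you establish the forward implication for classical $S$ and then transport it to general $S$ via (b)$\Rightarrow$(a), and your observation that $0$-truncatedness of $^{cl}\CX$ is what kills $\pi_1(\CX(S),x)$ in that step is exactly the right point to isolate.
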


\begin{proof}

It is clear that if $\CX$ is formally smooth, then it satisfies (a): indeed, consider the split square-zero extension
of $S$ corresponding to $\CF[i]$. The converse implication
follows from deformation theory using Lemmas \ref{l:can sq zero}, \ref{l:cl sq zero} and 
\lemref{l:formal smoothness via homotopy groups}.

\medskip

Condition (a) implies condition (b) tautologically. The converse implication follows
from the fact that every object of $\QCoh(S)^{\heartsuit}$ is the direct image 
under the canonical map $^{cl}\!S\to S$. Indeed, for a point $x:S\to \CX$,
the pull-back of $T^*_x\CX$ under $^{cl}\!S\to S$ identifies with $T^*_{^{cl}\!x}\CX$,
where $^{cl}\!x$ is the composition $^{cl}\!S\to S\overset{x}\to \CX$.

\medskip

Condition (b) implies condition (c) tautologically. For the converse implication,
we note that under the assumption that $\CX$ is locally of finite type, by \lemref{l:char coh},
the functor $T^*_x\CX$ commutes with colimits in $\QCoh(S)^{\heartsuit}$. This allows
to replace any $\CF\in \QCoh(S)^{\heartsuit}$ by one obtained as a direct image from
$^{red}\!S$.

\end{proof}

\sssec{}

The following definition will be convenient in the sequel. Let $S$ be an affine DG scheme,
and let $F$ be an object of $\on{Pro}(\QCoh(S)^{\leq 0})$. 

\medskip

We shall say that $F$ is \emph{convergent} if for
every $\CF\in \QCoh(S)^{\leq 0}$, the natural map
\begin{equation} \label{e:conv in Pro}
F(\CF)\to \underset{n\in \BN^{\on{op}}}{lim}\, F(\tau^{\geq -n}(\CF))
\end{equation}
is an isomorphism in $\inftygroup$.

\medskip

We have:

\begin{lem}
Let $\CX\in \on{PreStk}$ admit connective deformation theory, and let $x:S\to \CX$
be a map. Then $T^*_x\CX\in \on{Pro}(\QCoh(S)^{\leq 0})$ is convergent.
\end{lem}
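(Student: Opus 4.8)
The plan is to deduce convergence of the pro-object $T^*_x\CX$ from convergence of $\CX$ itself, via the explicit description of the cotangent space through split square-zero extensions. Write $S=\Spec(A)$ and $M:=\Gamma(S,\CF)$, so that by definition $T^*_x\CX(\CF)$ is the fiber, over the point $x$, of the restriction map $\Maps(S_\CF,\CX)\to \Maps(S,\CX)$, where $S_\CF=\Spec(A\oplus M)$. Since $\CX$ admits connective deformation theory, it is convergent (Definition \ref{def:deform theory}), so for every $T\in \affdgSch$ we have $\CX(T)\simeq \underset{m\in \BN^{\on{op}}}{lim}\,\CX(\tau^{\leq m}T)$. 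Applying this to $T=S_\CF$ and to $T=S$, and using that fibers commute with limits (the basepoints being compatible), I would first obtain
$$T^*_x\CX(\CF)\simeq \underset{m\in \BN^{\on{op}}}{lim}\, E_m(\CF),\qquad E_m(\CF):=\on{fib}\big(\CX(\tau^{\leq m}S_\CF)\to \CX(\tau^{\leq m}S)\big),$$
and likewise $T^*_x\CX(\tau^{\geq -n}\CF)\simeq \underset{m\in \BN^{\on{op}}}{lim}\, E_m(\tau^{\geq -n}\CF)$ for every $n$.

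The key computation is then to control $\tau^{\leq m}S_\CF$. Here I would use the formula $\tau^{\leq m}(S_\CF)=\Spec\big(\tau^{\geq -m}(A\oplus M)\big)$ together with the fact that truncation functors preserve direct sums and are compatible with the trivial square-zero construction; this gives $\tau^{\leq m}(S_\CF)\simeq \Spec\big(\tau^{\geq -m}A\oplus \tau^{\geq -m}M\big)$, so that $\tau^{\leq m}S_\CF$ depends on $\CF$ only through $\tau^{\geq -m}M=\Gamma(S,\tau^{\geq -m}\CF)$. In particular, for $n\geq m$ one has $\tau^{\leq m}S_{\tau^{\geq -n}\CF}\simeq \tau^{\leq m}S_\CF$, and hence
$$E_m(\tau^{\geq -n}\CF)\simeq E_m(\CF)\qquad\text{whenever } n\geq m.$$

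Finally I would assemble these into the convergence isomorphism by a Fubini-and-cofinality argument. We have
$$\underset{n\in \BN^{\on{op}}}{lim}\,T^*_x\CX(\tau^{\geq -n}\CF)\simeq \underset{n\in \BN^{\on{op}}}{lim}\;\underset{m\in \BN^{\on{op}}}{lim}\,E_m(\tau^{\geq -n}\CF)\simeq \underset{(m,n)\in \BN^{\on{op}}\times \BN^{\on{op}}}{lim}\,E_m(\tau^{\geq -n}\CF).$$
Since the diagonal $\BN\hookrightarrow \BN\times \BN$ is cofinal, the inverse limit may be computed along $m=n$, where the previous step yields $E_m(\tau^{\geq -m}\CF)\simeq E_m(\CF)$; thus it equals $\underset{m\in \BN^{\on{op}}}{lim}\,E_m(\CF)\simeq T^*_x\CX(\CF)$, which is precisely \eqref{e:conv in Pro}. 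The main obstacle I anticipate is the middle step: checking carefully that the coconnective truncation of the trivial square-zero extension $\Spec(A\oplus M)$ is again the trivial square-zero extension of the truncated base by the truncated module, i.e. that the passage $\CF\rightsquigarrow S_\CF$ commutes with truncation in the stated sense. Once this ``$\tau^{\leq m}S_\CF$ only sees $\tau^{\geq -m}\CF$'' statement is established, everything else is a formal manipulation of cofiltered limits.
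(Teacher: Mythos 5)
Your proof is correct and follows the same route as the paper, whose entire argument is the one-line observation that admitting connective deformation theory includes convergence of $\CX$ as a prestack; your write-up is the honest unwinding of that remark. The one step you flag as a potential obstacle — that $\tau^{\leq m}(S_\CF)$ depends on $\CF$ only through $\tau^{\geq -m}\CF$ — does hold, since the map $A\oplus M\to A\oplus \tau^{\geq -n}M$ of trivial square-zero extensions induces an isomorphism on $H^{-i}$ for $i\leq n$, and the rest is the formal cofinality manipulation you describe.
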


\begin{proof}
Follows from the fact that the condition of admitting connective deformation theory includes convergence.
\end{proof} 

\sssec{}

Let $S$ be an affine classical scheme. Let us characterize those objects
$$F\in \on{Pro}(\QCoh(S)^{\leq 0})$$ that satisfy property (a) of
\propref{p:formal smoothness through deformations}. 


\medskip

We have:

\begin{lem} \label{l:when pro proj one}
For $S\in \affSch$ and $F\in \on{Pro}(\QCoh(S)^{\leq 0})$ 
the following are equivalent:

\smallskip

\noindent{\em(a)} $F$ is convergent and $\pi_0\left(F(\CF[i])\right)=0$ for all $\CF\in \QCoh(S)^\heartsuit$
and $i>0$.

\smallskip

\noindent{\em(a')} $\pi_0\left(F(\CF)\right)=0$ for all $\CF\in \QCoh(S)^{<0}$.

\smallskip

\noindent{\em(b)} $F$ 
belongs to the full subcategory
$$\on{Pro}(\QCoh(S)^{\heartsuit,\on{proj}})\subset \on{Pro}(\QCoh(S)^{\leq 0})$$
where $\QCoh(S)^{\heartsuit,\on{proj}}$ is the full subcategory of projective objects in $\QCoh(S)^{\heartsuit}$.

\smallskip

\noindent{\em(b')} $F$ is convergent, belongs to the full subcategory
$$\on{Pro}(\QCoh(S)^{\heartsuit})\subset \on{Pro}(\QCoh(S)^{\leq 0}),$$
and the functor
$$\CF\mapsto \pi_0\circ F(\CF),\quad \QCoh(S)^\heartsuit\to \on{Sets}$$
is right exact.

\end{lem}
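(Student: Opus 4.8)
The plan is to prove the chain of implications (a) $\Leftrightarrow$ (a') and (a) $\Leftrightarrow$ (b) $\Leftrightarrow$ (b'), establishing that all four conditions are equivalent characterizations of pro-objects that are, up to the convergence bookkeeping, pro-projective.

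First I would dispose of the equivalence (a) $\Leftrightarrow$ (a'). For this, recall that $F\in \on{Pro}(\QCoh(S)^{\leq 0})$ is by definition an exact functor, so it lifts to $F^{\on{Vect}}:\QCoh(S)^{\leq 0}\to \Vect$ as in \secref{sss:lift to spectra}, and $\pi_0(F(\CF))=H^0(F^{\on{Vect}}(\CF))$ while $F(\CF[i])$ corresponds to the shift. Given any $\CF\in \QCoh(S)^{<0}$, one uses the (possibly infinite) Postnikov filtration to write $\CF$ as a limit of its truncations $\tau^{\geq -n}(\CF)$, each of which is built from finitely many shifts $H^j(\CF)[-j]$ with $j<0$ (equivalently $H^j(\CF)[i]$ with $i=-j>0$) via exact triangles. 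Since $F^{\on{Sp}}$ is exact, the vanishing $\pi_0(F(\CG[i]))=0$ for $\CG\in\QCoh(S)^\heartsuit$, $i>0$ propagates through these triangles to give $\pi_0(F(\tau^{\geq -n}(\CF)))=0$; then convergence \eqref{e:conv in Pro} promotes this to $\pi_0(F(\CF))=0$, which is (a'). Conversely (a') trivially contains the statements in (a) applied to $\CF[i]$ with $i>0$, and convergence is part of the standing hypothesis that $T^*_x\CX$ is convergent, so I should state (a) with convergence built in as written.

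Next, for (a) $\Leftrightarrow$ (b), the key input is the standard characterization of projectivity in $\QCoh(S)^\heartsuit$: an object $P$ is projective if and only if $\Hom(P,-)$ is exact on the heart, equivalently $\on{Ext}^{>0}(P,\CF)=0$ for $\CF\in\QCoh(S)^\heartsuit$. I would use the equivalence $\on{Pro}(\QCoh(S))^{\leq 0}\simeq \on{Pro}(\QCoh(S)^{\leq 0})$ and the t-structure on $\on{Pro}(\QCoh(S))$ from \secref{ss:pro qc}, under which an object is in the heart precisely when it kills $\QCoh(S)^{\neq 0}$. The condition in (a') says $F$ kills all of $\QCoh(S)^{<0}$; combined with $F\in\on{Pro}(\QCoh(S)^{\leq 0})$ (which kills $\QCoh(S)^{>0}$) this places $F$ in $\on{Pro}(\QCoh(S)^\heartsuit)$, and the further pro-projectivity is exactly the condition that the induced functor $\CF\mapsto\pi_0(F(\CF))$ on the heart be exact. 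Writing $F$ as a filtered inverse limit of corepresentables $\CP_\alpha\in\QCoh(S)^\heartsuit$, exactness of $\underset{\alpha}{``\lim"}\,\Hom(\CP_\alpha,-)$ forces each relevant $\CP_\alpha$ to be taken projective (using that $S$ is affine, so the heart has enough projectives and these form a cofinal system), giving (b).

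Finally (b) $\Leftrightarrow$ (b') is essentially a repackaging: (b') is just (b) with the convergence and right-exactness conditions spelled out in place of the abstract membership in $\on{Pro}(\QCoh(S)^{\heartsuit,\on{proj}})$, so the content is the observation that for $F\in\on{Pro}(\QCoh(S)^\heartsuit)$ pro-projectivity is equivalent to right-exactness of $\pi_0\circ F$ on the heart, which is the defining property of projectives pro-extended. I expect the main obstacle to be the careful handling of convergence when passing between the infinite Postnikov tower and the finite truncations in the (a') $\Rightarrow$ (a) direction, and more subtly the claim that right-exactness of the pro-functor on the heart can be realized by a pro-system of genuinely projective objects; this requires knowing that $\QCoh(S)^\heartsuit$ has enough projectives (true for affine $S$) and that the pro-structure can be refined to a cofinal pro-projective one, which is where I would spend the most care rather than treating it as routine.
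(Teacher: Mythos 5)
Your overall architecture matches the lemma's actual content, and your treatment of (a)\,$\Leftrightarrow$\,(a') via Postnikov towers, exactness of $F^{\on{Sp}}$ and convergence is a reasonable (if more elaborate than necessary) version of what the paper treats as immediate. But the two implications that carry the real weight are not actually proved. For (a')\,$\Rightarrow$\,(b), your sketch says that exactness of $\underset{\alpha}{``lim"}\,\Hom(\CP_\alpha,-)$ ``forces each relevant $\CP_\alpha$ to be taken projective''; this is not how it goes — the terms of an arbitrary pro-presentation need not be projective, and one must instead \emph{construct} a new cofinal pro-system of projectives. You flag this yourself as the place you would ``spend the most care,'' but that is precisely the missing idea: the paper forms the category of pairs $\{P\in \QCoh(S)^{\heartsuit,\on{proj}},\, f_P\in H^0(F(P))\}$, uses (a') to show it is cofiltered, and shows the canonical map $F\to \underset{(P,f_P)}{``lim"}\, P$ is an isomorphism. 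Without this (or an equivalent device) the implication is unproved. A smaller imprecision in the same step: (a') gives vanishing of $\pi_0(F(\CF))$ on $\QCoh(S)^{<0}$, not vanishing of the whole space $F(\CF)$ (already false for $F$ corepresented by a projective, where $\pi_1$ of $F(\CG[1])$ is $\Hom(P,\CG)$), so the phrase ``$F$ kills all of $\QCoh(S)^{<0}$'' overstates what places $F$ in $\on{Pro}(\QCoh(S)^{\heartsuit})$.

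The second gap is your claim that (b)\,$\Leftrightarrow$\,(b') is ``essentially a repackaging.'' The direction (b)\,$\Rightarrow$\,(b') is indeed immediate, but (b')\,$\Rightarrow$\,(b) is not: knowing only that $F\in\on{Pro}(\QCoh(S)^{\heartsuit})$ with $\pi_0\circ F$ right exact, one must recover condition (a), and the paper does this by a genuine argument — for $\CF$ injective, $\pi_0(F(\CF[i]))=\underset{\alpha}{colim}\,\on{Ext}^i(\CF_\alpha,\CF)=0$ for $i>0$; then embedding an arbitrary $\CF\in\QCoh(S)^{\heartsuit}$ into an injective and running the long exact cohomology sequence together with right-exactness gives the case $i=1$, and induction on $i$ gives the rest. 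This injective-resolution step is entirely absent from your proposal, so the cycle of implications you describe does not close.
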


\begin{rem}
This lemma is not specific to $\QCoh(S)$; it is applicable to any 
stable $\infty$-category equipped with a t-structure, whose heart
has enough projectives and injectives. 
\end{rem}

\begin{proof}

The equivalence of (a) and (a') is immediate. It is also clear that (b) implies (a). 

\medskip

Suppose that $F$ satisfies (a'), and let us deduce (b). Consider the category 
$$\{P\in \QCoh(S)^{\heartsuit,\on{proj}},f_P\in H^0(F(P))\}.$$
The assumption implies that this category is cofiltered, and it is easy to see that the resulting map in 
$\on{Pro}(\QCoh(S)^{\geq -n,\leq 0})$
$$F\to \underset{(P,f_P)}{``lim"}\, P$$
is an isomorphism.

\medskip

The implication (b) $\Rightarrow$ (b') is also immediate. Let us show that (b') implies (a).
By assumption, $F$ is given as an object 
$$\underset{\alpha\in \sA}{``lim"}\, \CF_\alpha \in \on{Pro}(\QCoh(S)^\heartsuit),$$
where the category of indices $\sA$ is filtered. By definition,
$$\pi_0(F(\CF))\simeq \underset{\alpha\in \sA}{colim}\, \Hom(\CF_\alpha,\CF).$$

Hence, if $\CF\in \QCoh(S)^\heartsuit$ is injective, then $\pi_0(F(\CF[i]))=0$ for $i>0$.
The exactness of $F$ on the abelian category implies that $\pi_0(F(\CF[1]))=0$ for
any $\CF\in \QCoh(S)^\heartsuit$ by the long exact cohomology sequence.
The assertion that $\pi_0(F(\CF[i]))=0$ for $n>i>1$ and any $\CF\in \QCoh(S)^\heartsuit$ 
follows by induction on $i$, again by the long exact cohomology sequence.

\end{proof}

\sssec{}

In what follows, for $S\in \affSch$, we shall refer to objects of  $F\in \on{Pro}(\QCoh(S)^{\leq 0})$ satisfying the equivalent
conditions of \lemref{l:when pro proj one} as \emph{pro-projective}. 

\sssec{}  \label{sss:formal smoothness and cotangent bug}

We can now better explain the non-locality of the definition of formal smoothness
mentioned in Remark \ref{r:formal smoothness bug}:

\medskip

Let $S$ be an affine classical scheme, and let $F$ be an object of $\on{Pro}(\QCoh(S)^{\heartsuit})$.
It is a natural question to ask whether the property of $F$ to be pro-projective 
is local in the Zariski topology.

\medskip

Namely, if $S_i$ is an open cover of $S$ by affine 
subchemes and $F|_{S_i}\in \on{Pro}(\QCoh(S_i)^{\heartsuit,\on{proj}})$, will it be true that
$F$ itself belongs to $\on{Pro}(\QCoh(S)^{\heartsuit,\on{proj}})$? 

\medskip

Unfortunately, we do not know the answer to this question, but we think that 
it is probably negative.

\begin{rem}
Note, however, if we ask the same question for $F$ being an object of $\QCoh(S)^{\heartsuit}$, rather than
$\on{Pro}(\QCoh(S)^{\heartsuit})$, the answer will be affirmative, due to a non-trivial theorem of Raynaud-Gruson,
\cite{RG}. 
\end{rem}

\ssec{Formal smoothness for prestacks locally of finite type}   \label{ss:formal smoothness for laft}

\sssec{}

Let $S$ be an affine DG scheme, and let $F$ be an object of $\on{Pro}(\QCoh(S)^{\geq -n,\leq 0})$. 

\medskip

We shall say that $F$ is \emph{pro-coherent} if, when viewed as a functor
$$\QCoh(S)^{\geq-n, \leq 0}\to \inftygroup,$$
it commutes with filtered colimits. 

\medskip

Note that this condition is satisfied for $F$ arising as $^{\geq -n}(T^*_x\CX)$ for $x:S\to \CX$, 
where $\CX$ admits connective deformation theory and belongs to $\inftydgprestack_{\on{laft}}$.

\medskip

Also note that when $S$ is Noetherian, by \lemref{l:char coh}, pro-coherence is equivalent to
$F$ belonging to $\on{Pro}(\Coh(S)^{\geq -n,\leq 0})$. 

\medskip

In general, $F$ is pro-coherent if and only if
it can be represented by a complex 
$$P^{-n-1}\to P^{-n}\to...\to P^{-1}\to P^0$$
in $\on{Pro}(\QCoh(S)^{\heartsuit})$, whose terms belong to $\on{Pro}(\QCoh(S)^{\heartsuit,\on{proj,f.g.}})$,
where $$\QCoh(S)^{\heartsuit,\on{proj,f.g.}}\subset \QCoh(S)^{\heartsuit}$$ denotes the category of projective
finitely generated quasi-coherent sheaves.

\sssec{}  \label{sss:locality of formal smoothness}

We have: 

\begin{lem} \label{l:locality of proj} Let $S$ be a classical affine scheme and
let $F\in \on{Pro}(\QCoh(S)^\heartsuit)$ be pro-coherent. Then its 
property of being pro-projective is local in the Zariski topology.
\end{lem}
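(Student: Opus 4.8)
The plan is to reduce pro-projectivity to a cohomological vanishing that can be tested through descent, and to isolate the one non-formal point as an instance of the Raynaud--Gruson theorem. Concretely, by \lemref{l:when pro proj one} (condition (b$'$)), for a pro-coherent $F$ being pro-projective is equivalent to $F$ lying in $\on{Pro}(\QCoh(S)^{\heartsuit})$ together with right-exactness of the heart-valued functor $\CH\mapsto \pi_0(F(\CH))$; using the structural description of pro-coherent objects recalled before the lemma, I would represent $F$ by a finite complex $P^{-m}\to\cdots\to P^0$ of objects of $\on{Pro}(\QCoh(S)^{\heartsuit,\on{proj},\on{f.g.}})$, so that pro-projectivity says exactly that this complex is a pro-projective resolution: pro-acyclic in negative degrees, with pro-(finitely generated projective) $H^0$. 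First I would record that both pro-coherence and pro-projectivity are stable under restriction to smaller opens: $j^*$ preserves coherence, and for a quasi-compact open $j$ one has $F|_U(\CG)=F(j_*\CG)$ with $j_*$ $t$-exact, so that via \lemref{l:gluing pro} the hypotheses pass to every member of the cover and to all of their intersections. By quasi-compactness of the affine $S$ we may assume the cover is finite.

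Next I would run a Mayer--Vietoris / finite \v{C}ech argument, modelled on the proof of \lemref{l:t on Pro}, proceeding by induction on the size of the cover and reducing to a union of two opens $S=U_1\cup U_2$ (the second of which is a union of fewer opens, on which pro-projectivity holds by the inductive hypothesis). Applying the exact $\Vect$-valued functor $F^{\on{Vect}}$ of \secref{sss:lift to spectra} to the \v{C}ech triangle $\CF\to (j_1)_*j_1^*\CF\oplus (j_2)_*j_2^*\CF\to (j_{12})_*j_{12}^*\CF$ and using $F^{\on{Vect}}\circ (j_W)_*=(F|_{U_W})^{\on{Vect}}$, I get a bounded exact triangle computing $F^{\on{Vect}}(\CF)$ from the local values. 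It is convenient to test the vanishing on objects $\CF=\CH[i]$ with $\CH\in\QCoh(S)^{\heartsuit}$ and $i>0$, as permitted by \lemref{l:when pro proj one}(a) and pro-coherence (the relevant functors commute with filtered colimits, so one may also take $\CH$ finitely generated).

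The key bookkeeping is then purely in degrees: local pro-projectivity forces each $(F|_{U_W})^{\on{Vect}}(j_W^*\CH[i])$ to be concentrated in cohomological degree $-i$, so in the two-open \v{C}ech triangle the only contribution to $H^0(F^{\on{Vect}}(\CH[i]))$ for $i\geq 2$ would have to come from a triple intersection, which is absent; hence the cases $i\geq 2$ follow formally, and the whole statement collapses to the surjectivity, for $i=1$, of $(F|_{U_1})^{\heartsuit}(j_1^*\CH)\oplus (F|_{U_2})^{\heartsuit}(j_2^*\CH)\to (F|_{U_{12}})^{\heartsuit}(j_{12}^*\CH)$. Equivalently, this is the assertion that the top cohomology $H^0(P^\bullet)$ is globally pro-projective given that it is so locally.

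The main obstacle is exactly this last surjectivity, i.e. the globalization of projectivity; everything preceding it is formal. Here the hypothesis of pro-coherence is essential: it supplies \emph{finitely generated} representatives (via \lemref{l:char coh} and the complex of pro-(f.g.\ projectives)), which lets me transfer the question from pro-objects to honest quasi-coherent sheaves and invoke the theorem of Raynaud--Gruson \cite{RG} that a quasi-coherent sheaf which is Zariski-locally projective is projective. This is precisely the input the paper cites for the non-pro case, and it is the absence of such finiteness that makes the general (non-pro-coherent) statement presumably false, in accordance with Remark following \secref{sss:formal smoothness and cotangent bug}.
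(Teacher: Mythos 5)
Your Čech reduction is broadly the right shape, but the endgame — which you yourself identify as "the main obstacle" — is where the proof breaks. Two problems. First, the appeal to Raynaud--Gruson does not go through: \cite{RG} is a statement about honest quasi-coherent sheaves, and you give no mechanism for converting pro-projectivity of a pro-object into projectivity of a module. The sentence "this is the assertion that $H^0(P^\bullet)$ is globally pro-projective given that it is so locally" is circular (that assertion \emph{is} the lemma), and the proposed "transfer ... to honest quasi-coherent sheaves" is exactly the missing step. The natural module attached to a pro-coherent $F$, namely $M=F^\heartsuit(\CO_S)$, controls pro-projectivity of $F$ through \emph{flatness} and Mittag--Leffler-type conditions (cf.\ the use of \cite{BD}, Sect.~7.12 later in the paper), not through projectivity, so Raynaud--Gruson is not even the right statement to descend. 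Second, your bookkeeping relies on "$j_*$ is $t$-exact for a quasi-compact open," which is false when the open is not affine (e.g.\ $\BA^2\setminus\{0\}\hookrightarrow\BA^2$ has $R^1j_*\CO\neq 0$); since your induction makes $U_2$ and $U_{12}$ non-affine, the "concentrated in degree $-i$" claim fails there, and "pro-projective over a non-affine open" is not even defined.

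Both problems are avoidable, and — somewhat ironically — the step you flag as needing Raynaud--Gruson is actually free. If you run the argument with the full Čech complex of a finite cover by \emph{basic affine} opens $D(f_{i})$ (so all multiple intersections are affine and all pushforwards $t$-exact), then pro-coherence gives $F^\heartsuit(\CN_{f})\simeq F^\heartsuit(\CN)_{f}$ for $\CN\in\QCoh(S)^\heartsuit$, so the potentially obstructing cohomology is computed by the Čech complex of the ordinary $A$-module $F^\heartsuit(\CN)$ on the affine $S$, which is acyclic in positive degrees; your $i=1$ surjectivity is an instance of this. That yields a correct proof, different from the paper's. The paper instead tests condition (b$'$) of \lemref{l:when pro proj one}: for pro-coherent $F$, right-exactness of $F^\heartsuit$ is shown to be equivalent to the natural map $\CN\underset{A}\otimes F^\heartsuit(A)\to F^\heartsuit(\CN)$ being an isomorphism, and that isomorphism is verified after a faithfully flat base change $A\to B$ — which proves the stronger statement that pro-projectivity of pro-coherent objects is fpqc-local, not just Zariski-local. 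I recommend either adopting the paper's descent argument or rewriting your Čech argument over a basic affine cover and deleting the Raynaud--Gruson step.
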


\begin{proof}

We will check the locality of condition (b') of \lemref{l:when pro proj one}.

\medskip

First, it is easy to see that the property for an object of $\on{Pro}(\QCoh(S)^{\leq 0})$ to be
convergent is Zariski-local.

\medskip

Hence, it remains to check that the property of the functor
$$\CF\mapsto \pi_0(F(\CF)),\quad \QCoh(S)^\heartsuit\to \on{Sets}$$
to be right exact is also Zariski-local, under the assumption that $F$ is pro-coherent. 
We will show that this property is in fact fpqc-local.

\medskip

Thus, let $f:S'\to S$ be an fpqc map, where $S=\Spec(A)$ and $S'=\Spec(B)$. We assume that
the functor 
$$F':=\on{Pro}(f^*)(F): B\mod\to \inftygroup$$
is such that
$$F'{}^\heartsuit:=\pi_0\circ F':(B\mod)^\heartsuit\to \on{Sets}$$
is right exact, and we wish to deduce the same for $$F^\heartsuit:=\pi_0\circ F:(A\mod)^\heartsuit\to \on{Sets}.$$
Note that by adjunction, $F'{}^\heartsuit(\CM)=F^\heartsuit(f_*(\CM))$ for $\CM\in (B\mod)^\heartsuit$.

\medskip

Consider the object $F^\heartsuit(A)\in \on{Sets}$. The action of $A$ on itself as an $A$-module
defines on $F^\heartsuit(A)$ a structure of an $A$-module. There is a natural map of functors
$(A\mod)^\heartsuit\to \on{Sets}$
\begin{equation} \label{e:from tensor product to Hom}
\CN\underset{A}\otimes F^\heartsuit(A)\to F^\heartsuit(\CN),
\end{equation}
where in the above formula we are using the \emph{non-derived} tensor product. 

\medskip

Note the map in \eqref{e:from tensor product to Hom} is an isomorphism whenever $F$ is pro-coherent
and $F^\heartsuit$ is right exact. Indeed, both functors are right exact and commute with filtered colimits,
so the isomorphism for any $\CN$ follows from the case $\CN=A$.

\medskip

And vice versa, if \eqref{e:from tensor product to Hom} is an isomorphism then $F^\heartsuit$ is right exact.
Indeed, the left-hand side is a right exact, and the right-hand side is left exact, so if the map in
question is an isomorphism, and both functors are actually exact. 

\medskip

Also note that \eqref{e:from tensor product to Hom} is an isomorphism for $F^\heartsuit$ pro-coherent whenever
$\CN$ is $A$-flat, by Lazard's lemma. 

\medskip

In order to show that \eqref{e:from tensor product to Hom} is an isomorphism under our
assumptions, let us tensor both sides with $B$, and consider the commutative diagram
$$
\CD
\CN\underset{A}\otimes F^\heartsuit(A)\underset{A}\otimes B  @>>>  F^\heartsuit(\CN)\underset{A}\otimes B   \\
@VVV   @VVV  \\
\CN\underset{A}\otimes F^\heartsuit(B)  @>>>  F^\heartsuit(\CN\underset{A}\otimes B).
\endCD
$$
Since $B$ is faithfully flat over $A$, it is enough to show that the upper horizontal arrow
is an isomorphism.

\medskip

In the above diagram the vertical arrows are isomorphisms since $B$ is $A$-flat. However, the lower
horizontal arrow identifies with
$$\CN\underset{A}\otimes F'{}^\heartsuit(B) \simeq
(\CN\underset{A}\otimes B)\underset{B}\otimes F'{}^\heartsuit(B) \to 
F'{}^\heartsuit(\CN\underset{A}\otimes B),$$
which is an isomorphism by \eqref{e:from tensor product to Hom} applied to
$F'{}^\heartsuit$.

\end{proof}

\sssec{}

In view of \propref{p:formal smoothness through deformations},
the above lemma implies that for 0-truncated prestacks that admit connective deformation theory and 
are locally almost of finite type, the definition of formal smoothness is reasonable, in the 
sense that it is Zariski-local.

\medskip

As a manifestation of this, we have the following assertion that will be useful in the
sequel.

\sssec{}

Let $\CX$ be a formal DG scheme with the underlying reduced classical scheme $X$. Denote
$$T^*\CX|_X:=T^*_x\CX$$ 
where $x:X\to \CX$ is the tautological point.

\begin{cor} \label{c:estimate on cotangent formal}
Suppose that $\CX$ is locally almost of finite type, and that $T^*\CX|_X$ is Zariski-locally 
pro-projective. Then $\CX$ is 
formally smooth. 
\end{cor}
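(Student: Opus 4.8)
The plan is to verify the criterion of \propref{p:formal smoothness through deformations} in its form (c), which is available precisely because $\CX$ is locally almost of finite type. First I would record the structural inputs. Since $\CX$ is a formal DG scheme it is in particular a DG indscheme, hence it is convergent and satisfies Conditions (A), (B) and (C), i.e.\ it admits connective deformation theory in the sense of \defnref{def:deform theory}; moreover ${}^{cl}\CX$ is a classical indscheme and therefore $0$-truncated (a filtered colimit of $0$-truncated schemes, cf.\ \lemref{l:no sheafification}), so \propref{p:formal smoothness through deformations} applies. Thus it suffices to show that for every \emph{reduced} classical affine scheme $S$ and every map $x\colon S\to \CX$, the pro-cotangent space $T^*_x\CX\in \on{Pro}(\QCoh(S)^{\leq 0})$ is pro-projective; by \lemref{l:when pro proj one} this is equivalent to the required vanishing of $\Hom(T^*_x\CX,\CF[i])$ for $\CF\in \QCoh(S)^\heartsuit$ and $i>0$ (convergence of $T^*_x\CX$ being automatic from connective deformation theory).

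Next I would exploit that $S$ is reduced. Since $\CX$ is a formal DG scheme, ${}^{cl,red}\CX=X$, so $\CX(S)=X(S)$ and the map $x$ factors canonically as $S\overset{g}\to X\overset{x_0}\to \CX$, where $x_0$ is the tautological point. By Condition (B), i.e.\ the compatibility of pro-cotangent spaces with pullbacks, we then have
\[
T^*_x\CX\simeq \on{Pro}(g^*)\bigl(T^*_{x_0}\CX\bigr)=\on{Pro}(g^*)\bigl(T^*\CX|_X\bigr).
\]
Hence everything is reduced to a statement about $T^*\CX|_X$ and its pullback to $S$.

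Then I would transport the hypothesis along $g$ and globalize on $S$. The point is that $\on{Pro}(g^*)$ preserves pro-projectivity: a projective quasi-coherent sheaf is flat, so its derived pullback agrees with the underived one, stays in the heart, and remains projective (the underived $g^*$ on hearts is left adjoint to $g_*$, hence preserves projectives); passing to pro-objects, $\on{Pro}(g^*)$ sends $\on{Pro}(\QCoh(X)^{\heartsuit,\on{proj}})$ into $\on{Pro}(\QCoh(S)^{\heartsuit,\on{proj}})$. Applying this over the affine opens of $S$ whose image lands in an open of $X$ on which $T^*\CX|_X$ is pro-projective (such opens cover $S$), we conclude that $T^*_x\CX$ is Zariski-locally pro-projective. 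Finally, since $S$ is affine and $T^*_x\CX$ is pro-coherent --- a consequence of $\CX$ being locally almost of finite type --- \lemref{l:locality of proj} upgrades local pro-projectivity to genuine pro-projectivity of $T^*_x\CX$, completing the verification of \propref{p:formal smoothness through deformations}(c).

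The main obstacle I anticipate is the bookkeeping around locality: one must promote ``Zariski-locally pro-projective'' to ``pro-projective,'' and this is exactly where the locally-almost-of-finite-type hypothesis is essential, since it guarantees the pro-coherence needed to invoke \lemref{l:locality of proj} (whose proof in turn rests on the faithfully-flat descent argument of that lemma). Everything else --- the reduction to reduced test schemes via \propref{p:formal smoothness through deformations}(c), the factorization through $X$, and the pullback compatibility via Condition (B) --- is formal given the results already established.
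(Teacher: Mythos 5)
Your proof is correct and follows essentially the same route as the paper's: reduce to criterion (c) of \propref{p:formal smoothness through deformations}, factor the reduced test point through $X$, use pullback-compatibility of pro-cotangent spaces, and invoke pro-coherence (from the laft hypothesis) together with \lemref{l:locality of proj} to pass from local to global pro-projectivity. The only (cosmetic) difference is the order of operations --- the paper globalizes pro-projectivity on $X$ once and then pulls back, whereas you pull back to each affine $S$ and globalize there; both work, and yours has the minor advantage of applying \lemref{l:locality of proj} only to affine schemes, as it is literally stated.
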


\begin{proof}

We will check that the conditions of \propref{p:formal smoothness through deformations}(c). 
Note that every map $S\to \CX$, where $S$ is a reduced classical affine scheme, factors
through a map $f:S\to X$.  Thus, we need to show that for every such $f$, the object
$$\on{Pro}(f^*)(T^*\CX|_X)\in \on{Pro}(\QCoh(S)^{\leq 0})$$
is pro-projective. 

\medskip

First, the Zariski-locality of the t-structure on $\on{Pro}(\QCoh(X)^{\leq 0})$ implies that 
$T^*\CX|_X$ belongs to the full subcategory 
$$\on{Pro}(\QCoh(X)^\heartsuit)\subset \on{Pro}(\QCoh(X)^{\leq 0}).$$
Now, since $\CX$ is locally almost of finite type, the classical scheme $X$ is of finite type. 
Hence, \propref{p:char finite type new} implies that $T^*\CX|_X$ belongs to
$$\on{Pro}(\Coh(X)^\heartsuit)\subset \on{Pro}(\QCoh(X)^\heartsuit)\subset \on{Pro}(\QCoh(X)^{\leq 0}).$$
Finally, by \lemref{l:locality of proj}, we obtain that $T^*\CX|_X$ belongs to
$$\on{Pro}(\Coh(X)^{\heartsuit,\on{proj}})\subset 
\on{Pro}(\Coh(X)^\heartsuit)\subset \on{Pro}(\QCoh(X)^\heartsuit)\subset \on{Pro}(\QCoh(X)^{\leq 0}),$$
and in particular to
$$\on{Pro}(\QCoh(X)^{\heartsuit,\on{proj}})\subset \on{Pro}(\QCoh(X)^{\leq 0}).$$

\medskip

However, it is clear that for any $f:S\to X$ with $S\in \affSch$, the functor $\on{Pro}(f^*)$ sends
pro-projective objects to pro-projective objects, as required. 

\end{proof}

\ssec{Examples of formally smooth DG indschemes}   \label{ss:examples of formally smooth}

In this subsection we will give three examples of formally smooth DG indschemes.

\sssec{}

The first example is the most basic one: we claim that the affine space $\BA^n$,
considered as an object of $\inftydgprestack$, is formally smooth. Indeed, the
definition of formal smoothness is satisfied on the nose as
$$\Maps(\Spec(A),\BA^n)\simeq \Omega^{\infty}({\mathsf {Sp}}(A))^{\times n}.$$

\sssec{}  \label{sss:classical smooth schemes}

Let $X$ be a classical smooth scheme of finite type over $k$. We claim that $X$,
considered as an object of $\inftydgprestack$ (i.e., $^L\!\on{LKE}_{(\affSch)^{\on{op}}\hookrightarrow (\affdgSch)^{\on{op}}}(X)$), is formally smooth. 

\medskip

Indeed, it suffices to show that the conditions of \propref{p:formal smoothness through deformations}
are satisfied. In fact, we claim that $T^*X$, is an object of $\Coh(X)^{\heartsuit}$, and is
locally projective. 

\medskip

The question is local on $X$, so we can assume that
$X$ fits into a Cartesian square
\begin{equation} \label{e:smooth scheme}
\CD
X @>>> \BA^n \\
@VVV   @VV{f}V \\
0  @>>> \BA^m,
\endCD
\end{equation}
where the map $f$ is smooth, and where the fiber product \emph{is taken in the category of
classical schemes}.

\medskip

Since $f$ is flat, the above square is also Cartesian in the category of DG schemes.
Hence, $T^*X$ can be calculated as
$$\on{Cone}(f^*(T^*\BA^m)|_X\to T^*\BA^n|_X),$$
and the smoothness hypothesis on $f$ implies the required properties of $T^*X$. 

\begin{cor}  \label{c:formal completion formally smooth}
Let $X$ be a smooth classical scheme locally of finite type, and let $Y\subset X$
be a Zariski-closed subset. Then the formal completion $X^{\wedge}_Y$ is
formally smooth as an object of $\inftydgprestack$.
\end{cor}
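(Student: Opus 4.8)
The plan is to deduce this from \corref{c:estimate on cotangent formal}, whose hypotheses match the situation exactly, using the cotangent computation for smooth schemes from \secref{sss:classical smooth schemes}. First I would put $X^{\wedge}_Y$ into the shape required by that corollary. By \corref{c:formal compl formal}, $X^{\wedge}_Y$ is a formal DG scheme whose underlying reduced classical scheme is $Y$; and since $X$, being smooth locally of finite type, is locally almost of finite type, \corref{c:formal lft} shows $X^{\wedge}_Y$ is locally almost of finite type as well. Thus \corref{c:estimate on cotangent formal} reduces the problem to showing that $T^*(X^{\wedge}_Y)|_Y = T^*_y(X^{\wedge}_Y)$ is Zariski-locally pro-projective, where $y\colon Y\to X^{\wedge}_Y$ is the tautological point.

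The next step is to identify this pro-cotangent space with a pullback of $T^*X$. By \secref{sss:remarks on compl}(ii), the DG scheme $X$ and its formal completion $X^{\wedge}_Y$ have the same deformation theory; concretely, the natural map $T^*_y X\to T^*_y(X^{\wedge}_Y)$ is an isomorphism in $\on{Pro}(\QCoh(Y)^{\leq 0})$, where on the left $y$ is regarded as the map $Y\to X$. Since $X$ is a DG scheme and $Y$ is quasi-separated and quasi-compact, the isomorphism \eqref{e:pull back of tangent} (in its version for non-affine test schemes) gives $T^*_y X\simeq y^*(T^*X)$.

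Finally I would invoke \secref{sss:classical smooth schemes}: for $X$ smooth locally of finite type the object $T^*X$ lies in $\Coh(X)^{\heartsuit}$ and is Zariski-locally projective, indeed locally free. Its pullback $y^*(T^*X)$ is therefore a vector bundle on $Y$ (the derived and underived pullbacks agree, as $T^*X$ is locally free), hence Zariski-locally free and in particular Zariski-locally pro-projective. This verifies the last hypothesis of \corref{c:estimate on cotangent formal}, and the corollary follows. I do not expect a serious obstacle here: the one delicate point is that pro-projectivity is only asserted \emph{Zariski-locally}, but this is precisely the form in which \corref{c:estimate on cotangent formal} is stated, and it is exactly what sidesteps the potential non-locality of formal smoothness flagged in Remark \ref{r:formal smoothness bug}. (Alternatively, since $Y$ is quasi-compact one may replace $X$ by a quasi-compact open neighborhood of $Y$, so that $X\in\dgSch_{\on{qsep-qc}}$ is itself a DG indscheme, and then apply \propref{p:formal smoothness of formal completions} together with the formal smoothness of $X$ established in \secref{sss:classical smooth schemes}.)
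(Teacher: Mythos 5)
Your proof is correct. The paper's own proof is a one-liner resting on the same essential input: since $\wh{i}:X^{\wedge}_Y\to X$ induces an isomorphism on pro-cotangent complexes (your \secref{sss:remarks on compl}(ii) step) and $X$ is formally smooth by \secref{sss:classical smooth schemes}, condition (a) of \propref{p:formal smoothness through deformations} transfers directly from $X$ to $X^{\wedge}_Y$ at \emph{every} test point $x:S\to X^{\wedge}_Y$, with no need to reduce to the tautological point $Y\to X^{\wedge}_Y$. Your detour through \corref{c:estimate on cotangent formal} is valid but costs you the extra hypotheses of that corollary --- local almost-finite-typeness (which you correctly supply via \corref{c:formal lft}) and the Zariski-locality analysis of pro-projectivity underlying \lemref{l:locality of proj} --- none of which are needed for the direct argument, since here the pro-cotangent space at any point is an honest locally free sheaf in the heart rather than a genuine pro-object. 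Your parenthetical alternative via \propref{p:formal smoothness of formal completions}, after shrinking $X$ to a quasi-compact open neighborhood of $Y$ so that it qualifies as a DG indscheme (harmless, since $X^{\wedge}_Y$ only depends on such a neighborhood), is also correct and arguably the most economical route of all.
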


\begin{proof}
This follows from \propref{p:formal smoothness through deformations} as
$\wh{i}:X^{\wedge}_Y\to X$ induces an isomorphism on pro-cotangent complexes.
\end{proof}

Also, note that by \propref{p:class vs derived formal}, the DG indscheme $X^{\wedge}_Y$ 
is $0$-coconnective, i.e., is a left Kan extension of a classical indscheme. 

\sssec{}  \label{sss:Jacobi ft}

The following example will be needed for the proof of \thmref{t:classical vs derived ft}. 
Consider the formal DG scheme $\BA^{n,m}:=\on{Spf}\left(k[x_1,...,x_n][\![y_1,...,y_m]\!]\right)$, i.e., the
formal completion of $\BA^{n+m}$ along the subscheme $\BA^n\hookrightarrow \BA^{n+m}$
embedded along the first $n$ coordinates.

\medskip

Let $f_1,...,f_k$ be elements of $k[x_1,...,x_n][\![y_1,...,y_m]\!]$, and let
$\bar{f_1},...\bar{f_k}$ be their images under
$$k[x_1,...,x_n][\![y_1,...,y_m]\!]\twoheadrightarrow k[x_1,...,x_n].$$
Set
$$\CX:=0\underset{\BA^k}\times \BA^{n,m}\text{  and  }
X:=0\underset{\BA^k}\times \BA^n.$$

\medskip

Suppose that the Jacobi matrix of $f_1,...,f_k$ is non-degenerate when restricted to $X$.
I.e., the matrix $k\times (m+n)$-matrix $\partial_i(f_j)|_{X}$, viewed as a map 
$$\CO^{\oplus n+m}_{X}\to \CO^{\oplus k}_{X},$$
is a surjective map of vector bundles when restricted to $X$.

\medskip

From \corref{c:estimate on cotangent formal} and \corref{c:formal lft}, we obtain:

\begin{cor}  \label{c:Jacobi matrix ft}
Under the above circumstances, the DG indscheme $\CX$ is formally smooth.
\end{cor}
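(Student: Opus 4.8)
The plan is to verify the hypotheses of \corref{c:estimate on cotangent formal}, namely that $\CX$ is locally almost of finite type and that its cotangent complex at the reduced scheme is Zariski-locally pro-projective. First I would observe that $\CX$ is a formal completion of a DG indscheme (indeed $\BA^{n,m}$ is itself the formal completion $(\BA^{n+m})^\wedge_{\BA^n}$, and $\CX$ is a base change of it by the closed embedding $0\to \BA^k$), so by \corref{c:formal lft} together with the compatibility of formal completion with base change (see \secref{sss:remarks on compl}(iv)), $\CX$ is locally almost of finite type. The underlying reduced classical scheme of $\CX$ is exactly $X = 0\underset{\BA^k}\times \BA^n$ (with its reduced structure), so $T^*\CX|_X = T^*_x\CX$ is well-defined and it remains to show it is Zariski-locally pro-projective.

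Next I would compute the pro-cotangent complex $T^*\CX|_X$ explicitly using deformation theory. Since formal completion does not change deformation theory (\secref{sss:remarks on compl}(ii)), the pro-cotangent complex of $\CX$ at $x:X\to\CX$ agrees with that of the \emph{honest} (formal) fiber product $0\underset{\BA^k}\times \BA^{n,m}$, computed just as for a DG scheme. Concretely, $\CX$ is cut out inside $\BA^{n,m}$ by the equations $f_1,\dots,f_k$ in the derived sense, so there is an exact triangle
\begin{equation} \label{e:cotangent triangle Jacobi}
x^*(T^*\BA^{n,m})\to T^*\CX|_X\to \left(\CO_X^{\oplus k}\right)[1],
\end{equation}
where the map $\CO_X^{\oplus k}\to x^*(T^*\BA^{n,m})$ is given by the differentials $df_j$, i.e.\ by the transpose of the Jacobi matrix $\partial_i(f_j)|_X$. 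Here $x^*(T^*\BA^{n,m})$ is a free module of rank $n+m$ (the cotangent complex of the formal completion $\BA^{n,m}$ restricted to $X$ is the restriction of that of $\BA^{n+m}$, which is free), lying in $\on{Pro}(\Coh(X)^{\heartsuit,\on{proj}})$.

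The crucial point is the non-degeneracy hypothesis: the map $\partial_i(f_j)|_X:\CO_X^{\oplus n+m}\to \CO_X^{\oplus k}$ is a \emph{surjective} map of vector bundles on $X$. Dualizing, the map $\CO_X^{\oplus k}\to x^*(T^*\BA^{n,m})$ appearing in \eqref{e:cotangent triangle Jacobi} is the inclusion of a direct summand (a split injection of vector bundles, Zariski-locally), so its cokernel is again a (Zariski-locally) projective module in $\Coh(X)^\heartsuit$ and the connecting term contributes nothing in positive degree. Thus the triangle \eqref{e:cotangent triangle Jacobi} forces $T^*\CX|_X$ to lie, Zariski-locally on $X$, in $\on{Pro}(\QCoh(X)^{\heartsuit,\on{proj}})$, i.e.\ to be Zariski-locally pro-projective. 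I expect this linear-algebra step — checking that surjectivity of the Jacobian makes the connecting map a split injection of bundles so that the cone sits in the heart with no higher cohomology — to be the main (though not deep) obstacle, since one must be careful that the splitting is only local and that the higher-degree terms of the free cotangent complex of the formal piece $\BA^{n,m}$ genuinely vanish. Having established local pro-projectivity, \corref{c:estimate on cotangent formal} applies and yields that $\CX$ is formally smooth, completing the proof. \qed
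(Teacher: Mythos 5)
Your argument is correct and is essentially the paper's own proof: the corollary is deduced precisely by combining \corref{c:formal lft} (for the locally-almost-of-finite-type hypothesis) with \corref{c:estimate on cotangent formal} (reducing formal smoothness to local pro-projectivity of $T^*\CX|_X$). The paper leaves the cotangent-complex/Jacobian computation implicit, and your triangle \eqref{e:cotangent triangle Jacobi} together with the observation that surjectivity of $\partial_i(f_j)|_X$ dualizes to a locally split injection of bundles is exactly the intended verification.
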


\medskip

We now claim:
\begin{prop} \label{p:Jacobi classical ft}
The DG indscheme $\CX$ is $0$-coconnective.
\end{prop}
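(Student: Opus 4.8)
The goal is to show that $\CX = 0\underset{\BA^k}\times \BA^{n,m}$ is $0$-coconnective, i.e., that the canonical map from the left Kan extension of its underlying classical indscheme is an isomorphism. The overall strategy parallels the proof of \propref{p:class vs derived formal}: I would produce an \emph{explicit} presentation of $\CX$ as a filtered colimit of DG schemes, and then compare the resulting inverse system of algebras with the inverse system of its classical truncations, showing the two systems are pro-equivalent. The key geometric input is the non-degeneracy of the Jacobi matrix restricted to $X$, which should allow an Artin--Rees / Koszul-type vanishing argument in the spirit of \lemref{l:Artin-Rees alg} and \lemref{l:actual Artin-Rees}.

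\textbf{First steps.} First I would write $\BA^{n,m}$ explicitly as a colimit. By \propref{p:expl pres} applied to $\BA^{n+m}$ completed along $\BA^n$ (the ideal being generated by $y_1,\dots,y_m$), we have
$$\BA^{n,m}\simeq \underset{N}{colim}\, \Spec(B_N),$$
where $B_N = k[x_1,\dots,x_n][y_1,\dots,y_m][t_1,\dots,t_m]$ with $t_i$ in degree $-1$ and $d(t_i)=y_i^N$, and the colimit is taken in $\inftydgprestack$. Since $\CX$ is the fiber product $0\underset{\BA^k}\times \BA^{n,m}$ and fiber products commute with filtered colimits, this gives
$$\CX\simeq \underset{N}{colim}\, \Spec(C_N),\quad C_N := B_N\underset{k[z_1,\dots,z_k]}\otimes k,$$
where the map $k[z_1,\dots,z_k]\to B_N$ sends $z_j\mapsto f_j$. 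The derived tensor product introduces Koszul generators for the $f_j$, so $C_N$ is a Koszul-type complex; its classical truncation $C_N' := H^0(C_N)$ yields $^{cl}\CX\simeq \underset{N}{colim}\,\Spec(C_N')$, the left Kan extension of the classical indscheme. As in \propref{p:class vs derived formal}, it then suffices to show the two inverse systems $\{C_N\}$ and $\{C_N'\}$ are pro-isomorphic, i.e., that the map $\underset{N}{colim}\,\Spec(C_N')\to\underset{N}{colim}\,\Spec(C_N)$ is an isomorphism in $\on{PreStk}$.

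\textbf{The core estimate.} By the factorization criterion used in \lemref{l:Artin-Rees alg} (a map of connective DG algebras $C_1\to C_2$ factors through $H^0(C_1)$ iff $H^{-i}(C_1)\to H^{-i}(C_2)$ vanishes for all $i>0$), it is enough to show: for every $N$ there exists $N'\geq N$ such that the transition map $C_{N'}\to C_N$ induces the zero map on $H^{-i}$ for all $i>0$. This is where the Jacobi non-degeneracy enters. The higher cohomology of $C_N$ comes from two sources of Koszul relations---the $d(t_i)=y_i^N$ from the formal completion and the Koszul generators for the $f_j$'s---and the surjectivity of the matrix $\partial_i(f_j)|_X:\CO_X^{\oplus(n+m)}\to\CO_X^{\oplus k}$ guarantees that, after passing far enough along the $y_i^N$-tower, the two Koszul complexes interact so as to kill the higher cohomology in the transition maps, analogously to the pro-vanishing in \lemref{l:actual Artin-Rees}. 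Concretely I would reduce, as in the proof of \lemref{l:Artin-Rees alg}, to an Artin--Rees statement about $\on{Tor}$-groups over the polynomial base and use that the system of ideals generated by powers of the $y_i$ is eventually absorbed by the relations coming from the $f_j$ once the Jacobian is non-degenerate on $X$.

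\textbf{Main obstacle.} The hard part will be controlling the higher $\on{Tor}$'s of the \emph{combined} Koszul complex precisely enough to get the uniform pro-vanishing, rather than just asymptotic flatness. Unlike \propref{p:class vs derived formal}, here the completion is along $\BA^n$ (not a point) and the equations $f_j$ have nontrivial restriction $\bar f_j$ to $\BA^n$, so the Jacobian hypothesis must be leveraged to ensure that the derived fiber $0\underset{\BA^k}\times(-)$ does not introduce persistent higher cohomology as $N\to\infty$. I expect the cleanest route is to localize on $X$ where, by non-degeneracy, one can change coordinates (reorganizing the $f_j$ as part of a regular system of parameters transverse to $X$) and thereby identify $C_N$ up to the relevant truncation with a product of a smooth factor and a completion of lower-dimensional type, reducing the statement to the already-established \propref{p:class vs derived formal} for that factor. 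Verifying that this local reduction is compatible with the transition maps of the tower, and gluing the resulting estimates over an affine cover of $X$ using \lemref{l:ppties pushouts} and the Zariski-locality of the construction, is the step requiring the most care.
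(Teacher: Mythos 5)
Your proposal does not close: the ``core estimate'' --- that the Jacobian hypothesis forces the transition maps $H^{-i}(C_{N'})\to H^{-i}(C_N)$ to vanish pro-systematically --- is the entire content of the statement in your setup, and you assert it rather than prove it. There is also a problem already in setting up the tower $\{C_N\}$: the $f_j$ are power series in $k[x_1,\dots,x_n][\![y_1,\dots,y_m]\!]$, so they do not define cochain-level maps $\Spec(B_N)\to\BA^k$; you would have to choose polynomial truncations of each $f_j$ compatible with the homotopies $d(t_i)=y_i^N$ and then track these choices along the tower, which is exactly the kind of bookkeeping the argument should avoid.

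The missing idea is to reverse the order of the two operations: take the derived fiber product \emph{before} completing. Let $^{\sim}\!\BA^{n,m}:=\Spec\left(k[x_1,\dots,x_n][\![y_1,\dots,y_m]\!]\right)$ be the honest Noetherian affine scheme; there the $f_j$ are genuine functions and define a map to $\BA^k$. The non-degeneracy of the Jacobi matrix along $X$ says precisely that this map is flat (in fact smooth) on a Zariski neighborhood $U$ of $X$, so the derived fiber product $0\underset{\BA^k}\times U$ has no higher Tor's at all and is a $0$-coconnective Noetherian DG scheme --- no Koszul or Artin--Rees analysis is needed. Since formal completion is compatible with fiber products (\secref{sss:remarks on compl}(iv)), $\CX$ is the formal completion of this classical scheme along $X$, and \propref{p:class vs derived formal} finishes the proof. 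Your closing paragraph gestures at this reduction (localizing near $X$ and invoking \propref{p:class vs derived formal}), but the clean formulation is flatness of the uncompleted map, which eliminates both the gluing concerns and the pro-system comparison you were worried about.
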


\begin{proof}

Consider the \emph{scheme} $$^{\sim}\!\BA^{n,m}:=\on{Spec}\left(k[x_1,...,x_n][\![y_1,...,y_m]\!]\right)$$
and its map to $\BA^k$ given by $f_1,...,f_k$. The assumption on the Jacobi matrix implies that this
map is flat on a Zariski neighborhood $U$ of $X\subset {}^{\sim}{}\BA^{n,m}$. Therefore, the Cartesian
product \emph{taken in the category of DG schemes}
$$^{\sim}\CX\simeq 0\underset{\BA^k}\times U$$
is $0$-coconnective as a DG scheme. 

\medskip

The formal DG scheme $\CX$ is obtained from $^{\sim}\CX$ as a formal completion along $X$.
Since all the schemes involved are Noetherian, the assertion follows from \propref{p:class vs derived formal}.

\end{proof}

\sssec{}  \label{sss:elementary}

In what follows we shall refer to formal DG schemes $\CX$ of the type described in \secref{sss:Jacobi ft}
as \emph{elementary}.

\medskip

We shall say that a classical formal scheme is elementary if it is of the form $^{cl}\CX$ for $\CX$
an elementary formal DG scheme. 

\section{Classical vs. derived formal smoothness}

The focus of this section is the relation between the notions of formal smoothness
in the classical and derived contexts when $\CX$ is a DG indscheme. Namely, we would like
to know under what circumstances a formally smooth DG indscheme $\CX$ is $0$-coconnective,
i.e., arises as a left Kan extension from a classical indscheme. The reader may have observed
that this was the case in all the examples that we considered in \secref{ss:examples of formally smooth}.

\medskip

And vice versa, we would like to know when, for a classical formally smooth indscheme $\CX_{cl}$, the object 
$$\CX:={}^L\!\on{LKE}_{(\affSch)^{\on{op}}\hookrightarrow (\affdgSch)^{\on{op}}}(\CX_{cl})\in \inftydgstack$$
is a formally smooth DG indscheme. (Note that it is not clear that $\CX$ defined
as above is a DG indscheme, since the convergence condition is not a priori guaranteed.)

\medskip

Unfortunately, we do not have a general answer for this question even in the case of schemes: 
we do not even know that the DG scheme 
$$X:={}^L\!\on{LKE}_{(\affSch)^{\on{op}}\hookrightarrow (\affdgSch)^{\on{op}}}(X_{cl})$$
is smooth when $X_{cl}$ is a smooth classical scheme, except when $X_{cl}$ is locally of 
finite type. 

\ssec{The main result}

The main result of this section and the first of the two main results of this paper is a partial
answer to the above questions, under the assumption that our (DG) indschemes are locally (almost)
of finite type. 

\sssec{}

Let $\CX_{cl}$ be a classical formally smooth $\aleph_0$ indscheme. Assume that 
$\CX_{cl}$ is locally of finite type. Set 
$$\CX:={}^L\!\on{LKE}_{(\affSch)^{\on{op}}\hookrightarrow (\affdgSch)^{\on{op}}}(\CX_{cl})\in \on{PreStk}.$$

\medskip

We will prove:

\begin{thm} \label{t:classical vs derived ft}
Under the above circumstances, $\CX$ is a formally smooth DG indscheme.
\end{thm}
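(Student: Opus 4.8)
The plan is to prove the theorem by combining the characterization of DG indschemes via deformation theory (\thmref{t:char by deform}) with the cotangent-space criterion for formal smoothness (\propref{p:formal smoothness through deformations}), reducing everything to the already-understood elementary formal DG schemes of \secref{sss:elementary}. Let me think through the pieces carefully.

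First I would need to show $\CX$ is a DG indscheme at all. By \thmref{t:char by deform}, since $^{cl}\CX = \CX_{cl}$ is a classical indscheme by hypothesis, it suffices to produce an extension of $\CX$ to $^{\leq n+1}\!\inftydgprestack$ for each $n$ satisfying Conditions (A), (B), (C). The natural candidate is $\CX$ itself (as an object of $\on{PreStk}$, we can restrict to $^{\leq n+1}\!\affdgSch$), so really I must verify that $\CX = {}^L\!\on{LKE}(\CX_{cl})$ admits connective deformation theory. Convergence should hold because $^L\!\on{LKE}$ of a locally-finite-type classical indscheme is convergent — this is where the $\aleph_0$ and locally-of-finite-type hypotheses will be used. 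Conditions (A), (B), (C) concern the behavior of $\CX$ on split square-zero and square-zero extensions, i.e. on infinitesimal data, and for a left Kan extension from classical schemes these must be checked by hand. This is genuinely the delicate point: a priori $^L\!\on{LKE}$ followed by sheafification need not have good deformation theory, and I expect one must use Proposition 7.12.22 of \cite{BD} (flagged in the acknowledgments as crucial) to control the cotangent complex.

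Granting that $\CX$ is a DG indscheme with connective deformation theory, formal smoothness is then checked via \propref{p:formal smoothness through deformations}. By \propref{p:formal smoothness of formal completions}, formal smoothness of $\CX$ is equivalent to formal smoothness of all formal completions $\CX^\wedge_Y$ along reduced closed subschemes $Y \hookrightarrow {}^{cl,red}\CX$, and by \corref{c:estimate on cotangent formal} it suffices to show that $T^*\CX^\wedge_Y|_Y = T^*_y\CX$ is Zariski-locally pro-projective (using that $\CX$, hence $\CX^\wedge_Y$ by \corref{c:formal lft}, is locally almost of finite type). Since deformation theory of $\CX$ agrees with that of $\CX_{cl}$ on classical test schemes, and $\CX_{cl}$ is classically formally smooth, \propref{p:formal smoothness through deformations}(b) applied at the classical level shows $T^*_x\CX_{cl}$ is pro-projective for classical $S$; \lemref{l:when pro proj one} then identifies $T^*\CX|_X$ as an object of $\on{Pro}(\Coh(X)^{\heartsuit,\on{proj}})$, and \lemref{l:locality of proj} upgrades this to a Zariski-local statement after checking pro-coherence, which holds because $\CX$ is locally almost of finite type.

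The hard part, I expect, is establishing that $\CX = {}^L\!\on{LKE}(\CX_{cl})$ has connective deformation theory — this is precisely the step where passing from classical to derived algebraic geometry can fail, and it is not formal. The subtlety is that the sheafified left Kan extension does not manifestly commute with the formation of square-zero extensions, so one cannot simply read off Conditions (A)--(C) from the classical situation; one must reduce, using the locally-of-finite-type and $\aleph_0$ structure and the presentation of $\CX_{cl}$ as a filtered colimit of finite-type schemes, to a local model. I anticipate that the reduction produces, Zariski-locally, the \emph{elementary} formal DG schemes of \secref{sss:Jacobi ft}, whose $0$-coconnectivity is \propref{p:Jacobi classical ft} and whose formal smoothness is \corref{c:Jacobi matrix ft}; the content of \cite[Prop. 7.12.22]{BD} should be exactly what guarantees that a classically formally smooth finite-type indscheme admits such elementary local models for its infinitesimal neighborhoods. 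Once the local elementary description is in hand, both the DG-indscheme property and formal smoothness follow from the worked examples, and the global statement is assembled by the Zariski-locality established in \lemref{l:locality of proj}.
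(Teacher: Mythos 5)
Your overall strategy shares the paper's key ingredient --- the reduction to elementary local models via \cite[Proposition 7.12.22]{BD} --- but your second paragraph contains a genuine gap that assumes away the actual content of the theorem. You argue: classical formal smoothness of $\CX_{cl}$ plus \propref{p:formal smoothness through deformations}(b) shows that $T^*_x\CX$ is pro-projective for classical $S$, and then \corref{c:estimate on cotangent formal} finishes. But classical formal smoothness only controls the truncation $^{\geq -1}(T^*_x\CX)$ --- this is exactly \propref{p:estimate on cl cotangent}, which characterizes classical formal smoothness as pro-projectivity of the $[-1,0]$-truncation of the pro-cotangent space. Pro-projectivity in the sense of \lemref{l:when pro proj one} (which is what \corref{c:estimate on cotangent formal} and conditions (a)/(b) of \propref{p:formal smoothness through deformations} require) demands that the full object of $\on{Pro}(\QCoh(S)^{\leq 0})$ lie in $\on{Pro}(\QCoh(S)^{\heartsuit,\on{proj}})$, i.e., that $H^{-i}(T^*_x\CX)$ vanish for all $i\geq 1$. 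Classical formal smoothness says nothing about $H^{-i}$ for $i\geq 2$: those are probed by square-zero extensions by $\CF[i]$ with $i\geq 2$, which are genuinely derived test objects. The passage from ``pro-projective in amplitude $[-1,0]$'' to ``pro-projective on the nose'' is precisely what the theorem asserts, and it is exactly here that the locally-of-finite-type and $\aleph_0$ hypotheses do their work through the elementary local models (an elementary formal scheme is cut out by equations with nondegenerate Jacobian, so its pro-cotangent complex is concentrated in degree $0$ and locally free).

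The paper's route also differs from yours on the first step: rather than verifying Conditions (A)--(C) for ${}^L\!\on{LKE}(\CX_{cl})$ directly via \thmref{t:char by deform}, it reduces via \propref{p:formal smoothness of formal completions} to the formal completions $\CX^\wedge_Y$ with $Y$ affine, identifies ${}^L\!\on{LKE}({}^{cl}(\CX^\wedge_Y))$ with the derived formal completion using the Noetherian comparison \propref{p:class vs derived formal} (this is \lemref{l:classical vs derived on ind}), and then invokes \propref{p:descr of formal completion modified} to write the classical formal completion as a retract of a filtered colimit of elementary formal schemes; each elementary piece is $0$-coconnective and formally smooth by \propref{p:Jacobi classical ft} and \corref{c:Jacobi matrix ft}, and formal smoothness passes to retracts and filtered colimits (\lemref{l:LKE of formal completion}). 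So the elementary models are used not to establish deformation theory of the left Kan extension, as you propose, but to control the full derived pro-cotangent complex --- i.e., to close exactly the gap identified above. Your plan would become correct if you routed the formal-smoothness verification through \propref{p:descr of formal completion modified} as well, instead of trying to deduce pro-projectivity of $T^*_x\CX$ from the classical hypothesis alone.
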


This theorem gives a partial answer to the second of the two questions above. We shall presently show
that it also gives a partial answer to the first question.

\sssec{}

We have the following observation:

\begin{prop} \label{p:classical and derived, general}
If $\CX$ is a formally smooth DG indscheme such that 
$$^L\!\on{LKE}_{(\affSch)^{\on{op}}\hookrightarrow (\affdgSch)^{\on{op}}}({}^{cl}\CX)\in \inftydgstack$$
is also a formally smooth DG indscheme, then the natural map
$$^L\!\on{LKE}_{(\affSch)^{\on{op}}\hookrightarrow (\affdgSch)^{\on{op}}}({}^{cl}\CX)\to \CX$$
is an isomorphism. In particular, $\CX$ is $0$-coconnective.
\end{prop}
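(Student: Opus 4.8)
\smallskip

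Write $\CX_0:={}^L\!\on{LKE}_{(\affSch)^{\on{op}}\hookrightarrow (\affdgSch)^{\on{op}}}({}^{cl}\CX)$, so that the claim is that the canonical map $\phi:\CX_0\to\CX$ is an isomorphism. The plan is to show that $\phi$ induces an isomorphism $\CX_0(S)\to\CX(S)$ for every $S\in\affdgSch$. Both $\CX_0$ and $\CX$ are DG indschemes, hence convergent and admitting connective deformation theory (Conditions (A), (B), (C), by \corref{c:A for indsch} and its analogues). By convergence it therefore suffices to treat $S\in{}^{\leq n}\!\affdgSch$, and I would argue by induction on $n$. Since the left Kan extension is fully faithful, ${}^{cl}\CX_0\simeq{}^{cl}\CX$ and $\phi$ restricts to an isomorphism on classical truncations; this is the base case $n=0$.

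For the inductive step, assume $\phi$ induces an isomorphism on $S'$-points for all $S'\in{}^{\leq n-1}\!\affdgSch$, and let $S\in{}^{\leq n}\!\affdgSch$. Set $S_0:={}^{\leq n-1}\!S$; by \corref{c:can sq zero} the inclusion $S_0\hookrightarrow S$ is a square-zero extension by the ideal $\CI:=H^{-n}(\CO_S)[n]\in\QCoh(S_0)^\heartsuit[n]$. In the commutative square relating $\CX_0(S)\to\CX(S)$ to $\CX_0(S_0)\to\CX(S_0)$, the bottom map is an isomorphism by the inductive hypothesis, so it suffices to show that $\phi$ induces an isomorphism on the fibers of $\CX_0(S)\to\CX_0(S_0)$ and $\CX(S)\to\CX(S_0)$ over each point $x:S_0\to\CX$. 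By Conditions (A) and (C), the fiber over $x$ is built functorially out of the values of the pro-cotangent space $T^*_x\CX$ on $\CI$ and $\CI[1]$, i.e. out of the spaces $\Maps_{\on{Pro}(\QCoh(S_0))}(T^*_x\CX,\CI[j])$ for $j=0,1$ (and likewise for $\CX_0$). Now $\CI$ is the pushforward along $\iota:{}^{cl}\!S_0\to S_0$ of $H^{-n}(\CO_S)\in\QCoh({}^{cl}\!S_0)^\heartsuit$ (direct image being an equivalence on hearts), so by adjunction and the pullback-compatibility of pro-cotangent spaces (Condition (B)) one gets $\Maps(T^*_x\CX,\CI[j])\simeq\Maps_{\QCoh({}^{cl}\!S_0)}(T^*_{\,{}^{cl}\!x}\CX,\,H^{-n}(\CO_S)[n+j])$, where ${}^{cl}\!x:{}^{cl}\!S_0\to\CX$ is the induced classical point. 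Thus everything is reduced to comparing $T^*_{\,{}^{cl}\!x}\CX$ and $T^*_{\,{}^{cl}\!x}\CX_0$ over the classical affine scheme ${}^{cl}\!S_0={}^{cl}\!S$.

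The core of the argument, and the step I expect to be the main obstacle, is showing that the natural map $T^*_{\,{}^{cl}\!x}\CX_0\to T^*_{\,{}^{cl}\!x}\CX$ is an isomorphism of pro-objects. This is exactly where formal smoothness is indispensable: by \propref{p:formal smoothness through deformations} both $T^*_{\,{}^{cl}\!x}\CX$ and $T^*_{\,{}^{cl}\!x}\CX_0$ are \emph{pro-projective}, hence lie in $\on{Pro}(\QCoh({}^{cl}\!S_0)^{\heartsuit,\on{proj}})$ and, by \lemref{l:when pro proj one}(b'), are completely determined by the induced right-exact functors $\CF\mapsto\pi_0\bigl(T^*_{\,{}^{cl}\!x}(-)(\CF)\bigr)$ on $\QCoh({}^{cl}\!S_0)^\heartsuit$. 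For $\CF$ in the heart the corresponding split square-zero extension $({}^{cl}\!S_0)_\CF$ is a \emph{classical} affine scheme, so these lift-functors are computed entirely by ${}^{cl}\CX$ and ${}^{cl}\CX_0$ respectively; since ${}^{cl}\CX\simeq{}^{cl}\CX_0$, the two functors coincide and the map of pro-projective objects is an isomorphism. Feeding this back, the fibers of $\CX_0(S)\to\CX_0(S_0)$ and $\CX(S)\to\CX(S_0)$ over $x$ are identified, the induction closes, and $\phi$ is an isomorphism; in particular $\CX$ is $0$-coconnective. The delicate points to verify carefully are the precise deformation-theoretic description of the fiber in terms of $T^*_x\CX$ evaluated on the square-zero ideal, and the passage to the classical truncation via pushforward, so that the pro-projectivity reconstruction of \lemref{l:when pro proj one} applies over an honest classical scheme.
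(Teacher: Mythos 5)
Your proposal is correct and follows essentially the same route as the paper's proof: reduce via deformation theory to comparing the pro-cotangent spaces of $\CX$ and its classical left Kan extension, use formal smoothness (pro-projectivity, via \propref{p:formal smoothness through deformations} and \lemref{l:when pro proj one}) to see that these are determined by their values on $\QCoh({}^{cl}\!S)^\heartsuit$, and observe that for heart objects the split square-zero extension is a classical affine scheme, so the answer depends only on ${}^{cl}\CX$. The paper merely compresses your explicit Postnikov induction into the phrase ``by deformation theory, it suffices to show that the map of pro-cotangent spaces is an isomorphism.''
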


\begin{proof}

By assumption, both sides in 
\begin{equation} \label{e:X' and X}
\CX':={}^L\!\on{LKE}_{(\affSch)^{\on{op}}\hookrightarrow (\affdgSch)^{\on{op}}}({}^{cl}\CX)\to \CX
\end{equation}
are formally smooth DG indschemes, and the above map induces an isomorphism of the
underlying classical indschemes.

\medskip

By deformation theory, it suffices to show that for every affine DG scheme
$S$ and a map $x':S\to \CX'$, the map
$$T^*_x\CX\to T^*_{x'}\CX'$$
is an isomorphism, where $x$ is the composition of $x'$ and the map \eqref{e:X' and X}.

\medskip

Using \propref{p:formal smoothness through deformations}(a), we obtain that it suffices to check
that the map
\begin{equation} \label{e:X' and X M}
T^*_{x'}\CX'(\CF)\to T^*_{x}\CX(\CF)
\end{equation}
is an isomorphism for every $\CF\in \QCoh(S)^\heartsuit$. Since any such $\CF$ comes
as a direct image from $^{cl}\!S$, this reduces the assertion to the case 
when $S$ is classical. 

\medskip

We have
$$T^*_{x'}\CX'(\CF)\simeq \on{Maps}(S_{\CF},\CX')\underset{\on{Maps}(S,\CX')}\times x',$$
and similarly for $T^*_{x}\CX(\CF)$. When $S$ is classical and $\CF\in \QCoh(S)^\heartsuit$,
the DG scheme $S_\CF$ is also classical. So, both sides of \eqref{e:X' and X M} only
depend on the restrictions of $\CX|_{\affSch}$ and $\CX'|_{\affSch}$, respectively, and,
hence are isomorphic by construction.

\end{proof}

\sssec{}

Combining \propref{p:classical and derived, general} and \thmref{t:classical vs derived ft}, we obtain:

\begin{thm}  \label{t:derived vs classical ft}
Let $\CX$ be a formally smooth DG indscheme, such that $^{cl}\CX:=\CX|_{\affSch}$ is locally of finite type
and $\aleph_0$. Then $\CX$ is $0$-coconnective, i.e., the natural map
$$^L\!\on{LKE}_{(\affSch)^{\on{op}}\hookrightarrow (\affdgSch)^{\on{op}}}({}^{cl}\CX)\to \CX$$
is an isomorphism.  Moreover, $\CX$ is locally almost of finite type and $\aleph_0$. 
\end{thm}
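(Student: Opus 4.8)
The plan is to derive the two assertions in turn: first the $0$-coconnectivity, which is a formal consequence of the two preceding results, and then the locally-almost-of-finite-type and weakly $\aleph_0$ properties, which require revisiting the characterizations via pro-cotangent spaces.

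For the first assertion, set $\CX_{cl}:={}^{cl}\CX$. By hypothesis $\CX$ is a formally smooth DG indscheme, so by condition (1) of \defnref{d:formally smooth} the classical prestack $\CX_{cl}$ is formally smooth in the classical sense; it is moreover an $\aleph_0$ classical indscheme locally of finite type by assumption. Thus the hypotheses of \thmref{t:classical vs derived ft} are met, and we conclude that $^L\!\on{LKE}_{(\affSch)^{\on{op}}\hookrightarrow (\affdgSch)^{\on{op}}}(\CX_{cl})$ is itself a formally smooth DG indscheme. Now both $\CX$ and $^L\!\on{LKE}_{(\affSch)^{\on{op}}\hookrightarrow (\affdgSch)^{\on{op}}}({}^{cl}\CX)$ are formally smooth DG indschemes with the same underlying classical indscheme, so \propref{p:classical and derived, general} applies verbatim and shows that the canonical map is an isomorphism. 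Hence $\CX$ is $0$-coconnective.

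For the second assertion, I would use $0$-coconnectivity to reduce all deformation-theoretic computations to classical data. Concretely, to show $\CX$ is locally almost of finite type I would verify condition (b)(iii) of \propref{p:char finite type new}: for an affine $Z\in\Sch_{\on{ft}}$ mapping to $^{cl}\CX$, the pro-cotangent space $^{\geq -n-1}(T^*_x\CX_{n+1})$ lies in $\on{Pro}(\Coh(Z)^{\geq -n-1,\leq 0})$. Formal smoothness gives, via \propref{p:formal smoothness through deformations}, that this object is pro-projective; and since $\CX$ is $0$-coconnective, its cotangent spaces at classical finite-type points depend only on $\CX|_{\affSch}={}^{cl}\CX$, exactly as in the argument proving \propref{p:classical and derived, general}. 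Then the fact that $^{cl}\CX$ is locally of finite type translates, through \lemref{l:char coh} and \lemref{l:coh via restr}, into the statement that $T^*_x\CX_{n+1}$ commutes with filtered colimits, i.e.\ is pro-coherent, giving the desired membership in $\on{Pro}(\Coh)$.

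Finally, for weak $\aleph_0$-ness I would apply \propref{p:crit for aleph 0} to each $^{\leq n}\CX$: condition (a) holds because $^{cl}\CX$ is $\aleph_0$, and it remains to check condition (b), namely that the pro-cotangent spaces $^{\geq -n-1}(T^*_x\CX_{n+1})$ are $\aleph_0$. This is the main obstacle. By the previous paragraph these objects are simultaneously pro-projective and pro-coherent over a Noetherian base $Z\in\Sch_{\on{ft}}$, and the point is to re-index such a pro-object over the poset $\BN$. I would obtain this from the local structure established in the proof of \thmref{t:classical vs derived ft}, where $\CX$ is modeled Zariski-locally on elementary formal DG schemes (\secref{sss:elementary}): the pro-cotangent spaces of the basic completions $(\BA^1)^\wedge_{\{0\}}$ are manifestly $\aleph_0$ by the $\BN$-indexed presentation of \propref{p:formal compl of A1}, and this $\BN$-indexing is inherited by the pro-projective, pro-coherent pro-cotangent spaces of $\CX$. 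Granting this, \propref{p:crit for aleph 0} gives that $^{\leq n}\CX$ is $\aleph_0$ for every $n$, i.e.\ $\CX$ is weakly $\aleph_0$.
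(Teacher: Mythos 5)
Your treatment of the first assertion is exactly the paper's: combine \thmref{t:classical vs derived ft} with \propref{p:classical and derived, general}. For the remaining two assertions, however, you miss the observation that makes them nearly immediate, namely that the $0$-coconnectivity you have just established lets you lift the presentation of $^{cl}\CX$ to the derived level. Writing $^{cl}\CX\simeq \underset{\alpha}{colim}\, X_\alpha$ with $X_\alpha$ classical schemes closed in $^{cl}\CX$ -- hence of finite type by \lemref{l:classical lft}, and with the index category equivalent to $\BN$ by the $\aleph_0$ hypothesis -- the isomorphism $\CX\simeq {}^L\!\on{LKE}_{(\affSch)^{\on{op}}\hookrightarrow (\affdgSch)^{\on{op}}}({}^{cl}\CX)$ yields $\CX\simeq \underset{\alpha}{colim}\, X_\alpha$ with the $X_\alpha$ now regarded as objects of $\dgSch_{\on{aft}}$. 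This at once places $\CX$ in $\inftydgprestack_{\on{laft}}$, and exhibits each $^{\leq n}\CX$ as an $\BN$-indexed colimit of closed embeddings, whence weak $\aleph_0$-ness follows (either directly, or by feeding the $\BN$-indexed presentation into \eqref{e:expl cotangent indsch} and applying \propref{p:crit for aleph 0}, which is how the paper phrases it). Measured against this, your argument for ``locally almost of finite type'' via condition (b)(iii) of \propref{p:char finite type new} is workable -- pro-coherence can indeed be tested on $\QCoh(Z)^\heartsuit$, where split square-zero extensions are classical and the finite-type hypothesis on $^{cl}\CX$ applies -- but it is a long detour around a one-line argument.

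The genuine gap is in your treatment of weak $\aleph_0$-ness. You correctly sense that pro-projectivity plus pro-coherence of the pro-cotangent spaces does not by itself force them to be $\aleph_0$ (an arbitrary filtered pro-object of projective coherent sheaves need not be re-indexable by $\BN$), but the step you then leave as ``granting this'' -- that the $\BN$-indexing visible in the elementary local models is ``inherited'' by the pro-cotangent spaces of $\CX$ -- is precisely the assertion requiring proof, and your proposal gives no mechanism for it. The route through \propref{p:descr of formal completion modified} is also structurally awkward here: that proposition already consumes the $\aleph_0$ hypothesis on the classical indscheme, produces only a \emph{retract} of a colimit of elementary pieces (so one would additionally need that retracts of $\BN$-indexed pro-objects are $\aleph_0$), and applies only after passing to formal completions along affine reduced subschemes, so a gluing step via \lemref{l:t on Pro} would still be needed. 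All of this evaporates once you notice that the $\BN$-indexed presentation transfers from $^{cl}\CX$ to $\CX$ itself by $0$-coconnectivity, after which \eqref{e:expl cotangent indsch} exhibits each pro-cotangent space as an $\BN$-indexed limit of honest quasi-coherent sheaves.
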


\begin{proof}
The first assertion is immediate. 

\medskip

Writing $^{cl}\CX$ as a colimit in $^{cl}\!\on{PreStk}$ of $X_\alpha$, 
with $X_\alpha$ being classical schemes closed in $^{cl}\CX$ and hence of finite type, we obtain that
$$\CX\simeq \underset{\alpha}{colim}\, X_\alpha,$$
where the colimit is taken in $\on{PreStk}$, and $X_\alpha$ are now understood as objects of
$\dgSch_{\on{aft}}$. Hence, $\CX\in \on{PreStk}_{\on{laft}}$. 

\medskip

The fact that $\CX$ is $\aleph_0$ also follows.
\end{proof} 

Thus, we obtain:

\begin{cor}  \label{c:main cor}
There exists an equivalence of categories between the category of classical formally smooth
$\aleph_0$ indschemes locally of finite type and that of formally smooth $\aleph_0$ DG 
indschemes locally almost of finite type.  
\end{cor}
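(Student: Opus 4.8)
The plan is to obtain this equivalence as a restriction of the fully faithful functor already constructed in \secref{ss:gen DG indschemes}. Recall that the functor \eqref{e:LKE n to gen}, in the case $n=0$, is given by $\CX_{cl}\mapsto L(\CX_{cl})$ with $L:={}^L\!\on{LKE}_{(\affSch)^{\on{op}}\hookrightarrow (\affdgSch)^{\on{op}}}$, and that it is fully faithful with inverse the restriction functor $\CX\mapsto {}^{cl}\CX$ on its essential image. Since both sides of the statement are full subcategories of $^{cl}\!\dgindSch$ and $\dgindSch$ respectively, it suffices to check that $L$ carries the first full subcategory onto the second; full faithfulness is then inherited automatically, and the two composites are identified with the respective identity functors by the unit and counit isomorphisms of the adjunction. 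All of the substantive content has already been proved in \thmref{t:classical vs derived ft} and \thmref{t:derived vs classical ft}, so the remaining task is purely a matter of matching the finiteness and countability hypotheses on the two sides.

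First I would verify that $L$ sends the source category into the target. Let $\CX_{cl}$ be a classical formally smooth $\aleph_0$ indscheme locally of finite type, and set $\CX:=L(\CX_{cl})$. By \thmref{t:classical vs derived ft}, $\CX$ is a formally smooth DG indscheme, and by full faithfulness of $L$ its classical truncation $^{cl}\CX$ is canonically isomorphic to $\CX_{cl}$; in particular $^{cl}\CX$ is $\aleph_0$ and locally of finite type. Then \thmref{t:derived vs classical ft} applies to $\CX$ and shows that $\CX$ is locally almost of finite type and weakly $\aleph_0$. Hence $\CX$ belongs to the target category, as desired.

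For the reverse inclusion I would show that every object of the target category lies in the essential image of $L$. Let $\CX$ be a formally smooth weakly $\aleph_0$ DG indscheme locally almost of finite type. Its classical truncation $^{cl}\CX$ is a classical indscheme; it is formally smooth as a classical prestack by condition (1) of Definition~\ref{d:formally smooth}, it is $\aleph_0$ since $\CX$ is weakly $\aleph_0$ (taking the $n=0$ case in \secref{sss:aleph 0}), and it is locally of finite type since $\CX$ is locally almost of finite type (again taking $n=0$). Thus $^{cl}\CX$ lies in the source category. Finally, \thmref{t:derived vs classical ft} guarantees that the canonical map $L({}^{cl}\CX)\to \CX$ is an isomorphism, so $\CX\simeq L({}^{cl}\CX)$ is in the essential image. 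Combining the two inclusions shows that $L$ restricts to an equivalence between the two full subcategories, which is the assertion of the corollary. The only point requiring attention is the bookkeeping of the hypotheses (weak versus strict $\aleph_0$, and laft versus lft), so I do not anticipate a genuine obstacle here; one could alternatively phrase the reverse direction through \propref{p:classical and derived, general}, but invoking \thmref{t:derived vs classical ft} directly is shortest.
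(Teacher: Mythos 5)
Your proposal is correct and is exactly how the paper intends the corollary to be read: it is the formal combination of \thmref{t:classical vs derived ft} (to land in the target) and \thmref{t:derived vs classical ft} (for essential surjectivity and the identification $L({}^{cl}\CX)\simeq\CX$), with the $n=0$ cases of the definitions of ``weakly $\aleph_0$'' and ``laft'' supplying the hypotheses on $^{cl}\CX$. The only cosmetic point is that the functor \eqref{e:LKE n to gen} is a priori the \emph{convergent completion} of $^L\!\on{LKE}$, but \thmref{t:classical vs derived ft} shows the two agree on the source category, so your identification is harmless.
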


\sssec{}

Prior to proving \thmref{t:classical vs derived ft}, let us see some of its corollaries in concrete geometric
situations.

\ssec{Loop spaces}  \label{sss:loops setting}

\sssec{} Let $Z$ be an object of $\inftydgprestack$. We define the objects
$Z[t]/t^k$, $Z\qqart$ and $Z\ppart$ of $\inftydgprestack$ as follows: for $S=\Spec(A)\in \affdgSch$,
$$\Maps(S,Z[t]/t^k):=\Maps(\Spec(A[t]/t^k),Z),\,\, 
\Maps(S,Z\qqart):=\Maps(\Spec(A\qqart),Z)$$
and 
$$\Maps(S,Z\ppart):=\Maps(\Spec(A\ppart),Z).$$

Note that by definition,
$$Z\qqart\simeq \underset{k}{lim}\, Z[t]/t^k,$$
as objects of $\inftydgprestack$.

\begin{lem}  \label{l:loops formally smooth}
Assume that $Z$ is formally smooth as an object of $\inftydgprestack$. Then so are
$Z[t]/t^k$, $Z\qqart$ and $Z\ppart$. 
\end{lem}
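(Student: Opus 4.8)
I want to reduce the formal smoothness of $Z[t]/t^k$, $Z\qqart$ and $Z\ppart$ to that of $Z$ by exploiting the adjunction built into their very definitions. The key observation is that for each of these constructions, testing formal smoothness against a DG affine scheme $S = \Spec(A)$ amounts to testing $Z$ against a ``base-changed'' affine scheme: a map $S \to Z[t]/t^k$ is the same as a map $\Spec(A[t]/t^k) \to Z$, and similarly with $A\qqart$ and $A\ppart$ in place of $A[t]/t^k$. So the strategy is to check the two conditions of Definition~\ref{d:formally smooth} by transporting them through these substitutions on the test algebra.

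\textbf{First I would handle $Z[t]/t^k$.} For condition (1), given a nilpotent closed embedding of classical affine schemes $S \hookrightarrow S'$ with $S = \Spec(A)$, $S' = \Spec(A')$, I need surjectivity of $\pi_0(\CX(S')) \to \pi_0(\CX(S))$ for $\CX = Z[t]/t^k$. This unwinds to the map $\pi_0(\Maps(\Spec(A'[t]/t^k), Z)) \to \pi_0(\Maps(\Spec(A[t]/t^k), Z))$, so I must verify that $A[t]/t^k \to A'[t]/t^k$ is again a nilpotent closed embedding of classical schemes. Since the ideal of $A$ in $A'$ is nilpotent, the ideal of $A[t]/t^k$ in $A'[t]/t^k$ is its base change and is likewise nilpotent, and the map remains a closed embedding; then formal smoothness of $Z$ gives the surjectivity. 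For condition (2), I need that $\CX(S) \to \CX({}^{\leq n}S)$ induces an isomorphism on $\pi_n$; this translates to comparing $Z$ evaluated on $\Spec(A[t]/t^k)$ and $\Spec(({}^{\leq n}A)[t]/t^k)$, and the point is that $A \mapsto A[t]/t^k$ is exact enough to commute with the truncation $\tau^{\leq n}$ in the relevant range, so that the truncation of $A[t]/t^k$ is (functorially) the $[t]/t^k$ of the truncation of $A$. Applying condition (2) for $Z$ then finishes this case.

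\textbf{Next, $Z\qqart$ and $Z\ppart$.} For $Z\qqart$ I would first use the displayed identity $Z\qqart \simeq \underset{k}{\lim}\, Z[t]/t^k$ in $\inftydgprestack$, so that $\Maps(S, Z\qqart) \simeq \underset{k}{\lim}\, \Maps(S, Z[t]/t^k)$. Formal smoothness is not automatically preserved by limits, so rather than quote the limit blindly I would argue directly on the test algebras: a map out of $S$ corresponds to a map out of $\Spec(A\qqart)$, and $A \mapsto A\qqart$ again sends nilpotent closed embeddings to nilpotent closed embeddings (the ideal base-changes to a nilpotent ideal in $A'\qqart$) and is compatible with truncations in each bounded range. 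The case $Z\ppart$ is identical, replacing $A\qqart$ by $A\ppart = A\qqart[t^{-1}]$; the formation of the Laurent series ring is a flat (indeed filtered-colimit) modification of $A\qqart$ that preserves the nilpotence of the ideal and commutes with the bounded truncations used in condition (2).

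\textbf{The main obstacle.} The delicate point is condition (2), namely the interaction of the operations $A \mapsto A[t]/t^k$, $A \mapsto A\qqart$, $A \mapsto A\ppart$ with the truncation functor $\tau^{\leq n}$. For $A[t]/t^k$ this is harmless since it is a finite free construction, but for $A\qqart$ and especially $A\ppart$ one is taking an infinite product (respectively a colimit of such) and must check that $H^{-n}$ and the comparison of $\pi_n$'s are computed correctly after truncation --- i.e.\ that $\tau^{\leq n}(A\qqart)$ agrees, in the range that matters for $\pi_n$ of the mapping space, with $(\tau^{\leq n}A)\qqart$ in a way compatible with convergence. I expect this to be the step requiring genuine care; once it is settled, condition (1) and the overall reduction to formal smoothness of $Z$ are routine applications of the definitions and \lemref{l:formal smoothness via homotopy groups}.
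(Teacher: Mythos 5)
Your proposal is correct and follows essentially the same route as the paper: the proof there consists precisely of the two facts you isolate, namely that $\tau^{\leq n}(A[t]/t^k)\to (\tau^{\leq n}A)[t]/t^k$, $\tau^{\leq n}(A\qqart)\to (\tau^{\leq n}A)\qqart$ and $\tau^{\leq n}(A\ppart)\to (\tau^{\leq n}A)\ppart$ are isomorphisms (handling condition (2)), and that a surjection of classical algebras with nilpotent kernel stays such after applying $(-)[t]/t^k$, $(-)\qqart$, $(-)\ppart$ (handling condition (1)). The truncation compatibility you flag as the delicate point is exactly what the paper asserts without further comment; it holds because cohomology commutes with the relevant products (over a field) and with the filtered colimit defining $A\ppart$.
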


\begin{proof}
This is immediate from the fact that for a DG algebra $A$, the maps
$$\tau^{\leq n}(A[t]/t^k)\to (\tau^{\leq n}(A))[t]/t^k,\,\, \tau^{\leq n}(A\qqart)\to (\tau^{\leq n}A)\qqart \text{ and }
\tau^{\leq n}(A\ppart)\to (\tau^{\leq n}A)\ppart$$
are isomorphisms,
and that for a surjection of classical algebras $A_1\twoheadrightarrow A_2$ with a nilpotent kernel, the corresponding maps
$$A_1[t]/t^k\to A_2[t]/t^k,\,\, A_1\qqart\to A_2\qqart \text{ and } A_1\ppart\to A_2\ppart$$
have the same property. 
\end{proof}

\sssec{}

From now on we are going to consider the case when $Z\in \dgSch_{\on{aft}}$. We have:

\begin{prop} \label{p:repr of loops}
Under the above circumstances, we have:

\smallskip

\noindent{\em(a)} $Z[t]/t^k\in \dgSch_{\on{aft}}$, and is affine if $Z$ is affine.

\smallskip

\noindent{\em(b)} $Z\qqart\in \dgSch$, and it is affine if $Z$ is affine.

\smallskip

\noindent{\em(c)} If $Z$ is affine, then $Z\ppart$ is a DG indscheme. 

\end{prop}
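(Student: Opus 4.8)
The plan is to prove the three parts of \propref{p:repr of loops} in increasing order of difficulty, with the main interest being part (c).

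For part (a), I would argue that $Z[t]/t^k$ is built up from $Z$ by finitely many square-zero type extensions along the filtration of $k[t]/t^k$ by powers of $t$. More precisely, for $S = \Spec(A)$, a map $\Spec(A[t]/t^k) \to Z$ should be analyzed via the tower
$$A \to A[t]/t^2 \to A[t]/t^3 \to \dots \to A[t]/t^k,$$
where each step $A[t]/t^{j} \to A[t]/t^{j+1}$ is a square-zero extension by the ideal $(t^j) \cong A$ (placed in degree $0$). When $Z$ is affine, say $Z = \Spec(B)$ with $B$ almost of finite type, one can show directly that the functor $A \mapsto \Maps(\Spec(A[t]/t^k), Z)$ is corepresented by a connective DG algebra almost of finite type: concretely, if $B = k[x_1,\dots,x_N]/(\text{relations})$ one writes $x_i = \sum_{j<k} x_{i,j} t^j$ and imposes the relations modulo $t^k$, which produces finitely many generators and relations. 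For general $Z\in \dgSch_{\on{aft}}$ one uses an affine Zariski cover and the fact that passing to $A[t]/t^k$ commutes with the relevant gluing, which follows because $\Spec(A[t]/t^k) \to \Spec(A)$ is a homeomorphism on underlying spaces (a nilpotent thickening), so open subschemes of $Z$ pull back to an open cover of $Z[t]/t^k$.

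For part (b), the key observation is the identity $Z\qqart \simeq \underset{k}{\lim}\, Z[t]/t^k$ in $\inftydgprestack$, already recorded just before the statement. Since each $Z[t]/t^k$ is a DG scheme by part (a), affine when $Z$ is, and since the transition maps $Z[t]/t^{k+1} \to Z[t]/t^k$ are affine closed nil-immersions (they are surjective with nilpotent kernel at the level of the representing algebras), the inverse limit is again a DG scheme; in the affine case it is $\Spec$ of the colimit $\underset{k}{\colim}\, B_k$ of the corepresenting algebras, which is precisely the algebra corepresenting maps out of $A\qqart$. I would verify convergence and that the limit is representable by appealing to the affine case plus Zariski descent.

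Part (c) is where the real content lies, and I expect it to be the main obstacle. The strategy is to realize $Z\ppart$ as a DG indscheme by exhibiting it as a colimit of closed embeddings of objects of the form $t^{-n}\cdot Z\qqart$, i.e. by writing, at least heuristically,
$$Z\ppart \simeq \underset{n}{\colim}\, \left(t^{-n} Z\qqart\right),$$
where each term is isomorphic to $Z\qqart$ (an affine DG scheme by part (b)) and the transition maps are closed embeddings coming from the inclusions $t^{-n}A\qqart \hookrightarrow t^{-n-1}A\qqart$. The first hard point is to make sense of the terms $t^{-n}Z\qqart$ as honest DG schemes and the maps between them as closed embeddings; here I would lean on the affine structure of $Z$ and the lattice description of $A\ppart$ as $\underset{n}{\colim}\, t^{-n}A\qqart$, checking that the induced maps on representing objects are closed embeddings in the sense of the Definition in \secref{sss:DG schemes} (a condition on underlying classical schemes). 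The second, more delicate point is convergence: to conclude that $Z\ppart$ is a DG indscheme one must verify both conditions in the Definition of \secref{ss:gen DG indschemes}, namely that $Z\ppart$ is convergent and that each $^{\leq n}(Z\ppart)$ is a $\nDG$ indscheme. Convergence should follow from the compatibility of truncations with $A \mapsto A\ppart$ used in \lemref{l:loops formally smooth}, together with the fact that the colimit presentation interacts well with truncation; alternatively one may replace the naive colimit by its convergent completion and invoke \lemref{l:convergent completion}. I would therefore structure the proof of (c) so that the filtered colimit of closed embeddings is formed at the truncated level, apply \lemref{l:convergent completion} to land inside $\dgindSch$, and then identify the result with $Z\ppart$ by comparing mapping spaces out of affine DG schemes, using part (b) to control each term.
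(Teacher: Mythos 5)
Your outline for parts (a) and (b) is workable, but it contains one factual slip and part (c) has a genuine gap. The slip: the transition maps $Z[t]/t^{k+1}\to Z[t]/t^k$ are \emph{not} closed nil-immersions and the maps of corepresenting algebras $B_k\to B_{k+1}$ are injections, not surjections. For $Z=\BA^1$ one has $Z[t]/t^k\simeq \BA^k$ (coordinates the coefficients of $x=\sum_{j<k}x_jt^j$), and $Z[t]/t^{k+1}\to Z[t]/t^k$ is the coordinate projection $\BA^{k+1}\to \BA^k$ --- an affine smooth surjection. Your conclusion for (b) survives only because affineness of the transition maps is all that is needed to conclude that $\underset{k}{lim}\, Z[t]/t^k$ is (relatively) affine, namely $\Spec(\underset{k}{colim}\, B_k)$ in the affine case; but the justification as written would not compile into a proof.

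The real problem is part (c). The proposed presentation $Z\ppart\simeq \underset{n}{colim}\, t^{-n}Z\qqart$ ``with each term isomorphic to $Z\qqart$'' only makes sense when $Z$ is a vector space: multiplication by $t^{-n}$ is an operation on $\BA^N\ppart$, not on $Z\ppart$ for a general affine $Z$ of finite type (try $Z=\BG_m$ or any non-linear hypersurface --- the locus of maps $\Spec(A\ppart)\to Z$ whose coordinates have poles of order $\leq n$ is neither canonically defined without choosing generators, nor isomorphic to $Z\qqart$). To make a lattice-type filtration work you must choose a closed embedding $Z\hookrightarrow \BA^N$ and define the terms as derived fiber products $Z\ppart\underset{\BA^N\ppart}\times t^{-n}\BA^N\qqart$, and then prove these are DG schemes with closed embeddings between them --- which in effect forces you onto the paper's route. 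The paper argues differently and more uniformly for all three parts: since $Z[t]/t^k$, $Z\qqart$ and $Z\ppart$ are convergent, it suffices to work over $^{\leq n}\!\affdgSch$; there, every object of $^{\leq n}\!\affdgSch_{\on{ft}}$ is a finite limit (a truncated totalization) of affine spaces, the functors $Z\mapsto Z[t]/t^k$, $Z\qqart$, $Z\ppart$ commute with limits, and the target categories are stable under the relevant finite limits; this reduces everything to $Z=\BA^N$, where all three assertions are manifest (and where your lattice picture is literally correct). So the missing idea in your part (c) is precisely this reduction to the linear case; without it, the step ``make sense of $t^{-n}Z\qqart$'' does not go through.
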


\begin{proof}

For all three statements, it is enough to assume that $Z$ is affine. Note that 
$Z[t]/t^k$, $Z\qqart$ and $Z\ppart$, considered as objects of $\inftydgprestack$ are convergent. 
Hence, it is sufficient to show that 
$$^{\leq n}(Z[t]/t^k):=Z[t]/t^k|_{^{\leq n}\!\affdgSch},\,\, ^{\leq n}(Z\qqart):=Z\qqart|_{^{\leq n}\!\affdgSch}  \text{ and }$$
$$^{\leq n}(Z\ppart):=Z\ppart|_{^{\leq n}\!\affdgSch}$$
are representable by objects from $^{\leq n}\!\affdgSch$ (for the first two) 
and $^{\leq n}\!\dgindSch$, respectively. Note that the above objects only depend
on the truncation $^{\leq n}\!Z$. The assertions of the proposition result from combining the following observations: 

\medskip

\noindent(i) The assignments $Z\mapsto Z[t]/t^k$, $Z\mapsto Z\qqart$ and $Z\mapsto Z\ppart$ commute with limits.

\smallskip

\noindent(ii) Every object of $^{\leq n}\!\affdgSch_{\on{ft}}$ can be obtained as the
totalization of a truncated cosimplicial object whose terms are 
isomorphic to affine spaces $\BA^n$. 

\smallskip

\noindent(iii) The subcategories $$^{\leq n}\!\affdgSch_{\on{ft}}\subset {}^{\leq n}\!\affdgSch\subset {}{}^{\leq n}\!\inftydgprestack \text{ and }
^{\leq n}\!\dgindSch\subset {}^{\leq n}\!\inftydgprestack$$ are stable
under finite products.

\smallskip

\noindent(iv) For $Z=\BA^n$, both assertions of the proposition are manifest.

\end{proof}

\sssec{}

Suppose now that $Z$ is a classical scheme which is smooth over $k$ (and in particular,
locally of finite type). We have:

\begin{prop} \label{p:arc classical}
The DG schemes $Z[t]/t^k$ and $Z\qqart$ are $0$-coconnective.
\end{prop}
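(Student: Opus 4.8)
The question of $0$-coconnectivity is Zariski-local, since being classical may be checked on an affine open cover. Moreover, for an open $U\subset Z$ one has $U[t]/t^k\simeq U\underset{Z}{\times}Z[t]/t^k$ and $U\qqart\simeq U\underset{Z}{\times}Z\qqart$, because $\Spec(A[t]/t^k)$ and $\Spec(A\qqart)$ have the same underlying topological space as $\Spec(A)$, so a map out of them factors through $U$ exactly when the underlying map $\Spec(A)\to Z$ does. Thus I reduce to the case where $Z$ is affine and smooth. In that case, as in \secref{sss:classical smooth schemes}, Zariski-locally $Z$ fits into a Cartesian square as in \eqref{e:smooth scheme} (with $Z$ in place of $X$) with $f\colon\BA^n\to\BA^m$ smooth, the fiber product being taken in classical schemes; since $f$ is flat, this square is also Cartesian in $\dgSch$.

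I first treat $Z[t]/t^k$. Using that the functor $Z\mapsto Z[t]/t^k$ commutes with limits (observation (i) in the proof of \propref{p:repr of loops}), together with the elementary computations $(\BA^n)[t]/t^k\simeq\BA^{nk}$ and $(\on{pt})[t]/t^k\simeq\on{pt}$, I obtain an identification in $\dgSch$
$$Z[t]/t^k\simeq\on{pt}\underset{\BA^{mk}}{\times}\BA^{nk},$$
where $\BA^{nk}\to\BA^{mk}$ is the jet-scheme map $J_k(f):=f[t]/t^k$ and $\on{pt}\to\BA^{mk}$ is the origin. The crux is to show that this derived fiber product is $0$-coconnective, for which it suffices that $J_k(f)$ be flat, so that the derived fiber product coincides with its classical truncation. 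This is a purely classical statement about jet schemes of the smooth morphism $f$: smoothness is étale-local on source and target, jet schemes are compatible with étale base change (so that $J_k$ of an étale map is again étale), and the jet-scheme map of a projection $\BA^{a+b}\to\BA^a$ is again a projection $\BA^{(a+b)k}\to\BA^{ak}$; composing these factorizations shows $J_k(f)$ is smooth, hence flat. Consequently the derived fiber product has no higher cohomology and $Z[t]/t^k$ is classical, i.e. $0$-coconnective. I expect this flatness of jet-scheme maps — the one place where the smoothness of $Z$ is genuinely used — to be the main obstacle.

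For $Z\qqart$, I use $Z\qqart\simeq\underset{k}{lim}\,Z[t]/t^k$ in $\dgSch$. Writing $Z[t]/t^k=\Spec(C_k)$, with each $C_k$ now known to be a classical (discrete) algebra by the previous step, the limit is computed as $\Spec\left(\underset{k}{colim}\,C_k\right)$, the colimit taken in connective commutative DG algebras. Since filtered colimits are exact, $\underset{k}{colim}\,C_k$ is again discrete, whence $Z\qqart$ is $0$-coconnective. This last argument also sidesteps any flatness question for the non-finite-type arc map $(\BA^n)\qqart\to(\BA^m)\qqart$, reducing everything to the finite-level statement proved above.
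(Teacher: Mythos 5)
Your argument is correct, but it takes a genuinely different route from the paper's in both halves. For $Z[t]/t^k$, the paper never touches the jet-scheme map: it invokes \lemref{l:loops formally smooth} to see that $^{cl}(Z[t]/t^k)$ is classically formally smooth, combines this with \propref{p:repr of loops}(a) (local finite type) to conclude that $^{cl}(Z[t]/t^k)$ is a \emph{smooth} classical scheme, and then applies \propref{p:classical and derived, general} together with \secref{sss:classical smooth schemes}. Your direct proof that $f[t]/t^k$ is smooth (hence flat) for $f$ a smooth map of affine spaces is a legitimate substitute; in fact the paper needs exactly this flatness statement --- asserted there without proof --- in its own treatment of $Z\qqart$, so your \'etale-local factorization supplies a justification the paper leaves implicit. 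For $Z\qqart$, the paper base-changes along $f\qqart:\BA^n\qqart\to \BA^m\qqart$ and argues that $f\qqart$ is flat as a limit of the flat maps $f[t]/t^k$; you instead pass the limit $\underset{k}{lim}\, Z[t]/t^k$ through $\Spec$ and use that a filtered colimit of discrete connective algebras is discrete. Your version is arguably cleaner, since it avoids having to justify that the limit of a tower of flat maps of affine schemes is again flat.

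One small repair is needed in your reduction step: it is not true that $\Spec(A\qqart)$ has the same underlying topological space as $\Spec(A)$ (already $\Spec(k\qqart)$ has a generic point not lying over $\Spec(k)$). The statement you actually use --- that a map $\Spec(A\qqart)\to Z$ factors through an open $U\subset Z$ as soon as its restriction along $\Spec(A)\hookrightarrow \Spec(A\qqart)$ does --- is nonetheless true: since $1-tv$ is invertible in $A\qqart$ for every $v$, the element $t$ lies in the Jacobson radical, so every closed point of $\Spec(A\qqart)$ lies in $V(t)\simeq \Spec(A)$, and an open subset of an affine spectrum containing all closed points is the whole space. With this substitute justification your reduction to affine $Z$, and hence the whole argument, goes through.
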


\begin{proof}

To prove that $Z[t]/t^k$ is $0$-coconnective, by \propref{p:classical and derived, general} and \secref{sss:classical smooth schemes},
it is sufficient to show that the classical scheme 
$$^{cl}(Z[t]/t^k):=Z[t]/t^k|_{\affSch}$$
is smooth. By \lemref{l:loops formally smooth}, $^{cl}(Z[t]/t^k)$ is formally smooth as a classical scheme,
which implies that it is smooth, since $^{cl}(Z[t]/t^k)$ is locally of finite type by \propref{p:repr of loops}(a).

\medskip

To treat the case of $Z\qqart$, we will have to go back to the proof of \propref{p:repr of loops}. We can assume
that $Z$ is affine and that it fits into a Cartesian square \eqref{e:smooth scheme}. Hence,
we have a Cartesian square
$$
\CD
Z\qqart @>>>  \BA^n\qqart  \\
@VVV   @VV{f\qqart}V  \\
0  @>>>  \BA^m\qqart.
\endCD
$$
Since the affine schemes $\BA^n\qqart$ and $\BA^m\qqart$ are $0$-coconnective, to show that $Z\qqart$ is also
$0$-coconnective, it suffices to show that the map $f\qqart$ is flat. The latter is the limit of the maps
$f[t]/t^k:\BA^n[t]/t^k\to \BA^m[t]/t^k$, and smoothness of $f$ implies that each of these maps is flat. Hence,
$f[t]/t^k$ is flat as well.

\end{proof}

\sssec{Question}

What are the conditions on a classical scheme of finite type $Z$
(viewed as a $0$-coconnective DG scheme), that will guarantee that $Z\qqart$ will 
also be $0$-coconnective?

\medskip

It is easy to see that this is not always the case: for instance, consider $Z=\Spec(k[t]/t^2)$. 
However, the smoothness condition on $Z$ is not necessary, as can be seen from the
following example: 

\medskip

Let $\fg$ be a semi-simple Lie algebra, and let $\CN\subset \fg$
be its nilpotent cone. We have:

\begin{cor}
The DG scheme $\CN\qqart$ is $0$-coconnective. 
\end{cor}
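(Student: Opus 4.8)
The plan is to realize $\CN$ as a \emph{derived} complete intersection and then transport this description through the loop construction. Write $N=\dim\fg$ and $r=\on{rank}\fg$, and let $p_1,\dots,p_r\in k[\fg]$ be homogeneous generators of the $G$-invariants, so that the adjoint quotient $\chi=(p_1,\dots,p_r)\colon \fg\simeq\BA^N\to\BA^r$ has $\CN=\chi^{-1}(0)$. By Kostant's theorem the morphism $\chi$ is flat with all fibers of dimension $N-r$; since $\CN$ has codimension exactly $r$ in the smooth (hence Cohen--Macaulay) scheme $\fg$ and its ideal is generated by the $r$ functions $p_i$, these form a regular sequence. Consequently the classical fiber product $\CN=0\underset{\BA^r}\times\fg$ already computes the \emph{derived} fiber product; i.e. the square
$$
\CD
\CN  @>>>  \fg  \\
@VVV   @VV{\chi}V  \\
0  @>>>  \BA^r
\endCD
$$
is Cartesian in $\dgSch$, exactly as in the smooth case treated in \secref{sss:classical smooth schemes}.

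Next I would apply $Z\mapsto Z\qqart$. As recorded in the proof of \propref{p:repr of loops}, this assignment commutes with limits, and by \propref{p:repr of loops}(b) it carries the affine schemes $\fg\simeq\BA^N$, $\BA^r$ and $\CN$ to affine DG schemes. Applying it to the Cartesian square above yields
$$
\CN\qqart\simeq \fg\qqart\underset{\BA^r\qqart}\times 0,
$$
again a fiber product in $\dgSch$. By \propref{p:arc classical} (applied to the smooth schemes $\BA^N$ and $\BA^r$) the DG schemes $\fg\qqart$ and $\BA^r\qqart$ are $0$-coconnective. Thus, exactly as in the proof of \propref{p:arc classical}, the whole matter reduces to showing that the map $\chi\qqart\colon \fg\qqart\to\BA^r\qqart$ is flat: since $\chi\qqart=\underset{k}{lim}\,\chi[t]/t^k$, it suffices to prove that each truncation $\chi[t]/t^k\colon \fg[t]/t^k\to\BA^r[t]/t^k$ is flat, and then the fiber over the $0$-coconnective base becomes $0$-coconnective.

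The flatness of $\chi[t]/t^k$ is the crux, and I expect it to be the main obstacle. Here the source $\fg[t]/t^k\simeq\BA^{Nk}$ and target $\BA^r[t]/t^k\simeq\BA^{rk}$ are both smooth, so by miracle flatness $\chi[t]/t^k$ is flat if and only if every fiber has the expected dimension $(N-r)k$. This is precisely a statement about the jet schemes of the fibers of $\chi$: for the fiber over the constant zero jet it amounts to $\dim {}^{cl}(\CN[t]/t^k)=(N-r)k$, and similarly for the other (integral, normal) fibers of $\chi$. Such equidimensionality of jet schemes is available because the fibers of the adjoint quotient are local complete intersections with rational singularities (for the most degenerate fiber $\CN$, rationality of singularities comes from the Springer resolution); by Musta\c{t}\u{a}'s theorem on jet schemes of locally complete intersections with rational singularities, all $J_k$ of these fibers are irreducible of the expected dimension. (Alternatively, one may invoke the arc-space results of Drinfeld cited in the acknowledgments.) Granting this dimension count, the expected-codimension equations for $\CN[t]/t^k$ inside the Cohen--Macaulay $\BA^{Nk}$ form a regular sequence, so each $\chi[t]/t^k$ is flat; passing to the limit gives flatness of $\chi\qqart$, and hence $\CN\qqart$ is $0$-coconnective, as claimed.
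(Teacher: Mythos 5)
Your argument is correct and follows the same route as the paper: the Cartesian square exhibiting $\CN$ as the zero fiber of the adjoint quotient map, Kostant's flatness to upgrade it to a Cartesian square in $\dgSch$, the $0$-coconnectivity of $\fg\qqart$ and $(\fg/\!/G)\qqart$ from the smooth case, and the reduction to flatness of $\chi\qqart$. The only divergence is that the paper simply cites Theorem A.4 of \cite{EF} for this last flatness, whereas you sketch its proof via Musta\c{t}\u{a}'s theorem; your sketch is essentially the Eisenbud--Frenkel argument, though be aware that the fibers of $\chi[t]/t^k$ over \emph{non-constant} jets of the base are not jet schemes of fibers of $\chi$, so one needs the extra step (a $\BG_m$-contraction plus upper semicontinuity of fiber dimension) to reduce the dimension bound for all fibers to the single computation $\dim J_k(\CN)=(N-r)(k+1)$ before miracle flatness applies.
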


\begin{proof}

By definition, $\CN$ fits into a Cartesian square
$$
\CD
\CN @>>>  \fg \\
@VVV   @VV{\varpi}V  \\
0  @>>>  \fg/\!/G,
\endCD
$$
\emph{taken in the category of classical schemes}, 
where $\fg/\!/G$ is the GIT quotient of $\fg$ by the adjoint action of $G$,
i.e., $\Spec(\on{Sym}(\fg^*)^G)$, and $\varpi$ is the Chevalley map.

\medskip

However, by Kostant's theorem, the map $\varpi$ is flat, so the above
square is also Cartesian in the category of DG schemes. Hence,
we have a Cartesian square
$$
\CD
\CN\qqart @>>>  \fg\qqart \\
@VVV   @VV{\varpi\qqart}V  \\
0  @>>>  \fg/\!/G\qqart.
\endCD
$$

Since $\fg$ and $\fg/\!/G$ are smooth schemes of finite type
(in fact, isomorphic to affine spaces), the DG schemes 
$\fg\qqart$ and $\fg/\!/G\qqart$ are $0$-coconnective. Hence, to show
that $\CN\qqart$ is $0$-coconnective, it suffices to know that the map $\varpi\qqart$
is flat. However, the latter is Theorem A.4 in \cite{EF}.

\end{proof}

\sssec{The case of loops}

Let $Z$ be an affine smooth scheme of finite type over the ground field. We propose: 

\begin{conj} \label{conj:formal smoothness}
The DG indscheme $Z\ppart$ is $0$-coconnective.
\end{conj}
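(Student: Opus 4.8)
The plan is to reduce to the two building blocks into which Drinfeld's structure theorem decomposes $Z\ppart$, and to check $0$-coconnectivity on each. First recall what is already available. Since $Z$ is affine, $Z\ppart\in \dgindSch$ by \propref{p:repr of loops}(c); and since $Z$ is smooth it is formally smooth as an object of $\inftydgprestack$ (see \secref{sss:classical smooth schemes}), whence $Z\ppart$ is formally smooth by \lemref{l:loops formally smooth}. The obstacle to invoking \thmref{t:derived vs classical ft} directly is that $Z\ppart$ is \emph{not} locally of finite type, so its hypotheses fail. Accordingly, I would use \cite[Theorem 6.4]{Dr}, which asserts that the failure of the locally finite type condition for $Z\ppart$ is ``by factors isomorphic to $\BA^\infty$'': locally on the ind-scheme $Z\ppart$ one has an isomorphism $Z\ppart\simeq \CW\times \BA^\infty$, where $\CW$ is formally smooth and locally of finite type (built from the truncated jet spaces $Z[t]/t^k$ and the arc space $Z\qqart$) and $\BA^\infty=\Spec(k[t_1,t_2,\dots])$.

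The key steps, in order, would be: (1) $\BA^\infty$ is $0$-coconnective, which is immediate since $k[t_1,t_2,\dots]$ is a classical $k$-algebra, so $\BA^\infty$ already lies in $\affSch\subset \affdgSch$; (2) the factor $\CW$ is $0$-coconnective by \thmref{t:derived vs classical ft}, being formally smooth, locally of finite type, and (via the jet description) $\aleph_0$; (3) $0$-coconnectivity is preserved under the product $\CW\times \BA^\infty$, because over the field $k$ every classical algebra is flat, so the derived tensor product of classical algebras is classical, and hence the derived product of $0$-coconnective objects is $0$-coconnective; (4) $0$-coconnectivity is local, so the local identifications glue. Combining (1)--(4) would give that $Z\ppart$ is $0$-coconnective, i.e. that the canonical map $^L\!\on{LKE}_{(\affSch)^{\on{op}}\hookrightarrow (\affdgSch)^{\on{op}}}({}^{cl}(Z\ppart))\to Z\ppart$ is an isomorphism. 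An equivalent packaging of the same input is to verify the hypotheses of \propref{p:classical and derived, general}: it suffices to show that $\CX':={}^L\!\on{LKE}({}^{cl}(Z\ppart))$ is again a formally smooth DG indscheme, and the local product description is exactly what one would use to check both the convergence of $\CX'$ and its formal smoothness.

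The main obstacle is step (4) together with making (3) precise at the level of the \emph{derived} structure rather than just the classical one. The delicate point is that $0$-coconnectivity for an ind-scheme is defined ``as a stack'', i.e. via the sheafification functor $L$ (see \secref{sss:weakly coconnective} and the discussion following \propref{p:presentation of indschemes}), and sheafification is not known to commute with the infinite product producing the $\BA^\infty$-factor. Thus passing from Drinfeld's local product isomorphism to a global statement requires controlling how the $\BA^\infty$-factors patch across the ind-scheme, which is precisely the step that does not reduce to the finite-type theory. This is why I expect the general conjecture to be genuinely harder than the case $Z=G$: for an algebraic group the projection $G\ppart\to \Gr_G$ is a $G\qqart$-torsor, the base $\Gr_G$ is a formally smooth ind-projective ind-scheme of ind-finite type (hence $0$-coconnective by \thmref{t:derived vs classical ft}), and $G\qqart$ is $0$-coconnective by \propref{p:arc classical}; a local trivialization of this torsor realizes $G\ppart$ locally as $\Gr_G\times G\qqart$ with \emph{both} factors already $0$-coconnective, so the gluing is governed by the torsor structure rather than by an unstructured $\BA^\infty$, and the argument goes through.
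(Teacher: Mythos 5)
The statement you are trying to prove is stated in the paper as a \emph{conjecture}, and the paper does not prove it: the authors cite \cite[Theorem 6.4]{Dr} only as ``evidence'' and explicitly establish the result just in the special case $Z=G$ (\thmref{t:loop group classical}). Your proposal is therefore correctly calibrated, but it is not a proof, and the gap you flag in steps (3)--(4) is exactly the reason the general statement remains open. Concretely: Drinfeld's theorem produces product decompositions $\CW\times\BA^\infty$ only locally (and not canonically), while $0$-coconnectivity of a DG indscheme is a condition ``as a stack'', i.e.\ it involves the sheafification functor $L$ applied to a left Kan extension, and neither $L$ nor $\on{LKE}$ is known to interact well with gluing uncontrolled $\BA^\infty$-factors along the ind-structure; moreover there is no global formally smooth, locally of finite type factor $\CW$ to which \thmref{t:derived vs classical ft} could be applied. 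So steps (2) and (4) cannot currently be carried out, and the argument does not close.

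Your final paragraph, by contrast, does reproduce the paper's actual proof in the case $Z=G$: the paper proves \thmref{t:loop group classical} by exhibiting $G\ppart\to\Gr_G$ as a map whose base $\Gr_G$ is $0$-coconnective by \thmref{t:aff gr} (itself a consequence of \thmref{t:derived vs classical ft} via \propref{p:aff gr}) and whose pullback to any classical affine $S$ is, after an \'etale-local trivialization of the torsor, isomorphic to $S\times G\qqart$, which is $0$-coconnective by \propref{p:arc classical}; a general lemma then transfers $0$-coconnectivity from the base and the fibers to the total space. This is precisely your observation that for a group the $\BA^\infty$-ambiguity is replaced by the structure of a $G\qqart$-torsor over a locally-of-finite-type base, which is what makes the gluing tractable. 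In summary: your write-up is an accurate account of the state of the problem and of the one case that is actually proved, but it should not be presented as a proof of the conjecture itself.
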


In this next subsection we will prove this conjecture in a particular case when $Z$ is an algebraic group $G$.

\ssec{Loop groups and the affine Grassmannian}

\sssec{}

Let $G$ be an algebraic group. We define the affine Grassmannian $\Gr_G$ as an object of $\inftydgprestack$
as follows:

\smallskip

$\Maps(\Spec(A),\Gr_G)$ is the $\infty$-groupoid of principal $G$-bundles on $\Spec(A\qqart)$ equipped
with a trivialization over $\Spec(A\ppart)$. 

\smallskip

It is easy to show that $\Gr_G$ is convergent and that it belongs to
$\inftydgstack$ (i.e., it satisfies fppf descent).
We have a naturally defined map $G\ppart\to \Gr_G$, which identifies $\Gr_G$ with the
quotient of $G\ppart$ by $G\qqart$ in the fppf and the \'etale topology (indeed, it is easy to see that
any $G$-bundle on $\Spec(A\qqart)$ admits a trivialization after an \'etale localization with 
respect to $\Spec(A)$).

\medskip

It is well-known that the underlying object $^{cl}\!\Gr_G\in {}^{cl}\!\inftystack$ is a classical
indscheme, which is $\aleph_0$ and locally of finite type. 
 
\begin{prop} \label{p:aff gr}
$\Gr_G$ is a DG indscheme. Moreover, it is formally smooth.  
\end{prop}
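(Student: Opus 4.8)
The goal is to prove \propref{p:aff gr}: the affine Grassmannian $\Gr_G$ is a formally smooth DG indscheme.

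The plan is to deduce both assertions from the structural results established earlier, rather than to analyze $\Gr_G$ directly. First I would establish that $\Gr_G$ is a DG indscheme by invoking \thmref{t:char by deform}. We are told in the statement's preamble that $^{cl}\!\Gr_G$ is a classical indscheme, that $\Gr_G$ is convergent, and that it satisfies fppf descent. By \thmref{t:char by deform}, to conclude that $\Gr_G \in {}^{\leq n}\!\dgindSch$ for each $n$ (and hence, using convergence, that $\Gr_G \in \dgindSch$), it suffices to verify that $\Gr_G$ admits connective deformation theory, i.e. satisfies Conditions (A), (B) and (C). The natural way to check these is via their reformulation as the property of sending appropriate push-outs in $\affdgSch$ to fiber products in $\inftygroup$, as laid out in \secref{s:deformation theory}. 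Since $\Maps(\Spec(A),\Gr_G)$ is built from $G$-bundles on $\Spec(A\qqart)$ trivialized on $\Spec(A\ppart)$, and since the formation of $A \mapsto A\qqart$, $A \mapsto A\ppart$ commutes with the relevant square-zero and nil-immersion push-outs of affine DG schemes (as in \lemref{l:nil pushout} and \secref{sss:push-out aff}), the deformation-theoretic conditions for $\Gr_G$ reduce to the corresponding statements for the algebraic group $G$ itself, which is a smooth scheme of finite type and manifestly admits connective deformation theory.

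For formal smoothness, I would use \propref{p:formal smoothness through deformations} together with \lemref{l:loops formally smooth}. The group $G$ is a smooth classical scheme of finite type, hence formally smooth as an object of $\inftydgprestack$ by \secref{sss:classical smooth schemes}. By \lemref{l:loops formally smooth}, both $G\qqart$ and $G\ppart$ are then formally smooth. The key geometric input is the identification, recalled in the preamble, of $\Gr_G$ with the quotient of $G\ppart$ by $G\qqart$ in the fppf (or \'etale) topology, via the surjection $G\ppart \to \Gr_G$. Formal smoothness is a lifting property against nilpotent closed embeddings $S \hookrightarrow S'$ of classical affine schemes together with a truncation-compatibility on homotopy groups; both can be checked by passing, fppf-locally on $S'$, to a trivialization, which reduces the surjectivity statement for $\Gr_G$ to the already-established formal smoothness of $G\ppart$. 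Concretely, given a map $S \to \Gr_G$ and a nilpotent thickening $S'$, one lifts the corresponding $G$-bundle with its trivialization by first \'etale-locally trivializing it, using formal smoothness of $G\ppart$ to lift, and then descending; the deformation-theoretic criterion of \propref{p:formal smoothness through deformations}(a), phrased in terms of the pro-cotangent complex, makes this descent clean because the cotangent complex of $\Gr_G$ can be computed from that of $G\ppart$ along the smooth cover.

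The main obstacle I anticipate is the bookkeeping needed to transfer the deformation theory and formal-smoothness conditions across the fppf quotient $G\ppart \to \Gr_G$: one must check that Conditions (A), (B), (C), and the homotopy-group condition in \defnref{d:formally smooth}, which are a priori statements about $\Gr_G$ directly, genuinely descend from $G\ppart$. This requires knowing that the relevant push-outs of test schemes are preserved under the passage $A \mapsto A\ppart$ and that the quotient construction interacts well with square-zero extensions --- in effect, that $\Gr_G$ and $G\ppart$ \emph{have the same deformation theory} in the sense of \secref{sss:remarks on compl}(ii). Once this compatibility is set up carefully, both conclusions follow formally; the smoothness of $G$ and the elementary behavior of the loop and arc constructions on affine space (\secref{sss:classical smooth schemes}, \lemref{l:loops formally smooth}) supply all the positive input, and no direct cotangent computation on the infinite-dimensional indscheme $\Gr_G$ is needed beyond invoking that it is inherited from the finite-dimensional smooth scheme $G$.
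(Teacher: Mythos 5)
Your overall strategy --- \thmref{t:char by deform} for the indscheme claim and the deformation-theoretic criterion of \propref{p:formal smoothness through deformations} for formal smoothness, with an \'etale-local reduction to $G\ppart$ and $G\qqart$ --- is the same as the paper's. But the step you yourself flag as ``the main obstacle'' is exactly where the content lies, and your proposal does not close it. Conditions (A), (B), (C) and the formal smoothness criterion are ultimately statements about the pro-cotangent space $T^*_g\Gr_G$; knowing that $G\ppart$ and $G\qqart$ have good deformation theory (which does follow from smoothness of $G$, since $A\mapsto A\ppart,\,A\qqart$ commute with the relevant push-outs) does not by itself identify $T^*_g\Gr_G$, and ``descending'' the lifting property along the fppf quotient $G\ppart\to\Gr_G$ is not formal: a point of $\Gr_G(S)$ lifts to $G\ppart$ only \'etale-locally, and the local lifts of its extension to $S'$ must be glued, which requires controlling the ambiguity of the lift.

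The paper's resolution is concrete. After an \'etale localization one chooses a lift $\tilde g:S\to G\ppart$ of $g$; multiplication by $\tilde g$ identifies $\on{SplitSqZExt}(S,1_{G\qqart})$ with $\on{SplitSqZExt}(S,\tilde g)$, and the ambiguity of lifting an extension $g':S'\to\Gr_G$ back to $G\ppart$ is exactly a fiber of the former. This yields the identification $T^*_g\Gr_G\simeq \on{Cone}\bigl(\alpha:T^*_{\tilde g}G\ppart\to T^*_{1_{G\qqart}}G\qqart\bigr)[-1]$, which is what establishes Conditions (A) and (C), with (B) holding by construction. Formal smoothness then amounts to verifying property (b) of \propref{p:formal smoothness through deformations} for this cone; translating by the inverse of $\tilde g$ reduces to the unit point, where $\on{Cone}(\alpha)$ is computed explicitly as the pro-object $\underset{\alpha}{``lim"}\,\CO_S\otimes V^*_\alpha$ with $V_\alpha$ finite-dimensional and $\underset{\alpha}{colim}\,V_\alpha\simeq\fg\ppart/\fg\qqart$, i.e.\ a pro-free and hence pro-projective object. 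Your proposal never produces this (or any) identification of $T^*_g\Gr_G$, so the pro-projectivity that formal smoothness requires is asserted (``inherited from $G$'') rather than proved.
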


\begin{proof}
To prove that $\Gr_G$ is a DG indscheme, we will apply \thmref{t:char by deform}. For $S\in \affdgSch$
and a point $g:S\to \Gr_G$ we need to study the category of extensions of $g$ to a point $g':S'\to \Gr_G$
for square-zero extensions $S\hookrightarrow S'$. The question is local in the \'etale topology on $S$,
so we can assume that the point $g$ admits a lift to a point $\tilde{g}:S\to G\ppart$. Multiplication
by $\tilde{g}$ defines a map 
$$\on{SplitSqZExt}(S,1_{G\qqart})\to \on{SplitSqZExt}(S,\tilde{g}),$$
where $1_{G\qqart}:S\to G\qqart$ is the constant map to the unit point of $G\qqart$. Consider the
corresponding map
$$\alpha:T^*_{\tilde{g}}G\ppart\to T^*_{1_{G\qqart}}G\qqart.$$
We claim that $\on{Cone}(\alpha)[-1]$ represents $T^*_g\Gr_G$. This follows from the
fact that any extension $g':S'\to \Gr_G$ also admits a lift to $\tilde{g}':S'\to G\ppart$ 
and if $\CF\in \QCoh(S)$ is the ideal of $S$ inside $S'$, the ambiguity for such lift is given 
by the fiber of $\on{SplitSqZExt}(S,1_{G\qqart})$ over $S_\CF\in \on{SplitSqZExt}(S)$. 
This shows that $\Gr_G$ satisfies scheme-like Conditions (A) and (C), while Condition (B)
follows from the construction. 

\medskip

To show that $\Gr_G$ is formally smooth, it suffices to show that $\on{Cone}(\alpha)[-1]$
satisfies property (b) of \propref{p:formal smoothness through deformations}. Multiplication
by the inverse $\tilde{g}$ defines an isomorphism between $\on{Cone}(\alpha)$
and the situation when $\tilde{g}=1_{G\ppart}$. In the latter case, $\on{Cone}(\alpha)$ 
isomorphic to the object of $\on{Pro}(\Coh(S)^{\heartsuit})$ equal to $\underset{\alpha}{``lim"}\, \CO_S\otimes V^*_\alpha$,
where $\alpha\mapsto V_\alpha$ is the filtered family of finite-dimensional $k$-vector spaces, such that
$$\underset{\alpha}{colim}\, V_\alpha\simeq \fg\ppart/\fg\qqart.$$

\end{proof}

\sssec{}

Let us now observe the following corollary of \thmref{t:derived vs classical ft}: 

\begin{thm} \label{t:aff gr}
$\Gr_G$ is $0$-coconnective. Moreover, it is weakly $\aleph_0$, and locally almost of finite type. 
\end{thm}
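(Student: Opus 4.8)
The plan is to deduce \thmref{t:aff gr} as a direct application of \thmref{t:derived vs classical ft} to the specific DG indscheme $\CX = \Gr_G$. The hypotheses of that theorem require three things: that $\Gr_G$ be a formally smooth DG indscheme, that $^{cl}\!\Gr_G$ be locally of finite type, and that $^{cl}\!\Gr_G$ be $\aleph_0$. The first of these is precisely the content of \propref{p:aff gr}, which establishes both that $\Gr_G$ is a DG indscheme and that it is formally smooth. The latter two are classical, well-known facts about the affine Grassmannian, already recalled in the paragraph preceding \propref{p:aff gr}: the underlying classical indscheme $^{cl}\!\Gr_G \in {}^{cl}\!\inftystack$ is $\aleph_0$ and locally of finite type.

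First I would invoke \propref{p:aff gr} to record that $\Gr_G$ is a formally smooth DG indscheme, so that in particular $^{cl}\Gr_G := \Gr_G|_{\affSch}$ is the underlying classical indscheme. Next I would note that this underlying classical indscheme is, as recalled above, locally of finite type and $\aleph_0$. At this point all hypotheses of \thmref{t:derived vs classical ft} are verified for $\CX = \Gr_G$. Applying the theorem then yields immediately that $\Gr_G$ is $0$-coconnective, i.e., the natural map
$$
{}^L\!\on{LKE}_{(\affSch)^{\on{op}}\hookrightarrow (\affdgSch)^{\on{op}}}({}^{cl}\Gr_G)\to \Gr_G
$$
is an isomorphism, and simultaneously that $\Gr_G$ is weakly $\aleph_0$ and locally almost of finite type.

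Honestly, there is no genuine obstacle here: the entire content of \thmref{t:aff gr} is packaged into the two preceding inputs, and the proof is a one-line citation. The only point requiring the slightest care is confirming that the classical claims about $^{cl}\!\Gr_G$ (local finiteness of type and the $\aleph_0$ property) are indeed available — but these are standard and are explicitly asserted in the text just before \propref{p:aff gr}, so they may be taken as given. Thus the proof reduces to matching the hypotheses of \thmref{t:derived vs classical ft} against the conclusions of \propref{p:aff gr} together with the recalled classical properties, and reading off the three conclusions verbatim. I would write it as follows:

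\begin{proof}
By \propref{p:aff gr}, $\Gr_G$ is a formally smooth DG indscheme. Moreover, as recalled above, the underlying classical indscheme $^{cl}\!\Gr_G$ is locally of finite type and $\aleph_0$. Hence the hypotheses of \thmref{t:derived vs classical ft} are satisfied for $\CX = \Gr_G$, and the assertions of the theorem follow at once: $\Gr_G$ is $0$-coconnective, weakly $\aleph_0$, and locally almost of finite type.
\end{proof}
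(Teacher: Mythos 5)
Your proposal is correct and is exactly the paper's own argument: the theorem is stated as a corollary of \thmref{t:derived vs classical ft}, whose hypotheses are supplied by \propref{p:aff gr} (formal smoothness) together with the standard facts, recalled just before that proposition, that $^{cl}\!\Gr_G$ is $\aleph_0$ and locally of finite type. Nothing further is needed.
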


We shall now use \thmref{t:aff gr} to prove the following:

\begin{thm}  \label{t:loop group classical}
The indscheme $G\ppart$ is $0$-coconnective.
\end{thm}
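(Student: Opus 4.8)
The goal is to deduce that $G\ppart$ is $0$-coconnective from the corresponding fact for the affine Grassmannian $\Gr_G$, which is \thmref{t:aff gr}. The key geometric input is the fibration structure
$$G\ppart\to \Gr_G,$$
which exhibits $\Gr_G$ as the quotient of $G\ppart$ by the right action of $G\qqart$ in the \'etale (or fppf) topology. Since $G\qqart$ is itself a $0$-coconnective DG scheme by \propref{p:arc classical} (applied to the smooth scheme $Z=G$), the strategy is to leverage the fact that $G\ppart$ is, \'etale-locally over $\Gr_G$, a product of $\Gr_G$ with $G\qqart$, and that $0$-coconnectivity is preserved under such operations.

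\textbf{Step 1: Reduce to a local statement via the $G\qqart$-torsor.} First I would observe that the map $\pi:G\ppart\to \Gr_G$ is a $G\qqart$-torsor in the \'etale topology: the fiber product $G\ppart\underset{\Gr_G}\times G\ppart$ is isomorphic to $G\ppart\times G\qqart$ via $(g_1,g_2)\mapsto (g_1, g_1^{-1}g_2)$. Since $\Gr_G$ is formally smooth by \propref{p:aff gr} and is a DG indscheme, one can find \'etale-local trivializations: \'etale-locally on a test affine scheme $S\in\affSch$, a point $S\to \Gr_G$ lifts to $G\ppart$ (as used in the proof of \propref{p:aff gr}). The plan is to use this to compare $G\ppart$ with the left Kan extension of its classical truncation.

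\textbf{Step 2: Compare cotangent complexes and apply deformation theory.} Both $G\ppart$ and $\CX':={}^L\!\on{LKE}_{(\affSch)^{\on{op}}\hookrightarrow (\affdgSch)^{\on{op}}}({}^{cl}(G\ppart))$ are formally smooth DG indschemes: for $G\ppart$ this follows from \lemref{l:loops formally smooth} applied to $Z=G$, and both have the same underlying classical indscheme $^{cl}(G\ppart)$, which is $\aleph_0$ and locally of finite type (being a $G\qqart$-bundle over the locally-of-finite-type $^{cl}\!\Gr_G$). Here the main obstacle is that $G\ppart$ is \emph{not} locally almost of finite type, so \thmref{t:derived vs classical ft} does not apply directly. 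However, $G\ppart$ does admit connective deformation theory (Conditions (A), (B), (C) hold, by the same argument as for $\Gr_G$ since these are checked via split square-zero extensions and are local). Thus \propref{p:classical and derived, general} applies: it suffices to show that $\CX'$ is \emph{also} formally smooth as a DG indscheme, i.e. that it is convergent and satisfies the homotopy-group condition, which then forces the map $\CX'\to G\ppart$ to be an isomorphism by comparison of pro-cotangent complexes.

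\textbf{Step 3: Establish $0$-coconnectivity of $\CX'$ via the fibration.} The crux is to prove that $\CX'={}^L\!\on{LKE}(^{cl}(G\ppart))$ is a formally smooth DG indscheme, which is where the $G\qqart$-torsor structure is essential. Since $\Gr_G$ is $0$-coconnective by \thmref{t:aff gr} and $G\qqart$ is $0$-coconnective by \propref{p:arc classical}, and since $0$-coconnectivity (being a left Kan extension from classical schemes) is preserved under forming \'etale-local products, I would argue that the total space $G\ppart$ of the torsor is assembled, via \'etale descent, from pieces of the form $U\times G\qqart$ with $U\to \Gr_G$ \'etale, each of which is $0$-coconnective. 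The main technical point to verify—and the step I expect to be hardest—is that $0$-coconnectivity descends correctly along this \'etale-local presentation, i.e. that the sheafification in the definition of $\CX'$ is compatible with the torsor structure. Concretely, I would check that $^{cl}(G\ppart)\simeq {}^{cl}\!\Gr_G\underset{B G\qqart}{\times}\on{pt}$-type descent data lifts to the derived level because both $\Gr_G$ and $G\qqart$ do; since the $G\qqart$-action on the $0$-coconnective DG indscheme $G\ppart$ has $0$-coconnective quotient $\Gr_G$, and $G\qqart$ is $0$-coconnective, the Cartesian square expressing the torsor is preserved (using flatness of $\pi$, which makes the relevant fiber products agree in classical and DG schemes). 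This yields that $G\ppart$ agrees with $\CX'$, completing the proof.
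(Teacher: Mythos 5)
Your geometric input is exactly the paper's: the map $G\ppart\to \Gr_G$ is an \'etale-locally trivial $G\qqart$-torsor, $\Gr_G$ is $0$-coconnective by \thmref{t:aff gr}, and $G\qqart$ is $0$-coconnective by \propref{p:arc classical}. Where you go astray is in how you convert this into the conclusion: Step~2 is a detour that does not reduce the problem. To apply \propref{p:classical and derived, general} you must first prove that $\CX'={}^L\!\on{LKE}_{(\affSch)^{\on{op}}\hookrightarrow (\affdgSch)^{\on{op}}}({}^{cl}(G\ppart))$ is a formally smooth DG indscheme (in particular convergent), and since $G\ppart$ is not locally almost of finite type none of the finite-type machinery gives you this; you end up having to prove it by the very \'etale-local product decomposition you invoke in Step~3 --- but if that decomposition argument works, it proves $\CX'\simeq G\ppart$ directly and Step~2 is superfluous. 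Note also the circular phrase in Step~3 (``the $G\qqart$-action on the $0$-coconnective DG indscheme $G\ppart$''): you may not assume there what you are trying to prove.

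The descent step you leave as an assertion is the one genuine gap, and it is exactly what the paper isolates as a clean lemma: if $f:\CX_1\to\CX_2$ is a map in $\inftydgstack$ with $\CX_2$ $0$-coconnective and with $S\underset{\CX_2}\times \CX_1$ $0$-coconnective for every $S\in\affSch$ mapping to $\CX_2$, then $\CX_1$ is $0$-coconnective. Taking $\CX_1=G\ppart$ and $\CX_2=\Gr_G$, the fiber over a classical affine $S$ is, \'etale-locally on $S$ (after lifting $S\to\Gr_G$ to $G\ppart$, which the torsor property permits), isomorphic to $S\times G\qqart$, hence $0$-coconnective; no deformation theory and no comparison of pro-cotangent complexes is needed. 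I would drop Step~2 entirely and replace the last sentence of Step~3 by a precise statement and proof of this lemma, since ``$0$-coconnectivity descends correctly along this \'etale-local presentation'' is currently asserted rather than proved.
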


\begin{proof}

Let $f:\CX_1\to \CX_2$ be a map in $\inftydgstack$ such that $\CX_2$ is $0$-coconnective,
and for any $S\in \affSch$ and a map $S\to \CX_2$, the fiber product
$S\underset{\CX_1}\times \CX_2\in \inftydgstack$ is also $0$-coconnective. 

\begin{lem}
Under the above circumstances, $\CX_2$ is also $0$-coconnective.
\end{lem} 

We apply this lemma to $\CX_1=G\ppart$ and $\CX_2=\Gr_G$. It remains to verify that
for a classical affine scheme $S$ and a map $g:S\to \Gr_G$, the fiber product
$S\underset{\Gr_G}\times G\ppart$ is $0$-coconnective. The question is local in the \'etale
topology on $S$. Hence, we can assume that $g$ admits a lift to an $S$-point of $G\ppart$.
However, this left defines an isomorphism
$$S\underset{\Gr_G}\times G\ppart\simeq S\times G\qqart,$$
and the assertion follows from \thmref{t:aff gr} and \propref{p:arc classical}.

\end{proof}

\ssec{The (pro)-cotangent complex of a classical formally smooth (ind)scheme}

For the proof of \thmref{t:classical vs derived ft} we will need to establish several facts concerning the 
pro-cotangent complex of classical formally smooth indschemes. 

\sssec{}

Let $\CX_{cl}$ be a classical indscheme; set
$$\CX:={}^L\!\on{LKE}_{(\affSch)^{\on{op}}\hookrightarrow (\affdgSch)^{\on{op}}}(\CX_{cl})\in \inftydgprestack.$$

\begin{prop} \label{p:estimate on cl cotangent}
The indscheme $\CX_{cl}$ is classically formally smooth if and only if for every $S\in \affSch$ and $x:S\to \CX_{cl}$,
the object 
$$^{\geq -1}(T^*_x\CX)\in \on{Pro}(\QCoh(S)^{\geq -1,\leq 0})$$
is pro-projective. 
\end{prop}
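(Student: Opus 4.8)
The statement characterizes classical formal smoothness in terms of pro-projectivity of the $(-1)$-truncated pro-cotangent space. The key tool is \propref{p:formal smoothness through deformations}, which already relates formal smoothness to vanishing of $\on{Hom}(T^*_x\CX,\CF[i])$ for $\CF\in \QCoh(S)^\heartsuit$ and $i>0$, combined with the algebraic characterization of pro-projective objects in \lemref{l:when pro proj one}. The main point to check is that, in the setting where $\CX$ is a left Kan extension of a \emph{classical} indscheme, the relevant obstruction groups only see the truncation $^{\geq -1}(T^*_x\CX)$, so that testing on $\QCoh(S)^\heartsuit$ is equivalent to pro-projectivity of this single object.

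First I would reduce to the affine case: since $\CX_{cl}$ is an indscheme, it is covered by closed embeddings from classical schemes $X_\alpha$, and both classical formal smoothness and the pro-projectivity condition can be tested on maps $x:S\to X_\alpha$ with $S$ affine classical (using \secref{sss:closed embed surj} to ensure $H^0((dx)^*)$ is surjective, so that the relevant cones live in the correct connective range). By \eqref{e:expl cotangent indsch} the pro-cotangent space is presented as $``\!\lim"_\alpha\, {}^{\geq -1}(T^*_{x_\alpha}X_\alpha)$, and for each scheme $X_\alpha$ of the inductive system the object $^{\geq -1}(T^*_{x_\alpha}X_\alpha)$ lies in $\QCoh(S)^{\geq -1,\leq 0}$ (by the discussion in the subsection ``The cotangent complex of a DG scheme''). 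Thus $^{\geq -1}(T^*_x\CX)$ is a genuine pro-object of $\QCoh(S)^{\geq -1,\leq 0}$.

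Next I would run the equivalence of conditions. By \propref{p:formal smoothness through deformations}(b), $\CX_{cl}$ is classically formally smooth precisely when, for every classical $S$ and $x:S\to \CX_{cl}$, one has $\on{Hom}(T^*_x\CX,\CF[i])=0$ for all $\CF\in \QCoh(S)^\heartsuit$ and $i>0$. Because $S$ is classical and we are taking a left Kan extension from $\affSch$, the deformation groupoid attached to a split square-zero extension by $\CF\in \QCoh(S)^\heartsuit$ only depends on the image of $T^*_x\CX$ in $\on{Pro}(\QCoh(S)^{\geq -1,\leq 0})$: the higher terms $H^{-i}$ of the cotangent space with $i\ge 2$ pair trivially against $\CF[i']$ with $i'>0$ into the relevant degrees, because over a \emph{classical} test scheme only the $(-1)$-truncation contributes to extensions of a classical point by a heart-valued ideal. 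Concretely, $\on{Hom}(T^*_x\CX,\CF[i])=\on{Hom}(^{\geq -1}(T^*_x\CX),\CF[i])$ for $\CF\in \QCoh(S)^\heartsuit$ and all $i>0$, so the vanishing condition of \propref{p:formal smoothness through deformations}(b) is literally the defining vanishing in condition (a) of \lemref{l:when pro proj one} applied to $F:={}^{\geq -1}(T^*_x\CX)$. That lemma then identifies this vanishing with $F$ being pro-projective.

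The main obstacle is justifying the reduction from $T^*_x\CX$ to its truncation $^{\geq -1}(T^*_x\CX)$ in the pairing against objects of the heart, i.e.\ verifying that contributions from $H^{-i}$ with $i\ge 2$ do not affect $\on{Hom}(-,\CF[i])$ for $i>0$ once $S$ is classical. I would handle this by working with the spectrum-valued lift $F^{\on{Vect}}:\QCoh(S)\to \Vect$ of \secref{sss:lift to spectra} and analyzing its cohomology degree by degree: for $S$ classical and $\CF$ in the heart, the relevant mapping spectrum is concentrated so that only the $\QCoh(S)^{\geq -1,\leq 0}$-part survives in the range $i>0$. The connectivity bookkeeping, together with \lemref{l:when pro proj one}(a)$\Leftrightarrow$(b), then yields the equivalence in both directions and completes the proof.
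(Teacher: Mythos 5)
There is a genuine gap, and it sits exactly at the point your proposal treats as bookkeeping. First, \propref{p:formal smoothness through deformations} characterizes formal smoothness of $\CX$ in the \emph{derived} sense of Definition \ref{d:formally smooth} (and its condition is vanishing of $\Hom(T^*_x\CX,\CF[i])$ for \emph{all} $i>0$); it does not characterize classical formal smoothness of $\CX_{cl}$, and conflating the two notions is precisely what this section is set up to avoid (their equivalence is the content of \thmref{t:classical vs derived ft}, which needs finite-type hypotheses and is proved \emph{using} the present proposition). What classical formal smoothness actually controls, via square-zero extensions of classical schemes by heart objects (\lemref{l:cl sq zero}), is only the case $i=1$. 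Moreover your claimed identity $\Hom(T^*_x\CX,\CF[i])=\Hom({}^{\geq -1}(T^*_x\CX),\CF[i])$ is true for $i=1$ but false for $i\geq 2$: for $F$ a pro-object of $\QCoh(S)^{\geq -1,\leq 0}$ with nonzero $H^{-1}$, terms like $\Ext^1(H^{-1}(F),\CF)$ contribute to $\pi_0(F(\CF[2]))$. Consequently the $i=1$ vanishing alone does not verify condition (a) of \lemref{l:when pro proj one}, and pro-projectivity does not follow.

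The missing ingredient is the proof that $H^{-1}\left({}^{\geq -1}(T^*_x\CX)\right)=0$, i.e.\ that the truncated pro-cotangent space actually lies in $\on{Pro}(\QCoh(S)^{\heartsuit})$; combined with right exactness of $\pi_0\circ F$ on the heart this gives condition (b') of \lemref{l:when pro proj one}, which is how the paper concludes. This vanishing is the real geometric content of the ``only if'' direction and requires an argument: the paper writes $\CX_{cl}\simeq \underset{\alpha}{colim}\, X_\alpha$, localizes so that $x$ factors through an open affine $U_{\alpha_0}\subset X_{\alpha_0}$, uses \eqref{e:expl cotangent indsch} to present $H^{-1}(T^*_x\CX)$ as $\underset{\alpha}{``lim"}\, H^{-1}(T^*_{\iota_{\alpha_0,\alpha}\circ j}X_\alpha)$, and then shows each map out of this pro-system to an $\CM\in\QCoh^\heartsuit$ is eventually zero by embedding $\CM$ into an injective, extending the map to $\psi:T^*U_\alpha\to\CM[1]$, reading $\psi$ as a square-zero extension $U'_\alpha$ of $U_\alpha$, and invoking classical formal smoothness to extend $U_\alpha\to\CX_{cl}$ to $U'_\alpha\to\CX_{cl}$ through some $X_\beta$. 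Nothing in your ``connectivity bookkeeping'' step substitutes for this. (Your ``if'' direction is essentially fine once rerouted through \lemref{l:cl sq zero} and the $i=1$ case of \lemref{l:when pro proj one}(a), which is how the paper does it.)
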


\begin{proof} 

Let $^{\geq -1}(T^*_x\CX)\in \on{Pro}(\QCoh(S)^{\geq -1,\leq 0})$ be pro-projective. By \lemref{l:cl sq zero},
it suffices to show that
$$\pi_0\left(\Maps({}^{\geq -1}(T^*_x\CX),\CF[1])\right)=0$$
for $\CF\in \QCoh(S)^\heartsuit$. However, the latter is given by condition (a) \lemref{l:when pro proj one}. 

\medskip

For the opposite implication, let us assume that $\CX_{cl}$ is formally smooth. We will
check that $^{\geq -1}(T^*_x\CX)$ satisfies condition (b') of \lemref{l:when pro proj one}.
Let $x$ be a map $S\to \CX$, where $S\in \affSch$. 

\medskip

The fact that the functor
$$\CF\mapsto \pi_0\left(\Maps({}^{\geq -1}(T^*_x\CX),\CF)\right),\quad \QCoh(S)^\heartsuit\to \on{Sets}$$
is right exact follows from the assumption on $\CX_{cl}$ and the 
definition of $T^*_x\CX$
in terms of split square-zero extensions in \secref{sss:condition A}. Hence, it remains to
show that $H^{-1}(T^*_x\CX)=0$.

\medskip

Let $$\CX_{cl}\simeq \underset{\alpha\in \sA}{colim}\, X_{\alpha},$$
where $X_{\alpha}\in \Sch_{\on{qsep-qc}}$. Let $\alpha_0$ be an index
such that $x$ factors through a map $x_{\alpha_0}:S\to X_{\alpha_0}$. 

\medskip

Since the t-structure on $\on{Pro}(\QCoh(S))$ is Zariski-local, we can assume that
the map $x_{\alpha_0}$ factors as
$$S\to U_{\alpha_0}\overset{j}\hookrightarrow X_{\alpha_0},$$
where $U_{\alpha_0}$ is an open affine inside $X_{\alpha_0}$. 

\medskip

Let $\iota_{\alpha_0}:X_{\alpha_0}\to \CX_{cl}$ denote the tautological map.
For $(\alpha_0\to \alpha)\in \sA$, let $\iota_{\alpha_0,\alpha}$ denote 
the corresponding closed embedding $X_{\alpha_0}\to X_{\alpha}$. 

\medskip

It is easy to see that it is sufficient to show that
$$H^{-1}\left(T^*_{\iota_{\alpha_0}\circ j}\CX\right)=0$$
as an object of $\on{Pro}(\QCoh(U_{\alpha_0})^\heartsuit)$. 

\medskip

By \eqref{e:expl cotangent indsch}, we have
$$H^{-1}\left(T^*_{\iota_{\alpha_0}\circ j}\CX\right)\simeq
\underset{\alpha\in \sA_{\alpha_0/}}{``lim"}\, H^{-1}\left(T^*_{\iota_{\alpha_0,\alpha}\circ j}X_\beta\right).$$

\medskip

So, we need to show that for a given
$\CM\in \QCoh(U_{\alpha_0})^\heartsuit$, $\alpha_0\to \alpha$ and 
$$\phi:H^{-1}(T^*_{\iota_{\alpha_0,\alpha}\circ j}X_\alpha)\to \CM,$$
there exists $\alpha\to \beta$, such that the composition
$$H^{-1}(T^*_{\iota_{\alpha_0,\beta}\circ j}X_{\beta})\to 
H^{-1}(T^*_{\iota_{\alpha_0,\alpha}\circ j}X_\alpha)\overset{\phi}\to \CM$$
vanishes. 

\medskip

Since $\iota_{\alpha_0,\alpha}$ is a closed embedding, 
$U_{\alpha_0}$ is the pre-image of an open affine $U_\alpha$ in $X_\alpha$.
Replacing $\CM$ by its direct image under $U_{\alpha_0}\to U_\alpha$, 
we can assume that $\alpha=\alpha_0$. Further, embedding $\CM$ into an injective
sheaf, we can assume that the map $\phi$ extends to a map $\psi:T^*U_\alpha\to \CM[1]$.
We wish to find an index $\beta\in \sA_{\alpha/}$ such that the composition
\begin{equation} \label{e:split beta}
T^*_{\iota_{\alpha,\beta}\circ j}X_\beta\to T^*U_\alpha\overset{\psi}\to \CM[1]
\end{equation}
vanishes.

\medskip

However, the data of $\psi$ as above is equivalent to that of a square-zero extension
$U'_\alpha$ of $U_\alpha$. And the data of a splitting of \eqref{e:split beta} is equivalent
to that of an extension of the map $\iota_{\alpha,\beta}\circ j:U_\alpha\to X_\beta$ to a map
$U'_\alpha\to X_\beta$. Thus, giving such an index $\beta$ is equivalent to
extending the map $U_\alpha\to \CX_{cl}$ to a map $U'_\alpha\to \CX_{cl}$. The
existence of such an extension follows from the classical formal smoothness
of $\CX_{cl}$.

\end{proof}

\begin{cor} \label{c:estimate on cl cotangent}
Let $X_{cl}$ be a classical scheme, and consider it as a DG scheme. 
Then $X_{cl}$ is classically formally smooth if and only if $T^*X_{cl}$ satisfies:

\smallskip

\noindent{\em(a)} $H^{-1}(T^*X_{cl})=0$.

\smallskip

\noindent{\em(b)} $H^0(T^*X_{cl})$ is projective over every affine subscheme 
of $X_{cl}$.

\end{cor}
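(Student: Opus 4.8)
The plan is to deduce the corollary directly from \propref{p:estimate on cl cotangent}, specialized to the case where the classical indscheme there is the classical scheme $X_{cl}$. By that proposition, $X_{cl}$ is classically formally smooth if and only if, for every $S\in \affSch$ and every $x:S\to X_{cl}$, the object $^{\geq -1}(T^*_x X_{cl})\in \on{Pro}(\QCoh(S)^{\geq -1,\leq 0})$ is pro-projective. The whole point is that when $X_{cl}$ is a \emph{scheme} this pro-object is an \emph{honest} quasi-coherent sheaf: by the discussion of the cotangent complex of a DG scheme (see \eqref{e:pull back of tangent}) there is a well-defined $^{\geq -1}(T^*X_{cl})\in \QCoh(X_{cl})^{\geq -1,\leq 0}$ with $^{\geq -1}(T^*_x X_{cl})\simeq {}^{\geq -1}\!x^*({}^{\geq -1}(T^*X_{cl}))$, and for the purposes of computing $H^{-1}$ and $H^0$ this agrees with the object $T^*X_{cl}$ of the statement. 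First I would record the translation of ``pro-projective'' for a constant pro-object: applying criterion (b') of \lemref{l:when pro proj one} to an honest bounded sheaf $F$ (which is automatically convergent), $F$ is pro-projective if and only if $F\in \QCoh(S)^\heartsuit$, i.e. $H^{-1}(F)=0$, together with $H^0(F)$ projective, since for $F$ in the heart the functor $\CG\mapsto \pi_0(F(\CG))=\Hom(F,\CG)$ is right exact on $\QCoh(S)^\heartsuit$ exactly when $F$ is projective.

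For the forward direction, suppose $X_{cl}$ is classically formally smooth. For an affine open $j:U\hookrightarrow X_{cl}$ the functor $j^*$ is exact, so $^{\geq -1}(T^*_j X_{cl})=(T^*X_{cl})|_U$; applying the proposition with $x=j$ and the translation above gives $H^{-1}(T^*X_{cl})|_U=0$ and $(H^0(T^*X_{cl}))|_U$ projective over $\CO(U)$. Letting $U$ range over an affine cover yields conditions (a) and (b).

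For the converse, assume (a) and (b). Condition (a) gives $T^*X_{cl}=\CE$ with $\CE:=H^0(T^*X_{cl})\in \QCoh(X_{cl})^\heartsuit$, and (b) makes $\CE$ projective, hence flat, over every affine open; flatness guarantees that for any $x:S\to X_{cl}$ the pullback $x^*\CE$ is concentrated in degree $0$, so that $^{\geq -1}(T^*_x X_{cl})=x^*\CE$. It remains to show $x^*\CE$ is projective over $\CO(S)$. The main obstacle is that the image of $x$ need not lie in a single affine open, so projectivity cannot be read off directly; this is precisely the non-locality phenomenon flagged in \secref{sss:formal smoothness and cotangent bug}. I would resolve it by covering $S$ by affine opens $S_{ij}$ each mapping into some affine open $U_i$ of $X_{cl}$: on each $S_{ij}$ the restriction of $x^*\CE$ is the pullback of the projective module $\CE|_{U_i}$ under a map of affine schemes, hence projective. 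Thus $x^*\CE$ is Zariski-locally projective on $S$, and the Raynaud--Gruson theorem \cite{RG} (the honest-sheaf locality noted in the remark after \lemref{l:locality of proj}) upgrades this to global projectivity of $x^*\CE$ over $\CO(S)$. Therefore $^{\geq -1}(T^*_x X_{cl})$ is pro-projective for all $x$, and \propref{p:estimate on cl cotangent} gives that $X_{cl}$ is classically formally smooth. The one point requiring care beyond routine bookkeeping is exactly this appeal to \cite{RG}, needed because $\CE$ may fail to be finitely generated, so that \lemref{l:locality of proj}, which is valid only for pro-coherent objects, does not apply.
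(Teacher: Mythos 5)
Your proposal is correct and follows essentially the same route as the paper: reduce to \propref{p:estimate on cl cotangent}, observe that for a scheme the pro-cotangent space is an honest sheaf whose pro-projectivity amounts to conditions (a) and (b), and then, for the converse, note that projectivity of $x^*(T^*X_{cl})$ over an arbitrary affine $S$ mapping to $X_{cl}$ is known Zariski-locally on $S$ and is globalized by the Raynaud--Gruson theorem. The paper's own proof is just a terser version of exactly this argument, with the flatness/degree-$0$ bookkeeping left implicit.
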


\begin{proof}

We only need to show that if $S$ is an affine scheme mapping to $X_{cl}$, then the pull-back of
$T^*X_{cl}$ to it is projective. By assumption, we know this locally in the Zariski topology on $S$.
The assertion now follows from the theorem of Raynaud-Gruson mentioned earlier that
projectivity of a module is a Zariski-local property.

\end{proof}

\sssec{}

The following somewhat technical assertion will be needed in the sequel:

\medskip

Let $\CX_{cl}$ be a classical formal scheme with the underlying reduced scheme $X$,
and set 
$$\CX:=^L\!\on{LKE}_{(\affSch)^{\on{op}}\hookrightarrow (\affdgSch)^{\on{op}}}(\CX_{cl}).$$

\begin{cor} \label{c:estimate on cl cotangent formal}
Suppose that $\CX_{cl}$ is locally of finite type, and that $^{\geq -1}(T^*\CX|_X)$ is locally 
pro-projective. Then $\CX_{cl}$ is classically formally smooth.
\end{cor}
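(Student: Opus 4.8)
The plan is to verify the criterion of \propref{p:estimate on cl cotangent}: it suffices to show that for every $S\in\affSch$ and every $x:S\to\CX_{cl}$, the object ${}^{\geq -1}(T^*_x\CX)\in\on{Pro}(\QCoh(S)^{\geq -1,\leq 0})$ is pro-projective. I would organize the verification into a global statement at the reduced level, a reduction of arbitrary test schemes to the finite-type thickenings of $X$, and finally a passage from the reduced scheme to those thickenings, which is where I expect the real work to be.

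First I would record the global reduced statement. Since $\CX_{cl}$ is locally of finite type, its reduced scheme $X$ is of finite type, so by the (a)$\Rightarrow$(b) direction of \propref{p:char finite type new} the object ${}^{\geq -1}(T^*\CX|_X)$ is pro-coherent. Being pro-coherent and locally pro-projective by hypothesis, \lemref{l:locality of proj} upgrades it to a \emph{globally} pro-projective object, i.e. ${}^{\geq -1}(T^*\CX|_X)\in\on{Pro}(\Coh(X)^{\heartsuit,\on{proj}})$. Next I would reduce a general test scheme $S$ to the thickenings appearing in a presentation $\CX_{cl}\simeq\underset{\alpha}{colim}\,X_\alpha$ with $X_\alpha$ closed in $\CX_{cl}$; by \lemref{l:classical lft} each $X_\alpha\in\Sch_{\on{ft}}$, and since $S$ is affine the map $x$ factors through some $\iota_{\alpha_0}:X_{\alpha_0}\to\CX_{cl}$ via $x_{\alpha_0}:S\to X_{\alpha_0}$. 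Compatibility of pro-cotangent spaces with pullbacks (Condition (B), \eqref{e:cotangent cmpx for presheaf}) gives
$$ {}^{\geq -1}(T^*_x\CX)\simeq\on{Pro}(x_{\alpha_0}^*)\left({}^{\geq -1}(T^*_{\iota_{\alpha_0}}\CX)\right), $$
and since $\on{Pro}(f^*)$ carries pro-projective objects to pro-projective objects, it is enough to prove that $G:={}^{\geq -1}(T^*_{\iota_{\alpha_0}}\CX)$ is pro-projective over the (possibly non-reduced) finite-type scheme $X_{\alpha_0}$. By \eqref{e:expl cotangent indsch} and the finite-typeness of the $X_\beta$, the object $G$ is pro-coherent, and Condition (B) along the reduction $\iota_X:X={}^{red}X_{\alpha_0}\hookrightarrow X_{\alpha_0}$ identifies $\on{Pro}(\iota_X^*)(G)$ with ${}^{\geq -1}(T^*\CX|_X)$, which is pro-projective by the previous step.

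The heart of the argument — and the step I expect to be the main obstacle — is to pass from pro-projectivity over the reduction $X$ to pro-projectivity of $G$ over $X_{\alpha_0}$; this is exactly where the \emph{formal} (rather than honest) scheme structure is used, since for an ordinary scheme pro-projectivity over the reduction need not propagate (as $\Spec k[\epsilon]/\epsilon^2$ shows). Working Zariski-locally, which is permissible because $G$ is pro-coherent and pro-projectivity is Zariski-local by \lemref{l:locality of proj}, I would write $X_{\alpha_0}=\Spec(R)$ with nilradical $\mathfrak{n}$, which is \emph{nilpotent} since $R$ is Noetherian. The mechanism is a pro-object version of Nakayama's lemma and of the local criterion of flatness, in the spirit of \lemref{l:stab ker} and \propref{p:class vs derived formal}: because $\mathfrak{n}^d=0$ for some $d$, any transition map in the defining pro-system that becomes zero after $\otimes_R R/\mathfrak{n}$ has image in $\mathfrak{n}(-)$, so the composite of $d$ consecutive transition maps vanishes, whence a pro-object that is pro-zero modulo $\mathfrak{n}$ is already pro-zero. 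Applying this to $H^{-1}(G)$ (via the sequence relating $H^{-1}(\on{Pro}(\iota_X^*)G)$ to $H^{-1}(G)\otimes_R R/\mathfrak{n}$) shows $G\in\on{Pro}(\QCoh(X_{\alpha_0})^\heartsuit)$, and applying the flatness criterion to $H^0(G)$ — whose reduction is pro-projective and whose $\on{Tor}_1$ against $R/\mathfrak{n}$ is pro-trivial — promotes $G$ to a pro-projective object through the characterizations of \lemref{l:when pro proj one}.

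With $G$ pro-projective, the reductions above yield that ${}^{\geq -1}(T^*_x\CX)$ is pro-projective for every $S$ and every $x:S\to\CX_{cl}$, so \propref{p:estimate on cl cotangent} gives that $\CX_{cl}$ is classically formally smooth. The delicate point to get right is the pro-coherence input that justifies the module-theoretic arguments over the pro-system, and the bookkeeping that the nilpotency of $\mathfrak{n}$ provides a \emph{uniform} bound $d$ making the Artin--Rees-type vanishing effective; everything else is a translation of standard lifting criteria into the language of $\on{Pro}(\QCoh)$.
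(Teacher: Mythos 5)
Your proposal is correct in outline but takes a genuinely different route from the paper's. The paper proves this corollary exactly the way it proves \corref{c:estimate on cotangent formal}: the locally-of-finite-type hypothesis is used to reduce the criterion of \propref{p:estimate on cl cotangent} to \emph{reduced} test schemes $S$ (the same filtered-colimit mechanism as in the implication (c) $\Rightarrow$ (b) of \propref{p:formal smoothness through deformations}, via \lemref{l:char coh}); a reduced affine $S$ mapping to the formal scheme $\CX_{cl}$ factors through $X$, and one concludes by pulling back $^{\geq -1}(T^*\CX|_X)$, which is globally pro-projective by pro-coherence (\propref{p:char finite type new}) plus \lemref{l:locality of proj} --- this last step you reproduce verbatim. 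You instead keep $S$ arbitrary, factor $x$ through a finite-type (hence Noetherian) thickening $X_{\alpha_0}$, and transfer the conclusion from $X$ to $X_{\alpha_0}$ across the nilpotent ideal $\fn$. This is a legitimate alternative: it confines the non-reduced difficulty to a Noetherian ring where $\fn^d=0$ genuinely holds and the d\'evissage is elementary, at the price of an extra lifting step that the paper's reduction to reduced $S$ avoids.

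One caution about that lifting step. As you yourself observe with $\Spec(k[\epsilon]/\epsilon^2)$, pro-projectivity does not in general ascend from the reduction, and the mechanism you propose --- pro-Nakayama applied to $H^{-1}(G)$ and a local flatness criterion applied to $H^0(G)$ --- is harder to close than the sketch suggests: the comparison between $H^{-1}(G_j)\otimes_R R/\fn$ and $H^{-1}$ of the (truncated) derived restriction is contaminated by $\on{Tor}_{\geq 1}(H^0(G_j),R/\fn)$ terms, which are not pro-zero a priori, so pro-vanishing of the latter does not immediately yield pro-vanishing of the former. Fortunately you do not need full pro-projectivity of $G$ over $X_{\alpha_0}$: the ``if'' direction of \propref{p:estimate on cl cotangent} only uses the vanishing of $\pi_0\left(\Maps({}^{\geq -1}(T^*_x\CX),\CF[1])\right)$ for $\CF$ in the heart. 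For $\CF$ an $R/\fn$-module this follows directly from the derived adjunction $\Maps_R(G_j,\CF[1])\simeq \Maps_{R/\fn}(G_j\otimes^L_R R/\fn,\CF[1])$ (the $\tau^{\geq -1}$ truncation is harmless in this cohomological degree) together with the pro-projectivity of $^{\geq -1}(T^*\CX|_X)$; the finite filtration $\CF\supset \fn\CF\supset\dots\supset \fn^d\CF=0$ and the long exact sequence then extend the vanishing to all $\CF$ in the heart. I would replace the Nakayama/flatness step by this d\'evissage; with that substitution your argument closes.
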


This follows from \propref{p:estimate on cl cotangent} in the same way as \corref{c:estimate on cotangent formal} 
follows from \propref{p:formal smoothness through deformations}.

\ssec{Classical formally smooth indschemes locally of finite type case}

In this subsection we will reduce \thmref{t:classical vs derived ft} to a key
proposition (\propref{p:descr of formal completion modified}) that describes
the general shape of formal classical schemes locally of finite type. 

\sssec{}

In fact, we will prove the following stronger assertion. Let $\CX_{cl}$ be as in \thmref{t:classical
vs derived ft}, and let ${}^{\on{Zar}}\CX^{\wedge}_Y$ denote the \emph{Zariski} sheafification of the prestack
$\!\on{LKE}_{(\affSch)^{\on{op}}\hookrightarrow (\affdgSch)^{\on{op}}}(\CX_{cl})$. We will show that
${}^{\on{Zar}}\CX$ is a formally smooth DG indscheme. Indeed, in this case, by \propref{p:indschemes fppf}, the natural
map $$ {}^{\on{Zar}}\CX \to \CX $$ is then an isomorphism.

\sssec{}
In what follows, let $\on{ZarStk} \subset \inftydgprestack$ be the full subcategory of prestacks satisfying Zariski descent,
$L_{\on{Zar}}: \on{PreStk} \to \on{ZarStk}$ denote the left adjoint to the inclusion, i.e. Zariski sheafification, and
$$ ^{L_{\on{Zar}}}\!\on{LKE}_{(\affSch)^{\on{op}}\hookrightarrow (\affdgSch)^{\on{op}}} := L_{\on{Zar}} \circ \on{LKE}_{(\affSch)^{\on{op}}\hookrightarrow (\affdgSch)^{\on{op}}}.$$

\sssec{}
Now, let $\CX_{cl}\simeq \underset{\alpha}{colim}\, X_{\alpha}$, be a presentation of $\CX_{cl}$ where the 
colimit is taken in $^{cl}\!\on{PreStk}$.  Recall that if $Y_{cl}$ is a classical scheme, then
$$^{L_{\on{Zar}}}\!\on{LKE}_{(\affSch)^{\on{op}}\hookrightarrow (\affdgSch)^{\on{op}}}({}^{cl}(Y_{cl})) 
\simeq ^{L}\!\on{LKE}_{(\affSch)^{\on{op}}\hookrightarrow (\affdgSch)^{\on{op}}}({}^{cl}(Y_{cl}))$$
is a DG scheme.  Therefore,
$$ {}^{\on{Zar}}\CX \simeq \underset{\alpha}{colim}\, {}^L\!\on{LKE}_{(\affSch)^{\on{op}}\hookrightarrow 
(\affdgSch)^{\on{op}}}{}^{cl}(X_{\alpha}), $$
where the colimit is taken in the $\on{ZarStk}$.  Now, by \propref{p:indschemes fppf n}, we 
have that for every $n$
$$ ^{\leq n}({}^{\on{Zar}}\CX) \simeq \underset{\alpha}{colim}\,  
^{^{\leq n}\!L}\!\on{LKE}_{(\affSch)^{\on{op}}\hookrightarrow (\affdgSch)^{\on{op}}}{}^{cl}(X_{\alpha}) $$
where the colimit is taken in $^{\leq n}\!\inftydgprestack$.  Thus, $^{\leq n}({}^{\on{Zar}}\CX)$ is a 
$\nDG$ indscheme.  Hence, by definition, ${}^{\on{Zar}}\CX$ is a DG indscheme iff it is convergent.  In 
particular, to prove \thmref{t:classical vs derived ft}, it suffices to prove that ${}^{\on{Zar}}\CX$ is a 
formally smooth prestack.

\medskip

Let $Y$ be a reduced classical scheme and let $Y\subset {}^{red}(\CX_{cl})$ be a closed embedding.
(Note that such a $Y$ is automatically locally of finite type.) By \propref{p:formal smoothness of formal
completions}, in order to prove that ${}^{\on{Zar}}\CX$ is a formally smooth, it suffices to show that that
the formal completion ${}^{\on{Zar}}\CX^{\wedge}_Y$ is formally smooth.

Now, let $\sqcup U_i \to Y$ be a Zariski cover of $Y$ by affine schemes.
By \corref{c:completion along open embedding}, we have that
$$ \sqcup\ {}^{\on{Zar}}\CX^{\wedge}_{U_i} \to {}^{\on{Zar}}\CX^{\wedge}_Y $$
is a Zariski cover. It will suffice to
show that each ${}^{\on{Zar}}\CX^{\wedge}_{U_i}$ is formally smooth (and, in particular, convergent).
Indeed, by the same argument as \cite[Proposition 4.5.2]{Stacks}, convergence is Zariski local. Hence, in
this case, ${}^{\on{Zar}}\CX^{\wedge}_Y$ is convergent and therefore, by the same argument as above, a DG
indscheme. In particular, by \corref{c:indsch have def theory}, it admits connective deformation theory.
Finally, by \propref{p:formal smoothness through deformations} and \lemref{l:locality of proj}, formal
smoothness of the ${}^{\on{Zar}}\CX^{\wedge}_{U_i}$ implies that $ {}^{\on{Zar}}\CX^{\wedge}_Y$ is formally
smooth.  Thus, replacing $Y$ by an $U_i$ (and $\CX_{cl}$ appropriately), we reduce to the case that $Y$ is 
affine.

\sssec{}
We will prove that $\CX^{\wedge}_Y$ is formally smooth by 
quoting/reproving the following result (see \cite[Proposition 7.12.23]{BD}). 
This proposition will also be useful to us in the sequel. 

\begin{prop} \label{p:descr of formal completion modified}
Let $\CZ_{cl}$ be a classical formal scheme. Assume that:

\begin{itemize}

\item As a classical indscheme, $\CZ_{cl}$ is locally of finite type and $\aleph_0$. 

\item The classical scheme $^{red}(\CZ_{cl})$ is affine.

\item $\CZ_{cl}$ is classically formally smooth.

\end{itemize}

Then $\CZ_{cl}$ is isomorphic 
to retract of a filtered colimit, taken in $^{cl}\!\inftydgprestack$, of classical formal schemes
each of which is \emph{elementary} (see \secref{sss:elementary}). 
\end{prop}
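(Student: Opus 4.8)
The goal is to show that a classically formally smooth $\aleph_0$ formal scheme $\CZ_{cl}$ with affine reduced part $Z_{red}:={}^{red}(\CZ_{cl})$ and locally of finite type is a retract of a filtered colimit of elementary formal schemes. The first thing I would do is record the structural data we have at hand. By \corref{c:estimate on cl cotangent formal} (or directly by \propref{p:estimate on cl cotangent}), classical formal smoothness together with the local finite type hypothesis says exactly that $^{\geq -1}(T^*\CZ|_{Z_{red}})$ is locally pro-projective, and in fact, since $Z_{red}$ is of finite type, pro-coherent and concentrated in the heart after shift, i.e. of the form $``\lim"\,\CP_\alpha$ with $\CP_\alpha\in\Coh(Z_{red})^{\heartsuit,\on{proj}}$. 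Because $Z_{red}$ is affine and reduced, I would choose finitely many functions assembling into maps that present $Z_{red}$ as cut out inside an affine space, and I would choose a presentation of the pro-cotangent space at the tautological point $x:Z_{red}\to\CZ_{cl}$ by a cofiltered system of finitely generated free modules; the $\aleph_0$ hypothesis (via \propref{p:crit for aleph 0}) guarantees this system can be indexed by $\BN$.

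\textbf{Key steps.}
First I would build, for each index in the $\BN$-system, an elementary formal scheme $\CW_n$ of the type in \secref{sss:Jacobi ft}: the free generators $x_i$ and the completion variables $y_j$ come from a presentation of $Z_{red}$ inside $\BA^n$ and from the completion directions recorded by $T^*\CZ|_{Z_{red}}$, while the equations $f_1,\dots,f_k$ (with non-degenerate Jacobi matrix along $X$) are chosen to match, at the level of cotangent complexes, the chosen projective presentation. The point is that an elementary $\CW_n$ is exactly the universal object whose $^{\geq -1}(T^*\CW_n|_X)$ is the free two-term complex dictated by the Jacobi data, by \corref{c:Jacobi matrix ft}. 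Second, I would produce, by induction on $n$ using the deformation-theoretic interpretation of square-zero extensions (\lemref{l:reform C}, \lemref{l:sq zero filtered}) and the classical formal smoothness of $\CZ_{cl}$, a compatible family of maps $\CW_n\to\CZ_{cl}$ together with maps $\CZ_{cl}\to\CW_n$ splitting them up to the relevant truncation. Classical formal smoothness is what lets me lift maps across the nilpotent thickenings built into the elementary schemes, and $\aleph_0$ is what lets me arrange the lifts into a single $\BN$-indexed tower. Taking the colimit $\underset{n}{colim}\,\CW_n$ in $^{cl}\!\inftydgprestack$ and assembling the splittings exhibits $\CZ_{cl}$ as a retract.

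\textbf{Assembling the retract.}
The heart of the argument is the retraction: I would show that the composite $\CZ_{cl}\to\underset{n}{colim}\,\CW_n\to\CZ_{cl}$ is an isomorphism. Both sides are classical formal schemes, locally of finite type, affine reduced part, and formally smooth, so by \propref{p:formal smoothness through deformations}(c) and \propref{p:estimate on cl cotangent} it suffices to check that the induced map on pro-cotangent spaces at the tautological point is an isomorphism. By construction the elementary $\CW_n$ were chosen so that $\underset{n}{colim}$ reproduces the chosen projective presentation of $^{\geq -1}(T^*\CZ|_{Z_{red}})$, and then \lemref{l:locality of proj} together with the two-out-of-three and convergence properties of \lemref{l:when pro proj one} force the map to be an isomorphism of pro-objects. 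Since the cotangent comparison is an isomorphism on a cofinal family of closed points, \propref{p:formal smoothness through deformations} upgrades this to an isomorphism of formal schemes, yielding the retract.

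\textbf{Main obstacle.}
The step I expect to be the crux is the inductive construction of the compatible tower $\{\CW_n\}$ together with the maps to and from $\CZ_{cl}$, i.e. matching the abstract pro-projective presentation of the cotangent space to \emph{honest geometric} elementary models in a way compatible across the $\BN$-tower. Producing the individual elementary $\CW_n$ with the prescribed Jacobi data is routine given \secref{sss:Jacobi ft}, and producing a single lift is deformation theory; the difficulty is the homotopy-coherent compatibility of the whole system and arranging that the two composites are inverse up to the needed truncation at every stage. This is precisely the place where the $\aleph_0$ hypothesis is essential — it converts a potentially unwieldy filtered diagram of choices into a linearly ordered one where the Mittag-Leffler/\,$lim^1$-type arguments (as used, e.g., in the proof of \propref{p:2nd t st}) apply and let the retraction close up.
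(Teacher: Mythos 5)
Your architecture diverges from the paper's in a way that leaves a genuine gap at the central step: the construction of the elementary models $\CW_n$ and of the maps relating them to $\CZ_{cl}$. You propose to choose the defining equations $f_1,\dots,f_k$ of each $\CW_n$ ``to match, at the level of cotangent complexes, the chosen projective presentation,'' and you describe an elementary formal scheme as ``the universal object'' with a prescribed free two-term cotangent complex. Neither claim holds: the pro-cotangent space at the tautological point only records the linearization of the ideal (the four-term sequence $0\to H^{-1}(T^*Z)\to \CI|_Z\to H^0(T^*\CZ|_Z)\to H^0(T^*Z)\to 0$ of the paper's Step 0), and a two-term complex of free modules neither determines an elementary formal scheme nor produces a map between $\CZ_{cl}$ and one. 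To get anywhere you need actual elements of the ideal, not just their symbols. This is exactly what the paper's Steps 2--3 supply: one chooses a pro-free $Q=\underset{m}{``lim"}\,Q_m$ together with an honest map $f:Q\to \CI|_Z$ (produced by splitting the surjection $R:=\CI|_Z\underset{P}\times Q\twoheadrightarrow Q$, which is where pro-projectivity and the $\BN$-indexing are used), and then the images of the subobjects $Q^m=\on{ker}(Q\to Q_m)$ define genuine sub-ideals $\CJ^m_n\subset \CI_n$ cutting out formal subschemes $\CZ^m_{cl}$ \emph{inside} $\CZ_{cl}$; these are the elementary pieces, and $\CZ_{cl}$ is literally their colimit. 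Your external models come with no specified equations and no specified maps, so there is nothing to compose in the ``assembling the retract'' step, and the cotangent computation you invoke there cannot be carried out because the map $\underset{n}{colim}\,\CW_n\to\CZ_{cl}$ on which it depends has not been defined.

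A second, related gap is that you misplace the source of the word ``retract.'' You assert that the pro-cotangent space is presented by an $\BN$-indexed system of finitely generated \emph{free} modules; what the hypotheses actually give (via \propref{p:estimate on cl cotangent}, \propref{p:char finite type new} and \propref{p:crit for aleph 0}) is that it is $\aleph_0$ and lies in $\on{Pro}(\Coh(Z)^{\heartsuit,\on{proj}})$. Passing from pro-projective to pro-free is a nontrivial point of the Mittag--Leffler module theory of \cite{BD} (Proposition 7.12.6, Theorem 7.12.8, Proposition 7.12.14), and it is achieved only after multiplying $\CZ$ by an auxiliary formally smooth indscheme --- that multiplication is the one and only reason the conclusion reads ``retract of a colimit'' rather than ``colimit.'' By silently assuming freeness and instead trying to manufacture the retraction from a pair of maps $\CZ_{cl}\rightleftarrows \underset{n}{colim}\,\CW_n$, you both skip the step that actually requires the retract and introduce a construction the argument does not support: classical formal smoothness lets you lift maps \emph{into} a formally smooth target along nilpotent thickenings of an affine source, but it does not give you the coherent compatibility of these lifts across the tower, nor the identification of the composite with the identity, that your closing cotangent argument presupposes.
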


\sssec{}

Let us deduce \thmref{t:classical vs derived ft} from \propref{p:descr of formal completion modified}.
This will be done via a series of lemmas. First, we have:

\begin{lem}  \label{l:classical vs derived on ind}
For any $\CX_{cl}\in {}^{\leq 0}\!\dgindSch_{\on{lft}}$,
a reduced classical scheme $Y$ and a closed embedding $Y\subset {}^{red}(\CX_{cl})$, the
canonical map 
$$^{L_{\on{Zar}}}\!\on{LKE}_{(\affSch)^{\on{op}}\hookrightarrow (\affdgSch)^{\on{op}}}({}^{cl}(\CX^\wedge_Y))\to \CX^\wedge_Y,$$
where $\CX:={}^{L_{\on{Zar}}}\!\on{LKE}_{(\affSch)^{\on{op}}\hookrightarrow (\affdgSch)^{\on{op}}}(\CX_{cl})$, is an isomorphism.
\end{lem}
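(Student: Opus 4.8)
The plan is to establish the isomorphism by comparing the two sides over affine test schemes, reducing everything to the corresponding statement for DG schemes. The key structural input is that formal completion commutes with filtered colimits (see \secref{sss:remarks on compl}(iii)) and that the formation of pro-cotangent complexes is compatible with formal completion (see \secref{sss:remarks on compl}(ii)). Since $\CX_{cl}\in {}^{\leq 0}\!\dgindSch_{\on{lft}}$ is locally of finite type, by \propref{p:canonical presentation of laft indsch} we may write $^{cl}(\CX^\wedge_Y)$ as a filtered colimit $\underset{\alpha}{colim}\, Y'_\alpha$, where the $Y'_\alpha\to \CX_{cl}$ are closed embeddings of classical schemes of finite type whose set-theoretic image lies in $Y$.

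First I would reduce the statement to the case where $\CX_{cl}$ is itself a classical scheme. Using \secref{sss:remarks on compl}(iii), if we write $\CX_{cl}\simeq \underset{\alpha}{colim}\, X_\alpha$ with $X_\alpha\in \Sch_{\on{ft}}$ closed in $\CX_{cl}$ and set $Y_\alpha:=Y\cap {}^{cl,red}\!X_\alpha$, then $\CX^\wedge_Y\simeq \underset{\alpha}{colim}\, (X_\alpha)^\wedge_{Y_\alpha}$, and similarly $^{cl}(\CX^\wedge_Y)\simeq \underset{\alpha}{colim}\, {}^{cl}((X_\alpha)^\wedge_{Y_\alpha})$. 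Because $^L\!\on{LKE}_{(\affSch)^{\on{op}}\hookrightarrow (\affdgSch)^{\on{op}}}$ is a left adjoint, it commutes with colimits, so it suffices to prove that each
$$^L\!\on{LKE}_{(\affSch)^{\on{op}}\hookrightarrow (\affdgSch)^{\on{op}}}({}^{cl}((X_\alpha)^\wedge_{Y_\alpha}))\to (X_\alpha)^\wedge_{Y_\alpha}$$
is an isomorphism. Here I would invoke \propref{p:class vs derived formal}, whose hypothesis is precisely that $X_\alpha$ is Noetherian; since $X_\alpha$ is of finite type over $k$, it is Noetherian, and the proposition applies directly to the formal completion of a Noetherian classical scheme along a Zariski-closed subset.

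The main obstacle lies in handling the interaction between the colimit defining $\CX_{cl}$ and the sheafification built into $^L\!\on{LKE}$ and into the formal completion functor. Specifically, $^{cl}(\CX^\wedge_Y)$ is a colimit taken in the category of classical \emph{stacks} (via \lemref{l:compl and sheafification}, formal completion of a stack is a stack), whereas the presentation \propref{p:class vs derived formal} addresses affine (or at least honestly schematic) pieces. I expect to need \lemref{l:compl and sheafification} to commute the sheafification functor $L$ past formal completion, ensuring that the colimit comparison is compatible with the $\on{Stk}$-structure on both sides. The careful point is that both $X_\alpha$ and the transition maps are genuine, so \propref{p:class vs derived formal} gives a termwise isomorphism which, after applying the colimit (preserved by the left adjoint $^L\!\on{LKE}$) and using the compatibility of formal completion with colimits, assembles into the desired global isomorphism.

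In summary, the route is: (1) reduce to the scheme case by writing $\CX_{cl}$ as a filtered colimit of finite-type closed subschemes and using that $\CX\mapsto \CX^\wedge_Y$ and $^L\!\on{LKE}$ both commute with filtered colimits; (2) on each term $X_\alpha$, apply the Noetherian comparison \propref{p:class vs derived formal}; (3) reassemble, using \lemref{l:compl and sheafification} to match the stack-theoretic colimits on the two sides. The hard part is tracking the sheafifications and confirming that the termwise isomorphisms of \propref{p:class vs derived formal} are natural in $\alpha$ so that they glue to an isomorphism of the colimits; everything else is formal manipulation of adjoint functors.
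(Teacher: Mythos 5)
Your proposal is correct and follows essentially the same route as the paper's own proof: present $\CX_{cl}$ as a filtered colimit of finite-type closed subschemes, commute formal completion and $^L\!\on{LKE}$ past the colimit using \secref{sss:remarks on compl}(iii) and \lemref{l:compl and sheafification}, and apply the Noetherian comparison \propref{p:class vs derived formal} termwise. The only cosmetic difference is that the paper arranges for $Y$ to be contained in each $X_\alpha$ rather than intersecting, which is equivalent.
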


\begin{proof} 

Let $\CX_{cl}\simeq \underset{\alpha}{colim}\, X_{\alpha}$, where the colimit is taken in $^{cl}\!\on{PreStk}$, and each $X_\alpha$ is a finite type scheme.
Then
$$\CX\simeq \underset{\alpha}{colim}\, X_\alpha,$$
where the colimit is taken in $\on{ZarStk}$. 

\medskip

Without loss of generality, we can assume that $Y$ is contained in each $X_{\alpha}$. Then 
$${}^{cl}(\CX^\wedge_Y)\simeq \underset{\alpha}{colim}\, {}^{cl}((X_{\alpha})^\wedge_Y),$$
and the left-hand side in the lemma is
$$\underset{\alpha}{colim}\, 
{}^{L_{\on{Zar}}}\!\on{LKE}_{(\affSch)^{\on{op}}\hookrightarrow (\affdgSch)^{\on{op}}}{}^{cl}((X_{\alpha})^\wedge_Y),$$
where the colimit is taken in $\on{ZarStk}$.

\medskip

We now claim that the map
$$L_{\on{Zar}}\left(\underset{\alpha}{colim}\, ((X_\alpha)^{\wedge}_Y)\right)\to
\left(L_{\on{Zar}}(\underset{\alpha}{colim}\, X_\alpha)\right)^{\wedge}_Y=\CX^\wedge_Y,$$
where both colimits are taken in $\on{ZarStk}$,
is an isomorphism. This follows from \lemref{l:compl and tau sheafification} and 
\secref{sss:remarks on compl}(iii).

\medskip

Thus, the right-hand side in the lemma identifies with 
$\underset{\alpha}{colim}\, (X_\alpha)^{\wedge}_Y$, where the colimit is taken in 
$\on{ZarStk}$.

\medskip

Hence, to prove the lemma, it suffices to show that for every $\alpha$, the canonical map
$$^{L_{\on{Zar}}}\!\on{LKE}_{(\affSch)^{\on{op}}\hookrightarrow (\affdgSch)^{\on{op}}}{}^{cl}((X_{\alpha})^\wedge_Y)\to
(X_\alpha)^{\wedge}_Y$$
is an isomorphism. However, this is the content of \propref{p:class vs derived formal} (see Remark \ref{r:derived formal as zar sheafification}),
which is applicable since $X_{\alpha}$ is of finite type, and in particular, Noetherian. 

\end{proof}

\begin{lem} \label{l:LKE of formal completion}
Let $\CZ_{cl}$ be as in \propref{p:descr of formal completion modified}.
Then
$$^{L_{\on{Zar}}}\!\on{LKE}_{(\affSch)^{\on{op}}\hookrightarrow (\affdgSch)^{\on{op}}}(\CZ_{cl})$$
is a formally smooth DG indscheme.
\end{lem}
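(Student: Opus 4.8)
The plan is to reduce, via \propref{p:descr of formal completion modified}, to the case of \emph{elementary} formal DG schemes, and then to verify the hypotheses of \thmref{t:char by deform} together with the homotopy-group criterion for formal smoothness, with convergence extracted at the end from the explicit presentation.

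First I would apply \propref{p:descr of formal completion modified} to present $\CZ_{cl}$ as a retract, in $^{cl}\!\inftydgprestack$, of a filtered colimit $\underset{\alpha}{colim}\, \CW_{\alpha,cl}$ of elementary classical formal schemes; since $\CZ_{cl}$ is $\aleph_0$ I may take the index category to be the poset $\BN$. Each $\CW_{\alpha,cl}$ equals $^{cl}\CW_\alpha$ for an elementary formal DG scheme $\CW_\alpha$ (see \secref{sss:elementary}), and by \propref{p:Jacobi classical ft} each $\CW_\alpha$ is $0$-coconnective, i.e. $\CW_\alpha\simeq {}^L\!\on{LKE}_{(\affSch)^{\on{op}}\hookrightarrow (\affdgSch)^{\on{op}}}(\CW_{\alpha,cl})$. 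As $^L\!\on{LKE}$ is a fully faithful left adjoint it commutes with colimits and with retracts, so $\CX$ is exhibited as a retract, in $\inftydgstack$, of
$$\CW:=\underset{\alpha}{colim}\, \CW_\alpha,$$
each $\CW_\alpha$ being a formally smooth (\corref{c:Jacobi matrix ft}) DG indscheme locally almost of finite type (\corref{c:formal lft}). Full faithfulness of $^L\!\on{LKE}$ also yields $^{cl}\CX\simeq \CZ_{cl}$, a classical indscheme.

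Next I would analyze $\CX$ one truncation at a time. All the $\CW_\alpha$, and hence $\CX$, are $0$-truncated, and for each $n$ the objects $^{\leq n}\CW_\alpha$ are $\nDG$ indschemes, hence fppf sheaves (\propref{p:indschemes fppf n}) that are uniformly truncated. By \lemref{l:no sheafification} the colimit $^{\leq n}\CW\simeq \underset{\alpha}{colim}\, {}^{\leq n}\CW_\alpha$ is already a sheaf, computed objectwise on $^{\leq n}\!\affdgSch$. Consequently $^{\leq n}\CW$ inherits indscheme-like Conditions (A), (B) and (C): each $^{\leq n}\CW_\alpha$ satisfies them (\corref{c:A for indsch} and its analogues for (B) and (C)), these conditions assert that certain pushout squares of affines go to fiber products, and filtered colimits in $\inftygroup$ commute with finite limits. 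Since Conditions (A), (B), (C) are manifestly preserved by retracts (a retract of a Cartesian square is Cartesian), the prestack $^{\leq n+1}\CX$ satisfies them as well. As $^{cl}\CX\simeq \CZ_{cl}$ is a classical indscheme, \thmref{t:char by deform} then gives $^{\leq n}\CX\in {}^{\leq n}\!\dgindSch$ for every $n$.

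It remains to establish convergence of $\CX$, which is the main obstacle, and formal smoothness. The truncated parts of the homotopy-group condition of \defnref{d:formally smooth} transfer from the formally smooth $\CW_\alpha$ to $^{\leq n}\CW$ by the same objectwise-colimit argument (homotopy groups commute with filtered colimits and with retracts): for $S\in {}^{\leq i}\!\affdgSch$ and $j\le i$ the map $\CX(S)\to \CX({}^{\leq j}S)$ is an isomorphism on $\pi_j$. Hence, by \lemref{l:formal smoothness via homotopy groups}, once $\CX$ is known to be convergent it follows at once that $\CX$ is a formally smooth DG indscheme (and, being convergent with $\nDG$-indscheme truncations, a DG indscheme). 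To prove convergence I would return to the explicit $\BN$-indexed presentation of the elementary pieces (as in \propref{p:expl pres}) and show that the tower $\{\CX({}^{\leq m}S)\}_m$ satisfies a Mittag--Leffler stabilization forcing the relevant $\lim^{1}$-obstruction to vanish, so that $\CX(S)\overset{\sim}\to \underset{m}{lim}\,\CX({}^{\leq m}S)$; this is where the uniform $0$-coconnectivity of the $\CW_\alpha$ and an Artin--Rees type argument (compare \lemref{l:Artin-Rees alg}) must be combined with the fact that, by \lemref{l:no sheafification}, no sheafification intervenes at any finite truncation level. The delicate point is precisely the interchange of the filtered colimit over $\alpha$ with the limit over truncations in the presence of the sheafification defining $\CX$ on non-truncated test schemes; everything else is a formal consequence of the deformation theory already developed.
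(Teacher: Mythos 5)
Your reduction to the elementary case via \propref{p:descr of formal completion modified} and your treatment of the truncated levels via \lemref{l:no sheafification} match the paper's strategy (the detour through Conditions (A), (B), (C) and \thmref{t:char by deform} is unnecessary: once $^{\leq n}\CW\simeq \underset{\alpha}{colim}\,{}^{\leq n}\CW_\alpha$ is computed objectwise, it is a filtered colimit of $\nDG$ indschemes along closed embeddings and hence a $\nDG$ indscheme directly). The genuine gap is the convergence step, which you correctly single out as the main obstacle but then attack with the wrong tools. A Mittag--Leffler analysis of the tower $\{\CX(\tau^{\leq m}S)\}_m$ can only compute $\underset{m}{lim}\,\CX(\tau^{\leq m}S)$; it says nothing about the value $\CX(S)$ on a \emph{non-truncated} test scheme $S$, which is exactly where the sheafification you have already performed becomes opaque: the $\CW_\alpha$ restricted to all of $\affdgSch$ are not uniformly truncated, so \lemref{l:no sheafification} cannot be invoked there, and you have verified the homotopy-group condition of \defnref{d:formally smooth} only for truncated test schemes, which by \lemref{l:formal smoothness via homotopy groups} is strictly weaker than formal smoothness absent convergence. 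Moreover \lemref{l:Artin-Rees alg} concerns the comparison of $A_n$ with $H^0(A_n)$ for a classical Noetherian scheme and has no bearing on the colimit/limit interchange you need.

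The missing idea is to reverse the order of operations: form $\CZ':=\underset{\alpha}{colim}\,\CW_\alpha$ in $\inftydgprestack$ \emph{first}, so that $\CZ'(S)=\underset{\alpha}{colim}\,\CW_\alpha(S)$ for every $S\in\affdgSch$, truncated or not. Each $\CW_\alpha$ is formally smooth, so condition (2) of \defnref{d:formally smooth} holds for $\CW_\alpha$ on an arbitrary $S$; since homotopy groups commute with filtered colimits of spaces, it holds for $\CZ'$ as well (and likewise the surjectivity on $\pi_0$ for the classical condition). Hence $\CZ'$ is formally smooth, and in particular convergent. Convergence then lets you check descent for $\CZ'$ one coconnective truncation at a time, where \lemref{l:no sheafification} does apply, so $\CZ'$ is already a stack and coincides with your sheafified colimit $\CW$. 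No $lim^1$ or Artin--Rees input is needed; this is how the paper closes the argument.
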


Let us assume this lemma and finish the proof of \thmref{t:classical vs derived ft}. 

\begin{proof}[Proof of \thmref{t:classical vs derived ft}]

We need to show that $^{\on{Zar}}\CX^\wedge_Y$ is formally smooth. By \lemref{l:classical vs derived on ind},
this is equivalent to $^{L_{\on{Zar}}}\!\on{LKE}_{(\affSch)^{\on{op}}\hookrightarrow (\affdgSch)^{\on{op}}}({}^{cl}(\CX^\wedge_Y))$ being formally smooth. 
The required assertion follows \lemref{l:LKE of formal completion} applied to 
$\CZ_{cl}:={}^{cl}(\CX^\wedge_Y)$.

\end{proof}

\sssec{Proof of  \lemref{l:LKE of formal completion}}

Since the notion of formal smoothness is stable under taking retracts, 
we can assume that $$\CZ_{cl}\simeq \underset{\alpha}{colim}\, \CZ_{\alpha,cl},$$
(colimit taken in $^{cl}\!\inftydgprestack$),
where each $\CZ_{\alpha,cl}$ is elementary. 

\medskip

Hence, $\CZ:={}^{L_{\on{Zar}}}\!\on{LKE}_{(\affSch)^{\on{op}}\hookrightarrow (\affdgSch)^{\on{op}}}(\CZ_{cl})$ is isomorphic to 
$$\underset{\alpha}{colim}\, \CZ_{\alpha},$$
where the colimit is taken in $\on{ZarStk}$, and
$$\CZ_\alpha:={}^{L_{\on{Zar}}}\!\on{LKE}_{(\affSch)^{\on{op}}\hookrightarrow (\affdgSch)^{\on{op}}}(\CZ_{\alpha,cl}).$$
By Proposition \ref{p:Jacobi classical ft} 
combined with \corref{c:Jacobi matrix ft}, each $\CZ_\alpha$ is a formally smooth DG indscheme. 
Thus, it remains to prove the following:

\begin{lem}
Let $\alpha\mapsto \CZ_\alpha$ be filtered family of objects if $\inftydgstack$, each of
which is formally smooth as an object of $\inftydgprestack$. Assume that for every $n$,
all $\CZ_\alpha|_{^{\leq n}\!\affdgSch}$ are $k$-truncated for some $k$. Then
$\CZ:=\underset{\alpha}{colim}\, \CZ_\alpha$ is a stack and is also formally smooth as an object of 
$\inftydgprestack$, where the colimit is taken in $\inftydgprestack$. In particular, it is also the colimit in $\inftydgstack$ (and hence, also in $\on{ZarStk}$).
\end{lem}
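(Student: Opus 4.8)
The plan is to reduce everything to the prestack level. Write $\wt\CZ:=\underset{\alpha}{colim}\,\CZ_\alpha$ for the colimit formed in $\inftydgprestack$, which is computed objectwise, so that $\CZ=L\wt\CZ$. I would first show that $\wt\CZ$ is already convergent and already satisfies descent; this forces $\CZ=\wt\CZ$, after which formal smoothness can be read off directly from \lemref{l:formal smoothness via homotopy groups}, whose three requirements (convergence, classical formal smoothness of $^{cl}\CZ$, and an isomorphism on $\pi_j$ for truncation maps out of eventually coconnective schemes) I would verify in turn.

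The first observation is that at each truncated level the sheafification is invisible. For every $n$ we have $^{\leq n}\wt\CZ\simeq\underset{\alpha}{colim}\,{}^{\leq n}\CZ_\alpha$ in $^{\leq n}\!\inftydgprestack$, again objectwise. Each $^{\leq n}\CZ_\alpha$ is an object of $^{\leq n}\!\inftydgstack$ and, by hypothesis, is $k$-truncated for a $k=k(n)$ independent of $\alpha$. Hence \lemref{l:no sheafification} applies and shows that $^{\leq n}\wt\CZ$ is itself a $k$-truncated object of $^{\leq n}\!\inftydgstack$; that is, no sheafification is needed at the truncated level.

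The heart of the argument, and the step I expect to be the main obstacle, is convergence of $\wt\CZ$: one must interchange the filtered colimit over $\alpha$ with the tower limit $\underset{n}{lim}$ defining convergence, which fails for general towers. Here it succeeds because each $\CZ_\alpha$ is formally smooth: by \lemref{l:formal smoothness via homotopy groups} (applied with $i=n$ to $T={}^{\leq n}S$), for every $S\in\affdgSch$ the tower $n\mapsto\pi_j(\CZ_\alpha({}^{\leq n}S))$ has all transition maps isomorphisms once $n\geq j$. Since filtered colimits commute with homotopy groups, the tower $n\mapsto\pi_j\big(\underset{\alpha}{colim}\,\CZ_\alpha({}^{\leq n}S)\big)$ is likewise eventually constant in $n$; such towers have vanishing $\lim^1$ and their homotopy limit is computed degreewise. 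Comparing homotopy groups at every basepoint then identifies $\wt\CZ(S)=\underset{\alpha}{colim}\,\CZ_\alpha(S)$ with $\underset{n}{lim}\,\wt\CZ({}^{\leq n}S)$, so $\wt\CZ$ is convergent. Being convergent with every $^{\leq n}\wt\CZ$ a stack, $\wt\CZ$ itself satisfies descent: for an fppf cover $S'\to S$ the totalization of its \v{C}ech nerve commutes with $\underset{n}{lim}$, and $^{\leq n}(S'{}^\bullet/S)$ is the \v{C}ech nerve of the flat truncated cover $^{\leq n}S'\to{}^{\leq n}S$, so descent for $\wt\CZ$ reduces to descent for each $^{\leq n}\wt\CZ$. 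Therefore $\wt\CZ\in\inftydgstack$ and $\CZ=L\wt\CZ=\wt\CZ$, computed objectwise.

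It then remains to finish via \lemref{l:formal smoothness via homotopy groups}. Convergence is established. For the classical part, $^{cl}\CZ\simeq\underset{\alpha}{colim}\,{}^{cl}\CZ_\alpha$, and for a nilpotent embedding $S\hookrightarrow S'$ of classical affine schemes the map $\pi_0(\CZ(S'))\to\pi_0(\CZ(S))$ is the filtered colimit of the maps $\pi_0(\CZ_\alpha(S'))\to\pi_0(\CZ_\alpha(S))$, each surjective by \defnref{d:classical formally smooth}; a filtered colimit of surjections of sets is surjective, so $^{cl}\CZ$ is classically formally smooth. Finally, for $i\geq j$ and $S\in{}^{\leq i}\!\affdgSch$ the map $\CZ(S)\to\CZ({}^{\leq j}S)$ is the filtered colimit over $\alpha$ of the maps $\CZ_\alpha(S)\to\CZ_\alpha({}^{\leq j}S)$, each an isomorphism on $\pi_j$ since $\CZ_\alpha$ is formally smooth, and passing to the colimit preserves this isomorphism. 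Hence $\CZ$ is formally smooth, as claimed.
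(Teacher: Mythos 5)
Your proposal is correct and follows essentially the same route as the paper: form the colimit in $\inftydgprestack$, use commutation of homotopy groups with filtered colimits to get formal smoothness (hence convergence) of that prestack colimit, and then deduce descent from convergence together with \lemref{l:no sheafification}, so that sheafification changes nothing. The paper merely compresses your convergence and $\pi_j$-isomorphism steps into the single remark that formal smoothness is a condition on homotopy groups and therefore passes to filtered colimits.
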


\begin{proof}
 Since
homotopy groups commute with filtered colimits,
we obtain that $\CZ$
is formally smooth as an object of $\inftydgprestack$. In particular,
it is convergent.
It remains to show that $\CZ$ satisfies descent.
By convergence, it is enough to check the
descent condition on $^{\leq n}\!\affdgSch$. But the latter follows from
the truncatedness assumption by \lemref{l:no sheafification}.
\end{proof}

\qed (\lemref{l:LKE of formal completion})

\ssec{Proof of the key proposition}

In this subsection we will prove \propref{p:descr of formal completion modified},
reproducing a slightly modified argument from \cite{BD}, pages 328-331.
\footnote{The reason that we include the proof instead of just quoting the result
from \cite{BD} is that it seems that the considerations that involve derived pro-cotangent 
spaces that we introduce help to make the argument of {\it loc. cit.} more 
conceptual.}

\medskip

\begin{rem}
Note that the statement of \cite[Proposition 7.12.23]{BD} is slightly stronger: it asserts that, under the (innocuous) additional
assumption that $^{red}(\CZ_{cl})$ is connected, we have an isomorphism
$$\CZ_{cl}\simeq \CZ_{0,cl}\times {}^{cl}\!\left(\on{Spf}\left(k[\![z_1,z_2,\ldots]\!]\right)\right),$$
where $\CZ_{0,cl}$ is elementary. The reason we choose the formulation given in
\propref{p:descr of formal completion modified} is that it makes it more amenable for
generalization in the non-finite type situation. 
\end{rem}

\sssec{Step 0: initial remarks} 

Denote $Z:={}^{red}(\CZ_{cl})$ and $\CZ:={}^L\!\on{LKE}_{(\affSch)^{\on{op}}\hookrightarrow (\affdgSch)^{\on{op}}}(\CZ_{cl})$.

\medskip

By \propref{p:char finite type new}, the finite type condition implies that 
the object $T^*\CZ|_Z$ belongs to $\on{Pro}(\Coh(Z)^{\leq 0})$, where
$$T^*\CZ|_Z:=T^*_z\CZ,$$
where $z:Z\to \CZ$ is the tautological map. \propref{p:crit for aleph 0} implies that 
$T^*\CZ|_Z$ is $\aleph_0$ as an object of $\on{Pro}(\Coh(Z)^{\leq 0})$.

\medskip

By \propref{p:estimate on cl cotangent} and \cite[Proposition 7.12.6(iii)]{BD}, $T^*\CZ|_Z$ 
is the dual of a Mittag-Leffler quasi-coherent sheaf $\CM$ on $Z$. By
\cite[Theorem 7.12.8]{BD}, the $\aleph_0$ condition implies that $\CM$ is actually projective. 

\medskip

Multiplying $\CZ$ by a suitable formally smooth
classical indscheme as in \cite[Proposition 7.12.14]{BD}, we can assume that $\CM$ is
a free countably generated $\CO_Z$-module. \footnote{This last procedure is the reason the
word "retract" appears in the formulation of \propref{p:descr of formal completion modified}.}

\medskip

Thus, we obtain that we can assume that
\begin{equation} \label{e:cotangent as proj lim}
H^0(T^*\CZ|_Z)\in \on{Pro}(\Coh(Z)^{\heartsuit})
\end{equation}
can be represented as $P:=\underset{k\in \BN}{``lim"}\, P_k$, where $P_k$ are locally free 
(in fact, free) sheaves on $Z$ of finite rank, and the maps $P_{k+1}\to P_k$ are surjective.

\medskip

Let us write $\CZ_{cl}\simeq \underset{n\in \BN}{colim}\, Z_n$ with $Z=Z_0$. Let 
$\CI_n$ denote the sheaf of ideals of $Z$ in $Z_n$. The finite type hypothesis
implies that $\CI_n\in \Coh(Z_n)^\heartsuit$. Consider
$\CI_n|_Z\simeq \CI_n/\CI_n^2\in \Coh(Z)^\heartsuit$ and denote
$$\CI|_Z:=\underset{n}{``lim"}\, \CI_n|_Z\in \on{Pro}(\Coh(Z)^{\heartsuit}).$$
By \propref{p:estimate on cl cotangent}, the long exact cohomology sequence 
for the map $Z\hookrightarrow \CZ$ gives rise to a 4-term exact
sequence in $\on{Pro}(\Coh(Z)^\heartsuit)$:
\begin{equation} \label{e:4 term seq}
0\to H^{-1}(T^*Z)\to \CI|_Z\to H^0(T^*\CZ|_Z)\to H^0(T^*Z)\to 0.
\end{equation}

\sssec{Step 1: "the finite-dimensional case"}

Let us first assume that $H^0(T^*\CZ|_Z)$ is an object of $\Coh(Z)$. In this case we will prove that $^{cl}\CZ$
is elementary. 

\medskip

By \eqref{e:cotangent as proj lim}, $H^0(T^*\CZ|_Z)$ is locally free of finite rank over $Z$. 

\medskip

As in \cite{BD}, top of page 329, it suffices to show that the system of coherent sheaves 
$n\mapsto \CI_n|_Z$ stabilizes. However, \eqref{e:4 term seq} implies that 
$\CI|_Z$ is in fact an object of $\Coh(Z)^\heartsuit$. Since the maps $\CI_{n+1}|_Z\to \CI_{n}|_Z$ 
are surjective, this implies the stabilization statement.

\sssec{Step 2: choosing generators for the ideal}

For a general $\CZ_{cl}$ we will construct an object $Q\in \on{Pro}(\Coh(Z)^{\heartsuit})$ of the form
$$Q\simeq \underset{m\in \BN}{``lim"}\, Q_m,$$ where $Q_m$ are locally free sheaves
on $Z$ of finite rank, and the maps $Q_{m+1}\to Q_m$ are surjective, and a map
$f:Q\to \CI|_Z$, such that 
the composition
$$Q\to \CI|_Z\to H^0(T^*\CZ|_Z)=:P$$
in injective and has the property that $\on{coker}(Q\to P)$ belongs to $\Coh(Z)$
and is locally free. 

\medskip

Consider again \eqref{e:4 term seq}. Let $k$ be an index such that the map
$P\to H^0(T^*Z)$ factors through a map $P_k\twoheadrightarrow H^0(T^*Z)$, and let
$Q:=\on{ker}(P\to P_k)$. Set 
$$R:=\CI|_Z\underset{P}\times Q.$$
By construction, the map $R\to Q$ is surjective, i.e., 
we have the following short exact sequence in $\on{Pro}(\Coh(Z)^{\heartsuit})$:
$$0\to H^1(T^*Z)\to R \to Q\to 0.$$
Since $Q$ is pro-projective and the category of indices is $\BN$, the map $R\to Q$ admits
a right inverse, which gives rise to the desired map $f:Q\to R\to \CI|_Z$.

\sssec{Step 3} We shall now use the above pair $(Q,f:Q\to \CI|_Z)$ to construct the desired
family of sub-indschemes of $\CZ_{cl}$, each being as in Step 1. 

\medskip

For every $n$ consider the object 
$$\CI|_{Z_n}:=\underset{n'\geq n}{``lim"}\, \CI_{n'}|_{Z_n}\in \on{Pro}(\Coh(Z_n)^\heartsuit).$$
We can extend the locally free sheaves $Q_m$ to a compatible family of locally free finite rank
coherent sheaves $n\mapsto Q_m|_{Z_n}$ and a compatible family of maps
$$f|_{Z_n}:Q|_{Z_n}:=\underset{m}{``lim"}\, Q_m|_{Z_n}\to \CI|_{Z_n}.$$

Let $Q^m|_{Z_n}$ be the kernel of the map $Q|_{Z_n}\to Q_m|_{Z_n}$. For each $m$ we define
the closed sub-scheme $Z^m_n$ of $Z_n$ to be given by the ideal $\CJ^m_n$ equal to the image of
$$Q^m|_{Z_n}\hookrightarrow Q|_{Z_n}\overset{f|_{Z_n}}\longrightarrow \CI|_{Z_n}\to \CI_n.$$
We set
$$\CZ^m_{cl}:=\underset{n}{colim}\, Z^m_n.$$
It is clear from the construction that 
$$\CZ_{cl}\simeq \underset{m}{colim}\, \CZ^m_{cl}.$$

\sssec{Step 4}

It remains to show that each $\CZ^m_{cl}$ is a classical indscheme satisfying
the assumptions of Step 1. Let 
$$\CZ^m:={}^L\!\on{LKE}_{(\affSch)^{\on{op}}\hookrightarrow (\affdgSch)^{\on{op}}}(\CZ^m_{cl}).$$
Using \corref{c:estimate on cl cotangent formal}, it suffices to show that $H^0(T^*\CZ^m|_Z)\in \on{Pro}(\Coh(Z)^\heartsuit)$
is locally free of finite rank and that $H^{-1}(T^*\CZ^m|_Z)=0$.

\medskip

For that it suffices to show that the map
$$\underset{n}{``lim"}\, \CJ^m_n|_Z\to H^0(T^*\CZ|_Z)$$ is injective and that the quotient belongs
to $\Coh(Z)^\heartsuit$ and is locally free of finite rank. However, by construction, we have a surjective
map
$$Q^m:=\on{ker}(Q\to Q_m)\twoheadrightarrow  \underset{n}{``lim"}\, \CJ^m_n|_Z,$$
and the required properties follow from the corresponding properties of the map $f$.

\qed(\propref{p:descr of formal completion modified})

\section{$\QCoh$ and $\IndCoh$ on formally smooth indschemes}

\ssec{The main result}

The goal of this section is to prove the following result, originally established
by J.~Lurie using a different method:

\begin{thm} \label{t:QCoh main}
Let $\CX$ be a formally smooth DG indscheme, which is weakly $\aleph_0$ and locally almost
of finite type. Then the functor
$$\Upsilon_\CX:=-\underset{\CO_\CX}\otimes \omega_\CX:\QCoh(\CX)\to \IndCoh(X)$$
is an equivalence.
\end{thm}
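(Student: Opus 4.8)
The plan is to reduce \thmref{t:QCoh main} to the case of a formal completion $X^\wedge_Y$ of a smooth scheme, where I have already analyzed both $\QCoh$ and $\IndCoh$ in detail in the previous section. By \thmref{t:derived vs classical ft}, the hypotheses (formally smooth, weakly $\aleph_0$, locally almost of finite type) force $\CX$ to be $0$-coconnective, i.e. the left Kan extension of a classical formally smooth $\aleph_0$ indscheme locally of finite type. Using \propref{p:formal smoothness of formal completions}, I can present $\CX$ in terms of its formal completions $\CX^\wedge_Y$ along reduced closed subschemes $Y \subset {}^{cl,red}\CX$; and by \propref{p:descr of formal completion modified}, after passing to such completions and taking retracts, $\CX$ is built as a filtered colimit (in $\inftydgstack$) of \emph{elementary} formal DG schemes of the type studied in \secref{sss:Jacobi ft}. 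Since both $\QCoh$ and $\IndCoh$ send colimits of indschemes to limits of DG categories (via \lemref{l:IndCoh and colim} and the colimit-to-limit property of $\QCoh$), and since $\Upsilon$ is functorial and compatible with $!$-pullback as in the commutative diagram \eqref{e:qcoh and indcoh compat 1}, it suffices to prove the statement after restricting to each $X^\wedge_Y$ with $X$ smooth and $Y$ affine.

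\textbf{The equivalence for formal completions of smooth schemes.}
First I would settle the base case where $\CX = X^\wedge_Y$ for $X$ a \emph{smooth} classical scheme and $Y \subset {}^{cl,red}X$ a quasi-compact, quasi-separated Zariski-closed subset whose ideal is locally finitely generated. Here \propref{p:QCoh of compl} and \propref{p:localization for IndCoh} identify $\QCoh(X^\wedge_Y) \simeq \QCoh(X)_Y$ and $\IndCoh(X^\wedge_Y) \simeq \IndCoh(X)_Y$ as co-localizations of $\QCoh(X)$ and $\IndCoh(X)$, respectively. The key point is that since $X$ is smooth, the dualizing sheaf $\omega_X$ is (up to a cohomological shift given by a line bundle, locally $\CO_X$) invertible, so the functor $\Upsilon_X = -\otimes\omega_X$ is an equivalence $\QCoh(X)\to \IndCoh(X)$. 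I would then verify, using \corref{c:Psi on formal} together with the commutative diagrams \eqref{e:qcoh and indcoh compat 1} through \eqref{e:qcoh and indcoh compat 4}, that $\Upsilon_X$ carries the localizing subcategory $\QCoh(X)_Y$ (the kernel of $j^*$) isomorphically onto $\IndCoh(X)_Y$ (the kernel of $j^{\IndCoh,*}$), compatibly with the co-localization functors $\br^{\QCoh}$ and $\br^{\IndCoh}$. Because $\Upsilon_{X^\wedge_Y}$ is by construction the functor induced on these co-localizations, it is an equivalence.

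\textbf{Assembling the general case.}
With the base case in hand, I would run the reduction in reverse. The elementary formal DG schemes of \secref{sss:Jacobi ft} are themselves of the form $0\times_{\BA^k}\BA^{n,m}$, i.e. base changes of formal completions of affine (and in particular smooth) spaces along a map with nondegenerate Jacobian; by \propref{p:Jacobi classical ft} these are $0$-coconnective, and the same $\omega$-twisting argument applies since the relevant cotangent complexes are locally free. I would check that $\Upsilon$ commutes with the relevant $!$-pullbacks and pushforwards appearing in the presentation of $\CX$ as a colimit, so that the equivalences for the building blocks assemble into an equivalence for $\CX$ after passing to the limit of categories; the compatibility of $\Upsilon$ with $\wh{i}^!$ (and dually with $\wh{i}^{\IndCoh}_*$) from the diagrams of \secref{ss:formal completion of a scheme} is exactly what guarantees the limit of the fiberwise equivalences is again an equivalence.

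\textbf{The main obstacle.}
I expect the hard part to be controlling $\Upsilon$ on the non-smooth elementary pieces and verifying that the local equivalences glue. Concretely, the delicate point is that $\Upsilon_\CX$ is defined abstractly (tensoring with $\omega_\CX$) whereas the comparison on each $X^\wedge_Y$ goes through the self-duality of $\QCoh(X^\wedge_Y)$ furnished by \corref{c:formal completion quasi-perfect} and the Serre self-duality of $\IndCoh$ from \corref{c:Serre for Ind}; reconciling these two dualities compatibly across the whole filtered colimit, and ensuring the $\aleph_0$/finiteness hypotheses are genuinely used to guarantee compact generation and the invertibility of the relevant dualizing objects, is where the real work lies. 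In particular one must make sure the colimit-to-limit passage does not destroy the equivalence, which is where the \emph{weakly} $\aleph_0$ hypothesis (allowing a presentation indexed by $\BN$ at each truncation level) becomes essential.
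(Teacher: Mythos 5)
Your overall route is the same as the paper's: reduce to formal completions $\CX^\wedge_{{}^{red}X_\alpha}$ using that both $\QCoh$ and $\IndCoh$ send the colimit $\underset{\alpha}{colim}\,\CX_\alpha\to\CX$ to a limit of categories, localize in the Zariski topology, and then invoke \propref{p:descr of formal completion modified} to reduce to retracts of filtered colimits of \emph{elementary} formal DG schemes. The problem is the base case. Your argument for $X^\wedge_Y$ with $X$ a smooth classical scheme of finite type is sound (it is essentially \corref{c:Psi on formal} plus the diagrams of \secref{ss:formal completion of a scheme}), but that is not the case the reduction lands in. An elementary piece is $\CX=0\underset{\BA^k}\times\BA^{n,m}$, and its ambient scheme is $^\sim\CX=0\underset{\BA^k}\times\Spec\left(k[x_1,\dots,x_n][\![y_1,\dots,y_m]\!]\right)$, which is Noetherian and has regular classical truncation but is \emph{not} almost of finite type. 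Hence $\IndCoh({}^\sim\CX)$, $\omega_{{}^\sim\CX}$ and $\Upsilon_{{}^\sim\CX}$ are not defined within the paper's framework, and the claim that ``the same $\omega$-twisting argument applies since the relevant cotangent complexes are locally free'' has no content as stated: there is no invertible dualizing object on the ambient scheme to twist by.

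What is actually needed at this point, and what the paper supplies, is the following. One works with $\Psi_{{}^\sim\CX}$ (which \emph{is} defined for Noetherian DG schemes) and uses that $\Psi_Z$ is an equivalence for a $0$-coconnective DG scheme $Z$ with regular underlying classical scheme; one establishes quasi-perfectness of the elementary $\CX$ (\lemref{l:X tilde qp}) to obtain the self-duality $\bD^{\on{naive}}_\CX$ and hence the dual functor $\Upsilon^\vee_\CX$; and one then proves \propref{p:Psi on compl and not}, namely $\wh{i}_?\circ\Upsilon^\vee_\CX\simeq\Psi_{{}^\sim\CX}\circ\wh{i}^{\IndCoh}_*$, by a direct computation with the duality pairings, $\Gamma^{\IndCoh}$ and the projection formula. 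The paper explicitly warns that this last commutation does \emph{not} follow formally from the diagrams \eqref{e:qcoh and indcoh compat 1}--\eqref{e:qcoh and indcoh compat 4}, precisely because those relied on the finite-type hypothesis on the ambient scheme. You correctly flagged the reconciliation of the two dualities as the hard point, but the argument you sketch for it routes through an object ($\omega$ of the ambient scheme) that does not exist here, so the key step remains unproved.
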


\medskip

Note that by \thmref{t:classical vs derived ft}, the DG indscheme $\CX$ is $0$-coconnected,
so $\QCoh(\CX)$ is equivalent to 
$$\QCoh({}^L\!\tau^{cl}(\CX))\simeq \QCoh(\tau^{cl}(\CX)),$$ 
where the latter equivalence is because of \cite[Corollary 1.3.7]{QCoh}.

\medskip

We also note the following corollary of \thmref{t:QCoh main} and \corref{c:IndCoh compactly generated}:

\begin{cor}
Let $\CX$ be a formally smooth DG indscheme, which is weakly $\aleph_0$ and locally almost
of finite type. Then the category $\QCoh(\CX)$ is compactly generated.
\end{cor}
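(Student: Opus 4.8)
The plan is to deduce the statement directly from the two results it is advertised as a corollary of, namely \thmref{t:QCoh main} and \corref{c:IndCoh compactly generated}. First I would invoke \thmref{t:QCoh main}: since $\CX$ is by hypothesis a formally smooth DG indscheme that is weakly $\aleph_0$ and locally almost of finite type, that theorem applies verbatim and yields that the functor
$$\Upsilon_\CX:\QCoh(\CX)\to \IndCoh(\CX)$$
is an equivalence of DG categories. Note that $\IndCoh(\CX)$ is well-defined here because $\CX\in \dgindSch_{\on{laft}}$, so there is no abuse in speaking of it.

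Next I would observe that $\IndCoh(\CX)$ is compactly generated. This is precisely the content of \corref{c:IndCoh compactly generated}, which applies since $\CX\in \dgindSch$; moreover, that corollary provides an explicit generating family of compact objects, namely $\Coh(\CX)$. The final step is the purely formal remark that compact generation is invariant under equivalence of DG categories: an equivalence carries compact objects to compact objects (compactness being characterized by commutation of $\CMaps$ with filtered colimits, a property preserved by any equivalence), and the essential image under an equivalence of a generating set again generates. Transporting along the inverse equivalence $\Upsilon_\CX^{-1}$, we conclude that $\QCoh(\CX)$ is compactly generated, with a generating family of compacts given by $\Upsilon_\CX^{-1}(\Coh(\CX))$.

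Since each of these steps is immediate, there is no substantive obstacle in the present statement; the entire weight of the argument rests on \thmref{t:QCoh main}, whose proof is carried out separately. The only point meriting a word of care is the verification that the hypotheses of the cited theorem and corollary are met, but these coincide on the nose with the hypotheses assumed here, so nothing further is required.
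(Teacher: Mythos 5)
Your proof is correct and is exactly the paper's argument: the corollary is stated there as an immediate consequence of \thmref{t:QCoh main} and \corref{c:IndCoh compactly generated}, i.e., transporting the compact generation of $\IndCoh(\CX)$ by $\Coh(\CX)$ through the equivalence $\Upsilon_\CX$. Nothing is missing; the formal step that compact generation is invariant under equivalences of DG categories is precisely the intended (unwritten) content.
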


\medskip

The rest of this section is devoted to the proof of \thmref{t:QCoh main}.

\ssec{Reduction to the ``standard" case}

\sssec{}
Write $^{cl}\CX$ as $\underset{\alpha}{colim}\, X_\alpha$, where $X_\alpha$ are
classical schemes locally of finite type. Let $\CX_\alpha:=\CX^\wedge_{^{red}\!X_\alpha}$ be the formal
completion of $\CX$ along $^{red}\!X_\alpha$. Each $\CX_\alpha$ is DG 
indscheme satisfying the assumptions of the theorem. 

\medskip

Since
$$\underset{\alpha}{colim}\, \CX_\alpha\to \CX,$$
is an isomorphism (the above colimit taken in $\on{PreStk}$), the functors 
$$\QCoh(\CX)\to \underset{\alpha}{lim}\, \QCoh(\CX_\alpha) \text{ and }
\IndCoh(\CX)\to \underset{\alpha}{lim}\, \IndCoh(\CX_\alpha)$$
are both equivalences, where the first limit is taken with respect to the *-pullback
functors, and the second limit is taken with respect to the !-pullback functors.

\medskip

Since for $\alpha\to \beta$ the diagrams
$$
\CD
\QCoh(\CX)  @>>>   \QCoh(\CX_\beta)  @>>> \QCoh(\CX_\alpha)  \\
@V{\Upsilon_\CX}VV   @VV{\Upsilon_{\CX_\beta}}V    @VV{\Upsilon_{\CX_\alpha}}V  \\
\IndCoh(\CX)  @>>>   \IndCoh(\CX_\beta)  @>>> \IndCoh(\CX_{\alpha}) 
\endCD
$$ 
are commutative, it suffices to show that each of the functors
\begin{equation} \label{e:IndCoh vs QCoh}
\Upsilon_{\CX_\alpha}:\QCoh(\CX_\alpha)\to \IndCoh(\CX_\alpha)
\end{equation}
is an equivalence. 

\medskip

So, from now on we will assume that $\CX$ is formal. 

\sssec{}

By \cite[Proposition 4.2.1]{IndCoh}, the functor
$\IndCoh$ satisfies Zariski descent. So, the statement about 
equivalence in \eqref{e:IndCoh vs QCoh} is local in the Zariski topology. Therefore, we can assume that $\CX$ is affine, 
and thus apply \propref{p:descr of formal completion modified}. 

\sssec{}

Since the statement of the theorem survives taking retracts and colimits of DG indschemes, 
we can assume that $\CX$ is \emph{elementary}  (see \secref{sss:elementary}). 
The proof that 
the functor $\Upsilon_\CX$ is an equivalence in this case is a rather straightforward 
but somewhat tedious verification, which we shall presently perform. 

\ssec{The functor $\Upsilon^\vee_\CX$}

\sssec{} \label{sss:self duality on weird}

Recall the notation of \secref{sss:Jacobi ft}. Let us denote by $\CY$ the DG indscheme $\BA^{n,m}$. Let $f:\CX\hookrightarrow \CY$
denote the corresponding closed embedding.

\medskip

Since
$$\CX\simeq 0 \underset{\BA^k}\times \CY,$$
by \cite[Proposition 3.2.1]{QCoh}, we have:
\begin{equation} \label{e:category as ten product}
\QCoh(\CX)\simeq \Vect \underset{\QCoh(\BA^k)}\otimes \QCoh(\CY).
\end{equation}

\medskip

By \secref{ss:self duality formal compl}, the indscheme $\CY$ is quasi-perfect
(i.e., the category $\QCoh(\CY)$ is compactly generated and its compact objects are perfect). 
We claim:

\begin{lem}  \label{l:X tilde qp}
The DG indscheme $\CX$ is quasi-perfect.
\end{lem}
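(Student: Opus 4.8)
The statement to prove is \lemref{l:X tilde qp}: that the elementary formal DG indscheme $\CX = 0\underset{\BA^k}\times \CY$ is quasi-perfect, where $\CY = \BA^{n,m}$ is quasi-perfect by \corref{c:formal completion quasi-perfect}. The natural strategy is to exploit the expression \eqref{e:category as ten product}, namely $\QCoh(\CX)\simeq \Vect\underset{\QCoh(\BA^k)}\otimes \QCoh(\CY)$, together with the closed embedding $f:\CX\hookrightarrow\CY$ and the functor $f^*:\QCoh(\CY)\to\QCoh(\CX)$. First I would show that $\QCoh(\CX)$ is compactly generated: since $\QCoh(\CY)$ is compactly generated by perfect objects and $f^*$ is a symmetric monoidal, colimit-preserving functor, the essential image $f^*(\QCoh(\CY)^c)$ consists of perfect objects and generates $\QCoh(\CX)$ under colimits. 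Concretely, $\CX$ is cut out from $\CY$ by the $k$ equations coming from the map $\CY\to\BA^k$, so $\CO_\CX = f^*\CO_\CY$ is perfect over $\CX$, and one checks that $f^*$ carries a compact generating set of $\QCoh(\CY)$ to a compact generating set of $\QCoh(\CX)$.

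\textbf{Key steps in order.} (i) Establish that $f^*$ sends $\QCoh(\CY)^{\on{perf}}$ into $\QCoh(\CX)^{\on{perf}}$ — immediate, since $f^*$ is symmetric monoidal and a pullback of a dualizable object is dualizable, and dualizable equals perfect by \cite[Lemma 4.2.2]{QCoh}. (ii) Verify that the image $f^*(\QCoh(\CY)^c)$ generates $\QCoh(\CX)$; here the base change formula \eqref{e:category as ten product} and the fact that $\Vect$ is generated by its unit over $\QCoh(\BA^k)$ are the essential inputs, showing that objects of the form $f^*\CF$ for $\CF\in\QCoh(\CY)^c$ exhaust the category under colimits. (iii) Confirm that these generators $f^*\CF$ are \emph{compact}: this is where one uses that $f:\CX\to\CY$ is a closed embedding cut out by a \emph{finite} regular-type pattern (the Koszul-type presentation $\CO_\CX \simeq \CO_\CY\underset{\CO_{\BA^k}}\otimes k$), so that $f_*$ is continuous and $f^*$ preserves compactness by the adjunction $\Maps_{\QCoh(\CX)}(f^*\CF,-)\simeq\Maps_{\QCoh(\CY)}(\CF,f_*(-))$. (iv) Finally, check condition (ii) of \secref{sss:rigidity on QCoh}: that the duality functor $\CF\mapsto\CF^\vee$ on $\QCoh(\CX)^{\on{perf}}$ preserves the subcategory of compact objects. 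Since every compact object is a retract of some $f^*\CF$ with $\CF$ a compact perfect object of $\QCoh(\CY)$, and $\CY$ is already known to be quasi-perfect, the duality compatibility transports along $f^*$ using that $(f^*\CF)^\vee \simeq f^*(\CF^\vee)$ for $\CF$ perfect.

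\textbf{The main obstacle.} The crux is step (iii), verifying compactness of the generators, i.e.\ the continuity of $f_*$. Unlike for a quasi-compact quasi-separated scheme, $\CX$ here is a genuine (formal) DG indscheme, so $f_*$ for the closed embedding $f:\CX\hookrightarrow\CY$ need not be continuous a priori — indeed the warning in \secref{ss:formal completion of a scheme} about non-continuity of $\wh{i}_*$ is exactly the kind of phenomenon to watch for. What saves the situation is that $\CX$ is obtained from $\CY$ by the \emph{finite} codimension condition: the map $\CX\to\CY$ is the base change of $0\hookrightarrow\BA^k$, and $0\hookrightarrow\BA^k$ is a closed embedding cut out by a finite Koszul complex, so $f_*$ is given by a finite filtration with perfect associated graded and is therefore continuous. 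I would make this precise by reducing, via \eqref{e:category as ten product}, to the statement that the functor $\Vect\to\QCoh(\BA^k)$ right adjoint to restriction is itself controlled by the finite Koszul resolution of $k$ over $\CO_{\BA^k}$, so that compactness of $\CO_\CX$ (hence of all $f^*\CF$) follows. Once continuity of $f_*$ is secured, the remaining verifications of \secref{sss:rigidity on QCoh} are formal consequences of the quasi-perfectness of $\CY$ and the monoidality of $f^*$.
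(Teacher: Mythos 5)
Your proposal is correct and follows essentially the same route as the paper: the paper's (two-sentence) proof likewise reads compact generation of $\QCoh(\CX)$ off the tensor-product decomposition \eqref{e:category as ten product}, identifying a generating set of compact objects with the $f^*$-image of compacts of $\QCoh(\CY)$, and then concludes by noting that pullback preserves perfectness. Your extra care in step (iii) — securing compactness via the finite Koszul presentation of $0\hookrightarrow\BA^k$ and the projection formula — is exactly the content the paper leaves implicit in the tensor-product formula, so it is a welcome elaboration rather than a divergence.
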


\begin{proof}
From \eqref{e:category as ten product} we obtain that a generating set of compact
objects of $\QCoh(\CX)$ is obtained as the essential image under the functor $f^*$
of compact objects of $\QCoh(\CY)$. The assertion of the lemma follows from the
fact that the pullback functor preserves perfectness.
\end{proof}

\medskip

In particular, from \lemref{l:X tilde qp} we obtain a self-duality equivalence
\begin{equation} \label{e:duality X tilde}
\bD_\CX^{\on{naive}}:(\QCoh(\CX))^\vee\simeq \QCoh(\CX),
\end{equation}
Using also 
$$\bD_\CX^{\on{Serre}}:(\IndCoh(\CX))^\vee\simeq \IndCoh(\CX),$$
we can consider the functor
$$\Upsilon^\vee_\CX:\IndCoh(\CX)\to \QCoh(\CX),$$
dual to $\Upsilon_\CX$.

\medskip

Showing that $\Upsilon_\CX$ is an equivalence is equivalent to showing
that $\Upsilon^\vee_\CX$ is an equivalence.

\sssec{}

Let $^\sim\CY:={}^{\sim}\!\BA^{n,m}$ be the \emph{scheme} introduced in the course 
of the proof of \propref{p:Jacobi classical ft}. Let $^\sim\CX$ be the DG \emph{scheme} 
$$0 \underset{\BA^k}\times ^\sim\CY.$$
As we saw in \secref{sss:Jacobi ft}, the DG scheme $\CX$ is also $0$-coconnective.

\medskip

The formal (DG) scheme $\CX$ is obtained as a formal completion of $^\sim\CX$
along a Zariski-closed subset $X$.  Let
$\wh{i}:\CX\to {}^\sim\CX$ denote the corresponding map, and let 
$$^\sim\CX-X=:U_\CX\overset{j}\hookrightarrow {}^\sim\CX$$
be the complementary open embedding. 

\medskip

Since the DG scheme $^\sim\CX$ is Noetherian, 
the category $\IndCoh(\CX)$ and the functor
$$\Psi_{\CX}:\IndCoh(\CX)\to \QCoh(\CX)$$
are well-defined (see \cite[Sect. 1.1]{IndCoh}).

\sssec{}

We will deduce the fact that $\Upsilon^\vee_\CX$ is an equivalence from the following statement:

\begin{prop}  \label{p:Psi on compl and not}
The diagram of functors
$$
\CD
\IndCoh(\CX)     @>{\wh{i}^{\IndCoh}_*}>> \IndCoh({}^\sim\CX)  \\
@V{\Upsilon^\vee_{\CX}}VV   @V{\Psi_{^\sim\CX}}VV  \\
\QCoh(\CX)   @>{\wh{i}_?}>> \QCoh({}^\sim\CX)
\endCD
$$
commutes.
\end{prop}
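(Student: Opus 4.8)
The plan is to recognize the square in question as the diagram \eqref{e:qcoh and indcoh compat 2} specialized to $X={}^\sim\CX$, and to prove it by passing to dual functors in the tautologically commutative diagram \eqref{e:qcoh and indcoh compat 1}, checking along the way that every duality statement invoked in the formal-completion section survives in the present, slightly more general, setting. Recall that $\CX$ is the formal completion $({}^\sim\CX)^\wedge_X$ of the DG scheme ${}^\sim\CX$ along the Zariski-closed subset $X$, so the entire formalism for formal completions applies; the one point to watch is that ${}^\sim\CX$ is merely \emph{Noetherian} (because of the power-series directions in ${}^\sim\BA^{n,m}$) rather than almost of finite type. The category $\IndCoh({}^\sim\CX)$, the functor $\Psi_{{}^\sim\CX}$, and the self-dualities $\bD^{\on{Serre}}_{{}^\sim\CX}$ and $\bD^{\on{naive}}_{{}^\sim\CX}$ are nonetheless all defined, by the Noetherian version of the theory of \cite[Sects. 1.1, 9.2, 9.3]{IndCoh}.

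First I would record the diagram \eqref{e:qcoh and indcoh compat 1}, i.e.\ the isomorphism $\Upsilon_\CX\circ \wh{i}{}^*\simeq \wh{i}{}^!\circ \Upsilon_{{}^\sim\CX}$, which holds tautologically since $\Upsilon$ is the operation of tensoring with the dualizing sheaf and $\wh{i}{}^!(\omega_{{}^\sim\CX})\simeq \omega_\CX$. I would then dualize, using four identifications: (i) $\CX$ is quasi-perfect by \lemref{l:X tilde qp}, so $\bD^{\on{naive}}_\CX$ exists and $\Upsilon^\vee_\CX$ is defined; (ii) the dual of $\wh{i}{}^*$ is $\wh{i}_?$, which is \corref{c:dual of ?} and uses only the quasi-perfectness of $\CX$ together with the self-duality of $\QCoh({}^\sim\CX)$ (valid, as ${}^\sim\CX$ is a quasi-compact quasi-separated, hence quasi-perfect, DG scheme); (iii) the dual of $\wh{i}{}^!$ is $\wh{i}^{\IndCoh}_*$, from the identification of the dual of $f^{\IndCoh}_*$ with $f^!$ under Serre duality, applied to the closed embeddings $Y_\alpha\hookrightarrow {}^\sim\CX$ whose colimit computes $\wh{i}^{\IndCoh}_*$ as in \secref{sss:lower shriek}; and (iv) the dual of $\Upsilon_{{}^\sim\CX}$ is $\Psi_{{}^\sim\CX}$, which is \secref{sss:when Psi}, i.e.\ \cite[Proposition 9.3.3]{IndCoh}. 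Since the dual of a commuting square of functors is again a commuting square, dualizing \eqref{e:qcoh and indcoh compat 1} yields precisely $\wh{i}_?\circ \Upsilon^\vee_\CX\simeq \Psi_{{}^\sim\CX}\circ \wh{i}^{\IndCoh}_*$, which is the assertion of the proposition.

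I expect the main obstacle to be the bookkeeping in steps (iii) and (iv): the duality statements of the formal-completion section and of \secref{sss:when Psi} are phrased for DG schemes almost of finite type, whereas here ${}^\sim\CX$ is only Noetherian. I would therefore make explicit that $\IndCoh({}^\sim\CX)$, $\Psi_{{}^\sim\CX}$, the self-duality $\bD^{\on{Serre}}_{{}^\sim\CX}$, and the $f^!$--$f^{\IndCoh}_*$ duality for the relevant closed morphisms are all supplied by the Noetherian theory of \cite[Sect. 1.1]{IndCoh}, and that their compatibility with the presentation $\IndCoh(\CX)\simeq \underset{\alpha}{colim}\,\IndCoh(Y_\alpha)$ is exactly as in \secref{sss:lower shriek}; granting this, the dualization goes through verbatim with no new computation. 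Should invoking the duality formalism over the non-aft scheme ${}^\sim\CX$ prove awkward, I would keep in reserve the more hands-on route mirroring \lemref{l:Psi and compl}: reduce the claim to the statement that $\Psi_{{}^\sim\CX}$ intertwines the localization endofunctors $j_*\circ j^*$ and $j^{\IndCoh}_*\circ j^{\IndCoh,*}$ on ${}^\sim\CX$, which follows from \cite[Propositions 3.1.1 and 3.5.4]{IndCoh}.
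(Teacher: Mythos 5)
Your proposal correctly identifies the square as \eqref{e:qcoh and indcoh compat 2} with $X={}^\sim\CX$, but the dualization route you propose is exactly the one the paper's own remark rules out: ``this proposition does not formally follow from the commutativity of \eqref{e:qcoh and indcoh compat 2}, because the latter relied on the finite type assumption of the ambient DG scheme.'' The gap is in your steps (iii) and (iv). Passing to duals along the $\IndCoh$ column of \eqref{e:qcoh and indcoh compat 1} requires a Serre self-duality $\bD^{\on{Serre}}_{{}^\sim\CX}$ of $\IndCoh({}^\sim\CX)$ together with the identifications $(\wh{i}{}^!)^\vee\simeq \wh{i}^{\IndCoh}_*$ and $\Upsilon_{{}^\sim\CX}^\vee\simeq \Psi_{{}^\sim\CX}$. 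These are supplied by \cite[Sects. 9.2--9.3]{IndCoh} only for schemes almost of finite type: Serre duality there rests on the duality anti-equivalence of $\Coh$ coming from the dualizing complex formalism over $k$, and there is no documented ``Noetherian version'' one can invoke for ${}^\sim\CX=0\times_{\BA^k}{}^\sim\BA^{n,m}$. Asserting that the theory ``goes through verbatim'' is precisely the undischarged hypothesis. Your fallback does not close this either: knowing that $\Psi_{{}^\sim\CX}$ intertwines $j_*\circ j^*$ with $j^{\IndCoh}_*\circ j^{\IndCoh,*}$ only shows that $\Psi_{{}^\sim\CX}\circ\wh{i}^{\IndCoh}_*$ lands in $\QCoh({}^\sim\CX)_X$; it does not identify the resulting functor $\IndCoh(\CX)\to\QCoh({}^\sim\CX)_X$ with $\wh{i}_?\circ\Upsilon^\vee_\CX$, which is the actual content of the statement. (In the aft treatment, \lemref{l:Psi and compl} is used to pass from \eqref{e:qcoh and indcoh compat 2} to \eqref{e:qcoh and indcoh compat 3}, not to establish \eqref{e:qcoh and indcoh compat 2} itself.)

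The paper's proof circumvents all duality questions on $\IndCoh({}^\sim\CX)$ by testing both circuits against the pairing $\langle-,-\rangle_{\QCoh({}^\sim\CX)}$, which only uses the naive self-duality of $\QCoh({}^\sim\CX)$ (unproblematic, as ${}^\sim\CX$ is quasi-compact quasi-separated) and the quasi-perfectness of $\CX$ (\lemref{l:X tilde qp}). Concretely: $\langle\CF,\wh{i}_?\circ\Upsilon^\vee_\CX(\CF_1)\rangle\simeq\langle\wh{i}^*\CF,\Upsilon^\vee_\CX(\CF_1)\rangle\simeq\Gamma^{\IndCoh}(\CX,\wh{i}^*\CF\otimes\CF_1)$ by \corref{c:dual of ?} and \secref{sss:when Psi}; then the compatibility $\Gamma^{\IndCoh}(\CX,-)\simeq\Gamma^{\IndCoh}({}^\sim\CX,-)\circ\wh{i}^{\IndCoh}_*$ (valid for Noetherian schemes by \secref{sss:lower shriek}) and the projection formula turn this into $\Gamma^{\IndCoh}({}^\sim\CX,\CF\otimes\wh{i}^{\IndCoh}_*(\CF_1))\simeq\langle\CF,\Psi_{{}^\sim\CX}\circ\wh{i}^{\IndCoh}_*(\CF_1)\rangle$. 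If you want to salvage your approach, you would need to either establish the Serre-duality package for ${}^\sim\CX$ from scratch, or switch to this pairing argument.
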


\begin{rem}
This proposition does not formally follow from the commutativity of
\eqref{e:qcoh and indcoh compat 2}, because the latter relied on the finite type assumption
of the ambient DG scheme (in our case the ambient scheme is $^\sim\CX$, and it is
not of finite type). 
\end{rem}

\sssec{}

Let us assume this proposition for a moment and finish the proof of the fact that
$\Upsilon^\vee_{\CX}$ is an equivalence (and thereby of \thmref{t:QCoh main}).

\begin{proof}

We have a commutative diagram
$$
\CD
\IndCoh({}^\sim\CX)   @>{j^{\IndCoh,*}}>>  \IndCoh(U_\CX)  \\
@V{\Psi_{^\sim\CX}}VV    @VV{\Psi_{U_\CX}}V   \\
\QCoh({}^\sim\CX)   @>{j^*}>>  \QCoh(U_\CX) 
\endCD
$$
(see \cite[Proposition 3.5.4]{IndCoh}). 

\medskip

By \propref{p:QCoh of compl}, the category $\QCoh(\CX)$ identifies with the kernel of
the functor
$$j^*:\QCoh({}^\sim\CX) \to \QCoh(U_\CX).$$
By \propref{p:localization for IndCoh}, the category $\IndCoh(\CX)$ identifies with the kernel
of the functor
$$j^{\IndCoh,*}:\IndCoh({}^\sim\CX) \to \IndCoh(U_\CX).$$

\medskip

\noindent (We remark that in \propref{p:localization for IndCoh} it was assumed that the ambient scheme
is almost of finite type over the field, but the proof applies in the case when it is only assumed
Noetherian, which is the case for $^\sim\CX$.)

\medskip

The required assertion follows from the fact that the functors
$\Psi_{^\sim\CX}$ and $\Psi_{U_\CX}$ are equivalences, since the
corresponding DG schemes are $0$-coconnective and the underlying
classical schemes are regular (see \cite[Lemma 1.1.6]{IndCoh}).

\end{proof}

\ssec{Proof of \propref{p:Psi on compl and not}}

We shall compare the functors
$$\QCoh({}^\sim\CX)\otimes \IndCoh(\CX) \rightrightarrows \Vect$$
that arise from the two circuits of the diagram and the duality pairing
$$\langle-,-\rangle_{\QCoh({}^\sim\CX)}:\QCoh({}^\sim\CX)\otimes \QCoh({}^\sim\CX)\to \Vect,$$
corresponding to the functor $\bD_\CX^{\on{naive}}$ of \eqref{e:duality X tilde}.



\medskip

For $\CF\in \QCoh({}^\sim\CX)$ and $\CF'\in \IndCoh(\CX)$ we have:
\begin{equation} \label{e:pairing expl}
\langle \CF,\wh{i}_?\circ \Upsilon^\vee_\CX(\CF_1)\rangle_{\QCoh({}^\sim\CX)}\simeq
\langle \wh{i}^*(\CF),\Upsilon^\vee_\CX(\CF_1)\rangle_{\QCoh(\CX)}\simeq 
\Gamma^{\IndCoh}(\CX,\wh{i}^*(\CF)\underset{\CO_{\CX}}\otimes \CF_1),
\end{equation}
where the first isomorphism follows from \corref{c:dual of ?}, and the second one from 
\secref{sss:when Psi}.

\medskip

The description of the functor $\wh{i}^{\IndCoh}_*$ given in \secref{sss:lower shriek}
(which is valid for all Noetherian schemes) implies that we have a canonical isomorphism
$$\Gamma^{\IndCoh}(\CX,-)\simeq \Gamma^{\IndCoh}({}^\sim\CX,-)\circ \wh{i}^{\IndCoh}_*.$$
Hence, the expression in \eqref{e:pairing expl} can be further rewritten as
$$\Gamma^{\IndCoh}\left({}^\sim\CX,\wh{i}^{\IndCoh}_*(\wh{i}^*(\CF)\underset{\CO_{\CX}}\otimes \CF_1)\right),$$
which by the projection formula is canonically isomorphic to
$$\Gamma^{\IndCoh}\left({}^\sim\CX,\CF\underset{\CO_{^\sim\CX}}\otimes \wh{i}^{\IndCoh}_*(\CF_1)\right).$$

\medskip

Now,
$$\langle \CF, \Psi_{^\sim\CX}\circ \wh{i}^{\IndCoh}_*(\CF_1)\rangle_{\QCoh({}^\sim\CX)}\simeq
\Gamma^{\IndCoh}({}^\sim\CX,\CF\underset{\CO_{^\sim\CX}}\otimes \wh{i}^{\IndCoh}_*(\CF_1)),$$
as required.

\qed

\end{document}